\def\@tocline#1#2#3#4#5#6#7{\relax
  \ifnum #1>\c@tocdepth 
  \else
    \par \addpenalty\@secpenalty\addvspace{#2}%
    \begingroup \hyphenpenalty\@M
    \@ifempty{#4}{%
      \@tempdima\csname r@tocindent\number#1\endcsname\relax
    }{%
      \@tempdima#4\relax
    }%
    \parindent\z@ \leftskip#3\relax \advance\leftskip\@tempdima\relax
    \rightskip\@pnumwidth plus4em \parfillskip-\@pnumwidth
    #5\leavevmode\hskip-\@tempdima
      \ifcase #1
       \or\or \hskip 1em \or \hskip 2em \else \hskip 3em \fi%
      #6\nobreak\relax
    \dotfill\hbox to\@pnumwidth{\@tocpagenum{#7}}\par
    \nobreak
    \endgroup
  \fi}
\newcommand*{\compl}[1]{#1^\dagger}
\newtheorem{theorem}{Theorem}[section]
\newtheorem{lemma}[theorem]{Lemma}
\newtheorem{corollary}[theorem]{Corollary}
\newtheorem{proposition}[theorem]{Proposition}
\theoremstyle{definition}
\newtheorem{defn}[theorem]{Definition}
\newtheorem{remark}[theorem]{Remark}
\newtheorem{example}[theorem]{Example}
\def\supp{{\rm supp}}
\def\<{\langle}
\def\>{\rangle}
\def\to{\rightarrow}
\def\im{{\rm im}}
\newcommand*{\sbr}[1]{\scalebox{0.8}{$(#1)$}}
\newcommand*{\db}[1]{\llbracket #1\rrbracket}
\newcommand{\mc}{\mathcal}
\newcommand{\mb}{\mathbb}
\newcommand{\wh}{\widehat}
\newcommand{\wt}{\widetilde}
\newcommand{\id}{\mathrm{id}}
\DeclareMathOperator{\stab}{Stab}
\DeclareMathOperator{\ab}{Z}
\DeclareMathOperator{\tran}{\Theta}
\DeclareMathOperator{\Lip}{Lip}
\DeclareMathOperator{\codim}{codim}
\DeclareMathOperator{\q}{c}
\DeclareMathOperator{\ns}{X}
\DeclareMathOperator{\nss}{Y}
\DeclareMathOperator{\co}{\circ\hspace{-0.02 cm}}
\DeclareMathOperator{\cu}{C}
\DeclareMathOperator{\cor}{Cor}
\DeclareMathOperator{\cs}{s}
\DeclareMathOperator{\bnd}{B}
\DeclareMathOperator{\diam}{diam}
\newcommand*{\smallcorner}[7]{
\begin{tikzpicture}[baseline={([yshift=-.5ex]current bounding box.center)},vertex/.style={anchor=base,
    circle,fill=black!25,minimum size=18pt,inner sep=2pt}]
  \draw[thick](0,1,0)--(0,1,1)--(1,1,1);
  \draw[thick](1,0,0)--(1,0,1)--(0,0,1)--(0,1,1);
  \draw[thick](1,1,1)--(1,0,1);
  \draw[thick](1,0,0)--(0,0,0)--(0,1,0);
  \draw[thick](0,0,0)--(0,0,1);
  \draw[thick](0.2,0.2,0) node{$#3$};
  \draw(1.1,0.2,0) node{$#4$};
  \draw(0.1,1.2,0) node{$#7$};
  \draw(1.1,-0.2,1) node{$#2$};
  \draw(0.1,-0.2,1) node{$#1$};
  \draw(-0.1,1.2,1) node{$#5$};
  \draw(1,1.2,1) node{$#6$};
\end{tikzpicture}
}
\title[Free nilspaces, double-coset nilspaces, and Gowers norms]{Free nilspaces, double-coset nilspaces,\\ and Gowers norms}
\author{Pablo Candela}
\thanks{Pablo Candela: Instituto de Ciencias Matem\'aticas, Calle Nicol\'as Cabrera 13-15, Madrid 28049, Spain. 
\texttt{pablo.candela@icmat.es}, ORCiD: 0000-0002-7261-5536.}
\author{Diego González-Sánchez}
\thanks{Diego González-Sánchez: HUN-REN Alfréd Rényi Institute of Mathematics, Reáltanoda utca 13-15, Budapest, Hungary, H-1053. \texttt{diegogs@renyi.hu}, ORCiD: 0000-0003-4388-9148.}
\author{Balázs Szegedy}
\thanks{Balázs Szegedy: HUN-REN Alfréd Rényi Institute of Mathematics, Reáltanoda utca 13-15, Budapest, Hungary, H-1053. \texttt{szegedyb@gmail.com}, ORCiD: 0009-0009-6682-3361.}
\subjclass[2020]{11B30}
\begin{document}
\singlespacing

\vspace*{-1cm}

\maketitle

\bigskip

\vspace*{-1cm}
\begin{abstract} 
 Compact finite-rank (\textsc{cfr}) nilspaces have become central in the nilspace approach to higher-order Fourier analysis, notably through their role in a general form of the inverse theorem for the Gowers norms. This paper studies these nilspaces per se, and in connection with further refinements of this inverse theorem that have been conjectured recently. Our first main result states that every \textsc{cfr} nilspace is obtained by taking a \emph{free} nilspace (a nilspace based on an abelian group of the form $\mb{Z}^{r}\times \mb{R}^s$) and quotienting this by a discrete group action of a specific type, describable in terms of polynomials. We call these group actions \emph{higher-order lattice actions} as they generalize actions of lattices in $\mb{Z}^r\times \mb{R}^s$. The second main result (relying on the first one) represents every \textsc{cfr} nilspace as a \emph{double-coset space} $K\backslash G / \Gamma$ where $G$ is a nilpotent Lie group of a specific kind. Our third main result extends the aforementioned results to $k$-step compact nilspaces (not necessarily of finite rank), by representing any such nilspace as a quotient of infinite products of free nilspaces and also as double coset spaces $K\backslash G/\Gamma$ where $G$ is a degree-$k$ nilpotent pro-Lie group. These results require developing the theory of topological \emph{non-compact} nilspaces, for which we provide groundwork in this paper. Applications include new inverse theorems for Gowers norms on any finite abelian group. These theorems  are purely group theoretic in  that the correlating harmonics are based on double-coset spaces. This yields progress towards the Jamneshan--Tao conjecture.
\end{abstract}

\section{Introduction}
Since its inception in the seminal work of Gowers \cite{GSz}, higher-order Fourier analysis has yielded many notable results, including the celebrated proof by Green and Tao that the primes contain arbitrarily long arithmetic progressions \cite{GTprimes}. A central topic in this area is that of inverse theorems and regularity lemmas (or structure theorems) for the Gowers norms \cite{G-gen}. An inverse theorem for all Gowers norms on finite cyclic groups was proved by Green, Tao and Ziegler in \cite{GTZ} (see also previous partial results in this direction in \cite{GT08} and \cite{GTZ-U4}). Analogous results on vector spaces over finite fields were proved in \cite{BTZ} and \cite{TZ-High, T&Z-Low}. 

In ergodic theory, Host and Kra introduced uniformity seminorms for measure-preserving systems \cite{HK-non-conv}, analogous to the Gowers norms. Host and Kra then  initiated an axiomatic approach to the objects that appear as characteristic factors for these seminorms  \cite{HK-par}, notably by introducing the notion of parallelepiped structures. Inspired by this, Antolín Camarena and Szegedy introduced the concept of nilspaces \cite{CamSzeg}. These objects provide a general notion of cubic structures and have proved useful in addressing various questions related to uniformity norms and seminorms, in arithmetic combinatorics, ergodic theory, topological dynamics, and probability theory. The literature on this theory and its applications includes \cite{Cand:Notes1,CGSS, CScouplings, CSinverse, GMV1, GMV2, GMV3}.

\emph{Compact Finite-Rank}  (\textsc{cfr}) nilspaces arise naturally in this context. In particular, the general inverse theorem for Gowers norms in \cite{CSinverse} shows that these nilspaces suffice to obtain correlating harmonics for functions with non-trivial Gowers norms on an ample family of spaces admitting such norms, including all compact abelian groups and nilmanifolds. Moreover, these specific nilspaces are relevant to an interesting recent conjecture of Jamneshan and Tao concerning the inverse theorem \cite[Conjecture 1.11]{J&T}. Indeed, this conjecture posits that in the case of finite abelian groups, the underlying object generating the correlating harmonics in the inverse theorem can always be taken to be a (not necessarily connected) nilmanifold. Given the inverse theorem from \cite{CSinverse}, this conjecture can be approached as a problem of giving an appropriate description of \textsc{cfr} nilspaces in terms of nilmanifolds (we illustrate this in particular in the 2-step case, as explained below in Subsection \ref{subsec:intro2step}).

This relevance of \textsc{cfr} nilspaces motivates their further study per se. One of our aims in this paper is to widen the set of viewpoints from which these objects can be analyzed, and especially to open this study to areas more classical than nilspace theory, such as the theory of groups and group actions. The first two main results in this paper go in this direction, as follows. 

The first main result establishes that \textsc{cfr} nilspaces can always be represented as quotients of spaces of the form $\mb{Z}^{r}\times \mb{R}^s$ by discrete group actions of a specific type, which can be described in terms of polynomials; see Theorem \ref{thm:gpcongrep-intro}. 

The second main result (building on the first one) represents every \textsc{cfr} nilspace as a double-coset space $K\backslash G / \Gamma$ where $G,K$ are specific nilpotent Lie groups and $\Gamma$ is a discrete subgroup of $G$ (see Theorem \ref{thm:cfr=double-coset-intro}). The relevance of double-coset spaces to nilspace theory emerged in
connection with applications in dynamics. Specifically, in 2014, Gutman,
Manners and Varjú announced the result that minimal nilspace systems of finite order where the acting group is a compactly generated abelian group are isomorphic to double coset spaces (private
communication). In
particular, they expressed a well-known example of Rudolph from \cite{Rudolph} 
in terms of a double-coset space. More recent uses of double-coset spaces in related directions include the work of Shalom \cite[Theorem 1.21]{Shalom} and that of Jamneshan, Shalom and Tao \cite[Theorem 1.8]{JST1}. These results motivated the belief (shared in the area of nilspace theory\footnote{We thank in particular Yonatan Gutman, Frederick Manners and Or Shalom for exchanges on this topic.}) that, while compact nilspaces are not always coset spaces (see \cite[Example 6]{HK-par}), they might always be expressible as \emph{double}-coset spaces. Theorem \ref{thm:cfr=double-coset-intro} below confirms this belief for \textsc{cfr} nilspaces in general. 

Furthermore, our third main result extends the above picture to all compact nilspaces (not necessarily of finite rank), proving that every such nilspace is also a double-coset space, involving a \emph{pro-Lie} group $G$. This extension to general compact nilspaces is also of interest beyond nilspace theory; in particular, it is relevant to the study of Host--Kra factors, since it was established in \cite[Theorem 5.11]{CScouplings} that Host--Kra factors (for general nilpotent group actions) are always compact nilspace systems, i.e.\ measure-preserving systems defined on compact nilspaces \cite[Definition 5.10]{CScouplings}. The extension in question builds on the fact that every $k$-step compact nilspace is an inverse limit of $k$-step \textsc{cfr} nilspaces  \cite[Theorem 2.7.3]{Cand:Notes2}; combining this with the first two main results above, we obtain that an inverse limit of $k$-step \textsc{cfr} nilspaces is isomorphic to a double-coset nilspace $K\backslash G/\Gamma$, where $G$ is a degree-$k$ filtered pro-Lie group, $K$ is a closed subgroup of $G$ and $\Gamma$ is a pro-discrete (hence closed) subgroup of $G$; see Theorem \ref{thm:cpct-nil-as-dou-coset}. In particular, this implies that every inverse limit of compact nilmanifolds\footnote{Meaning that for each $i\in \mb{N}$ we have a degree-$k$ filtered Lie group $G_i$, a lattice $\Gamma_i\le G_i$, and a continuous surjective homomorphism $\varphi_{i,i+1}:G_{i+1}\to G_i$ such that $\varphi_{i,i+1}(\Gamma_{i+1})\subset\Gamma_i$; the corresponding inverse system of nilmanifolds $G_i/\Gamma_i$ is defined by the maps $\overline{\varphi_{i,i+1}}:G_{i+1}/\Gamma_{i+1}\to G_i/\Gamma_i$, $g\Gamma_{i+1}\mapsto \varphi_{i,i+1}(g)\Gamma_i$, for $i\in \mb{N}$.}, when viewed in the category of compact nilspaces, is isomorphic to a double-coset space of this type (recall, for comparison, that Rudolph's example \cite{Rudolph} shows that an inverse limit of 2-step nilmanifolds is not, in general, the homogeneous space of some locally compact nilpotent group).

Proving the above results required further development, in this paper, of notions that are of interest for nilspace theory, such as \emph{free nilspaces} and \emph{congruences} on nilspaces. These notions in turn required non-trivial groundwork in the theory of topologically \emph{non-compact} nilspaces.

We obtain applications by upgrading the general inverse theorem for Gowers norms from \cite{CSinverse} using the new information on \textsc{cfr} nilspaces obtained in this paper. In particular we prove an inverse theorem for finite abelian groups in terms of double-coset spaces, which can be viewed as a step towards \cite[Conjecture 1.11]{J&T}. We also obtain a new proof, in the 2-step case, of a recent result of Jamneshan and Tao \cite[Theorem 1.10]{J&T}.

The next four brief subsections explain in more detail our main results, the related concepts, and the main applications. After that, this introduction is closed with an outline of the paper.

\subsection{\textsc{cfr} nilspaces as quotients of $\mb{Z}^r\times \mb{R}^s$ by higher-order lattice actions}\hfill\\
To motivate our first main result, let us start by recalling the following description of characters from classical Fourier analysis. Given any compact abelian group $\ab$, a character $\chi$ on $\ab$ can be viewed as the composition of two maps, a continuous homomorphism $\phi:\ab\to \mb{R}/\mb{Z}$ and the map $e:\mb{R}/\mb{Z}\to \mb{C}$, $x\mapsto \exp(2\pi i x)$:
\[
\chi:\ab \xrightarrow{\quad\phi\quad} \mb{R}/\mb{Z} \xrightarrow{\quad e\quad} \mb{C}.
\]
Allowing $\mb{R}/\mb{Z}$ to be replaced by a torus $\mb{R}^s/\mb{Z}^s$ and replacing $e$ by a linear combination of such exponential functions, we obtain trigonometric polynomials. 

The main result in question here extends this picture to higher-order Fourier analysis, replacing the torus with a quotient of $\mb{Z}^r\times\mb{R}^s$ by the action of a discrete nilpotent group $\Gamma$ acting on $\mb{Z}^r\times\mb{R}^s$ by polynomial maps. The homomorphism $\phi$ is then replaced by a nilspace morphism\footnote{Note that the quotient notation here takes the wider meaning of quotienting by the \emph{action} of $\Gamma$ ($\Gamma$ is not necessarily a subgroup of $\mb{Z}^r\times\mb{R}^s$).} $\varphi:\ab\to (\mb{Z}^r\times\mb{R}^s)/\Gamma$, and the map $e$ by a Lipschitz function $F:(\mb{Z}^r\times\mb{R}^s)/\Gamma\to \mb{C}$, thus obtaining what we call a \emph{nilspace polynomial} on $\ab$:
\[
\chi:\ab \xrightarrow{\quad\varphi\quad} (\mb{Z}^r\times\mb{R}^s)/\Gamma \xrightarrow{\quad F\quad} \mb{C}. 
\]
\begin{example}
A basic non-abelian example is provided by the Heisenberg group. This group has appeared already in many works in higher-order Fourier analysis since \cite{GT08}; recall that it is the group of $3\times 3$ upper unitriangular real matrices $\mc{H}= \begin{psmallmatrix} 1 & \mb{R} & \mb{R}\\[0.1em]0  & 1 & \mb{R}\\[0.1em] 0 & 0 & 1 \end{psmallmatrix}\vspace{0.05cm}$. A special case of what we prove in this paper is that, as a nilspace, the Heisenberg group is isomorphic to a nilspace expressible in terms of structures which are among the most basic ones in nilspace theory, and which play a key role in this paper, namely the nilspaces $\mc{D}_k(\mb{R})$. (Given any abelian group $\ab$ and $k\in\mb{N}$, we denote by $\mc{D}_k(\ab)$ the $k$-step nilspace associated with the filtered group $(\ab,\ab_\bullet)$, where the filtration $\ab_\bullet=(\ab_i)_{i\geq 0}$ consists of $\ab_i=\ab$ for $i\leq k$ and $\ab_i=\{\id\}$ otherwise; see \cite[\S 2.2.4]{Cand:Notes1}.) Indeed, the group nilspace consisting of $\mc{H}$ equipped with the Host-Kra cubes (relative to the lower-central series) is isomorphic to the product nilspace  $F=\mc{D}_1(\mb{R}^2)\times \mc{D}_2(\mb{R})$ (in this special case, this can be checked directly, an isomorphism being $\varphi:(x,y,z)\in F \mapsto \begin{psmallmatrix} 1 & x & z\\[0.1em]0  & 1 & y\\[0.1em] 0 & 0 & 1 \end{psmallmatrix}\in \mc{H}\vspace{0.05cm}$). The Heisenberg nilmanifold is the quotient space $\mc{H}/\Gamma$ where $\Gamma$ is the \emph{discrete} Heisenberg group $\begin{psmallmatrix} 1 & \mb{Z} & \mb{Z}\\[0.1em]0  & 1 & \mb{Z}\\[0.1em] 0 & 0 & 1 \end{psmallmatrix}\vspace{0.05cm}$. Using this isomorphism, it is easily seen that $\mc{H}/\Gamma$ is isomorphic (as a nilspace) to the quotient of $F$ by the action of the group generated by the following transformations on $F$, which are specific examples of \emph{nilspace translations} (see \cite[\S 3.2.4]{Cand:Notes1}): $\alpha(x,y,z)=(x+1,y,z)$ and $\beta(x,y,z)=(x,y+1,z+x)$. Indeed, these two translations correspond (via the isomorphism $\varphi$) to the right multiplications in $\mc{H}$ by the elements $\begin{psmallmatrix} 1 & 1 & 0\\[0.1em]0  & 1 & 0\\[0.1em] 0 & 0 & 1 \end{psmallmatrix}$ and $\begin{psmallmatrix} 1 & 0 & 0\\[0.1em] 0  & 1 & 1\\[0.1em] 0 & 0 & 1 \end{psmallmatrix}$ respectively, and these elements generate $\Gamma$.
\end{example}
\noindent The central objects involved in our first main result  are what we call \emph{free nilspaces}. The nilspace $F$ above is a simple example, and the general definition is as follows.
\begin{defn}\label{def:free-nil-intro}
A \emph{free nilspace} is a direct product (in the nilspace category) of finitely many components of the form $\mc{D}_i(\mb{R})$ and $\mc{D}_i(\mb{Z})$ where $i\in \mb{N}$. We say that a free nilspace is \emph{discrete} if it is a direct product of components of the form $\mathcal{D}_i(\mathbb{Z})$, and that it is \emph{continuous} if it is a direct product of components of the form $\mc{D}_i(\mb{R})$.
\end{defn}

\noindent A very useful aspect of free nilspaces is the  explicit description one can give of the translations on these nilspaces, in terms of multivariate polynomials; see Theorem \ref{thm:decrip-trans-group}.

As mentioned above, we want to take the quotient of any free nilspace $F$ by the action of a discrete subgroup $\Gamma$ of the translation group $\tran(F)$. However, such a quotient space is not necesarily a well-defined nilspace. We identify the following sufficient condition for the quotient to be a nilspace  (purely algebraically, without any topological considerations for now).
\begin{defn}[Fiber-transitive group of translations]
Let $F$ be a $k$-step free nilspace and let $\Gamma$ be a subgroup of the translation group $\tran(F)$. We say that $\Gamma$ is a \emph{fiber-transitive group on} $F$ if the following holds: for all $x,y\in F$, if there exists $\gamma\in \Gamma$ and $i\in[k]$ such that $\gamma(x)=y$ and $\pi_i(x)=\pi_i(y)$, then there exists $\gamma'\in \Gamma\cap \tran_{i+1}(F)$ such that $\gamma'(x)=y$.
\end{defn}
\noindent This is in fact a special case of a concept introduced in this paper which provides a general setting in which quotienting a nilspace by an equivalence relation yields again a nilspace. We call such equivalence relations \emph{nilspace congruences}, and the congruences corresponding to fiber-transitive groups are called \emph{groupable congruences}; see Section \ref{sec:gpcongs}. We also give examples of subgroups of translation groups that are not fiber-transitive; see Remark \ref{rem:noncongorbit}.

When considering quotients by fiber-transitive groups in settings where a topology is added, we face certain issues similar to standard phenomena in topological group theory. For example, a quotient group such as $\mb{R}/\mb{Q}$ is well-defined algebraically, but when we equip $\mb{R}$ with the standard topology, quotienting by $\mb{Q}$ does not yield a useful \emph{Hausdorff} topology in the quotient, whereas quotienting by subgroups such as $\mb{Z}$ does yield such a topology in the quotient, thanks to the proper discontinuity of the action of $\mb{Z}$ on $\mb{R}$. We shall need an analogously convenient property for quotients of free nilspaces. To formulate this, we first need to recall the notion of the canonical projections onto characteristic factors of a nilspace. Specifically, if $F$ is a free nilspace $\prod_{i=1}^k \mc{D}_i(\mb{Z}^{a_i}\times \mb{R}^{b_i})$, the projection $\pi_j$ is defined as the map that just deletes the coordinates with index $i > j$, namely $\pi_j:F=\prod_{i=1}^k \mc{D}_i(\mb{Z}^{a_i}\times \mb{R}^{b_i})\to F_j=\prod_{i=1}^j \mc{D}_i(\mb{Z}^{a_i}\times \mb{R}^{b_i})$. Similarly, note that a translation $\alpha\in \tran(F)$ can be seen as a translation in $F_j$ just by forgetting about its action on the higher-order components. Thus the projection $\pi_j$ induces a map $\eta_j:\tran(F)\to \tran(F_j)$, which is in fact a group homomorphism. Finally, recall (using for example  \cite[Lemma 3.2.37]{Cand:Notes1}) that for every $k$-step free nilspace $F$ we have $\tran_k(F)\cong \mb{Z}^{a_k}\times \mb{R}^{b_k}$.
\begin{defn}[Fiberwise discrete and cocompact actions on free nilspaces]\label{def:FDCA}
Let $F$ be a $k$-step free nilspace and let $\Gamma$ be a fiber-transitive group on $F$. We say that $\Gamma$ is \emph{fiber-discrete} if for every $j\in[k]$, the group $\eta_j(\Gamma)\cap \tran_j(F_j)$ is a discrete subgroup of $\tran_j(F_j)\cong \mb{Z}^{a_j}\times \mb{R}^{b_j}$. We say that $\Gamma$ is \emph{fiber-cocompact} if for every $j\in[k]$, the group $\eta_j(\Gamma)\cap \tran_j(F_j)$ is cocompact in $\tran_j(F_j)$.
\end{defn}
\noindent The above notions are special cases, for free nilspaces, of notions for more general non-compact topological nilspaces, introduced in Section \ref{sec:gpcongs} (see Definition \ref{def:FDCA-general}).
\begin{remark}
Note that given a fiber-transitive group $\Gamma$, if it is \emph{fiber}-discrete then it is a discrete subgroup of $\tran(F)$ in the usual sense; see Lemma \ref{lem:closedconseq}. However, note that $\Gamma$ being fiber-cocompact does \emph{not} imply that $\Gamma$ is a cocompact subgroup of $\tran(F)$; see Example \ref{ex:fiber-co-comp-not-co-comp}. On the other hand $\Gamma$ being fiber-cocompact does imply that the topological space $F/\Gamma$ is compact; this is clear in Example \ref{ex:fiber-co-comp-not-co-comp}, and in general it follows from Lemma \ref{lem:cong-equi} and Corollary \ref{cor:quo-cfr-nil}.
\end{remark}
\noindent We can now define a generalization, for free nilspaces, of the actions of standard lattices on $\mb{R}^n$.
\begin{defn}[Higher-order lattice actions]
Let $F$ be a $k$-step free nilspace on $\mb{Z}^r\times \mb{R}^s$ (i.e.\ $F=\prod_{i=1}^k \mc{D}_i(\mb{Z}^{a_i}\times \mb{R}^{b_i})$ where $\sum_i a_i=r$ and $\sum_i b_i=s$). A \emph{$k$-th order lattice action} on $F$ (or on $\mb{Z}^r\times \mb{R}^s$) is an action by a subgroup $\Gamma$ of the translation group $\tran(F)$ such that $\Gamma$ is fiber-transitive and fiber-discrete. We say that this action is \emph{cocompact} if $\Gamma$ is also fiber-cocompact.
\end{defn}
Let us now state our first main result, which we prove in Section \ref{sec:groupequivrep} (see Theorem \ref{thm:groupequivrep}).
\begin{theorem}\label{thm:gpcongrep-intro}
Let $\ns$ be a $k$-step compact finite-rank nilspace. Then there exists a $k$-step free nilspace $F$, and a $k$-th order lattice action on $F$ by some group $\Gamma\subset \tran(F)$, such that $\ns\cong F/\Gamma$.
\end{theorem}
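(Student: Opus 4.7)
The plan is to proceed by induction on the step $k$, using the standard representation of a $k$-step CFR nilspace as an abelian cocycle extension of its $(k-1)$-step factor. For the base case $k=1$, a $1$-step CFR nilspace $\ns$ is a compact abelian Lie group, hence $\ns\cong\mathbb{T}^{b_1}\times A_1$ with $A_1$ finite abelian; writing $A_1\cong\mathbb{Z}^{a_1}/\Lambda_1$, I realise $\ns$ as the quotient of $F=\mathcal{D}_1(\mathbb{R}^{b_1}\times\mathbb{Z}^{a_1})$ by the subgroup $\Gamma=\mathbb{Z}^{b_1}\times\Lambda_1$ of $\tran(F)$ acting by translation, which is visibly fiber-transitive, fiber-discrete, and fiber-cocompact.

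For the inductive step, suppose the theorem holds for step $k-1$ and let $\ns$ be $k$-step CFR. The $(k-1)$-factor $\ns_{k-1}=\pi_{k-1}(\ns)$ is also CFR, and by standard nilspace theory $\ns$ arises from $\ns_{k-1}$ as a degree-$k$ cocycle extension with structure group $Z_k(\ns)$, which is a compact abelian Lie group since $\ns$ is CFR. Fix a presentation $Z_k(\ns)\cong(\mathbb{R}^{b_k}\times\mathbb{Z}^{a_k})/L_k$ with $L_k$ a full-rank discrete subgroup, and apply the induction hypothesis to write $\ns_{k-1}\cong F'/\Gamma'$ for a $(k-1)$-step free nilspace $F'$. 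The next goal is to show that the pullback of the extension $\ns\to\ns_{k-1}$ along $q:F'\to\ns_{k-1}$, after further lifting along the covering $\mathbb{R}^{b_k}\times\mathbb{Z}^{a_k}\to Z_k(\ns)$, splits as the direct-product nilspace $F:=F'\times\mathcal{D}_k(\mathbb{R}^{b_k}\times\mathbb{Z}^{a_k})$; equivalently, every degree-$k$ cocycle on the free nilspace $F'$ with values in $\mathbb{R}^{b_k}\times\mathbb{Z}^{a_k}$ is a coboundary. I expect this ``acyclicity of free nilspaces'' to follow by combining the explicit polynomial description of cube-morphisms on $F'$ (in the spirit of Theorem \ref{thm:decrip-trans-group}) with an inductive trivialisation along the coordinates of $F'$.

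It then remains to lift $\Gamma'$ to a subgroup $\Gamma\le\tran(F)$ whose quotient action on $F$ recovers $\ns$. I would take $\Gamma$ to be generated by the vertical lattice $L_k$ (acting only on the last factor of $F$) and, for each $\gamma\in\Gamma'$, a lift $\tilde\gamma\in\tran(F)$ of the form $\tilde\gamma(x,z)=(\gamma(x),z+c_\gamma(x))$, where $c_\gamma:F'\to\mathbb{R}^{b_k}\times\mathbb{Z}^{a_k}$ is a polynomial map of the degree permitted by Theorem \ref{thm:decrip-trans-group}. The main obstacle is selecting the family $\{c_\gamma\}_{\gamma\in\Gamma'}$ so that the collection $\{\tilde\gamma\}$ is closed under composition and inversion modulo $L_k$: this is a $2$-cocycle equation for $\Gamma'$ with coefficients in the space of such polynomial maps, and its obstruction should again vanish by an acyclicity-of-free-nilspaces argument analogous to the one above. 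Once such $\Gamma$ is built, the verification that it is a $k$-th order lattice action proceeds layer by layer: fiber-transitivity on the top fibre is built in via $L_k$ and on lower fibres is inherited from $\Gamma'$ after projection by $\eta_j$; fiber-discreteness reads $\eta_k(\Gamma)\cap\tran_k(F)=L_k$ at the top layer and reduces to the inductive hypothesis on $\Gamma'$ below; fiber-cocompactness is inherited analogously from the CFR assumption via $L_k$ and $\Gamma'$. The resulting isomorphism $\ns\cong F/\Gamma$ then completes the induction.
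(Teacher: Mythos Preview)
Your inductive strategy matches the paper's exactly: induction on $k$, pull back the degree-$k$ extension along the free cover $F'\to\ns_{k-1}$, show the resulting extension splits, and lift $\Gamma'$. Your ``acyclicity of free nilspaces'' is precisely Theorem~\ref{thm:splitext}, whose proof is substantial (it occupies Section~\ref{sec:split-ext} and uses the translation-bundle machinery, lifting of one-parameter subgroups, and the Lie structure of $\tran(\nss)$); it is not a quick inductive trivialisation along coordinates.

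Where your outline has a real gap is the group lift. You correctly spot that choosing $\{c_\gamma\}$ so that the $\tilde\gamma$ close up modulo $L_k$ is a $2$-cocycle condition, but your proposed resolution (``acyclicity again'') is not justified: the coefficient module is $\hom(F',\mc{D}_{k-1}(\mb{Z}^{a_k}\times\mb{R}^{b_k}))$ as a $\Gamma'$-module, and there is no general reason for $H^2(\Gamma',-)$ to vanish. The paper sidesteps this entirely. On the fiber product $\nss=F'\times_{\ns_{k-1}}\ns$ (\emph{before} passing to the cover $\ab_k'\to\ab_k$), each $h\in\Gamma'$ acts canonically by $\tilde h(f,x)=(h(f),x)$; this is a translation because $h$ is $\sim_{\Gamma'}$-vertical, and $h\mapsto\tilde h$ is an injective group homomorphism with no choices made. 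Only after transporting through the splitting isomorphism $\iota:\nss\cong F'\times\mc{D}_k(\ab_k)$ does one lift along the abelian cover $\ab_k'\to\ab_k$, and there the paper lifts merely a \emph{finite generating set} (via Theorem~\ref{thm:lift-trans}), adjoins the kernel $L_k$, and takes the group $H'$ generated. The crucial check that $H'\cap\tran_k(F)=L_k$ (equation~\eqref{eq:filtcorresp-2}), which is what guarantees fiber-discreteness and that no spurious top-degree polynomials enter, is carried out directly using the fiber-product description of $\nss$ --- not by any cocycle vanishing.
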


\subsection{\textsc{cfr} nilspaces as double-coset spaces}\hfill\\
The quotient structure describing \textsc{cfr} nilspaces in Theorem \ref{thm:gpcongrep-intro} is similar in several ways to a nilmanifold construction, but note that in this theorem $\ns$ is not obtained as the quotient of a Lie \emph{group} (i.e.\ we are not obtaining $\ns$ as a \emph{coset} nilspace), but rather as the quotient of the free nilspace $F$ by the \emph{action} of $\Gamma$. Nevertheless, from this description it is possible to deduce a characterization of $\ns$ as a \emph{double}-coset nilspace. Indeed, fix any $x_0\in F$ and let $K=\stab(x_0)=\{\alpha\in \tran(F):\alpha(x_0)=x_0\}$. We can then prove that the nilspace $F$ is isomorphic to the coset nilspace $K\backslash\!\tran(F)$. We can thus view the action of $\Gamma$ as an action by right multiplication on $ K\backslash\!\tran(F)$, and then prove that $\ns \cong F/\Gamma \cong K\backslash\!\tran(F)/\Gamma$. To confirm all this rigorously,  we must clarify, in particular, under what conditions a double-coset space $K\backslash G/\Gamma$ becomes a nilspace when equipped with the natural image (under the double quotient map) of the cube structure on the ambient filtered group $G$. We identify the following symmetric, purely algebraic property which suffices for this purpose.
\begin{defn}[Groupable nilpair]
Let $(G,G_\bullet)$ be a filtered group of degree $k$ and let $K,\Gamma$ be subgroups of $G$. We say that $(K,\Gamma)$ is a \emph{groupable nilpair} in $(G,G_\bullet)$ if any of the following equivalent properties is satisfied:
\begin{enumerate}
    \item For every $x\in G$ and every $i\ge 0$ we have $(Kx\Gamma)\cap(G_ix\Gamma)=(K\cap G_i)x\Gamma$.
    \item For every $x\in G$ and every $i\ge 0$ we have $(Kx\Gamma)\cap(KxG_i)=Kx(\Gamma\cap G_i)$.
\end{enumerate}
\end{defn}
\noindent We shall need useful conditions, analogous to those in Definition \ref{def:FDCA}, to ensure that a double coset nilspace $K\backslash G/\Gamma$ (equipped with the images of the Host--Kra cubes on $(G,G_\bullet)$) is not only a nilspace purely algebraically, but also a \textsc{cfr} nilspace topologically. The following definition provides sufficient conditions of this sort, when the group $G$ is a compactly-generated Lie group (all Lie groups considered in this paper are  assumed to be compactly generated).
\begin{defn}\label{def:CRDCnilpair}
Let $(G,G_\bullet)$ be a degree-$k$ filtered Lie group such that for each $i\in[k]$ the subgroup $G_i$ is closed in $G$. Let $(K,\Gamma)$ be a groupable nilpair in $(G,G_\bullet)$. For each $i\in [k]$ let $K_i=K\cap G_i$, $\Gamma_i=\Gamma\cap G_i$. We say the nilpair $(K,\Gamma)$ is \emph{closed right-discrete} if for every $i\in[k]$ the group $(KG_{i+1})/G_{i+1}$ is a closed subgroup of $G/G_{i+1}$ and $K_i \Gamma_i G_{i+1} /(K_iG_{i+1})$ is a discrete subgroup of $G_i/(K_iG_{i+1})$. We say the nilpair $(K,\Gamma)$ is \emph{fiberwise cocompact} if for every $i\in[k]$, the group $K_i\Gamma_iG_{i+1}/(K_iG_{i+1})$ is a cocompact subgroup of $G_i/(K_iG_{i+1})$.
\end{defn}
\noindent We prove the following result showing that the above properties indeed suffice as mentioned.
\begin{theorem}\label{thm:suffDC}
Let $(G,G_\bullet)$ be a degree-$k$ filtered Lie group such that for each $i\in [k]$ the subgroup $G_i$ is closed in $G$. Let $(K,\Gamma)$ be a groupable nilpair in $(G,G_\bullet)$ that is closed right-discrete and cocompact. Then the double-coset nilspace $\ns=K\backslash G/\Gamma$ is a $k$-step \textsc{cfr} nilspace. Moreover, for each $i\in [k]$ the $i$-th structure group of $\ns$ is $G_i/(\Gamma_i K_i G_{i+1})$.
\end{theorem}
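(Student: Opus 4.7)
The plan is to verify the conclusions in two layers: first the algebraic nilspace axioms, and then the topological \textsc{cfr} properties together with the identification of the structure groups, the latter by induction on $k$.

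For the algebraic layer, I would define cubes on $\ns=K\backslash G/\Gamma$ as images under the double-quotient map $\pi:G\to\ns$ of the Host--Kra cubes on $(G,G_\bullet)$. The corner-completion axiom for $\ns$ reduces to the corresponding property for $(G,G_\bullet)$ together with the groupable nilpair condition: given a corner in $\ns$, one lifts it to a corner in $G$, completes it in $G$, and projects back. The ambiguity in the lift at each vertex is controlled by $K$ on the left and $\Gamma$ on the right, and properties (i) and (ii) in the definition of groupable nilpair are precisely what is needed to absorb this ambiguity into elements of the appropriate filtration level when building the completion, so that the projected cube does not depend on the choice of lift.

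For the topological part, I would induct on $k$, the case $k=0$ being trivial. For the inductive step, consider the natural projection $\ns\to\ns':=(KG_k)\backslash G/(G_k\Gamma)$, which I claim realizes $\ns'$ as the degree-$(k-1)$ factor of $\ns$. The filtered Lie group $\overline G:=G/G_k$ with filtration $(G_i/G_k)_{0\le i\le k-1}$ and the subgroups $\overline K:=KG_k/G_k$, $\overline\Gamma:=\Gamma G_k/G_k$ forms a groupable nilpair; moreover, the closed right-discrete and fiberwise cocompact conditions at indices $i<k$ in Definition~\ref{def:CRDCnilpair} descend to $(\overline K,\overline\Gamma)$ by direct inspection. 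The inductive hypothesis then gives that $\ns'$ is a $(k-1)$-step \textsc{cfr} nilspace with structure groups $G_i/(K_i\Gamma_iG_{i+1})$ for $i<k$.

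For the top level, I identify the fiber of $\ns\to\ns'$ over $(KG_k)x(G_k\Gamma)$ globally with $G_k/(K_k\Gamma_k)$: the map $g\mapsto Kxg\Gamma$ sends $G_k$ onto the fiber, and a direct manipulation using the groupable property (ii) shows that two elements $g,g'\in G_k$ give equal double cosets iff $g'g^{-1}\in K_k\Gamma_k$ (here $G_k$ is abelian since $[G_k,G_k]\subseteq G_{2k}\subseteq G_{k+1}=\{e\}$). Specializing Definition~\ref{def:CRDCnilpair} at $i=k$: $K$ is closed in $G$, so $K_k=K\cap G_k$ is closed in the Lie group $G_k$; discreteness of $K_k\Gamma_k/K_k$ in $G_k/K_k$ gives $K_k\Gamma_k$ closed in $G_k$; and cocompactness gives that $G_k/(K_k\Gamma_k)$ is a compact abelian Lie group by Cartan's theorem. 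The main obstacle will be to upgrade this fiberwise identification into a locally trivial principal bundle structure on $\ns\to\ns'$, thereby verifying the topological \textsc{cfr} requirements. For this I would use local continuous sections of $G\to G/G_k$ (standard for closed normal subgroups of Lie groups) and descend them through the double-coset quotient, using closed right-discreteness to guarantee that nearby $K\times\Gamma$ orbits remain separated so the sections yield local trivializations. Combined with the inductive statement for $\ns'$, this would yield $\ns$ as a $k$-step \textsc{cfr} nilspace with structure groups $G_i/(K_i\Gamma_iG_{i+1})$ at every level $i\in[k]$.
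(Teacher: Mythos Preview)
Your approach differs from the paper's in a substantive way. The paper does not induct directly on $k$ at the level of the double-coset space. Instead, Section~\ref{sec:gpcongs} develops a general theory of quotients of Lie-fibered nilspaces by \emph{closed fiber-transitive filtrations} (culminating in Theorem~\ref{thm:coset-quot-closed} and Corollary~\ref{cor:quo-cfr-nil}), and the proof of Theorem~\ref{thm:top-double-coset} then applies this machinery twice in succession: first to the left action of $K$ on the group nilspace $G$ (yielding the Lie-fibered coset nilspace $K\backslash G$), and then to the right action of $\Gamma$ on $K\backslash G$. The closed right-discrete hypothesis is converted (via Lemma~\ref{lem:closedconseq}) into closedness of the relevant filtration, and fiberwise cocompactness is fed into Corollary~\ref{cor:quo-cfr-nil}. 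Your route, by contrast, stays inside the double-coset formalism and inducts on $k$ by passing to $G/G_k$; this is more elementary in spirit and avoids building the general Section~\ref{sec:gpcongs} machinery, at the cost of being specific to double cosets.

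Your algebraic sketch and the fiber identification $G_k/(K_k\Gamma_k)$ are correct. The genuine gap is the topological verification at the top level. The plan ``take local sections of $G\to G/G_k$ and descend them'' does not work as stated: a section $s:U\to G$ over $U\subset G/G_k$ composes with $G\to\ns$ to give a map $U\to\ns$, but this does not factor through $\ns'$, since distinct points of $U$ in the same $\overline K\times\overline\Gamma$-orbit can land in different double cosets upstairs. What you actually need is a local section of $\ns\to\ns'$, and to obtain one (e.g.\ via Gleason's theorem) you must \emph{first} know that $\ns$ is Hausdorff with a continuous free action of $G_k/(K_k\Gamma_k)$. But Hausdorffness is precisely the delicate point: it requires showing that the $K\times\Gamma$-orbit relation on $G$ is closed, and your remark that ``closed right-discreteness guarantees nearby orbits remain separated'' hides a real argument (in the paper this is Lemma~\ref{lem:re-param-conv-seq} feeding into Proposition~\ref{prop:quot-lcfr-nil}). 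Closure of the cube sets and continuity of completion on $\ns$ are likewise unaddressed; the paper handles these via Propositions~\ref{prop:closure-cubes-coset} and~\ref{prop:cont-completion-coset}, both resting on the same re-parametrization lemma. Your inductive strategy can be made to work, but filling this in amounts to redoing the key technical lemma of Section~\ref{sec:gpcongs} in the double-coset setting.
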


\noindent We can now state the second main result of this paper. 
\begin{theorem}\label{thm:cfr=double-coset-intro}
Let $\ns$ be a $k$-step compact finite-rank nilspace. Then there exists a degree-$k$ filtered Lie group $(G,G_\bullet)$, and closed subgroups $K,\Gamma$ of $G$ forming a groupable nilpair $(K,\Gamma)$ in $(G,G_\bullet)$ that is closed right-discrete and cocompact, such that $\ns\cong K\backslash G/\Gamma$.
\end{theorem}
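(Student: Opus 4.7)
The plan is to deduce Theorem \ref{thm:cfr=double-coset-intro} from Theorem \ref{thm:gpcongrep-intro} and Theorem \ref{thm:suffDC}, following the outline sketched just before the statement of Theorem \ref{thm:cfr=double-coset-intro}. First I would apply Theorem \ref{thm:gpcongrep-intro} to obtain a $k$-step free nilspace $F=\prod_{i=1}^k \mc{D}_i(\mb{Z}^{a_i}\times\mb{R}^{b_i})$ together with a cocompact $k$-th order lattice action by some $\Gamma\subset\tran(F)$ such that $\ns\cong F/\Gamma$. I then set $G:=\tran(F)$ equipped with its natural filtration $G_\bullet=(\tran_i(F))_{i\ge 0}$, which by Theorem \ref{thm:decrip-trans-group} makes $(G,G_\bullet)$ a degree-$k$ filtered nilpotent Lie group with every $G_i$ closed in $G$.

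Next, I would fix any base point $x_0\in F$ and set $K:=\stab(x_0)=\{\alpha\in G:\alpha(x_0)=x_0\}$; closedness of $K$ is immediate from continuity of the evaluation map $\mr{ev}_{x_0}:G\to F$, $\alpha\mapsto \alpha(x_0)$. The central identification step is to show that $\mr{ev}_{x_0}$ is surjective and descends to a nilspace isomorphism $K\backslash G\to F$. Surjectivity follows from the explicit description of $\tran(F)$ since, in particular, $\tran(F)$ contains all pure coordinate translations of each component $\mc{D}_i(\mb{Z}^{a_i}\times\mb{R}^{b_i})$; that $\mr{ev}_{x_0}$ is a morphism of cubic structures is built into the definition of nilspace translations and the coset-nilspace structure on $K\backslash G$ associated with $(G,G_\bullet)$. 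Under this isomorphism, the left action of $\Gamma$ on $F$ by translations corresponds to right multiplication by $\Gamma$ on $K\backslash G$, yielding $\ns\cong F/\Gamma \cong K\backslash G/\Gamma$.

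To invoke Theorem \ref{thm:suffDC}, it then remains to verify that $(K,\Gamma)$ is a groupable nilpair that is closed right-discrete and fiberwise cocompact. The groupable identity $(Kx\Gamma)\cap(G_ix\Gamma)=(K\cap G_i)x\Gamma$ would be proved by transporting both sides through $\mr{ev}_{x_0}$ (after translating the base point by $x$): the left-hand side corresponds to elements in the $\Gamma$-orbit of $\alpha(x_0)$ that agree with $\alpha(x_0)$ on the first $i$ characteristic factors, and fiber-transitivity of $\Gamma$ then provides the required element of $\Gamma\cap\tran_{i+1}(F)$ after correcting by some element of $K\cap G_i$. Closedness of $(KG_{i+1})/G_{i+1}$ in $G/G_{i+1}$ would follow by identifying this subgroup, via the homomorphism $\eta_i$, as the stabilizer in $\tran(F_{i-1})$ of the projection $\pi_{i-1}(x_0)\in F_{i-1}$, which is closed. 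The discreteness of $K_i\Gamma_iG_{i+1}/(K_iG_{i+1})$ in $G_i/(K_iG_{i+1})$ and the corresponding cocompactness translate, via the induced isomorphism $\tran_i(F)/(K_i\tran_{i+1}(F))\cong \tran_i(F_i)\cong \mb{Z}^{a_i}\times\mb{R}^{b_i}$, into exactly the fiber-discrete and fiber-cocompact conditions on $\Gamma$ provided by Definition \ref{def:FDCA}.

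The main obstacle I expect is the systematic verification of the groupable nilpair identity: the asymmetry between left quotienting by the stabilizer $K$ and right multiplication by the lattice $\Gamma$ forces one to argue for arbitrary $x\in G$, not just $x=e$, so one must carefully translate the base point along cosets and use that fiber-transitivity is preserved under the action of $G$ on $F$. A secondary technical point is the bookkeeping required to match the filtration index conventions on both sides, in particular when translating the top-level identification $\tran_k(F)\cong \mb{Z}^{a_k}\times\mb{R}^{b_k}$ down through the characteristic factors to recover the fiberwise conditions at every level.
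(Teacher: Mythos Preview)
Your approach is essentially the paper's, with one small but genuine slip. The evaluation map $\mr{ev}_{x_0}:\alpha\mapsto\alpha(x_0)$ does \emph{not} descend to the right-coset space $K\backslash G$: if $Kg=Kg'$ then $g'=kg$ and $g'(x_0)=k(g(x_0))\neq g(x_0)$ in general. It descends to $G/K$, on which the natural $\Gamma$-action is by \emph{left} multiplication, not right. The paper handles this by using $\psi:K\backslash G\to F$, $Kg\mapsto g^{-1}(x_0)$ (Lemma \ref{lem_aux-for-dou-coset}); then $\psi(Kgh^{-1})=h(\psi(Kg))$, so right multiplication by $\Gamma$ on $K\backslash G$ corresponds to the translation action of $\Gamma$ on $F$. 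With this correction your argument goes through.

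On the obstacle you anticipate: the paper sidesteps the direct verification of the double-coset identity for arbitrary $x\in G$. Once the equivariant isomorphism $K\backslash G\cong F$ is established, fiber-transitivity of $\Gamma$ on $F$ transfers verbatim to fiber-transitivity of the right $\Gamma$-action on $K\backslash G$, and Lemma \ref{lem:nil-pair-good-fil} converts this directly into the groupable-nilpair identity; no base-point shuffling is needed. Likewise, the closed right-discrete and cocompact conditions are read off from the structure-group isomorphisms $G_i/(K_iG_{i+1})\cong \ab_i(F)=\mb{Z}^{a_i}\times\mb{R}^{b_i}$ induced by $\psi$, which matches $K_i\Gamma_iG_{i+1}/(K_iG_{i+1})$ with $\eta_i(\Gamma_i)$. (Minor index slip: $KG_{i+1}$ is the $\eta_i$-preimage of the stabilizer of $\pi_i(x_0)$ in $\tran(F_i)$, not of $\pi_{i-1}(x_0)$ in $\tran(F_{i-1})$.)
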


\subsection{Compact nilspaces as quotients of pro-free nilspaces and as double-coset spaces}\label{subsec:introCompExt}\hfill\\
Given Theorem \ref{thm:cfr=double-coset-intro}, it is  natural to wonder whether a similar result holds for more general compact nilspaces (not necessarily of finite rank). As mentioned earlier, this question is also highly relevant (via \cite[Theorem 5.11]{CScouplings}) to the study of Host--Kra factors of measure-preserving group actions (however, pursuing this  connection in detail is outside the scope of this paper).  

It is known that every $k$-step compact nilspace is an inverse limit of $k$-step \textsc{cfr} nilspaces \cite[Theorem 2.7.3]{Cand:Notes2}. This, together with Theorems \ref{thm:gpcongrep-intro} and \ref{thm:cfr=double-coset-intro}, implies that every compact $k$-step nilspace can be represented as an inverse limit of either free nilspaces modulo $k$-th order lattice actions, or double-coset nilspaces. Here we want to go further and obtain single objects out of these inverse systems; in particular, we want a single double-coset space representing the original compact nilspace. To do so, we shall use the following concept.
\begin{defn}[Graded pro-free nilspaces]\label{def:free-graded-nilspace} Let $d\in \mb{N}\cup\{\omega\}$.
For every $i< d$ let $F_i$ be a free nilspace. We call the product nilspace $F:=\prod_{i<d} F_i$ a \emph{pro-free} nilspace, and together with the sequence $(F_i)_{i< d}$ we call it a \emph{graded pro-free nilspace}. We say $F$ is \emph{$k$-step} if each  $F_i$ is $k$-step.
\end{defn}
\noindent One of the main points of adding such a grading is that it enables us to identify, within the potentially very large translation group of a pro-free nilspace, certain subgroups that we call \emph{graded} translation groups, which we can ensure to be \emph{pro-Lie} groups. We leave the technical definition of graded translation groups to Section \ref{sec:cpct-nil} (see Definition \ref{def:gr-tran}).

Using this, we shall prove the following result.
\begin{theorem}\label{thm:cpct-as-quo-of-free-inf}
Let $\ns$ be a $k$-step compact nilspace. There exists a $k$-step graded pro-free nilspace $F$, and a fiber-transitive subgroup $\Gamma$ of the corresponding graded translation group on $F$, such that $\ns$ is isomorphic as a compact nilspace to $F/\Gamma$ \textup{(}with the quotient topology\textup{)}.
\end{theorem}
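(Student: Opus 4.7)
The natural approach combines Theorem \ref{thm:gpcongrep-intro}, which represents each CFR nilspace as a free nilspace modulo a higher-order lattice action, with the fact that every $k$-step compact nilspace is an inverse limit of $k$-step CFR nilspaces. Concretely, I would first invoke \cite[Theorem 2.7.3]{Cand:Notes2} to write $\ns = \varprojlim_{n \in \mb{N}} \ns_n$, where each $\ns_n$ is a $k$-step CFR nilspace and $\psi_{n,m} \colon \ns_m \to \ns_n$, for $n \le m$, are continuous surjective bonding morphisms. Applying Theorem \ref{thm:gpcongrep-intro} to each $\ns_n$ yields a $k$-step free nilspace $F_n$ and a $k$-th order lattice action $\Gamma_n \le \tran(F_n)$ with $\ns_n \cong F_n/\Gamma_n$.

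To assemble these data into a single graded pro-free nilspace, I would inductively lift each bonding map to a nilspace morphism $\phi_{n,n+1} \colon F_{n+1} \to F_n$ covering $\psi_{n,n+1}$, and arrange a splitting $F_{n+1} \cong F_n \times F_{n+1}^{\circ}$ under which $\phi_{n,n+1}$ becomes projection onto the first factor, for some auxiliary free nilspace $F_{n+1}^{\circ}$. Such splittings can be built using the explicit description of free nilspaces and their translation groups provided by Theorem \ref{thm:decrip-trans-group}, possibly after enlarging $F_{n+1}$ with additional free direct factors and correspondingly adjusting $\Gamma_{n+1}$ so that $\ns_{n+1} \cong F_{n+1}/\Gamma_{n+1}$ is preserved. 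Setting $F_1^{\circ} := F_1$, the tower $(F_n, \phi_{n,n+1})$ then has inverse limit canonically identified with the product $F := \prod_{n \in \mb{N}} F_n^{\circ}$, which is a $k$-step graded pro-free nilspace in the sense of Definition \ref{def:free-graded-nilspace}.

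By construction there is a continuous surjective nilspace morphism $\rho \colon F \to \ns$, namely the inverse limit of the quotient maps $F_n \to \ns_n$ along the compatible $\phi_{n,n+1}$. Define $\Gamma$ to be the subgroup of the graded translation group of $F$ consisting of those translations $\tau$ satisfying $\rho \circ \tau = \rho$. To verify that $\Gamma$ is fiber-transitive, one argues level by level: if $x, y \in F$ lie in the same $\rho$-fiber and agree modulo the $i$-th characteristic factor, then by applying the fiber-transitivity of each $\Gamma_n$ to the corresponding coordinates one obtains an element of $\Gamma \cap \tran_{i+1}(F)$ carrying $x$ to $y$. The identification $F/\Gamma \cong \ns$ as compact nilspaces then follows from the compactness of each $\ns_n$, the continuity and surjectivity of $\rho$, and the closed embedding $\ns \hookrightarrow \prod_n \ns_n$ induced by the inverse limit.

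The main obstacle I expect is the inductive construction of the splittings $F_{n+1} \cong F_n \times F_{n+1}^{\circ}$ together with the compatible lifts $\phi_{n,n+1}$: this is essentially a projectivity-type property of free nilspaces over their CFR quotients, which must be coordinated so that the relation $\ns_n \cong F_n/\Gamma_n$ and the grading-by-step structure are preserved throughout. In addition, one must ensure that the group $\Gamma$ as defined actually lies in the \emph{graded} translation group of $F$ rather than in the potentially much larger full translation group of the product; this will be automatic once the splittings are arranged to respect the grading at each level, which is precisely what the inductive construction is designed to achieve.
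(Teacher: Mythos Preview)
Your overall strategy matches the paper's: write $\ns=\varprojlim\ns_n$ with each $\ns_n$ a $k$-step \textsc{cfr} nilspace, represent each $\ns_n\cong F_n/\Gamma_n$ via Theorem~\ref{thm:gpcongrep-intro}, and then organize the $F_n$ into a single graded pro-free nilspace whose inverse-limit quotient recovers $\ns$. The splitting statement you want (a fibration $\phi:F_2\to F_1$ between free nilspaces factors as a coordinate projection $F_1\times F_3\to F_1$) is exactly the paper's Lemma~\ref{lem:strong-splitting}, and your final identification $F/\Gamma\cong\ns$ follows the paper's Theorem~\ref{thm:cpct-as-quotient-of-infinite-free}.

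The gap is precisely the point you flag as ``the main obstacle'', and your proposed resolution does not quite work. You want to \emph{lift} the bonding fibration $\psi_{n,n+1}:\ns_{n+1}\to\ns_n$ to a morphism $\phi_{n,n+1}:F_{n+1}\to F_n$ and then split. Even granting that such a lift exists (a projectivity statement for free nilspaces), the lift is in general only a morphism, not a fibration, and Lemma~\ref{lem:strong-splitting} requires a fibration. Your suggestion of ``enlarging $F_{n+1}$ with additional free direct factors'' is the right instinct but is not a mechanism.

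The paper's key idea here is to use \emph{fiber products} rather than lifts: replace $F_{n+1}$ by $\tilde F_{n+1}:=\tilde F_n\times_{\ns_n}F_{n+1}$ (Proposition~\ref{prop:fiber-prod-of-free}). This simultaneously gives (i) a canonical first-coordinate projection $p_1:\tilde F_{n+1}\to\tilde F_n$ that is automatically a fibration, so Lemma~\ref{lem:strong-splitting} applies; (ii) the non-trivial fact that $\tilde F_{n+1}$ is again a \emph{free} nilspace (Lemma~\ref{lem:fiber-prod-of-free-is-free}, which relies on the split-extension Theorem~\ref{thm:splitext}); and (iii) a natural fiber-transitive, fiber-discrete, fiber-cocompact action of $G_n\times G_{n+1}'$ on $\tilde F_{n+1}$ with quotient $\ns_{n+1}$ (Lemma~\ref{lem:structure-of-fiber-prod-of-free}). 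This also makes the compatibility of the $\Gamma_n$ across levels automatic (Remark~\ref{rem:lifting-of-consistent-trans}), so that $\Gamma:=\varprojlim G_n$ is fiber-transitive on $F$ and lies in the graded translation group (Corollary~\ref{cor:trans-in-graded-trans-group}); your alternative definition of $\Gamma$ as $\{\tau:\rho\circ\tau=\rho\}$ would need this same level-by-level compatibility to verify fiber-transitivity, so it does not bypass the issue.
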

\noindent With this in hand, an argument similar to the proof of Theorem \ref{thm:cfr=double-coset-intro} will enable us to turn Theorem \ref{thm:cpct-as-quo-of-free-inf} into the following general double-coset representation.
\begin{theorem}\label{thm:cpct-nil-as-dou-coset}
Let $\ns$ be a $k$-step compact nilspace. There exists a $k$-step pro-free nilspace $F$, a pro-Lie subgroup $G$ of $\tran(F)$, and a fiber-transitive group $\Gamma\subset G$ such that, letting $K:=\stab_G(x_0)=\{\alpha\in G:\alpha(x_0)=x_0\}$ for some $x_0\in \ns$, we have that $\ns$ is isomorphic as a compact nilspace to $K\backslash G/\Gamma$.
\end{theorem}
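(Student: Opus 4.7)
The plan is to lift the double-coset construction underlying Theorem \ref{thm:cfr=double-coset-intro} from the finite-rank setting to arbitrary compact nilspaces, using Theorem \ref{thm:cpct-as-quo-of-free-inf} as the starting point. The overall scheme mirrors the finite-rank case: first obtain a quotient representation $\ns\cong F/\Gamma$, then realise the free side $F$ itself as a coset nilspace $K\backslash G$ via an orbit map, and finally combine the two quotients.

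\textbf{Step 1 (Reduction via Theorem \ref{thm:cpct-as-quo-of-free-inf}).} Apply Theorem \ref{thm:cpct-as-quo-of-free-inf} to obtain a $k$-step graded pro-free nilspace $F=\prod_{i<\omega}F_i$ together with a fiber-transitive subgroup $\Gamma$ of the graded translation group on $F$ such that $\ns\cong F/\Gamma$ as compact nilspaces. This immediately gives a candidate for $\Gamma$ and records the only property we need from it: fiber-transitivity.

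\textbf{Step 2 (Choice of $G$ and transitivity).} Take $G$ to be the graded translation group on $F$ (as defined in Section \ref{sec:cpct-nil}), which by construction is a pro-Lie group and contains $\Gamma$. The filtration $G_i:=G\cap\tran_i(F)$ makes $(G,G_\bullet)$ into a degree-$k$ filtered pro-Lie group. The crucial structural point is that $G$ acts transitively on $F$: each factor $F_i$ is a free nilspace, and the argument from the proof of Theorem \ref{thm:cfr=double-coset-intro} (together with Theorem \ref{thm:decrip-trans-group}) shows that $\tran(F_i)$ acts transitively on $F_i$; transitivity of $G$ on the product $F$ then follows factor by factor from the way the graded translation group is assembled from the factors.

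\textbf{Step 3 (Orbit-map identification $F\cong K\backslash G$).} Fix any basepoint $x_0\in F$ (for instance the identity of the underlying group structure on $F$) and set $K:=\stab_G(x_0)$. By transitivity, the orbit map $g\mapsto g\cdot x_0$ descends to a bijection $K\backslash G\to F$; by general principles for coset nilspaces, together with the fact that the cube structure on $F$ is precisely the image under the orbit map of the Host--Kra cube structure on $(G,G_\bullet)$, this bijection is a nilspace isomorphism. Moreover the inclusion $\Gamma\hookrightarrow G$ intertwines the two $\Gamma$-actions, so quotienting both sides by $\Gamma$ on the right gives
\[
\ns\;\cong\;F/\Gamma\;\cong\;(K\backslash G)/\Gamma\;=\;K\backslash G/\Gamma,
\]
which is the desired representation.

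\textbf{Main obstacles.} The principal difficulty lies in the topology: one must check that $G$, built as a graded translation group of a pro-free nilspace, is genuinely pro-Lie, that it acts transitively as a topological group (so that the orbit map $G\to F$ is a continuous open surjection), and that the quotient topology on $K\backslash G/\Gamma$ agrees with the compact nilspace topology on $\ns$. The pro-Lie property should be read off from Definition \ref{def:gr-tran} and the fact that each finite partial product $\prod_{i\le n}F_i$ has a Lie translation group (the finite-rank case underlying Theorem \ref{thm:cfr=double-coset-intro}), so $G$ arises as an inverse limit of Lie groups in a compatible way. The identification of the two topologies on the double-coset space then follows because, after passing to each finite stage of the inverse system, the corresponding orbit map is the continuous open surjection already handled in the proof of Theorem \ref{thm:cfr=double-coset-intro}; taking inverse limits, and using fiber-transitivity of $\Gamma$ to ensure compatibility of the quotients at each stage, yields the homeomorphism $K\backslash G/\Gamma\cong\ns$.
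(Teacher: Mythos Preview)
Your proposal is correct and follows essentially the same route as the paper: obtain $\ns\cong F/\Gamma$ via Theorem~\ref{thm:cpct-as-quo-of-free-inf}, take $G$ to be the graded translation group (shown to be pro-Lie in Lemma~\ref{lem:top-graded-trans-gr}), and identify $F\cong K\backslash G$ via the orbit map, with the topological compatibility handled exactly as you suggest by passing through the finite stages of the inverse system (this is Lemma~\ref{lem:inf-free-as-quo-of-trans-by-k}). One minor slip: for \emph{right} cosets $K\backslash G$ the well-defined map is $Kg\mapsto g^{-1}(x_0)$ rather than $g\mapsto g\cdot x_0$, with the corresponding equivariance reading $(Kg,h)\mapsto Kgh^{-1}$ on the coset side versus the translation action of $h$ on $F$.
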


\subsection{Main applications}\label{subsec:intro2step}\hfill\\
The main applications in this paper are refinements of the principal results in higher-order Fourier analysis, namely the inverse theorem and regularity lemma for the Gowers norms. The refinements also provide additional perspectives on these results. These applications follow from combining the above-mentioned main results, namely Theorem \ref{thm:gpcongrep-intro} and Theorem  \ref{thm:cfr=double-coset-intro}, with the inverse theorem and regularity lemma from \cite{CSinverse}. Although these results in \cite{CSinverse} apply to more general compact abelian groups, for the applications here we focus on finite abelian groups, especially since this is the setting of the Jamneshan--Tao conjecture.

Let $\ab$ be a finite abelian group. Then there is a natural surjective homomorphism $\varphi:\mb{Z}^n\to \ab$; namely, letting $\ab=\prod_{j=1}^n\mb{Z}/k_j\mb{Z}$ be the invariant factor decomposition (where $k_j$ divides $k_{j+1}$ for all $j<n$), we let $\varphi:\mb{Z}^n\to \ab$ be the homomorphism that takes the quotient by $k_j\mb{Z}$ in the $j$-th component of $\mb{Z}^n$. Suppose now that $\ns,\nss$ are compact nilspaces and that we have a morphism $m:\ab\to \ns$ and a fibration $\psi:\nss \to \ns$. Then the map $g:=m\co\varphi:\mb{Z}^n\to \ns$ is a morphism in $\hom(\mc{D}_1(\mb{Z}^n),\ns)$, and it can then be lifted to $\nss$, in the sense that there exists $g'\in \hom(\mc{D}_1(\mb{Z}^n),\nss)$ such that $\psi\co g' = g$ (see \cite[Corollary A.7]{CGSS-p-hom}). If $\nss$ happens to have a simpler structure than $\ns$, then the morphism $g'$ can be described in a  more useful way than $g$. Examples of such descriptions include the Taylor expansion of a polynomial map $g'\in\hom(\mc{D}_1(\mb{Z}^n),\nss)$ when $\nss$ is a \emph{group nilspace} corresponding to a filtered group $(G,G_\bullet)$ (see for instance \cite[Lemma B.9]{GTZ}). In the special case of this where $\nss$ is a \emph{free} nilspace (and therefore $G$ is an \emph{abelian} group of the form $\mb{Z}^m\times \mb{R}^n$ and $G_\bullet$ is a specific type of filtration of finite degree), note that this Taylor expansion (e.g.\ from \cite[Lemma B.9]{GTZ}) reduces to an expression of $g'$ as a polynomial in a classical sense (a polynomial in $n$ integer variables and of degree at most $k$).
The statements of the inverse theorems will also use the following terms.
\begin{defn}
A \emph{family of double-coset nilspaces} is an array $\mc{N}=((G,K,\Gamma)_{k,i})_{k,i\in\mb{N}}$ where for each $k,i$ the triple $ (G,K,\Gamma)_{k,i}$ consists of a degree-$k$ filtered Lie group $(G,G_\bullet)$ and subgroups $K,\Gamma$ of $G$ forming a closed right-discrete and cocompact groupable nilpair.  Similarly, a \emph{family of higher-order lattice actions} is an array $\mc{M}=((F,\Gamma)_{k,i})_{k,i\in\mb{N}}$ where for each $k,i$ the pair $ (F,\Gamma)_{k,i}$ consists of a free $k$-step nilspace $F$ and a subgroup $\Gamma$ of $\tran(F)$ with a $k$-th order lattice action on $F$. A \emph{metrization} of $\mc{N}$ (resp.\ $\mc{M}$) is a sequence $D=(d_{k,i})_{k,i\in\mb{N}}$ where for each $k,i$ we have that $d_{k,i}$ is a compatible metric on the double-coset space $K\backslash G /\Gamma$ corresponding to the triple $(G,K,\Gamma)_{k,i}$ (resp.\ on the quotient space $F/\Gamma$ corresponding to the pair $(F,\Gamma)_{k,i})$.
\end{defn}

We now state the refinements of the inverse theorem from \cite{CSinverse} that we obtain for finite abelian groups (for the regularity lemma, similar refinements are obtained).
\begin{theorem}[Inverse theorem with double-coset nilspaces]\label{thm:inv-double-coset-intro} There exists a family of double-coset nilspaces $\mc{N}$, such that for any metrization $D$ of $\mc{N}$ the following holds. For any  $\epsilon>0$ and $k\in\mb{N}$,  there exists $M_{k,\epsilon}>0$ such that for any finite abelian group $\ab$ and any 1-bounded  function $f:\ab\to \mb{C}$  with $\|f\|_{U^{k+1}(\ab)}\ge \epsilon$, for some $i\leq M_{k,\epsilon}$ the triple $(G,K,\Gamma)_{k,i}$ in $\mc{N}$  satisfies the following:
\setlength{\leftmargini}{0.7cm}
\begin{enumerate}
    \item there is a polynomial map $g\in \hom(\mc{D}_1(\mb{Z}^n),G_\bullet)$ such that, letting $\pi_{K,\Gamma}:G\to K\backslash G/\Gamma$ be the quotient map and $\varphi:\mb{Z}^n\to\ab$ be the natural surjective homomorphism, there exists a morphism $m:\mc{D}_1(\ab)\to K\backslash G/\Gamma$ with $\pi_{K,\Gamma}\co g = m\co \varphi$,
    \item there is a continuous function $F:K\backslash G/\Gamma\to \mb{C}$ with Lipschitz norm $O_{k,\epsilon}(1)$,
\end{enumerate}
and letting $D\subset \mb{Z}^n$ be any fundamental domain of $\mb{Z}^n/\ker(\varphi)$, we have
\[
\left|\mb{E}_{x\in D} f(x) \overline{F\left(K g(x) \Gamma\right)}\right|\gg_{k,\epsilon} 1.
\]
\end{theorem}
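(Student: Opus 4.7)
The plan is to combine the general inverse theorem from \cite{CSinverse} with the double-coset representation Theorem \ref{thm:cfr=double-coset-intro} and the standard lifting property for morphisms out of $\mc{D}_1(\mb{Z}^n)$. First, I would apply the inverse theorem from \cite{CSinverse} to the given function $f:\ab\to\mb{C}$. For each $k\in\mb{N}$ and $\epsilon>0$, that result produces a finite list of $k$-step \textsc{cfr} nilspaces whose cardinality and bounded ``complexity'' depend only on $k$ and $\epsilon$, and guarantees that some member $\ns_{k,i}$ of this list admits a morphism $m:\mc{D}_1(\ab)\to\ns_{k,i}$ together with a Lipschitz function $F:\ns_{k,i}\to\mb{C}$ of Lipschitz norm $O_{k,\epsilon}(1)$ such that $|\mb{E}_{y\in\ab}\,f(y)\,\overline{F(m(y))}|\gg_{k,\epsilon}1$. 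Crucially, this list is constructed once and for all, independently of $\ab$ and of the particular $f$.

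Second, I would apply Theorem \ref{thm:cfr=double-coset-intro} to every \textsc{cfr} nilspace in this list, obtaining for each pair of indices $(k,i)$ a triple $(G,K,\Gamma)_{k,i}$, consisting of a degree-$k$ filtered Lie group $(G,G_\bullet)$ and a closed right-discrete, fiberwise cocompact groupable nilpair $(K,\Gamma)$, with $\ns_{k,i}\cong K\backslash G/\Gamma$ as compact nilspaces. Collecting these triples produces the family $\mc{N}$. Once any metrization $D$ of $\mc{N}$ is fixed, the nilspace isomorphism transports $F$ to a Lipschitz function on $K\backslash G/\Gamma$ whose Lipschitz norm is still $O_{k,\epsilon}(1)$ (up to absorbing the bi-Lipschitz constant of the isomorphism into the implicit constant, which is permissible because $\mc{N}$ and $D$ are fixed in advance). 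Set $M_{k,\epsilon}$ to be the number of triples in $\mc{N}$ used at level $k,\epsilon$.

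Third, I would produce the polynomial lift $g$. The natural double-quotient map $\pi_{K,\Gamma}:G\to K\backslash G/\Gamma$ is a fibration of nilspaces, because the cube structure on $K\backslash G/\Gamma$ is precisely the image of the Host--Kra cubes on $(G,G_\bullet)$ under this quotient; this is built into the proof of Theorem \ref{thm:suffDC} on which Theorem \ref{thm:cfr=double-coset-intro} rests. The composition $m\co\varphi$ is a morphism in $\hom(\mc{D}_1(\mb{Z}^n),K\backslash G/\Gamma)$, so by the lifting property for morphisms with domain $\mc{D}_1(\mb{Z}^n)$ against fibrations (\cite[Corollary A.7]{CGSS-p-hom}, used exactly as indicated in the discussion preceding the theorem) there exists $g\in\hom(\mc{D}_1(\mb{Z}^n),G_\bullet)$ with $\pi_{K,\Gamma}\co g=m\co\varphi$. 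This is the required polynomial map: membership in $\hom(\mc{D}_1(\mb{Z}^n),G_\bullet)$ encodes precisely that $g$ is polynomial relative to the filtration $G_\bullet$, in the classical Taylor-expansion sense recalled in the text.

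Finally, the correlation estimate is immediate: since $D$ is a fundamental domain of $\mb{Z}^n/\ker(\varphi)$, the restriction $\varphi|_D$ is a bijection to $\ab$, and by construction $F(Kg(x)\Gamma)=F(m(\varphi(x)))$ for every $x\in D$, so averaging reduces to the correlation over $\ab$ delivered by \cite{CSinverse}. The step I expect to be most delicate is the uniformity bookkeeping in the second paragraph: ensuring that the Lipschitz bound on $F$ after transport to $K\backslash G/\Gamma$ (measured in the pre-chosen metric $d_{k,i}$ from the metrization $D$) depends only on $k$ and $\epsilon$, rather than on $\ab$, and likewise that the lifted $g$ lands in $G_\bullet$ with its filtration intact. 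The lifting itself is formally a black-box application of \cite[Corollary A.7]{CGSS-p-hom} once one has verified that $\pi_{K,\Gamma}$ is a fibration, which is the one genuinely structural point but follows from the way the cube structure on the double-coset space is defined.
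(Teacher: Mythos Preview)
Your proposal is correct and follows essentially the same approach as the paper's own proof, which is in fact even more terse: it simply cites the combination of \cite[Theorem 1.6]{CSinverse} with Theorem \ref{thm:cfr=double-coset}, and notes that the proof of Theorem \ref{thm:groupequivrep} yields a deterministic way of assigning a double-coset representation to each \textsc{cfr} nilspace, so that the countable family $\mc{N}$ can be fixed once and for all. The details you spell out (the lifting via \cite[Corollary A.7]{CGSS-p-hom}, the fact that $\pi_{K,\Gamma}$ is a fibration, and the reduction of the average over $D$ to the average over $\ab$) are precisely the implicit steps the paper leaves to the reader.
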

\begin{remark}
Our proof gives additional information on the Lie group $G$ and subgroups $K,\Gamma$ in the above result. Indeed we obtain that $G$ can be taken to be the translation group of a free $k$-step nilspace $F$ (which is  proved to be a Lie group), that $K$ is the stabilizer in $G$ of a point in $F$, and that $\Gamma$ is a discrete subgroup of $G$ such that the pair $(K,\Gamma)$ satisfies Definition \ref{def:CRDCnilpair}. The polynomial map $g$ can also be described in more detail via Taylor expansion \cite[Lemma B.9]{GTZ}. 
\end{remark}

\begin{theorem}[Inverse theorem with higher-order lattice actions]\label{thm:inv-high-ord-latti-intro}
There exists a family of higher-order lattice actions $\mc{M}$, such that for any metrization $D$ of $\mc{M}$ the following holds.
For any $\epsilon>0$ and $k\in\mb{N}$, there exists $N_{k,\epsilon}>0$ such that for any finite abelian group $\ab$  and any 1-bounded function $f:\ab\to \mb{C}$ with $\|f\|_{U^{k+1}(\ab)}\ge \epsilon$, for some $i\leq N_{k,\epsilon}$ the pair $(F,\Gamma)_{i,k}$ in $\mc{M}$ satisfies the following:
\setlength{\leftmargini}{0.7cm}
\begin{enumerate}
    \item there is a polynomial map $g:\mb{Z}^n\to F$ such that, letting $\pi_{\Gamma}:F\to F/\Gamma$ be the quotient fibration and $\varphi:\mb{Z}^n\to\ab$ be the natural surjective homomorphism, there exists a morphism $m:\mc{D}_1(\ab)\to F/\Gamma$ with $\pi_{\Gamma}\co g = m\co \varphi$,
    \item there is a continuous function $\phi:F/\Gamma\to \mb{C}$ with Lipschitz norm $O_{k,\epsilon}(1)$,
\end{enumerate}
and letting $D\subset \mb{Z}^n$ be any fundamental domain of $\mb{Z}^n/\ker(\varphi)$, we have
\[
\left|\mb{E}_{x\in D} f(x) \overline{\phi(\pi_\Gamma(g(x)))}\right|\gg_{k,\epsilon} 1.
\]
\end{theorem}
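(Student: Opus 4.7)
The plan is to derive Theorem \ref{thm:inv-high-ord-latti-intro} by combining the general nilspace-based inverse theorem of \cite{CSinverse} with the structural result Theorem \ref{thm:gpcongrep-intro}. The inverse theorem in \cite{CSinverse} already produces, for any $1$-bounded $f:\ab\to \mb{C}$ with $\|f\|_{U^{k+1}(\ab)}\ge \epsilon$, a $k$-step \textsc{cfr} nilspace $\nss$ from a countable family depending only on $(k,\epsilon)$, a morphism $m_0\in\hom(\mc{D}_1(\ab),\nss)$, and a Lipschitz function $\tilde{\phi}:\nss\to \mb{C}$ with $O_{k,\epsilon}(1)$-Lipschitz norm such that $|\mb{E}_{x\in\ab}f(x)\overline{\tilde{\phi}(m_0(x))}|\gg_{k,\epsilon}1$. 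The task reduces to transferring this output into the ``free nilspace modulo higher-order lattice action'' framework.

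First, for each \textsc{cfr} nilspace $\nss$ appearing in the family from \cite{CSinverse}, I apply Theorem \ref{thm:gpcongrep-intro} to obtain an isomorphism $\iota:F/\Gamma \to \nss$ where $F$ is a $k$-step free nilspace and $\Gamma$ acts as a $k$-th order lattice action on $F$. The resulting pairs assemble into the family $\mc{M}=((F,\Gamma)_{k,i})_{k,i\in\mb{N}}$. Fixing any metrization $D=(d_{k,i})$ of $\mc{M}$, the Lipschitz function $\phi:=\tilde\phi\co\iota:F/\Gamma\to\mb{C}$ has Lipschitz norm $O_{k,\epsilon}(1)$, after absorbing the uniform continuity constant of $\iota$ (which depends only on $(k,i)$ and the choice of metrization) into the $O_{k,\epsilon}(1)$ bound; this gives item (ii).

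Second, I lift the morphism $m:=\iota^{-1}\co m_0\in \hom(\mc{D}_1(\ab),F/\Gamma)$ to $F$ after precomposing with the natural surjection $\varphi:\mb{Z}^n\to\ab$, obtaining $m\co\varphi\in\hom(\mc{D}_1(\mb{Z}^n),F/\Gamma)$. By the projectivity of $\mc{D}_1(\mb{Z}^n)$ in the nilspace category (\cite[Corollary A.7]{CGSS-p-hom}), there exists $g\in\hom(\mc{D}_1(\mb{Z}^n),F)$ with $\pi_\Gamma\co g=m\co\varphi$, establishing item (i). Since $F$ is a free nilspace, $g$ is a classical polynomial map in $n$ integer variables valued in $\mb{Z}^r\times\mb{R}^s$, of degree at most $k$, by standard Taylor expansion for polynomial maps into filtered abelian groups (e.g.\ \cite[Lemma B.9]{GTZ}). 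The correlation inequality then follows by a change of variable: for any fundamental domain $D\subset \mb{Z}^n$ of $\ker(\varphi)$,
\[
\left|\mb{E}_{x\in D} f(\varphi(x))\overline{\phi(\pi_\Gamma(g(x)))}\right|=\left|\mb{E}_{y\in\ab}f(y)\overline{\tilde\phi(m_0(y))}\right|\gg_{k,\epsilon}1.
\]

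The principal obstacle I expect is organizational rather than conceptual: the family $\mc{M}$ and any admissible metrization $D$ must be specified independently of $f$, $\ab$, and $n$, yet both the pair $(F,\Gamma)_{k,i}$ and the Lipschitz constant of $\phi$ emerge from the particular \textsc{cfr} nilspace returned by \cite{CSinverse}. Handling this requires indexing the (countable, $(k,\epsilon)$-controlled) family of output nilspaces by a single integer $i$ with bound $N_{k,\epsilon}$, and then checking that the transfer $\nss\mapsto F/\Gamma$ via Theorem \ref{thm:gpcongrep-intro} can be made in a manner compatible with a fixed metrization, so that the Lipschitz constants remain uniformly $O_{k,\epsilon}(1)$. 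No further nilspace-theoretic input beyond Theorem \ref{thm:gpcongrep-intro} and the properties of fibrations $\pi_\Gamma$ established alongside it is needed.
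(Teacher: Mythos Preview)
Your proposal is correct and follows essentially the same approach as the paper: combine the \textsc{cfr}-nilspace inverse theorem \cite[Theorem 1.6]{CSinverse} with Theorem \ref{thm:gpcongrep-intro} (equivalently Theorem \ref{thm:groupequivrep}) to replace each \textsc{cfr} nilspace by a quotient $F/\Gamma$, then lift the morphism through $\pi_\Gamma$ using the projectivity of $\mc{D}_1(\mb{Z}^n)$. The paper's proof is terser (two sentences), but your elaboration of the lifting step and the organizational point about assembling $\mc{M}$ deterministically matches exactly what the paper intends.
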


\noindent From these results we can also deduce the following theorem, giving a new confirmation (alternative to \cite[Theorem 1.10]{J&T}) of Conjecture 1.11 from \cite{J&T}, specifically for the $U^3$-norm.
\begin{theorem}[Inverse theorem for the $U^3$-norm on finite abelian groups] 
Let $\epsilon>0$, let $\ab$ be a finite abelian group, and let $f:\ab\to \mb{C}$ be a 1-bounded function with $\|f\|_{U^{3}(\ab)}\ge \epsilon$. Then there exists a 2-step connected filtered nilmanifold $G/\Gamma$ drawn from some finite collection $\mc{N}_\epsilon$ (where each such nilmanifold is endowed with an arbitrary compatible metric), a continuous function $F:G/\Gamma\to \mb{C}$ of Lipschitz norm $O_\epsilon(1)$, and a nilspace morphism $g:\mc{D}_1(\ab)\to G/\Gamma$, such that $|\mb{E}_{x\in \ab} f(x) \overline{F(g(x))}|\gg_{\epsilon} 1$.
\end{theorem}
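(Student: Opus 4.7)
I would deduce this theorem from Theorem \ref{thm:inv-double-coset-intro} specialized to $k=2$, by converting the resulting double-coset correlation into one with a connected 2-step filtered nilmanifold.

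First, apply Theorem \ref{thm:inv-double-coset-intro} with $k=2$: this yields some index $i\le M_{2,\epsilon}$, a triple $(G,K,\Gamma)_{2,i}$ from the family $\mc{N}$, a polynomial map $g$, an associated morphism $m:\mc{D}_1(\ab)\to K\backslash G/\Gamma$, and a Lipschitz function $F_0:K\backslash G/\Gamma\to\mb{C}$ of Lipschitz norm $O_\epsilon(1)$, with the required correlation. By the remark after Theorem \ref{thm:inv-double-coset-intro}, we may assume $G=\tran(F)$ for some 2-step free nilspace $F=\mc{D}_1(A)\times\mc{D}_2(B)$ (with $A,B$ of the form $\mb{R}^*\times\mb{Z}^*$), $K=\stab(x_0)$ for some $x_0\in F$, and $\Gamma$ a discrete subgroup for which $(K,\Gamma)$ is closed right-discrete and cocompact in the sense of Definition \ref{def:CRDCnilpair}.

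Second, I would construct a nilspace embedding $\iota:\ns\hookrightarrow \widetilde{G}/\widetilde{\Gamma}$, where $\ns:=K\backslash G/\Gamma$ and $\widetilde{G}/\widetilde{\Gamma}$ is a connected 2-step filtered nilmanifold. By Theorem \ref{thm:suffDC}, the structure groups of $\ns$ are $A_i=G_i/(\Gamma_iK_iG_{i+1})$ for $i=1,2$, both finite-dimensional compact abelian Lie groups, and so each is a torus times a finite abelian group. Standard embeddings $\mb{Z}/n\mb{Z}\hookrightarrow \mb{T}$ via $n$-th roots of unity produce inclusions $A_i\hookrightarrow \widetilde{A}_i$ into connected tori. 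One then builds a connected 2-step nilpotent Lie group $\widetilde{G}$ whose step-1 structure group is $\widetilde{A}_1$ and whose center contains $\widetilde{A}_2$, arranged via a continuous extension of the classifying cocycle of $\ns$ so that $\ns$ embeds as a closed subnilspace of the nilmanifold $\widetilde{G}/\widetilde{\Gamma}$ for a suitable cocompact discrete $\widetilde{\Gamma}$. An equivalent route is to use Theorem \ref{thm:gpcongrep-intro} to present $\ns$ as $F/\Gamma'$ and then ``inflate'' the $\mb{Z}$-factors of $F$ into $\mb{R}$-factors while simultaneously enlarging $\Gamma'$ to absorb the added degrees of freedom.

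Third, transfer the correlation along $\iota$. By the McShane--Whitney extension theorem, $F_0$ extends to a Lipschitz function $F:\widetilde{G}/\widetilde{\Gamma}\to\mb{C}$ with the same Lipschitz constant, hence of norm $O_\epsilon(1)$. Setting $g':=\iota\co m:\mc{D}_1(\ab)\to \widetilde{G}/\widetilde{\Gamma}$, a composition of nilspace morphisms and so itself a nilspace morphism, the identity $F(g'(x))=F_0(m(x))$ yields $|\mb{E}_{x\in\ab}f(x)\overline{F(g'(x))}|\gg_\epsilon 1$. The finite collection $\mc{N}_\epsilon$ is then obtained by taking the finitely many nilmanifolds $\widetilde{G}/\widetilde{\Gamma}$ constructed this way for $i\le M_{2,\epsilon}$, each endowed with a chosen compatible metric.

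The main obstacle is the second step: building the nilspace embedding into a connected 2-step filtered nilmanifold. The delicate point is that one must extend the classifying cocycle (equivalently, the action of the translation group of the underlying free nilspace) consistently across the enlargement of the compact abelian structure groups into tori, while keeping $\widetilde{G}$ connected and $\widetilde{\Gamma}$ discrete and cocompact. This is the essentially ``2-step'' input behind the result, paralleling in spirit the approach of Jamneshan and Tao in \cite{J&T}.
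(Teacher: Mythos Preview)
Your overall plan matches the paper's: obtain a correlation with a 2-step \textsc{cfr} nilspace, embed that nilspace into a connected 2-step nilmanifold, then extend the Lipschitz function. You also correctly isolate step 2 as the crux. But your proposal leaves that step unresolved, and the paper's resolution is precisely the content you are missing.

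The paper does not work with the double-coset form; it starts from Theorem~\ref{thm:inv-high-ord-latti-intro} (the higher-order lattice version), i.e.\ with the presentation $\ns\cong F'/H$ coming from Theorem~\ref{thm:groupequivrep}. The key 2-step input is Corollary~\ref{cor:k=2-str-good}: when $k=2$, the fiber-transitive filtration $H_\bullet$ produced by Theorem~\ref{thm:groupequivrep} is automatically \emph{free} in the sense of Definition~\ref{def:free-group-cong}. This freeness is exactly what makes your ``inflation'' route work: Proposition~\ref{prop:embeb-strong-good} takes the continuous closure $F^*$ of $F'$ (Definition~\ref{def:ctsclosure}, replacing each $\mb{Z}$-factor by $\mb{R}$), pushes $H$ forward to $\iota(H)\le\tran(F^*)$, and shows that $\pi_H(F')\hookrightarrow\pi_{\iota(H)}(F^*)$ is an injective nilspace morphism into a toral nilspace (hence a connected 2-step nilmanifold). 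Without freeness this step fails: $\iota(H_\bullet)$ need not be fiber-transitive on $F^*$, so the target need not be a nilspace and the map need not be injective. Your first route via ``extending the classifying cocycle'' from $A_i$ to tori $\widetilde{A}_i$ is not what the paper does and would need a separate argument; the paper sidesteps cocycle considerations entirely via the free-action criterion of Lemma~\ref{lem:equi-strong-good}.

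There is also a metric issue in your step 3. The Lipschitz bound from Theorem~\ref{thm:inv-double-coset-intro} (or \ref{thm:inv-high-ord-latti-intro}) is relative to the chosen metric on $\ns$, not to the restriction of an arbitrary metric on $\widetilde{G}/\widetilde{\Gamma}$, so McShane--Whitney does not directly give an $O_\epsilon(1)$ extension. The paper handles this by first fixing the metric on the nilmanifold, pulling it back via $\iota$ to make $\iota$ an isometry (Remark~\ref{rem:isometric-embedding}), and only then invoking the inverse theorem with that specific metrization of the family. The Kirszbraun-type extension (real and imaginary parts separately) then gives a $\sqrt{2}$-factor loss, which is harmless.
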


\subsection{Outline of the paper}\hfill\smallskip\\
Proving Theorem \ref{thm:gpcongrep-intro} occupies most of the efforts in this paper. The proof argues by induction on the step of the \textsc{cfr} nilspace $\ns$. The induction requires working with a fiber product of $\ns$ with a free $(k-1)$-step nilspace $F_{k-1}$. Since this free nilspace is non-compact, this requires us to handle non-compact topological nilspaces. We develop groundwork theory on these \emph{locally-compact Hausdorff} nilspaces (or \textsc{lch} nilspaces) in Section \ref{sec:lch}. In Section \ref{sec:freenilspaces} we focus on the main type of \textsc{lch} nilspaces that we shall need, namely \emph{free nilspaces}, and establish some of their main properties used in later sections. Section \ref{sec:split-ext} presents an important ingredient for our proof of  Theorem \ref{thm:gpcongrep-intro}. This ingredient, Theorem \ref{thm:splitext}, establishes that the fiber-product nilspace in the above-mentioned induction is a split nilspace extension. This result is of independent interest as a non-trivial generalization of classical results on homological algebra in the category of locally-compact abelian groups \cite{Mosk}. As a consequence, the fiber-product in question is shown to be nilspace-isomorphic to a product nilspace of the form $F_{k-1}\times\mc{D}_k(\ab_k)$, for some inductively-given $(k-1)$-step free nilspace $F_{k-1}$ and the (compact abelian Lie) structure group $\ab_k$ of $\ns$. This is useful because this product nilspace can then be shown to admit a fibration from a $k$-step free nilspace, namely $F:=F_{k-1}\times\mc{D}_k(\ab_k')$ where $\ab_k'$ is an abelian Lie group having a continuous surjective homomorphism onto $\ab_k$. Obtaining this fibration from this free nilspace is a major step towards Theorem \ref{thm:gpcongrep-intro}, and is recorded (in greater generality than  for \textsc{cfr} nilspaces) in Theorem \ref{thm:lcfr-factor-of-free}. Section \ref{sec:gpcongs} introduces the notion of groupable congruence and its main properties, in preparation for Section \ref{sec:groupequivrep}, where the first main result Theorem \ref{thm:gpcongrep-intro}  is proved (see Theorem \ref{thm:gpcongrep}). In Section \ref{sec:DC}, we first gather the necessary background on double-coset nilspaces (both algebraically and topologically).  We then prove our second main result, Theorem \ref{thm:cfr=double-coset-intro}, and prove the above-mentioned applications. Finally, in Section \ref{sec:cpct-nil} we extend our results to compact (not necessarily finite-rank) nilspaces.

\section{Locally-compact Hausdorff nilspaces}\label{sec:lch}

In this section we consider nilspaces equipped with a  locally compact, Hausdorff and second-countable topology (we shall abbreviate this from now on by referring to these topologies as ``\textsc{lch} topologies"). In particular all such spaces are metrizable. This will extend the topological theory of nilspaces from the case of  \emph{compact} nilspaces (studied in \cite{CamSzeg,Cand:Notes2}), to the present not-necessarily compact setting. Note that second-countability and the Hausdorff property will be assumed throughout this paper (just as they were in \cite{CamSzeg,Cand:Notes2}).

Let us start by recalling that a \emph{compact nilspace} is a nilspace $\ns$ equipped with a compact topology such that for every $n\ge 0$ the cube set $\cu^n(\ns)$ is a closed subset of the product (compact) space $\ns^{\db{n}}$. The non-compact generalization considered here is the following.
\begin{defn}[\textsc{lch} nilspace]\label{def:top-nil-open-maps-version}
We say that a nilspace $\ns$ is an \textsc{lch}\emph{ nilspace} if $\ns$ is a locally-compact Haursdorff second-countable topological space such that the following property holds for every integer $n\geq 0$:
\begin{enumerate}
    \item The cube set $\cu^n(\ns)$ is closed in the product topology on $\ns^{\db{n}}$.
    \item The coordinate projection $p^{\db{n}}:\cu^n(\ns)\to \cor^n(\ns)$ is an open map.
\end{enumerate}
\end{defn}

\noindent Similar notions have been studied before in the context of topological cube structures. For instance, in \cite[Ch. 7 \S 2.2, Proposition 7 (iii)]{HKbook} such objects are described and their main properties include continuity of a completion function for cubes. In the present nilspace-theoretic context, this can be phrased as follows. For a $k$-step nilspace $\ns$, since completion of $k+1$ corners is unique, there is a well-defined function $\mc{K}:\cor^{k+1}(\ns)\to \ns$ that maps any given corner to the point in $\ns$ that completes this corner\footnote{We will denote this completion map by $\mc{K}_{k+1}$ when there are many completion functions for different dimensions in the same argument.}. The assumption here analogous to that of \cite{HKbook} is that if we endow $\ns$ with some topology, then the map $\mc{K}$ should be continuous. Let us note that this continuity is an immediate consequence of Definition \ref{def:top-nil-open-maps-version} when $\ns$ is a $k$-step nilspace.
\begin{lemma}\label{lem:cont-if-open}
Let $\ns$ be a $k$-step \textsc{lch} nilspace. Then the completion function $\mc{K}:\cor^{k+1}(\ns)\to\ns$ is continuous.
\end{lemma}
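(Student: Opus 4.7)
The plan is to read $\mc{K}$ as a composition of two continuous maps obtained from the hypothesis. The key observation is that because $\ns$ is $k$-step, every $(k+1)$-corner has a \emph{unique} completion to a $(k+1)$-cube, so the coordinate projection
\[
p^{\db{k+1}}:\cu^{k+1}(\ns)\to\cor^{k+1}(\ns)
\]
is not merely surjective but bijective. Since $\cu^{k+1}(\ns)$ is closed in the product topology on $\ns^{\db{k+1}}$ and $p^{\db{k+1}}$ is the restriction of a product projection, it is continuous. By condition (ii) of Definition \ref{def:top-nil-open-maps-version}, it is also open. A continuous open bijection is a homeomorphism, so $(p^{\db{k+1}})^{-1}:\cor^{k+1}(\ns)\to\cu^{k+1}(\ns)$ is continuous.

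Next, I would identify $\mc{K}$ with the composition of this inverse with evaluation at the unique coordinate $v^*\in\db{k+1}$ that is not already part of the corner (the ``top'' vertex, say $v^*=(1,1,\ldots,1)$). Concretely, given $c\in\cor^{k+1}(\ns)$, let $q_c=(p^{\db{k+1}})^{-1}(c)$ be the unique cube extending $c$; then by definition $\mc{K}(c)=q_c(v^*)$. Since evaluation of a cube at a fixed coordinate is continuous (it is just a coordinate projection $\ns^{\db{k+1}}\to\ns$ restricted to $\cu^{k+1}(\ns)$), $\mc{K}$ is the composition
\[
\cor^{k+1}(\ns)\xrightarrow{\;(p^{\db{k+1}})^{-1}\;}\cu^{k+1}(\ns)\xrightarrow{\;\mathrm{ev}_{v^*}\;}\ns
\]
of two continuous maps, hence continuous.

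There is no real obstacle here; the point is simply to extract from the openness assumption (ii) that $p^{\db{k+1}}$ is a homeomorphism in the $(k+1)$-dimensional case where uniqueness of completion holds, and then to realize the completion function as an evaluation of the inverse. I would not expect any subtlety from local compactness or second countability at this stage; those hypotheses are only needed here insofar as they are part of the running \textsc{lch} framework. The only caveat worth flagging in the write-up is the justification that $p^{\db{k+1}}$ is bijective, which relies on the defining property of $k$-step nilspaces that $(k+1)$-corners have unique completions; this is what distinguishes dimension $k+1$ from lower dimensions, where $p^{\db{n}}$ need only be open and surjective.
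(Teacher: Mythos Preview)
Your proof is correct and essentially identical to the paper's: both observe that $p^{\db{k+1}}$ is a bijection (by unique completion in a $k$-step nilspace) which is open by hypothesis, deduce that its inverse is continuous, and then compose with the coordinate projection at $1^{k+1}$ to obtain $\mc{K}$. The only difference is that you make the continuity of $p^{\db{k+1}}$ explicit and phrase the conclusion as ``homeomorphism'', whereas the paper just uses that an open bijection has continuous inverse; this is cosmetic.
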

\begin{proof}
The map $p^{\db{k+1}}:\cu^{k+1}(\ns)\to \cor^{k+1}(\ns)$ is open and bijective by assumption. Hence its inverse $(p^{\db{k+1}})^{-1}$ is continuous. Composing this inverse with the projection to the coordinate indexed by $1^{k+1}$, we obtain precisely the map $\mc{K}$, which is therefore continuous.
\end{proof}
It is natural to wonder whether condition $(ii)$ in Definition \ref{def:top-nil-open-maps-version} is necessary in this non-compact setting. Indeed, in \cite{CamSzeg} or \cite[Definition 1.0.2]{Cand:Notes2}, the definition of \emph{compact} nilspaces only requires the cube set $\cu^n(\ns)$ to be a closed subset of $\ns^{\db{n}}$ for each $n\geq 0$ (in addition to the requirement that the topology of $\ns$ be \textsc{lch} and compact). In the present more general (non-compact) setting, it turns out that closure of the cube sets is not enough to obtain a satisfactory theory. Indeed, such a theory should yield in particular that every 1-step LCH nilspace is a topological abelian group for any addition operation $+$ compatible with the group structure (``compatible" meaning that the cubes on this nilspace are precisely the standard abelian cubes relative to $+$). But there are examples where this fails if we only assume closure of the cubes. Let us detail this with the following construction inspired by an example of Eric Wofsey \cite{WofStackEx}.
\begin{example}
On the abelian group $(\mb{Z},+)$ let us define the topology $\tau$ generated by all the sets $\{n\}$ for $n\in \mb{Z}\setminus\{0\}$ and the sets $\{0,3^m,3^{m+1},\ldots\}$ for $m\in \mb{N}$. It can be checked that this topology is \textsc{lch}. Hence, in particular this is a metric space. We claim that $(\mb{Z},\tau)$ satisfies that for all $n\ge 0$, the set $\cu^n(\mc{D}_1(\mb{Z}))$ is closed in $\mb{Z}^{\db{n}}$, but $(\mb{Z},\tau,+)$ is not a topological group. 

In order to prove that the cube sets are closed, note that it suffices to prove that $\cu^2(\mc{D}_1(\mb{Z}))$ is closed (since every higher-dimensional cube set is an intersection of preimages of 2-dimensional cube sets by continuous coordinate-projection maps). Let $((x_n,y_n,z_n,t_n))_n$ be a sequence in $\cu^2(\mc{D}_1(\mb{Z}))$ (that is, satisfying $x_n+y_n=z_n+t_n$ for each $n$) and converging to some limit $(x,y,z,t)$. We want to prove that $x+y=z+t$. Note that in this topology there are two types of convergent sequences. If $b\in \mb{Z}\setminus\{0\}$, then $b_n\to  b$ means that $b_n=b$ for $n$ sufficiently large. If $b=0$, then $b_n\to b$ means that either $b_n=0$ for $n$ large enough, or there exists a subsequence $(b_{n_m})_m$ such that $b_{n_m}=3^{\iota(m)}$ for some strictly increasing function $\iota:\mb{N}\to \mb{N}$. Thus, if $(b_n)$ converges then either $b_n$ is eventually constant or we can restrict to a subsequence such that $b_{n_m}=3^{\iota(m)}$ for an increasing $\iota$. Coming back to the convergent sequence $(x_n,y_n,z_n,t_n)$, we have the following exhaustive cases. In the first case,  all four component sequences are eventually constant, and we then have the desired closure. In a second case, only one of the sequences has a strictly increasing subsequence, and then (relabeling the subsequence), we have $x_n=3^{\iota(n)}$, and $y_n,z_n,t_n$ are eventually constant. We would then have $3^{\iota(n)}$ equal to the constant $z+t-y$, which is impossible for $n$ large enough. In the third case, two of the component sequences in the tuple $(x_n,y_n,z_n,t_n)$ have a strictly increasing subsequence. Then note that the only subcase here that is not obviously impossible is when these two components are on different sides of the equation $x_n+y_n = z_n+t_n$. Suppose that these components are $x_n$ and $z_n$. Passing to a subsequence if necessary, we may assume that $x_n=3^{\iota_1(n)}$. If, on this subsequence, the component $z_n$ becomes eventually constant (zero), then we are back into the previous case, yielding a contradiction. Otherwise we can pass to a further subsequence in which $x_n=3^{\iota_1(n)}$ and $z_n=3^{\iota_2(n)}$. Then, as $y_n$ and $t_n$ are eventually constant, for $n$ large enough they are equal to $y$ and $t$ respectively. But now, if we write the equation $3^{\iota_1(n)}+y=3^{\iota_2(n)}+t$ in base 3, it follows that as $\iota_1$ and $\iota_2$ are strictly increasing, the equation can hold for all such large enough $n$ only if $y=t$ and $\iota_1(n)=\iota_2(n)$. Thus, the limit of $(x_n,y_n,z_n,t_n)$ is $(0,y,0,y)$,  which is in $\cu^2(\mc{D}_1(\mb{Z}))$. The case when three of the component sequences have infinite increasing subsequences (and the fourth one is eventually constant) is treated similarly, considering subcases, in each of which either one obtains a contradiction or a conclusion confirming the desired closure. The case when all four sequences have a strictly increasing subsequence leads to the limit $(0,0,0,0)$, which is in $\cu^2(\mc{D}_1(\mb{Z}))$ as required. Finally, note that $(\mb{Z},\tau,+)$ is not a topological group, because the sequence $(3^n,3^n)$ converges to $(0,0)$ but $3^n+3^n=2\cdot3^n$ diverges as $n\to\infty$.
\end{example}
\noindent Our next aim is to prove that the basic constructions in nilspace theory also preserve the properties of \textsc{lch} nilspaces. We start by proving that the structure groups of an \textsc{lch} nilspace $\ns$ are \textsc{lch} topological groups when equipped with the restriction of the topology on $\ns$. This involves the following lemma, showing that the fibers of the factor projections $\pi_i:\ns\to\ns_i$ are closed.
\begin{lemma}\label{lem:closure}
Let $\ns$ be a $k$-step nilspace endowed with an \textsc{lch} topology such that for every $n\ge 0$ the cube set $\cu^n(\ns)$ is a closed subset of $\ns^{\db{n}}$. Then for each $i\in [k]$ the equivalence classes of $\sim_i$ are closed subsets of $\ns$. Moreover, the graph of $\sim_i$ is a closed subset of $\ns\times \ns$.
\end{lemma}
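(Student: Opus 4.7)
The plan is to reduce both assertions to establishing that the graph of $\sim_i$ is closed in $\ns\times\ns$, from which the closure of each individual equivalence class follows automatically by taking slices. The key ingredient is a standard cube-theoretic characterization of $\sim_i$, valid in any $k$-step nilspace for $i\in[k]$: namely, $x\sim_i y$ if and only if the configuration $\mathbf{q}_{x,y}:\db{i+1}\to \ns$ defined by $\mathbf{q}_{x,y}(v)=x$ for all $v\neq 1^{i+1}$ and $\mathbf{q}_{x,y}(1^{i+1})=y$ lies in $\cu^{i+1}(\ns)$. This characterization is purely algebraic: in a filtered group nilspace $(G,G_\bullet)$ it reduces via the Host--Kra parametrization to the condition $y-x\in G_{i+1}$ (one solves for the coefficients $g_S\in G_{|S|}$ of the Host--Kra expansion and finds $g_S=e$ for $|S|\leq i$ and $g_{[i+1]}=y-x$), and the general case follows from the construction of the characteristic factors. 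This appears as a standard fact in the nilspace literature (see, e.g., \cite{Cand:Notes1}).

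Granted this characterization, consider the map $\Phi:\ns\times\ns\to \ns^{\db{i+1}}$ defined by $\Phi(x,y)=\mathbf{q}_{x,y}$. Each of the $2^{i+1}$ coordinate components of $\Phi$ is either the first or the second canonical projection $\ns\times\ns\to\ns$, so $\Phi$ is continuous. The graph of $\sim_i$ equals exactly $\Phi^{-1}(\cu^{i+1}(\ns))$, which is closed in $\ns\times\ns$ since by hypothesis $\cu^{i+1}(\ns)$ is closed in $\ns^{\db{i+1}}$. This proves the second assertion. For the first assertion, fix $x_0\in \ns$ and note that the inclusion $\iota_{x_0}:\ns\to \ns\times \ns$, $y\mapsto (x_0,y)$, is continuous; the $\sim_i$-equivalence class of $x_0$ is then $\iota_{x_0}^{-1}$ of the graph of $\sim_i$, and is therefore closed.

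The main obstacle is simply identifying and verifying the cube characterization of $\sim_i$: one needs a canonical configuration $\mathbf{q}_{x,y}$, depending continuously on $(x,y)$, whose membership in a single closed cube set exactly captures the equivalence $x\sim_i y$. Once this is in hand, the topological content of the lemma is automatic and relies only on the closure hypothesis of the cube sets, and in particular does not require the openness condition (ii) of Definition~\ref{def:top-nil-open-maps-version}; the lemma therefore holds for the broader class of nilspaces with \textsc{lch} topology and closed cube sets.
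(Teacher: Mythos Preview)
Your proof is correct and follows essentially the same approach as the paper: both use the defining characterization of $\sim_i$ via the $(i+1)$-cube $\mathbf q_{x,y}$ and the closure of $\cu^{i+1}(\ns)$. The only differences are cosmetic: the paper argues via convergent sequences (using metrizability) and proves the two assertions separately, whereas you phrase the graph closure as the preimage $\Phi^{-1}(\cu^{i+1}(\ns))$ under a continuous map and then deduce closure of each class as a slice; note also that the cube characterization of $\sim_i$ you invoke is in fact the \emph{definition} of $\sim_i$ (see \cite[Definition 3.2.3]{Cand:Notes1}), so no separate algebraic verification is needed.
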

\begin{proof}
Fix any $x\in \ns$ and let $(y_n)$ be any sequence of points $y_n\sim_i x$ converging to some $y\in \ns$. We just need to show that $y\sim_i x$. This follows by definition of $\sim_i$ and the cube-set closure assumptions; indeed there is an $(i+1)$-cube $\q_n$ equal to $x$ everywhere except at one vertex where it equals $y_n$, and the convergence of $y_n$ to $y$ implies that $\q_n$ converges to a map $g$ that equals $x$ everywhere except at one vertex where it equals $y$. By the closure of $\cu^{i+1}(\ns)$ we deduce that $g$ is a cube, which implies that $y\sim_i x$ as required.

To see that the graph of $\sim_i$ is a closed subset of $\ns\times \ns$, the argument is similar: suppose that $(x_n,y_n)$ is a convergent sequence in this graph, that is we have $x_n\sim_i y_n$ for all $n$ and there is $(x,y)\in \ns\times\ns$ such that $(x_n,y_n)\to (x,y)$ in $\ns\times \ns$. Now $x_n\sim_i y_n$ means there is an $(i+1)$-cube $\q_n$ equal to $x_n$ at all vertices except at $1^{i+1}$ where it equals $y_n$. The convergence $(x_n,y_n)\to (x,y)$ implies that $(\q_n)$ converges in $\ns^{\db{i+1}}$ to a map $g$ equal to $x$ at every vertex except at $1^{i+1}$ where it equals $y$. By closure of $\cu^{i+1}(\ns)$ we have that $g\in \cu^{i+1}(\ns)$, so $(x,y)$ is in the graph of $\sim_i$ as required.
\end{proof}
\begin{proposition}\label{prop:k-th-group-action}
Let $\ns$ be a $k$-step nilspace endowed with an \textsc{lch} topology such that for every $n\ge 0$ the cube set $\cu^n(\ns)$ is closed in $\ns^{\db{n}}$ and the completion function on $\cor^{k+1}(\ns)$ is continuous. Then the structure group $\ab_k$ is an \textsc{lch} abelian group acting continuously on $\ns$.
\end{proposition}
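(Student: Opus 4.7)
The plan is to first equip $\ab_k$ with an LCH topology by identifying it set-theoretically with a closed fiber of the factor projection $\pi_{k-1}$, and then to express the group operations on $\ab_k$ and the action $\ab_k\times\ns\to\ns$ as continuous compositions with the completion function $\mc{K}:\cor^{k+1}(\ns)\to\ns$, whose continuity is the key topological hypothesis.

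\textbf{Topology on $\ab_k$.} Fix a basepoint $x_0\in\ns$ and let $F:=[x_0]_{\sim_{k-1}}=\pi_{k-1}^{-1}(\pi_{k-1}(x_0))\subset\ns$. By Lemma \ref{lem:closure}, $F$ is closed in $\ns$, hence $F$ is LCH with the subspace topology. Since $\ab_k$ acts freely and transitively on each $\sim_{k-1}$-class (a standard consequence of the nilspace axioms), the map $\Phi_{x_0}:\ab_k\to F$, $z\mapsto z\cdot x_0$, is a bijection, and I would transport the LCH topology of $F$ to $\ab_k$ via $\Phi_{x_0}$. Independence of this topology from the choice of $x_0$ can be verified a posteriori once the action is shown to be continuous, since translation by any element of $\ab_k$ then yields homeomorphisms between distinct fibers.

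\textbf{Continuity via $(k+1)$-corner completions.} For each of the maps to be shown continuous --- addition on $\ab_k$, inversion on $\ab_k$, and the action on $\ns$ --- I would exhibit a continuous ``corner-building'' map $\Psi$ from the domain of the operation into $\cor^{k+1}(\ns)$ (viewed as a subspace of $\ns^{\db{k+1}\setminus\{1^{k+1}\}}$ with its product topology), such that the composition $\mc{K}\co\Psi$ equals the operation in question under the identification $\ab_k\cong F$. Concretely, for the action of $z\in\ab_k$ on $x\in\ns$, the corner $\Psi(z,x)$ is built from the data $x_0$, $y_z:=z\cdot x_0$, and $x$, and its unique completion at $1^{k+1}$ is $z\cdot x$; analogous corner constructions express addition (using $x_0,y_{z_1},y_{z_2}$ with completion $y_{z_1+z_2}$) and inversion. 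The components of each such $\Psi$ are either constants or coordinate projections onto the input variables, so $\Psi$ is continuous into the product topology, and since $\mc{K}$ is continuous by hypothesis, so is $\mc{K}\co\Psi$. This yields continuity of every operation involved, making $\ab_k$ an LCH topological abelian group and its action on $\ns$ continuous.

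\textbf{Main obstacle.} The key technical step is producing, for each operation, a specific $(k+1)$-dimensional configuration whose face data satisfies the cube axioms (so that it lies in $\cor^{k+1}(\ns)$) and whose unique completion at $1^{k+1}$ matches the intended algebraic output. These verifications are purely nilspace-algebraic: they follow from the cube axioms together with the characterization of $\ab_k$ via unique extensions of $k$-corners and the fact that $\ab_k$ acts by preserving $\sim_{k-1}$-fibers. The algebraic constructions here mirror those used in the compact case in \cite{CamSzeg} and \cite{Cand:Notes2}; the novelty is only that, in the present LCH setting, one cannot appeal to compactness to obtain continuity for free, and must instead extract it from continuity of $\mc{K}$, which the hypotheses now provide. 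Once the algebraic identities are set up, the topological content is immediate.
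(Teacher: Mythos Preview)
Your proposal is correct and follows essentially the same approach as the paper: identify $\ab_k$ with a closed fiber of $\pi_{k-1}$ (hence \textsc{lch} by Lemma \ref{lem:closure}), then express the group operations and the global action as compositions of an explicit continuous corner-building map into $\cor^{k+1}(\ns)$ with the completion function $\mc{K}$. One small slip worth flagging: translation by elements of $\ab_k$ preserves each $\pi_{k-1}$-fiber rather than mapping between distinct fibers, so your a posteriori argument for basepoint-independence does not work as stated; the paper handles this by constructing an explicit homeomorphism $\theta:F\to F'$ between any two fibers, again via a corner-completion formula, which fits seamlessly into your scheme.
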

\begin{proof}
By Lemma \ref{lem:closure}, each fiber $F$ of $\pi_{k-1}$ is a closed subset of $\ns$. It is straightforwardly seen that then the subspace topology on $F$ is also locally-compact second-countable Hausdorff. Fix any such fiber $F$ on $\ns$, i.e. an equivalence class of $\sim_{k-1}$. Recall from \cite[Corollary 3.2.16]{Cand:Notes1} that we can regard $F$ as an abelian group by fixing an arbitrary $e\in F$. The group operation can be defined as follows: for any $x,y\in F$, let $\q'=\q'_{x,y}\in\cor^{k+1}(\ns)$ be the corner such that $\q'(1,0,1^{k-1})=x$, $\q'(0,1^k)=y$ and $\q'(v)=e$ otherwise. Letting $\mc{K}:\cor^{k+1}(\ns)\to \ns$ denote the (continuous) completion function, we define $x+y:=\mc{K}(\q_{x,y})$. For $k=2$ we can represent this for instance as follows:
$(x,y)\mapsto
\smallcorner{e}{e}{e}{e}{e}{x}{y}\mapsto \mc{K}(\q_{x,y})$. 

Letting $\iota:F\times F\to \cor^{k+1}(\ns)$,  $(x,y)\mapsto \q_{x,y}$, we have that $x+y:=\mc{K}\co\iota (x,y)$. By \cite[Lemma 3.2.7]{Cand:Notes1} we have $x+y\in F$. Since both $\iota$ and $\mc{K}$ are continuous,  the addition operation on $F$ is also continuous. A similar argument shows that inversion is also a continuous operation (see \cite[Proposition 2.4.1]{Cand:Notes1}).

We have thus shown that on any particular fiber $F$ of $\pi_{k-1}$, the structure and topology of $\ns$ induces a structure of \textsc{lch} abelian group on $F$. Next, we prove that every two such fibers are homeomorphic. To see this, let  $F,F'$ be any two fibers of $\pi_{k-1}$ and fix any points $e\in F,e'\in F'$, which we shall use as identity elements in the respective groups. For any $x\in F$, define $\q_x\in\cor^{k+1}(\ns)$ by $\q_x(1^k,0)=x$, $\q_x(v,0)=e$ for $v\in \db{k}$ and $\q_x(v,1)=e'$ for $v\in \db{k}\setminus\{1^k\}$. We then define the map $\theta:F\to F'$, $\theta(x):=\mc{K}(\q_x)$. For $k=2$, again we can visualize this as follows:
$x\mapsto \q_x:=\smallcorner{e}{e}{e}{x}{e'}{e'}{e'}\mapsto \mc{K}(\q_x)$.

By the continuity assumptions for the corner-completion function, this map $\theta$ is continuous. As its inverse is defined exactly the same way, we have that $\theta$ is an isomorphism of topological groups between $F$ and $F'$. Let $\ab_k$ denote the resulting topological group. Since $\ab_k$ is homeomorphic to each fiber $F$, its topology is \textsc{lch}.

The action of $\ab_k$ on each fiber is continuous. However, we need to establish continuity of the action of $\ab_k$ globally on $\ns$, that is, we need to show that the map $\ns\times \ab_k\to \ns$, $(x,z)\mapsto x+z$ is continuous. This follows since this map is a composition of the following form (illustrated for $k=2$; for $k>2$ it is similar): $(x,z)\mapsto \q_{x,z }:=\smallcorner{e}{e}{e}{e+z}{x}{x}{x}\mapsto \mc{K}(\q_{x,z})$.
Again, continuity of completion and the addition function on $F$ implies that $\mc{K}(\q_{x,z})$ is continuous as well.
\end{proof}
We obtain the following consequence concerning the quotient topology on $\ns_{k-1}$.
\begin{corollary}\label{cor:quotient-top-k-1-factor}
Let $\ns$ be a $k$-step nilspace endowed with an \textsc{lch} topology such that for every $n\ge 0$ the cube set $\cu^n(\ns)\subset \ns^{\db{n}}$ is closed and the completion of $(k+1)$-corners is continuous. Then the canonical quotient map $\pi_{k-1}:\ns\to\ns_{k-1}$ is an open map and $\ns_{k-1}$ equipped with the quotient topology is an \textsc{lch} space.\footnote{Note this claim is just about the quotient topology on $\ns_{k-1}$, we are not claiming yet that $\ns_{k-1}$ is an \textsc{lch} nilspace.}
\end{corollary}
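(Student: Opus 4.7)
The plan is to exploit Proposition \ref{prop:k-th-group-action}: the top structure group $\ab_k$ is an \textsc{lch} abelian group acting continuously on $\ns$, and the equivalence classes of $\sim_{k-1}$ are precisely the $\ab_k$-orbits (the structure group acts simply transitively on each fiber of $\pi_{k-1}$, cf.\ \cite[Corollary 3.2.16]{Cand:Notes1}, and this transitivity is built into the very definition of the addition operation in the proof of Proposition \ref{prop:k-th-group-action}). Thus $\pi_{k-1}$ is the orbit map of a continuous group action, which puts us in a classical situation.

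First, I would establish openness of $\pi_{k-1}$. Given an open set $U\subset\ns$, the saturation $\pi_{k-1}^{-1}(\pi_{k-1}(U))$ coincides with $\bigcup_{z\in\ab_k}(U+z)$. Continuity of the action implies that each translation $x\mapsto x+z$ is a homeomorphism of $\ns$, so every $U+z$ is open, and therefore so is their union. By the definition of the quotient topology, this means $\pi_{k-1}(U)$ is open, i.e.\ $\pi_{k-1}$ is an open map.

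Next, I would deduce the three topological properties of $\ns_{k-1}$ one by one from this openness. Second countability is immediate: the images under $\pi_{k-1}$ of a countable basis of $\ns$ form a countable basis for the quotient. Local compactness follows by the standard argument: given $y\in\ns_{k-1}$, pick $x$ with $\pi_{k-1}(x)=y$ and choose an open neighborhood $V$ of $x$ contained in some compact $K\subset\ns$; then $\pi_{k-1}(V)$ is open (by openness of $\pi_{k-1}$) and contained in the compact set $\pi_{k-1}(K)$, giving a compact neighborhood of $y$. For the Hausdorff property, I would invoke Lemma \ref{lem:closure}, which gives that the graph of $\sim_{k-1}$ is closed in $\ns\times\ns$: for distinct $y_1,y_2\in\ns_{k-1}$, lift to points $x_1,x_2$ with $(x_1,x_2)$ outside this closed graph, pick disjoint open neighborhoods $U_1\ni x_1$, $U_2\ni x_2$ with $(U_1\times U_2)\cap\mathrm{graph}(\sim_{k-1})=\emptyset$; then $\pi_{k-1}(U_1)$ and $\pi_{k-1}(U_2)$ are open (by openness of $\pi_{k-1}$) and disjoint (by the choice of $U_1,U_2$), separating $y_1$ from $y_2$.

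I do not expect a real obstacle: the openness of $\pi_{k-1}$ comes for free from the continuous transitive action of $\ab_k$ on fibers, and then the three \textsc{lch} properties reduce to standard quotient arguments. The only point deserving care is to observe that $\sim_{k-1}$ genuinely coincides with the orbit equivalence of $\ab_k$ on $\ns$, which is ensured by Proposition \ref{prop:k-th-group-action} together with Lemma \ref{lem:closure}.
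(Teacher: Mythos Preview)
Your proof is correct and follows essentially the same approach as the paper: openness of $\pi_{k-1}$ via the continuous action of $\ab_k$ (from Proposition \ref{prop:k-th-group-action}), Hausdorffness from the closed graph of $\sim_{k-1}$ (Lemma \ref{lem:closure}), and then local compactness and second countability from standard quotient arguments. The only difference is cosmetic: the paper defers the Hausdorff and local compactness steps to citations from Bourbaki, whereas you spell out the elementary arguments directly.
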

\begin{proof}
By Proposition \ref{prop:k-th-group-action} the saturation of any open set in $\ns$ by the action of $\ab_k$ is also an open set, and this implies that $\pi_{k-1}$ is an open map (in other words, the relation  $\sim_{k-1}$ is an open relation, in the sense of \cite[p.\ 52, Definition 2]{BourbakiGT1}). By Lemma \ref{lem:closure} the graph of $\sim_{k-1}$ is closed in $\ns\times\ns$. Hence $\pi_{k-1}(\ns)=\ns_{k-1}$ is Hausdorff, by \cite[p.\ 79, Proposition 8]{BourbakiGT1}. By \cite[p.\ 107, Proposition 10]{BourbakiGT1}, it follows that $\ns_{k-1}$ is locally compact.
Finally, since the continuous surjective open image of a second-countable space is second-countable, so is $\ns_{k-1}$.
\end{proof}
\begin{corollary}\label{cor:cpct-sets-are-im-of-cpct-sets}
Let $\ns$ be a $k$-step nilspace satisfying the hypothesis of Corollary \ref{cor:quotient-top-k-1-factor}. Then for every compact set $K'\subset \ns_{k-1}$ there exists a compact $K\subset \ns$ such that $\pi_{k-1}(K)=K'$.
\end{corollary}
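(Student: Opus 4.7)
The plan is to exploit the three features already secured in Corollary \ref{cor:quotient-top-k-1-factor}: the map $\pi_{k-1}$ is continuous, surjective and \emph{open}, the space $\ns$ is locally compact, and $\ns_{k-1}$ is Hausdorff (so that the compact set $K'$ is in particular closed). Given these, the result will follow by the standard argument that an open continuous surjection from a locally compact Hausdorff space onto a Hausdorff space lifts compact sets to compact sets.

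First, for each $y\in K'$ pick any point $x_y\in \pi_{k-1}^{-1}(y)$, and use local compactness of $\ns$ to choose a compact neighborhood $V_y\subset\ns$ of $x_y$. Because $\pi_{k-1}$ is open, the set $U_y:=\pi_{k-1}(\mathrm{int}(V_y))$ is an open neighborhood of $y$ in $\ns_{k-1}$, and clearly $U_y\subset \pi_{k-1}(V_y)$. As $y$ ranges over $K'$ the sets $U_y$ form an open cover of the compact set $K'$, so there is a finite subfamily $U_{y_1},\dots,U_{y_n}$ covering $K'$.

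Set $C:=V_{y_1}\cup\cdots\cup V_{y_n}$, which is a compact subset of $\ns$ satisfying $K'\subset \pi_{k-1}(C)$. Since $K'$ is compact and $\ns_{k-1}$ is Hausdorff, $K'$ is closed, and the continuity of $\pi_{k-1}$ gives that $\pi_{k-1}^{-1}(K')$ is closed in $\ns$. Therefore
\[
K:=C\cap \pi_{k-1}^{-1}(K')
\]
is a closed subset of the compact set $C$, hence compact. By construction $\pi_{k-1}(K)\subset K'$, and conversely every $y\in K'$ has some preimage in $C$ (as $K'\subset \pi_{k-1}(C)$), which then lies automatically in $\pi_{k-1}^{-1}(K')$, hence in $K$. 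Thus $\pi_{k-1}(K)=K'$, as required.

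There is no substantive obstacle in this argument: all the real work was done in Corollary \ref{cor:quotient-top-k-1-factor}, where openness of $\pi_{k-1}$ was deduced from the continuous action of the structure group $\ab_k$. The only point to watch is that we genuinely need both openness (to produce the cover by $U_y$'s) and Hausdorffness of $\ns_{k-1}$ (to know that $K'$ is closed, so that $C\cap \pi_{k-1}^{-1}(K')$ is compact); both are available.
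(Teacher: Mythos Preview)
Your proof is correct and is essentially the same as the paper's approach: the paper simply cites \cite[p.\ 107, Proposition 10]{BourbakiGT1}, and what you have written is precisely the standard proof of that proposition (lifting compacta through a continuous open surjection from a locally compact space). So there is no real difference in method, only in the level of detail.
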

\begin{proof}
This follows from \cite[p.\ 107, Proposition 10]{BourbakiGT1}. 
\end{proof}
\begin{lemma}\label{lem:closure-cube-in-k-1-factor}
Let $\ns$ be a $k$-step nilspace endowed with an \textsc{lch} topology such that for every $n\ge 0$ the cube set $\cu^n(\ns)\subset \ns^{\db{n}}$ is closed and the completion function on $(k+1)$-corners is continuous. Then $\cu^n(\ns_{k-1})$ is a closed subset of $\ns_{k-1}^{\db{n}}$ in the product topology.
\end{lemma}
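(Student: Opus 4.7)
Since $\ns_{k-1}$ is LCH and second-countable by Corollary \ref{cor:quotient-top-k-1-factor}, the product space $\ns_{k-1}^{\db{n}}$ is metrizable, so closure can be tested sequentially. Consider a sequence $(\q_m)_{m\in\mb{N}}$ in $\cu^n(\ns_{k-1})$ converging to some $g\in\ns_{k-1}^{\db{n}}$; the goal is to show $g\in\cu^n(\ns_{k-1})$. Since $\pi_{k-1}\colon\ns\to\ns_{k-1}$ is a factor projection of nilspaces, hence a fibration, each $\q_m$ lifts to some cube $\tilde\q_m\in\cu^n(\ns)$ with $\pi_{k-1}^{\db{n}}(\tilde\q_m)=\q_m$. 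The plan is to modify each $\tilde\q_m$ by an element of $\cu^n(\mc{D}_k(\ab_k))$, acting pointwise via the continuous $\ab_k$-action from Proposition \ref{prop:k-th-group-action} (noting that any two cube lifts of the same $\q_m$ differ by such an element, so the modification preserves the cube-lift property), so that along a subsequence the modified lifts converge in $\ns^{\db{n}}$ to some $\tilde g$. Closure of $\cu^n(\ns)$ then gives $\tilde g\in\cu^n(\ns)$, and continuity of $\pi_{k-1}^{\db{n}}$ yields $g=\pi_{k-1}^{\db{n}}(\tilde g)\in\cu^n(\ns_{k-1})$.

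To produce the convergent subsequence, note that for each vertex $v\in\db{n}$, the set $\{g(v)\}\cup\{\q_m(v):m\in\mb{N}\}$ is compact in $\ns_{k-1}$, so Corollary \ref{cor:cpct-sets-are-im-of-cpct-sets} yields a compact $L_v\subset\ns$ surjecting onto it. The key structural input is that $\cu^n(\mc{D}_k(\ab_k))$ consists of polynomial maps $\db{n}\to\ab_k$ of degree at most $k$, and such polynomials are freely parametrised by their values on the low-weight skeleton $\{v\in\db{n}:|v|\le k\}$. Enumerating these low-weight vertices in order of nondecreasing weight as $v_0,v_1,\ldots,v_N$, I successively modify $\tilde\q_m$ by a polynomial vanishing at $v_0,\ldots,v_{j-1}$ and taking a prescribed value at $v_j$ (which one can freely arrange, by the free parametrisation just mentioned) so as to force $\tilde\q_m(v_j)\in L_{v_j}$; then passing to a subsequence makes $\tilde\q_m(v_j)$ converge. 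A diagonalisation through $j=0,\ldots,N$ yields a single subsequence along which $\tilde\q_m(v)$ converges for every vertex $v$ of weight at most $k$.

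The main obstacle I anticipate is controlling the values at vertices of weight strictly greater than $k$, where the polynomial modifications leave no additional freedom. I plan to resolve this by induction on $|v|$: for $v$ of weight $k+\ell$ with $\ell\ge 1$, there is a $(k+1)$-dimensional face of $\db{n}$ containing $v$ whose other $2^{k+1}-1$ vertices all have weight strictly less than $|v|$, and by the $k$-step uniqueness and continuity of $(k+1)$-corner completion (Lemma \ref{lem:cont-if-open}), the value $\tilde\q_m(v)$ is a continuous function of the values at those lower-weight vertices; by the inductive hypothesis these already converge, so $\tilde\q_m(v)$ converges as well. The resulting pointwise limit $\tilde g\in\ns^{\db{n}}$ lies in $\cu^n(\ns)$ by the closure hypothesis, and its image is $g$, concluding the proof.
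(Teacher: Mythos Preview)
Your argument is correct, but it is considerably more elaborate than the paper's proof. The paper splits into two ranges of $n$. For $n\le k$ it uses the single observation that, since $\cu^n(\mc{D}_k(\ab_k))=\ab_k^{\db{n}}$ in this range, \emph{every} lift of an element of $\cu^n(\ns_{k-1})$ is already in $\cu^n(\ns)$, i.e.\ $(\pi_{k-1}^{\db{n}})^{-1}(\cu^n(\ns_{k-1}))=\cu^n(\ns)$; closure of $\cu^n(\ns_{k-1})$ then follows immediately from closure of $\cu^n(\ns)$ together with the fact that the quotient topology on $\ns_{k-1}^{\db{n}}$ via the open map $\pi_{k-1}^{\db{n}}$ coincides with the product topology. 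For $n>k$ it reduces to the case $n=k$ by the standard argument that in a $(k-1)$-step nilspace, $\cu^n$ is an intersection of continuous face-restriction preimages of $\cu^k$. Your route instead handles all $n$ uniformly via a sequential lifting and compactness-plus-diagonalisation argument, exploiting the free parametrisation of $\cu^n(\mc{D}_k(\ab_k))$ on the weight-$\le k$ skeleton and the continuity of $(k+1)$-corner completion for the higher-weight vertices. This is a valid and self-contained alternative, but it does more work than necessary: the saturation of $\cu^n(\ns)$ for $n\le k$ short-circuits the entire lifting-and-correction machinery. One minor remark: you cite Lemma~\ref{lem:cont-if-open} for the continuity of completion, but that lemma assumes the full \textsc{lch}-nilspace axioms; here the continuity of the $(k+1)$-corner completion is part of the hypothesis, so you should invoke it directly.
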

\begin{proof}
Since $\ns_{k-1}$ is a $(k-1)$-step nilspace, we have that if $\cu^k(\ns_{k-1})$ is closed, then arguing as in the proof of \cite[Lemma 2.1.1]{Cand:Notes2} we deduce that so is $\cu^n(\ns_{k-1})$ for every $n>k$. Thus it suffices to show that $\cu^n(\ns_{k-1})$ is closed for every $n\leq k$. 
But for such $n$, any lift of any $n$-cube on $\ns_{k-1}$ under $\pi_{k-1}^{\db{n}}$ is an $n$-cube on $\ns$ (recall \cite[Remark 3.2.12]{Cand:Notes1}), so ${\pi_{k-1}^{\db{n}}}^{-1}(\cu^n(\ns_{k-1})) = \cu^n(\ns)$. 
Since $\cu^n(\ns_{k-1})$ is closed, we deduce that $\cu^n(\ns_{k-1})$ is closed in the quotient topology on $\ns_{k-1}^{\db{n}}$ induced by $\pi_{k-1}^{\db{n}}$. But this quotient topology is homeomorphic to the $\db{n}$-power of the quotient topology on $\ns_{k-1}$ induced by $\pi_{k-1}$ (by \cite[p.\ 55, Corollary]{BourbakiGT1}, so we conclude that $\cu^n(\ns_{k-1})$ is indeed closed in the product topology on $\ns_{k-1}^{\db{n}}$.\end{proof}
\begin{theorem}\label{thm:factor-of-lch-nil}
Let $\ns$ be an \textsc{lch} $k$-step nilspace. Then $\ns_{k-1}$, equipped with the cube sets $(\pi_{k-1}^{\db{n}}(\cu^n(\ns)))_{n\ge 0}$, is a $(k-1)$-step \textsc{lch} nilspace with respect to the quotient topology.
\end{theorem}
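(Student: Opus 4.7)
The theorem has four content components: the $(k-1)$-step nilspace structure on $\ns_{k-1}$, the \textsc{lch} property, and the two conditions in Definition \ref{def:top-nil-open-maps-version} (closure of cube sets and openness of corner projections). The $(k-1)$-step structure is standard from algebraic nilspace theory (the canonical factor of a $k$-step nilspace has step at most $k-1$); the \textsc{lch} property follows from Corollary \ref{cor:quotient-top-k-1-factor}, with second-countability transferring since $\pi_{k-1}$ is a continuous open surjection from a second-countable space; and the closure of cube sets is exactly Lemma \ref{lem:closure-cube-in-k-1-factor}. The substantive task is therefore to establish the openness of $p^{\db{n}} \colon \cu^n(\ns_{k-1}) \to \cor^n(\ns_{k-1})$ for every $n \ge 0$.

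My strategy is to exploit the commutative square
\[
\begin{array}{ccc}
\cu^n(\ns) & \xrightarrow{p^{\db{n}}_\ns} & \cor^n(\ns) \\
\big\downarrow\pi^{\db{n}}_{k-1} & & \big\downarrow\pi^{\db{n}\setminus\{1^n\}}_{k-1} \\
\cu^n(\ns_{k-1}) & \xrightarrow{p^{\db{n}}_{\ns_{k-1}}} & \cor^n(\ns_{k-1})
\end{array}
\]
in which the vertical arrows are restrictions of the product maps induced by $\pi_{k-1}$; these are open on the ambient powers (open because $\pi_{k-1}$ is open by Corollary \ref{cor:quotient-top-k-1-factor}) and surjective onto the cube and corner sets (because $\pi_{k-1}$ is a nilspace fibration). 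Given an open $U \subset \cu^n(\ns_{k-1})$, set $\tilde{U} = (\pi^{\db{n}}_{k-1})^{-1}(U) \cap \cu^n(\ns)$, which is open in $\cu^n(\ns)$; the hypothesis on $\ns$ yields that $p^{\db{n}}_\ns(\tilde{U})$ is open in $\cor^n(\ns)$, and commutativity together with surjectivity on cube sets gives $p^{\db{n}}_{\ns_{k-1}}(U) = \pi^{\db{n}\setminus\{1^n\}}_{k-1}(p^{\db{n}}_\ns(\tilde{U}))$. It remains to show that this image is open in $\cor^n(\ns_{k-1})$, i.e., that the right vertical map, restricted to $\cor^n(\ns) \to \cor^n(\ns_{k-1})$, is open.

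For $n \le k$, I would establish the corner-saturation identity $\cor^n(\ns) = (\pi^{\db{n}\setminus\{1^n\}}_{k-1})^{-1}(\cor^n(\ns_{k-1}))$: given any $d \colon \db{n} \setminus \{1^n\} \to \ns$ with $\pi_{k-1} \circ d \in \cor^n(\ns_{k-1})$, extend $\pi_{k-1} \circ d$ to a cube $\q'$ in $\ns_{k-1}$, lift $\q'$ to a cube $\tilde{\q}_0$ in $\ns$, and use that for $n \le k$ the fibers of $\pi^{\db{n}}_{k-1}|_{\cu^n(\ns)}$ over $\q'$ are torsors over the full $\ab_k^{\db{n}}$ (since every function $\db{n} \to \ab_k$ is a cube in $\mc{D}_k(\ab_k)$ when $n \le k$), so one can adjust $\tilde{\q}_0$ by an $\ab_k$-valued function on $\db{n}$ to obtain a cube whose restriction to $\db{n} \setminus \{1^n\}$ equals $d$. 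With this saturation in hand, the restricted right vertical map becomes the restriction of an open map to a saturated subset, hence is itself open.

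The main obstacle is the case $n > k$, where saturation fails. However, here both $p^{\db{n}}_\ns$ and $p^{\db{n}}_{\ns_{k-1}}$ are continuous bijections (corners of dimension $> k$ have unique completions in a $k$-step nilspace, and similarly $> k-1$ suffices in a $(k-1)$-step nilspace), so openness of $p^{\db{n}}_{\ns_{k-1}}$ reduces to continuity of the $\ns_{k-1}$-completion map $\cor^n(\ns_{k-1}) \to \ns_{k-1}$. I would obtain this continuity by lifting an arbitrary $n$-corner $c'$ in $\ns_{k-1}$ to an $n$-corner in $\ns$ (via the corner saturation proven above, applied to any $(k+1)$-subcorner and then propagated to the whole $n$-corner using face-restriction characterizations of corners), completing in $\ns$ via Lemma \ref{lem:cont-if-open} (iterated on $(k+1)$-subcorners for $n > k+1$), and then projecting through the continuous $\pi_{k-1}$. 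A net argument using the open quotient $\pi^{\db{n}\setminus\{1^n\}}_{k-1}$ together with the local compactness of $\ns$ then lets one carry out these lifts consistently along a convergent net of corners in $\ns_{k-1}$, giving the desired continuity and completing the proof.
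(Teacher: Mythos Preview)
Your treatment of the case $n\le k$ is correct and matches the paper's argument: both establish that $\cu^n(\ns)$ and $\cor^n(\ns)$ are saturated under the $\pi_{k-1}$-product maps (equivalently, that the vertical maps in your square are quotient maps), and then transfer openness of $p^{\db{n}}_\ns$ across the square.

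The gap is in your handling of $n>k$. You invoke ``the corner saturation proven above, applied to any $(k+1)$-subcorner'', but your saturation identity $\cor^n(\ns)=(\pi_{k-1}^{\db{n}\setminus\{1^n\}})^{-1}(\cor^n(\ns_{k-1}))$ was established only for $n\le k$; for $n=k+1$ it is false in general, since an arbitrary $\pi_{k-1}$-lift of a $(k+1)$-corner need not be a corner in $\ns$. More seriously, the ``net argument'' you gesture at is exactly the hard part: to lift a convergent sequence of $n$-corners in $\ns_{k-1}$ to a convergent sequence of $n$-corners in $\ns$ when $n>k$, you cannot adjust lifts freely by $\ab_k^{\db{n}}$ (that would leave the cube set), so you would need to lift on the simplicial set $\{v:|v|\le k\}$, adjust there, and then complete---essentially the content of the later Proposition \ref{prop:lift-of-simplicial}, which is not yet available and is nontrivial.

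The paper avoids this entirely. Once the case $n\le k$ is done, take in particular $n=k$: openness of $p^{\db{k}}_{\ns_{k-1}}$ (a bijection, since $\ns_{k-1}$ is $(k-1)$-step) yields continuity of the completion $\mc{K}\colon\cor^k(\ns_{k-1})\to\ns_{k-1}$. For any $n>k$, the unique completion $\cor^n(\ns_{k-1})\to\cu^n(\ns_{k-1})$ is obtained by restricting to a $k$-dimensional upper face through $1^n$ and applying this $\mc{K}$, hence is continuous; its inverse $p^{\db{n}}_{\ns_{k-1}}$ is therefore open. No lifting to $\ns$ is needed for $n>k$.
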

\begin{proof}
Note that the algebraic part of this result follows from \cite[Lemma 3.2.10]{Cand:Notes1}. Combining Corollary \ref{cor:quotient-top-k-1-factor} and Lemma \ref{lem:closure-cube-in-k-1-factor} we have that the topology on $\ns_{k-1}$ is also \textsc{lch}, and part $(i)$ of Definition \ref{def:top-nil-open-maps-version} follows. Hence, we just have to prove part $(ii)$, i.e.,  that the projection maps $p_{k-1}^{\db{n}}:\cu^n(\ns_{k-1})\to \cor^{n}(\ns_{k-1})$ are open. 

First let us assume that $n\le k$. In this case, the sets $\cu^n(\ns)$ are closed and saturated with respect to $\pi_{k-1}^{\db{n}}$. That is, $\cu^n(\ns)$ is closed and $(\pi_{k-1}^{\db{n}})^{-1}(\pi_{k-1}^{\db{n}}(\cu^n(\ns)))=\cu^n(\ns)$. By \cite[Theorem 22.1]{Mu} the restricted map $\pi_{k-1}^{\db{n}}|_{\cu^n(\ns)}:\cu^n(\ns)\to \cu^n(\ns_{k-1})$ is a quotient map. Similarly, the restriction $\pi_{k-1}^{\db{n}\setminus\{1^n\}}|_{\cor^n(\ns)}:\cor^n(\ns)\to \cor^n(\ns_{k-1})$ is a quotient map. Given any open set $U\subset \cu^n(\ns_{k-1})$, we claim that $p_{k-1}^{\db{n}}(U)\subset \cor^n(\ns_{k-1})$ is open. By definition of the quotient topology, $p_{k-1}^{\db{n}}(U)$ is open if and only if $(\pi_{k-1}^{\db{n}\setminus\{1^n\}}|_{\cor^n(\ns)})^{-1}(p_{k-1}^{\db{n}}(U))$ is. But this set is precisely $p^{\db{n}}((\pi_{k-1}^{\db{n}}|_{\cu^n(\ns)})^{-1}(U))$ where $p^{\db{n}}:\cu^n(\ns)\to\cor^n(\ns)$. As $\pi_{k-1}^{\db{n}}|_{\cu^n(\ns)}:\cu^n(\ns)\to\cu^n(\ns_{k-1})$ is continuous, we have that $(\pi_{k-1}^{\db{n}}|_{\cu^n(\ns)})^{-1}(U)$ is open, and then the openness of $p^{\db{n}}$ gives us the desired conclusion.
In particular, if $n=k$ we get the result that the completion function $\mc{K}:\cor^k(\ns_{k-1})\to \cu^k(\ns_{k-1})$ is continuous (as it is the inverse of the open map $p_{k-1}^{\db{k}}:\cu^k(\ns_{k-1})\to \cor^k(\ns_{k-1})$). 

For $n>k$ note that we have a bijection $\cor^n(\ns_{k-1})\to \cu^n(\ns_{k-1})$, which is also continuous (as it just consists in applying $\mc{K}$ on some upper face of $\db{n}$). But by definition, the inverse of this function is $p_{k-1}^{\db{n}}:\cu^n(\ns_{k-1})\to \cor^n(\ns_{k-1})$. Hence $p_{k-1}^{\db{n}}$ is open and the result follows.
\end{proof}
\noindent We can now establish the equivalence between the completion function being continuous, and the projection to the sets of corners being an open map.
\begin{lemma}\label{lem:eq-def-lch-nil}
Let $\ns$ be a $k$-step nilspace endowed with an \textsc{lch} topology. Then $\ns$ is an \textsc{lch} nilspace if and only if the following conditions are satisfied:
\begin{enumerate}
    \item For every $n\ge 0$, the cube set $\cu^n(\ns)$ is a closed subset of $\ns^{\db{n}}$.
    \item For every $n\in[k]$, the corner completion function $\mc{K}_{n+1}:\cor^{n+1}(\ns_n)\to \ns_n$ is continuous, where the canonical factor $\ns_n$ is endowed with the quotient topology.
\end{enumerate}
\end{lemma}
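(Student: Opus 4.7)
For the forward direction, condition (i) is immediate from Definition \ref{def:top-nil-open-maps-version}. Iterating Theorem \ref{thm:factor-of-lch-nil} yields that each canonical factor $\ns_n$ is an $n$-step \textsc{lch} nilspace, so Lemma \ref{lem:cont-if-open} applied to $\ns_n$ gives continuity of $\mc{K}_{n+1}: \cor^{n+1}(\ns_n) \to \ns_n$ for every $n\in[k]$.

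For the backward direction I would induct on the step $k$, the case $k=0$ being trivial. In the inductive step, the continuity of $\mc{K}_{k+1}$ (condition (ii) at $n=k$) together with condition (i) supplies the hypotheses of Proposition \ref{prop:k-th-group-action}, Corollary \ref{cor:quotient-top-k-1-factor}, and Lemma \ref{lem:closure-cube-in-k-1-factor}, showing that $\ns_{k-1}$ carries an \textsc{lch} quotient topology with closed cube sets. Since $(\ns_{k-1})_n=\ns_n$ for $n\leq k-1$, condition (ii) for $\ns_{k-1}$ at those levels is inherited from that for $\ns$, and the induction hypothesis gives that $\ns_{k-1}$ is a $(k-1)$-step \textsc{lch} nilspace; in particular $p_{k-1}^{\db{m}}$ is open for every $m$.

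It remains to verify that each $p^{\db{m}}: \cu^m(\ns)\to \cor^m(\ns)$ is open. For $m\geq k+1$, cubes are uniquely determined by their corners and the inverse of $p^{\db{m}}$ is built by applying $\mc{K}_{k+1}$ to a $(k+1)$-dimensional face of $\db{m}$ containing $1^m$; continuity of $\mc{K}_{k+1}$ gives continuity of this inverse, hence $p^{\db{m}}$ is a homeomorphism. For $m=k$, the continuous $\ab_k$-action on $\ns$ from Proposition \ref{prop:k-th-group-action} extends to a continuous action of $\ab_k$ on $\cu^k(\ns)$ by translation at the $1^k$-vertex, whose orbits are precisely the fibers of $p^{\db{k}}$; since the orbit map of a continuous topological group action is open, $p^{\db{k}}$ is open.

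The main obstacle is the case $m<k$, where the fibers of $p^{\db{m}}$ are larger than $\ab_k$-torsors. The plan is to exploit the commutative square formed by $p^{\db{m}}$, $p_{k-1}^{\db{m}}$, and the $\pi_{k-1}$-induced maps $q_1: \cu^m(\ns)\to \cu^m(\ns_{k-1})$ and $q_2: \cor^m(\ns)\to \cor^m(\ns_{k-1})$, both of which are open surjections (since $\pi_{k-1}$ is open and the cube and corner sets on $\ns$ are preimages of those on $\ns_{k-1}$). Given an open $W\subset\cu^m(\ns)$ and $c_0\in W$, the set $q_2^{-1}(p_{k-1}^{\db{m}}(q_1(W)))$ is an open neighborhood of $e_0=p^{\db{m}}(c_0)$ in $\cor^m(\ns)$, but it coincides with the $\ab_k^{\db{m}\setminus\{1^m\}}$-saturation of $p^{\db{m}}(W)$ and may be strictly larger. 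To shrink it inside $p^{\db{m}}(W)$ one would build a continuous local section $s$ of $p^{\db{m}}$ near $e_0$ with $s(V)\subset W$, constructed by applying the continuous function $\mc{K}_{k+1}$ to a $(k+1)$-corner assembled from the fixed cube $c_0$ and a varying nearby $m$-corner $e$ of $e_0$; the delicate technical step will be showing that the assembled configuration is a bona fide $(k+1)$-corner extending to a cube, which I expect to follow from the nilspace axioms using $c_0$ as a reference (with the value of $s(e)$ at $1^m$ recovered by $\mc{K}_{k+1}$ applied to the resulting corner).
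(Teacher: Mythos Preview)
Your forward direction and the case $m\geq k+1$ of the backward direction match the paper. Your $m=k$ argument via the free transitive $\ab_k$-action on the fibers of $p^{\db{k}}$ is also correct, though the paper does not single this case out.

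The gap is your plan for $m<k$. The local-section construction via $\mc{K}_{k+1}$ is headed in the wrong direction: you are reaching for the \emph{top} completion function when the relevant one is $\mc{K}_m$ on the factor $\ns_{m-1}$, whose continuity is precisely hypothesis (ii) at level $n=m-1$. The paper's argument is a one-line identity. After the standard reduction to basic open sets, it suffices to show that for any open $A\subset\ns$ the set $p^{\db{m}}\big(\cu^m(\ns)\cap[\ns^{\db{m}\setminus\{1^m\}}\times A]\big)$ is open in $\cor^m(\ns)$. The key observation is that a corner $e\in\cor^m(\ns)$ admits a completion $\q$ with $\q(1^m)\in A$ if and only if the unique completion of $\pi_{m-1}\co e$ in the $(m-1)$-step nilspace $\ns_{m-1}$ lies in $\pi_{m-1}(A)$; this is because for $m\leq k$ every lift of an $m$-cube on $\ns_{m-1}$ is already an $m$-cube on $\ns$ (the structure groups $\ab_m,\ldots,\ab_k$ all satisfy $\cu^m(\mc{D}_j(\ab_j))=\ab_j^{\db{m}}$). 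Hence
\[
p^{\db{m}}\big(\cu^m(\ns)\cap[\ns^{\db{m}\setminus\{1^m\}}\times A]\big)=(\pi_{m-1}^{\db{m}\setminus\{1^m\}})^{-1}\big(\mc{K}_m^{-1}(\pi_{m-1}(A))\big),
\]
which is open since $\pi_{m-1}$ is open (your induction on $k$ already supplies this, via Corollary \ref{cor:quotient-top-k-1-factor} on $\ns$ composed with the factor maps of the \textsc{lch} nilspace $\ns_{k-1}$) and $\mc{K}_m$, $\pi_{m-1}^{\db{m}\setminus\{1^m\}}$ are continuous. This single formula replaces your entire $m<k$ discussion and also subsumes your separate $m=k$ case.
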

\begin{proof}
For the forward implication, note that $(i)$ follows from Definition \ref{def:top-nil-open-maps-version}, and $(ii)$ follows from Lemma \ref{lem:cont-if-open} applied to each factor of $\ns$, using Theorem \ref{thm:factor-of-lch-nil} to pass to lower-step  factors. 

To prove the converse, first note that we can apply Proposition \ref{prop:k-th-group-action}. Hence, the  structure group $\ab_k$ is an \textsc{lch} topological group that acts on $\ns$ continuously. By Corollary \ref{cor:quotient-top-k-1-factor}, we get that for every $n\in[k]$ the projection map $\pi_n:\ns\to\ns_n$ is an open quotient map. 

First, note that for $m>k$ the openness of $p^{\db{m}}:\cu^m(\ns)\to \cor^m(\ns)$ is trivial as its inverse can be recovered using $\mc{K}_{k+1}$ (which is continuous by hypothesis).

For $m\le k$, note that it is enough to prove that $p^{\db{m}}(\cu^m(\ns)\cap \prod_{v\in\db{m}} A_v)$ is open for open sets $A_v\subset \ns$, $v\in \db{m}$. Moreover, we can restrict our attention to sets where $A_v=\ns$ for all $v\not=1^m$, since we have
\[
p^{\db{m}}\big(\cu^m(\ns)\cap \prod_{v\in\db{m}} A_v\big)=\big(\prod_{v\in\db{m}\setminus\{1^m\}} A_v\big)\cap p^{\db{m}}\big(\cu^m(\ns)\cap [\ns^{\db{m}\setminus\{1^m\}}\times A_{1^m}]\big).
\]
But for those sets we know how to compute such images. Let
\setlength{\leftmargini}{0.4cm}
\begin{itemize}
    \item $\pi_{m-1}:\ns\to\ns_{m-1}$ be the quotient map, which is also open.
    \item $\pi_{m-1}^{\db{m}\setminus\{1^m\}}:\cor^m(\ns)\to\cor^m(\ns_{m-1})$ be the coordinate-wise application of $\pi_{m-1}$ (which is continuous).
    \item $\mc{K}_m:\cor^m(\ns_{m-1})\to \ns_{m-1}$ the completion function (which is continuous by hypothesis).
\end{itemize}
Then $p^{\db{m}}(\cu^m(\ns)\cap [\ns^{\db{m}\setminus\{1^m\}}\times A_{1^m}]) =(\pi_{m-1}^{\db{m}\setminus\{1^m\}})^{-1}(\mc{K}_{m}^{-1}(\pi_{m-1}(A_{1^m})))$, which is open, and the result follows.
\end{proof}
\begin{remark}
Note that compact nilspaces are in particular \textsc{lch} nilspaces. To see this it suffices to check the continuity of the completion function. This follows by \cite[Lemma 2.1.12]{Cand:Notes2}.
\end{remark}

\noindent Recall from \cite[Definition 3.1.4]{Cand:Notes1} the definition of a \emph{simplicial set} $S\subset \db{n}$, and from \cite[Definition 3.3.10]{Cand:Notes1} the definition of the set of \emph{restricted morphisms} $\hom_f(S,\ns)$. For our purposes here, we will just need the case where $S\subset \db{n}$ is a simplicial set, $\ns$ is a nilspace and $f:\emptyset\to \ns$ (so $f$ is just the trivial function). Thus, we define $\hom(S,\ns)\subset \ns^S$ as the set of maps $g:S\to \ns$ such that for every discrete-cube morphism $\phi:\db{m}\to \db{n}$ with image lying in $S$, we have $g\co\phi\in\cu^m(\ns)$ (no reference made to $f$, since in this case it is trivial). In particular $\cor^n(\ns)=\hom(\db{n}\setminus\{1^n\},\ns)$. When $\ns$ is an \textsc{lch} nilspace, we always consider $\hom(S,\ns)$ to be equipped with the subspace topology induced by the product topology on $\ns^S$.
\begin{lemma}\label{lem:simplicial-proy}
Let $\ns$ be a $k$-step \textsc{lch} nilspace and let $S\subset \db{n}$ be a simplicial set. Then the projection map $p_S:\cu^n(\ns)\to \hom(S,\ns)$ is open.
\end{lemma}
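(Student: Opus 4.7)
I would argue by induction on $m:=|\db{n}\setminus S|$, with the trivial base case $m=0$ (then $S=\db{n}$ and $p_S=\mathrm{id}$). For the inductive step, choose $v$ that is minimal in $\db{n}\setminus S$; such $v$ exists since $S\subsetneq\db{n}$ is simplicial. Because $S$ is downward closed, any $w>v$ must again lie in $\db{n}\setminus S$ (otherwise the predecessor $v$ of $w\in S$ would be forced into $S$), so no element of $S$ exceeds $v$; hence $S':=S\cup\{v\}$ is simplicial and $v$ is maximal in $S'$. Since $p_S$ factors as $p_S=\rho\co p_{S'}$, where $\rho:\hom(S',\ns)\to\hom(S,\ns)$ is the restriction that forgets the value at $v$, and $p_{S'}$ is open by the inductive hypothesis, it suffices to prove that $\rho$ is open.

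The key structural fact is that, because $v$ is maximal in $S'$, any discrete-cube morphism $\phi:\db{\ell}\to\db{n}$ whose image lies in $S'$ and contains $v$ must have image contained in the face $F_v:=\{w\in\db{n}:w\leq v\}$. Indeed, the image of $\phi$ is a subcube of $\db{n}$, with maximum $v_F$; since $v_F\in S'$ and $v_F\geq v$, maximality of $v$ forces $v_F=v$, so $\mathrm{image}(\phi)\subseteq F_v$. Identifying $F_v$ with $\db{|I|}$ via the support $I:=\{j:v_j=1\}$ of $v$, it follows that for $g\in\hom(S,\ns)$, extending $g$ to $S'$ by the value $y$ at $v$ produces an element of $\hom(S',\ns)$ if and only if $g|_{F_v\setminus\{v\}}\cup\{v\mapsto y\}$ lies in $\cu^{|I|}(\ns)$; in other words, $y$ must be a cube completion of the $|I|$-corner $g|_{F_v\setminus\{v\}}$, and no further constraints are imposed.

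To finish, I would take any basic open box $B=\hom(S',\ns)\cap\prod_{w\in S'}A_w$ and compute
\[
\rho(B)=\hom(S,\ns)\cap\Big(\prod_{w\in S}A_w\Big)\cap r_v^{-1}\Big(p^{\db{|I|}}\big(\mathrm{ev}_{1^{|I|}}^{-1}(A_v)\big)\Big),
\]
where $r_v:\hom(S,\ns)\to\cor^{|I|}(\ns)$ is the continuous restriction to $F_v\setminus\{v\}$ and $\mathrm{ev}_{1^{|I|}}:\cu^{|I|}(\ns)\to\ns$ is the continuous evaluation at $1^{|I|}$. Since $\mathrm{ev}_{1^{|I|}}^{-1}(A_v)$ is open in $\cu^{|I|}(\ns)$ and $p^{\db{|I|}}$ is open by the defining property of an \textsc{lch} nilspace, $\rho(B)$ is open, closing the induction. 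The main obstacle to justify carefully is the maximal-image claim: it relies on the fact that images of discrete-cube morphisms are subcubes of $\db{n}$ possessing a coordinatewise maximum (by the standard definition of such morphisms as assembling coordinates that are constant or projections from the domain), and that any cube on $F_v$ automatically restricts to cubes on every sub-subcube. This ensures that the only genuinely new constraint imposed by extending by the value at $v$ is the single $|I|$-cube completion constraint on $F_v$, which is precisely what the openness of $p^{\db{|I|}}$ handles.
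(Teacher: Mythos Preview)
Your proof is correct and follows essentially the same approach as the paper's own proof: both reduce to showing that the single-vertex ``forget $v$'' projection $\hom(S',\ns)\to\hom(S'\setminus\{v\},\ns)$ is open when $v$ is maximal in $S'$, and both establish this via the identical formula expressing the image of a basic box as an intersection involving $r_v^{-1}\big(p^{\db{|v|}}(\cdots)\big)$. The only cosmetic difference is that you induct upward from $S$ to $\db{n}$ (adding a minimal element of the complement) whereas the paper inducts downward from $\db{n}$ to $S$ (removing a maximal element); your justification of the ``maximal-image'' claim is in fact more explicit than the paper's, which simply states the key formula.
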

\begin{proof}
For any simplicial set $S\subset \db{n}$, let us say that $v\in S$ is \emph{maximal} if there is no $w\in S$ with $w\not=v$ such that $w\sbr{j}\ge v\sbr{j}$ for all $j\in[n]$. We claim that the projection map $p':\hom(S,\ns)\to\hom({S\setminus\{v\}},\ns)$ is open. It suffices to check this for open sets of the form $\prod_{w\in S} U_w\cap \hom(S,\ns)$ where $U_w\subset \ns$ are open for all $w\in S$. Letting $|v|:=\sum_{j=1}^n v\sbr{j}$, and defining the sets $V_{w'}:=\ns$ if $w'\not=1^{|v|}$ and $V_{1^{|v|}}=U_v$, we claim that
\[
p'(\prod_{w\in S} U_w\cap \hom(S,\ns))= \prod_{w\in S\setminus\{v\}} U_w\cap \hom({S\setminus\{v\}},\ns) \cap r^{-1}\big(p^{\db{|v|}}(\prod_{w'\in \db{|v|}} V_{w'}\cap \cu^{|v|}(\ns))\big)
\]
where $r$ is the map $\hom({S\setminus\{v\}},\ns)\to \cor^{|v|}(\ns)$ that projects to the coordinates $w\in S$ such that $w\sbr{j}\le v\sbr{j}$ for all $j\in[n]$,  relabeling these coordinates to the set $\db{|v|}$. Clearly $r$ is continuous, and as $p^{\db{|v|}}$ is open by hypothesis the result follows.

Then, for any simplicial $S\subset \db{n}$ we can find a sequence $v_1,v_2,\ldots,v_t\in \db{n}$ such that $S=\db{n}\setminus\{v_1,\ldots,v_t\}$, and for every $\ell=0,\ldots,t$ we have that $\db{n}\setminus\{v_1,\ldots,v_\ell\}$ is simplicial and $v_{\ell+1}$ is maximal in $\db{n}\setminus\{v_1,\ldots,v_\ell\}$. We can then write $p^S:\cu^n(\ns)\to\hom(S,\ns)$ as a composition of $t$ open maps $\cu^n(\ns)\to \hom(\db{n}\setminus\{v_1\},\ns)\to \hom(\db{n}\setminus\{v_1,v_2\},\ns)\to \cdots \to\hom(S,\ns)$ which is open in particular.
\end{proof}
\begin{lemma}\label{lem:open-projection-of-cubes}
Let $\ns$ be a $k$-step \textsc{lch} nilspace. Then for every $n\ge 0$ the projection function $\pi_i^{\db{n}}|_{\cu^n(\ns)}:\cu^n(\ns)\to \cu^n(\ns_i)$ is an open map. In particular $\pi_i^{\db{n}}|_{\cu^n(\ns)}$ is a quotient map.
\end{lemma}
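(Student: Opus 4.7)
The plan is to first reduce to the case $i = k-1$ via composition: the projection $\pi_i:\ns\to\ns_i$ factors through the intermediate factors $\ns_{k-1},\ns_{k-2},\ldots,\ns_{i+1}$, each of which is itself an \textsc{lch} nilspace by iteration of Theorem \ref{thm:factor-of-lch-nil}. Since composition of open maps is open, it suffices to prove that $\pi_{k-1}^{\db{n}}|_{\cu^n(\ns)}:\cu^n(\ns)\to \cu^n(\ns_{k-1})$ is open for every $n$. The ``in particular'' clause then follows, since a continuous surjective open map is automatically a quotient map (surjectivity follows from the fibration property: any cube on $\ns_{k-1}$ lifts to a cube on $\ns$). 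I then split the main claim according to whether $n\le k$ or $n>k$.

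For $n\le k$, I use a saturation argument: by \cite[Remark 3.2.12]{Cand:Notes1}, any map $\db{n}\to\ns$ whose projection to $\ns_{k-1}$ is a cube is itself a cube on $\ns$, so $\cu^n(\ns)=(\pi_{k-1}^{\db{n}})^{-1}(\cu^n(\ns_{k-1}))$. Since $\pi_{k-1}$ is open (Corollary \ref{cor:quotient-top-k-1-factor}), so is the product map $\pi_{k-1}^{\db{n}}:\ns^{\db{n}}\to\ns_{k-1}^{\db{n}}$. A standard computation then gives $\pi_{k-1}^{\db{n}}(\tilde V\cap\cu^n(\ns))=\pi_{k-1}^{\db{n}}(\tilde V)\cap\cu^n(\ns_{k-1})$ for any open $\tilde V\subset\ns^{\db{n}}$, which is open in the subspace topology on $\cu^n(\ns_{k-1})$.

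For $n>k$, saturation of $\cu^n(\ns)$ fails, so I reduce to a simpler simplicial set via iterated completion. Because completion at $1^n$ is unique in both $\ns$ and $\ns_{k-1}$, the corner-projection $p^{\db{n}}:\cu^n(\ns)\to\cor^n(\ns)$ is a homeomorphism (as in the proof of Theorem \ref{thm:factor-of-lch-nil}), and likewise for $\ns_{k-1}$; hence it suffices to show that $\cor^n(\ns)\to\cor^n(\ns_{k-1})$ is open. I then iteratively remove vertices $v\in \db{n}\setminus\{1^n\}$ with $|v|\ge k+1$ in decreasing order of weight. At each stage, $v$ is the unique top vertex of the $|v|$-dimensional sub-cube $\{w:w\le v\}$, and since $|v|>k$ the value $c(v)$ is uniquely determined from the restriction of $c$ to $\{w:w<v\}$ via the continuous completion map $\mc{K}$ (continuity follows by Lemma \ref{lem:cont-if-open} together with the proof of Theorem \ref{thm:factor-of-lch-nil}), so this vertex-removal produces a homeomorphism. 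Iterating until the $k$-skeleton $S^{(k)}:=\{v\in\db{n}:|v|\le k\}$, which stays simplicial at every stage because we always remove a maximal vertex, I identify $\cor^n(\ns)\cong\hom(S^{(k)},\ns)$ homeomorphically, and similarly $\cor^n(\ns_{k-1})\cong\hom(S^{(k)},\ns_{k-1})$. These identifications commute with the coordinate-wise $\pi_{k-1}$ projection, since $\pi_{k-1}$ is a nilspace morphism that intertwines the unique completion maps in the relevant dimensions.

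Finally, on $\hom(S^{(k)},\ns)$ all sub-cubes have dimension at most $k$, so the saturation argument from the first case applies verbatim to show that $\hom(S^{(k)},\ns)\to\hom(S^{(k)},\ns_{k-1})$ is open. Composing through the homeomorphisms gives openness of $\cu^n(\ns)\to\cu^n(\ns_{k-1})$, completing the proof. The main delicate point is the iterative reduction: at each step one must check that the completed extension indeed gives back a morphism on the augmented simplicial set (so one truly has a continuous inverse), and that the compatibility with $\pi_{k-1}$ is preserved throughout the iteration.
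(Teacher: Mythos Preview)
Your proof is correct and follows essentially the same route as the paper's: both reduce to $i=k-1$, handle $n\le k$ by the saturation identity $(\pi_{k-1}^{\db{n}})^{-1}(\cu^n(\ns_{k-1}))=\cu^n(\ns)$, and for $n>k$ factor the map through the $k$-skeleton $\hom(\db{n}_{\le k},\ns)$ where the saturation argument applies again. The only cosmetic difference is that the paper packages the vertex-peeling step into the preceding Lemma~\ref{lem:simplicial-proy} (proving openness of $p_S$ for arbitrary simplicial $S$), whereas you redo that argument inline and observe that for $S=\db{n}_{\le k}$ the projection is in fact a homeomorphism; your justification that only faces contained in $\{w\le v\}$ need checking (since $v$ maximal forces any larger face to leave $S$) is exactly what makes this work.
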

\begin{proof}
It suffices to prove the result for $i=k-1$, and then apply this case $i-1$ times.
For $i=k-1$ there are two cases. Let us deal with $n\le k$ first. Note that it is enough to prove the result for open sets of the form $\prod_{v\in\db{n}} U_v\cap \cu^n(\ns)$ where $U_v\subset \ns$ are open. We claim that in this case
\[
\pi_{k-1}^{\db{n}}(\prod_{v\in\db{n}} U_v\cap \cu^n(\ns))=\prod_{v\in\db{n}} \pi_{k-1}(U_v)\cap \cu^n(\ns_{k-1}).
\]
If this is true then the result will follow for $n\le k$, $\pi_{k-1}$ is an open map. In order to prove that, note that clearly the LHS is contained in the RHS. For the other inclusion, let $\pi_{k-1}\co\q$ be such that $(\pi_{k-1}\co\q)(v)\in \pi(U_v)$ for all $v\in \db{n}$. For each $v\in \db{n}$ there exists $d(v)\in U_v$ such that $(\pi_{k-1}\co\q)(v)=(\pi_{k-1}\co d)(v)$. By \cite[Corollary 3.2.8]{Cand:Notes1} we have that in particular $d\in \cu^n(\ns)$ and the result follows in this case.
For $n>k$, note that we can write the map $\pi_{k-1}^{\db{n}}$ as the composition of the following maps:
\[
\begin{tikzpicture}
  \matrix (m) [matrix of math nodes,row sep=3em,column sep=4em,minimum width=2em]
  {\cu^n(\ns) & \hom({\db{n}_{\le k}},\ns) & \hom({\db{n}_{\le k}},\ns_{k-1}) & \cu^n(\ns_{k-1}) \\};
  \path[-stealth]
    (m-1-1) edge node [above] {$p_{\db{n}_{\le k}}$} (m-1-2)
    (m-1-2) edge node [above] {$\pi_{k-1}^{\db{n}_{\le k}}$} (m-1-3)
    (m-1-3) edge node [above] {$\mc{K}'$} (m-1-4);
\end{tikzpicture}
\]
Here $p_{\db{n}_{\le k}}$ is defined as in Lemma \ref{lem:simplicial-proy}, $\db{n}_{\le k}:=\{v\in\db{n}:\sum_{j=1}^n v\sbr{j}\le k\}$, $\pi_{k-1}^{\db{n}_{\le k}}$ is the coordinate-wise application of $\pi_{k-1}$ and $\mc{K}'$ is the composition of several applications of the completion function $\mc{K}:\cor^k(\ns_{k-1})\to\ns_{k-1}$ in order to complete an element of $\hom({\db{n}_{\le k}},\ns_{k-1})$. By Lemma \ref{lem:simplicial-proy} we have that $p_{\db{n}_{\le k}}$ is open. By an argument very similar to the one in the previous paragraph we have that $\pi_{k-1}^{\db{n}_{\le k}}$ is open as well, and $\mc{K}'$ is open as its inverse is continuous (because it is just a projection to some coordinates).
\end{proof}
\noindent Our next step is to prove that several important constructions associated with nilspaces inherit the \textsc{lch} property from the original spaces. Recall the definition of fiber product of nilspaces.
\begin{defn}\label{def:fib-prod}
Let $\ns, \ns'$ and $\nss$ be nilspaces and let $\varphi:\ns\to\nss$ and $\varphi':\ns'\to\nss$ be fibrations. We define the \emph{fiber product} of $\ns$ and $\ns'$ as the nilspace consisting of the set $\ns\times_{\nss}\ns':=\{(x,x')\in \ns\times\ns': \varphi(x)=\varphi'(x')\}$ endowed with the cubes $\cu^n(\ns \times_{\nss} \ns'):=\{(\q,\q')\in \cu^n(\ns)\times \cu^n(\ns'):\varphi\co\q=\varphi'\co\q'\}$.
\end{defn}
It is proved in \cite[Lemma 4.2]{CGSS} that such fiber products are indeed nilspaces.
\begin{remark}
Note that in this definition we have not assumed anything about the topology on $\ns,\ns'$ or $\nss'$. We will do so in the following lemmas. Note also that in the expression $\ns\times_{\nss}\ns'$ there is no reference to $\varphi$ or $\varphi'$. These maps will be clear from the context.
\end{remark}

\noindent An observation which will often be useful in what follows is that some structures that we will have to handle are in fact \emph{Cartan principal bundles}, recalled here from \cite[1.1.2 Definition]{P61}.
\begin{defn}[Cartan principal bundle]\label{def:Cartan}
Let $B$ be a $G$-principal bundle. Let the action of $G$ over $B$ be given by $(g,b)\in G\times B\mapsto  gb\in B$. We say that the bundle is a \emph{Cartan principal bundle} if given the map $T:B\times G\to B\times B$ defined by $(b,g)\mapsto(b,gb)$ there exists a continuous map\footnote{Here the image $\im(T)$ is understood to have the subspace topology.} $R:\im(T)\to B\times G$ such that $R\co T=\id_{B\times G}$.
\end{defn}

\noindent In our context, the group $G$ will usually be abelian. Thus, if the action of $G$ on $B$ is given by $(g,b)\mapsto g+b$, the principal $G$-bundle $B$ is Cartan if and only if the \emph{difference map}, i.e.\ the map $\{(b,b')\in B\times B:\exists g\in G \text{ s.t. }g+b=b'\}\to G$, $(b,b')\mapsto b'-b$, is continuous.
\begin{lemma}\label{lem:diff-cont}
Let $\ns$ be a $k$-step \textsc{lch} nilspace. For each $i\in[k]$, let $\ab_i$ be the $i$-th structure group of $\ns$. Then for every $i\in[k]$, the $\ab_i$-principal bundle $\ns_i$ is a Cartan principal bundle.
\end{lemma}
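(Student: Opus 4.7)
The plan is to reduce to the top level and then construct the inverse of the action map explicitly via corner completion. By Theorem \ref{thm:factor-of-lch-nil}, each factor $\ns_i$ is itself an $i$-step \textsc{lch} nilspace, so applying the claim at the top of $\ns_i$ gives the statement for the $\ab_i$-bundle $\ns_i\to\ns_{i-1}$. It thus suffices to show that when $\ns$ is $k$-step, the difference map $D:\ns\times_{\ns_{k-1}}\ns\to\ab_k$, $(b,b')\mapsto b'-b$, is continuous; equivalently, that the bijective continuous map $T:\ab_k\times\ns\to\ns\times_{\ns_{k-1}}\ns$, $(g,b)\mapsto(b,b+g)$, is a homeomorphism.

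I fix any basepoint $b_0\in\ns$ and identify $\ab_k$ with the fiber $F_0=\pi_{k-1}^{-1}(\pi_{k-1}(b_0))$ via the continuous abelian group structure on $F_0$ with identity $b_0$ provided by Proposition \ref{prop:k-th-group-action}. The strategy is to write $D(b,b')=\Psi(b,b')-b_0$, where $\Psi:\ns\times_{\ns_{k-1}}\ns\to F_0$ is continuous with $\Psi(b,b')=b_0+(b'-b)$, and then invoke continuity of subtraction on $F_0$ from Proposition \ref{prop:k-th-group-action}. To define $\Psi$, I apply the continuous corner-completion $\mc{K}_{k+1}$ from Lemma \ref{lem:cont-if-open} to a specific $(k+1)$-corner $\q^{b,b'}\in\cor^{k+1}(\ns)$. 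Namely, setting $v_0:=(0,1,\ldots,1)\in\db{k+1}$, let
\[
\q^{b,b'}(v) := \begin{cases} b' & \text{if } v=v_0, \\ b & \text{if } v_1=0 \text{ and } v\ne v_0, \\ b_0 & \text{if } v_1=1 \text{ and } v\ne 1^{k+1}. \end{cases}
\]
The map $(b,b')\mapsto\q^{b,b'}$ is continuous by construction, so $\Psi(b,b'):=\mc{K}_{k+1}(\q^{b,b'})$ is continuous once one verifies that $\q^{b,b'}$ is a valid corner.

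The technical content then amounts to two facts: (a) $\q^{b,b'}$ is a valid $(k+1)$-corner on $\ns$, and (b) $\mc{K}_{k+1}(\q^{b,b'})=b_0+(b'-b)$. For (a), the $\pi_{k-1}$-projection of every sub-face of $\q^{b,b'}$ of dimension at most $k$ is either constant or a degeneracy of the $1$-cube $(\pi_{k-1}(b),\pi_{k-1}(b_0))$, hence a cube on $\ns_{k-1}$; combined with the standard tower characterization that for $n\le k$ one has $\q\in\cu^n(\ns)$ iff $\pi_{k-1}\co\q\in\cu^n(\ns_{k-1})$ (following from $\cu^n(\mc{D}_k(\ab_k))=\ab_k^{\db{n}}$ for $n\le k$), this gives validity. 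For (b), I compare $\q^{b,b'}$ with the reference cube $L(v):=b$ if $v_1=0$ and $L(v):=b_0$ if $v_1=1$, which is a $(k+1)$-cube by degeneracy from the $1$-cube $(b,b_0)$. The $\ab_k$-valued cochain $f(v):=\q^{b,b'}(v)-L(v)$ vanishes everywhere except at $v_0$, where it equals $b'-b$, and at $1^{k+1}$, where it equals $\mc{K}_{k+1}(\q^{b,b'})-b_0$. The cube condition $\sum_v(-1)^{|v|}f(v)=0$ characterizing $f\in\cu^{k+1}(\mc{D}_k(\ab_k))$, together with $|v_0|=k$ and $|1^{k+1}|=k+1$, then yields $\mc{K}_{k+1}(\q^{b,b'})-b_0=b'-b$, as required.

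The main obstacle is the careful justification of the tower characterization invoked in (a), asserting that for $n\le k$ the set $\cu^n(\ns)$ coincides with the $\pi_{k-1}$-preimage of $\cu^n(\ns_{k-1})$. While this is well-known in nilspace theory, it uses the description of $k$-step nilspaces as abelian extensions of their $(k-1)$-step factors by $\mc{D}_k(\ab_k)$, together with the fact that $\mc{D}_k(\ab_k)$ imposes no cube constraint in dimensions $\le k$. With that in hand, the continuity of $D=\Psi(\cdot,\cdot)-b_0$ follows as a composition of continuous maps, establishing the Cartan property.
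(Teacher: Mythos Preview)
Your proof is correct and follows essentially the same approach as the paper's: reduce to the top level and express the difference map as the continuous completion $\mc{K}_{k+1}$ applied to an explicit $(k+1)$-corner depending continuously on the pair. Your corner $\q^{b,b'}$ is the paper's $\q_{x,y}$ up to a coordinate permutation (you split along the first coordinate, the paper along the last), and your justification of validity and of the identity $\mc{K}_{k+1}(\q^{b,b'})=b_0+(b'-b)$ via the Gray-code equation is more detailed than the paper's terse treatment, but the underlying argument is identical.
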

\begin{proof}
By induction on $k$ it suffices to prove this for $i=k$. Recall that $\ab_k$ can be identified with any fixed equivalence class (or fiber) in $\ns$ for the relation $\sim_k$, in which we have fixed a particular element $e\in F$ to be the identity for addition. Then, given $x,y\in \ns\times_{\ns_{k-1}}\ns$, let $\q_{x,y}\in\cor^{k+1}(\ns)$ be the corner defined by  $\q_{x,y}(v,0):=y$ for $v\in \db{k}\setminus\{1^k\}$, $\q_{x,y}(1^k,0):=x$ and $\q_{x,y}(v,1)=e$ for $v\in\db{k}\setminus\{1^k\}$. The difference $x-y$ can be then expressed as $\mc{K}(\q_{x,y})$ where $\mc{K}:\cor^{k+1}(\ns)\to\ns$ is the unique-completion function. As an example, for $k=2$ this composition can be viewed as follows: $(x,y)\mapsto \q_{x,y}:=\smallcorner{y}{y}{y}{x}{e}{e}{e}\mapsto \mc{K}(\q_{x,y})$. As the map $(x,y)\mapsto \q_{x,y}$ is continuous, so is $\mc{K}\co\q_{x,y}$ and the result follows.
\end{proof}
\begin{lemma}\label{lem:fib-prod-top-nil}
Let $\ns,\ns'$ and $\nss$ be $k$-step \textsc{lch} nilspaces, and let $\varphi:\ns\to\nss$,   $\varphi':\ns'\to\nss$ be (continuous) fibrations. Then the corresponding fiber product $\ns \times_{\nss} \ns'$ (recall Definition \ref{def:fib-prod}) is also an \textsc{lch} nilspace. Moreover, if $\ab_i(\ns),\ab_i(\ns')$ and $\ab_i(\nss)$ are the $i$th structure groups of $\ns,\ns'$ and $\nss$ respectively, then the $i$th structure group of $\ns \times_{\nss} \ns'$ is $G_i:=\{(z,z')\in \ab_i(\ns)\times\ab_i(\ns'): \phi_i(z)=\phi'_i(z')\}$ where $\phi_i:\ab_i(\ns)\to\ab_i(\nss)$ and $\phi'_i:\ab_i(\ns')\to\ab_i(\nss)$ are the structure homomorphisms of $\varphi$ and $\varphi'$ respectively.
\end{lemma}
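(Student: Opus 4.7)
The algebraic content, namely that $\ns\times_\nss\ns'$ is a nilspace with the claimed cube sets, is established in \cite[Lemma 4.2]{CGSS}, so I would focus on the topological requirements of Definition \ref{def:top-nil-open-maps-version} and on identifying the structure groups. First, since $\nss$ is Hausdorff and $\varphi,\varphi'$ are continuous, the underlying set $\ns\times_\nss\ns'$ is closed in the \textsc{lch} product $\ns\times\ns'$, and hence inherits an \textsc{lch} topology. For any $n\ge 0$, the cube set $\cu^n(\ns\times_\nss\ns')$ is the intersection inside $(\ns\times\ns')^{\db n}$ of $\cu^n(\ns)\times\cu^n(\ns')$ with the closed subset $\{(\q,\q'):\varphi\co\q=\varphi'\co\q'\}$, so condition (i) of Definition \ref{def:top-nil-open-maps-version} is immediate.

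For condition (ii) the plan is to invoke Lemma \ref{lem:eq-def-lch-nil}, which reduces the task to verifying continuity of the corner-completion maps $\mc K_{n+1}$ on each factor $(\ns\times_\nss\ns')_n$ for $n\in[k]$. The top step $n=k$ is direct: a $(k+1)$-corner in $\ns\times_\nss\ns'$ is a compatible pair $(c,c')$ with $c\in\cor^{k+1}(\ns)$, $c'\in\cor^{k+1}(\ns')$ and $\varphi\co c=\varphi'\co c'$, and its unique completion is $(\mc K(c),\mc K(c'))$, which lies in the fiber product because $\varphi\co\mc K(c)$ and $\varphi'\co\mc K(c')$ both complete the same $(k+1)$-corner in the $k$-step nilspace $\nss$, and so coincide by uniqueness of completion. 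Continuity of this combined map then follows coordinate-wise from Lemma \ref{lem:cont-if-open} applied in $\ns$ and $\ns'$.

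The cases $n<k$ would be handled by induction on $k$, which requires the topological identification $(\ns\times_\nss\ns')_j\cong\ns_j\times_{\nss_j}\ns'_j$ of \textsc{lch} nilspaces for each $j<k$. Algebraically this is a direct consequence of \cite[Lemma 3.2.10]{Cand:Notes1} applied to the induced fibrations $\ns_j\to\nss_j$ and $\ns'_j\to\nss'_j$. The nontrivial point is that the natural continuous bijection from the quotient of the fiber product to the fiber product of the quotients is a homeomorphism; I would verify this by showing that the composition $\ns\times_\nss\ns'\to\ns_j\times_{\nss_j}\ns'_j$ is open, using openness of the canonical projections $\ns\to\ns_j$ and $\ns'\to\ns'_j$ (Corollary \ref{cor:quotient-top-k-1-factor} iterated) together with continuity of the structure-group action from Proposition \ref{prop:k-th-group-action} and the continuity of the difference map on Cartan bundles (Lemma \ref{lem:diff-cont}) to lift a pair of local neighborhoods agreeing in the $j$-th factor of $\nss$ to local neighborhoods agreeing in $\nss$ itself.

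For the structure groups, two points $(x,x'),(y,y')$ of $\ns\times_\nss\ns'$ are $\sim_i$-equivalent iff $x\sim_i y$ in $\ns$, $x'\sim_i y'$ in $\ns'$, and the corresponding increments $z\in\ab_i(\ns)$, $z'\in\ab_i(\ns')$ satisfy $\phi_i(z)=\phi'_i(z')$; this yields a canonical continuous abelian-group isomorphism between the $i$-th structure group of $\ns\times_\nss\ns'$ and $G_i$, which is the kernel of the continuous homomorphism $(z,z')\mapsto\phi_i(z)-\phi'_i(z')$, hence is a closed (and therefore \textsc{lch}) subgroup of $\ab_i(\ns)\times\ab_i(\ns')$. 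The main obstacle in the whole argument is the topological identification of factors sketched in the previous paragraph, because quotient topology on a fiber product is generally weaker than the fiber product of quotient topologies; once that identification is established, everything else follows from the hypotheses or from routine induction.
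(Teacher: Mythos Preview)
Your proposal is correct and follows essentially the same route as the paper: reduce to Lemma \ref{lem:eq-def-lch-nil}, handle $(k{+}1)$-corners coordinate-wise, and then establish the homeomorphism $(\ns\times_\nss\ns')_{k-1}\cong\ns_{k-1}\times_{\nss_{k-1}}\ns'_{k-1}$ as the key step, with the structure-group identification coming along for free. The only differences are cosmetic: the paper cites \cite[Proposition A.20]{CGSS-p-hom} (rather than \cite[Lemma 3.2.10]{Cand:Notes1}) for the algebraic factor identification, and carries out the homeomorphism step by a sequential-continuity argument for $\iota^{-1}$ via Lemma \ref{lem:conv-quot} and the surjectivity of $\phi_k'$, rather than the openness/lifting phrasing you sketch---but the underlying tools (Corollary \ref{cor:quotient-top-k-1-factor}, Proposition \ref{prop:k-th-group-action}, Lemma \ref{lem:diff-cont}) are exactly the ones you name.
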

\begin{proof}
By \cite[Lemma 4.2]{CGSS} we know that this fiber product is a
$k$-step nilspace (algebraically speaking). Hence, we just have to check that the conditions of Definition \ref{def:top-nil-open-maps-version} are satisfied.

From the continuity of $\varphi$ and $\varphi'$ it is easy to check that the topology on $\ns \times_{\nss} \ns'$ is also \textsc{lch}. Indeed, this set is a closed subset of $\ns\times\ns'$, as it is the preimage of the diagonal of $\nss\times \nss$ under the map $(x,x')\mapsto(\varphi(x),\varphi'(x'))$. Similarly, for each $n\geq 0$ the cube set $\cu^n(\ns \times_{\nss} \ns')$ is closed, since it is $\big[\cu^n(\ns\times\ns')\big]\cap (\ns \times_{\nss} \ns')^{\db{n}}$, which is a closed subset of $(\ns\times\ns')^{\db{n}}$, so in particular it is a closed subset of $(\ns \times_{\nss} \ns')^{\db{n}}$.

Instead of proving the second part of Definition \ref{def:top-nil-open-maps-version} we use Lemma \ref{lem:eq-def-lch-nil} and just check that $\ns \times_{\nss} \ns'$ satisfies property $(ii)$ in this lemma. By \cite[Proposition A.20]{CGSS-p-hom}, for each $i\in [k]$ the $i$-step characteristic factor of this nilspace is $\ns_i\times_{\nss_i}\ns'_i$. Also, by the proof of \cite[Proposition A.20]{CGSS-p-hom}, the $i$-th structure group of $\ns \times_{\nss} \ns'$ is $G_i:=\{(z,z')\in \ab_i(\ns)\times\ab_i(\ns'): \phi_i(z)=\phi'_i(z')\}$ where $\phi_i:\ab_i(\ns)\to\ab_i(\nss)$ and $\phi'_i:\ab_i(\ns')\to\ab_i(\nss)$ are the structure homomorphisms of $\varphi$ and $\varphi'$ respectively (see \cite[Definition 3.3.1]{Cand:Notes1}). To see that part $(ii)$ of Lemma \ref{lem:eq-def-lch-nil} holds for $i=k$, note that the completion function on $\cor^{k+1}(\ns \times_{\nss} \ns')$ is continuous, as it is just the coordinate-wise application of the $(k+1)$-completion functions on $\ns$ and $\ns'$. For $i< k$, we already have a purely algebraic nilspace isomorphism $(\ns \times_{\nss} \ns')_i \to \ns_i \times_{\nss_i} \ns_i'$. If we show that this is also a homeomorphism then we will be done, as we can then repeat the preceding argument to obtain the continuity of the completion function. To obtain the homeomorphism property it suffices to show that the quotient topology on $(\ns \times_{\nss} \ns')_i$ is equal to the subspace topology on $\ns_i \times_{\nss_i} \ns_i'$ induced by the product topology on $\ns_i\times\ns_i$. Note that it suffices to show this for $i=k-1$, as the rest of the cases follow by induction on $k$. Hence, let us consider the following diagram:
\begin{equation}\label{diag:fib-prod-homoeom}
\begin{aligned}[c]
\begin{tikzpicture}
  \matrix (m) [matrix of math nodes,row sep=2em,column sep=4em,minimum width=2em]
  {\ns \times_{\nss} \ns' & \\
     (\ns \times_{\nss} \ns')_{k-1} & \ns_{k-1} \times_{\nss_{k-1}} \ns_{k-1}', \\};
  \path[-stealth]
    (m-1-1) edge node [above] {$T$} (m-2-2)
    (m-1-1) edge node [right] {$\pi$} (m-2-1)
    (m-2-1) edge node [above] {$\iota$} (m-2-2);
\end{tikzpicture}
\end{aligned}
\end{equation}

\vspace{-0.3cm}

\noindent where $T(x,x'):=(\pi_{k-1}(x),\pi_{k-1}(x'))$, $\pi$ is the quotient map for the relation $\sim_k$ (see \cite[Definition 3.2.3]{Cand:Notes1}) and $\iota(\pi(x,x'))=(\pi_{k-1}(x),\pi_{k-1}(x'))$. Recall that by \cite[Proposition A.20]{CGSS-p-hom}, $\iota$ is a well-defined bijective morphism. We want to prove that it is a homeomorphism.

We already know that the relation $\sim_k$ on $\ns \times_{\nss} \ns'$ comes from the continuous action of the group $G_{k}$. Also, using the closure of $\cu^n(\ns \times_{\nss} \ns')$ it follows that the set $\{((x_0,x_0'),(x_1,x_1'))\in (\ns \times_{\nss} \ns')^2: (x_0,x_0')\sim_k(x_1,x_1')\}$ is closed. Hence we can apply Lemma \ref{lem:quo-by-cont-gr-act} to conclude that the topology on $(\ns \times_{\nss} \ns')_{k-1}$ is \textsc{lch} and, in particular, it is metrizable. Let us now check that $\iota$ is continuous. Indeed, if $U\subset \ns_{k-1} \times_{\nss_{k-1}} \ns_{k-1}'$ is open, then since $\iota^{-1}(U)=\pi(T^{-1}(U))$, the continuity of $T$ and the openness of $\pi$ imply that $\iota^{-1}(U)$ is open as required. 

To prove that $\iota^{-1}$ is continuous, it suffices to prove that if $(\pi_{k-1}(x_n),\pi_{k-1}(x_n'))$ converges to $(\pi_{k-1}(x),\pi_{k-1}(x'))$ in $\ns_{k-1}\times_{\nss_{k-1}}\ns_{k-1}'$ as $n\to\infty$ then $\pi(x_n,x_n')\to \pi(x,x')$ as $n\to \infty$. By Lemma \ref{lem:conv-quot}, it suffices to prove that there exists a sequence $(g_n\in G_{k})_{n\in\mb{N}}$ such that $(x_n,x_n')+g_n\to (x,x')$, which we do in the remainder of this proof. 

Note that, without loss of generality, we can assume that $\varphi(x_n)=\varphi'(x_n')$ for all $n\in\mb{N}$ (so far we only know that $\varphi_{k-1}(\pi_{k-1}(x_n))=\varphi'_{k-1}(\pi_{k-1}(x_n'))$) and similarly that $\varphi(x)=\varphi'(x')$. Indeed, if this is not the case, say if $\varphi(x)\not=\varphi'(x')$, then since $(\pi_{k-1}(x),\pi_{k-1}(x'))$ is in $\ns_{k-1}\times_{\nss_{k-1}}\ns_{k-1}'$, we have $\pi_{k-1}(\varphi(x))=\varphi_{k-1}(\pi_{k-1}(x)) = \varphi'_{k-1}(\pi_{k-1}(x'))=\pi_{k-1}(\varphi'(x'))$. In particular, there exists $s\in \ab_k(\nss)$ such that $\varphi(x)=\varphi'(x')+s$. Using the surjectivity of $\phi'_k$ we can find some $r\in \ab_k(\ns')$ such that $\phi'_k(r)=s$, whence  $\varphi(x)=\varphi'(x'+r)$. With similarly chosen $r_n$ for each $n$, we obtain the claimed additional assumption for $(x_n,x_n'+r_n)$ and $(x,x'+r)$ (and note that $\pi(x_n,x_n'+r_n)=\pi(x_n,x_n')$ and similarly for $(x,x'+r)$, by the invariance of $\pi_{k-1}$ under the action of $\ab_k(\ns')$).

By Lemma \ref{lem:conv-quot} in $\ns$, there is a sequence $(z_n)_n$ in $\ab_k(\ns)$ such that $x_n+z_n\to x$ as $n\to\infty$ in $\ns$. For every $n\in \mb{N}$, by the surjectivity of $\phi_k'$ there is $z_n'\in \ab_k(\ns')$ such that $(z_n,z_n')\in G_k$. Note that  $\varphi(x+z_n)\to\varphi(x)$ as $n\to\infty$ and that $\varphi(x_n+z_n)=\varphi'(x_n'+z_n')$ for all $n\in \mb{N}$. Next, by Lemma \ref{lem:conv-quot} applied in $\ns'$, for every $n\in \mb{N}$ there exists $z_n''\in \ab_k(\ns')$ such that $x_n'+z_n'+z_n''\to x'$ as $n\to\infty$. We claim that $\phi'_k(z_n'')\to 0$ as $n\to \infty$. Indeed $\varphi'(x_n'+z_n'+z_n'')\to \varphi'(x')=\varphi(x)$ and also $\varphi'(x_n'+z_n'+z_n'')=\varphi'(x_n'+z_n')+\phi_k'(z_n'')=\varphi(x_n+z_n)+\phi_k'(z_n'')$. As $\varphi(x_n+z_n)\to \varphi(x)$ we get that $\phi'_k(z_n'')\to 0$ (here we are using Lemma \ref{lem:diff-cont}). Again by Lemma \ref{lem:conv-quot} we get that there exists $z_n^*\in \ker(\phi'_k)$ such that $z_n''-z_n^*\to 0$ in $\ab_k(\ns')$. We claim now that the sequence $(g_n:=(z_n,z_n'+z_n^*)\in G_k)_{n\in\mb{N}}$ works, i.e., that $(x_n,x'_n)+g_n\to (x,x')$. Indeed  $x_n'+z_n'+z_n^*=(x_n'+z_n'+z_n'')+(z_n^*-z_n'')$, where the first summand converges to $x'$ and the second to $0$.
\end{proof}
We now focus on \textsc{lch} nilspaces whose structure groups are Lie groups.
\begin{defn}[Lie-fibered nilpaces]\label{def:lcfr-nil}
Let $\ns$ be a $k$-step \textsc{lch} nilspace. We say that $\ns$ is a \emph{locally-compact and finite-rank} nilspace, or \emph{Lie-fibered nilspace}, if all its structure groups are compactly generated (abelian) Lie groups.
\end{defn}
\noindent For any such nilspace $\ns$, the factor projection $\pi_{k-1}:\ns\to\ns_{k-1}$ is locally trivial as a bundle projection. This can be deduced from a result of Palais \cite[Theorem 2.3.3]{P61} as follows.
\begin{theorem}\label{thm:local-triviality-bundle}
Let $\ns$ be a Lie-fibered $k$-step nilspace. Then there is a covering of $\ns_{k-1}$ by open sets $U$ with the property that for each $U$ there is a homeomorphism $\phi_U:\pi_{k-1}^{-1}(U)\to U\times \ab_k$ which is $\ab_k$-equivariant. 
\end{theorem}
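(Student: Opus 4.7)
The plan is to invoke Palais's result \cite[Theorem~2.3.3]{P61}, which asserts that every Cartan principal $G$-bundle with $G$ a Lie group admits continuous local cross-sections. First I would check that the hypotheses are met for the bundle $\pi_{k-1}:\ns\to\ns_{k-1}$. By Definition \ref{def:lcfr-nil}, the top structure group $\ab_k$ is a compactly-generated abelian Lie group acting continuously on $\ns$ (continuity of the action comes from Proposition \ref{prop:k-th-group-action}). By Lemma \ref{lem:diff-cont} applied with $i=k$, this principal $\ab_k$-bundle is Cartan, i.e.\ the difference map on pairs of points lying in the same fiber of $\pi_{k-1}$ is continuous. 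Finally, the base $\ns_{k-1}$ is an \textsc{lch} space by Theorem \ref{thm:factor-of-lch-nil}, so Palais's theorem applies and yields, for every $\bar{x}\in\ns_{k-1}$, an open neighborhood $U\ni\bar{x}$ and a continuous section $s_U:U\to\ns$ of $\pi_{k-1}$.

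With a local section in hand, the remaining step is the standard construction of a local trivialization. I would define
\[
\psi_U: U\times \ab_k \to \pi_{k-1}^{-1}(U),\qquad (u,a)\mapsto s_U(u)+a,
\]
which is continuous by continuity of the $\ab_k$-action. For the inverse, I would set
\[
\phi_U:\pi_{k-1}^{-1}(U)\to U\times\ab_k,\qquad x\mapsto \bigl(\pi_{k-1}(x),\; x - s_U(\pi_{k-1}(x))\bigr),
\]
noting that $x$ and $s_U(\pi_{k-1}(x))$ lie in the same $\pi_{k-1}$-fiber so the difference is well-defined and belongs to $\ab_k$; continuity of $\phi_U$ then follows precisely from the Cartan property established in Lemma \ref{lem:diff-cont}. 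The identities $\phi_U\co \psi_U = \id$ and $\psi_U\co \phi_U = \id$ are immediate, so $\phi_U$ is a homeomorphism. Its $\ab_k$-equivariance is also immediate: $\phi_U(x+a') = (\pi_{k-1}(x),\; x+a'-s_U(\pi_{k-1}(x)))$, which equals $\phi_U(x)+(0,a')$ in the obvious action on $U\times\ab_k$.

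I do not anticipate any serious obstacle: the topologically delicate content (continuity of the difference map, \textsc{lch} property of $\ns_{k-1}$, continuity of the $\ab_k$-action) has already been established in Proposition \ref{prop:k-th-group-action}, Theorem \ref{thm:factor-of-lch-nil}, and Lemma \ref{lem:diff-cont}. The only nontrivial ingredient beyond these is Palais's theorem itself, which is used as a black box to pass from the Cartan property to the existence of local sections; once local sections exist, the construction of an equivariant trivialization is routine.
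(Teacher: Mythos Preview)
Your proposal is correct and follows essentially the same approach as the paper: verify that $\ns$ is a Cartan principal $\ab_k$-bundle using Proposition~\ref{prop:k-th-group-action} and Lemma~\ref{lem:diff-cont}, then invoke Palais's theorem. The paper's proof is terser---it cites Palais directly for local triviality rather than first extracting a local section and then building the trivialization by hand---but your explicit construction of $\phi_U$ from the section is entirely standard and correct.
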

The equivariance here is meant relatively to the natural actions of $\ab_k$ on the two spaces in question, namely its action on $\ns$  as the $k$-th structure group, and its action  on $U\times \ab_k$ by addition in the second component.
\begin{proof}
By Proposition \ref{prop:k-th-group-action} and Lemma \ref{lem:diff-cont}, the nilspace $\ns$ is a Cartan principal $\ab_k$-bundle, and therefore by \cite[Theorem in \S 4]{P61}) this bundle is locally trivial.
\end{proof}

\subsection{The open mapping theorem for Lie-fibered nilspaces}\hfill\\
A well-known version of the open mapping theorem for topological groups states that if $G$ and $H$ are \textsc{lch} topological groups, with $G$ being $\sigma$-compact, and $\varphi:G\to H$ is a continuous surjective homomorphism, then $\varphi$ is an open map \cite[p.\ 669]{H&M-Cpct}. In this section we prove the following analogue.
\begin{theorem}\label{thm:open-mapping-thm}
Let $\ns$,$\nss$ be Lie-fibered $k$-step nilspaces, and let $\varphi:\ns\to\nss$ be a continuous fibration. Then $\varphi$ is an open map.
\end{theorem}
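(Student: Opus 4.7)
The plan is to proceed by induction on the step $k$, using the local triviality established in Theorem \ref{thm:local-triviality-bundle} to reduce the problem fiberwise to the classical open mapping theorem for topological groups.

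For the base case $k=1$, the nilspaces $\ns,\nss$ are simply compactly-generated abelian Lie groups endowed with their standard degree-1 cubes, and $\varphi$ is (up to translation) a continuous surjective group homomorphism $\phi:\ns\to\nss$. Since $\ns$ is compactly generated and Lie, it is $\sigma$-compact, so the classical open mapping theorem for \textsc{lch} groups (e.g., the version quoted from \cite{H&M-Cpct}) yields openness of $\phi$ and hence of $\varphi$.

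For the inductive step, assume the theorem holds for $(k-1)$-step Lie-fibered nilspaces. Given $\varphi:\ns\to\nss$ as in the hypothesis, the induced map $\varphi_{k-1}:\ns_{k-1}\to\nss_{k-1}$ is a continuous fibration between Lie-fibered $(k-1)$-step nilspaces (using Theorem \ref{thm:factor-of-lch-nil}), so by induction it is open; and the induced structure-group homomorphism $\phi_k:\ab_k(\ns)\to\ab_k(\nss)$ is a continuous surjection between compactly generated abelian Lie groups, hence open by the classical open mapping theorem. To show $\varphi$ is open, it suffices, given $y\in\ns$ and an open neighborhood $O$ of $y$, to exhibit an open neighborhood of $\varphi(y)$ inside $\varphi(O)$. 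Choose trivializing neighborhoods $U\subset\ns_{k-1}$ of $\pi_{k-1}(y)$ and $W\subset\nss_{k-1}$ of $\varphi_{k-1}(\pi_{k-1}(y))$ provided by Theorem \ref{thm:local-triviality-bundle}, shrinking $U$ so that $\varphi_{k-1}(U)\subset W$. In the resulting $\ab_k$-equivariant trivializations $\pi_{k-1}^{-1}(U)\cong U\times\ab_k(\ns)$ and $\pi_{k-1}^{-1}(W)\cong W\times\ab_k(\nss)$, the equivariance $\varphi(x+z)=\varphi(x)+\phi_k(z)$ (which holds since $\varphi$ is a fibration) forces the local model of $\varphi$ to take the form
\[
\widetilde{\varphi}:U\times\ab_k(\ns)\to W\times\ab_k(\nss),\qquad (v,a)\mapsto(\varphi_{k-1}(v),\,\phi_k(a)+f(v)),
\]
where $f:U\to\ab_k(\nss)$ is continuous (it is the image of the zero section, read off in the $W$-trivialization).

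It remains to verify that $\widetilde\varphi$ is open, which I would do by factoring it as a composition of three maps that are individually open. First, $\mathrm{id}_U\times\phi_k:U\times\ab_k(\ns)\to U\times\ab_k(\nss)$ is open because $\phi_k$ is (and the image of a basic open rectangle $A\times B$ is $A\times\phi_k(B)$). Second, the fiber-preserving shear $(v,b)\mapsto(v,b+f(v))$ is a homeomorphism of $U\times\ab_k(\nss)$ (its inverse is $(v,c)\mapsto(v,c-f(v))$, continuous since $f$ is). Third, $\varphi_{k-1}\times\mathrm{id}:U\times\ab_k(\nss)\to W\times\ab_k(\nss)$ is open because $\varphi_{k-1}$ restricted to the open set $U$ is open by the inductive hypothesis. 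Composing, $\widetilde\varphi$ is open, so $\widetilde\varphi(O\cap\pi_{k-1}^{-1}(U))$ is open in $\pi_{k-1}^{-1}(W)$ (hence in $\nss$) and furnishes the desired neighborhood of $\varphi(y)$ inside $\varphi(O)$.

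The step I expect to require the most care is verifying that the induced map $\varphi_{k-1}$ is indeed a continuous fibration of Lie-fibered nilspaces (so that the inductive hypothesis applies), and that the local trivializations can be compatibly chosen so that the explicit form of $\widetilde\varphi$ displayed above is correct; both points rely on the fact that fibrations induce surjective homomorphisms between structure groups and on the $\ab_k$-equivariance built into Theorem \ref{thm:local-triviality-bundle}. Everything else is a straightforward bookkeeping exercise combining openness of continuous surjections between compactly generated Lie groups with the product-structure argument above.
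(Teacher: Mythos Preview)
Your proposal is correct and follows essentially the same approach as the paper: induction on $k$, base case via the classical open mapping theorem for \textsc{lch} groups, and inductive step via the local trivializations from Theorem \ref{thm:local-triviality-bundle}, expressing $\varphi$ locally as $(v,a)\mapsto(\varphi_{k-1}(v),\phi_k(a)+f(v))$ and factoring this into maps that are open by induction, by the classical theorem, or by being homeomorphisms. Your three-map factorization ($\mathrm{id}\times\phi_k$, shear, $\varphi_{k-1}\times\mathrm{id}$) is a slightly cleaner packaging than the paper's (which composes the shear with $\varphi_{k-1}\times\mathrm{id}$ into a single map $T$ and then undoes the shear to check openness), but the content is the same.
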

\noindent As with classical open mapping theorems, this is useful to establish that certain invertible continuous maps are homeomorphisms. In particular we have the following consequence.
\begin{corollary}\label{cor:inv-cont-lcfr}
Let $\ns$ be a Lie-fibered $k$-step nilspace, and let $\alpha$ be a continuous translation on $\ns$. Then $\alpha$ is a homeomorphism. In particular $\alpha^{-1}$ is continuous and the set of continuous translations on $\ns$ is a group under composition.
\end{corollary}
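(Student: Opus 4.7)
The plan is to deduce the corollary directly from Theorem \ref{thm:open-mapping-thm}. First I would recall that any translation $\alpha \in \tran(\ns)$ is by definition a nilspace automorphism: it is a bijection $\ns\to\ns$ that together with its inverse preserves the cube structure (and in fact respects the filtration $\tran_i$). In particular $\alpha$ is a bijective morphism of nilspaces, and since surjective morphisms between $k$-step nilspaces that preserve cubes of dimension up to some small range are fibrations (and here $\alpha$ is actually an isomorphism algebraically), $\alpha$ is a fibration from $\ns$ to itself. Applying Theorem \ref{thm:open-mapping-thm} with $\nss=\ns$ then yields that $\alpha$ is an open map. Being a continuous open bijection, $\alpha$ is a homeomorphism, so $\alpha^{-1}$ is continuous, which is the first assertion.

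For the second assertion, recall that $\tran(\ns)$ is a group under composition purely algebraically (this is established, for instance, in \cite[\S 3.2.4]{Cand:Notes1}). The composition of two continuous maps is continuous, so the set of continuous translations is closed under composition. By the first part just established, it is also closed under taking inverses, and it visibly contains the identity. Hence the set of continuous translations forms a subgroup of $\tran(\ns)$.

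The only non-routine step is the invocation of Theorem \ref{thm:open-mapping-thm}, which already carries the full topological weight; once that theorem is in hand the corollary is a direct application. The minor verification to make explicit is that a translation does qualify as a fibration (not merely as a morphism). This is immediate from the definition of $\tran(\ns)$, since being an automorphism of the nilspace $\ns$ certainly implies the cube-lifting property that defines a fibration. No further analysis of the topology on $\ns$ beyond what Theorem \ref{thm:open-mapping-thm} uses is required.
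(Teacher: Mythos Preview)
Your proposal is correct and follows essentially the same approach as the paper: translations are fibrations, so Theorem~\ref{thm:open-mapping-thm} makes any continuous translation an open bijection, hence a homeomorphism. The paper's proof is just the one-line version of what you wrote; your additional remarks on why a translation is a fibration and on the subgroup property are accurate elaborations rather than a different route.
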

\noindent Hence, for a Lie-fibered nilspace $\ns$, the notation $\tran(\ns)$ will always refer to the group of \emph{continuous} translations on $\ns$.
\begin{proof}
Any translation on $\ns$ is a fibration. Hence, by Theorem \ref{thm:open-mapping-thm} a continuous translation is an open map, so its inverse is continuous.
\end{proof}
\begin{proof}[Proof of Theorem \ref{thm:open-mapping-thm}]
We argue by induction on $k$. The case $k=1$ holds by the open mapping theorem for topological groups. For $k>1$, we shall use that the bundle map $\pi:\ns\to\ns_{k-1}$ is locally trivial, which is given by Theorem \ref{thm:local-triviality-bundle}. That is, we shall use that there exists a covering of $\ns_{k-1}$ by open sets $\{U_i\}_{i\in I}$ such for each $i$ there is a map $\pi_{k-1}^{-1}(U_i)\to  U_i \times\ab_k(\ns)$ that is a homeomorphism which is also $\ab_k$-equivariant.

Fix any $x\in\ns$. We now show that for small enough neighborhoods of $x$, their images under $\varphi$ are open. Let $V\subset \nss_{k-1}$ be any open neighborhood of $\pi_{k-1}(\varphi(x))$ such that there is a $\ab_k$-equivariant homeomorphism $\tau:V\times \ab_k(\nss)\to \pi_{k-1}^{-1}(V)$.  Note that we can express $\tau$ as $(\pi_{k-1}(y),z')\mapsto s'(\pi_{k-1}(y))+z'$ for some continuous map $s':V\to \pi_{k-1}^{-1}(V)$.  The set $\varphi^{-1}(\pi_{k-1}^{-1}(V))$ is an open neighborhood of $x$. Thus $\pi_{k-1}(\varphi^{-1}(\pi^{-1}(V)))$ is an open neighborhood of $\pi_{k-1}(x)$ (recall that $\pi_{k-1}$ is an open map by Corollary \ref{cor:quotient-top-k-1-factor}). Let $U^*$ be an open neighborhood of $\pi_{k-1}(x)$ such that there is an equivariant homeomorphism $U^*\times \ab_k(\ns)\to \pi_{k-1}^{-1}(U^*)$, $(\pi_{k-1}(x),z)\mapsto s(\pi_{k-1}(x))+z$. Take $U:=U^*\cap \pi_{k-1}(\varphi^{-1}(\pi^{-1}(V)))$ and note that the same homemorphism works replacing $U^*$ with $U$.

Identifying $\pi_{k-1}^{-1}(U)$ with $U\times \ab_k(\ns)$, we can then define the base of neighborhoods of $x$ consisting of sets of the form $A\times H\subset U\times \ab_k(\ns)$, where $A\subset U$ and $H\subset \ab_k(\ns)$ are open. The image of any such neighbourhodd $A\times H$ under $\varphi$ is
\[
\varphi(A\times H)=\tau^{-1}\Big(\,\{(\varphi_{k-1}(u),\phi_k(z)+(\varphi\co s-s'\co \varphi_{k-1})(a)): (a,z)\in A\times H\}\Big),
\]
where $\phi_k:\ab_k(\ns)\to\ab_k(\nss)$ is the $k$-th structure homomorphism of $\varphi$ and $\varphi_{k-1}:\ns_{k-1}\to\nss_{k-1}$ is the morphism between the $k-1$ factors of $\ns$ and $\nss$ (note that $\phi_k$ is continuous and surjective since $\varphi$ is a fibration). To see that this image is open, it suffices to prove the claim that the composition $U\times \ab_k\to U\times \ab_k'\to V\times \ab_k'$, $(u,z)\mapsto (u,\phi_k(z))\mapsto (\varphi_{k-1}(u),\phi_k(z)+(\varphi\co s-s'\co \varphi_{k-1})(u))$ is open. Since the first of these maps is clearly open, we focus on showing that the second map, namely 
$T:U\times \ab_k'\to V\times \ab_k'$, $(u,z')\mapsto (\varphi_{k-1}(u),z'+(\varphi\co s-s'\co \varphi_{k-1})(u))$, is open. Let  $I:U\times Z_k'\to U\times Z_k'$,  $(u,z')\mapsto (u,z'-(\varphi\co s-s'\co \varphi_{k-1})(u))$. Since $I$ is clearly a  homeomorphism, it suffices to show that $T\co I$ is open. But $T\co I(u,z')=(\varphi_{k-1}(u),z')$, which is clearly open. This proves our claim.

By induction $\varphi_{k-1}(A)$ is open and by the open mapping theorem for topological groups so is $\phi_k(H)$. It follows that $\varphi(A\times H)$ is open as required

We have thus proved that images under $\varphi$ of the above basic open neighbourhoods are open, so the openness of $\varphi$ follows by taking unions.
\end{proof}
\noindent In fact, a similar local-triviality argument  proves the following more general result.
\begin{theorem}\label{thm:open-mapping-general}
Let $\ns$ be a $k$-step \textsc{lch} nilspace, and let $\nss$ be a $k$-step Lie-fibered nilspace. Then any continuous fibration $\varphi:\ns\to\nss$ is an open map.
\end{theorem}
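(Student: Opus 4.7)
The plan is to induct on $k$ and adapt the strategy of the proof of Theorem~\ref{thm:open-mapping-thm}, removing the one point that used the Lie-fibered hypothesis on $\ns$, namely the local triviality of $\pi_{k-1}:\ns\to\ns_{k-1}$. To compensate, I would rely on the local triviality of $\pi_{k-1}:\nss\to\nss_{k-1}$ (which still holds by Theorem~\ref{thm:local-triviality-bundle}), together with the classical open mapping theorem for topological groups applied to $\phi_k:\ab_k(\ns)\to\ab_k(\nss)$; the latter is valid because $\ab_k(\ns)$ is an \textsc{lch} second-countable (hence $\sigma$-compact) abelian group and $\phi_k$ is a continuous surjective homomorphism. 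The base case $k=1$ then reduces, after fixing basepoints, to the open mapping theorem for topological groups. For the inductive step, Theorem~\ref{thm:factor-of-lch-nil} makes $\ns_{k-1}$ an \textsc{lch} nilspace, and the induced map $\varphi_{k-1}:\ns_{k-1}\to\nss_{k-1}$ is a continuous fibration with Lie-fibered target, so by induction $\varphi_{k-1}$ is open.

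The main construction is as follows. Fix an open $W\subset\ns$, a point $x\in W$, and $y:=\varphi(x)$. Using Theorem~\ref{thm:local-triviality-bundle} for $\nss$, pick an open neighborhood $V$ of $\pi_{k-1}(y)$ in $\nss_{k-1}$ with a $\ab_k(\nss)$-equivariant homeomorphism $\tau:V\times\ab_k(\nss)\to\pi_{k-1}^{-1}(V)$ of the form $(\bar y,z')\mapsto s'(\bar y)+z'$ for a continuous section $s'$. Setting $V_0:=\varphi_{k-1}^{-1}(V)$ and replacing $W$ by $W\cap\pi_{k-1}^{-1}(V_0)$ (still open, still containing $x$), define the continuous map
\[
\mu:\pi_{k-1}^{-1}(V_0)\to V_0\times\ab_k(\nss),\quad x'\mapsto\bigl(\pi_{k-1}(x'),\,\varphi(x')-s'(\varphi_{k-1}(\pi_{k-1}(x')))\bigr).
\]
A direct computation (tracking how $\tau^{-1}$ acts on $\varphi(x')$) gives $\tau^{-1}(\varphi(W))=(\varphi_{k-1}\times\id)(\mu(W))$. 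Since $\tau$ is a homeomorphism and $\varphi_{k-1}\times\id$ is open (by the inductive openness of $\varphi_{k-1}$), the openness of $\varphi(W)$ reduces to showing that $\mu$ is an open map.

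The main technical step is therefore to prove that $\mu$ is open; this effectively replaces the local triviality of $\ns$ used in the original proof. I would argue via the sequential criterion (valid as both source and target are metrizable): given $\mu(x')=:(\pi_{k-1}(x'),z_0)\in\mu(W)$ and any sequence $(\bar x_n,z_n)\to(\pi_{k-1}(x'),z_0)$, I must produce preimages converging to $x'$. First, since $\pi_{k-1}:\ns\to\ns_{k-1}$ is a continuous open surjection of metrizable spaces, the standard sequential-lifting lemma yields $x_n\to x'$ with $\pi_{k-1}(x_n)=\bar x_n$. Writing $\mu(x_n)=(\bar x_n,c_n)$, continuity forces $c_n\to z_0$, hence $z_n-c_n\to 0$ in $\ab_k(\nss)$. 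The openness of $\phi_k$, applied to a countable neighborhood basis of $0$, then lets me lift this null sequence, producing $w_n\to 0$ in $\ab_k(\ns)$ with $\phi_k(w_n)=z_n-c_n$. The $\phi_k$-equivariance identity $\mu(x'+w)=\mu(x')+(0,\phi_k(w))$ gives $\mu(x_n+w_n)=(\bar x_n,z_n)$, while continuity of the $\ab_k(\ns)$-action yields $x_n+w_n\to x'$. Hence $(\bar x_n,z_n)\in\mu(W)$ eventually, so $\mu(W)$ is open and the induction closes. The main obstacle is precisely this openness of $\mu$: it combines the sequential lifting for the open quotient $\pi_{k-1}$ with the lifting of null sequences through the open surjective homomorphism $\phi_k$, the latter relying crucially on the $\sigma$-compactness of $\ab_k(\ns)$.
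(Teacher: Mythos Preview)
Your proof is correct and takes a genuinely different route from the paper's. The paper factors $\varphi$ as $\ns\xrightarrow{\pi_{\ker(\phi_k)}}\ns/\ker(\phi_k)\xrightarrow{\varphi'}\nss$: the first map is open (quotient by a group action), and the quotient $\ns/\ker(\phi_k)$ has $k$-th structure group $\ab_k(\ns)/\ker(\phi_k)\cong\ab_k(\nss)$, a Lie group; Palais's theorem then gives local triviality of $\ns/\ker(\phi_k)$ over $\ns_{k-1}$, and the argument of Theorem~\ref{thm:open-mapping-thm} applies verbatim to $\varphi'$. This is structurally clean but relies on the auxiliary Proposition~\ref{prop:quot-last-str-gr} (that $\ns/\ab$ is an \textsc{lch} nilspace for closed $\ab\leq\ab_k$), which takes some work to establish.

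Your approach bypasses that proposition entirely: you never modify $\ns$, instead using only the local triviality of the \emph{target} $\nss$ and reducing everything to the openness of the single map $\mu$, which you prove by a direct sequential argument combining the sequence-lifting property of the open quotient $\pi_{k-1}$ with that of the open homomorphism $\phi_k$. The equivariance identity $\mu(x'+w)=\mu(x')+(0,\phi_k(w))$ is exactly what makes this work. Your argument is more elementary and self-contained; the paper's is shorter once Proposition~\ref{prop:quot-last-str-gr} is in hand and recycles Theorem~\ref{thm:open-mapping-thm} more directly. Both are valid; yours has the advantage of not needing the quotient-nilspace machinery at all.
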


\noindent To prove this we first prove the following auxiliary result extending \cite[Proposition A.19]{CGSS-p-hom}.
\begin{proposition}\label{prop:quot-last-str-gr}
Let $\ns$ be a $k$-step, \textsc{lch} nilspace and $\ab$ a closed subgroup of its last structure group. Then $\ns/\ab$ with the cubes $\cu^n(\ns/\ab):=\{\pi_{\ab}\co \q: \q\in \cu^n(\ns) \}$ is an \textsc{lch} nilspace.\footnote{Here as usual $\ns/\ab$ is the quotient of $\ns$ by the action of $\ab$ with the quotient topology and $\pi_{\ab}$ is the quotient map.} Moreover, $(\ns/\ab)_{k-1}\cong \ns_{k-1}$ and the $k$-th structure group of $\ns/\ab$ is isomorphic to $\ab_k/\ab$ as topological groups where $\ab_k$ is the $k$-th structure group of $\ns$.
\end{proposition}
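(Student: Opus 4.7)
The plan is to build on the purely algebraic nilspace structure of $\ns/\ab$ provided by \cite[Proposition A.19]{CGSS-p-hom}, which states that $\ns/\ab$ is a $k$-step nilspace with $(\ns/\ab)_{k-1}\cong\ns_{k-1}$ and $k$-th structure group $\ab_k/\ab$; what remains is to verify that $\ns/\ab$ is an \textsc{lch} nilspace and that the two identifications are also topological (a homeomorphism and a topological group isomorphism, respectively). As a first step, I would show that $\ns/\ab$ is an \textsc{lch} space and that $\pi_\ab:\ns\to\ns/\ab$ is an open quotient map. Since $\ab$ is a closed subgroup of $\ab_k$ and the latter acts continuously on $\ns$ by Proposition~\ref{prop:k-th-group-action}, the restricted action of $\ab$ on $\ns$ is continuous. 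Using the continuity of the fiberwise difference map from Lemma~\ref{lem:diff-cont}, the graph of the $\ab$-orbit equivalence relation is seen to be closed in $\ns\times\ns$. One then runs the argument of Corollary~\ref{cor:quotient-top-k-1-factor} to conclude that $\pi_\ab$ is open and that $\ns/\ab$ is \textsc{lch}. The topological identifications in the ``moreover'' part follow at once: the factorization $\ns\to\ns/\ab\to\ns_{k-1}$ identifies $(\ns/\ab)_{k-1}$ with $\ns_{k-1}$ as \textsc{lch} spaces, and the restriction of $\pi_\ab$ to any fiber of $\pi_{k-1}$ (which is homeomorphic to $\ab_k$) is a continuous open surjection onto the corresponding fiber of $\ns/\ab\to(\ns/\ab)_{k-1}$, giving a topological group isomorphism with $\ab_k/\ab$.

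The central technical step is the continuity of the top completion function $\mc{K}_{k+1}:\cor^{k+1}(\ns/\ab)\to\ns/\ab$, which I would obtain by factoring through the completion function of $\ns$. Let $S:=\db{k+1}\setminus\{1^{k+1}\}$ and consider $\pi_\ab^S:\ns^S\to(\ns/\ab)^S$, which is open as a product of open maps. The crucial observation is that $\cor^{k+1}(\ns)\subset\ns^S$ is saturated under $\pi_\ab^S$: every discrete-cube morphism $\phi:\db{m}\to S$ has image contained in a proper face of $\db{k+1}$ of dimension at most $k$, and any function $\db{m}\to\ab_k$ factoring through such a face is a $\mc{D}_k(\ab_k)$-cube of dimension $m$; hence adding any pointwise function $a:S\to\ab$ to a corner $c\in\cor^{k+1}(\ns)$ preserves the corner property. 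Consequently the restriction $\pi_\ab^S|_{\cor^{k+1}(\ns)}:\cor^{k+1}(\ns)\to\cor^{k+1}(\ns/\ab)$ is an open quotient map. Moreover, $\pi_\ab\co\mc{K}_{k+1}:\cor^{k+1}(\ns)\to\ns/\ab$ is constant on the fibers of this quotient: if two corners $c_1,c_2\in\cor^{k+1}(\ns)$ satisfy $\pi_\ab\co c_1=\pi_\ab\co c_2$, their pointwise difference defines a function $S\to\ab$ (using the free action of $\ab_k$ on fibers of $\pi_{k-1}$) which extends uniquely to a $\mc{D}_k(\ab_k)$-cube of dimension $k+1$ whose value at $1^{k+1}$, being a $\mb{Z}$-linear combination of the values on $S$, lies in $\ab$; so the completions of $c_1$ and $c_2$ in $\ns$ differ by an element of $\ab$ and project to the same point of $\ns/\ab$. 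By the universal property of the open quotient, this descends to the desired continuous completion $\mc{K}_{k+1}$ on $\cor^{k+1}(\ns/\ab)$.

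With the continuity of $\mc{K}_{k+1}$ in hand, the remaining \textsc{lch}-nilspace axioms follow through Lemma~\ref{lem:eq-def-lch-nil}. Closure of $\cu^n(\ns/\ab)$ for $n\le k$ is immediate from the characterization of cubes in a $k$-step nilspace at dimension at most $k$ as lifts of cubes in $\ns_{k-1}$, combined with the closure of $\cu^n(\ns_{k-1})$ inherited from $\ns$. Closure at $n=k+1$ follows from the continuity of $\mc{K}_{k+1}$ together with the closure of $\cor^{k+1}(\ns/\ab)$ (which reduces to closure of lower-dimensional cube sets); for $n>k+1$, one argues as in \cite[Lemma~2.1.1]{Cand:Notes2} via reduction to lower-dimensional face restrictions. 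The main obstacle I foresee is the careful verification of the saturation of $\cor^{k+1}(\ns)$ under $\pi_\ab^S$ and the ensuing factorization of $\pi_\ab\co\mc{K}_{k+1}$; both rely essentially on $\ab$ being a subgroup of $\ab_k$ and on the degree restriction coming from $\ns$ being $k$-step, which together make the relevant $\mc{D}_k(\ab_k)$-cube conditions compatible with the subgroup $\ab$.
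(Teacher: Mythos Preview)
Your proposal is correct and reaches the same conclusion, but the route differs from the paper's in two coupled respects. For the continuity of $\mc{K}_{k+1}$ on $\ns/\ab$, the paper lifts a convergent sequence of corners through $\pi_\ab$ using Lemma~\ref{lem:conv-quot}, checks (via \cite[Remark~3.2.12]{Cand:Notes1}) that the lifts land in $\cor^{k+1}(\ns)$, and then pushes forward the completion on $\ns$. You instead observe that $\cor^{k+1}(\ns)$ is $\pi_\ab^S$-saturated (since $\ab^S\subset\cu^m(\mc{D}_k(\ab_k))$ on every $m$-face with $m\le k$), so $\pi_\ab^S|_{\cor^{k+1}(\ns)}$ is an open quotient onto $\cor^{k+1}(\ns/\ab)$; then you descend $\pi_\ab\co\mc{K}_{k+1}$ through this quotient using that the unique $\mc{D}_k(\ab_k)$-extension of an $\ab$-valued map on $S$ stays $\ab$-valued at $1^{k+1}$. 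Correspondingly, the paper proves closure of $\cu^{k+1}(\ns/\ab)$ \emph{before} completion continuity, by a direct sequence argument showing that $\cu^{k+1}(\ns)+\ab^{\db{k+1}}$ is closed (using $\sigma_{k+1}$), whereas you obtain closure at $n=k+1$ \emph{after}, as the closed graph of the continuous $\mc{K}_{k+1}$ over the closed set $\cor^{k+1}(\ns/\ab)$. Your saturation/descent argument is clean and avoids the sequence bookkeeping; the paper's direct closure argument is more hands-on but self-contained. One small point: your claim that $\pi_\ab$ restricted to a $\pi_{k-1}$-fiber is open follows (as you presumably intend) from that fiber being $\pi_\ab$-saturated, which gives the structure-group isomorphism without invoking the open mapping theorem for Polish groups as the paper does.
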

\begin{proof}
The algebraic part of the proof follows from \cite[Proposition A.19]{CGSS-p-hom}, so here we just prove the topological part. We need to prove first that the topology of $\ns/\ab$ is \textsc{lch} and later we will use Lemma \ref{lem:eq-def-lch-nil} to prove that it is indeed an \textsc{lch} nilspace.

Let $\pi_{k-1}:\ns\to \ns_{k-1}$ be the usual quotient map in $\ns$. First we prove that the set $C=\{(x,x')\in \ns^2: x=gx' \text{ for some } g\in \ab\}$ is closed. Since $\ns$ is a Cartan bundle over $\ns_{k-1}$, on the closed set $C'=\{(x,x')\in\ns^2:\pi_{k-1}(x)=\pi_{k-1}(x')\}$ we know that the difference function $C'\to \ab_k$, $(x,x')\mapsto x-x'$ is well-defined and continuous; since $C$ is the preimage under this function of the closed set $\ab$, it is indeed closed. By Lemma \ref{lem:quo-by-cont-gr-act} the topology on $\ns/\ab$ is \textsc{lch}.

Next, we want to see that $\cu^n(\ns/\ab)$ is closed for all $n\in \mb{N}$. As the map $\pi_{\ab}:\ns\to \ns/\ab$ is open and continuous, the map $\ns^{\db{n}}\to (\ns/\ab)^{\db{n}}$ is a quotient map. Hence, it suffices to check that $\cu^n(\ns)+\ab^{\db{n}}$ is a closed subset of $\ns^{\db{n}}$. For $n\le k$ the result is trivial and the case $n>k+1$ follows from the case $n=k+1$, so let us just prove this latter case. Let $\q_n+d_n\in \cu^{k+1}(\ns)+\ab^{\db{k+1}}$ be a convergent sequence. Let $r_n':=(\q_n+d_n)|_{\db{k+1}\setminus{1^{k+1}}}\in \cor^{k+1}(\ns)$. This is clearly convergent so let us define $r_n\in \cu^{k+1}(\ns)$ its unique completion for all $n\in \mb{N}$ and note that then $r_n\in \cu^{k+1}(\ns)$ is a convergent sequence. It follows that $\q_n-r_n\in \cu^{k+1}(\mc{D}_k(\ab_k))$ and $(\q_n-r_n)(v) \in \ab$ for all $v\not=1^{k+1}$. Thus, applying the $\sigma_{k+1}$ operator (see \cite[Definition 2.2.22]{Cand:Notes1}) we get that $\sigma_{k+1}(\q_n-r_n)=0$ (by \cite[Proposition 2.2.28 $(iii)$]{Cand:Notes1}) so in particular $(\q_n-r_n)(1^{k+1})\in \ab$ as well. Thus, if we let $e_n:=\q_n-r_n$ we have that $\q_n+d_n = r_n+(e_n+d_n)$. As both $\q_n+d_n$ and $r_n$ are convergent sequences so is $e_n+d_n\in \ab^{\db{k+1}}$. Using that $\cu^{k+1}(\ns)$ and $\ab^{\db{k+1}}$ are closed we have that $\q_n+d_n$ converges to a limit in $\cu^{k+1}(\ns)+\ab^{\db{k+1}}$. Lemma \ref{lem:eq-def-lch-nil} $(i)$ follows.

Next, let us prove first that the completion function $\cor^{k+1}(\ns/\ab)\to \ns/\ab$ is continuous. If $\pi_{\ab}\co \q_n\in \cor^{k+1}(\ns/\ab)$ is convergent, by Lemma \ref{lem:conv-quot} applied for each $v\in \db{k+1}\setminus\{1^{k+1}\}$, we can assume that $\q_n\in \ns^{\db{k+1}\setminus\{1^{k+1}\}}$ is a convergent sequence as well. And moreover, note that if $\pi:\ns/\ab\to \ns_{k-1}$ is the map such that $\pi_{k-1}=\pi\co\pi_{\ab}$, by \cite[Proposition A.19 $(iii)$]{CGSS-p-hom} we have that $\pi$ is a morphism. If we let $F\subset \db{k+1}\setminus\{1^{k+1}\}$ be any $k$-dimensional face containing $0^{k+1}$, $\pi_{k-1}\co (\q_n|_F) = \pi\co \pi_{\ab}\co (\q_n|_F)$. But $\pi_{\ab}\co (\q_n|_F)\in \cu^k(\ns/\ab)$ so in particular, $\pi_{k-1}\co (\q_n|_F)\in \cu^k(\ns_{k-1})$. Thus by \cite[Remark 3.2.12]{Cand:Notes1} we conclude that $\q_n\in \cor^{k+1}(\ns)$. Hence, the continuity of completion of $(k+1)$-corners for $\ns/\ab$ follows from that for  $\ns$.

By \cite[Proposition A.19 $(iii)$]{CGSS-p-hom} we have a nilspace isomorphism $(\ns/\ab)_{k-1}\to \ns_{k-1}$ (purely algebraically). We want to prove that this map is also a homeomorphism. Let $\pi'_{k-1}:\ns/\ab\to (\ns/\ab)_{k-1}$ be the quotient map to the $(k-1)$-step factor of $\ns/\ab$. We want to prove that the map $\pi'_{k-1}(\pi_{\ab}(x)) \mapsto \pi_{k-1}(x)$ is a homeomorphism (we already know that it is an algebraic nilspace isomorphism). By Corollary \ref{cor:quotient-top-k-1-factor} we have that $\pi'_{k-1}$ is open and continuous. As both $\pi_{k-1}$ and $\pi_{\ab}$ are also open and continuous, it follows that the previous map is a homeomorphism. Hence, $(ii)$ of Lemma \ref{lem:eq-def-lch-nil} for $n$-corners, $n\le k$, follows from the fact that $\ns$ is an \textsc{lch} nilspace.

The $k$-th structure group of $\ns/\ab$ is isomorphic to $\ab_k/\ab$. To prove this, recall that by \cite[Corollary 3.2.16]{Cand:Notes1} the $k$-th structure group of $\ns/\ab$ is isomorphic (as a degree-$k$ abelian group) to any fiber of the factor map $\pi_{k-1}$ on $\ns/\ab$. Moreover, on this fiber we consider the subspace topology. By \cite[Proposition A.19]{CGSS-p-hom} we know that algebraically the $k$-th structure group of $\ns/\ab$ is isomorphic to $\ab_k/\ab$. To see that the isomorphism is also topological, fix any $\pi_{k-1}$-fiber $\pi_{\ab}(e)+\ab_k/\ab$ of $\ns/\ab$. Note that $\pi_{\ab}|_{e+\ab_k}: e+\ab_k\to \pi_{\ab}(e)+\ab_k/\ab$ is a continuous map (being the restriction of a continuous map). Moreover, it is also a homomorphism of groups when we identify $e+\ab_k$ with $\ab_k$ and $\pi_{\ab}(e)+\ab_k/\ab$ with the $k$-th structure group of $\ns/\ab$. By the open mapping theorem for Polish groups this map is open and thus the $k$-th structre group of $\ns/\ab$ is topologically isomorphic to $\ab_k$ modulo the kernel of $\pi_{\ab}|_{e+\ab_k}$, which is precisely $\ab$. \end{proof}

\begin{proof}[Proof of Theorem \ref{thm:open-mapping-general}]
Let $\phi_k:\ab_k\to\ab_k'$ be the last structure morphism of $\varphi$ (where $\ab_k$ and $\ab_k'$ are the $k$-th structure groups of $\ns$ and $\nss$ respectively). By \cite[Proposition A.19]{CGSS-p-hom} and Proposition \ref{prop:quot-last-str-gr} we have that $\ns/\ker(\phi_k)$ is a $k$-step \textsc{lch} nilspace. The canonical factor map $\pi_{\ker(\phi_k)}:\ns\to\ns/\ker(\phi_k)$ is continuous and open, and $\varphi$ factors through a continuous fibration $\varphi':\ns/\ker(\phi_k)\to\nss$, $x+\ker(\phi_k)\mapsto \varphi(x)$, i.e., $\varphi = \varphi'\co \pi_{\ker(\phi_k)}$. Note that since $\phi_k$ is a continuous  surjective homomorphism, by Lemma \ref{lem:quot-polish-gr-lie-gr} we have that $\ab_k/\ker(\phi_k)\cong \ab_k'$ is a Lie group. By \cite[Theorem 2.3.3]{P61}, $\ns/\ker(\phi_k)$ is a locally-trivial abelian bundle over $(\ns/\ker(\phi_k))_{k-1}\cong \ns_{k-1}$. By induction on $k$ we can suppose that $\varphi'_{k-1}$ is open. We can then argue as in the proof of Theorem \ref{thm:open-mapping-thm}, using the local triviality, to deduce that $\varphi'$ is open. Since $\pi_{\ker(\phi_k)}$ is also open, we deduce that $\varphi$ is open.
\end{proof}

\subsection{Topological aspects of nilspace extensions}\label{subsec:topaspextLCH}\hfill\\
Recall from \cite[\S 3.3.3]{Cand:Notes1} the purely algebraic definition of extensions of nilspaces. In this subsection we study several ways in which topology can be added to this construction in the present non-compact setting. To this end, let us first recall the following general notion of continuous abelian bundle (see e.g.\ \cite[Definition 2.1.6]{Cand:Notes2}), valid in non-compact settings.
\begin{defn}\label{def:CtsAbBund}
Let $\bnd$ be an abelian bundle with base $S$, structure group $\ab$ and projection $\pi$. We say that $\bnd$ is  \emph{continuous} if the following conditions hold:
\begin{enumerate}
\item $\bnd$ and $S$ are topological spaces.
\item $\ab$ is an abelian topological group.
\item The action $\alpha:\ab\times \bnd\to \bnd$ is continuous.
\item A set $U\subset S$ is open if and only if $\pi^{-1}(U)$ is open in $\bnd$.
\end{enumerate}
\end{defn}
\noindent We say that $\bnd$ is an \emph{\textsc{lch} abelian $\ab$-bundle} if it is a continuous abelian bundle and in addition the topologies on $\bnd$, $S$, $\ab$ are all locally-compact, second-countable Hausdorff. We also say that $\bnd$ is a \emph{Cartan continuous $\ab$-bundle} if, in addition to $\bnd$ being a continuous $\ab$-bundle, it is also a Cartan principal $\ab$-bundle (i.e.\ the difference map is continuous: see Definition \ref{def:Cartan}). Note that there are examples of continuous abelian bundles that are not Cartan. Take for example $\bnd=\mb{R}/\mb{Z}$ and consider the action of $\mb{Z}$ on $\bnd$ defined as $(x\mod 1,n)\mapsto x+n\alpha \mod 1$ for some irrational $\alpha\in\mb{R}$. If we let $S:=\bnd/\mb{Z}$ with the quotient topology we have that this is a continuous abelian bundle. However, it can be checked not to be Cartan.

With this we can define the following topologically enriched version of the notion of nilspace extension, for the case of \textsc{lch} nilspaces. In what follows, when we say that a nilspace is an \emph{algebraic} extension of another nilspace, we mean that it is an extension in the purely algebraic sense of \cite[\S 3.3.3]{Cand:Notes1} (before any topological aspects are added).

\begin{defn}[Continuous extensions of \textsc{lch} nilspaces]\label{def:ext-lch-nil}
Let $k,t\ge 0$ be integers. Let $\ns$, $\nss$ be $k$-step \textsc{lch} nilspaces such that $\nss$ is an algebraic nilspace extension of $\ns$ of degree $t$, by some \textsc{lch} abelian group $\ab$, with associated projection $p:\nss\to\ns$. We say that $\nss$ (or $p:\nss\to \ns$) is a \emph{continuous extension} of $\ns$ if $\nss$ is additionally an \textsc{lch} abelian $\ab$-bundle with base $\ns$.
\end{defn}
\begin{remark}\label{rem:topquot}
Equivalently $\nss$ is a continuous $\ab$-extension of $\ns$ if the action of $\ab$ on $\nss$ is  continuous and $p$ is a continuous open map. Note that, in this case, the topology on $\ns$ is equal to the quotient of the topology on $\nss$ under the action of $\ab$.
\end{remark}
\noindent The next result extends a well-known fact in the theory of Lie groups. Recall that if $0\to H\to G\to F\to 0$ is a proper short exact sequence of \textsc{lch} groups  where $H$ and $F$ are Lie, then so is $G$ (see \cite[Thm.\ 2.6]{Mosk}, \cite[Lem.\ A.3]{HK-non-conv}). For nilspaces we have the following analogous result.

\begin{proposition}\label{prop:ext-lie-is-lie-1}
Let $\ns$, $\nss$ be \textsc{lch} $k$-step nilspaces, and suppose that $\nss$ is a degree-$k$ algebraic nilspace extension of $\ns$ by an abelian \textsc{lch} group $\ab$. Let $p:\nss\to \ns$ be the associated fibration. If $p$ is continuous, $\ns$ is Lie-fibered, and $\ab$ is a Lie group, then $\nss$ is also Lie-fibered. Moreover $\nss$ is then a continuous extension of $\ns$.
\end{proposition}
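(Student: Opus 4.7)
The plan is to verify that every structure group of $\nss$ is a compactly-generated Lie group by combining the open mapping theorem for topological groups with the classical fact, cited in the paper as \cite[Thm.~2.6]{Mosk}, that an \textsc{lch} extension of Lie groups by Lie groups is Lie. First I would observe that by Proposition~\ref{prop:k-th-group-action} every structure group $\ab_i(\nss)$ is an \textsc{lch} second-countable abelian group, and that the continuous fibration $p$ induces a continuous surjective homomorphism $\phi_i : \ab_i(\nss) \to \ab_i(\ns)$ at each level $i \le k$. Since the extension has degree $k$, the induced morphism $p_i : \nss_i \to \ns_i$ on the $i$-step factors is an algebraic isomorphism for $i < k$, making each such $\phi_i$ an algebraic bijection. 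The classical open mapping theorem for $\sigma$-compact \textsc{lch} groups then promotes each $\phi_i$ to a topological isomorphism, so $\ab_i(\nss) \cong \ab_i(\ns)$ is compactly generated Lie for every $i < k$.

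The substantive work is at level $k$. The algebraic extension furnishes an abstract exact sequence $0 \to \ab \to \ab_k(\nss) \to \ab_k(\ns) \to 0$: the map $\iota : \ab \to \ab_k(\nss)$ records that $\ab$ acts on $\nss$ by degree-$k$ translations, each of which descends to the identity on $\ns$ (and hence lies in $\ker \phi_k$), with image all of $\ker \phi_k$ because $\ab$ acts transitively on every fiber of $p$ that lies inside a fiber of $\pi^{\nss}_{k-1}$. Openness of $\phi_k$ comes from the open mapping theorem again, giving $\ab_k(\nss)/\ker \phi_k \cong \ab_k(\ns)$ topologically. The main obstacle is then to upgrade the abstract isomorphism $\iota : \ab \to \ker \phi_k$ to a topological one; this is nontrivial because the algebraic $\ab$-action on $\nss$ is not given as continuous. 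To handle it I would fix $x_0 \in \nss$ and consider $F := p^{-1}(p(x_0))$, a closed \textsc{lch} subspace of $\nss$; the continuous $\ab_k(\nss)$-action restricts to a free transitive continuous $\ker \phi_k$-action on $F$, and a $\sigma$-compactness argument makes the resulting orbit map a homeomorphism $\ker \phi_k \cong F$. Using the continuous Cartan difference map from Lemma~\ref{lem:diff-cont} applied to $\pi^{\nss}_{k-1}$, one reduces the continuity of $\iota$ to that of the algebraic orbit map $a \mapsto a + x_0$, which can then be handled via the cocycle description of the algebraic extension (or alternatively via a closed-graph/measurability argument in the Polish setting). The open mapping theorem then makes $\iota$ a topological isomorphism, so $\ker \phi_k$ is Lie, and Moskowitz's theorem yields that $\ab_k(\nss)$ is Lie, with compact generation inherited from $\ab$ and $\ab_k(\ns)$. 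Combined with the preceding step this shows $\nss$ is Lie-fibered.

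For the ``moreover'' clause, the topological identification $\ab \cong \ker \phi_k \hookrightarrow \ab_k(\nss)$ from the previous step, together with the joint continuity of the full $\ab_k(\nss)$-action on $\nss$ guaranteed by Proposition~\ref{prop:k-th-group-action}, yields joint continuity of the $\ab$-action on $\nss$. Openness of $p$ follows at once from Theorem~\ref{thm:open-mapping-thm}, since both $\ns$ and $\nss$ are now Lie-fibered and $p$ is a continuous fibration between them. These facts together verify the four conditions of Definition~\ref{def:CtsAbBund}, so $\nss$ is a continuous $\ab$-extension of $\ns$ in the sense of Definition~\ref{def:ext-lch-nil}.
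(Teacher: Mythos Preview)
Your overall architecture matches the paper's: handle levels $i<k$ via open mapping, level $k$ via the short exact sequence $0\to\ab\to\ab_k(\nss)\to\ab_k(\ns)\to 0$ together with \cite[Theorem~2.6]{Mosk}, then deduce openness of $p$ from Theorem~\ref{thm:open-mapping-thm}. The only difference below level $k$ is cosmetic: the paper applies Theorem~\ref{thm:open-mapping-general} once to the bijective continuous fibration $p_{k-1}$ (after a short cube argument establishing bijectivity), obtaining $\nss_{k-1}\cong\ns_{k-1}$ as \textsc{lch} nilspaces in a single stroke, while you run the group open-mapping theorem on each structure homomorphism $\phi_i$ separately. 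Both routes work.

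At level $k$ you correctly isolate the delicate step, which the paper dispatches in a single line (``using the definition of addition in the fiber $F$ we have that the action of $\ab$ is\ldots continuous''): for Moskowitz one needs $\iota:\ab\hookrightarrow\ab_k(\nss)$ to be a topological embedding, equivalently the orbit map $a\mapsto a\cdot x_0$ to be continuous. You are right that nothing in the bare hypothesis ``algebraic extension by an \textsc{lch} group $\ab$'' ties the given Lie topology on $\ab$ to the topology on $\nss$---but your two proposed fixes do not close the gap either. A cocycle description already presupposes a continuous local trivialisation, and a closed-graph or automatic-continuity argument needs at least Borel measurability of $a\mapsto a\cdot x_0$, which a purely algebraic action does not supply (one could transport the standard topology on $\ab=\mathbb{R}$ through a discontinuous additive automorphism without altering the algebraic extension, falsifying the ``moreover'' clause outright). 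In every application in the paper (fiber products, translation bundles) the $\ab$-action is manifestly continuous, so the issue is moot in practice; but as written your sketch, like the paper's sentence, leaves this step unargued rather than resolved.
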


\begin{proof}
Recall that $p$ is indeed a fibration (\cite[Proposition A.17]{CGSS-p-hom}), and note that $p_{k-1}:\nss_{k-1}\to \ns_{k-1}$ is then a continuous fibration as well. Moreover $p_{k-1}$ is bijective. Indeed, for any elements $\pi_{k-1}(x),\pi_{k-1}(x')\in\nss_{k-1}$ such that $p_{k-1}(\pi_{k-1}(x))=p_{k-1}(\pi_{k-1}(x'))$, we have $\pi_{k-1}(p(x))=\pi_{k-1}(p(x'))$ and thus there exists a cube $\q\in \cu^k(\ns)$ such that $\q(v)=p(x)$ for all $v\not=1^k$ and $\q(1^k)=p(x')$. Let $\q':\db{k}\to\nss$ be the map with $\q'(v)=x$ for all $v\not=1^k$ and $\q'(1^k)=x'$. Since $\q'$ restricted to $\db{k}\setminus\{1^k\}$ is a corner and $p$ is a fibration with $p\co \q'|_{\db{k}\setminus\{1^k\}}=\q|_{\db{k}\setminus\{1^k\}}$, there exists $z\in \ab$ such that the map $\q'+1_{\{1^k\}}z$ is in $\cu^k(\nss)$. But note that $\ab^{\db{k}} = \cu^k(\mc{D}_k(\ab))$, so $1_{\{1^k\}}z\in \cu^k(\mc{D}_k(\ab))$, and therefore $\q'\in \cu^k(\nss)$. Hence $\pi_{k-1}(x)=\pi_{k-1}(x')$. By Theorem \ref{thm:open-mapping-general} the map $p_{k-1}$ is open, and therefore $p_{k-1}$ is an isomorphism of \textsc{lch} nilspaces. Hence every structure group of $\nss_{k-1}$ is a Lie group.

It therefore remains only to prove that the last structure group $\ab_k(\nss)$ is also a Lie group. Recall from Proposition \ref{prop:k-th-group-action} that $\ab_k(\nss)$ is isomorphic to any fixed fiber $F$ of $\pi_{k-1,\nss}$ where we fix some (any) element $e\in F$ to be the identity. Similarly, in $\ns$ we fix the fiber $F':=p(F)$ and fix $p(e)$ as the zero element of $\ab_k(\ns)$. The free action of $\ab$ on $F$ induces an inclusion of $\ab$ into $\ab_k(\nss)$. The homormorphism $\phi_k:\ab_k(\nss)\to\ab_k(\ns)$ can then be identified with $p|_F:F\to F'$. 

First we claim that the kernel of this homomorphism  $p|_F$ is precisely $e+\ab\subset F$. To see this, let $x\in F$ be any element such that $p(x)=p(e)$. By hypothesis on $p$, this means that if $\q\in F^{\db{k+1}}$ is the map $\q(v)=e$ for all $v\not=1^{k+1}$ and $\q(1^{k+1})=x$, there exists $d\in \ab^{\db{k+1}}$ such that $\q+d\in \cu^{k+1}(\nss)$. Note that we are not assuming that either $\q$ or $d$ are cubes for any nilspace, only that the sum $\q+d$ is. Then, by definition of the cubes in $\cu^{k+1}(\mc{D}_k(\ab))$ in terms of the single Gray-code linear equation, we can find a cube $d'\in \cu^{k+1}(\mc{D}_k(\ab))$ such that $d'(v)=-d(v)$ for all $v\not=1^{k+1}$. Hence $c+d+d'\in \cu^{k+1}(\nss)$. Note that all values of this cube are $e$ except perhaps the value at  $1^{k+1}$. By uniqueness of completion, this value must also be $e$. Hence $x+z=e$ for some $z\in \ab$,  which proves our claim.

Next, note that using the definition of addition in the fiber $F$ we have that the action of $\ab$ is also well defined and continuous. Furthermore, as mentioned above this free action induces an inclusion of $\ab$ in $\ab_k(\nss)$, and $e+\ab$ is closed because it is precisely $p^{-1}(\{p(e)\})$.

Thus we have a short exact sequence $0\to \ab \to \ab_k(\nss) \to \ab_k(\ns)\to 0$ where all groups are \textsc{lch} and $\ab,\ab_k(\ns)$ are Lie. By \cite[Theorem 2.6]{Mosk} $\ab_k(\nss)$ is Lie. Finally, by Theorem \ref{thm:open-mapping-thm} the map $p$ is open, so $\nss$ is a continuous extension of $\ns$.
\end{proof}

The previous result dealt only with degree-$k$ extensions of $k$-step nilspaces. We can use it to prove the following generalization.

\begin{corollary}\label{cor:ext-of-lcfr-is-lcfr}
Let $k,t\ge 0$ be integers. Let $\ns$, $\nss$ be \textsc{lch} $k$-step nilspaces such that $\nss$ is a degree-$t$ algebraic nilspace extension of $\ns$ by an abelian \textsc{lch} group $\ab$, with associated projection $p:\nss\to \ns$. If $p$ is continuous, $\ns$ is Lie-fibered, and $\ab$ is a Lie group, then $\nss$ is also Lie-fibered. In particular, $\nss$ is a continuous extension of $\ns$.
\end{corollary}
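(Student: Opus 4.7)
The plan is to argue by induction on $k-t$, the base case $t=k$ being exactly Proposition~\ref{prop:ext-lie-is-lie-1}. For the inductive step $t<k$, the strategy is to pass to the $(k-1)$-step factors, use the inductive hypothesis there to handle the first $k-1$ structure groups of $\nss$, and then separately deduce that the top structure group of $\nss$ is a Lie group via the open mapping theorem for topological groups.

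For the first step, by Theorem~\ref{thm:factor-of-lch-nil} both $\ns_{k-1}$ and $\nss_{k-1}$ are $(k-1)$-step \textsc{lch} nilspaces, and the continuous fibration $p$ descends to a continuous fibration $\bar{p}\colon\nss_{k-1}\to\ns_{k-1}$. Since $t<k$, the degree-$t$ cocycle defining $\nss$ over $\ns$ induces a degree-$t$ cocycle on cubes of $\ns_{k-1}$, making $\bar{p}$ a degree-$t$ algebraic extension of $\ns_{k-1}$ by $\ab$; moreover $\ns_{k-1}$ is Lie-fibered since its structure groups form a subset of those of $\ns$. As $(k-1)-t<k-t$, the inductive hypothesis gives that $\nss_{k-1}$ is Lie-fibered, and hence $\ab_i(\nss)=\ab_i(\nss_{k-1})$ is a Lie group for each $i\in[k-1]$. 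For the $k$-th structure group, the fact that the extension has degree $t<k$ implies that the $k$-th structure homomorphism $\phi_k\colon\ab_k(\nss)\to\ab_k(\ns)$ of $p$ is an algebraic isomorphism; it is continuous because $p$ is, and both groups are Polish by Proposition~\ref{prop:k-th-group-action} (being \textsc{lch} and second-countable). The open mapping theorem for topological groups then yields that $\phi_k$ is a topological isomorphism onto the Lie group $\ab_k(\ns)$, so $\ab_k(\nss)$ is a Lie group and $\nss$ is Lie-fibered.

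To conclude that $\nss$ is a continuous extension of $\ns$, openness of $p$ follows immediately from Theorem~\ref{thm:open-mapping-general}, which only requires the target $\ns$ to be Lie-fibered. The continuity of the $\ab$-action on $\nss$ can then be obtained by combining the inductive hypothesis on $\nss_{k-1}$ (giving a continuous $\ab$-action there) with the local triviality of the bundle $\nss\to\nss_{k-1}$ from Theorem~\ref{thm:local-triviality-bundle} (now that $\nss$ is Lie-fibered): in a local trivialisation the $\ab$-action reduces to its continuous action on $\nss_{k-1}$ combined with the identity on the $\ab_k(\nss)$-fibre, hence is jointly continuous. The main obstacle I expect lies in verifying that the degree-$t$ algebraic extension structure descends cleanly to the factor map $\bar{p}$; this is a purely algebraic bookkeeping exercise using the definition of degree-$t$ cocycles from \cite{Cand:Notes1}, but it needs to be carried out carefully to ensure that nothing above level $t$ leaks into the descended structure and that the $k$-th structure homomorphism of $p$ is indeed an isomorphism.
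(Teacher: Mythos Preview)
Your route to Lie-fiberedness is correct but genuinely different from the paper's. The paper does not induct through $\nss_{k-1}$; instead it passes to level~$t$ and uses the fiber-product representation $\nss\cong\nss_t\times_{\ns_t}\ns$ from \cite[Proposition~A.18]{CGSS-p-hom}. Proposition~\ref{prop:ext-lie-is-lie-1} is applied once to the degree-$t$ extension $p_t:\nss_t\to\ns_t$ of the $t$-step nilspace $\ns_t$, the fiber product is seen to be Lie-fibered via Lemma~\ref{lem:fib-prod-top-nil}, and then Theorem~\ref{thm:open-mapping-general} upgrades the continuous bijection $y\mapsto(\pi_t(y),p(y))$ to a homeomorphism. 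Your approach trades the fiber product for an induction and a direct analysis of $\phi_k$; both work, and your observation that $\phi_k$ is a continuous algebraic isomorphism between Polish groups (hence a topological isomorphism) is exactly the mechanism the paper invokes later, inside the proof of Theorem~\ref{thm:ext-lch-nil}.

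There is, however, a concrete error in your argument for continuity of the $\ab$-action. In a local trivialisation $\pi_{k-1}^{-1}(U)\cong U\times\ab_k(\nss)$ with section $s$, the point $y=s(u)+w$ is sent by $z\in\ab$ to $(u+_{k-1}z,\;w+\delta(u,z))$ where $\delta(u,z)=(s(u)+z)-s(u+_{k-1}z)\in\ab_k(\nss)$. This is \emph{not} the identity on the fibre (that would force $s$ to be $\ab$-equivariant, i.e.\ the extension to split over $U$), and the defining formula for $\delta$ already contains the $\ab$-action on $\nss$, so as written the argument is circular. The repair is available within your own framework: since $p$ is $\ab$-invariant, $\phi_k(\delta(u,z))=p(s(u))-p\bigl(s(u+_{k-1}z)\bigr)$, which is continuous in $(u,z)$ by continuity of $p$, $s$, the inductive hypothesis on $\nss_{k-1}$, and Lemma~\ref{lem:diff-cont} on $\ns$; then $\delta=\phi_k^{-1}\co(\phi_k\co\delta)$ is continuous because you have already shown $\phi_k$ is a topological isomorphism. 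Alternatively, once Lie-fiberedness is in hand you can simply import the paper's fiber-product identification: on $\nss_t\times_{\ns_t}\ns$ the group $\ab$ acts only through the $\nss_t$ component, where continuity comes from the ``moreover'' clause of Proposition~\ref{prop:ext-lie-is-lie-1}.
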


\begin{proof}
If $t\ge k$ then we can directly apply Proposition \ref{prop:ext-lie-is-lie-1} with $k=t$ (since $\ns,\nss$ are in particular $t$-step nilspaces).

For $t<k$, note that by \cite[Proposition A.18]{CGSS-p-hom} the nilspace $\nss$ is algebraically isomorphic to the fiber-product $\nss_t\times_{\ns_t}\ns$, with projection maps $p_t:\nss_t\to\ns_t$ and $\pi_t:\ns\to\ns_t$. Note that the algebraic isomorphism is the map $\varphi(y)=(\pi_t(y),p(x))$, so here this is also a continuous map. Note that $p_t$ is also continuous, thus implying that $p_t$ is continuous as well because $p_t\co \pi_{t,\nss} = \pi_{t,\ns} \co p$. As $p$ is continuous and both $\pi_{t,\nss}$ and $\pi_{t,\ns}$ are open and continuous we conclude that $p_t$ is continuous. By Proposition \ref{prop:ext-lie-is-lie-1} we get that $\nss_t$ is Lie-fibered as well, and $p_t$ is a continuous extension of $\ns_t$. By Lemma \ref{lem:fib-prod-top-nil} we know that $\nss_t\times_{\ns_t}\ns$ is an \textsc{lch} nilspace. Hence, in order to prove that it is Lie-fibered, it suffices to prove that its structure groups are Lie groups.  The structure groups of $\nss_t\times_{\ns_t}\ns$ are easy to compute. Using the formula given in Lemma \ref{lem:fib-prod-top-nil} we deduce that for $i\le t$, the $i$-th structure group of $\nss_t\times_{\ns_t}\ns$ is $\ab_i(\nss_t)$, which is Lie. For $i>t$ the $i$-th structure group of $\nss_t\times_{\ns_t}\ns$ is $\ab_i(\ns)$ (which is Lie by assumption). Hence $\nss_t\times_{\ns_t}\ns$ is Lie-fibered. Then, Theorem \ref{thm:open-mapping-general} implies that the isomorphism  $\varphi$ is also a homeomorphism. Hence $\nss$ is Lie-fibered. The last sentence in the corollary follows from Theorem \ref{thm:open-mapping-thm}.
\end{proof}
\noindent Later in this paper (e.g.\ in the proof of Theorem \ref{thm:splitext}) it will be important to have a useful criterion, given a $k$-step \textsc{lch} nilspace $\ns$, for the possibility to lift a continuous translation on $\ns_{k-1}$ to a translation on $\ns$. In the setting of compact nilspaces, a useful such criterion was given in \cite{CamSzeg} (see also \cite[Proposition 3.3.39]{Cand:Notes1}). We shall now prove an analogous criterion applicable to \textsc{lch} nilspaces. This will involve the notion of translation bundles from \cite[Definition 3.3.34]{Cand:Notes1}.

\begin{defn}
Let $\ns$ be a $k$-step nilspace and let $\alpha\in \tran_i(\ns_{k-1})$. We define the associated \emph{translation bundle} $\mc{T}=\mc{T}(\alpha,\ns,i):=\{(x_0,x_1)\in \ns^2: \alpha(\pi_{k-1}(x_0))=\pi_{k-1}(x_1)\}$. We equip this set with the subcubespace structure induced by the cubes on $\ns\bowtie_i \ns$ (see \cite[\S 3.1.4]{Cand:Notes1}).
\end{defn}

\noindent Recall from \cite[\S 3.3.4]{Cand:Notes1} that $\mc{T}$ equipped with the above-mentioned cubes is a $k$-step nilspace (in particular $\mc{T}$ is thus a degree-($k-i$) algebraic nilspace extension of $\ns$). Moreover, its $k$-th structure group is the $k$-th structure group $\ab_k$ of $\ns$, albeit with an action on $\mc{T}$ that is diagonal, namely the action $((x_0,x_1),z)\in \mc{T}\times \ab_k \mapsto (x_0+z,x_1+z)\in \mc{T}$.

When $\ns$ has the additional topological structure of being an \textsc{lch} nilspace, we shall always equip $\mc{T}$ with the subspace topology induced by the product topology on $\ns\times \ns$, and the factor $\mc{T}_{k-1}$ will be equipped with the quotient of the topology on $\mc{T}$ by the action of $\ab_k$. It is not hard to show that $\mc{T}$ thus becomes an \textsc{lch} abelian $\ab_k$-bundle over $\mc{T}_{k-1}$. This will be useful to prove the aforementioned criterion for lifting translations. Let us record this.
\begin{lemma}\label{lem:transbnd-ctsbnd}
Let $\ns$ be an \textsc{lch} nilspace, and let $\alpha\in \tran_i(\ns_{k-1})$. The (algebraic) nilspace $\mc{T}=\mc{T}(\alpha,\ns,i)$, equipped with the relative topology induced by the \textsc{lch} topology on $\ns\times \ns$, is a continuous abelian $\ab_k$-bundle over $\mc{T}_{k-1}$, equipping the latter with the quotient of the topology on $\mc{T}$ under the action of $\ab_k$. Moreover, this quotient topology is also \textsc{lch}.
\end{lemma}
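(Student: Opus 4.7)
The plan is to establish the four conditions of Definition \ref{def:CtsAbBund} for the diagonal $\ab_k$-action on $\mc{T}$, and then separately verify that the quotient $\mc{T}_{k-1}$ inherits an \textsc{lch} topology. Throughout, I will use that the $k$-th structure group of the subnilspace $\mc{T}$ is already known to be $\ab_k$ acting diagonally (as recalled just above the statement), so that $\mc{T}_{k-1}$ in the lemma is precisely the quotient of $\mc{T}$ by this action.

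First I would show that $\mc{T}$ is a closed subset of $\ns\times\ns$, and hence is itself \textsc{lch} in the subspace topology. Indeed, $\mc{T}$ is the preimage of the diagonal of $\ns_{k-1}\times\ns_{k-1}$ under the continuous map $(x_0,x_1)\mapsto \big(\alpha(\pi_{k-1}(x_0)),\pi_{k-1}(x_1)\big)$, using that $\pi_{k-1}$ is continuous (Theorem \ref{thm:factor-of-lch-nil}), that $\ns_{k-1}$ is Hausdorff, and that $\alpha$ is continuous. Next, the diagonal action $((x_0,x_1),z)\mapsto(x_0+z,x_1+z)$ is continuous and free on $\mc{T}$, being the restriction of the coordinatewise action on $\ns\times\ns$, whose continuity and freeness follow from Proposition \ref{prop:k-th-group-action}. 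Finally, writing $\pi:\mc{T}\to\mc{T}_{k-1}$ for the quotient map, one has $\pi^{-1}(\pi(U))=\bigcup_{z\in\ab_k}(U+(z,z))$ for any open $U\subseteq\mc{T}$, which is open as a union of translates of $U$ under the homeomorphisms given by the action; this shows $\pi$ is open, giving property (iv) of Definition \ref{def:CtsAbBund} automatically.

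The remaining and slightly more delicate point is to show that the quotient topology on $\mc{T}_{k-1}$ is \textsc{lch}. The crux here is Hausdorffness, for which I would show that the graph of $\sim_k$ on $\mc{T}$ is closed, and then invoke the Bourbaki-style argument already used in the proof of Corollary \ref{cor:quotient-top-k-1-factor} and Lemma \ref{lem:fib-prod-top-nil} (via Lemma \ref{lem:quo-by-cont-gr-act} and \cite[p.~79, Prop.~8]{BourbakiGT1} for Hausdorffness, and \cite[p.~107, Prop.~10]{BourbakiGT1} for local compactness). To verify closedness of the graph, I would use that $\ns$ is a Cartan $\ab_k$-bundle (Lemma \ref{lem:diff-cont}), so the difference map $\delta:\{(y,y')\in\ns^2:\pi_{k-1}(y)=\pi_{k-1}(y')\}\to\ab_k$ is continuous. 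Then the graph of $\sim_k$ on $\mc{T}$ equals
\[
\big\{((x_0,x_1),(x_0',x_1'))\in \mc{T}^2:\pi_{k-1}(x_0)=\pi_{k-1}(x_0'),\ \pi_{k-1}(x_1)=\pi_{k-1}(x_1'),\ \delta(x_0,x_0')=\delta(x_1,x_1')\big\},
\]
which is closed. Second-countability of $\mc{T}_{k-1}$ follows from second-countability of $\mc{T}$ together with openness and continuity of $\pi$.

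The main obstacle I anticipate is the Hausdorffness step, since in the non-compact setting it requires the Cartan property to ensure the equivalence relation has a closed graph; once this is in hand the rest is a routine bundle verification.
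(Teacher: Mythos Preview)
Your proposal is correct and follows essentially the same approach as the paper: both show $\mc{T}$ is closed in $\ns\times\ns$ as a preimage of the diagonal, verify the bundle axioms via the continuity of the $\ab_k$-action on $\ns$, and establish the \textsc{lch} property of $\mc{T}_{k-1}$ by showing the orbit relation has closed graph using the Cartan property (Lemma~\ref{lem:diff-cont}). Your description of the graph as $\{\delta(x_0,x_0')=\delta(x_1,x_1')\}$ is a slightly cleaner packaging than the paper's sequential argument, but the key ingredient is identical.
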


\begin{proof}
That $\mc{T}$ is \emph{algebraically} a $\ab_k$-bundle over $\mc{T}_{k-1}$  follows from the fact, recalled above, that $\mc{T}$ is a $k$-step nilspace with $(k-1)$-step factor $\mc{T}_{k-1}$ (see e.g.\  \cite[Lemma 3.3.35]{Cand:Notes1}). In particular, it follows from \cite[Proposition 3.3.36]{Cand:Notes1} that the $k$-th structure group of $\mc{T}$ is $\ab_k(\ns)$ (acting diagonally as recalled above). We check that $\mc{T}$ is also an \textsc{lch} \emph{continuous} $\ab_k$-bundle.

First note that the topology on $\mc{T}$ is \textsc{lch} because the product topology on $\ns\times \ns$ is \textsc{lch} and $\mc{T}$ is a closed subset (being the preimage, under the continuous map $(\alpha\co \pi_{k-1,\ns}) \times \pi_{k-1,\ns}$, of the diagonal in $\ns_{k-1}\times\ns_{k-1}$, which is closed by the Hausdorff property of $\ns_{k-1}$).  

Now we check that the properties defining a continuous bundle (\cite[Definition 2.1.6]{Cand:Notes2}) hold.

Property $(i)$ is clear: $\mc{T}$ has the above-described \textsc{lch} topology and $\mc{T}_{k-1}$ is given the quotient topology under the $\ab_k$-action. Property $(ii)$ holds in that $\ab_k$ comes naturally with an \textsc{lch} topology, i.e.\ the one given from $\ns$. To see property $(iii)$, note that the continuity of the diagonal $\ab_k$-action follows from the continuity of the action of $\ab_k$ on $\ns$ (looking at convergent sequences in $\mc{T}\times \ab_k$, for example).
Property $(iv)$ holds since we equip $\mc{T}_{k-1}$ with the quotient topology.

Finally, the fact that the quotient topology on $\mc{T}_{k-1}$ is \textsc{lch} follows from Lemma \ref{lem:quo-by-cont-gr-act}, provided that the set $C=\{(x,x')\in\mc{T}\times\mc{T}: \exists\,z\in \ab_k\textrm{ such that }x=x'+z\}$ is a closed subset of $\mc{T}\times\mc{T}$. To see this closure, suppose that a sequence $(x_n,x'_n)=((x_{0,n},x_{1,n}),(x_{0,n}',x_{1,n}'))$ in $C$ converges in $\mc{T}\times \mc{T}$ to some $((x_0,x_1),(x_0',x_1'))\in (\ns\times\ns)^2$. Then $x_{0,n}\to x_0$, $x_{1,n}\to x_1$, $x_{0,n}'\to x_0'$, $x_{1,n}'\to x_1'$ in $\ns$ (using the product topology). By the definition of $C$, for each $n$ there is $z_n\in \ab_k$ such that $x_{0,n}'=x_{0,n}+z_n$ and $x_{1,n}'=x_{1,n}+z_n$ for all $n$. Then, since $\ns$ is an \textsc{lch} nilspace, by Lemma \ref{lem:closure} the graph of $\sim_{k-1}$ on $\ns$ is closed, so there exist $z_0,z_1\in \ab_k$ such that $x_0'=x_0+z_0$ and $x_1'=x_1+z_1$. By continuity of the difference map (Lemma \ref{lem:diff-cont}) we have $z_n=x_{0,n}'-x_{0,n}\to x_0'-x_0=z_0$, and similarly $z_n\to z_1$, so $z_0=z_1$ and we are done.
\end{proof}

We can now obtain the aforementioned criterion for lifting continuous  translations.

\begin{proposition}\label{prop:cont-lift-of-trans}
Let $\ns$ be a $k$-step \textsc{lch} nilspace, let $i<k$, and let $\alpha\in\tran_i(\ns_{k-1})$ be a (continuous) translation. Suppose that there is a continuous nilspace morphism $m:\ns_{k-1}\to \mc{T}_{k-1}$ such that $\pi\co m=\id_{\ns_{k-1}}$ (where $\pi$ is the projection $\mc{T}_{k-1}\to\ns_{k-1}$). Then there exists a continuous translation $\beta\in\tran_i(\ns)$ such that $\pi_{k-1}\co \beta(x) = \alpha\co \pi_{k-1}(x)$ for all $x\in \ns$.
\end{proposition}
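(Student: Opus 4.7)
My plan is to define $\beta$ pointwise via the section $m$, verify its translation-of-degree-$i$ property by the algebraic criterion, and then push through continuity using the bundle structure of $\mc{T}$. Concretely, given $x\in\ns$, I would set $\beta(x)$ to be the unique $y\in\ns$ satisfying $(x,y)\in\mc{T}$ and $\pi_{k-1,\mc{T}}(x,y)=m(\pi_{k-1}(x))$. Existence follows by picking any representative $(x_0,x_1)\in\mc{T}$ of the class $m(\pi_{k-1}(x))\in\mc{T}_{k-1}$: the relation $\pi\co m=\id_{\ns_{k-1}}$ forces $\pi_{k-1}(x_0)=\pi_{k-1}(x)$, so $x_0=x+z$ for some $z\in\ab_k$, and the diagonal shift $-z$ turns $(x_0,x_1)$ into a representative $(x,x_1-z)$ of the same $\pi_{k-1,\mc{T}}$-class, giving $\beta(x):=x_1-z$. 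Uniqueness follows because any two candidates lie in a single orbit of the free diagonal $\ab_k$-action on the fiber, and agreement on the first coordinate forces the shift to be trivial.

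The algebraic fact that $\beta$ is a translation of degree $i$ lifting $\alpha$ is then exactly the content of the compact-case argument in \cite[Proposition 3.3.39]{Cand:Notes1}: the section $x\mapsto(x,\beta(x))$ is an algebraic nilspace morphism $\ns\to\mc{T}$ of the first-coordinate projection $\mc{T}\to\ns$ whose composition with $\pi_{k-1,\mc{T}}$ equals $m\co\pi_{k-1}$, and by the very definition of the translation bundle such sections correspond bijectively with degree-$i$ translations of $\ns$ lifting $\alpha$. The genuinely new content is continuity of $\beta$, and for this I would invoke Lemma \ref{lem:transbnd-ctsbnd}, by which $\pi_{k-1,\mc{T}}:\mc{T}\to\mc{T}_{k-1}$ is a continuous open surjection whose fibers are precisely the orbits of the continuous free diagonal action of $\ab_k$, and Lemma \ref{lem:diff-cont}, by which $\mc{T}$ is a Cartan principal $\ab_k$-bundle over $\mc{T}_{k-1}$ and hence the associated difference map is continuous. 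Given $x_n\to x$ in $\ns$, continuity of $m\co\pi_{k-1}$ yields $m(\pi_{k-1}(x_n))\to m(\pi_{k-1}(x))$ in $\mc{T}_{k-1}$; openness of $\pi_{k-1,\mc{T}}$ (applied in the convergence-lifting form of Lemma \ref{lem:conv-quot}) then produces $u_n=(y_n,y'_n)\in\mc{T}$ with $\pi_{k-1,\mc{T}}(u_n)=m(\pi_{k-1}(x_n))$ and $u_n\to(x,\beta(x))$. Since $(x_n,\beta(x_n))$ and $u_n$ lie in the same $\pi_{k-1,\mc{T}}$-fiber, they differ by a diagonal shift $z_n\in\ab_k$, so $x_n=y_n+z_n$ and $\beta(x_n)=y'_n+z_n$; continuity of the difference map together with $x_n\to x$ and $y_n\to x$ forces $z_n\to 0$, whence $\beta(x_n)\to\beta(x)$.

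The main obstacle is precisely this last continuity argument. Were $\ns$ assumed Lie-fibered, one could trivialize the bundle $\mc{T}\to\mc{T}_{k-1}$ locally near $m(\pi_{k-1}(x))$ and transport $m$ to a continuous local section of $\pi_{k-1,\mc{T}}$, making $\beta$ manifestly continuous; under the bare \textsc{lch} hypothesis no such trivialization is available. The substitute device is the combination of openness of $\pi_{k-1,\mc{T}}$ (which supplies an auxiliary but not canonical convergent lift) with the continuity of the difference map provided by the Cartan bundle structure (which lets one compare the canonical lift $(x_n,\beta(x_n))$ against the auxiliary one and absorb the discrepancy into a vanishing $z_n$).
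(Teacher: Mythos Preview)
Your argument is correct and essentially the same as the paper's: both lift convergence through $\pi_{k-1,\mc{T}}$ via Lemma \ref{lem:conv-quot} and then finish using continuity coming from the \textsc{lch} structure of $\ns$ (the paper invokes the completion function $\mc{K}_{k+1}$ directly, you invoke the difference map). One caveat: citing Lemma \ref{lem:diff-cont} as giving the Cartan property of $\mc{T}$ over $\mc{T}_{k-1}$ is premature, since that lemma requires its input to be an \textsc{lch} nilspace and $\mc{T}$ is only shown to be one later in Proposition \ref{prop:transbundleLCH}; however your actual computation $z_n = x_n - y_n \to 0$ uses only the difference map on $\ns$, which is precisely Lemma \ref{lem:diff-cont} applied to $\ns$, so the argument stands.
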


\begin{proof}
The algebraic part of this result is given by \cite[Proposition 3.3.39]{Cand:Notes1}. To avoid confusion, let us denote the projection $\ns\to \ns_{k-1}$ by $\pi_{k-1,\ns}$, and the bundle  projection $\mc{T}\to \mc{T}_{k-1}$ by $\pi_{k-1,\mc{T}}$. By Lemma \ref{lem:transbnd-ctsbnd} we know that $\mc{T}$ is an \textsc{lch} continuous $\ab_k$-bundle over $\mc{T}_{k-1}$.

Recall that the (purely algebraic) translation $\beta$ on $\ns$ lifting $\alpha$, given by \cite[Proposition 3.3.39]{Cand:Notes1}, is defined by $\beta(x)=m(\pi_{k-1,\ns}(x))\, (x)$, where $m(\pi_{k-1,\ns}(x))$ is a \emph{local translation} from $\pi_{k-1,\ns}^{-1}\pi_{k-1,\ns}(x)$ to $\pi_{k-1,\ns}^{-1}\alpha (\pi_{k-1,\ns}(x))$ \cite[Definition 3.2.26]{Cand:Notes1} which is applied to $x$. 

We now prove the continuity of the map $\beta$. Let $(x_n)_n$ be a convergent sequence in $\ns$ with limit $x$. Fix any sequence $((x_0^{(n)},x_1^{(n)}))_n$ in $\mc{T}$ such that $m(\pi_{k-1,\ns}(x_n))=\pi_{k-1,\mc{T}}(x_0^{(n)},x_1^{(n)})$ for every $n$, and similarly fix $(x_0',x_1')\in \mc{T}$ such that $m(\pi_{k-1,\ns}(x))=\pi_{k-1,\mc{T}}(x_0',x_1')$. The continuity of $m$ then implies that $\pi_{k-1,\mc{T}}(x_0^{(n)},x_1^{(n)})\to \pi_{k-1,\mc{T}}(x_0',x_1')$ as $n\to\infty$. By Lemma \ref{lem:conv-quot}, there is a sequence $(z_n)_n$ in $\ab_k(\ns)$ such that $(x_0^{(n)}+z_n,x_1^{(n)}+z_n)\to (x_0',x_1')$ as $n\to\infty$. 

Now, for any pair of points $x_0,x_1\in \ns$ and any $x\sim_{k-1} x_0$, let $\q'_{(x_0,x_1),x}$ denote the $(k+1)$-corner on $\ns$ defined by $\q'_{(x_0,x_1),x}(v,0)=x_0$ for $v\neq 1^k$, $\q'_{(x_0,x_1),x}(1^k,0)=x$, and $\q'_{(x_0,x_1),x}(v,1)=x_1$ for $v\neq 1^k$. For $k=2$ the corner $\q'_{(x_0,x_1),x}$ is
$
\smallcorner{x_0}{x_0}{x_0}{x}{x_1}{x_1}{x_1}.\\
$
By definition of $\beta$ and of local translations, for each $x\in \ns$ we have $\beta(x)=\mc{K}_{k+1}(\q'_{(x_{0,x},x_{1,x}),x})$, where $(x_{0,x},x_{1,x})$ is any element of $\mc{T}$ with $\pi_{k-1,\mc{T}}(x_{0,x},x_{1,x})=m(\pi_{k-1,\ns}(x))$ (in particular $x_{0,x}\sim_{k-1} x$). 
Now the continuity of $\mc{K}_{k+1}$ implies the following convergence, confirming the continuity of $\beta$:\quad $\beta(x_n)=\mc{K}_{k+1}(\q'_{(x_0^{(n)}+z_n,x_1^{(n)}+z_n),x_n})\to \mc{K}_{k+1}(\q'_{(x_0',x_1'),x})=\beta(x)$.
\end{proof}
\noindent The continuous $\ab_k$-bundle structure of $\mc{T}$ was enough for the above proof (i.e.\ enough to be able to apply Lemma \ref{lem:conv-quot}). However, in later sections there will be situations (e.g.\ the proof of Theorem \ref{thm:splitext}) in which it will be useful to know that $\mc{T}$ is not only such a bundle, but in fact is fully an \textsc{lch} nilspace, in particular the corner-completion functions on $\mc{T}$ are continuous. For such situations, we establish the following final main result of this subsection.

\begin{proposition}\label{prop:transbundleLCH}
Let $\ns$ be a $k$-step \textsc{lch} nilspace, let $i<k$, and let $\alpha\in\tran_i(\ns_{k-1})$ be a (continuous) translation. Then the translation bundle $\mc{T}=\mc{T}(\alpha,\ns,i)$, endowed with the sub-cubespace structure induced by $\ns\bowtie_i \ns$ and the subspace topology induced by $\ns\times\ns$, is an \textsc{lch} $k$-step nilspace that is a degree-($k-i$) continuous extension of $\ns$.
\end{proposition}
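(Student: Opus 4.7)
The algebraic nilspace structure on $\mc{T}$---namely, that it is a $k$-step nilspace forming a degree-$(k-i)$ extension of $\ns$ via the projection $p_0:\mc{T}\to\ns$, $(x_0,x_1)\mapsto x_0$---is provided by the results in \cite[\S 3.3.4]{Cand:Notes1}, and Lemma \ref{lem:transbnd-ctsbnd} already gives that $\mc{T}$ endowed with the subspace topology from $\ns\times\ns$ is \textsc{lch} and forms a continuous $\ab_k(\ns)$-bundle over $\mc{T}_{k-1}$. So the main work is to verify the two conditions of Lemma \ref{lem:eq-def-lch-nil}, and then to deduce the continuous-extension property.

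For condition (i), closure of the cube sets, I would describe the cubes on $\mc{T}$ inherited from $\ns\bowtie_i\ns$ as pairs $(q_0,q_1):\db n\to\ns$ with $q_0,q_1\in\cu^n(\ns)$ and pointwise satisfying $(q_0(v),q_1(v))\in\mc{T}$ for every $v\in\db n$ (plus, where applicable, further closed compatibility conditions expressed through the continuous maps $\alpha$ and $\pi_{k-1}$). All such conditions define closed subsets of $(\ns\times\ns)^{\db n}$: the first by the \textsc{lch} hypothesis on $\ns$, and the second because $\mc{T}$ is closed in $\ns\times\ns$ (being the preimage of the diagonal of $\ns_{k-1}^2$ under $(\alpha\co\pi_{k-1})\times\pi_{k-1}$). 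Hence $\cu^n(\mc{T})$ is closed.

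For condition (ii), continuity of the corner-completion functions $\mc{K}_{n+1}:\cor^{n+1}(\mc{T}_n)\to \mc{T}_n$: the case $n=k$ is handled directly using that $\mc{T}$ is algebraically a $k$-step nilspace and the coordinate projections $p_0,p_1:\mc{T}\to\ns$ are morphisms. Uniqueness of completion in $\mc{T}$ and in $\ns$ then forces
\[
\mc{K}_{k+1}^{\mc{T}}(c)=\bigl(\mc{K}_{k+1}^{\ns}(p_0\co c),\;\mc{K}_{k+1}^{\ns}(p_1\co c)\bigr)
\]
for every $c\in\cor^{k+1}(\mc{T})$, so continuity follows from Lemma \ref{lem:cont-if-open} and the continuity of $p_0,p_1$. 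For $n<k$, I would proceed by induction on $k$: the inductive step identifies the $(k-1)$-factor $\mc{T}_{k-1}$ with an \textsc{lch} nilspace produced by the same construction one step lower---specifically, with a translation bundle of the form $\mc{T}(\alpha',\ns_{k-1},i)$ built from the translation $\alpha'\in\tran_i(\ns_{k-2})$ on the $(k-2)$-factor of $\ns$ induced by $\alpha$ (or, should this identification need to be refined, with a closely related quotient object admitting the same inductive description). The openness of the canonical projection $\pi_{k-1,\mc{T}}:\mc{T}\to\mc{T}_{k-1}$ needed in this step is obtained from Corollary \ref{cor:quotient-top-k-1-factor} applied to $\mc{T}$, whose hypotheses are by then secured through condition (i) and the case $n=k$ of (ii).

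Finally, the continuous-extension property in the sense of Definition \ref{def:ext-lch-nil} amounts to $p_0:\mc{T}\to\ns$ being a continuous open map with a continuous action of the structure group on its fibers; continuity of $p_0$ is clear by restriction, and both its openness and the continuity of the $\ab_k(\ns)$-action on $\mc{T}$ follow from the continuous $\ab_k(\ns)$-bundle structure established in Lemma \ref{lem:transbnd-ctsbnd}, together with Remark \ref{rem:topquot}. The main obstacle in this plan is the inductive step for condition (ii) at $n<k$: verifying that $\mc{T}_{k-1}$ is indeed an \textsc{lch} nilspace of the same translation-bundle type one step lower requires a careful analysis of how the $\sim_{k-1}$-equivalence on $\mc{T}$ interacts with the canonical factor projections on each of the two coordinates of $\ns\times\ns$, and of the precise form of the resulting bijection at the algebraic and topological levels.
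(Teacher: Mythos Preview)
Your plan has a genuine gap at exactly the point you flag as the main obstacle: the inductive identification of $\mc{T}_{k-1}$ with a translation bundle $\mc{T}(\alpha',\ns_{k-1},i)$ one step lower does not hold. The $(k-1)$-factor $\mc{T}_{k-1}$ is the quotient of $\mc{T}$ by the \emph{diagonal} $\ab_k$-action, and the natural map $[(x_0,x_1)]\mapsto(\pi_{k-1}(x_0),\pi_{k-1}(x_1))$ into $\ns_{k-1}^2$ is not injective (pairs $(x_0,x_1)$ and $(x_0,x_1+z)$ with $z\in\ab_k\setminus\{0\}$ lie in distinct diagonal orbits but project to the same point). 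So $\mc{T}_{k-1}$ is not a sub-nilspace of $\ns_{k-1}\times\ns_{k-1}$ of translation-bundle type, and your induction does not close as stated.

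The paper avoids this altogether by changing viewpoint: rather than working with the diagonal action and $\mc{T}_{k-1}$, it uses the \emph{second-coordinate} action $((x_0,x_1),z)\mapsto(x_0,x_1+z)$, under which $q=p_0:\mc{T}\to\ns$ is a degree-$(k-i)$ extension, and then applies Theorem~\ref{thm:ext-lch-nil}. That theorem packages the entire induction on $k$, so one need only verify its three hypotheses. Closure of cubes is obtained via the arrow map $\iota:(\q_0,\q_1)\mapsto\langle\q_0,\q_1\rangle_i$ and the identity $\cu^n(\mc{T})=\iota^{-1}(\cu^{n+i}(\ns))$; continuity of $\mc{K}_{k+1}$ is exactly your coordinate-wise argument; and the decisive third condition, \emph{fiber-completion continuity} (Definition~\ref{def:fib-comp-cont}), is checked via the explicit formula $\mc{K}_{(q)}(\q_0'\times\q_1',\q)=\mc{K}_{k+1}^{\ns}(\langle\q,\q_1'\rangle_i)$. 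This last ingredient is what replaces your problematic induction. A related slip: in your final paragraph you appeal to Lemma~\ref{lem:transbnd-ctsbnd} for the continuous-extension structure over $\ns$, but that lemma concerns the diagonal action over $\mc{T}_{k-1}$, not the second-coordinate action over $\ns$; the paper verifies the Cartan-bundle structure for $q:\mc{T}\to\ns$ separately, with openness of $q$ coming from the formula $q((U\times V)\cap\mc{T})=U\cap\pi_{k-1}^{-1}(\alpha^{-1}(\pi_{k-1}(V)))$.
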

\begin{remark}
On the translation bundle $\mc{T}$ we will have two different actions of $\ab_k$. If we let $(x_0,x_1)\in \mc{T}$ then we have on the one hand the diagonal action of $\ab_k$, given by $((x_0,x_1),z)\mapsto (x_0+z,x_1+z)$. And on the other hand, we have the addition on the second component $((x_0,x_1),z)\mapsto (x_0,x_1+z)$. Consider now the  commutative diagram
\[
\begin{tikzpicture}
  \matrix (m) [matrix of math nodes,row sep=1.5em,column sep=4em,minimum width=2em]
  {\mc{T} & \ns  \\
     \mc{T}_{k-1} & \ns_{k-1}.\\};
  \path[-stealth]
    (m-1-1) edge node [above] {$q$} (m-1-2)
    (m-1-1) edge node [right] {$\pi$} (m-2-1)
    (m-1-2) edge node [right] {$\pi$} (m-2-2)
    (m-2-1) edge node [above] {$q_{k-1}$} (m-2-2);
\end{tikzpicture}
\]

\vspace{-0.3cm}

\noindent The diagonal action makes $\mc{T}$ a degree-$k$ extension over $\mc{T}_{k-1}$ whereas the addition on the second component makes $\mc{T}$ a degree-$(k-i)$ extension over $\ns$.
\end{remark}

\noindent The main difficulty here is to prove the continuity of the corner completion maps. For this, we begin by identifying topological properties which, when added to a degree-$k$ algebraic nilspace extension of a $k$-step \textsc{lch} nilspace, imply that the extension is also an \textsc{lch} nilspace.
\begin{lemma}\label{lem:top-degree-k-k-ext}
Let $\ns$ be a $k$-step \textsc{lch} nilspace. Let $\nss$ be a degree-$k$ algebraic nilspace extension of $\ns$ by an abelian group $\ab$, with corresponding fibration $q:\nss\to\ns$. Suppose that $\nss,\ab$ are equipped with \textsc{lch} topologies making $\nss$ a Cartan continuous $\ab$-bundle over $\ns$. Suppose also that the following  hold:
\setlength{\leftmargini}{0.8cm}
\begin{enumerate}
    \item For every $n\geq 0$ the cube set $\cu^n(\nss)$ is a closed subset of $\nss^{\db{n}}$.
    \item The completion function $\mc{K}:\cor^{k+1}(\nss)\to \nss$ is continuous.
\end{enumerate}
Then $\nss$ is an \textsc{lch} nilspace.
\end{lemma}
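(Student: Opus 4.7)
The strategy is to verify the hypotheses of Lemma \ref{lem:eq-def-lch-nil}. Since the \textsc{lch} topology on $\nss$ and the closure of all cube sets are given, the only remaining task is to prove that for each $n\in[k]$, the completion map $\mc{K}_{n+1}:\cor^{n+1}(\nss_n)\to\nss_n$ is continuous (with $\nss_n$ carrying the quotient topology). The case $n=k$ is exactly hypothesis $(ii)$, so the work reduces to the cases $n<k$.

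The plan is to show that the induced map $q_{k-1}:\nss_{k-1}\to\ns_{k-1}$ is an isomorphism of \textsc{lch} nilspaces, and then to iterate down the tower of factors. Algebraically, $q_n$ is bijective for $n<k$, a standard feature of degree-$k$ extensions (both $\nss$ and $\ns$ share the same lower-step structure). For the topology, $q:\nss\to\ns$ is continuous by hypothesis and open: Definition \ref{def:CtsAbBund}$(iv)$ identifies $q$ with the quotient map for the continuous $\ab$-action, and such quotient maps are always open since $q^{-1}(q(U))=\bigcup_{g\in\ab}(g+U)$ is open whenever $U$ is. Meanwhile Corollary \ref{cor:quotient-top-k-1-factor} applies directly to $\nss$ (its hypotheses are precisely the \textsc{lch} topology, the cube closure in $(i)$, and the $(k+1)$-corner completion continuity in $(ii)$), giving that $\pi^\nss_{k-1}$ is an open quotient map onto an \textsc{lch} space. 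From the identity $q_{k-1}\co\pi^\nss_{k-1}=\pi^\ns_{k-1}\co q$ one deduces continuity of $q_{k-1}$ by the universal property of the quotient. A direct verification using the openness of $q$ and the surjectivity of the structure homomorphism $\phi_k:\ab_k(\nss)\to\ab_k(\ns)$ (which holds because $q$ is a fibration) shows $q_{k-1}$ is also open, hence a homeomorphism.

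Since $q_{k-1}$ is both a nilspace isomorphism and a homeomorphism, and since $\ns_{k-1}$ is an \textsc{lch} nilspace by Theorem \ref{thm:factor-of-lch-nil} applied to $\ns$, one concludes that $\nss_{k-1}$ is an \textsc{lch} nilspace and in particular $\mc{K}_k:\cor^k(\nss_{k-1})\to\nss_{k-1}$ is continuous. Iterating this reasoning (at each step applying Theorem \ref{thm:factor-of-lch-nil} to $\ns_{n+1}$ and transporting the \textsc{lch}-nilspace structure through the analogous homeomorphism $\nss_n\cong\ns_n$) yields continuity of $\mc{K}_{n+1}$ on $\nss_n$ for every $n<k$. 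Combining these with the given continuity of $\mc{K}_{k+1}$ on $\nss$, Lemma \ref{lem:eq-def-lch-nil} gives that $\nss$ is an \textsc{lch} nilspace.

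The main technical hurdle is the openness of $q_{k-1}$. Given open $U\subseteq\nss_{k-1}$ and $V:=(\pi^\nss_{k-1})^{-1}(U)$, one has to verify the equality $q(V)=(\pi^\ns_{k-1})^{-1}(q_{k-1}(U))$. The forward inclusion is immediate from the defining identity. The reverse direction requires taking $x\in\ns$ with $\pi^\ns_{k-1}(x)=\pi^\ns_{k-1}(q(y))$ for some $y\in V$, writing $x=q(y)+z$ with $z\in\ab_k(\ns)$, lifting $z=\phi_k(z')$ for some $z'\in\ab_k(\nss)$ via surjectivity of $\phi_k$, and using the $\sim_{k-1}^\nss$-saturation of $V$ to conclude $y+z'\in V$ and therefore $x=q(y+z')\in q(V)$.
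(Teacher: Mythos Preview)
Your proof is correct but follows a different route from the paper's. The paper verifies Definition \ref{def:top-nil-open-maps-version} directly: it shows that for each $n\le k$ the projection $p^{\db{n}}_{\nss}:\cu^n(\nss)\to\cor^n(\nss)$ is open by exploiting the saturation of $\cu^n(\nss)$ under the $\ab^{\db{n}}$-action, which makes $q^{\db{n}}|_{\cu^n(\nss)}$ an open map onto $\cu^n(\ns)$, and then expresses $p^{\db{n}}_{\nss}(E_U)$ as $(q^{\db{n}\setminus\{1^n\}})^{-1}\bigl(p^{\db{n}}_{\ns}(q^{\db{n}}(E_U))\bigr)$, which is open because $p^{\db{n}}_{\ns}$ is open on the \textsc{lch} nilspace $\ns$. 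Your approach instead goes through Lemma \ref{lem:eq-def-lch-nil} and the factor isomorphism: you establish that $q_{k-1}:\nss_{k-1}\to\ns_{k-1}$ is a homeomorphism (the openness computation you give is clean and correct), and then import the entire \textsc{lch}-nilspace structure of $\ns_{k-1}$ wholesale. The paper's method stays at the cube level and avoids touching the factor tower; your method is more structural and arguably more transparent, since the bijectivity of $q_{k-1}$ is the conceptual reason why nothing can go wrong below degree $k$. One minor remark: once $\nss_{k-1}$ is known to be an \textsc{lch} nilspace, no genuine iteration is needed---Lemma \ref{lem:eq-def-lch-nil} applied to $\nss_{k-1}$ immediately yields continuity of all lower completion maps, since $(\nss_{k-1})_n=\nss_n$ with the same quotient topology.
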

\noindent Note that the assumption that $\ab$ has a continuous free action on $\nss$ ensures that the topology of $\ab$ as an \textsc{lch} topological group is the same as the subspace topology that $\ab$ inherits when identified with any fiber of $q$ in $\nss$.
\begin{proof}
Most of the conditions in Definition \ref{def:top-nil-open-maps-version} are already satisfied. Indeed, the only missing one is that for each $n\le k$ the map $p^{\db{n}}:\cu^n(\nss)\to\cor^n(\nss)$ is open (for $n >k$ this property already follows from $(ii)$).

By Remark \ref{rem:topquot}, the topology on $\ns$ is precisely the quotient of the topology on $\nss$ under the action of $\ab$. In particular, the map $q^{\db{n}}:\nss^{\db{n}}\to \ns^{\db{n}}$ is also a quotient open map. Note that for $n\leq k$ the set $\cu^n(\nss)\subset \nss^{\db{n}}$ is saturated with respect to the action of $\ab^{\db{n}}$ on $\nss^{\db{n}}$. Thus, the restriction $q^{\db{n}}|_{\cu^n(\nss)}:\cu^n(\nss)\to \cu^n(\ns)$ is also open. Now we want to prove that $p^{\db{n}}_{\nss}:\cu^n(\nss)\to \cor^n(\nss)$ is open for $n\le k$. It suffices to prove that for every open $U\subset \nss$ the set $p_{\nss}^{\db{n}}(E_U)$ is open where $E_U:=\left(U\times \prod_{v\in\db{n}\setminus\{1^n\}} \nss\right)\cap\cu^{n}(\nss)$. By definition of cubes in an extension we have $p_{\nss}^{\db{n}}(E_U) = (q^{\db{n}\setminus\{1^n\}})^{-1}(p^{\db{n}}_{\ns}(q^{\db{n}}(E_U)))$, where $p^{\db{n}}_{\ns}:\cu^n(\ns)\to \cor^n(\ns)$ is the usual projection on $\ns$. Since $\ns$ is an \textsc{lch} nilspace, $p^{\db{n}}_{\nss}$ is open. The result follows.
\end{proof}
\noindent It will be useful to extend the previous lemma, allowing $\nss$ to be a degree-$t$ extension of $\ns$ with $t \neq k$. For this, it turns out that we need an additional condition to ensure that the extension is still an \textsc{lch} nilspace. We call this condition \emph{fiber-completion continuity}. Let us describe it informally. Suppose that $\nss$ is a degree-$t$ extension of a $k$-step nilspace $\ns$. Let $\q'\in\cor^{t+1}(\nss)$ and $\q\in \cu^{t+1}(\ns)$ be such that $q\co\q'(v)=\q(v)$ for all $v\not=1^{t+1}$. Then by definition of degree-$t$ extensions \cite[Definition 3.3.13]{Cand:Notes1}, there exists a unique completion of $\q'$ whose image under $q$ is $\q$ (the uniqueness of this completion follows from uniqueness of  completion in $\cor^{t+1}(\mc{D}_t(\ab))$). The following condition ensures that this \emph{fiber-completion} is continuous.

\begin{defn}[Fiber-completion continuity]\label{def:fib-comp-cont}
Let $\nss$ be a nilspace endowed with an \textsc{lch} topology and let $\ns$ be a $k$-step \textsc{lch} nilspace. Suppose that $\nss$ is a degree-$t$ algebraic nilspace extension of $\ns$ by an abelian group $\ab$. Let $\Delta_{q:\nss\to\ns}:= \{(\q',\q)\in \cor^{t+1}(\nss)\times \cu^{t+1}(\ns):q\co \q'(v)=\q(v)\text{ for all } v\not=1^{t+1}\}$. Let $\mc{K}_{(q)}:\Delta_{q:\nss\to\ns}\to \nss$ be the function that maps each $(\q',\q)\in \Delta_{q:\nss\to\ns}$ to the unique element $y\in\nss$ which completes the corner $\q'$ and satisfies $q(y)=\q(1^{t+1})$. We say that the extension is \emph{fiber-completion continuous} if $\mc{K}_{(q)}$ is continuous (endowing $\cor^{t+1}(\nss)$ with the subspace topology induced by the product topology on $\nss^{\db{t+1}\setminus\{1^{t+1}\}}$).
\end{defn}
\noindent When there is no risk of confusion, we write $\Delta$ instead of $\Delta_{q:\nss\to\ns}$. Note that if $t=k$ and the completion on $\cor^{k+1}(\nss)$ is continuous, then the extension is  automatically fiber-continuous. We are now ready to prove our main result about extensions of \textsc{lch} nilspaces.

\begin{theorem}\label{thm:ext-lch-nil}
Let $\ns$ be a $k$-step \textsc{lch} nilspace. Let $\nss$ be a degree-$t$ algebraic nilspace extension of $\ns$ by an abelian group $\ab$, with corresponding fibration $q:\nss\to\ns$. Suppose that $\nss$, $\ab$ are equipped with \textsc{lch} topologies making $\nss$ a Cartan continuous $\ab$-bundle over $\ns$. Suppose also that the following hold:
\begin{enumerate}
    \item For every $n\geq 0$ the cube set $\cu^n(\nss)$ is a closed subset of $\nss^{\db{n}}$.
    \item The completion function $\mc{K}:\cor^{k+1}(\nss)\to \nss$ is continuous.
    \item The extension is fiber-completion continuous.
\end{enumerate}
Then $\nss$ is an \textsc{lch} nilspace.
\end{theorem}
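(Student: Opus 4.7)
The plan is to proceed by induction on $k-t\geq 0$.

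The base case $k=t$ is a direct application of Lemma \ref{lem:top-degree-k-k-ext} to the fibration $q:\nss\to\ns$: all hypotheses of that lemma are either directly given or (in the case of the Cartan bundle structure) assumed in the present theorem, and note that hypothesis $(iii)$ becomes redundant when $t=k$, since then the cube $\q$ in Definition \ref{def:fib-comp-cont} is uniquely determined by $\q'$ (the $(k+1)$-corner $\q'$ has unique completion in the $k$-step nilspace $\nss$, and $\q$ is its projection), so $\mc{K}_{(q)}$ coincides with the ordinary completion map whose continuity is hypothesis $(ii)$.

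For the inductive step $t<k$, the strategy is to descend to the factor $\nss_{k-1}$. Equipped with the quotient topology from $\nss$, I would first argue that $\nss_{k-1}$ is an \textsc{lch} space (using Lemma \ref{lem:closure} and Lemma \ref{lem:quo-by-cont-gr-act}: hypothesis $(i)$ provides the closure of the graph of $\sim_{k-1}$ on $\nss$ needed for the latter). Next, I would verify that the induced fibration $q_{k-1}:\nss_{k-1}\to\ns_{k-1}$ satisfies all the hypotheses of the theorem at degree $t$ (where $\ns_{k-1}$ is \textsc{lch} by Theorem \ref{thm:factor-of-lch-nil} and the extension structure descends from $\nss\to\ns$ by standard nilspace theory), so the inductive hypothesis yields that $\nss_{k-1}$ is an \textsc{lch} nilspace. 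Finally, viewing $\nss$ as a degree-$k$ algebraic extension of $\nss_{k-1}$ by $\ab_k(\nss)$, I apply Lemma \ref{lem:top-degree-k-k-ext} one more time: hypothesis $(ii)$ supplies the required $(k+1)$-corner continuity, and the Cartan $\ab_k(\nss)$-bundle structure of $\nss\to\nss_{k-1}$ follows by the same argument as in Proposition \ref{prop:k-th-group-action} and Lemma \ref{lem:diff-cont}, since these only require closure of cube sets together with continuity of completion, both of which we have.

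The main obstacle is propagating fiber-completion continuity $(iii)$ from $q:\nss\to\ns$ down to $q_{k-1}:\nss_{k-1}\to\ns_{k-1}$, which is the least routine of the hypotheses to check in the inductive application. Given a convergent sequence $(\bar{\q}'_n,\bar{\q}_n)\to(\bar{\q}',\bar{\q})$ in $\Delta_{q_{k-1}}$, I would lift $\bar{\q}_n$ coordinatewise to a convergent sequence $\q_n\to\q$ in $\cu^{t+1}(\ns)$ by using Lemma \ref{lem:conv-quot} together with the openness of $\pi_{k-1}:\ns\to\ns_{k-1}$ (Corollary \ref{cor:quotient-top-k-1-factor}), and likewise lift $\bar{\q}'_n$ to $\q'_n\in\cor^{t+1}(\nss)$ compatibly with $\q_n$, ensuring coordinatewise that $q\co\q'_n(v)=\q_n(v)$ for $v\neq 1^{t+1}$ and that the lifts depend continuously on $n$, using the openness of $\pi_{k-1}:\nss\to\nss_{k-1}$. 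Then $(\q'_n,\q_n)\in\Delta_q$, so hypothesis $(iii)$ yields convergence of $\mc{K}_{(q)}(\q'_n,\q_n)$ in $\nss$; projecting by $\pi_{k-1}$ delivers convergence of the desired fiber-completion in $\nss_{k-1}$. The remaining hypotheses for $q_{k-1}$, namely closure of cube sets $\cu^n(\nss_{k-1})$, continuity of the $k$-corner completion on $\nss_{k-1}$, and the Cartan continuous $\ab$-bundle structure of $\nss_{k-1}\to\ns_{k-1}$, should then follow by routine arguments combining the openness of the quotient map $\pi_{k-1}$ with hypotheses $(i)$--$(iii)$ applied on $\nss$.
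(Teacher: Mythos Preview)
Your outline is correct and follows essentially the same inductive descent to $\nss_{k-1}$ as the paper; the only substantive difference is cosmetic (you close with Lemma \ref{lem:top-degree-k-k-ext} applied to $\nss\to\nss_{k-1}$, the paper invokes Lemma \ref{lem:eq-def-lch-nil} directly). One warning: the step you label ``routine'', namely continuity of the $k$-corner completion $\mc{K}_k$ on $\nss_{k-1}$, is in fact the most delicate verification. A lift of a convergent sequence of corners in $\cor^k(\nss_{k-1})$ to $\cor^k(\nss)$ does not by itself yield convergent completions, because $k$-corners in the $k$-step nilspace $\nss$ have no unique completion; the paper handles this by a diagonalization that exploits the openness of $p^{\db{k}}:\cu^k(\ns)\to\cor^k(\ns)$ (available because $\ns$ is already \textsc{lch}) to select completions in $\ns$ that converge, and then invokes hypothesis $(iii)$ to transport this back to $\nss$. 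So hypothesis $(iii)$ is needed not only to propagate itself to $q_{k-1}$ but also to establish $(ii)$ at level $k-1$; be sure your write-up reflects this.
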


\begin{proof}
We argue by induction on $k$ (with $t$ fixed). For $k=1$ the result follows by Lemma \ref{lem:top-degree-k-k-ext}. 

For $k>1$, we prove that $\nss$ is an \textsc{lch} nilspace by proving that the conditions of Lemma \ref{lem:eq-def-lch-nil} are satisfied. Note that by $(i)$ the cube sets are closed, so we just need to prove that the completion functions are continuous. For $n>k$ this follows from $(ii)$. It then suffices to prove that $\nss_{k-1}$ is a degree-$t$ extension of $\ns_{k-1}$ that satisfies all the assumptions in the theorem. Indeed, if we prove this, then by induction the completion functions on the factors of $\nss$ will also be continuous, as required in Lemma \ref{lem:eq-def-lch-nil}-$(ii)$. By Lemma \ref{lem:top-degree-k-k-ext} we can assume that $t<k$.

Recall that by \cite[Proposition A.18]{CGSS-p-hom} the following diagram commutes:
\begin{equation}\label{diag:factor-of-ext-1}
\begin{aligned}[c]
\begin{tikzpicture}
  \matrix (m) [matrix of math nodes,row sep=1.5em,column sep=4em,minimum width=2em]
  {\nss & \ns  \\
     \nss_{k-1} & \ns_{k-1}, \\};
  \path[-stealth]
    (m-1-1) edge node [above] {$q$} (m-1-2)
    (m-1-1) edge node [right] {$\pi$} (m-2-1)
    (m-1-2) edge node [right] {$\pi$} (m-2-2)
    (m-2-1) edge node [above] {$q_{k-1}$} (m-2-2);
\end{tikzpicture}
\end{aligned}
\end{equation}

\vspace{-0.3cm}

\noindent where $q_{k-1}$ defines a degree-$t$ extension of $\ns_{k-1}$. (We just write $\pi$ instead of $\pi_{k-1}$ to avoid overloading the notation; the space where $\pi$ is defined will be clear from the context.)

Let us then prove that $\nss_{k-1},\ns_{k-1}$ satisfy all the  assumptions in the theorem.

Firstly, $\ns_{k-1}$ with the quotient topology is an \textsc{lch} nilspace, by Theorem \ref{thm:factor-of-lch-nil} applied to $\ns$.

Let us now analyze some properties of objects and maps in \eqref{diag:factor-of-ext-1}. The map $\pi:\ns\to\ns_{k-1}$ is open and continuous by Corollary \ref{cor:quotient-top-k-1-factor}. By Corollary \ref{cor:quotient-top-k-1-factor} applied to $\nss$,  the topology on $\nss_{k-1}$ is also \textsc{lch}. By Proposition \ref{prop:k-th-group-action} the $k$-th structure group $\ab_k(\nss)$ is an \textsc{lch} topological group acting continuously on $\nss$. The group $\ab$ is an \textsc{lch} group by assumption. 

We claim that the action of $\ab$ on $\nss_{k-1}$ is continuous. This action is defined by the formula $\pi(y)+z:=\pi(y+z)$ (see \cite[Proposition A.18]{CGSS-p-hom}). To see that it is continuous, fix any sequences $(y_n)$ in $\nss$ and $(z_n)$ in $\ab$ such that $\pi(y_n)\to \pi(y)$ and $z_n\to z$. By Lemma \ref{lem:conv-quot} there exists a sequence $g_n\in \ab_k(\nss)$ such that $y_n+g_n\to y$. The continuity of the action of $\ab$ on $\nss$ implies  that $(y_n+g_n)+z_n\to y+z$. As $\pi$ is continuous, we know that $\pi((y_n+g_n)+z_n)\to \pi(y+z)=:\pi(y)+z$. We leave it as an exercise for the reader (using the uniqueness of completion on $\cor^{k+1}(\nss)$) to see that $(y_n+g_n)+z_n=(y_n+z_n)+g_n$.  Then $\pi((y_n+g_n)+z_n)=\pi(y_n+z_n)=\pi(y_n)+z_n$, and the claimed continuity follows. This shows that Definition \ref{def:CtsAbBund}-$(iii)$ holds.

Next, note that since $q:\nss\to\ns$, $\pi:\ns\to \ns_{k-1}$ and $\pi:\nss\to\nss_{k-1}$ are open and continuous, we have that $q_{k-1}$ is open and continuous as well, by the commutativity of the diagram \eqref{diag:factor-of-ext-1}. This shows that Definition \ref{def:CtsAbBund}-$(iv)$ holds.

So far, we have proved that $q_{k-1}:\nss_{k-1}\to\ns_{k-1}$ is a continuous $\ab$-bundle. We now prove that this bundle is also Cartan (i.e.\ the difference map is continuous). Suppose that $(\pi(y_n),\pi(y_n'))\to (\pi(y),\pi(y'))$ for some $(\pi(y_n),\pi(y_n'))$ and $ (\pi(y),\pi(y'))\in \nss_{k-1}\times_{\ns_{k-1}}\nss_{k-1}$. By Lemma \ref{lem:conv-quot} there are sequences $(g_n),(g_n')$ in $\ab_k(\nss)$ such that $y_n+g_n\to y$ and $y_n'+g_n'\to y'$ in $\nss$. Hence, without loss of generality $y_n\to y$ and $y'_n\to y'$ as $n\to\infty$. As $q_{k-1}(\pi(y_n))= q_{k-1}(\pi(y'_n))$, there exists $z_n\in \ab_k(\ns)$ such that $q(y_n)+z_n=q(y_n')$. As $\ns$ is an \textsc{lch} nilspace, and $q$ is continuous, $z_n\to q(y)-q(y')=z\in \ab_k(\ns)$ as $n\to\infty$. We leave it as an exercise for the reader to check that with the given assumptions (i.e.\ that $t<k$) the $k$-th structure homomorphism of $q$, $\phi_k:\ab_k(\nss)\to \ab_k(\ns)$, is an (algebraic) group isomorphism (this follows from \cite[Definition 3.3.13]{Cand:Notes1} using the definition of cubes in an extension). To prove that $\phi_k$ is continuous, let $z_n'\in \ab_k(\nss)$ such that $z_n'\to z'$ as $n\to \infty$. Fix any $y_0\in \nss$. Then by definition $\phi_k(z_n')=q(y_0+z_n')-q(y_0)$. Since $\nss$ is Cartan, this is the composition of two continuous maps. Thus the limit is $q(y_0+z')-q(y_0)=\phi_k(z')$ as required. 
Hence, we have that $y_n+\phi_k^{-1}(z_n)\to y+\phi_k^{-1}(z)$ and $y_n'\to y'$. As $(y_n+\phi_k^{-1}(z_n),y_n')\in \nss\times_{\ns}\nss$, by continuity of the difference map in $\nss$ we have $(y_n+\phi_k^{-1}(z_n))-y_n'\to (y+\phi_k^{-1}(z))-y'$.\footnote{In this equation the ``$+$'' signs denote the action of $\ab_k(\nss)$ and the ``$-$'' signs denote the $\ab$-valued difference.} This confirms  that the difference map $\nss_{k-1}\times_{\ns_{k-1}}\nss_{k-1}$ is continuous. 

It remains to check that $\nss_{k-1}$ satisfies properties $(i)$, $(ii)$, $(iii)$ in the theorem. Property $(i)$ follows from Lemma \ref{lem:closure-cube-in-k-1-factor} applied to $\nss$. Let us now prove $(ii)$, i.e.\ that completion on $\cor^k(\nss_{k-1})$ is continuous. Let $\pi\co \q_n'\to \pi\co \q'$ be a convergent sequence in $\cor^k(\nss_{k-1})$ (for some $\q_n',\q'\in \cor^k(\nss)$). We want to prove that their completions also converge. By Lemma \ref{lem:conv-quot} there exists $d_n\in \ab_k(\nss)^{\db{k}\setminus\{1^k\}}$ such that $\q_n'+d_n\to \q'$ in $\cor^k(\nss)$. By continuity of $q$, we have $q\co (\q_n'+d_n)\to q\co \q'$ in $\cor^k(\ns)$. Now we argue by a diagonalization, as follows. Let $\q\in \cu^k(\nss)$ be such that $p^{\db{k}}(q\co \q)=q\co \q'$ where $p^{\db{k}}:\cu^k(\ns)\to \cor^k(\ns)$. For every $m\in \mb{N}$, let $B_{1/m}(q\co \q)\subset \cu^k(\ns)$ be the open ball of radius $1/m$ and center $q\co \q$. As $p^{\db{k}}$ is an open map, the sets $p^{\db{k}}(B_{1/m}(q\co \q))$ are open subsets of $\cor^k(\ns)$ that contain $q\co \q'$. As $q\co (\q_n'+d_n)\to q\co \q'$, there exists $N_m\ge 0$ such that if $n\ge N_m$ then $q\co (\q_n'+d_n)\in p^{\db{k}}(B_{1/m}(q\co \q))$. Without loss of generality we can suppose that the sequence $N_m$ is increasing. Now let $\tau(n):=\sup\{m:n\ge N_m\}$. Then for each $n$ let $\q_n\in \cu^k(\nss)$ be any cube such that $p^{\db{k}}(q\co \q_n)=q\co(\q_n'+d_n)$ and $q\co \q_n\in B_{1/\tau(n)}(q\co\q)$. In particular $q\co \q_n\to q\co \q$ in $\cu^k(\ns)$ as $n\to\infty$ (this is why we chose $N_m$ to be increasing, so that $\tau(n)\to\infty$ as $n\to\infty$). This is the point where fiber-completion continuity becomes useful. Indeed, by property $(iii)$ for $q:\nss\to \ns$, we can complete continuously the pair $(\q_n'+d_n,q\co\q_n)\in \cor^k(\nss)\times \cu^k(\ns)$, that is, we can apply $\mc{K}_{(q)}$ to obtain a cube $\q_n+f_n$ for some $f_n\in \cu^k(\mc{D}_t(\ab))$ in such a way that $(\q_n+f_n)(v)=(\q_n'+d_n)(v)$ for all $v\not=1^k$. Note that for this step to work it is crucial that $t<k$, as we are precisely applying the fiber-completion function to faces of dimension $t+1$ inside $\db{k}$. The process is as follows: first, for every $n\ge 1$ choose an element $f_n\in\cu^k(\mc{D}_t(\ab))$ such that $(\q_n+f_n)(v) =(\q_n'+d_n)(v)$ for $v\in \db{k}_{\le t}$. But then, by uniqueness of fiber-completion $\q_n+f_n$ automatically equals $\q_n'+d_n$ for all $v\not=1^k$. Then, the continuity of the fiber-completion implies that $(\q_n+f_n)(1^k)$ converges to the unique completion of $(\q',q\co\q)$. Finally, letting $\mc{K}_k:\cor^k(\nss_{k-1})\to \nss_{k-1}$ be the unique completion function, note that $\mc{K}_k(\pi\co \q_n')=\mc{K}_k(\pi\co (\q_n'+d_n))=\mc{K}_k(\pi\co (\q_n+f_n)|_{\db{k}\setminus\{1^k\}}) = \pi((\q_n+f_n)(1^k))$, which converges by the continuity of the fiber-completion function. The limit of $(\q_n+f_n)(1^k)$ is precisely $\mc{K}_{(q)}(\q',q\co\q)$ (recall Definition \ref{def:fib-comp-cont}). Hence $\mc{K}_k(\pi\co\q_n')\to \pi(\mc{K}_{(q)}(\q',q\co\q))$. Since $\pi(\mc{K}_{(q)}(\q',q\co\q)) = \mc{K}_k(\pi\co\q')$, this confirms the desired continuity property $(ii)$. Finally, let us prove that $(iii)$ holds. Let $(\pi\co \q_n',\pi\co \q_n)\in \Delta_{q_{k-1}:\nss_{k-1}\to\ns_{k-1}}$ be a sequence converging to some limit $(\pi\co\q',\pi\co\q)\in \Delta_{q_{k-1}:\nss_{k-1}\to\ns_{k-1}}$. We want to prove that the completions of the corners $\pi\co\q_n'$ converge as well. By Lemma \ref{lem:conv-quot} there exists $d_n\in\ab_k(\nss)^{\db{t+1}\setminus\{1^{t+1}\}}=\cor^{t+1}(\mc{D}_k(\ab_k(\nss)))$ such that $\q_n'+d_n\to \q'$ in $\cor^{t+1}(\nss)$. Then, clearly $q\co (\q_n'+d_n)$ is a lift of $\pi\co\q_n|_{\db{t+1}\setminus\{1^{t+1}\}}$. Applying again Lemma \ref{lem:conv-quot}, this time to $\q_n(1^{t+1})$, we get a sequence of elements $z_n\in \ab_k(\ns)$ such that $\q_n(1^{t+1})+z_n\to \q(1^{t+1})$. Now for each $n$ let $\q^*_n: 
v\mapsto \begin{cases}
 q\co(\q_n'+d_n)(v) & \text{ if }v\not=1^{t+1}\\
 \q_n(1^{t+1})+z_n & \text{otherwise}
\end{cases}$. By \cite[Remark 3.2.12]{Cand:Notes1} (using that $t<k$) we have $\q^*_n\in \cu^{t+1}(\ns)$ for every $n$. Furthermore, these cubes converge to some limit $\q^*\in \cu^{t+1}(\ns)$ such that $\pi\co\q^*=\pi\co\q$. Indeed, for $v\not=1^{t+1}$ we have $\q^*_n(v)=q(\q_n'+d_n)(v)$ which converges because $(\q_n'+d_n)(v)$ converges and $q$ is continuous, and $\q^*_n(1^{t+1})=\q_n(1^{t+1})+z_n$, which converges by assumption (the resulting $\q^*$ is a cube again by \cite[Remark 3.2.12]{Cand:Notes1}). Thus, we can apply the fiber-completion continuity to the sequence $(\q_n'+d_n,\q_n^*)$. The limit of the completions is the completion of their limit, i.e.\ $(\q',\q^*)\in \Delta_{q:\nss\to\ns}$. Letting  $\mc{K}_{(q_{k-1})}$ be the completion function in $\Delta_{q_{k-1}:\nss_{k-1}\to\ns_{k-1}}$, note that  $\mc{K}_{(q_{k-1})}(\pi\co\q_n',\pi\co\q_n) = \pi(\mc{K}_{(q)}(\q_n'+d_n,\q_n^*))$, and the latter, by continuity of $\pi$ and $\mc{K}_{(q)}$, converges to $\pi(\mc{K}_{(q)}(\q',\q^*))$. Arguing as in the previous sentence, we have $\pi(\mc{K}_{(q)}(\q',\q^*)) = \mc{K}_{(q_{k-1})}(\pi\co\q',\pi\co\q^*) = \mc{K}_{(q_{k-1})}(\pi\co\q',\pi\co\q)$, and $(iii)$ follows.
\end{proof}
\begin{proof}[Proof of Proposition \ref{prop:transbundleLCH}]
The proof of the algebraic part of the statement follows similarly to \cite[\S 3.3.4]{Cand:Notes1}. Note that the action of $\ab_k$ on $\mc{T}$ is $((x_0,x_1),z)\mapsto (x_0,x_1+z)$. The projection $q:\mc{T}\to \ns$ is $(x_0,x_1)\mapsto x_0$. We  prove that the conditions in Theorem \ref{thm:ext-lch-nil} hold for $\ns$, $\nss=\mc{T}$.

By assumption $\ns$ is an \textsc{lch} nilspace. We first claim that $\mc{T}$ is a Cartan continuous $\ab_k$-bundle over $\ns$. We are endowing $\mc{T}$ with the subspace topology induced by the product topology on $\ns\times \ns$, in which the set $\mc{T}$ is easily seen to be closed. Hence (as noted previously, e.g.\ in the proof of Proposition \ref{prop:k-th-group-action}) the subspace topology on $\mc{T}$ is also \textsc{lch}. The group $\ab_k$ is equipped with a structure of \textsc{lch} topological group, being the $k$-th structure group of $\ns$. The continuity of the action of $\ab_k$ on $\mc{T}$ follows from that of the action of $\ab_k$ on $\ns$ (considering convergent sequences). Similarly the difference continuity (Cartan property) follows from that of $\ns$. The map $q:\mc{T}\to \ns$ is clearly continuous. It is also open, since if $(U\times V)\cap \mc{T}$ is an open set of $\mc{T}$, then $q((U\times V)\cap \mc{T})=U\cap \pi^{-1}(\alpha^{-1}(\pi(V)))$, which is open. This confirms our claim about $\mc{T}$.

Now, to see that property $(i)$ in Theorem \ref{thm:ext-lch-nil} holds, note that the map $\iota:\mc{T}^{\db{n}}\to \ns^{\db{n+i}}$, $(\q_0,\q_1)\mapsto \langle \q_0,\q_1\rangle_i$ is continuous (see \cite[\S 3.1.4]{Cand:Notes1} for the definition of arrows $\langle \q_0,\q_1\rangle_i$), and $\cu^n(\mc{T})=\iota^{-1}(\cu^{n+i}(\ns))$. Thus, those sets are all closed and $(i)$ follows. To see that $(ii)$ holds, note that for any corner $(\q_0',\q_1')\in \cor^{k+1}(\mc{T})$, its completion is just $(\mc{K}_{k+1}(\q_0'),\mc{K}_{k+1}(\q_1'))$, where $\mc{K}_{k+1}$ is the completion function on $\cor^{k+1}(\ns)$. Finally, to see that $(iii)$ holds, we prove the continuity of the map $\mc{K}_{(q)}$ on $\Delta_{q:\mc{T}\to \ns}$. For every $(\q_0'\times \q_1',\q)\in \Delta_{q:\mc{T}\to \ns}$ we have $\mc{K}_{(q)}(\q_0'\times \q_1',\q) = \mc{K}_{k+1}(\langle \q,\q_1'\rangle_i)$, where $\langle \q,\q_1'\rangle_i$ is defined as in \cite[\S 3.1.4]{Cand:Notes1}.\footnote{The map $\langle \q,\q_1'\rangle_i:\db{k+1}\setminus\{1^{k+1}\}\to \ns$ is defined as follows: for every vertex in $\db{k+1}\setminus\{1^{k+1}\}$ written as $(v,w)$ where $w\in \db{i}$, we have $\langle \q,\q_1'\rangle_i(v,w)=\q(v)$ if $w\not=1^i$ and $\langle \q,\q_1'\rangle_i(v,1^i)=\q_1'(v)$ for $v\not=1^{k+1-i}$.} (Note that here $\q|_{\db{k+1}\setminus\{1^{k+1}\}} = \q_0'$.) As  $(\q_0'\times \q_1',\q)\mapsto \langle \q,\q_1'\rangle_i$ and $\mc{K}_{k+1}$ are continuous, the result follows.
\end{proof}

\subsection{On translation groups of \textsc{lch} nilspaces}\label{sec:trans-group}\hfill\\
Given any Lie-fibered nilspace $\ns$, we want to endow the translation group $\tran(\ns)$ with a useful topology compatible with the group structure. This group is  included in the set $C(\ns,\ns)$ of continuous maps from $\ns$ to itself, so a natural choice is to equip $\tran(\ns)$ with the subspace topology induced by the well-known \emph{compact-open} topology on $C(\ns,\ns)$. 
Recall (e.g.\ from \cite[\S 46]{Mu}) that  for arbitrary topological spaces $X,Y$, the compact-open topology on $C(X,Y)$ is the smallest topology containing all sets of the form $\{f\in C(X,Y): f(K)\subset U\}$, for all choices of a compact set $K\subset X$ and an open set $U\subset Y$.
The many useful features of this topology include the fact that it reduces to the topology of uniform convergence on compact sets (or \emph{topology of compact convergence}) when $Y$ is a metric space \cite[Theorem 46.8]{Mu}. In what follows, when considering a set of continuous functions $C(X,Y)$, we shall always assume that it is equipped with the compact-open topology.

Our first aim now is to prove that $\tran(\ns)$ equipped with this topology becomes a Polish group. We begin with the following fact.

\begin{lemma}\label{lem:tran-i-closed-subset}
Let $\ns$ be a $k$-step \textsc{lch} nilspace. Then, for each $i\in[k]$, the set $\tran_i(\ns)$ is a closed subset of $C(\ns,\ns)$. In particular $\tran_i(\ns)$ is a Polish space.
\end{lemma}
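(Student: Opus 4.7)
The plan is to express $\tran_i(\ns)$ as an intersection of closed subsets of $C(\ns,\ns)$, each cut out by a cube-preservation condition, and then to deduce the Polish property from a standard descriptive-set-theoretic fact. Concretely, recall from \cite[\S 3.2.4]{Cand:Notes1} that an element $\alpha\in \tran_i(\ns)$ is a continuous self-map $\alpha\colon\ns\to\ns$ characterized by the condition that, for every $n\ge 0$ and every $\q\in\cu^n(\ns)$, the $i$-arrow $\langle \q,\alpha\co\q\rangle_i$ (as defined in \cite[\S 3.1.4]{Cand:Notes1}) lies in $\cu^{n+i}(\ns)$.

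I would then verify that, for each fixed pair $(n,\q)$, the subset $S_{n,\q}\subset C(\ns,\ns)$ carved out by this single condition is closed. Three ingredients enter: $(a)$ the evaluation map $E_\q\colon C(\ns,\ns)\to\ns^{\db{n}}$, $\alpha\mapsto \alpha\co\q$, is continuous, because $\q(\db{n})$ is compact and $\ns$ is metrizable (being second-countable \textsc{lch}), so the compact-open topology agrees with uniform convergence on compacta; $(b)$ the arrow-formation map $\ns^{\db{n}}\to\ns^{\db{n+i}}$, $\q'\mapsto \langle\q,\q'\rangle_i$, is continuous since it simply reads off and duplicates coordinates; and $(c)$ the cube set $\cu^{n+i}(\ns)\subset \ns^{\db{n+i}}$ is closed by part $(i)$ of Definition \ref{def:top-nil-open-maps-version}. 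Composing $(a)$ with $(b)$ and pulling back along $(c)$ presents $S_{n,\q}$ as the preimage of a closed set under a continuous map, hence closed; intersecting over all $(n,\q)$ yields $\tran_i(\ns)$, which is therefore closed in $C(\ns,\ns)$.

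For the Polish conclusion I will appeal to two standard facts: first, any second-countable \textsc{lch} space is Polish (metrizable by Urysohn, with a compatible complete metric obtainable from $\sigma$-compactness and local compactness); second, if $X$ is locally compact Polish and $Y$ is Polish, then $C(X,Y)$ with the compact-open topology is Polish, a compatible complete metric being $d(f,g)=\sum_j 2^{-j}\min\bigl(1,\sup_{x\in K_j}d_Y(f(x),g(x))\bigr)$ for an exhaustion $X=\bigcup_j K_j$ by compacts. Taking $X=Y=\ns$ shows $C(\ns,\ns)$ is Polish, and thus its closed subset $\tran_i(\ns)$ is Polish. The main (mild) obstacle I anticipate is to confirm that the working definition of $\tran_i(\ns)$ consists of precisely the continuous maps obeying the above cube condition, without a separately imposed bijectivity clause; if such a clause is included, I expect bijectivity to be preserved under compact-open limits via the structural description of $i$-translations as acting by a single element of the $i$-th structure group on the corresponding characteristic factor (a Hausdorff limit argument on structure groups), so that the closedness conclusion survives.
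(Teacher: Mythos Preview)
Your proposal is correct and follows essentially the same route as the paper: both arguments use the closedness of the cube sets $\cu^{n+i}(\ns)$ together with the arrow characterization of $\tran_i(\ns)$ to conclude closedness in $C(\ns,\ns)$, and then invoke that $C(\ns,\ns)$ is Polish (the paper cites its own Proposition~\ref{prop:cont-LCH-polish} for this, whereas you spell out the construction of a compatible complete metric). The paper's version is simply terser, saying only that ``the closure of the cube sets $\cu^n(\ns)$ is readily seen to imply that $\tran_i(\ns)$ is a closed subset of $C(\ns,\ns)$ (considering convergent sequences of translations)''; your decomposition into the sets $S_{n,\q}$ and the explicit continuity of the evaluation and arrow-formation maps is a fuller justification of the same point. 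Your closing worry about a separate bijectivity clause is unnecessary: the paper works with the arrow characterization from \cite[Lemma~3.2.32]{Cand:Notes1}, under which membership in $\tran_i(\ns)$ is exactly the cube condition you wrote, so no extra hypothesis needs to be preserved in the limit.
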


\begin{proof}
By Lemma \ref{prop:cont-LCH-polish} we know that $C(\ns,\ns)$ is Polish. The closure of the cube sets $\cu^n(\ns)$ is readily seen to imply that $\tran_i(\ns)$ is a closed subset of $C(\ns,\ns)$ (considering convergent sequences of translations). Hence $\tran_i(\ns)$ is Polish \cite[(3.3) Proposition]{Kechris}.
\end{proof}

To confirm that this topology makes $\tran_i(\ns)$ a Polish group, we need to prove that the group operation and inversion are continuous relative to this topology. To this end, the assumption that $\ns$ is Lie-fibered will be useful, for the following reason.
\begin{proposition}
Let $\ns$ be a $k$-step Lie-fibered nilspace. Then $\ns$ is locally connected.
\end{proposition}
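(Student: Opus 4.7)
The plan is to argue by induction on the step $k$, exploiting the local triviality of the fibration $\pi_{k-1}:\ns\to\ns_{k-1}$ provided by Theorem \ref{thm:local-triviality-bundle}.

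For the base case $k=1$, a $1$-step Lie-fibered nilspace is precisely a compactly generated abelian Lie group (by Proposition \ref{prop:k-th-group-action} together with the definition of Lie-fibered). Every Lie group is a (second-countable) manifold and hence locally Euclidean, so it is locally connected.

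For the inductive step, assume that every $(k-1)$-step Lie-fibered nilspace is locally connected. By Theorem \ref{thm:factor-of-lch-nil}, the factor $\ns_{k-1}$ is a $(k-1)$-step \textsc{lch} nilspace, and its structure groups are the first $k-1$ structure groups of $\ns$, hence it is still Lie-fibered. By the induction hypothesis, $\ns_{k-1}$ is locally connected. By Theorem \ref{thm:local-triviality-bundle}, there is an open cover $(U_j)_{j\in J}$ of $\ns_{k-1}$ such that for each $j$ there is a homeomorphism $\phi_j:\pi_{k-1}^{-1}(U_j)\to U_j\times \ab_k$. The group $\ab_k$ is an abelian Lie group by the Lie-fibered assumption, hence locally connected. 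Since local connectedness passes to finite products and is a purely local property, each open set $\pi_{k-1}^{-1}(U_j)$ is locally connected as a space. Fix any $x\in\ns$ and any open neighborhood $V$ of $x$. Pick $j$ such that $\pi_{k-1}(x)\in U_j$, and observe that $V\cap \pi_{k-1}^{-1}(U_j)$ is an open neighborhood of $x$ inside the locally connected space $\pi_{k-1}^{-1}(U_j)$; hence it contains a connected open neighborhood of $x$, which is also open in $\ns$. This shows $\ns$ is locally connected.

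The only step requiring care is ensuring that one may genuinely invoke Theorem \ref{thm:local-triviality-bundle} in the inductive step, which uses that $\ns$ itself (not just $\ns_{k-1}$) is Lie-fibered; this is immediate from the hypothesis. Everything else reduces to the standard facts that Lie groups are locally connected and that local connectedness is preserved by finite products and by passing to open subspaces.
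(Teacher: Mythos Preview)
Your proof is correct and follows essentially the same approach as the paper: induction on $k$, using the local triviality from Theorem \ref{thm:local-triviality-bundle} together with the fact that abelian Lie groups are locally connected. The only cosmetic difference is that the paper starts the induction at $k=0$ (a singleton) rather than $k=1$, and your phrasing ``is precisely a compactly generated abelian Lie group'' is slightly imprecise (a $1$-step nilspace is an \emph{affine} abelian group), but topologically it is homeomorphic to its structure group, so your conclusion stands.
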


\begin{proof}
We argue by induction on $k$. Since a 0-step nilspace is a singleton, the case $k=0$ is trivial. For $k>0$, as the structure group $\ab_k$ is an abelian Lie group, it is locally connected (being a manifold), and then, from Theorem \ref{thm:local-triviality-bundle} and the inductive assumption that $\ns_{k-1}$ is locally-connected, it follows that every point of $\ns$ has a basis of connected neighborhoods.
\end{proof}

\begin{theorem}\label{thm:trans-group-polish}
Let $\ns$ be a $k$-step Lie-fibered nilspace. Then for every $i\in [k]$ the translation group $\tran_i(\ns)$ is a Polish group.
\end{theorem}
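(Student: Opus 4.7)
The Polish space structure is already in hand: by Lemma~\ref{lem:tran-i-closed-subset}, $\tran_i(\ns)$ is a closed subset of $C(\ns,\ns)$ with the compact-open topology, hence a Polish space. So what remains is to verify that $\tran_i(\ns)$ is a topological group, i.e.\ that composition and inversion are jointly continuous with respect to the compact-open topology. Note that $\tran_i(\ns)$ is indeed a group: it is clearly closed under composition, and Corollary~\ref{cor:inv-cont-lcfr} ensures that every continuous translation on a Lie-fibered nilspace has a continuous inverse (which lies in $\tran_i(\ns)$ because being in $\tran_i$ is invariant under inversion in $\tran(\ns)$).

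For continuity of composition, the plan is to invoke a standard fact about function spaces: if $X$ is a locally compact Hausdorff space, then the composition map $C(X,X)\times C(X,X)\to C(X,X)$, $(f,g)\mapsto f\co g$, is jointly continuous in the compact-open topology. Applied to $X=\ns$ (which is locally compact Hausdorff by assumption) and then restricted to $\tran_i(\ns)\times \tran_i(\ns)$, this gives the continuity of composition.

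The main obstacle is continuity of inversion: in general, even for homeomorphism groups with the compact-open topology, inversion need not be continuous. The way I would overcome this is by invoking the classical theorem of Arens (\emph{Topologies for homeomorphism groups}, Amer.\ J.\ Math.\ 68 (1946)), which asserts that if $X$ is a locally compact, locally connected Hausdorff space, then $\mathrm{Homeo}(X)$ equipped with the compact-open topology is a topological group. To apply this, I would use the proposition immediately preceding the theorem, which establishes that any Lie-fibered nilspace $\ns$ is locally connected (by induction on $k$, using the local triviality of $\ns\to\ns_{k-1}$ from Theorem~\ref{thm:local-triviality-bundle} together with the local connectedness of the abelian Lie group $\ab_k$). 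Since $\ns$ is therefore locally compact and locally connected Hausdorff, Arens's theorem gives that inversion is continuous on $\mathrm{Homeo}(\ns)$; and since $\tran_i(\ns)$ is, by Corollary~\ref{cor:inv-cont-lcfr}, a subgroup of $\mathrm{Homeo}(\ns)$, inversion restricts to a continuous map on $\tran_i(\ns)$.

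Combining the three ingredients, $\tran_i(\ns)$ is a Polish space with continuous composition and inversion, hence a Polish group. I expect the proof to be short (essentially two citations plus the local connectedness input), with the only genuine subtlety being the need for local connectedness to guarantee continuity of inversion; this is precisely why the Lie-fibered hypothesis is essential here, as it is what provides locally trivial bundle structures with locally connected fibres.
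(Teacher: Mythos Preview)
Your proposal is correct and follows essentially the same approach as the paper: the paper likewise invokes Lemma~\ref{lem:tran-i-closed-subset} for the Polish space structure, cites Munkres for continuity of composition on $C(X,X)$ with $X$ locally compact, and then appeals to Arens's theorem (using the local-connectedness proposition and Corollary~\ref{cor:inv-cont-lcfr}) to obtain continuity of inversion via $\mathrm{Homeo}(\ns)$.
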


\begin{proof}
We have already proved that the topology induced on $\tran_i(\ns)$ by the compact-open topology is Polish, so it suffices to prove that the group operation and inversion are both continuous. For the group operation (composition of translations), this follows directly from \cite[\S 46, Ex. 7]{Mu}. For the inversion, first note that $\tran_i(\ns)$ is a subset of the set of homeomorphisms of $\ns$. This is because the inverse of a continuous translation is continuous by Corollary \ref{cor:inv-cont-lcfr}. Moreover, by \cite[Theorem 4]{Ar46} the set of homeomorphisms with the compact-open topology is a topological group under composition, provided that the space is \textsc{lch} and locally connected.\footnote{Since Lie-fibered nilspaces are manifolds, one could also use \cite{Ka}, where a uniqueness property of the compact-open topology is also proved.} Thus, the set $\tran_i(\ns)$ inherits the continuity of the inversion from that of the set of homeomorphisms.
\end{proof}
\begin{remark}
Without local connectedness, it can happen that in the space's homeomorphism group the inversion is not continuous relative to the compact-open topology \cite{Di}.
\end{remark}

Let us prove here an additional result about Lie-fibered nilspaces that will be useful later.

\begin{proposition}\label{prop:hom-ns-z-polish}
Let $\ns$ be a $k$-step Lie-fibered nilspace, let $\ab$ be an abelian Lie group and let $t\in \mb{Z}_{\ge 0}$. Then the set $\hom(\ns,\mc{D}_t(\ab))$ equipped with pointwise addition and the compact-open topology is a Polish group.
\end{proposition}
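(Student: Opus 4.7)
The strategy is to realize $\hom(\ns,\mc{D}_t(\ab))$ as a closed subgroup of $C(\ns,\ab)$ endowed with pointwise addition and the compact-open topology; the conclusion will then follow from the classical fact that a closed subgroup of a Polish topological group is itself Polish.

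First I would check that $C(\ns,\ab)$ itself is a Polish topological group. Since $\ns$ is Lie-fibered, it is \textsc{lch} and second-countable, hence Polish; likewise $\ab$ is Polish as any abelian Lie group. By Lemma \ref{prop:cont-LCH-polish} (already invoked in the proof of Lemma \ref{lem:tran-i-closed-subset}), the compact-open topology on $C(\ns,\ab)$ is Polish. Local compactness of $\ns$ gives continuity of the evaluation map $C(\ns,\ab)\times \ns\to \ab$, and composing this with the continuous operations of $\ab$ in the standard way yields continuity of pointwise addition and negation on $C(\ns,\ab)$.

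Next, the subgroup property follows from the observation that $\cu^n(\mc{D}_t(\ab))$ is a subgroup of $\ab^{\db{n}}$ under pointwise addition: this cube set is cut out by a family of linear equations of the form $\sum_{v\in\db{n}}\epsilon_v c(v)=0$ with $\epsilon_v\in\mb{Z}$ (for instance, the vanishing of the $(t+1)$-th cube derivative on every $(t+1)$-dimensional subface). Hence, if $f,g\in\hom(\ns,\mc{D}_t(\ab))$ then for every $\q\in\cu^n(\ns)$ the maps $f\co \q$ and $g\co \q$ lie in $\cu^n(\mc{D}_t(\ab))$, and so does $(f+g)\co \q$; similarly $-f$ is a morphism.

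These same linear equations are what give closedness: each constraint $\sum_v\epsilon_v f(\q(v))=0$, for fixed $\q$ and fixed defining equation, is a closed condition on $C(\ns,\ab)$, because point-evaluation $f\mapsto f(x)$ is continuous in the compact-open topology (singletons being compact) and the group operation on $\ab$ is continuous. Intersecting over all $n$, all $\q\in\cu^n(\ns)$ and all defining equations exhibits $\hom(\ns,\mc{D}_t(\ab))$ as a closed subset of $C(\ns,\ab)$; the subspace topology here coincides with the compact-open topology on $\hom(\ns,\mc{D}_t(\ab))$ itself, since the degree-$t$ structure does not alter the underlying topological target space $\ab$. No significant obstacle arises; the essential observation is that the cube equations for $\mc{D}_t(\ab)$ are pointwise $\mb{Z}$-linear in $\ab$, which simultaneously yields the subgroup property and the closedness in the compact-open topology.
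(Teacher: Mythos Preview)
Your proposal is correct and follows essentially the same route as the paper: both arguments realise $\hom(\ns,\mc{D}_t(\ab))$ as a closed subgroup of the Polish group $C(\ns,\ab)$ with the compact-open topology. The paper verifies closedness sequentially (using that $\cu^m(\mc{D}_t(\ab))$ is closed in $\ab^{\db{m}}$) and checks continuity of addition directly on $\hom$ via a contradiction argument with the evaluation map, whereas you establish the topological group structure on all of $C(\ns,\ab)$ first and phrase closedness as an intersection of point-evaluation conditions; these are cosmetic variations of the same idea.
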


\begin{proof}
By Lemma \ref{prop:cont-LCH-polish} we have that $C(\ns,\ab)$ is a Polish space. We claim that $\hom(\ns,\mc{D}_t(\ab))$ is a closed subset of $C(\ns,\ab)$. To prove this, let $(f_n)$ be a sequence in $\hom(\ns,\mc{D}_t(\ab))$ converging to $f\in C(\ns,\ab)$. We need to prove that for every $\q\in \cu^m(\ns)$ we have $f\co \q\in \cu^m(\mc{D}_t(\ab))$. Since $f_n\co\q\in \cu^m(\mc{D}_t(\ab))$ and $\cu^m(\mc{D}_t(\ab))$ is closed, our claim follows.

Now we need to prove that the group operations are also continuous. We will just do it for addition, as the argument is almost identical for the inverse. Fix any sequence of elements $(f_n,g_n)\in \hom(\ns,\mc{D}_t(\ab))^2$ converging to some $(f,g)\in \hom(\ns,\mc{D}_t(\ab))^2$. We need to prove that $f_n+g_n\to f+g$. Suppose for a contradiction that this fails. Let $d$ be a  metric on $\ab$ compatible with the topology. By \cite[Theorems 46.8 and 46.2]{Mu}, there is then some compact set $K\subset \ns$ and some $\epsilon_0>0$ such that $\sup_{x\in K} d((f_n+g_n)(x),(f+g)(x))>\epsilon_0$ for infinitely many $n$. Passing to a subsequence if necessary, we may assume that the previous lower bound holds for all $n$. Passing to a further subsequence, we obtain a sequence $(x_n)$ in $K$ such that $x_n\to x$ and $ d((f_n+g_n)(x_n),(f+g)(x_n))>\epsilon_0$ for all $n$. This yields a contradiction, as by \cite[Theorem 46.10]{Mu} the evaluation map is continuous and thus $f_n(x_n)+g_n(x_n)\to f(x)+g(x)$.
\end{proof}

\section{Free nilspaces}\label{sec:freenilspaces}
\noindent This section treats the following class of \textsc{lch} nilspaces, which plays a key role in this paper.

\begin{defn}\label{def:free-nil}
A \emph{free nilspace} is a direct product (in the nilspace category) of finitely many components of the form $\mc{D}_i(\mb{Z})$ and $\mc{D}_i(\mb{R})$ where $i\in \mb{N}$. If all these components are of the form $\mc{D}_i(\mb{Z})$ (resp.\ $\mc{D}_i(\mb{R})$) then the free nilspace is said to be \emph{discrete} (resp.\ \emph{continuous}).
\end{defn}
\noindent Thus, if $F$ is a free nilspace of step $k$ then we can write it in the form $F=\prod_{i=1}^k \mc{D}_i(\mb{Z}^{a_i}\times \mb{R}^{b_i})$ for some integers $a_i,b_i\ge 0$. We shall often use the following alternative expression as well.

\begin{defn}
Let $F$ be a $k$-step free nilspace $\prod_{i=1}^k \mc{D}_i(\mb{Z}^{a_i}\times \mb{R}^{b_i})$, where  $a_i,b_i\in\mb{Z}_{\ge 0}$ for each $i\in [k]$. Thus an element $g$ of $F$ is a sequence of pairs $\left((x_i,y_i)\in \mc{D}_i(\mb{Z}^{a_i}\times \mb{R}^{b_i})\right)_{i\in[k]}$ where $x_i\in \mc{D}_i(\mb{Z}^{a_i})$, $y_i\in \mc{D}_i(\mb{R}^{b_i})$. By a permutation of coordinates (which will often be used tacitly), we can write any $g\in F$ as a pair $g=(x,y)$ where $x:=(x_i=(x_{i,j})_{j\in [a_i]})_{i\in[k]}$ lies in the product nilspace $\prod_{i=1}^k \mc{D}_i(\mb{Z}^{a_i})$, nilspace which we call the \emph{discrete part} of $F$, and $y:=(y_i=(y_{i,j})_{j\in [b_i]})_{i\in[k]}$ lies in the product nilspace $\prod_{i=1}^k \mc{D}_i(\mb{R}^{b_i})$, the \emph{continuous part} of $F$. We shall often refer to $x,y$ as the \emph{discrete and continuous parts} of $g$ respectively.
\end{defn}

\noindent From a purely algebraic viewpoint, free nilspaces are a specific type of \emph{group nilspaces}, i.e.\ nilspaces consisting of filtered groups where the cube structure is given by the Host--Kra cube groups associated with the filtration, see \cite[Ch. 6]{HKbook}. More precisely, for a free $k$-step nilspace the underlying group is an \emph{abelian} group of the form $\mb{Z}^m\times \mb{R}^n$, and the filtration is the product of filtrations associated with the nilspaces $\mc{D}_i(\mb{Z}),\mc{D}_j(\mb{R})$, $i,j\in [k]$. It follows that morphisms between free nilspaces are polynomial maps between the corresponding filtered groups (recall e.g.\ \cite[Theorem 2.2.14]{Cand:Notes1}), and can be expressed in terms of multivariate polynomials. We shall now give precise formulations of this, in the form of certain Taylor expansions for such morphisms. We will then use these expressions to prove results on lifting morphisms from free nilspaces, which will play an important role in the next section.

\subsection{Morphisms between free nilspaces as polynomials}\hfill\\
To formulate the Taylor expansions, we use the following notation. 

\begin{defn}\label{def:height-lattice}
Let $k\in \mb{N}$, and for each $i\in [k]$ let $a_i,b_i\in\mb{Z}_{\ge 0}$. Then, for any element $(m,n)= \big(((m_{i,j})_{j\in [a_i]})_{i\in[k]},((n_{i,\ell})_{\ell\in [b_i]})_{i\in[k]}\big) \in (\prod_{i=1}^k \mb{Z}_{\ge 0}^{a_i})\times (\prod_{i=1}^k \mb{Z}_{\ge 0}^{b_i})$, we define \[
|(m,n)|_{k,(a_i)_i,(b_i)_i}:=\sum_{i=1}^k i\Big(\sum_{j=1}^{a_i} m_{i,j}+\sum_{\ell=1}^{b_i} n_{i,\ell}\Big).
\]
If there is no risk of confusion, we abbreviate this to $|(m,n)|$. Given variables $x_{i,j},y_{i,\ell}$ for $i\in[k], j\in [a_i],\ell\in[b_i]$, we write $\binom{(x,y)}{(m,n)}$ for the monomial\footnote{Here $\binom{x}{n}$ denotes the standard binomial coefficient. It turns out to be technically convenient here to consider monomials as products of binomial coefficients rather than the more usual products of variables.} $\prod_{i=1}^k\prod_{j=1}^{a_i} \binom{x_{i,j}}{m_{i,j}}\prod_{\ell=1}^{b_i} \binom{y_{i,\ell}}{n_{i,\ell}}$.
\end{defn}

\begin{remark}\label{rem:monomial-degree}
Thus, when $(x,y)$ is a point in a free nilspace $\prod_{i=1}^k\mc{D}_i(\mb{Z}^{a_i})\times\prod_{i=1}^k\mc{D}_i(\mb{R}^{b_i})$, the number $|(m,n)|$ yields a notion of \emph{degree} of the monomial $\binom{(x,y)}{(m,n)}$, which takes into account the powers to which the variables are raised in the monomial, but also the degree $i$ of the nilspace structure $\mc{D}_i(\mb{Z})$ or $\mc{D}_i(\mb{R})$ to which $x_{i,j}$ or $y_{i,\ell}$ belongs. Thus we shall refer to $|(m,n)|$ as the \emph{filtered degree} of the monomial $\binom{(x,y)}{(m,n)}$.
\end{remark}
\noindent We now state the Taylor expansions in two separate results. The first one addresses the discrete case (in which the image of the morphism on $F$ is discrete), as follows.
\begin{lemma}\label{lem:Taylor-discrete}
Let $k\in \mb{N}$. For each $i\in [k]$ let $a_i,b_i\in\mb{Z}_{\ge 0}$, and let $F$ be the free nilspace $\big[\prod_{i=1}^k \mc{D}_i(\mb{Z}^{a_i})\big] \times \big[\prod_{i=1}^k \mc{D}_i( \mb{R}^{b_i})\big]$. Let $A$ be a discrete finitely generated abelian group, and let $\phi:F\to \mc{D}_k(A)$ be a continuous nilspace morphism. Then for each $m\in \prod_{i=1}^k \mb{Z}_{\ge 0}^{a_i}$ with $|(m,0)|_{k,(a_i),(0)}\le k$ there exists a coefficient $g_m\in A$, such that for every $x\in \prod_{i=1}^k \mc{D}_i(\mb{Z}^{a_i})$ and $y\in \prod_{i=1}^k \mc{D}_i( \mb{R}^{b_i})$ we have
\begin{equation}\label{eq:Taylor-discrete}
\phi(x,y)=\sum_{m:|(m,0)|_{k,(a_i),(0)}\le k} g_m \binom{(x,y)}{(m,0)}.
\end{equation}
Conversely, every map $\phi$ of the form \eqref{eq:Taylor-discrete} is a continuous morphism $F\to\mc{D}_k(A)$.
\end{lemma}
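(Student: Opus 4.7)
The plan is to prove the forward and converse directions separately, reducing first to the purely discrete case and then invoking a standard Taylor-style expansion for morphisms between discrete free nilspaces.

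\textbf{Step 1 (continuity forces independence from the continuous part).} I would first observe that for each fixed $x$ in the discrete part $\prod_{i=1}^k \mc{D}_i(\mb{Z}^{a_i})$, the map $y \mapsto \phi(x,y)$ is a continuous map from $\prod_{i=1}^k \mc{D}_i(\mb{R}^{b_i}) \cong \mb{R}^{\sum_i b_i}$, a path-connected space, into the discrete space $A$, hence constant. Consequently $\phi$ factors through the projection onto the discrete part, i.e.\ $\phi(x,y)=\widetilde\phi(x)$ for a nilspace morphism $\widetilde\phi:\prod_{i=1}^k \mc{D}_i(\mb{Z}^{a_i})\to \mc{D}_k(A)$. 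This immediately explains why only multi-indices of the form $(m,0)$ can appear in the expansion.

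\textbf{Step 2 (Taylor expansion in binomial basis).} Next I would apply the standard classification of morphisms between abelian group nilspaces. The source is the group nilspace on $G=\prod_i \mb{Z}^{a_i}$ equipped with the filtration in which the coordinate $x_{i,j}$ sits in $G_i\setminus G_{i+1}$; the target is $\mc{D}_k(A)$. By \cite[Thm.\ 2.2.14]{Cand:Notes1} (or equivalently Leibman's theory of polynomial maps between filtered groups, cf.\ \cite[Lem.\ B.9]{GTZ}), such a morphism $\widetilde\phi$ is precisely a polynomial map whose ``filtered degree'' in the sense that each iterated discrete derivative $\partial_{h_{i_1}}\cdots\partial_{h_{i_r}}\widetilde\phi$ with $h_{i_s}\in\mc{D}_{i_s}(\mb Z^{a_{i_s}})$ vanishes whenever $i_1+\cdots+i_r>k$. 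Every function $\mb{Z}^N\to A$ into a (possibly torsion) abelian group admits a unique expansion in the binomial basis $\binom{x}{m}$ via finite differences, and the vanishing-of-derivatives condition above translates exactly into the requirement that $g_m=0$ unless $\sum_{i,j} i\,m_{i,j}=|(m,0)|\le k$. This yields the claimed expansion \eqref{eq:Taylor-discrete}.

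\textbf{Step 3 (converse) and main obstacle.} For the converse, I would verify that each monomial $\binom{(x,y)}{(m,0)}$ of filtered degree $|(m,0)|\le k$ defines a nilspace morphism $F\to \mc{D}_k(\mb{Z})$. This reduces, using that products of morphisms into $\mc{D}_{k_1},\mc{D}_{k_2}$ give a morphism into $\mc{D}_{k_1+k_2}$ (via cube-by-cube multiplication in $\mb Z$) and that sums of morphisms into $\mc{D}_k(A)$ are morphisms, to the single-variable fact that $x_{i,j}\mapsto\binom{x_{i,j}}{m_{i,j}}$ is a morphism $\mc{D}_i(\mb Z)\to \mc{D}_{im_{i,j}}(\mb Z)$, which holds since $\binom{\cdot}{m_{i,j}}$ is a polynomial of ordinary degree $m_{i,j}$ and polynomials of degree $d$ are precisely the morphisms $\mc{D}_i(\mb Z)\to\mc{D}_{id}(\mb Z)$. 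Composing with the homomorphism $\mb Z\to A$, $1\mapsto g_m$, and summing, gives a morphism into $\mc{D}_k(A)$; continuity is automatic since the map is polynomial in $x$ and constant in $y$. The main technical obstacle is Step 2: cleanly matching the abstract filtered-degree bound from \cite[Thm.\ 2.2.14]{Cand:Notes1} with the concrete index bound $|(m,0)|\le k$ used here, and ensuring the binomial-basis uniqueness argument works with the arbitrary discrete finitely generated target $A$ (which may have torsion), so that the coefficients $g_m\in A$ are well-defined.
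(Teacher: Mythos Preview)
Your proposal is correct and follows essentially the same three-stage strategy as the paper's proof in Appendix~A: first eliminate the continuous variables by connectedness (the paper's Lemma~\ref{lem:non-dep-cont-coord}), then expand in the binomial basis over the discrete part (the paper's Theorem~\ref{thm:Taylor-discrete-app}), and finally verify the converse via the product and degree rules for binomial monomials (the paper's Lemmas~\ref{lem:binompoly}, \ref{lem:prodpolys} and Corollary~\ref{cor:binom-is-poly}).

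The only stylistic difference is in Step~2: you invoke the Mahler/Newton forward-difference expansion and then impose the vanishing-of-derivatives condition from \cite[Thm.~2.2.14]{Cand:Notes1}, whereas the paper carries out an explicit induction on the size of a suitably defined ``support'' of $\phi$, peeling off one maximal monomial at a time. Both routes are standard and equivalent; the paper's version has the minor advantage of being self-contained and making it transparent that the argument works verbatim for any abelian target $A$ (torsion or not), which is precisely the obstacle you flagged.
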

\noindent We are thus expressing $\phi$ as a linear combination of monomials $(x,y)^{(m,0)}$ which \emph{involve only the discrete variables} $x_{i,j}$. The proof of Lemma \ref{lem:Taylor-discrete} is given in Appendix \ref{app:Taylor} and uses mainly standard arguments from polynomial algebra (see Theorem \ref{thm:Taylor-discrete-app} and Remark \ref{rem:Taylor-discrete-app}).

 We now state the result addressing the continuous case.

\begin{lemma}\label{lem:poly-free-to-R}
Let $k\in \mb{N}$. For each $i\in [k]$ let $a_i,b_i\in\mb{Z}_{\ge 0}$, and let $F$ be the free nilspace $\big[\prod_{i=1}^k \mc{D}_i(\mb{Z}^{a_i})\big] \times \big[\prod_{i=1}^k \mc{D}_i( \mb{R}^{b_i})\big]$. Then for any continuous morphism $\phi:F\to \mc{D}_k(\mb{R})$, for each $(m,n)\in (\prod_{i=1}^k \mb{Z}_{\ge 0}^{a_i})\times (\prod_{i=1}^k \mb{Z}_{\ge 0}^{b_i})$ with $|(m,n)|\leq k$ there is a coefficient $\lambda_{m,n}\in \mb{R}$ such that for every $x\in \prod_{i=1}^k \mc{D}_i(\mb{Z}^{a_i})$ and $y\in \prod_{i=1}^k \mc{D}_i( \mb{R}^{b_i})$ we have
\begin{equation}\label{eq:poly-free-to-R}
\phi(x,y)=\sum_{(m,n):\,|(m,n)|\le k} \lambda_{m,n} \binom{(x,y)}{(m,n)},
\end{equation}
Conversely, every map $\phi$ of the form \eqref{eq:poly-free-to-R} is a continuous morphism $F\to\mc{D}_k(\mb{R})$.
\end{lemma}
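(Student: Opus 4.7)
The converse direction (that each map of the form \eqref{eq:poly-free-to-R} is a continuous morphism) should follow from three standard ingredients: (i) each factor $\binom{x_{i,j}}{m_{i,j}}$ is a morphism $\mc{D}_i(\mb{Z})\to\mc{D}_{i\,m_{i,j}}(\mb{R})$, and similarly $\binom{y_{i,\ell}}{n_{i,\ell}}$ is a morphism $\mc{D}_i(\mb{R})\to\mc{D}_{i\,n_{i,\ell}}(\mb{R})$, by basic polynomial theory for group nilspaces (cf.~\cite[\S 2.2]{Cand:Notes1}); (ii) the product of morphisms into $\mc{D}_{a}(\mb{R})$ and $\mc{D}_{b}(\mb{R})$ gives a morphism into $\mc{D}_{a+b}(\mb{R})$, using that $\mb{R}$ is a ring and that the multiplication $\mc{D}_a(\mb{R})\times\mc{D}_b(\mb{R})\to\mc{D}_{a+b}(\mb{R})$ is itself a morphism; (iii) sums of morphisms into $\mc{D}_k(\mb{R})$ remain morphisms by abelianness of the target. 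The filtered degree of each monomial is exactly $|(m,n)|\le k$, so each monomial lies in $\hom(F,\mc{D}_{|(m,n)|}(\mb{R}))\subset\hom(F,\mc{D}_k(\mb{R}))$, and continuity is immediate.

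For the forward direction, my plan is to induct on the total number of variables $N:=\sum_{i=1}^k(a_i+b_i)$, the base case $N=0$ being trivial. For the inductive step, I would peel off one variable at a time, say a continuous one $y_{i^*,\ell^*}\in\mc{D}_{i^*}(\mb{R})$, and exploit the fact that since the basis vector $e_{i^*,\ell^*}$ sits in the $i^*$-th filtration level of $F$, the finite-difference operator
\[
\Delta_{e_{i^*,\ell^*}}\phi(x,y):=\phi(x,y+e_{i^*,\ell^*})-\phi(x,y)
\]
sends morphisms $F\to\mc{D}_k(\mb{R})$ to morphisms $F\to\mc{D}_{k-i^*}(\mb{R})$. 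This is the standard degree-shifting behaviour of polynomial maps on filtered abelian groups. Iterating gives $\Delta_{e_{i^*,\ell^*}}^{\lfloor k/i^*\rfloor+1}\phi\equiv 0$, and then continuity of $\phi$ in $y_{i^*,\ell^*}$ yields a finite Taylor-type expansion
\[
\phi(x,y)=\sum_{j=0}^{\lfloor k/i^*\rfloor}\psi_j(x,y')\binom{y_{i^*,\ell^*}}{j},
\]
where $y'$ denotes the remaining continuous variables, and each coefficient $\psi_j$ is a continuous morphism from the reduced free nilspace $F'$ (obtained by deleting the $y_{i^*,\ell^*}$ component) into $\mc{D}_{k-i^*j}(\mb{R})$.

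Applying the inductive hypothesis to each $\psi_j$ with the smaller step bound $k-i^*j$ produces the Taylor expansion of $\psi_j$, and reassembling these sums gives the claimed formula \eqref{eq:poly-free-to-R} for $\phi$, with the filtered-degree constraint on indices $(m,n)$ preserved under multiplication by $\binom{y_{i^*,\ell^*}}{j}$. The case in which the peeled-off variable is discrete, say $x_{i^*,j^*}\in\mc{D}_{i^*}(\mb{Z})$, runs identically, now using the integer shift $e_{i^*,j^*}$ and Newton's forward-difference formula to recover the binomial expansion from the fact that iterated differences of order exceeding $\lfloor k/i^*\rfloor$ vanish.

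The main obstacle I expect is the rigorous justification that the coefficient functions $\psi_j$ are genuine morphisms into $\mc{D}_{k-i^*j}(\mb{R})$ on $F'$, rather than merely continuous functions of the remaining variables. This amounts to verifying that higher-order finite-difference operators along a fixed filtered direction interact compatibly with the full cube structure of the ambient free nilspace, which is classical for filtered abelian Lie groups but requires careful bookkeeping in this mixed discrete/continuous setting. A secondary subtlety is ensuring that no monomial is double-counted when the nested inductive sums are collapsed into the single formula \eqref{eq:poly-free-to-R}.
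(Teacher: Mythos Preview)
Your converse direction is correct and matches the paper's treatment (the paper bundles it as Corollary~\ref{cor:cont-binom-poly}).

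For the forward direction there is a genuine gap when the peeled-off variable is continuous. You difference by the \emph{fixed} unit shift $e_{i^*,\ell^*}$, obtain $\Delta_{e_{i^*,\ell^*}}^{\lfloor k/i^*\rfloor+1}\phi\equiv 0$, and then assert that continuity in $y_{i^*,\ell^*}$ forces a polynomial expansion in that variable. This inference fails: the continuous function $t\mapsto\sin(2\pi t)$ satisfies $\Delta_1 f\equiv 0$ yet is not polynomial. The repair is immediate and already available from the morphism hypothesis: every $t\,e_{i^*,\ell^*}$ with $t\in\mb{R}$ lies in the $i^*$-th filtration level, so the polynomial-map property gives $\partial_{t_1 e_{i^*,\ell^*}}\cdots\partial_{t_{d+1} e_{i^*,\ell^*}}\phi\equiv 0$ for \emph{all} real $t_1,\dots,t_{d+1}$ (with $d=\lfloor k/i^*\rfloor$). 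Together with continuity this is the Fr\'echet polynomial functional equation, and now $\phi$ really is a polynomial of degree $\le d$ in $y_{i^*,\ell^*}$. With this fix your induction goes through; the coefficient $\psi_j$ is recovered as $\Delta_{e_{i^*,\ell^*}}^j\phi$ restricted to the slice $y_{i^*,\ell^*}=0$, which is a morphism $F'\to\mc{D}_{k-i^*j}(\mb{R})$ because $\Delta_{e_{i^*,\ell^*}}^j$ drops the degree by $i^*j$ and the slice inclusion $F'\hookrightarrow F$ is a morphism. So your flagged obstacle about $\psi_j$ dissolves, and the discrete case is fine as written since Newton's formula over $\mb{Z}$ needs only the integer shift.

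Once patched, your route is genuinely different from the paper's. The paper does not induct on variables at all for Lemma~\ref{lem:poly-free-to-R}: it composes $\phi$ with the quotient $\pi:\mb{R}\to\mb{T}$, invokes Lemma~\ref{lem:Taylor-cont} (the $\mb{T}$-valued case, proved separately by an induction on the \emph{support} of the morphism) to produce a polynomial $\phi'$ with $\pi\co\phi=\pi\co\phi'$, and then applies the discrete Lemma~\ref{lem:Taylor-discrete} to the $\mb{Z}$-valued difference $\phi-\phi'$. Your variable-by-variable induction is more self-contained and treats the $\mb{R}$-valued case directly; the paper's detour through $\mb{T}$ buys them Lemma~\ref{lem:Taylor-cont} and Lemma~\ref{lem:poly-free-to-R} essentially simultaneously, which they need anyway for lifting morphisms in Theorem~\ref{thm:lift-mor-cfr-ab-gr}.
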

\noindent The proof is given in Appendix \ref{app:Taylor}; a simple modification of it yields the following expression of continuous morphisms from $F$ to the circle group $\mb{T}$, which is used in the next subsection.
\begin{lemma}\label{lem:Taylor-cont}
Let $F=\big[\prod_{i=1}^k \mc{D}_i(\mb{Z}^{a_i})\big] \times \big[\prod_{i=1}^k \mc{D}_i( \mb{R}^{b_i})\big]$ be a free nilspace and let $\pi:\mb{R}\to\mb{T}$ be the canonical quotient homomorphism. A map $\psi:F\to \mc{D}_k(\mb{T})$ is a continuous morphism if and only if there is a morphism $\phi:F\to \mc{D}_k(\mb{R})$ of the form \eqref{eq:poly-free-to-R} such that
\begin{equation}\label{eq:Taylor-cont}
\psi(x,y)=\pi\co\phi(x,y).
\end{equation}
\end{lemma}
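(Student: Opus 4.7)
The backward direction is immediate: by Lemma \ref{lem:poly-free-to-R}, every $\phi$ of the form \eqref{eq:poly-free-to-R} is a continuous morphism $F\to\mc{D}_k(\mb{R})$, and $\pi$ is a continuous group homomorphism, so $\pi\co\phi$ is a continuous morphism $F\to\mc{D}_k(\mb{T})$.

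For the forward direction, write $F=F_d\times F_c$ with $F_d:=\prod_{i=1}^k\mc{D}_i(\mb{Z}^{a_i})$ (discrete) and $F_c:=\prod_{i=1}^k\mc{D}_i(\mb{R}^{b_i})$ (a Euclidean, hence simply connected, space). First I would lift $\psi$ on the ``backbone'' $F_d\times\{0\}$: since $F_d$ is discrete, $\psi|_{F_d\times\{0\}}:F_d\to\mc{D}_k(\mb{T})$ is automatically continuous, and the algebraic argument behind Lemma \ref{lem:Taylor-discrete} (which only uses that the target is an abelian group, not its topology) yields $\psi(x,0)=\sum_{|(m,0)|\le k}\mu_m\binom{x}{m}$ with $\mu_m\in\mb{T}$. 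Choosing any lift $\lambda_m\in\mb{R}$ of each $\mu_m$ under $\pi$ and setting $\phi_0(x):=\sum_{|(m,0)|\le k}\lambda_m\binom{x}{m}$, Lemma \ref{lem:poly-free-to-R} gives a morphism $\phi_0:F_d\to\mc{D}_k(\mb{R})$ satisfying $\pi\co\phi_0=\psi|_{F_d\times\{0\}}$.

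Next, for each $x\in F_d$, the continuous map $\psi(x,\cdot):F_c\to\mb{T}$ lifts uniquely to a continuous $\phi_x:F_c\to\mb{R}$ with $\phi_x(0)=\phi_0(x)$, by standard covering-space theory for $\pi$ along the simply connected $F_c$. Define $\phi(x,y):=\phi_x(y)$; since each slice $\{x\}\times F_c$ is clopen in $F$ (as $F_d$ is discrete), $\phi$ is continuous on $F$, and $\pi\co\phi=\psi$ by construction. To verify that $\phi$ is a morphism $F\to\mc{D}_k(\mb{R})$, fix $\q\in\cu^n(F)$, write $\q=(\q_d,\q_c)$, and fix a $(k+1)$-dimensional face $G\subset\db{n}$. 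Since the Host--Kra cube conditions on $F_c$ are $\mb{R}$-linear in the values, the path $t\in[0,1]\mapsto(\q_d,t\q_c)$ lies in $\cu^n(F)$. Setting $S(t):=\sigma_{k+1}(\phi\co(\q_d,t\q_c)|_G)$ (where $\sigma_{k+1}$ is the alternating-sum operator of \cite[Def.\ 2.2.22]{Cand:Notes1}), continuity of $\phi$ makes $S$ continuous, while $\pi\co\phi=\psi$ together with $\psi\co\q\in\cu^n(\mc{D}_k(\mb{T}))$ forces $S(t)\in\mb{Z}$ for all $t$, so $S$ is constant. At $t=0$ the cube $(\q_d,0)$ lies in $F_d\times\{0\}$ where $\phi$ agrees with $\phi_0$, a morphism to $\mc{D}_k(\mb{R})$, giving $S(0)=0$; hence $S(1)=0$, and $\phi\co\q\in\cu^n(\mc{D}_k(\mb{R}))$. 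Lemma \ref{lem:poly-free-to-R} then supplies the polynomial form of $\phi$.

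The hardest step is the continuity-plus-integrality deformation in the morphism verification: this is the key mechanism by which the polynomial structure required along continuous directions is forced by the corresponding property along the discrete ``backbone.'' A subtler preliminary point is justifying the Taylor expansion for $\mb{T}$-valued morphisms on the discrete part $F_d$, which uses the algebraic (topology-free) core of Lemma \ref{lem:Taylor-discrete}.
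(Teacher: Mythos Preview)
Your approach is genuinely different from the paper's and the core construction is correct: the covering-space lift $\phi$ is well defined and continuous, and the continuity--integrality deformation showing that $\phi$ is a morphism to $\mc{D}_k(\mb{R})$ is clean. There is, however, a circularity in the last line. You invoke Lemma~\ref{lem:poly-free-to-R} to put $\phi$ in polynomial form, but in the paper Lemma~\ref{lem:poly-free-to-R} is \emph{deduced from} Lemma~\ref{lem:Taylor-cont} (see the end of Appendix~\ref{app:Taylor}: one composes a given morphism $F\to\mc{D}_k(\mb{R})$ with $\pi$, applies Lemma~\ref{lem:Taylor-cont}, subtracts, and finishes via Lemma~\ref{lem:Taylor-discrete}). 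Your earlier uses of Lemma~\ref{lem:poly-free-to-R} are harmless because they only use its converse (polynomials are morphisms), which is Corollary~\ref{cor:cont-binom-poly} and is independent; but the final use is of the forward direction and is circular.

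The gap is repairable without abandoning your method: having produced a continuous morphism $\phi:F\to\mc{D}_k(\mb{R})$, restrict it to the discrete sub-nilspace $F_d\times\prod_i\mc{D}_i(\mb{Z}^{b_i})$, apply Theorem~\ref{thm:Taylor-discrete-app} with $A=\mb{R}$ to obtain a polynomial $P$ of the form~\eqref{eq:poly-free-to-R} agreeing with $\phi$ on all integer points, and then show $\phi-P\equiv 0$ by successively freeing the continuous blocks (freezing all but one block at integers, $\phi-P$ restricts to a continuous map $\mb{R}^{b_i}\to\mb{R}$ whose high-order finite differences vanish, hence a genuine polynomial, and it vanishes on the Zariski-dense set $\mb{Z}^{b_i}$, so is identically zero). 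This replaces the circular step by an independent argument.

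For comparison, the paper's proof works directly with the $\mb{T}$-valued $\psi$ and inducts on the size of a support set $\supp(\psi)$: one picks a maximal $(m,n)$, takes the corresponding iterated discrete derivatives, uses Lemma~\ref{lem:fact-hom} to show the resulting function of the shift parameter $h$ is $\pi(\alpha h+\beta)$, subtracts the matching monomial, and repeats. This yields the polynomial lift directly, and Lemma~\ref{lem:poly-free-to-R} is then obtained as a corollary. Your covering-space-plus-homotopy route is more conceptual and avoids the combinatorics of support reduction, but you must independently supply the polynomial form of $\phi$.
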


\begin{remark}\label{rem:poly-degree}
As mentioned in Remark \ref{rem:monomial-degree}, the number $|(m,n)|$ gives a notion of filtered degree of a monomial $\binom{(x,y)}{(m,n)}$. Lemmas \ref{lem:Taylor-discrete} and \ref{lem:poly-free-to-R} together tell us that a morphism $\phi$ from a free nilspace $F$ to $\mb{R}$ or $\mb{Z}$ is a multivariate polynomial, more precisely a linear combination of monomials of the form $\binom{(x,y)}{(m,n)}$. Therefore we can now define the \emph{filtered degree} of such a polynomial to be the maximum of $|(m,n)|$ over all the monomials forming the polynomial. We can then summarize the results so far in this section as follows.
\end{remark}

\begin{lemma}\label{lem:mor-free-nil-as-poly} 
Let $F$, $F'$ be free nilspaces. A function $P:F\to F'$ is a continuous morphism if and only if the coordinate functions $F\to \mc{D}_i(G)$ of $P$ (where $G$ is $\mb{Z}$ or $\mb{R}$ and $i\in [k]$) are all multivariate polynomials of the form \eqref{eq:Taylor-discrete} or \eqref{eq:poly-free-to-R}, all of filtered degree at most $i$.
\end{lemma}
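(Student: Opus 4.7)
The plan is to reduce the question to the case where the target is a single factor of $F'$. If we write $F' = \prod_j \mc{D}_{i_j}(G_j)$ with $G_j \in \{\mb{Z}, \mb{R}\}$ and $i_j \in [k']$, then by the universal property of direct products in the nilspace category (both on cube sets and, via the product topology, on continuity) a map $P \colon F \to F'$ is a continuous morphism if and only if each component $P_j := \pi_j \co P \colon F \to \mc{D}_{i_j}(G_j)$ is one. So it suffices to show, for every $i \ge 1$ and $G \in \{\mb{Z},\mb{R}\}$, that a map $\phi \colon F \to \mc{D}_i(G)$ is a continuous morphism if and only if $\phi$ is a polynomial of the form \eqref{eq:Taylor-discrete} or \eqref{eq:poly-free-to-R} of filtered degree at most $i$.

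For the forward direction, first note that whenever $i \le k$ (where $k$ is the step of $F$) the inclusion $\cu^n(\mc{D}_i(G)) \subset \cu^n(\mc{D}_k(G))$ holds, because the vanishing of $\sigma_{i+1}$ on every $(i+1)$-face of a map $\db{n} \to G$ implies the vanishing of $\sigma_{k+1}$ on $(k+1)$-faces for abelian-valued maps. Hence $\phi$ is also a morphism $F \to \mc{D}_k(G)$, and Lemma \ref{lem:Taylor-discrete} (if $G=\mb{Z}$) or Lemma \ref{lem:poly-free-to-R} (if $G=\mb{R}$) yields a Taylor expansion of $\phi$ of the required form with filtered degree at most $k$. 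The main technical step, which I expect to be the main obstacle, is to sharpen this bound from $k$ to $i$. I would do this by exploiting the additional cube condition $\sigma_{i+1}(\phi\co \q) = 0$ for every $\q \in \cu^{i+1}(F)$: given any monomial $\binom{(x,y)}{(m,n)}$ with filtered degree $|(m,n)| > i$ appearing in the expansion of $\phi$, one builds $\q$ from affine $(i+1)$-cubes in each variable so that $\sigma_{i+1}$ applied to $\binom{\q}{(m,n)}$ is nonzero, while the contribution of the remaining monomials can be isolated by a double induction on total and partial degrees (using that $\sigma_{i+1}$ annihilates any polynomial of filtered degree $\le i$ in the cube variables). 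The resulting vanishing forces the offending coefficient to be zero.

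For the converse, any $\phi$ of the form \eqref{eq:Taylor-discrete} or \eqref{eq:poly-free-to-R} of filtered degree at most $i$ is continuous, being a finite sum of continuous monomials on $\mb{Z}^{\sum_j a_j} \times \mb{R}^{\sum_j b_j}$. To see that $\phi$ is a morphism into $\mc{D}_i(G)$, one must verify that $\sigma_{i+1}(\phi\co\q) = 0$ for every $\q \in \cu^{i+1}(F)$. By linearity this reduces to a check one monomial $\binom{(x,y)}{(m,n)}$ at a time. Using that each coordinate of $\q$ is itself a morphism into $\mc{D}_{i'}(\mb{Z})$ or $\mc{D}_{i'}(\mb{R})$, one expands $\binom{\q(v)}{(m,n)}$ via the Vandermonde--Leibniz identity $\binom{x+y}{r} = \sum_{s+t=r}\binom{x}{s}\binom{y}{t}$ to recognize it as a polynomial in the $(i+1)$ cube variables of filtered degree at most $|(m,n)| \le i$; the $(i+1)$-st discrete derivative (which is exactly $\sigma_{i+1}$ up to sign) then annihilates it, giving the desired vanishing.
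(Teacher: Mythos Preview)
Your reduction to coordinate functions via the product structure is exactly what the paper does, and your converse direction (that the Taylor polynomials of filtered degree $\le i$ are morphisms into $\mc{D}_i(G)$) is in essence the converse halves of Lemmas \ref{lem:Taylor-discrete} and \ref{lem:poly-free-to-R}, handled in the appendix via Corollary \ref{cor:binom-is-poly} and its real analogue.

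Where you diverge is in the forward direction. You first embed $\mc{D}_i(G)$ into $\mc{D}_k(G)$, apply Lemmas \ref{lem:Taylor-discrete} and \ref{lem:poly-free-to-R} to obtain an expansion of filtered degree at most $k$ (the step of $F$), and then propose a separate cube-construction argument to kill the monomials of filtered degree in $(i,k]$. This can presumably be made to work, but the sketch is vague at exactly the point you flag as the main obstacle, and it is unnecessary. The paper's route (implicit in the one-line proof, and made explicit at the start of Appendix \ref{app:Taylor}) is much shorter: any continuous morphism $\phi:F\to\mc{D}_i(G)$ automatically factors as $\phi=\phi_i\co\pi_i$ through the $i$-step factor $F_i$ of $F$, because $\pi_i(x)=\pi_i(y)$ forces $\phi(x)=\phi(y)$ by unique completion of $(i+1)$-corners in $\mc{D}_i(G)$. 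Since $F_i$ is an $i$-step free nilspace, Lemmas \ref{lem:Taylor-discrete} and \ref{lem:poly-free-to-R} applied to $\phi_i$ (with the parameter $k$ there equal to $i$) give filtered degree $\le i$ immediately. Your approach trades this one-line factoring observation for a bespoke coefficient-isolation argument; the factoring is what you are missing.
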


\begin{proof}
By composing $P$ with projections to the different coordinates, the lemma follows from Lemmas \ref{lem:Taylor-discrete} and  \ref{lem:poly-free-to-R}.
\end{proof}

\begin{remark}
Despite the various existing treatments of polynomial maps between filtered groups (e.g.\ \cite[Appendix B]{GTZ}, \cite[\S 2.2]{Cand:Notes1} and many more), we did not find a quick deduction of the Taylor expansions in this subsection from the existing literature. Nevertheless, we chose to leave the proofs for Appendix \ref{app:Taylor} as the arguments are basically standard. 
\end{remark}

\subsection{Lifting a morphism defined on a free nilspace}\label{subsec:lift-inv-free-nil}\hfill\\
Let $F$ be a free nilspace, let $A$ be a compact abelian Lie group, and suppose that for some $k\in \mb{N}$ we have a nilspace morphism $\phi:F\to\mc{D}_k(A)$. We know that $A\cong Z\times \mb{T}^s$ for some finite abelian group $Z$ and some integer $s\geq 0$. Let $r$ be the minimal cardinality of a set of generators for $Z$, so that $Z$ is isomorphic to a direct sum of $r$ finite cyclic groups (we call $r$  the \emph{rank} of $Z$). We then consider the abelian Lie group $B=\mb{Z}^r\times \mb{R}^s$ and note that there is a natural continuous surjective homomorphism $\pi:B\to A$ (which simply applies a quotient homomorphism in each coordinate, either from $\mb{Z}$ to a finite cyclic group, or from $\mb{R}$ to $\mb{T}$). We call $B$ the \emph{covering group} of $A$, and $\pi$ the \emph{covering homomorphism}. The question treated in this subsection is whether we can lift $\phi$ through $\pi$ to a morphism $\psi:F\to \mc{D}_k(B)$, that is, whether there is a nilspace morphism $\psi:F\to\mc{D}_k(B)$ such that $\pi\co\psi=\phi$.

Using the Taylor expansions from the previous subsection, this question can be settled quickly in the affirmative, obtaining the following main result of this subsection.

\begin{theorem}\label{thm:lift-mor-cfr-ab-gr}
Let $A=Z\times \mb{T}^s$ be a compact\footnote{The assumption of compactness can be relaxed to that of being compactly-generated, using similar arguments.} abelian Lie group, let $B=\mb{Z}^r\times \mb{R}^s$ be the covering group and $\pi:B\to A$ the covering homomorphism. Then for any free nilspace $F$, any $k\in \mb{N}$ and any $\phi\in \hom(F,\mc{D}_k(A))$, there exists $\psi\in\hom(F,\mc{D}_k(B))$ such that $\phi = \pi\co \psi$. 
\end{theorem}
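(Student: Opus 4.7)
The plan is to reduce the problem coordinate-by-coordinate via the decomposition $A = Z \times \mb{T}^s$, where $Z$ is a finite abelian group of rank $r$. Writing $Z \cong \prod_{j=1}^r \mb{Z}/k_j\mb{Z}$ (for instance, by the invariant factor decomposition), the covering $\pi:B=\mb{Z}^r\times\mb{R}^s\to A$ is the product of the coordinate quotients $\mb{Z}\to\mb{Z}/k_j\mb{Z}$ and $\mb{R}\to\mb{T}$. Composing $\phi$ with the projection to each factor of $A$, it suffices to treat the two cases separately: lifting a morphism $\phi_j\in\hom(F,\mc{D}_k(\mb{Z}/k_j\mb{Z}))$ to $\hom(F,\mc{D}_k(\mb{Z}))$, and lifting $\phi_\ell\in\hom(F,\mc{D}_k(\mb{T}))$ to $\hom(F,\mc{D}_k(\mb{R}))$. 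Once all these coordinate lifts are obtained, assembling them gives the desired $\psi\in\hom(F,\mc{D}_k(B))$, since a map into the product nilspace is a morphism iff each of its coordinate maps is, and $\pi\co\psi=\phi$ holds by construction.

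For the continuous coordinate, the lift is already provided directly by Lemma \ref{lem:Taylor-cont}: every continuous morphism $\psi_\ell:F\to\mc{D}_k(\mb{T})$ is of the form $\pi_{\mb{T}}\co\phi_\ell$ for some $\phi_\ell\in\hom(F,\mc{D}_k(\mb{R}))$ of the form \eqref{eq:poly-free-to-R}. This is nothing to prove at this stage.

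For the discrete coordinate, apply Lemma \ref{lem:Taylor-discrete} with target group $\mb{Z}/k_j\mb{Z}$ (which qualifies as a discrete finitely generated abelian group): this yields coefficients $g_m\in \mb{Z}/k_j\mb{Z}$, indexed by $m\in\prod_{i=1}^k\mb{Z}_{\ge 0}^{a_i}$ with $|(m,0)|\le k$, such that
\[
\phi_j(x,y)\;=\;\sum_{m:\,|(m,0)|\le k} g_m\,\binom{(x,y)}{(m,0)}.
\]
Only the discrete variables appear here (since the index is $(m,0)$), and the monomials $\binom{(x,y)}{(m,0)}$ take integer values on the discrete part of $F$. Choose, arbitrarily, integer lifts $\tilde g_m\in\mb{Z}$ of each $g_m$, and define
\[
\psi_j(x,y)\;:=\;\sum_{m:\,|(m,0)|\le k} \tilde g_m\,\binom{(x,y)}{(m,0)}.
\]
By the converse direction of Lemma \ref{lem:Taylor-discrete} (applied with target $\mb{Z}$), the map $\psi_j$ is a continuous morphism $F\to\mc{D}_k(\mb{Z})$. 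Composition with the quotient homomorphism $\mb{Z}\to\mb{Z}/k_j\mb{Z}$ sends $\tilde g_m$ to $g_m$, so $\pi_j\co\psi_j=\phi_j$, as required.

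No step is a genuine obstacle: the real content of the theorem is contained in the Taylor-expansion results (Lemmas \ref{lem:Taylor-discrete} and \ref{lem:Taylor-cont}), which express morphisms from free nilspaces concretely as integer-valued polynomials with coefficients in the target group. Given those, the argument is a coordinatewise lift of polynomial coefficients through the surjection $\mb{Z}\to\mb{Z}/k_j\mb{Z}$, combined with the fact that the covering $\mb{R}\to\mb{T}$ already lifts morphisms into $\mc{D}_k(\mb{T})$ tautologically by Lemma \ref{lem:Taylor-cont}.
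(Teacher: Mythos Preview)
Your proof is correct and follows essentially the same approach as the paper: split into the discrete and toral coordinates, lift the discrete part by lifting the Taylor coefficients from Lemma \ref{lem:Taylor-discrete} through the surjection $\mb{Z}\to\mb{Z}/k_j\mb{Z}$, and lift the toral part directly via Lemma \ref{lem:Taylor-cont}, then reassemble. The only cosmetic difference is that you decompose $Z$ further into its cyclic factors whereas the paper lifts all of $Z$ at once to $\mb{Z}^r$; also note that in your continuous-coordinate paragraph you have inadvertently swapped the roles of $\phi_\ell$ and $\psi_\ell$.
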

\noindent There are more general versions of this result (see for instance Remark \ref{rem:moregenlift}). 
\begin{proof}
First we address the discrete part, in which $A$ is just a finite abelian group. In this case, letting $r$ be the rank of $A$, the conclusion is the following:
\begin{equation}\label{prop:lift-disc}
 \textrm{there exists a morphism $\psi:F\to \mc{D}_k(\mb{Z}^r)$ such that $\phi=\pi\co\psi$.}   
\end{equation}
Indeed, it suffices to lift the expansion \eqref{eq:Taylor-discrete} for $\phi$ to a Taylor expansion of a morphism $\psi:F\to \mc{D}_k(B=\mb{Z}^r)$ by replacing each coefficient $g_m\in A$ by some $g_m'\in B$ such that $\pi(g_m')=g_m$.

Next we treat the continuous part of the problem, where the image group $A$ is a torus $\mb{T}^s$. Letting $\pi:\mb{R}\to\mb{T}$ be the canonical quotient homomorphism, the main claim here is that
\begin{equation}\label{prop:lift-cont}
\textrm{there exists a morphism $\psi:F\to \mc{D}_k(\mb{R}^s)$ such that $\phi=\pi^s\co\psi$.}
\end{equation}
This follows readily by applying Lemma \ref{lem:Taylor-cont} to each of the $s$ coordinate maps of $\phi$ and taking the product of the resulting maps.

The theorem now follows. Indeed,  we can first lift separately the morphisms $\pi_d\co\phi$ and $\pi_c\co\phi$, using \eqref{prop:lift-disc} and \eqref{prop:lift-cont} respectively, where $\pi_d$ is the coordinate projection from $A$ to its discrete (finite) part $Z$ and $\pi_c$ is the coordinate projection from $A$ to its continuous part $\mb{T}^s$. Then we put together the two lifts to obtain $\psi$ (taking the product of the two maps).
\end{proof}

\begin{remark}\label{rem:moregenlift}
Theorem \ref{thm:lift-mor-cfr-ab-gr} can be extended to more general situations. In fact, if $\gamma:B\to A$ is a surjective homomorphism where $A,B$ are abelian Lie groups it can be proved that for any $f\in \hom(F,\mc{D}_k(A))$ there exists $g\in \hom(F,\mc{D}_k(B))$ such that $\gamma\co g = f$. As we will not use this result, we omit its proof.
\end{remark}

\subsection{On the translation groups and structure groups of free nilspaces}\hfill\\
The main goal of this final subsection on free nilspaces is to obtain the following lifting result for translations, which will be used in the proof of Theorem \ref{thm:gpcongrep}.

\begin{theorem}\label{thm:lift-trans}
Let $F=\prod_{i=1}^k \mc{D}_i(\mb{Z}^{a_i}\times\mb{R}^{b_i})$ be a free nilspace \textup{(}$a_i,b_i\in\mb{Z}_{\ge 0}$, for each $i\in [k]$\textup{)}. Let $A=Z\times \mb{T}^s$ be a compact abelian Lie group, let $B=\mb{Z}^r\times \mb{R}^s$ be the covering group for $A$, and let $\pi:B\to A$ be the covering homomorphism. Then for every $i\in [k]$ and $\alpha\in \tran_i(F\times \mc{D}_k(A))$ there exists $\beta\in \tran_i(F\times \mc{D}_k(B))$ such that, letting $\phi:F\times\mc{D}_k(B)\to F\times \mc{D}_k(A)$ be the morphism $(f,b)\mapsto (f,\pi(b))$, we have $\alpha\co \phi= \phi\co \beta$.
\end{theorem}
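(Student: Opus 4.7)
The plan is to construct $\beta$ explicitly by exploiting the polynomial description of translations on free nilspaces (Theorem \ref{thm:decrip-trans-group}) together with the coefficient-lifting result Theorem \ref{thm:lift-mor-cfr-ab-gr}. The first step is to decompose $\alpha$ canonically. Since the variable $a\in\mc{D}_k(A)$ has step $k$, the filtered-degree rule of Theorem \ref{thm:decrip-trans-group}, together with the Taylor expansions in the spirit of Lemmas \ref{lem:Taylor-discrete} and \ref{lem:poly-free-to-R}, forces $a$ not to appear in any $F$-component (of step $\le k$), and also forces the shift in the $a$-coordinate to be of filtered degree at most $k-i$ in the $F$-variables. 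Hence
\[
\alpha(f,a) = \big(\alpha_F(f),\; a+\sigma(f)\big),
\]
where $\alpha_F:F\to F$ is continuous and $\sigma:F\to A$ is a continuous morphism $F\to\mc{D}_{k-i}(A)$. Moreover $\alpha_F\in\tran_i(F)$: given any cube $\q_F\in\cu^n(F)$, extending it to $\q=(\q_F,\q_A)\in\cu^n(F\times\mc{D}_k(A))$ (for example, with $\q_A$ constant) and projecting the arrow cube $\langle\q,\alpha\co\q\rangle_i$ to the $F$-coordinate yields $\langle\q_F,\alpha_F\co\q_F\rangle_i$, which is a cube in $F\bowtie_i F$.

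Since $F$ is a free nilspace and $B$ is the covering group of $A$, Theorem \ref{thm:lift-mor-cfr-ab-gr} (applied with step $k-i$ in place of $k$) yields a continuous morphism $\tilde\sigma:F\to\mc{D}_{k-i}(B)$ with $\pi\co\tilde\sigma=\sigma$. I then set
\[
\beta(f,b) := \big(\alpha_F(f),\; b+\tilde\sigma(f)\big).
\]
This is a continuous morphism $F\times\mc{D}_k(B)\to F\times\mc{D}_k(B)$, and the required identity $\phi\co\beta=\alpha\co\phi$ follows at once, since both sides equal $\big(\alpha_F(f),\,\pi(b)+\sigma(f)\big)$.

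The last step is to confirm $\beta\in\tran_i(F\times\mc{D}_k(B))$. Since $F\times\mc{D}_k(B)$ is itself a free nilspace (as $B=\mb{Z}^r\times\mb{R}^s$), Theorem \ref{thm:decrip-trans-group} characterizes its degree-$i$ translations as maps of precisely the form $(f,b)\mapsto(\alpha_F'(f),\, b+\tau(f))$ with $\alpha_F'\in\tran_i(F)$ and $\tau:F\to\mc{D}_{k-i}(B)$ a continuous morphism. Our $\beta$ has exactly this form, so it is a translation of degree $i$, completing the proof. The principal obstacle lies in the first step: one must extract the clean decomposition $\alpha=(\alpha_F,\,a+\sigma)$ with the sharp filtered-degree bound $\le k-i$ on $\sigma$, which relies on the full statement of Theorem \ref{thm:decrip-trans-group} extended to products of the form $F\times\mc{D}_k(A)$ rather than to plain free nilspaces. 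Once this decomposition is in hand, the remaining steps reduce cleanly to applying Theorem \ref{thm:lift-mor-cfr-ab-gr} coefficient-wise in the Taylor expansion of $\sigma$, with the resulting $\tilde\sigma$ automatically preserving the polynomial shape dictated by the translation property.
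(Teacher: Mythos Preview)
Your proof is correct and follows essentially the same approach as the paper: both arguments use Theorem \ref{thm:decrip-trans-group} (which applies to $F\times\mc{D}_k(A)$ directly, since it is stated for arbitrary abelian groups $A_i$) to isolate the $A$-valued component $T_k\in\hom(F_{k-i},\mc{D}_{k-i}(A))$ of $\alpha$, lift it via Theorem \ref{thm:lift-mor-cfr-ab-gr} to a $B$-valued morphism, and substitute the lift back into the expression \eqref{eq:str-trans-ab-nil} to obtain $\beta$. The paper's proof is simply a more compressed version of what you wrote.
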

\noindent To prove this we shall use the following description of the translation groups of a free nilspace, which extends \cite[Theorem 5.9]{CGSS-p-hom} and seems to be of independent interest. In particular, this description will also help to prove that these translation groups are Lie groups.

\begin{remark}\label{rem:D0}
In the following result, we use the basic degree-$t$ nilspace structure on an abelian group $\ab$, denoted by $\mc{D}_t(\ab)$; see \cite[equation $(2.9)$]{Cand:Notes1}. However, we shall use this even for $t\leq 0$. Note that for $t=0$ the resulting nilspace is non-ergodic, and the $n$-cubes on this nilspace are the constant maps $\db{n}\to\ab$.  It follows that for any (ergodic) nilspace $\ns$, the set of morphisms $\hom(\ns,\mc{D}_0(\ab))$ is the set of constant maps $\ns \to \ab$.  For $t<0$ we take $\mc{D}_t(\ab)$ to be $\{0_{\ab}\}$.
\end{remark}

\begin{theorem}\label{thm:decrip-trans-group}
Let $(A_i)_{i=1}^k$ be a sequence of abelian groups, and let $\ns$ be the product nilspace $\prod_{i=1}^k\mc{D}_i(A_i)$. Then for each $s\in [k]$ we have
\begin{equation}\label{eq:str-trans-ab-nil}
\tran_s(\ns) = \prod_{i=1}^k \hom\left(\prod_{j=1}^{i-s} \mc{D}_j(A_j),\mc{D}_{i-s}(A_i)\right),
\end{equation}
where the action of an element $(0,\ldots,0,T_s,T_{s+1},\ldots,T_k)$ in this product, as a translation $\alpha\in \tran_s(\ns)$, is defined by $\alpha(x)=(x_1,\ldots,x_k)+(0,\ldots,0,T_s,T_{s+1}(x_1),\ldots,T_k(x_1,\ldots,x_{k-s}))$.
\end{theorem}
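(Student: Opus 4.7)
I would prove both inclusions of \eqref{eq:str-trans-ab-nil} by a coordinate-wise analysis directly from the definition of $s$-translation. Recall that $\alpha\in\tran_s(\ns)$ iff for every $n\ge 0$ and every $\q\in\cu^n(\ns)$ the arrow map $\q':=\langle\q,\alpha\co\q\rangle_s:\db{n}\times\db{s}\to\ns$, equal to $\q(v)$ for $w\ne 1^s$ and to $\alpha(\q(v))$ for $w=1^s$, lies in $\cu^{n+s}(\ns)$. Since $\ns=\prod_{i=1}^k\mc{D}_i(A_i)$ carries the product cube structure, this condition splits coordinate-wise. Writing $\alpha=(\alpha_1,\dots,\alpha_k)$ and noting that $\q_i$, viewed as constant in the $w$-variable, is automatically an $(n+s)$-cube on $\mc{D}_i(A_i)$, the condition reduces to: for each $i\in[k]$ and each $\q\in\cu^n(\ns)$,
\[
(v,w)\mapsto[w=1^s]\cdot f_{\q,i}(v)\in\cu^{n+s}(\mc{D}_i(A_i)),\qquad\text{where }f_{\q,i}(v):=\alpha_i(\q(v))-\q_i(v).
\]

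The core technical ingredient is then the following cube-product criterion, valid for any abelian group $A$, any $i\in[k]$, any $n\ge 0$, and any $f:\db{n}\to A$:
\[
(\ast)\qquad (v,w)\mapsto[w=1^s]\cdot f(v)\in\cu^{n+s}(\mc{D}_i(A))\iff f\in\cu^n(\mc{D}_{i-s}(A)),
\]
with the convention $\mc{D}_t(A)=\{0\}$ for $t<0$ (so for $i<s$ the condition forces $f\equiv 0$ and hence $\alpha_i(x)=x_i$). Both directions follow by testing the Gray-code equation $\sigma_{i+1}=0$ on $(i+1)$-faces $F=F_1\times F_2\subseteq\db{n}\times\db{s}$ and exploiting the multiplicative factorization $\sigma_{i+1}((f\cdot h)|_{F_1\times F_2})=\sigma_{\dim F_1}(f|_{F_1})\cdot\sigma_{\dim F_2}(h|_{F_2})$ with $h(w):=[w=1^s]$. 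The ``$\Rightarrow$'' direction recovers exactly the condition $\sigma_{i-s+1}(f|_{F_1})=0$ for every $(i-s+1)$-face $F_1\subseteq\db{n}$ by taking $F_2=\db{s}$; the ``$\Leftarrow$'' direction handles all remaining $(i+1)$-faces using the standard inclusion $\cu^n(\mc{D}_{i-s}(A))\subseteq\cu^n(\mc{D}_a(A))$ for every $a\ge i-s$.

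With $(\ast)$ in hand, both inclusions of \eqref{eq:str-trans-ab-nil} follow. For $\supseteq$: given $T_i\in\hom(\prod_{j=1}^{i-s}\mc{D}_j(A_j),\mc{D}_{i-s}(A_i))$ and $\alpha$ defined by the formula in the statement, one has $f_{\q,i}=T_i\co(\q_1,\dots,\q_{i-s})\in\cu^n(\mc{D}_{i-s}(A_i))$, so $(\ast)$ yields $\alpha\in\tran_s(\ns)$. For $\subseteq$: given $\alpha\in\tran_s(\ns)$, $(\ast)$ shows that $\beta_i:x\mapsto\alpha_i(x)-x_i$ is a morphism $\ns\to\mc{D}_{i-s}(A_i)$ for each $i$. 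To conclude that $\beta_i$ depends only on $(x_1,\dots,x_{i-s})$, I would invoke the elementary step-restriction fact that any morphism $g:\mc{D}_j(A)\to\mc{D}_t(B)$ with $j>t$ is constant: when $j\ge t+1$ every map $\db{t+1}\to A$ is a $\mc{D}_j(A)$-cube (since $\db{t+1}$ has no $(j+1)$-faces), so the condition $\sigma_{t+1}(g\co\q)=0$ applied to single-vertex-supported cubes $\q$ forces $g(a)=g(0_A)$ for all $a\in A$. For each $j_0>i-s$, the restriction of $\beta_i$ to the $j_0$-th factor (fixing the other coordinates) is such a morphism, hence constant in $x_{j_0}$; this yields the required $T_i\in\hom(\prod_{j=1}^{i-s}\mc{D}_j(A_j),\mc{D}_{i-s}(A_i))$. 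The main obstacle is the combinatorial bookkeeping needed to verify $(\ast)$, especially to cover all $(i+1)$-faces in the ``$\Leftarrow$'' direction.
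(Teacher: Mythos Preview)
Your approach is correct and close in spirit to the paper's, but organized differently. The paper argues by induction on $k$: for $\supseteq$ it reduces (via the inductive hypothesis on $\ns_{k-1}$) to showing that the map altering only the $k$-th coordinate by $T_k$ is in $\tran_s(\ns)$, and for $\subseteq$ it first uses that the constant translations in coordinates $i\ge k-s+1$ are central in $\tran_s(\ns)$ to peel off those coordinates, then invokes the inductive hypothesis on $\ns_{k-1}$, leaving only the morphism property of $g'$ in the $k$-th coordinate to verify. In both directions the paper then appeals to \cite[Lemma 2.2.19]{Cand:Notes1} as a black box, which is exactly your criterion $(\ast)$. Your route avoids the induction on $k$ entirely: you apply $(\ast)$ coordinate-wise to obtain directly that each $\beta_i=\alpha_i-x_i$ is a morphism $\ns\to\mc{D}_{i-s}(A_i)$, and then use the step-restriction fact (the case $\ell>k$ of the paper's Lemma~\ref{lem:higher-ab-hom-set}) to kill dependence on coordinates $j_0>i-s$. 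This is arguably cleaner, and your Gray-code proof of $(\ast)$ via the multiplicative factorization $\sigma_{a+b}=\sigma_a\cdot\sigma_b$ on product faces is a nice self-contained replacement for the external citation; the only care needed (which you flag) is that every $(i+1)$-face of $\db{n}\times\db{s}$ is indeed a product face $F_1\times F_2$, after which the case analysis on whether $1^s\in F_2$ goes through as you describe.
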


\begin{remark}
In \eqref{eq:str-trans-ab-nil} the product signs outside and inside the bracket indicate a Cartesian product and a nilspace product respectively. The factor for $i=s$ is the set $\hom(\{\emptyset\},\mc{D}_0(A_s))$ (taking the empty product $\prod_{j=1}^{0} \mc{D}_j(A_j)$ to be a singleton, as usual), so this is the set of constants $T_s\in A_s$. For each $i<s$ the $i$-th factor can be viewed as consisting only of $0_{A_i}$.
\end{remark}

\begin{proof}
First we prove that any such function $\alpha$ is in $\tran_s(\ns)$. We argue by induction on $k$. For $k=1$, since  $\alpha$ then just adds a constant, it is  indeed a translation on $\mc{D}_1(A_1)$. For $k\geq 2$ and $s\in [k]$, by induction the map $(x_1,\ldots,x_{k-1})+(0,\ldots,0,T_s,T_{s+1}(x_1),\ldots,T_{k-1}(x_1,\ldots,x_{k-1-s}))$ is in $\tran_s(\ns_{k-1})$. Hence it suffices to prove that the map $\alpha': x=(x_1,,\ldots,x_k)\mapsto x+\big(0,\ldots,0,T_k(x_1,\ldots,x_{k-s})\big)$ is  in $\tran_s(\ns)$.
By \cite[Lemma 3.2.32]{Cand:Notes1} it suffices to prove that for every $\q=(\q_1,\ldots,\q_k)\in \cu^n\big(\prod_{i=1}^{k}\mc{D}_i(A_i)\big)$ the $s$-arrow $\langle \q,\alpha'\co\q\rangle_s$ is in $\cu^{n+s}(\ns)$. Now $\langle \q,\alpha'\co\q\rangle_s=\langle \q,\q\rangle_s + g$, for $g:\db{n+s}\to \prod_{i=1}^k A_i$, $v\mapsto (0,\ldots,0,\langle 0,T_k\co (\q_1,\ldots,\q_{k-s})\rangle_s(v))$. Thus it suffices to prove that $\langle 0,T_k\co (\q_1,\ldots,\q_{k-s})\rangle_s\in \cu^n(\mc{D}_{k-s}(A_k))$. By \cite[Lemma 2.2.19]{Cand:Notes1}, this follows if $T_k\co (\q_1,\ldots,\q_{k-s})\in \cu^n(\mc{D}_{k-s}(A_k))$, which holds by our assumption on $T_k$. 

Now we prove that every translation $\alpha\in \tran_s(\ns)$ has the form claimed in the theorem. Note that by the previous paragraph, adding a constant $x_i$ in the $i$-th component of $\ns$ is a translation in $\tran_i(\ns)$. Moreover, for $i\geq k-s+1$ such translations are central in $\tran_s(\ns)$. Hence for every $x\in \ns$ we have $\alpha(x)=\alpha(x_1,\ldots,x_{k-s},0^s)+(0^{k-s},x_{k-s+1},\ldots,x_k)$. By induction, the first summand here equals $(x_1,\ldots,x_{k-s},0^s)+(0^{k-s},T_s,T_{s+1}(x_1),\ldots,T_{k-1}(x_1,\ldots,x_{k-1-s}),g(x))$ for some map $g:\ns\to A_k$ which factors through $\pi_{k-s}$, i.e.\ $g(x)=g'(x_1,\ldots,x_{k-s})$ for some map $g':\ns_{k-s}\to A_k$. It now suffices to prove that $g'\in \hom(\prod_{i=1}^{k-s}\mc{D}_i(A_i),\mc{D}_{k-s}(A_k))$. Fix any $\q\in \cu^n(\prod_{i=1}^{k-s}\mc{D}_i(A_i))$. Let $\q^*\in \cu^n(\ns)$ be defined by $\q^*(v)=(\q(v),0^s)$ for $v\in\db{n}$, and consider the map $\langle\q^*,\alpha\co\q^*\rangle_s$. On one hand, by \cite[Lemma 3.2.32]{Cand:Notes1} this map is a cube. On the other hand, by the above expression of $\alpha$, this map equals $\langle\q^*,\q^*\rangle_s+\langle0,\q'\rangle_s+\langle 0,\q''\rangle_s$ for some $\q'=(\q'_1,\ldots,\q'_{k-s},0^s)\in \cu^n(\ns)$, and with $\q''(v)=(0^{k-s},g'\co \q(v))$. Then $\langle 0,\q''\rangle_s\in \cu^{n+s}(\ns)$, since it is the combination of cubes $\langle\q^*,\alpha\co\q^*\rangle_s-\langle\q^*, \q^*\rangle_s - \langle0,\q'\rangle_s$. Hence $\langle 0,g'\co\q\rangle_s\in \cu^{n+s}(\mc{D}_k(A_k))$, so $g'\co\q\in \cu^n(\mc{D}_{k-s}(A_k))$ by \cite[Lemma 2.2.19]{Cand:Notes1}, as required.
\end{proof}

\begin{proof}[Proof of Theorem \ref{thm:lift-trans}]
Given any translation $\alpha\in \tran_s(F\times \mc{D}_k(A))$, by Theorem \ref{thm:decrip-trans-group}, we can lift $\alpha$ in the desired way just by lifting the polynomial $T_k\in \hom(\prod_{j=1}^{k-s}\mc{D}_j(\mb{Z}^{a_i}\times \mb{R}^{b_i}),\mc{D}_{k-s}(A))$. By Theorem \ref{thm:lift-mor-cfr-ab-gr} there exists $T_k'\in \hom(\prod_{j=1}^{k-s}\mc{D}_j(\mb{Z}^{a_i}\times \mb{R}^{b_i}),\mc{D}_{k-s}(B))$ such that $\pi\co T_k'=T_k$. Using $T_k'$ instead of $T_k$ in the expression \eqref{eq:str-trans-ab-nil} for $\alpha$, we obtain a translation $\beta$ as claimed.
\end{proof}

\noindent To close this section we shall prove that the translation group and structure groups of a free nilspace are always Lie groups. This can be established using an alternative description of the translation group of free nilspaces given by the next theorem (which actually concerns more general nilspaces consisting in products of higher-degree abelian Lie groups).

Given a $k$-step nilspace $\ns$, recall from \cite[Definition 3.3.1 (ii)]{Cand:Notes1} that for every $j\in [k]$ we have the following group homomorphism (e.g.\ by \cite[Lemma 1.5]{CGSS} with $\psi=\pi_{k-1}$): 
\begin{equation}\label{eq:etaj}
\eta_j:\tran(\ns)\to \tran(\ns_j),\quad \alpha\mapsto \alpha_j,\textrm{ where } \alpha_j\co \pi_j = \pi_j\co \alpha,
\end{equation}

\begin{theorem}\label{thm:freetransLie}
Let $(A_i)_{i=1}^k$ be a sequence of (compactly generated) abelian Lie groups, and let $\ns$ be the $k$-step Lie-fibered nilspace $\prod_{i=1}^k \mc{D}_i(A_i)$. Then $\tran(\ns)$ is a topological group extension of  $\tran(\ns_{k-1})$ by $\ker(\eta_{k-1})$.
\end{theorem}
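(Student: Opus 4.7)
My plan is to establish the short exact sequence of topological groups
\[
1 \to \ker(\eta_{k-1}) \hookrightarrow \tran(\ns) \xrightarrow{\;\eta_{k-1}\;} \tran(\ns_{k-1}) \to 1
\]
by verifying that $\eta_{k-1}$ is a continuous, surjective, and open homomorphism; closedness of $\ker(\eta_{k-1})$ (as the preimage of the identity in a Hausdorff group) and normality (as a kernel) then follow automatically. The starting observation is that $\mc{D}_k(A_k)_{k-1}$ is a one-point nilspace, so $\ns_{k-1}$ identifies canonically with $\prod_{i=1}^{k-1}\mc{D}_i(A_i)$, and in the nilspace-product decomposition $\ns=\ns_{k-1}\times \mc{D}_k(A_k)$ the projection $\pi_{k-1}:\ns\to\ns_{k-1}$ is simply $(y,z)\mapsto y$.

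For surjectivity of $\eta_{k-1}$, given $\beta\in\tran(\ns_{k-1})$ I define $\alpha:\ns\to\ns$ by $\alpha(y,z):=(\beta(y),z)$. This map is continuous. To see that it is a translation, for any cube $\q=(\q_1,\q_2)\in\cu^n(\ns)$ I compute
\[
\langle \q,\alpha\co\q\rangle_1=\big(\langle \q_1,\beta\co\q_1\rangle_1,\,\langle \q_2,\q_2\rangle_1\big).
\]
The first coordinate lies in $\cu^{n+1}(\ns_{k-1})$ because $\beta\in\tran(\ns_{k-1})$; the second is the pullback of $\q_2$ under the cube morphism $\db{n+1}\to\db{n}$ that forgets the final coordinate, hence is a cube in $\cu^{n+1}(\mc{D}_k(A_k))$. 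Therefore $\alpha\in\tran(\ns)$ and $\eta_{k-1}(\alpha)=\beta$. (Equivalently, one can read off this lift from the formula in Theorem \ref{thm:decrip-trans-group} by setting the last component $T_k$ equal to $0$.)

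Continuity of $\eta_{k-1}$ for the compact-open topology is routine: if $\alpha_n\to\alpha$ uniformly on compact subsets of $\ns$, then for any compact $K'\subset\ns_{k-1}$ the set $K'\times\{0_{A_k}\}$ is compact in $\ns$, and uniform convergence $\alpha_n|_{K'\times\{0\}}\to\alpha|_{K'\times\{0\}}$, composed with the continuous map $\pi_{k-1}$, yields $\eta_{k-1}(\alpha_n)|_{K'}\to \eta_{k-1}(\alpha)|_{K'}$ uniformly. Openness of $\eta_{k-1}$ is then obtained by invoking the open mapping theorem for Polish groups: every continuous surjective homomorphism between Polish groups is open. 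Both $\tran(\ns)$ and $\tran(\ns_{k-1})$ are Polish groups by Theorem \ref{thm:trans-group-polish}, since $\ns$ and $\ns_{k-1}$ are Lie-fibered (their structure groups being the $A_i$, which are Lie by hypothesis).

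The only point requiring genuine care is the verification that the naive lift $\alpha(y,z)=(\beta(y),z)$ is a translation rather than merely a continuous map, which is handled by the transparent arrow computation above; all remaining ingredients — Polishness of the translation groups, continuity of $\eta_{k-1}$, and the open mapping theorem — are either readily available from earlier in the paper or entirely standard.
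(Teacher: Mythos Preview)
Your proof is correct and follows essentially the same approach as the paper: both establish the short exact sequence by checking that $\eta_{k-1}$ is a continuous surjective homomorphism between Polish groups (via Theorem \ref{thm:trans-group-polish}) and then invoking the open mapping theorem, with surjectivity obtained by lifting $\beta$ to $(\beta,\id)$ (equivalently, setting $T_k=0$ in the description from Theorem \ref{thm:decrip-trans-group}). Your continuity argument exploits the explicit product decomposition $\ns=\ns_{k-1}\times\mc{D}_k(A_k)$ to build compact lifts $K'\times\{0\}$, whereas the paper cites the more general Lemma \ref{lem:h-is-cont}, but this is a cosmetic difference.
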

\noindent When $\ns$ is a Lie-fibered nilspace, the map $\eta_{k-1}$ is seen to be a continuous homomorphism similarly as in \cite[Lemma 2.9.3]{Cand:Notes2} (where $\eta_{k-1}$ was denoted simply by $h$). Let us record this.

\begin{lemma}\label{lem:h-is-cont}
Let $\ns$ be a Lie-fibered $k$-step nilspace. Then the homomorphism $\eta_{k-1}:\tran(\ns)\to \tran(\ns_{k-1})$ is continuous.
\end{lemma}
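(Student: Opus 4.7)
The plan is to verify continuity by checking that preimages of a subbasis of the compact-open topology on $\tran(\ns_{k-1})$ (viewed as a subspace of $C(\ns_{k-1},\ns_{k-1})$) are open in $\tran(\ns)$. Recall that this topology has subbasis consisting of the sets $W(K',U')=\{f:f(K')\subset U'\}$ where $K'\subset \ns_{k-1}$ is compact and $U'\subset \ns_{k-1}$ is open. Fix such $K'$ and $U'$.

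The first step is to replace the compact set $K'$ downstairs with a compact set $K$ upstairs. Since $\ns$ is a Lie-fibered, hence \textsc{lch}, $k$-step nilspace, Corollary \ref{cor:cpct-sets-are-im-of-cpct-sets} yields a compact set $K\subset \ns$ with $\pi_{k-1}(K)=K'$. The second step is the key elementary observation that for any $\alpha\in \tran(\ns)$ we have the equivalence
\[
\eta_{k-1}(\alpha)(K')\subset U' \quad\Longleftrightarrow\quad \alpha(K)\subset \pi_{k-1}^{-1}(U').
\]
The forward direction uses that each $x\in K$ has $\pi_{k-1}(x)\in K'$, so $\pi_{k-1}(\alpha(x))=\eta_{k-1}(\alpha)(\pi_{k-1}(x))\in U'$ by the defining intertwining relation \eqref{eq:etaj}, whence $\alpha(x)\in \pi_{k-1}^{-1}(U')$. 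The reverse direction follows from $\eta_{k-1}(\alpha)(K')=\pi_{k-1}(\alpha(K))\subset \pi_{k-1}(\pi_{k-1}^{-1}(U'))\subset U'$, again by \eqref{eq:etaj} together with $\pi_{k-1}(K)=K'$.

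The third step concludes: since $\pi_{k-1}$ is continuous (it is the canonical projection onto a characteristic factor of an \textsc{lch} nilspace), the set $V:=\pi_{k-1}^{-1}(U')$ is open in $\ns$. Thus by the previous equivalence,
\[
\eta_{k-1}^{-1}\bigl(W(K',U')\cap \tran(\ns_{k-1})\bigr) \;=\; \{\alpha\in \tran(\ns):\alpha(K)\subset V\} \;=\; W(K,V)\cap \tran(\ns),
\]
which is open in the subspace topology on $\tran(\ns)$ induced by the compact-open topology on $C(\ns,\ns)$. As $W(K',U')$ ranges over a subbasis, this establishes continuity of $\eta_{k-1}$. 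There is no real obstacle here; the only subtlety worth noting is the use of Corollary \ref{cor:cpct-sets-are-im-of-cpct-sets} to produce the compact preimage $K$, which is where the \textsc{lch} (in particular local compactness) hypothesis on $\ns$ enters essentially.
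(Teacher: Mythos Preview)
Your proof is correct. Both your argument and the paper's hinge on the same key ingredient, Corollary \ref{cor:cpct-sets-are-im-of-cpct-sets}, to lift a compact subset of $\ns_{k-1}$ to a compact subset of $\ns$, together with the intertwining relation $\pi_{k-1}\co\alpha=\eta_{k-1}(\alpha)\co\pi_{k-1}$. The difference is one of packaging: the paper invokes the equivalence of the compact-open topology with the topology of uniform convergence on compact sets (using that $\ns_{k-1}$ is metrizable), then argues sequentially by contradiction with a compactness-and-subsequence step; you instead work directly with the subbasis $W(K',U')$ of the compact-open topology and obtain the preimage identity $\eta_{k-1}^{-1}(W(K',U'))=W(K,\pi_{k-1}^{-1}(U'))$ in one stroke. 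Your route is slightly cleaner in that it does not appeal to metrizability or sequential characterizations, and the equivalence you prove makes the structure of the argument transparent. The paper's sequential approach, on the other hand, is the template reused in several later proofs (e.g.\ Lemma \ref{lem:ker-h-Lie}, Theorem \ref{thm:LFnstransLie}), so it has the virtue of uniformity with the surrounding arguments.
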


\begin{proof}
Recall that we equip $\tran(\ns)$ with the compact-open topology, which in this case equals the topology of convergence in compact sets (see \S \ref{sec:trans-group}). Let $(\alpha_n)_n$ be a sequence in $\tran(\ns)$ converging to some $\alpha\in \tran(\ns)$. We want to prove that $\eta_{k-1}(\alpha_n)\to \eta_{k-1}(\alpha)$. Let $K\subset \ns_{k-1}$ be any compact set. By Corollary \ref{cor:cpct-sets-are-im-of-cpct-sets} there is a compact set $K'\subset \ns$ such that $\pi_{k-1}(K')=K$. Fix any metric $d$ metrizing the topology on $\ns_{k-1}$. We claim  that $\sup_{y\in K}d\big(\eta_{k-1}(\alpha_n)(y),\eta_{k-1}(\alpha)(y)\big)\to 0$ as $n\to \infty$. Since $\pi_{k-1}$ is surjective from $K'$ onto $K$, this last supremum is at most $\sup_{x\in K'}d(\pi_{k-1}(\alpha_n(x)),\pi_{k-1}(\alpha(x)))$ for every $n$. Suppose this supremum does not converge to 0 as $n\to \infty$. Then there exists $\epsilon>0$ and a sequence $(x_n)$ in $K'$ such that for every $n$ we have $d(\pi_{k-1}(\alpha_n(x_n)),\pi_{k-1}(\alpha(x_n)))>\epsilon$. As $K'$ is compact, passing to a subsequence we can assume that $x_n$ converges to some $x'\in K'$. This yields a contradiction because $\lim_{n\to\infty}\alpha_n(x_n)=\alpha(x')=\lim_{n\to\infty} \alpha(x_n)$.
\end{proof}

\begin{proof}[Proof of Theorem \ref{thm:freetransLie}]
By Lemma \ref{lem:h-is-cont} and Theorem \ref{thm:trans-group-polish}, the map $\eta_{k-1}$ is a continuous homomorphism between the Polish groups $\tran(\ns)$ and $\tran(\ns_{k-1})$. Moreover, the map $\eta_{k-1}$ is seen to be surjective using the description of $\tran(\ns)$ and $\tran(\ns_{k-1})$ given in \eqref{eq:str-trans-ab-nil}. Indeed, since $\ns_{k-1}$ is isomorphic as an Lie-fibered nilspace to $\prod_{i=1}^{k-1}\mc{D}_i(A_i)$, by \eqref{eq:str-trans-ab-nil} applied to $\tran(\ns_{k-1})$ any $\alpha'\in \tran(\ns_{k-1})$ corresponds uniquely to a tuple $(T_1,\ldots,T_{k-1})$, and then by \eqref{eq:str-trans-ab-nil} applied to $\tran(\ns)$ the tuple $(T_1,\ldots,T_{k-1},0)$ corresponds to a translation $\alpha\in \tran(\ns)$ such that $\eta_{k-1}(\alpha)=\alpha'$. Finally, letting $\iota$ denote the inclusion homomorphism $\ker(\eta_{k-1}) \xrightarrow{\iota} \tran(\ns)$, note that $\iota$ and $\eta_{k-1}$ are open maps onto their images, by the open mapping theorem for Polish groups (see \cite[Chapter 1]{BeKe}). We have thus confirmed that we have the short exact sequence of topological groups $0\to \ker(\eta_{k-1}) \xrightarrow{\iota} \tran(\ns) \xrightarrow{\eta_{k-1}}\tran(\ns_{k-1})\to 0$. 
\end{proof}
We need a couple of additional  lemmas to deduce that the translation group of a free nilspace is a Lie group. The first of these will also be used several times later, and concerns the set $\hom(F,\mc{D}_k(\ab))$ for a free nilspace $F$ and an abelian Lie group $\ab$. the group $\hom(F,\mc{D}_k(\ab))$ will be always considered to be equipped with pointwise addition and the compact-open topology. We then have the following fact.
\begin{lemma}\label{lem:hom-Lie}
Let $F$ be a free nilspace, let $\ab$ be an abelian \textup{(}compactly generated\textup{)} Lie group, and let $k$ be any positive integer. Then $\hom(F,\mc{D}_k(\ab))$ is an abelian Lie group.
\end{lemma}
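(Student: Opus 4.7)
The approach combines the structure theorem for compactly generated abelian Lie groups with the Taylor expansions already established in Lemmas \ref{lem:Taylor-discrete}, \ref{lem:poly-free-to-R}, and \ref{lem:Taylor-cont}. By that structure theorem, $\ab\cong \mb{R}^a\times \mb{T}^b\times \mb{Z}^c\times E$ for some non-negative integers $a,b,c$ and some finite abelian group $E$. Taking $\hom(F,\mc{D}_k(\cdot))$ commutes with finite products (as a topological group, using the compact-open topology), so it suffices to prove that each of the four building blocks $\hom(F,\mc{D}_k(\mb{R}))$, $\hom(F,\mc{D}_k(\mb{T}))$, $\hom(F,\mc{D}_k(\mb{Z}))$, and $\hom(F,\mc{D}_k(E))$ is an abelian Lie group; the result for $\ab$ then follows.

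First I would handle the three easy cases. For $\mb{R}$, Lemma \ref{lem:poly-free-to-R} identifies $\hom(F,\mc{D}_k(\mb{R}))$ with the real vector space spanned by the finitely many monomials $\binom{(x,y)}{(m,n)}$ of filtered degree at most $k$ (finite because $F$ has finitely many components). The coefficient map $\mb{R}^N\to \hom(F,\mc{D}_k(\mb{R}))$ is a continuous linear bijection of Polish groups (Proposition \ref{prop:hom-ns-z-polish}), hence a homeomorphism by the open mapping theorem for Polish groups; so this factor is $\mb{R}^N$. For $\mb{Z}$, Lemma \ref{lem:Taylor-discrete} gives an analogous identification with $\mb{Z}^M$, and since the compact-open topology forces eventual equality of integer coefficients, the topology is discrete. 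For the finite group $E$, Lemma \ref{lem:Taylor-discrete} (applied with $A=E$) shows $\hom(F,\mc{D}_k(E))$ is a finite set, hence a zero-dimensional Lie group.

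The delicate case is $\mb{T}$. The plan is to realize $\hom(F,\mc{D}_k(\mb{T}))$ as the quotient of the finite-dimensional real vector space $\hom(F,\mc{D}_k(\mb{R}))\cong \mb{R}^N$ by a discrete subgroup. By Lemma \ref{lem:Taylor-cont}, post-composition with $\pi:\mb{R}\to\mb{T}$ yields a surjective continuous homomorphism
\[
P:\hom(F,\mc{D}_k(\mb{R}))\longrightarrow \hom(F,\mc{D}_k(\mb{T})),\qquad \phi\mapsto \pi\co\phi.
\]
Both groups are Polish by Proposition \ref{prop:hom-ns-z-polish}, so the open mapping theorem for Polish groups makes $P$ a quotient homomorphism. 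Its kernel consists of continuous morphisms $\phi:F\to\mc{D}_k(\mb{R})$ taking values in $\mb{Z}$. By continuity and connectedness of the continuous part of $F$, such $\phi$ factor through the discrete part $F_d:=\prod_i \mc{D}_i(\mb{Z}^{a_i})$; and by the classical characterization of integer-valued polynomials via the binomial basis, they are exactly the integer combinations of those monomials $\binom{(x,y)}{(m,0)}$ with $|(m,0)|\leq k$. This kernel is thus a $\mb{Z}$-lattice spanned by a subset of the real basis identified above, hence a discrete closed subgroup of $\hom(F,\mc{D}_k(\mb{R}))\cong \mb{R}^N$. Consequently $\hom(F,\mc{D}_k(\mb{T}))$ is the quotient of $\mb{R}^N$ by a discrete subgroup, so it is an abelian Lie group of the form $\mb{R}^p\times \mb{T}^q$.

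The main obstacle will be the rigorous topological bookkeeping, namely verifying that the compact-open topology on each $\hom(F,\mc{D}_k(\cdot))$ really matches the Lie-group topology supplied by the Taylor description, and identifying the kernel of $P$ in the toral case. The first difficulty dissolves via the open mapping theorem for Polish groups (applicable thanks to Proposition \ref{prop:hom-ns-z-polish}); the second reduces to the standard fact that a real polynomial taking integer values on $\mb{Z}^n$ has integer coefficients in the binomial basis. Granted all this, the final conclusion is immediate: a finite product of abelian Lie groups is an abelian Lie group, completing the proof.
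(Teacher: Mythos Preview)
Your argument is correct. The paper's proof follows the same underlying idea but packages it more uniformly: instead of splitting $\ab$ into factors $\mb{R}^a\times\mb{T}^b\times\mb{Z}^c\times E$ and treating each separately, it takes a single covering group $B=\mb{Z}^r\times\mb{R}^s$ of $\ab$ (as in Theorem \ref{thm:lift-mor-cfr-ab-gr}), observes from Lemmas \ref{lem:Taylor-discrete} and \ref{lem:poly-free-to-R} that $\hom(F,\mc{D}_k(B))\cong\mb{Z}^m\times\mb{R}^n$ is Lie, and then uses the push-forward map $\phi:\hom(F,\mc{D}_k(B))\to\hom(F,\mc{D}_k(\ab))$, $f\mapsto\pi\co f$. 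Surjectivity comes from Theorem \ref{thm:lift-mor-cfr-ab-gr}, and the paper does not bother to identify $\ker(\phi)$ explicitly: it is enough that $\ker(\phi)$ is closed (hence Lie by Cartan's theorem), so that the Polish quotient $\hom(F,\mc{D}_k(B))/\ker(\phi)\cong\hom(F,\mc{D}_k(\ab))$ is Lie. Your case-by-case treatment recovers this for $\ab=\mb{T}$ (with the extra information that the kernel is actually discrete, yielding the explicit form $\mb{R}^p\times\mb{T}^q$), while the paper's single covering argument avoids the case split at the cost of slightly less explicit output. Both routes rest on the same two pillars: the Taylor expansions identifying $\hom(F,\mc{D}_k(B))$ with a Euclidean-by-lattice group, and the lifting result guaranteeing surjectivity of the push-forward.
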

\begin{proof}
Let $B$ be the covering group of $\ab$ as in \S \ref{subsec:lift-inv-free-nil}, thus $B=\mb{Z}^r\times \mb{R}^s$ with a continuous surjective homomorphism $\pi:B\to \ab$. It is readily seen from Lemmas  \ref{lem:Taylor-discrete} and \ref{lem:poly-free-to-R} that $\hom(F,\mc{D}_k(B))\cong \mb{Z}^m\times \mb{R}^n$ for some $m,n\in\mb{Z}_{\ge 0}$, so this is a Lie group. Let $\phi:\hom(F,\mc{D}_k(B))\to \hom(F,\mc{D}_k(\ab))$ be the homomorphism $f\mapsto \pi\co f$. Arguing as in the proof of Lemma \ref{lem:h-is-cont}, we see that $\phi$ is continuous. Hence $\ker(\phi)$ is a closed subgroup of $\hom(F,\mc{D}_k(B))$, hence a Lie group. By Theorem \ref{thm:lift-mor-cfr-ab-gr} the map $\phi$ is also surjective. It follows as in the proof of Theorem \ref{thm:freetransLie} that $\hom(F,\mc{D}_k(\ab))\cong \hom(F,\mc{D}_k(B))/\ker(\phi)$, whence $\hom(F,\mc{D}_k(\ab))$ is a Lie group by \cite[Theorem 2.6]{Mosk}.
\end{proof}

\begin{lemma}\label{lem:ker-h-Lie}
Let $F$ be a $k$-step free nilspace, and let $\eta_{k-1}:\tran(F)\to\tran(F_{k-1})$ be the surjective continuous homomorphism from \eqref{eq:etaj}. Then $\ker(\eta_{k-1})$ is a Lie group.
\end{lemma}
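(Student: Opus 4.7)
The plan is to use the explicit description of the translation group from Theorem \ref{thm:decrip-trans-group} to identify $\ker(\eta_{k-1})$ with a space to which Lemma \ref{lem:hom-Lie} applies directly.

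Write $F=\prod_{i=1}^k \mc{D}_i(A_i)$ with $A_i=\mb{Z}^{a_i}\times \mb{R}^{b_i}$. By Theorem \ref{thm:decrip-trans-group} applied with $s=1$, every $\alpha\in\tran(F)$ corresponds to a tuple $(T_1,T_2,\ldots,T_k)$ with $T_i\in\hom\bigl(\prod_{j=1}^{i-1}\mc{D}_j(A_j),\mc{D}_{i-1}(A_i)\bigr)$, acting by
\[
\alpha(x_1,\ldots,x_k)=(x_1,\ldots,x_k)+\bigl(T_1,\,T_2(x_1),\,T_3(x_1,x_2),\,\ldots,\,T_k(x_1,\ldots,x_{k-1})\bigr).
\]
Applying the same theorem to the $(k{-}1)$-step free nilspace $F_{k-1}=\prod_{i=1}^{k-1}\mc{D}_i(A_i)$, the map $\eta_{k-1}$ corresponds under these identifications to the projection $(T_1,\ldots,T_k)\mapsto(T_1,\ldots,T_{k-1})$. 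Hence the condition $\eta_{k-1}(\alpha)=\id$ (equivalently $\pi_{k-1}\co\alpha=\pi_{k-1}$) forces $T_1=\cdots=T_{k-1}=0$, while $T_k$ remains arbitrary. This gives an algebraic group isomorphism
\[
\ker(\eta_{k-1})\;\cong\;\hom\!\left(\prod_{j=1}^{k-1}\mc{D}_j(A_j),\;\mc{D}_{k-1}(A_k)\right).
\]

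Next I would verify that this algebraic isomorphism is also a homeomorphism when both sides carry the compact-open topology. One direction is immediate: if $T_k^{(n)}\to T_k$ uniformly on compacts of $F_{k-1}$, then the associated translations converge uniformly on compacts of $F$, since they act as the identity on the first $k-1$ coordinates and by translation-by-$T_k^{(n)}(x_1,\ldots,x_{k-1})$ on the last. For the converse, given any compact $K'\subset F_{k-1}$, the set $K:=K'\times\{0_{A_k}\}\subset F$ is compact, and evaluating the translations at points of $K$ recovers $T_k^{(n)}$ on $K'$; uniform convergence of $\alpha_n$ on $K$ therefore implies uniform convergence of $T_k^{(n)}$ on $K'$.

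Finally, $\prod_{j=1}^{k-1}\mc{D}_j(A_j)$ is a free nilspace and $A_k=\mb{Z}^{a_k}\times\mb{R}^{b_k}$ is a (compactly generated) abelian Lie group, so Lemma \ref{lem:hom-Lie} applies and gives that $\hom\bigl(\prod_{j=1}^{k-1}\mc{D}_j(A_j),\mc{D}_{k-1}(A_k)\bigr)$ is a Lie group. Transporting the Lie structure along the topological group isomorphism above, we conclude that $\ker(\eta_{k-1})$ is a Lie group.

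The only mildly delicate point is the homeomorphism check in the second paragraph; everything else is direct bookkeeping from Theorem \ref{thm:decrip-trans-group} and a citation of Lemma \ref{lem:hom-Lie}.
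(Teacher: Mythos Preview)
Your proof is correct and follows essentially the same approach as the paper: identify $\ker(\eta_{k-1})$ with $\hom(F_{k-1},\mc{D}_{k-1}(\ab_k))$ and invoke Lemma \ref{lem:hom-Lie}. The only minor variations are that you obtain the algebraic identification from Theorem \ref{thm:decrip-trans-group} (the paper instead cites \cite[Lemma 2.9.5]{Cand:Notes2}), and you check the homeomorphism by verifying continuity in both directions directly, whereas the paper checks continuity of one direction and then appeals to the open mapping theorem for Polish groups.
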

\begin{proof}
By Lemma \ref{lem:hom-Lie} it suffices to prove that the group $\ker(\eta_{k-1})$ is topologically isomorphic to $\hom(F_{k-1},\mc{D}_k(\ab_k))$. The algebraic part is given by \cite[Lemma 2.9.5]{Cand:Notes2}, which gives us an (purely algebraic so far) isomorphism $\varphi:\ker(\eta_{k-1})\to \hom(F_{k-1},\mc{D}_k(\ab_k))$. Hence, it suffices to check that $\varphi$ is continuous, as then it is open (by the open mapping theorem for Polish groups) and therefore a topological-group isomorphism. To check this continuity, recall that we are using the compact-open topology on both $\ker(\eta_{k-1})$ and $\hom(F_{k-1},\mc{D}_k(\ab_k))$. Suppose that $\varphi$ is not continuous. Then there is a convergent sequence $(\alpha_n)$ in $\ker(\eta_{k-1})$ with limit $\alpha\in \ker(\eta_{k-1})$ such that, for some compact $K\subset F_{k-1}$ and $\epsilon>0$, for infinitely many $n$ we have $\sup_{f\in K}d_{\ab_k}(\varphi(\alpha_n)(f),\varphi(\alpha)(f))>\epsilon$. Passing to a subsequence we can assume that for every $n\in \mb{N}$ there exists $f_n\in K$ such that $d_{\ab}(\varphi(\alpha_n)(f_n),\varphi(\alpha)(f_n))>\epsilon$. By Corollary \ref{cor:cpct-sets-are-im-of-cpct-sets} we know that there exists a compact set $K'\subset F$ such that $\pi_{k-1}(K')=K$. For each $n$ let $x_n\in K'$ be such that $\pi(x_n)=f_n$. By compactness, passing to a subsequence we can assume that $x_n\to x$ as $n\to\infty$. Hence $d_{\ab_k}(\alpha_n(x_n)-x_n,\alpha(x_n)-x_n)>\epsilon$ for all $n$. Now letting $n\to \infty$, we know that $\alpha_n(x_n)\to \alpha(x)$, and by continuity of subtraction in an \textsc{lch} nilspace we deduce $0=d_{\ab_k}(\alpha(x)-x,\alpha(x)-x)>\epsilon$, a contradiction.
\end{proof}

\begin{corollary}\label{cor:trans&structFree}
If $F$ is a free nilspace, then $\tran(F)$ and all structure groups of $F$ are Lie groups.
\end{corollary}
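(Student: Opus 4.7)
The plan is to handle the two claims separately, with the translation group claim proceeding by induction on the step $k$.

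For the structure groups, one reads off the answer from the product decomposition $F=\prod_{i=1}^k \mc{D}_i(\mb{Z}^{a_i}\times\mb{R}^{b_i})$. The $i$-th structure group of a product of $\mc{D}_j(\cdot)$'s is just $\mb{Z}^{a_i}\times\mb{R}^{b_i}$ (only the $i$-th factor contributes, since the $j$-th structure group of $\mc{D}_j(A)$ is $A$ and is trivial for the other indices). This is patently a (compactly generated) abelian Lie group. So this part is essentially by inspection.

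For $\tran(F)$, I would induct on $k$. In the base case $k=1$, the nilspace $F=\mc{D}_1(\mb{Z}^{a_1}\times\mb{R}^{b_1})$ is a 1-step free nilspace, and by Theorem \ref{thm:decrip-trans-group} (with $s=1$) we get $\tran(F)\cong \mb{Z}^{a_1}\times\mb{R}^{b_1}$ as topological groups (translations in $\tran_1$ are just shifts by constants), which is Lie. For the inductive step, assume $\tran(F_{k-1})$ is Lie, where $F_{k-1}=\prod_{i=1}^{k-1}\mc{D}_i(\mb{Z}^{a_i}\times\mb{R}^{b_i})$ is the $(k-1)$-step factor of $F$ (which is again a free nilspace).

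Now invoke Theorem \ref{thm:freetransLie} applied to $F$: this gives a short exact sequence of Polish groups
\[
0\to \ker(\eta_{k-1})\to \tran(F)\xrightarrow{\eta_{k-1}}\tran(F_{k-1})\to 0,
\]
in which the maps are continuous, $\eta_{k-1}$ is open (by the open mapping theorem for Polish groups), and the inclusion is a closed embedding. By Lemma \ref{lem:ker-h-Lie} the kernel is a Lie group, and by induction so is the quotient. The classical extension result for locally compact groups (\cite[Theorem 2.6]{Mosk}), which says that a proper extension of a Lie group by a Lie group is Lie, then forces $\tran(F)$ itself to be a Lie group, completing the induction.

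There is no real obstacle here: the work was already done in Theorem \ref{thm:freetransLie} (the extension structure), Lemma \ref{lem:ker-h-Lie} (the kernel is Lie, via the identification with $\hom(F_{k-1},\mc{D}_k(\ab_k))$ and Lemma \ref{lem:hom-Lie}), and the Moskowitz extension theorem. The only thing to be slightly careful about is verifying that the short exact sequence is ``proper'' in the sense needed to apply \cite[Theorem 2.6]{Mosk}, but this is precisely what Theorem \ref{thm:freetransLie} provides.
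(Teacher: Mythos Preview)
Your proposal is correct and follows essentially the same approach as the paper: induction on $k$, with the inductive step combining Theorem \ref{thm:freetransLie} (the extension structure), Lemma \ref{lem:ker-h-Lie} (the kernel is Lie), and \cite[Theorem 2.6]{Mosk}. You are slightly more explicit than the paper in treating the structure groups (which the paper's proof leaves implicit) and in flagging the ``properness'' hypothesis needed for Moskowitz's theorem, but the argument is the same.
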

\begin{proof}
We argue by induction on $k$. For $k=1$ the translation group is isomorphic to the abelian group underlying $F$, which is Lie. For $k>1$, by induction $\tran(\ns_{k-1})$ is  Lie, and $\ker(h\eta_{k-1})$ is Lie by Lemma \ref{lem:ker-h-Lie}. Hence $\tran(\ns)$ is Lie by Theorem \ref{thm:freetransLie} and \cite[Theorem 2.6]{Mosk}.
\end{proof}

\section{Lie-fibered extensions of free nilspaces are split extensions}\label{sec:split-ext}
We shall now combine results from the previous sections to establish the following theorem, which is a central ingredient for the proof of our main result (Theorem \ref{thm:gpcongrep}). 
\begin{theorem}\label{thm:splitext}
Let $F$ be a $k$-step free nilspace and let $\nss$ be a degree-$k$ extension of $F$ by an abelian Lie group $\ab$, with corresponding fibration $q:\nss\to F$. Then this  extension splits, i.e., there exists a continuous morphism $s:F\to\nss$ such that $q\co s=\id$. In particular $\nss$ is isomorphic as an \textsc{lch} nilspace to the product-nilspace $F\times \mc{D}_k(\ab)$.
\end{theorem}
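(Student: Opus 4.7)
I would argue by induction on $k$. For the base case $k=1$, the nilspace $F=\mb{Z}^{a_1}\times\mb{R}^{b_1}$ is an abelian Lie group and $\nss$ is an abelian Lie group fitting in the short exact sequence $0\to\ab\to\nss\to F\to 0$. This splits continuously by classical results on extensions of locally compact abelian groups (e.g.\ \cite[Thm.\ 2.13]{Mosk}): $\mb{R}^{b_1}$ is a vector group (so extensions by any abelian Lie group split), and $\mb{Z}^{a_1}$ is projective in the category of abelian groups (so a preimage for each generator defines a splitting). The resulting continuous group homomorphism $s:F\to\nss$ is automatically a morphism of $1$-step nilspaces.

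For the inductive step $k>1$, write $F=F_{k-1}\times\mc{D}_k(A_k)$ with $F_{k-1}:=\prod_{i<k}\mc{D}_i(A_i)$ and $A_k=\mb{Z}^{a_k}\times\mb{R}^{b_k}$, and let $\iota:F_{k-1}\hookrightarrow F$ be $x\mapsto(x,0)$. By Corollary \ref{cor:ext-of-lcfr-is-lcfr}, $\nss$ is Lie-fibered, so the last structure group sits in a short exact sequence of abelian Lie groups $0\to\ab\to\ab_k(\nss)\xrightarrow{\,\varpi\,}A_k\to 0$, which again splits continuously by the base-case argument (since $A_k$ has the same form): pick $\tau:A_k\to\ab_k(\nss)$ with $\varpi\co\tau=\id_{A_k}$. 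The plan is then to produce a morphism $\sigma_0:F_{k-1}\to\nss$ with $q\co\sigma_0=\iota$, and define
\[
s(x,a):=\sigma_0(x)+\tau(a),
\]
using the action of $\ab_k(\nss)$ on $\nss$. Then $s$ is a morphism because for any cube $(\q_1,\q_2)\in\cu^n(F_{k-1})\times\cu^n(\mc{D}_k(A_k))$ the map $\sigma_0\co\q_1$ is a cube in $\nss$ and $\tau\co\q_2$ is a cube in $\mc{D}_k(\ab_k(\nss))$, whose action on cubes of $\nss$ yields again a cube. The identity $q\co s=\id$ follows from $q\co\sigma_0=\iota$, $\varpi\co\tau=\id$, and the equivariance $q(y+z)=q(y)+\varpi(z)$.

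To construct $\sigma_0$, I would first find \emph{any} morphism section $\sigma:F_{k-1}\to\nss$ of the projection $\pi:\nss\to\nss_{k-1}\cong F_{k-1}$ (identifying via $q_{k-1}$, which is an isomorphism because the extension is of degree $k$). Granted such a $\sigma$, the difference $\phi:=q\co\sigma-\iota:F_{k-1}\to F$ lands in the fiber $\mc{D}_k(A_k)$ over $F_{k-1}$ (since $\pi_F\co q\co\sigma=q_{k-1}\co\pi\co\sigma=\id=\pi_F\co\iota$), so it defines an element $\phi\in\hom(F_{k-1},\mc{D}_k(A_k))$. By the general lifting statement (Remark \ref{rem:moregenlift}) applied to the surjective continuous homomorphism $\varpi:\ab_k(\nss)\to A_k$ of abelian Lie groups and the free nilspace $F_{k-1}$, there exists $\tilde\phi\in\hom(F_{k-1},\mc{D}_k(\ab_k(\nss)))$ with $\varpi\co\tilde\phi=\phi$. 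Setting $\sigma_0(x):=\sigma(x)-\tilde\phi(x)$ then gives a morphism section with $q\co\sigma_0=\iota$ (using that the $\ab_k(\nss)$-action preserves $\pi$-fibers).

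The \textbf{main obstacle} is thus producing the initial morphism section $\sigma:F_{k-1}\to\nss$ of $\pi$. Equivalently, this is the case of the theorem in which the top free factor $A_k$ has been reduced to zero, so that one must show directly that any $k$-step Lie-fibered nilspace $\nss$ whose $(k-1)$-factor is a free $(k-1)$-step nilspace splits as $\nss_{k-1}\times\mc{D}_k(\ab_k(\nss))$. I expect to handle this by a further induction, leveraging (i) the local triviality of the bundle $\pi:\nss\to\nss_{k-1}$ coming from Theorem \ref{thm:local-triviality-bundle}, (ii) the polynomial Taylor description of morphisms out of $F_{k-1}$ provided by Lemmas \ref{lem:Taylor-discrete} and \ref{lem:poly-free-to-R}, and (iii) Proposition \ref{prop:cont-lift-of-trans} to lift translations from $\nss_{k-1}$ to $\nss$ layer by layer (noting that the translation bundles are themselves \textsc{lch} nilspaces by Proposition \ref{prop:transbundleLCH}, so that once lower-step morphism sections are available the lifts can be assembled coherently). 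The free nilspace structure of $F_{k-1}$ (a product of $\mc{D}_i(\mb{Z})$ and $\mc{D}_i(\mb{R})$ components) is what makes this step possible: it permits specifying $\sigma$ on "generating" coordinates and propagating by unique corner-completion, while the classical splitting of abelian Lie group extensions over $\mb{Z}$- and $\mb{R}$-factors ensures the polynomial coefficients of $\sigma$ can be chosen continuously.
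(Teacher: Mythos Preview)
Your proposal has the right overall shape (induction on $k$, classical splitting for abelian Lie groups, translation-lifting via Proposition~\ref{prop:cont-lift-of-trans}), but the ``main obstacle'' you flag is in fact the entire content of the theorem, and your sketch for it misses the two technical points that make the paper's argument go through.

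First, your reduction to $A_k=0$ is not a genuine reduction: finding a morphism section of $\pi:\nss\to\nss_{k-1}\cong F_{k-1}$ is exactly the statement of the theorem for the degree-$k$ extension $\nss$ of the free nilspace $F_{k-1}$ by $\ab_k(\nss)$. The paper instead handles all of $F$ at once. The key insight you are missing is how the inductive hypothesis is actually used: from the case $t\le k-1$ of the theorem, the paper deduces (via the fiber-product identification $\nss'\cong\nss'_t\times_{F'_t}F'$) that \emph{any} degree-$t$ extension of \emph{any} free nilspace splits, for all $t\le k-1$. Now the translation bundle $\mc{T}(\alpha,\nss,i)$ for $\alpha\in\tran_i(\nss_{k-1})$ with $i\ge 1$ is a degree-$(k-i)\le k-1$ extension of $\nss_{k-1}\cong F_{k-1}$ (Proposition~\ref{prop:transbundleLCH}); hence it splits, so Proposition~\ref{prop:cont-lift-of-trans} applies and yields a lift in $\tran_i(\nss)$. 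After a correction using the splitting $s_k:A_k\to\ab_k(\nss)$, this gives surjectivity of the induced homomorphism $\wh{q}:\tran_i(\nss)\to\tran_i(F)$ for every $i$. Your phrase ``lower-step morphism sections'' suggests you are thinking of a different induction; what is needed is lower \emph{degree} of the extension, not lower step.

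Second, even with $\wh{q}$ surjective, you still need to produce a coherent section over the \emph{continuous} coordinates $\mb{R}^{b_i}$ of $F$. The paper does this by first proving that $\tran(\nss)$ is a Lie group (via the exact sequence $0\to\ker(\wh{q})\to\tran(\nss)\to\tran(F)\to 0$, identifying $\ker(\wh{q})\cong\hom(F,\mc{D}_{k-1}(\ab_k(\nss)))$ and invoking Lemma~\ref{lem:hom-Lie}), and then lifting each one-parameter subgroup $\delta_{i,s}:\mb{R}\to\tran_i(F)$ through the quotient $\wh{q}$ using the one-parameter-subgroup lifting lemma for Lie groups. The discrete generators are lifted elementwise. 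The cross-section is then $s(x)=\big(\prod_{i,j}\alpha_{i,j}^{x_{i,j}}\prod_{i,\ell}\beta_{i,\ell}(x'_{i,\ell})\big)(y_0)$ for a fixed $y_0\in q^{-1}(0)$. Your sketch invokes local triviality and Taylor expansions, but neither substitutes for this Lie-group step; without it there is no mechanism to choose lifts continuously over $\mb{R}$-parameters.
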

\noindent This theorem is a generalization for free nilspaces of a splitting-extension result which was proved for finite (discrete) nilspaces in \cite[Theorem 5]{SzegFin}. We first record the case $k=1$ of the theorem, which is a known result and will also be used in the main proof.
\begin{lemma}\label{lem:abcasesplit}
Let $F$ be a torsion-free abelian Lie group, let $\ab$, $G$ be locally-compact abelian groups, and suppose that $G$ is a topological-group extension of $F$ by $\ab$, i.e.\ there is a proper exact sequence of topological groups $0\to \ab \to G \to F\to 0$. Then this extension splits.
\end{lemma}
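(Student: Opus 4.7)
The plan is to reduce to the two extreme cases $F=\mathbb{Z}^m$ and $F=\mathbb{R}^n$, and then invoke a classical splitting theorem for the latter.

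First, by the structure theorem for compactly-generated abelian Lie groups, $F$ is isomorphic to $\mathbb{R}^n\times\mathbb{Z}^m\times\mathbb{T}^p\times K$ for some finite abelian group $K$ and integers $n,m,p\geq 0$. Since $F$ is torsion-free, both the torus factor $\mathbb{T}^p$ and the finite factor $K$ must be trivial, so $F\cong\mathbb{R}^n\times\mathbb{Z}^m$. I will identify $F$ with this product in what follows.

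Second, I would split off the discrete factor. Let $p\colon G\to F$ denote the quotient homomorphism and set $G_1:=p^{-1}(\mathbb{R}^n\times\{0\})$, a closed subgroup of $G$ fitting into a short exact sequence $0\to A\to G_1\to \mathbb{R}^n\to 0$ of \textsc{lca} groups. Choosing any elements $v_1,\ldots,v_m\in G$ with $p(v_j)=(0,e_j)$ (where $e_j$ is the $j$-th standard generator of $\mathbb{Z}^m$), the assignment $s_2\colon (k_1,\ldots,k_m)\mapsto k_1 v_1+\cdots+k_m v_m$ is a group homomorphism $\mathbb{Z}^m\to G$, and it is automatically continuous because $\mathbb{Z}^m$ carries the discrete topology. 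Moreover $p\circ s_2(y)=(0,y)$ for all $y\in\mathbb{Z}^m$.

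Third, and this is the main obstacle, I would split the remaining extension $0\to A\to G_1\to \mathbb{R}^n\to 0$. This is a classical result on extensions of \textsc{lca} groups: for any \textsc{lca} group $A$ and any $n\geq 0$, every such extension splits (this is the vanishing of $\mathrm{Ext}^1_{\mathrm{LCA}}(\mathbb{R}^n,A)$, proved for instance in Moskowitz \cite{Mosk}). The underlying idea, for completeness, is that the projection $G_1\to\mathbb{R}^n$ is a principal $A$-bundle over a contractible, paracompact base, so it admits a continuous set-theoretic section $\sigma\colon\mathbb{R}^n\to G_1$; the associated continuous cocycle $c(x,y)=\sigma(x+y)-\sigma(x)-\sigma(y)\in A$ then lies in $Z^2_{\mathrm{cont}}(\mathbb{R}^n,A)$, and this continuous cohomology group vanishes (one produces a continuous $\beta\colon\mathbb{R}^n\to A$ with $c=\delta\beta$, so that $s_1:=\sigma-\beta$ is a continuous homomorphic splitting).

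Finally, I would combine the two partial splittings. Since $G$ is abelian, the map
\[
s\colon F=\mathbb{R}^n\times\mathbb{Z}^m\longrightarrow G,\qquad s(x,y):=s_1(x)+s_2(y)
\]
is a continuous homomorphism satisfying $p\circ s=\mathrm{id}_F$, which is the desired splitting.
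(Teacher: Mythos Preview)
Your proof is correct and follows essentially the same route as the paper. The paper's argument is a one-line citation: since $F\cong\mathbb{R}^n\times\mathbb{Z}^m$, Moskowitz \cite[Theorem 3.3]{Mosk} gives that $F$ is projective in the category of locally compact abelian groups, and then \cite[Theorem 3.5]{Mosk} yields the splitting directly. You unpack this slightly by handling the $\mathbb{Z}^m$ factor by hand (lifting generators) and invoking Moskowitz only for the $\mathbb{R}^n$ part, but the substance is the same.
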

\begin{proof}
We have that $F$ is a direct product $\mb{R}^n \oplus \mb{Z}^m$, so by \cite[Theorem 3.3]{Mosk} the group $F$ is projective for \textsc{lch} groups, and then the result follows from \cite[Theorem 3.5]{Mosk}.
\end{proof}

A consequence of Lemma \ref{lem:abcasesplit} is that, under its hypotheses, there exists a continuous homomorphism $s:F\to G$ such that, letting $p:G\to F$ be the projection homomorphism, we have $p\co s=\id$. Let us record the following definition for nilspaces, which extends this phenomenon.

\begin{defn}[Cross-section]
Let $\ns,\nss$ be nilspaces and let $\varphi:\nss\to \ns$ be a fibration. A \emph{cross-section} for $\varphi$ is a nilspace morphism $s:\ns\to \nss$ such that $\varphi\co s =\id_{\ns}$.
\end{defn}

\begin{proof}[Proof of Theorem \ref{thm:splitext}] 
We argue by induction on $k$. The case $k=1$ is given by Lemma \ref{lem:abcasesplit}. 

For $k>1$, we assume that the result holds for step at most $k-1$. The crux of the proof consists in specifying a continuous cross-section $\cs:F\to \nss$. Fixing some $y\in q^{-1}(0)$, we will obtain from each element $f\in F$ a translation on $\nss$ in a consistent way, defining then $\cs(f)$ as the image of $y$ under this translation. 

Before detailing the above construction of $\cs$, we make the following observation that uses the inductive hypothesis: for any $t\le k-1$, and any free nilspace $F'$ (of arbitrary finite step), any degree-$t$ extension $\psi:\nss'\to F'$ splits. This observation is essentially \cite[Lemma 4.4]{CGSS-p-hom}; for completeness we detail the proof here. Consider the following commutative diagram:
\begin{equation}\label{diag:splt-1}
\begin{aligned}[c]
\begin{tikzpicture}
  \matrix (m) [matrix of math nodes,row sep=2em,column sep=4em,minimum width=2em]
  {\nss' & F'  \\
     \nss'_{t} & F'_{t}. \\};
  \path[-stealth]
    (m-1-1) edge node [above] {$\psi$} (m-1-2)
    (m-1-1) edge node [right] {$\pi_t$} (m-2-1)
    (m-1-2) edge node [right] {$\pi_t$} (m-2-2)
    (m-2-1) edge node [above] {$\psi_t$} (m-2-2);
\end{tikzpicture}
\end{aligned}
\end{equation}

\vspace{-0.3cm}

\noindent By \cite[Proposition A.18]{CGSS-p-hom} we already know that $\nss'$ is isomorphic (algebraically) to the nilspace $\nss'_t\times_{F'_t} F'$, the isomorphism being the map $y\mapsto (\pi_t(y),\psi(y))$. As this map is a continuous fibration between Lie-fibered nilspaces, it is an open map by Theorem \ref{thm:open-mapping-thm}. Hence it is a homeomorphism. By the induction hypothesis, the extension $\psi_t:\nss'_t\to F'_t$ splits, so there exists a continuous cross-section $s_t:F'_t\to \nss'_t$ such that $\psi_t\co s_t=\id$. Then, the map $F'\to \nss'_t\times_{F'_t} F'$, $x\mapsto(s_t(\pi_t(x)),x)$ is a continuous cross-section from $F'$ to $\nss'$.

Now let $q:\nss\to F$ be the continuous fibration given in the theorem. Note that $q_{k-1}$ is then an isomorphism $\nss_{k-1}\to F_{k-1}$ (by similar arguments as in the proof of Proposition \ref{prop:ext-lie-is-lie-1}). This implies that for every translation $\alpha\in\tran(\nss)$ the map $q$ is \emph{$\alpha$-consistent}, meaning that  $q(y)=q(y')\Rightarrow q(\alpha y)=q(\alpha y')$. Indeed, if $q(y)=q(y')$, then from $q_{k-1}$ being an isomorphism it follows that $y,y'$ are in the same fiber of $\pi_{k-1,\nss}$, so $y'=y+z$ for some $z\in\ab_k(\nss)$, and then $q(y)=q(y')$ also implies that $z$ is in the kernel of the $k$-th structure homomorphism $\phi_{k,q}$ of $q$ (see \cite[Definition 3.3.1]{Cand:Notes1}). Then $\alpha (y')=\alpha(y)+ z$, so $q(\alpha y')=q(\alpha(y))+ \phi_{k,q}(z)=q(\alpha(y))$ as claimed. We can then define (as in \cite[Lemma 1.5]{CGSS}) a homomorphism $\wh{q}:\tran(\nss) \to \tran(F)$, which is a \emph{filtered} group homomorphism, i.e.\ such that for each $i\in [k]$, $\wh{q}$ sends $\alpha\in \tran_i(\nss)$ to the translation $\wh{q}(\alpha): x\in F\mapsto q(\alpha(y))$ in $\tran_i(F)$ (for any $y\in \nss$ with $q(y)=x$).

To obtain the desired cross-section $\cs$ below, we shall use the following properties of $\wh{q}$. 

First note that $\wh{q}$ is continuous (relative to the compact-open topology on $\tran_i(\nss)$ and $\tran_i(F)$; recall \S \ref{sec:trans-group}). Indeed, suppose for a contradiction that there was a sequence $(\alpha_n)_{n\in \mb{N}}$ converging to $\alpha$ in $\tran_i(\nss)$ such that $\wh{q}(\alpha_n)\not\to \wh{q}(\alpha)$. Then there would be a compact set $K\subset F$ and some $\epsilon>0$ such that $\sup_{x\in K}d_F(\wh{q}(\alpha_n)(x),\wh{q}(\alpha)(x))>\epsilon$ for all $n$. Thus there would exist a sequence $(x_n)_{n\in \mb{N}}$ in $K$ such that $d_F(\wh{q}(\alpha_n)(x_n),\wh{q}(\alpha)(x_n))>\epsilon$ for all $n$. By compactness of $K$ we can assume that $x_n$ converges to some $x^*\in K$ as $n\to\infty$. Letting $y_n,y\in\nss$ be such that $x_n=q(y_n)$ for all $n$ and $x^*=q(y)$, by Lemma \ref{lem:conv-quot} there would exist a sequence $z_n\in \ab$ such that $y_n+z_n\to y$ as $n\to\infty$. Renaming this sequence, we could assume that $y_n\to y$. By \cite[Theorem 46.10]{Mu} we would then have  $\alpha_n(y_n)\to \alpha(y)$ as $n\to \infty$. Since $\wh{q}(\alpha_n)(x_n)=q(\alpha_n(y_n))$ and $\wh{q}(\alpha)(x^*)=q(\alpha(y))$, by continuity of $q$ we would have $\wh{q}(\alpha_n)(x_n)\to \wh{q}(\alpha)(x^*)$ as $n\to\infty$, contradicting $d_F(\wh{q}(\alpha_n)(x_n),\wh{q}(\alpha)(x_n))>\epsilon$ for $n$ large enough.

Next we prove the key fact that $\wh{q}$ is surjective from $\tran_i(\nss)$ onto $\tran_i(F)$, for each $i\in [k]$. Note that for $i=k$ this is clear by virtue of $q$ being a fibration (its $k$-th structure homomorphism is then surjective), so we may assume that $i<k$. Let $\eta_{k-1}:\tran_i(F)\to\tran_i(F_{k-1})$ be the homomorphism from \eqref{eq:etaj}. Since $q_{k-1}:\nss_{k-1}\to F_{k-1}$ is an \textsc{lch}-nilspace isomorphism, for every $\gamma\in \tran_i(F)$ we can treat $\eta_{k-1}(\gamma)$ as a translation on $\nss_{k-1}$. By Corollary \ref{cor:ext-of-lcfr-is-lcfr}, $\nss$ is also a Lie-fibered $k$-step nilspace. Now, we want to apply Proposition \ref{prop:cont-lift-of-trans} to prove the existence of a (continuous) translation in $\tran_i(\nss)$ which is a lift of $\eta_{k-1}(\gamma)$ (viewing the latter as a translation in $\tran_i(\nss_{k-1})$). To be able to apply Proposition \ref{prop:cont-lift-of-trans}, we need the associated translation bundle to be a \emph{split} extension of $\nss_{k-1}$. But since $\nss_{k-1}$ is isomorphic to $F_{k-1}$, this translation bundle is thus a degree-$(k-i)$ extension of $F_{k-1}$, so by the above observation related to \eqref{diag:splt-1}, this extension does split. Hence we can indeed apply Proposition \ref{prop:cont-lift-of-trans} and thus we obtain a continuous translation $\beta\in \tran_i(\nss)$ that lifts $\eta_{k-1}(\gamma)$ through $\pi_{k-1}$. Now a remaining issue is that a priori $\wh{q}(\beta)$ may not equal $\gamma$. However, the images of these translations through $\eta_{k-1}$ are both equal to $\eta_{k-1}(\gamma)$. Hence $\wh{q}(\beta)-\gamma:F\to \mc{D}_k(\ab_k(F))$ is a well-defined continuous morphism. Let $\phi_k$ be the $k$-th structure morphism of $q$, thus $\phi_k$ is a surjective continuous group homomorphism $\ab_k(\nss)\to \ab_k(F)$. These last two groups are Lie groups (by assumption for $\ab_k(\nss)$, and by Corollary \ref{cor:trans&structFree} for $\ab_k(F)$). By Lemma \ref{lem:abcasesplit} we have that $\ab_k(\nss)$ is a \emph{split} topological-group extension of $\ab_k(F)$, so there is a continuous homomorphism $s_k:\ab_k(F)\to  \ab_k(\nss)$ such that $\phi_k\co s_k$ is the identity map on $\ab_k(F)$. We can now define the map $\alpha:=\beta+s_k\co(\gamma-\wh{q}(\beta))\co q:\nss\to\nss$. This is a continuous translation on $\nss$, and a simple calculation shows that  $\wh{q}(\alpha)=\gamma$, as required.

Having proved that $\wh{q}$ is a continuous surjective homomorphism, we are nearly ready to define the desired cross-section, by lifting from $\tran_i(F)$ to $\tran_i(\nss)$ using the surjectivity. But to do so consistently on the continuous part of $F$, we shall use that $\tran(\nss)$ is a Lie group (which enables us to lift 1-parameter subgroups), so let us now establish this Lie property.

First we claim that $\ker(\wh{q})$ is a Lie group. For each $\alpha\in \ker(\wh{q})=\ker(\wh{q})\cap\tran_1(\nss)$  we have a well-defined map $\xi_\alpha:F\to \ab_k(\nss)$, $x\mapsto y-\alpha(y)$ for any $y\in q^{-1}(x)$, and by \cite[Lemma 2.9.5]{Cand:Notes2} this map $\xi_\alpha$ is a nilspace morphism $F\to \mc{D}_{k-1}(\ab_k(\nss))$. Let $\varphi:\ker(\wh{q})\to \hom(F,\mc{D}_{k-1}(\ab_k(\nss)))$ be the map $\alpha\mapsto \xi_\alpha$. We claim that $\varphi$ is a continuous group homomorphism (where, as usual, $\hom\big(F,\mc{D}_{k-1}(\ab_k(\nss))\big)$ is equipped with pointwise addition and the compact-open topology). Let $\alpha,\beta\in \ker(\wh{q})$. The value of  $\varphi(\alpha)+\varphi(\beta)$ at $x\in F$ can be calculated as follows. Let $y\in q^{-1}(x)$ and note that by the assumption $\alpha\in \ker(\wh{q})$ we have $\alpha(y)\in q^{-1}(x)$. Hence $\varphi(\alpha)+\varphi(\beta)=(y-\alpha(y))+(\alpha(y)-\beta(\alpha(y)) = y-\beta(\alpha(y)) = \varphi(\beta\alpha)$. The calculation for the inverse follows similarly. Continuity of $\varphi$ follows by a similar argument as in previous proofs: let $K\subset F$ be compact and $(\alpha_n)$ be a convergent sequence in $\ker(\wh{q})$ with limit $\alpha$. Suppose for a contradiction that there is $\epsilon >0$ such that $\sup_{x\in K} d(\varphi(\alpha_n)(x),\varphi(\alpha)(x))>\epsilon$ for all $n$. Then passing to a subsequence we may assume that there exists $x_n\to x\in K$ such that $d(\varphi(\alpha_n)(x_n),\varphi(\alpha)(x_n))>\epsilon$ for all $n$. By Lemma \ref{lem:conv-quot} we can further assume that there exists $y_n\in \nss$ with $y_n\to y$ as $n\to \infty$ such that $q(y_n)=x_n$ for all $n$. We thus obtain a contradiction, as then $\epsilon<d(y_n-\alpha_n(y_n),y_n,\alpha(y_n))\to 0$ as $n\to \infty$. 
Now to deduce that $\ker(\wh{q})$ is a Lie group, note that the continuous homomorphism $\varphi:\ker(\wh{q})\to \hom(F,\mc{D}_{k-1}(\ab_k(\nss)))$ is easily checked to be an isomorphism, with its inverse $\varphi^{-1}:\hom(F,\mc{D}_{k-1}(\ab_k(\nss)))\to \ker(\wh{q})$ being $\varphi^{-1}(f)(x)=x+f(q(x))$. Indeed, for any $f\in \hom(F,\mc{D}_{k-1}(\ab_k(\nss)))$ the map $x\mapsto x+f(q(x))$ is continuous (since $f$, $q$, and the $\ab_k$-action are all continuous), so this map is indeed a \emph{continuous} translation in $\ker(\wh{q})$. Hence $\varphi$ is a bijective continuous homomorphism, hence an isomorphism (by the open mapping theorem). By Lemma \ref{lem:hom-Lie} the group $\hom(F,\mc{D}_{k-1}(\ab_k(\nss)))$ is Lie, and our claim follows.

To see that $\tran_i(\nss)$ is a Lie group for each $i$, note first for $i=1$ that by the continuity and surjectivity of $\wh{q}$ we have the short exact sequence 
$0\to \ker(\wh{q})\to \tran_1(\nss)\to \tran_1(F)\to 0$. 
We know that $\tran_1(\nss)$ is a Polish group (by Theorem \ref{thm:trans-group-polish}), that its closed normal subgroup $\ker(\wh{q})$ is a Lie group, and that $\tran_1(F)$ is also a Lie group (by Corollary \ref{cor:trans&structFree}). Therefore by \cite[Corollary A.2]{HK-non-conv} we have that $\tran_1(\nss)$ is also locally compact, and then it is a Lie group by \cite[Lemma A.3]{HK-non-conv}. The Lie property of the closed subgroups $\tran_i(\nss)$, $i>1$, now follows by Cartan's theorem.

We can now complete the proof by specifying the cross-section $F\to \nss$. Recall that $F=\prod_{i=1}^k \mc{D}_i(\mb{Z}^{a_i}\times \mb{R}^{b_i})$. For $i\in[k]$ and $j\in[a_i]$ we denote by $\gamma_{i,j}\in \tran_i(F)$ the translation that adds 1 to the $j$-th coordinate of elements of $F$ in the discrete part of $\mc{D}_i(\mb{Z}^{a_i}\times \mb{R}^{b_i})$. Similarly, for every $i\in[k]$ and every $s\in [b_i]$ we denote by $\delta_{i,s}:\mb{R}\to \tran_i(F)$ the one-parameter subgroup defined by mapping $t\in \mb{R}$ to the translation in $\tran_i(F)$ that adds $t$ to the $s$-th coordinate of elements of $F$ in the continuous part of  $\mc{D}_i(\mb{Z}^{a_i}\times \mb{R}^{b_i})$. Since $\wh{q}$ is surjective, for every $i\in[k]$ and $j\in[a_i]$ there exists  $\alpha_{i,j}\in \tran_i(\nss)$ such that $\wh{q}(\alpha_{i,j})= \gamma_{i,j}$. Similarly, for each $i\in[k]$ and $s\in [b_i]$, the surjectivity of $\wh{q}$ makes it a quotient morphism in the sense of \cite[p.\ 169]{H&M-ProLie} (by the open mapping theorem), so by the one-parameter subgroup lifting lemma \cite[Lemma 4.19]{H&M-ProLie} there exists a homomorphism $\beta_{i,s}:\mb{R}\to \tran_i(\nss)$ such that for all $t\in \mb{R}$ we have $\wh{q}(\beta_{i,s}(t))=\delta_{i,s}(t)$. Now, writing the elements of $F$ as tuples $((x_{i,j})_{j\in[a_i]},(x'_{i,\ell})_{\ell\in[b_i]})_{i\in [k]}$, and fixing any $y\in q^{-1}(0)$, let
\begin{eqnarray}\label{eq:maincrosssec}
\cs:\hspace{2cm}  F & \to & \nss \nonumber\\
 ((x_{i,j}),(x'_{i,\ell})) & \mapsto & \Big[\prod_{i=1}^k\Big(\prod_{j=1}^{a_i} \alpha_{i,j}^{x_{i,j}}\Big)\Big(\prod_{\ell=1}^{b_i} \beta_{i,\ell}(x'_{i,\ell})\Big)\Big] (y).
\end{eqnarray}
This is clearly a continuous morphism, and by construction  it is a cross-section for $q$.\end{proof}
\subsection{Lie-fibered nilspaces as fibration images of free nilspaces}\hfill\smallskip\\
Proving Theorem \ref{thm:splitext} is a key step in this paper on the way to the main theorems \ref{thm:gpcongrep} and \ref{thm:cfr=double-coset}. In this subsection we pause to deduce some first consequences of Theorem \ref{thm:splitext}, the first of  which is the following general description of Lie-fibered nilspaces in terms of free nilspaces.
\begin{theorem}\label{thm:lcfr-factor-of-free}
Let $\ns$ be a $k$-step Lie-fibered nilspace. Then there exists a $k$-step free nilspace $F$ and a continuous fibration $\varphi:F\to\ns$.
\end{theorem}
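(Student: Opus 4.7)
The plan is to argue by induction on the step $k$. The base case $k=1$ is essentially classical: a $1$-step Lie-fibered nilspace is $\mc{D}_1(A)$ for some compactly generated abelian Lie group $A$, which admits a continuous surjective homomorphism $\pi:B\to A$ from a covering group of the form $B=\mb{Z}^r\times\mb{R}^s$; this $\pi$ is a continuous fibration from the $1$-step free nilspace $\mc{D}_1(B)$ onto $\ns$.

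For the inductive step I would suppose the result holds for $(k-1)$-step Lie-fibered nilspaces. Given $\ns$ of step $k$, the factor $\ns_{k-1}$ is $(k-1)$-step \textsc{lch} (Theorem \ref{thm:factor-of-lch-nil}) and Lie-fibered, since its structure groups are the first $k-1$ structure groups of $\ns$. By the inductive hypothesis there exist a $(k-1)$-step free nilspace $F'$ and a continuous fibration $\varphi':F'\to\ns_{k-1}$. I would then form the fiber-product nilspace $\nss:=F'\times_{\ns_{k-1}}\ns$ with respect to $\varphi'$ and the canonical projection $\pi_{k-1}:\ns\to\ns_{k-1}$. By Lemma \ref{lem:fib-prod-top-nil}, $\nss$ is an \textsc{lch} nilspace whose structure groups are the pullbacks of those of $F'$ and $\ns$; in particular, since $F'$ is $(k-1)$-step, the $k$-th structure group $\ab_k(\nss)$ is canonically isomorphic to the Lie group $\ab_k(\ns)$, and the lower structure groups of $\nss$ remain compactly generated Lie. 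Thus $\nss$ is Lie-fibered, and the projection $q:\nss\to F'$ is a continuous fibration exhibiting $\nss$ as a degree-$k$ continuous extension of the $k$-step free nilspace $F'$ by the abelian Lie group $\ab_k(\ns)$ in the sense of Definition \ref{def:ext-lch-nil}.

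The crux of the argument is then to invoke Theorem \ref{thm:splitext}: this extension splits, hence $\nss$ is isomorphic, as an \textsc{lch} nilspace, to the product $F'\times\mc{D}_k(\ab_k(\ns))$. Letting $\pi:B\to \ab_k(\ns)$ denote the covering homomorphism from $B=\mb{Z}^r\times\mb{R}^s$ for appropriate $r,s$, the map $\id_{F'}\times\pi$ is a continuous fibration $F'\times\mc{D}_k(B)\to F'\times\mc{D}_k(\ab_k(\ns))\cong\nss$. Composing with the second-coordinate projection $\nss\to\ns$, which is a continuous fibration as the second projection of a fiber product of continuous fibrations, delivers the desired continuous fibration $\varphi:F\to\ns$ from the $k$-step free nilspace $F:=F'\times\mc{D}_k(B)$.

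I do not anticipate a serious obstacle in this argument: the topological content of the induction is absorbed by Lemma \ref{lem:fib-prod-top-nil} (which ensures $\nss$ is \textsc{lch} and computes its structure groups), while the decisive structural input is supplied by Theorem \ref{thm:splitext}. The only mildly delicate point is verifying that the hypotheses of Theorem \ref{thm:splitext} apply to $q:\nss\to F'$, which reduces to the identification of $\ab_k(\nss)$ with the Lie group $\ab_k(\ns)$ and to the Lie-fiberedness of $\nss$, both of which follow from the fiber-product computation above.
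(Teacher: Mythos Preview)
Your proposal is correct and follows essentially the same approach as the paper's proof: induction on $k$, forming the fiber product $\nss=F'\times_{\ns_{k-1}}\ns$, applying Theorem \ref{thm:splitext} to split $\nss$ as $F'\times\mc{D}_k(\ab_k(\ns))$, and then composing the covering map $\id_{F'}\times\pi$ with the second projection $p_2:\nss\to\ns$. The only cosmetic slip is that you call $F'$ a ``$k$-step free nilspace'' when it is $(k-1)$-step (though it may of course be regarded as $k$-step with trivial top structure group, which is what Theorem \ref{thm:splitext} needs).
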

\begin{proof}
We argue by induction on the step $k$. For $k=1$, the nilspace $\ns$ is topologically isomorphic to $\mc{D}_1(\ab)$ where $\ab$ is an abelian Lie group. By known results (see e.g.\ \cite[Theorem 2.4]{Mosk}) we have $\ab\cong \mb{R}^m\times \mb{T}^n\times D$ for some discrete abelian group $D$. By \cite[Corollary 12]{Fu&Shak}, if $\ab$ is compactly generated then $D$ is finitely generated, hence of the form $\mb{Z}^r\times G$ for some finite abelian group $G$. The conclusion of the theorem then clearly holds in this case.

Let $k>1$ and suppose by induction that $\ns_{k-1}$ satisfies the desired conclusion, so there is a $(k-1)$-step free nilspace $F_{k-1}$, and a fibration $\varphi_{k-1}: F_{k-1}\to \ns_{k-1}$. We then define the fiber-product
$\nss:=F_{k-1}\times_{\ns_{k-1}}\ns = \{(f,x)\in F_{k-1}\times \ns: \varphi_{k-1}(f)=\pi_{k-1}(x)\}$. Algebraically this is a $k$-step nilspace that is a degree-$k$ extension of $F$ by the structure group $\ab_k=\ab_k(\ns)$. In particular (using for instance \cite[Proposition A.20]{CGSS-p-hom}) for each $i\in [k-1]$ the $i$-th nilspace factor $\pi_i(\nss)$ is isomorphic to the $i$-th nilspace factor $F_i$ of $F_{k-1}$. Recall also that $\pi_{k-1,\nss}$ is just projection to the $F_{k-1}$ component. By Lemma \ref{lem:fib-prod-top-nil} we also know that $\nss$ is a Lie-fibered nilspace. It is straightforwardly seen that the component projections $p_1:\nss\to F_{k-1}$, $(f,x)\mapsto f$ and $p_2:\nss\to \ns$, $(f,x)\mapsto x$ are continuous fibrations. By Theorem \ref{thm:splitext}, the nilspace $\nss$ is a  split extension, so $\nss\cong F_{k-1}\times \mc{D}_k(\ab_k)$. Let $\ab_k'$ be a covering group of $\ab_k$ as per Theorem \ref{thm:lift-mor-cfr-ab-gr}. In particular $\ab_k'= \mb{Z}^b\times\mb{R}^c$ for some integers $b,c\ge 0$.  Let $\phi$ be the natural continuous surjective homomorphism $\ab_k'\to \ab_k$.  We now define the free nilspace $F:= F_{k-1}\times \mc{D}_k(\ab_k')$. The map $\pi_\phi:F\to\nss$, $(f,f_k)\to (f,\phi(f_k))$ is easily checked to be a fibration, and then the composition $p_2\co \pi_{\phi}$ is a fibration $\varphi:F\to \ns$ as required. 
\end{proof}
\noindent The following commutative diagram summarizes the above proof and will also be useful later:
\begin{equation}\label{diag:lift-1}
\begin{aligned}[c]
\begin{tikzpicture}
  \matrix (m) [matrix of math nodes,row sep=2em,column sep=4em,minimum width=2em]
  {F & F_{k-1}\times \mc{D}_k(\ab_k)\simeq F_{k-1}\times_{\ns_{k-1}} \ns & \ns \\
     & F_{k-1} & \ns_{k-1}. \\};
  \path[-stealth]
    (m-1-1) edge node [above] {$\pi_\phi$} (m-1-2)
    (m-1-2) edge node [above] {$p_2$} (m-1-3)
    (m-1-3) edge node [right] {$\pi_{k-1}$} (m-2-3)
    (m-2-2) edge node [above] {$\varphi_{k-1}$} (m-2-3)
    (m-1-2) edge node [right] {$p_1$} (m-2-2);
\end{tikzpicture}
\end{aligned}
\end{equation}

\vspace{-0.3cm}

\noindent We complete this subsection by recording some consequences of Theorem \ref{thm:lcfr-factor-of-free} that will be useful in what follows. We begin by strengthening Proposition \ref{prop:hom-ns-z-polish}.

\begin{corollary}\label{cor:hom-is-lie}
Let $\ns$ be a $k$-step Lie-fibered nilspace, let $\ab$ be an abelian Lie group and let $t\in \mb{Z}_{\ge 0}$. Then the set $\hom(\ns,\mc{D}_t(\ab))$ equipped with pointwise addition and the compact-open topology is a Lie group.
\end{corollary}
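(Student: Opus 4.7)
The plan is to reduce the problem to the free case, where Lemma \ref{lem:hom-Lie} already provides the Lie structure, by pulling back morphisms along a free fibration obtained from Theorem \ref{thm:lcfr-factor-of-free}.

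First, the case $t=0$ is trivial, since by Remark \ref{rem:D0} the set $\hom(\ns,\mc{D}_0(\ab))$ consists of constant maps and is therefore isomorphic as a topological group to $\ab$. So assume $t\ge 1$. By Theorem \ref{thm:lcfr-factor-of-free}, there exist a $k$-step free nilspace $F$ and a continuous fibration $\varphi\colon F\to \ns$. I would then introduce the pullback homomorphism
\[
\varphi^*\colon \hom(\ns,\mc{D}_t(\ab))\to \hom(F,\mc{D}_t(\ab)),\qquad f\mapsto f\co\varphi.
\]
The map $\varphi^*$ is a well-defined group homomorphism (being pointwise in nature and preserving morphisms), and it is injective because $\varphi$ is surjective. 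Continuity relative to the compact-open topologies follows from the fact that, for any compact $K\subset F$, the set $\varphi(K)$ is compact in $\ns$, so convergence of $f_n\to f$ uniformly on $\varphi(K)$ implies convergence of the pullbacks uniformly on $K$.

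Next, I would identify the image of $\varphi^*$ with the subgroup $\Lambda\subset \hom(F,\mc{D}_t(\ab))$ consisting of those morphisms that are constant on the fibers of $\varphi$. The inclusion $\mr{Im}(\varphi^*)\subset\Lambda$ is immediate. Conversely, given $g\in \Lambda$, the function $g$ descends set-theoretically through $\varphi$ to a map $f\colon \ns\to \ab$; since $\varphi$ is continuous and open (by Theorem \ref{thm:open-mapping-thm}), $\ns$ carries the quotient topology induced by $\varphi$, so $f$ is continuous. To see that $f$ is a morphism, note that any $\q\in \cu^n(\ns)$ lifts through the fibration $\varphi$ to some $\q'\in \cu^n(F)$ with $\varphi\co\q'=\q$, whence $f\co\q = g\co\q'\in \cu^n(\mc{D}_t(\ab))$. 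Thus $\Lambda=\mr{Im}(\varphi^*)$. Moreover $\Lambda$ is closed in $\hom(F,\mc{D}_t(\ab))$, because compact-open convergence implies pointwise convergence and the fiber-constancy condition is preserved under pointwise limits.

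Finally, by Lemma \ref{lem:hom-Lie} the group $\hom(F,\mc{D}_t(\ab))$ is a Lie group, so by Cartan's theorem the closed subgroup $\Lambda$ is itself a Lie group, in particular a Polish group. Since $\hom(\ns,\mc{D}_t(\ab))$ is Polish by Proposition \ref{prop:hom-ns-z-polish} and $\varphi^*$ is a continuous bijective homomorphism onto $\Lambda$, the open mapping theorem for Polish groups yields that $\varphi^*$ is a topological-group isomorphism onto $\Lambda$. Hence $\hom(\ns,\mc{D}_t(\ab))$ is a Lie group. The step that requires the most care is the identification of $\mr{Im}(\varphi^*)$ with $\Lambda$ and the verification that $\Lambda$ is closed; everything else is a direct combination of the free case and standard open mapping and Cartan theorems.
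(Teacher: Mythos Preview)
Your proposal is correct and follows essentially the same approach as the paper: both pull back along a free fibration from Theorem \ref{thm:lcfr-factor-of-free}, identify the image of the pullback with the closed subgroup of fiber-constant morphisms (your $\Lambda$ is the paper's $C$), use Lemma \ref{lem:hom-Lie} and Cartan's theorem to make this subgroup Lie, and then conclude via the Polish structure from Proposition \ref{prop:hom-ns-z-polish}. The only cosmetic difference is that the paper verifies continuity of the inverse of $\varphi^*$ directly (via openness of $\varphi$), while you invoke the open mapping theorem for Polish groups; both are valid.
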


\begin{proof}
By Theorem \ref{thm:lcfr-factor-of-free} there is a $k$-step free nilspace $F$ and a continuous fibration $\varphi:F\to\ns$. By Lemma \ref{lem:hom-Lie} the group $\hom(F,\mc{D}_t(\ab))$ is Lie. Let 
\[
C:=\{g\in \hom(F,\mc{D}_t(\ab)): \forall x,y\in F, \text{ if } \varphi(x)=\varphi(y) \text{ then } g(x)=g(y)\}.
\]
Note that this is a closed subset of $\hom(F,\mc{D}_t(\ab))$ (as can be seen straightforwardly, considering converging sequences in $C$). Hence, as a closed subgroup of $\hom(F,\mc{D}_t(\ab))$, in particular $C$ is a Lie group. By Proposition \ref{prop:hom-ns-z-polish}, $\hom(\ns,\mc{D}_t(\ab))$ is a Polish group. Consider the map $\iota:\hom(\ns,\mc{D}_t(\ab))\to C\subset \hom(F,\mc{D}_t(\ab))$, $\gamma\mapsto \gamma\co \varphi$. This map is readily seen to be continuous and injective. Moreover, given any $g\in C$, defining  $\gamma^*\in \hom(\ns,\mc{D}_t(\ab))$  by the formula $\gamma^*(s):=g(x)$ for any $x\in \varphi^{-1}(s)$, using that $\varphi$ is open by Theorem \ref{thm:open-mapping-thm} we get that $\gamma^*$ is in fact continuous (using that $\varphi$ is 1-sequence covering as in \cite[\S 4]{Li}). Hence, $C$ is topologically isomorphic to $\hom(\ns,\mc{D}_t(\ab))$ and so the latter is a Lie group.
\end{proof}
If the group $\ab$ is discrete then we can strengthen the previous result as follows.
\begin{corollary}\label{cor:hom-is-discrete}
Let $\ns$ be a $k$-step Lie-fibered nilspace, let $\ab$ be a discrete abelian Lie group and let $t\in \mb{Z}_{\geq 0}$. Then the set $\hom(\ns,\mc{D}_t(\ab))$ with addition as operation is a discrete group.
\end{corollary}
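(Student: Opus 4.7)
The plan is to upgrade the Lie-group statement of Corollary~\ref{cor:hom-is-lie} to discreteness by exploiting the topological rigidity forced by having the \emph{discrete} abelian group $\ab$ as target. By Corollary~\ref{cor:hom-is-lie} the group $G:=\hom(\ns,\mc{D}_t(\ab))$, equipped with pointwise addition and the compact-open topology, is a Lie group; in particular it is a manifold and therefore locally path-connected. Consequently, to prove that $G$ is discrete it suffices to show that the connected component of the identity is $\{0\}$, since in a locally connected space connected components are open, and local path-connectedness makes them coincide with path-components. So the task reduces to showing that the path-component of $0\in G$ is trivial.

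I would verify this path-rigidity directly from the definition of the compact-open topology. Given any continuous path $s\mapsto \phi_s$ from $[0,1]$ into $G$ starting at $\phi_0=0$, the key observation is that every singleton $\{x\}\subset\ns$ is compact, so each set of the form $\{g\in G:g(x)\in U\}$, for $U\subset \ab$ open, is a subbasic open set of the compact-open topology on $G$. Continuity of the path therefore entails that, for every fixed $x\in\ns$, the evaluation map $s\mapsto \phi_s(x)$ from $[0,1]$ to $\ab$ is continuous. Since $\ab$ is discrete and $[0,1]$ is connected, each such evaluation map must be constant, giving $\phi_s(x)=\phi_0(x)=0$ for every $s\in[0,1]$ and every $x\in\ns$; that is $\phi_s\equiv 0$.

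The only substantive ingredient is Corollary~\ref{cor:hom-is-lie}, whose Lie-group conclusion is what makes the implication ``path-component of $0$ is trivial $\implies$ $G$ is discrete'' legitimate; without it one would need an explicit finite-dimensional Taylor parametrization of $\hom(\ns,\mc{D}_t(\ab))$ (e.g.\ via Lemma~\ref{lem:Taylor-discrete} pulled back through a fibration $F\to\ns$ from a free nilspace) to exhibit an explicit open neighborhood of $0$ reducing to $\{0\}$. With Corollary~\ref{cor:hom-is-lie} in hand, the remainder is the routine observation that compact-open convergence refines pointwise convergence, so I do not anticipate any real obstacle.
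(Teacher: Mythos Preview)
Your argument is correct. By Corollary~\ref{cor:hom-is-lie} the group $G$ is Lie, hence locally path-connected; the evaluation maps $s\mapsto\phi_s(x)$ are continuous because the compact-open topology refines pointwise convergence, and discreteness of $\ab$ forces each such map to be constant, so the path-component of $0$ is trivial and $\{0\}$ is open.

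The paper proceeds differently and more directly: rather than invoking Corollary~\ref{cor:hom-is-lie} and then arguing with connectedness, it reuses the embedding from the proof of that corollary, namely the map $\hom(\ns,\mc{D}_t(\ab))\hookrightarrow\hom(F,\mc{D}_t(\ab))$, $\gamma\mapsto\gamma\co\varphi$, for a fibration $\varphi:F\to\ns$ from a free nilspace, and then observes via Lemma~\ref{lem:Taylor-discrete} that $\hom(F,\mc{D}_t(\ab))\cong\ab^n$ is already discrete, so every subgroup is discrete. In effect you are using the Lie-group conclusion of Corollary~\ref{cor:hom-is-lie} as a black box and adding a soft connectivity argument on top, whereas the paper goes one level deeper and reads off discreteness straight from the Taylor parametrization (exactly the alternative you sketch in your final paragraph). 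Your route has the advantage of not reopening the proof of Corollary~\ref{cor:hom-is-lie}; the paper's route avoids the detour through local path-connectedness and makes the finite parametrization by $\ab^n$ explicit.
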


\begin{proof}
As in the previous proof, let $F$ be a $k$-step free nilspace such that there exists a fibration $\varphi:F\to\ns$. Arguing similarly, we have that $\hom(\ns,\mc{D}_t(\ab))$ is topologically isomorphic to a subgroup of $\hom(F,\mc{D}_t(\ab))$. Lemma \ref{lem:Taylor-discrete} implies that the latter group is isomorphic to $\ab^n$ for some $n\ge 0$, so it is discrete. Hence so are all its subgroups, in particular $\hom(\ns,\mc{D}_t(\ab))$.
\end{proof}

\noindent The last consequence of Theorem \ref{thm:lcfr-factor-of-free} in this subsection is that if $\ns$ is a $k$-step, Lie-fibered nilspace then $\tran(\ns)$ is a Lie group. To prove this we use the following lemma.

\begin{lemma}\label{lem:lift-tran-to-free}
Let $\ns$ be a $k$-step Lie-fibered nilspace. Let $F$ be the free nilspace provided by Theorem \ref{thm:lcfr-factor-of-free}, with fibration $\varphi:F\to \ns$. Then for every $i\in[k]$ and $\alpha\in \tran_i(\ns)$ there exists $\beta\in \tran_i(F)$ such   $\alpha\co\varphi = \varphi\co\beta$.
\end{lemma}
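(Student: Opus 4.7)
The plan is to argue by induction on the step $k$, exploiting the explicit inductive construction of $F$ recorded in the proof of Theorem~\ref{thm:lcfr-factor-of-free}. The base case $k=1$ is immediate: $\ns\cong\mc{D}_1(\ab_1(\ns))$ and $F\cong\mc{D}_1(B)$ where $B$ is a covering group of $\ab_1(\ns)$ with continuous surjective homomorphism $\pi:B\to\ab_1(\ns)$; any $\alpha\in\tran_1(\ns)$ is translation by some $a\in\ab_1(\ns)$, and picking any $b\in\pi^{-1}(a)$ produces the required continuous $\beta\in\tran_1(F)$.

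For the step $k\ge 2$, I will use diagram~\eqref{diag:lift-1}: $F=F_{k-1}\times\mc{D}_k(\ab_k')$ where $F_{k-1}$ is a $(k-1)$-step free nilspace fibered over $\ns_{k-1}$ by $\varphi_{k-1}$ (given by the outer induction), $\ab_k'=\mb{Z}^b\times\mb{R}^c$ covers the $k$-th structure group $\ab_k$ of $\ns$ via $\phi:\ab_k'\to\ab_k$, the splitting isomorphism $\iota:F_{k-1}\times\mc{D}_k(\ab_k)\xrightarrow{\cong}\nss:=F_{k-1}\times_{\ns_{k-1}}\ns$ is furnished by Theorem~\ref{thm:splitext}, and $\varphi=p_2\co\iota\co\pi_\phi$ with $\pi_\phi(f,f_k):=(f,\phi(f_k))$ and $p_2:\nss\to\ns$ the projection. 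Given $\alpha\in\tran_i(\ns)$, the outer induction applied to $\eta_{k-1}(\alpha)\in\tran_i(\ns_{k-1})$ yields a lift $\beta_{k-1}\in\tran_i(F_{k-1})$ with $\eta_{k-1}(\alpha)\co\varphi_{k-1}=\varphi_{k-1}\co\beta_{k-1}$. I then set $\gamma(f,x):=(\beta_{k-1}(f),\alpha(x))$ on $\nss$; well-definedness follows from the identity $\varphi_{k-1}(\beta_{k-1}(f))=\eta_{k-1}(\alpha)(\varphi_{k-1}(f))=\eta_{k-1}(\alpha)(\pi_{k-1}(x))=\pi_{k-1}(\alpha(x))$, and the fact that $\gamma\in\tran_i(\nss)$ follows by computing the arrow $\langle\q,\gamma\co\q\rangle_i$ coordinate-wise for $\q\in\cu^n(\nss)$ and noting that each component is an $i$-arrow of cubes with matching projections.

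Transporting $\gamma$ across $\iota$ gives a continuous translation $\gamma'\in\tran_i(F_{k-1}\times\mc{D}_k(\ab_k))$, and Theorem~\ref{thm:lift-trans}, applied with free nilspace $F_{k-1}$ and covering homomorphism $\phi:\ab_k'\to\ab_k$, furnishes $\beta\in\tran_i(F)$ satisfying $\gamma'\co\pi_\phi=\pi_\phi\co\beta$. A short chase through the above factorization of $\varphi$ then yields
\begin{equation*}
\varphi\co\beta=p_2\co\iota\co\pi_\phi\co\beta=p_2\co\iota\co\gamma'\co\pi_\phi=p_2\co\gamma\co\iota\co\pi_\phi=\alpha\co p_2\co\iota\co\pi_\phi=\alpha\co\varphi.
\end{equation*}
The main obstacle I expect is producing $\beta$ \emph{continuously} and at the correct level $i$ (an algebraic lift alone would not suffice); this is precisely the content of Theorem~\ref{thm:lift-trans}, which uses the Taylor-expansion description of translation groups of free nilspaces (Theorem~\ref{thm:decrip-trans-group}) to lift the top polynomial coefficient of $\gamma'$ through $\phi$ while preserving its polynomial, hence continuous, form. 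Every remaining step is a compatibility check along the projections $\varphi_{k-1}$ and $\pi_{k-1,\ns}$ that is built into the inductive construction.
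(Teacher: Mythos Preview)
Your proof is correct and follows essentially the same approach as the paper's: induction on $k$, lift $\eta_{k-1}(\alpha)$ to $\beta_{k-1}\in\tran_i(F_{k-1})$ by the inductive hypothesis, form the coordinate-wise translation $(\beta_{k-1},\alpha)$ on the fiber product $\nss=F_{k-1}\times_{\ns_{k-1}}\ns$, and then invoke Theorem~\ref{thm:lift-trans} to lift through $\pi_\phi$. The paper's version is terser (it suppresses the isomorphism $\iota$, treating $\nss$ and $F_{k-1}\times\mc{D}_k(\ab_k)$ as identified, and uses $k=0$ rather than $k=1$ as the base case), but the argument is the same; your explicit diagram chase and verification that $\gamma\in\tran_i(\nss)$ are welcome elaborations of steps the paper leaves implicit.
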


\begin{proof}
We argue by induction on $k$, with the case $k=0$ being trivial. For $k>0$, we use the notation in diagram \eqref{diag:lift-1}. Thus $\varphi := p_2\co \pi_\phi$. Let $\alpha\in \tran_i(\ns)$, and let $\alpha_{k-1}\in\tran_i(\ns_{k-1})$ be such that $\pi_{k-1}\co\alpha = \alpha_{k-1}\co \pi_{k-1}$. By induction there exists $\beta_{k-1}'\in\tran_i(F_{k-1})$ such that $\varphi_{k-1}\co \beta'_{k-1} = \alpha_{k-1}\co \varphi_{k-1}$. Note that  $(\beta_{k-1}',\alpha)$ is an element of $\tran_i( F_{k-1}\times_{\ns_{k-1}} \ns)$ acting coordinate-wise. By Theorem \ref{thm:lift-trans} we can lift this to some $\beta\in \tran_i(F)$, and the result follows.
\end{proof}
\noindent We can now prove the desired consequence of Theorem \ref{thm:lcfr-factor-of-free}. This extends \cite[Theorem 2.9.10]{Cand:Notes2}.
\begin{theorem}\label{thm:LFnstransLie}
Let $\ns$ be a $k$-step Lie-fibered nilspace. For each $i\in [k]$, $\tran_i(\ns)$ is a Lie group.
\end{theorem}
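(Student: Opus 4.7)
The plan is to reduce the problem to the free-nilspace case, where the conclusion is Corollary~\ref{cor:trans&structFree}. By Theorem~\ref{thm:lcfr-factor-of-free} I fix a $k$-step free nilspace $F$ and a continuous fibration $\varphi:F\to \ns$, with the aim of realizing $\tran_i(\ns)$ as a Lie-group quotient built from $\tran_i(F)$. Since a direct push-forward $\tran_i(F)\to\tran_i(\ns)$ is not available (a translation of $F$ need not descend along $\varphi$), I will instead work with the subgroup of those that do.

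First I would introduce $C_i:=\{\beta\in\tran_i(F): \varphi(x)=\varphi(y)\Rightarrow \varphi(\beta(x))=\varphi(\beta(y))\}$ of $\varphi$-consistent translations. A convergent-sequence argument using continuity of $\varphi$ and the Hausdorff property of $\ns$ shows that $C_i$ is closed in $\tran_i(F)$, hence a Lie group by Corollary~\ref{cor:trans&structFree} and Cartan's theorem on closed subgroups. Each $\beta\in C_i$ descends to a well-defined map $\Phi(\beta):\ns\to\ns$ characterized by $\Phi(\beta)\co \varphi=\varphi\co \beta$. The routine checks that $\Phi(\beta)\in\tran_i(\ns)$ go through by lifting cubes of $\ns$ to cubes of $F$ through the fibration $\varphi$, applying $\beta$ there, and pushing back down by $\varphi$; continuity of $\Phi(\beta)$ follows because $\varphi$ is open (Theorem~\ref{thm:open-mapping-thm}); and $\Phi$ is evidently a group homomorphism. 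Surjectivity of $\Phi$ is exactly the content of Lemma~\ref{lem:lift-tran-to-free}.

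The main technical step will be to show that $\Phi:C_i\to\tran_i(\ns)$ is continuous for the compact-open topologies. Given $\beta_n\to \beta$ in $C_i$ and a compact $K\subset \ns$, I would lift $K$ to a compact $K'\subset F$ with $\varphi(K')=K$; this is available since $\varphi$ is a continuous open surjection between LCH spaces (the same Bourbaki argument as in Corollary~\ref{cor:cpct-sets-are-im-of-cpct-sets}). Uniform convergence of $\beta_n$ to $\beta$ on $K'$, combined with the continuity of $\varphi$ on a compact neighborhood of $\beta(K')$, then yields uniform convergence of $\Phi(\beta_n)$ to $\Phi(\beta)$ on $K$ by the contradiction-via-subsequences argument used in Lemma~\ref{lem:h-is-cont}. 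The one slightly delicate point is to control the images $\beta_n(K')$ inside a common compact set, which can be arranged by shrinking to a tail of the sequence or by running the argument directly on convergent subsequences.

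With continuity of $\Phi$ in hand, and noting that $C_i$ is Polish (as a Lie group) and that $\tran_i(\ns)$ is Polish by Theorem~\ref{thm:trans-group-polish}, the open mapping theorem for Polish groups gives that $\Phi$ is open, so $\tran_i(\ns)\cong C_i/\ker\Phi$ as topological groups. Since $\ker\Phi$ is a closed normal subgroup of the Lie group $C_i$, the quotient is a Lie group by the standard quotient theorem, and the proof is complete. An alternative would have been to induct on $k$ via the short exact sequence $0\to\ker(\eta_{k-1})\to\tran_i(\ns)\to\tran_i(\ns_{k-1})\to 0$ in the style of Theorem~\ref{thm:freetransLie}, but this would require establishing the surjectivity of $\eta_{k-1}$ on Lie-fibered nilspaces directly (via Proposition~\ref{prop:cont-lift-of-trans} together with a splitting of the translation bundle), which ultimately appeals to the same free-nilspace results, so the reduction to $F$ above is the more economical route.
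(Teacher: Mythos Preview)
Your proposal is correct and follows essentially the same route as the paper: take the free cover $\varphi:F\to\ns$ from Theorem~\ref{thm:lcfr-factor-of-free}, form the closed subgroup $C\subset\tran(F)$ of $\varphi$-consistent translations (Lie by Corollary~\ref{cor:trans&structFree} and Cartan), push it forward by the induced homomorphism $\wh{\varphi}$, check continuity via compact lifts and the subsequence argument, invoke Lemma~\ref{lem:lift-tran-to-free} for surjectivity, and conclude by the open mapping theorem that $\tran(\ns)\cong C/\ker\wh{\varphi}$ is Lie. The only cosmetic difference is that the paper does this once for $\tran(\ns)$ and then recovers each $\tran_i(\ns)$ as a closed subgroup via Cartan, rather than running the argument separately for each $C_i$; and your ``delicate point'' about trapping $\beta_n(K')$ in a common compact is unnecessary, since the contradiction argument only needs joint continuity of evaluation $(\beta,s)\mapsto \beta(s)$ at the limit point.
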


\begin{proof}
We know that $\tran_i(\ns)$ is a closed subgroup of $\tran(\ns)$ by Lemma \ref{lem:tran-i-closed-subset}, so by Cartan's theorem it suffices to prove that $\tran(\ns)$ is a Lie group. Let $\varphi:F\to \ns$ be the fibration in Theorem \ref{thm:lcfr-factor-of-free}, and let
$C:=\{\alpha\in \tran(F):\forall x,y\in \ns, \text{ if } \varphi(x)=\varphi(y) \text{ then } \varphi(\alpha(x))=\varphi(\alpha(y))\}$. Note that $C$ is a closed subgroup of $\tran(F)$. Hence $C$ is a Lie group (by Theorem \ref{thm:freetransLie} and Cartan's theorem). Let $\wh{\varphi}:C\to \tran(\ns)$ be the homomorphism induced by $\varphi$ as in \cite[Lemma 1.5]{CGSS}. 
 To see that $\wh{\varphi}$ is continuous, let $\alpha_n\to\alpha$ in $\tran(F)$, that is, for any compact set $K\subset F$ we have $\sup_{s\in K}d_{F}(\alpha_n(s),\alpha(s))\to 0$ as $n\to\infty$. We need to check that for every compact set $K'\subset \ns$ we have $\sup_{x\in K'}d_{\ns}(\widehat{\varphi}(\alpha_n)(x),\widehat{\varphi}(\alpha)(x))\to 0$ as $n\to\infty$. By Corollary \ref{cor:cpct-sets-are-im-of-cpct-sets}, there exists a compact set $K\subset F$ such that $\varphi(K)=K'$. Suppose for a contradiction that $\sup_{x\in K'}d_{\ns}(\widehat{\varphi}(\alpha_n)(x),\widehat{\varphi}(\alpha)(x))\not\to 0$ as $n\to\infty$. Then, passing to a subsequence if necessary, we can assume that for some $\epsilon>0$ we have  $\sup_{x\in K'}d_{\ns}(\widehat{\varphi}(\alpha_n)(x),\widehat{\varphi}(\alpha)(x))>\epsilon$ for all $n$. In particular, there exists a sequence $x_n\in K'$ such that $d_{\ns}(\widehat{\varphi}(\alpha_n)(x_n),\widehat{\varphi}(\alpha)(x_n))>\epsilon$ for all $n$. For each $n$ let $s_n\in K$ be any element such that $\varphi(s_n)=x_n$. By compactness of $K'$ we can assume that $s_n\to s\in K$ as $n\to \infty$. This yields a contradiction, because $d_{\ns}(\varphi(\alpha_n(s_n)),\varphi(\alpha(s_n)))=d_{\ns}(\widehat{\varphi}(\alpha_n)(x_n),\widehat{\varphi}(\alpha)(x_n))>\epsilon$, whereas $\alpha_n(s_n))\to \alpha(s)$ and $\alpha(s_n))\to\alpha(s)$ as $n\to \infty$. Thus we have a continuous homomorphism $\wh{\varphi}:C\to \tran(\ns)$, which is also surjective by Lemma \ref{lem:lift-tran-to-free}. As $C$ and $\tran(\ns)$ are Polish, the map $\wh{\varphi}$ is also open (by the open mapping theorem) and $\tran(\ns)$ is topologically isomorphic to $C/\ker(\widehat{\varphi})$. But $\ker(\widehat{\varphi})$ is Lie (being a closed subgroup of a Lie group). Hence $\tran(\ns)$ is Lie by \cite[Theorem  2.6]{Mosk}.
\end{proof}

\section{Groupable congruences and fiber-transitive filtrations}\label{sec:gpcongs}
\noindent Given an equivalence relation $\sim$ on a set $X$, let us denote by $\pi_\sim$ the natural map sending $x\in X$ to the equivalence class $\{y\in X: y\sim x\}$. In universal algebra, a \emph{congruence} (or \emph{congruence relation}) is generally speaking an equivalence relation $\sim$ on an algebraic object that is compatible with the algebraic structure, in the sense that the structure is preserved by the map $\pi_\sim$. The precise definition can be formulated for a general algebra, thus encompassing the well-known special cases of congruences on groups, rings, modules, and so on; see \cite[Definition 1.19]{Bergman}. In this spirit, it is natural to define congruences on nilspaces as follows.

\begin{defn}\label{def:cong} 
Let $\ns$ be a nilspace. A \emph{congruence} on $\ns$ is an equivalence relation $\sim$ on $\ns$ such that  $\pi_{\sim}(\ns)$ equipped with the image cube sets $\pi_{\sim}^{\db{n}}\big(\! \cu^n(\ns)\big)$, $n\in\mb{Z}_{\geq 0}$, is a nilspace.
\end{defn}
\noindent Recall that a nilspace morphism $\varphi:\ns\to\nss$ is \emph{cube-surjective} if $\varphi^{\db{n}}$ maps $\cu^n(\ns)$ onto $\cu^n(\nss)$ for every $n\geq 0$. Such morphisms are closely related to congruences.
\begin{lemma}
Let $\ns$ be a nilspace. If $\sim$ is a congruence on $\ns$, then $\pi_\sim$ is a cube-surjective morphism onto $\pi_{\sim}(\ns)$ with the image cube sets. Conversely, if $\nss$ is a nilspace and $\varphi:\ns\to\nss$ is a cube-surjective morphism, then the  relation $x\sim y \Leftrightarrow \varphi(x)=\varphi(y)$ is a congruence on $\ns$.
\end{lemma}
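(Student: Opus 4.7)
The plan is to verify both implications essentially by unpacking Definition \ref{def:cong} and the notion of cube-surjective morphism, with no substantial hidden difficulty.

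For the forward direction, I would assume that $\sim$ is a congruence on $\ns$, so that by hypothesis the set $\pi_\sim(\ns)$ together with the cube sets $\cu^n(\pi_\sim(\ns)):=\pi_\sim^{\db{n}}(\cu^n(\ns))$ is a nilspace. To see that $\pi_\sim$ is a morphism, I would take any $\q\in \cu^n(\ns)$ and observe that $\pi_\sim\co \q = \pi_\sim^{\db{n}}(\q)$ lies in $\pi_\sim^{\db{n}}(\cu^n(\ns))$, which is precisely $\cu^n(\pi_\sim(\ns))$ by definition of the image cube sets. Cube-surjectivity is then immediate from the same definition, since every $n$-cube on the quotient is by construction the $\pi_\sim^{\db{n}}$-image of some $n$-cube on $\ns$.

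For the converse, given a cube-surjective morphism $\varphi:\ns\to\nss$ with $\nss$ a nilspace, and $\sim$ defined by $x\sim y\iff\varphi(x)=\varphi(y)$, I would first note that cube-surjectivity at $n=0$ (since $\cu^0(\nss)=\nss$) forces $\varphi$ to be surjective on points. The universal property of the set-theoretic quotient then yields a unique bijection $\tilde\varphi:\pi_\sim(\ns)\to\nss$ with $\tilde\varphi\co\pi_\sim=\varphi$. Under this bijection, the image cube sets $\pi_\sim^{\db{n}}(\cu^n(\ns))$ on the quotient are sent to $\varphi^{\db{n}}(\cu^n(\ns))$, which equals $\cu^n(\nss)$ exactly by cube-surjectivity of $\varphi$. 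Hence $\tilde\varphi$ identifies the cube-structured set $\pi_\sim(\ns)$ with the nilspace $\nss$; since the nilspace axioms are purely statements about cube sets and are transported verbatim by a bijection matching them, $\pi_\sim(\ns)$ with the image cube sets is a nilspace, confirming that $\sim$ is a congruence.

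The only mild subtleties are the use of the $n=0$ case to derive point-surjectivity and the observation that a bijection of cube-structured sets transports the nilspace axioms; neither constitutes a real obstacle, and indeed the lemma should be read as a dictionary between two equivalent ways of describing nilspace quotients.
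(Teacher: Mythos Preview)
Your proposal is correct. The paper actually states this lemma without proof, treating it as immediate from the definitions; your argument is exactly the natural unpacking of Definition~\ref{def:cong} and cube-surjectivity that the authors leave implicit.
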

\noindent Fibrations are a very useful class of cube-surjective morphisms. Therefore, nilspace congruences $\sim$ whose quotient maps $\pi_\sim$ are guaranteed to be fibrations will be particularly convenient. A nilspace congruence does not necessarily have this property, because there exist cube-surjective morphisms that are not fibrations (see Remark \ref{rem:congfibr}). However, there is a more specific type of congruence on nilspaces, which does have the above property and which plays a major role in this paper. To formulate the definition we need a couple of additional notions.
\begin{defn}
Let $X$ be a set and let $\sim$ be an equivalence relation on $X$. We say that a map $g:X\to X$ is \emph{$\sim$-vertical} (or \emph{$\sim$-fiber-preserving over the identity map}) if we have $x\sim g(x)$ for every $x\in \ns$. We say that $g$ is \emph{$\sim$-consistent} (or that $\sim$ is \emph{$g$-consistent}) if for every $x,y\in \ns$ we have $x\sim y \Rightarrow g(x)\sim g(y)$.
\end{defn}
\noindent If $g$ is $\sim$-vertical then it is $\sim$-consistent; the converse  fails (e.g.\ congruences on abelian groups).

The notion of congruence that we are about to define uses an important algebraic feature of nilspaces, namely that every nilspace $\ns$ is naturally equipped with the action of its translation group $\tran(\ns)$. This enables us to use the following basic fact.
\begin{lemma}
Let $X$ be a set equipped with an action by a group $G$, let $\sim$ be an equivalence relation on $X$, and let 
\begin{equation}
G^{(\sim)}:=\{g\in G: g\textrm{ is $\sim$-vertical}\}.
\end{equation}
Then $G^{(\sim)}$ is a subgroup of $G$.
\end{lemma}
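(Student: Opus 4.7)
The lemma is a purely formal consequence of the three defining properties of an equivalence relation (reflexivity, symmetry, transitivity) together with the group axioms, so the plan is simply to verify the three subgroup conditions for $G^{(\sim)}$ one by one. No nilspace structure is used at all; only the fact that $G$ acts on $X$ by functions $X\to X$, composed in the usual way.

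First, I would check that the identity element $e\in G$ belongs to $G^{(\sim)}$: since $e$ acts as the identity map $\id_X$, reflexivity of $\sim$ gives $e(x)=x\sim x$ for every $x\in X$. Second, I would check closure under the group operation: given $g,h\in G^{(\sim)}$ and $x\in X$, apply $h$-verticality to get $h(x)\sim x$, then apply $g$-verticality to the point $h(x)$ to get $g(h(x))\sim h(x)$, and conclude $(gh)(x)=g(h(x))\sim x$ by transitivity. Third, I would verify closure under inverses: for $g\in G^{(\sim)}$ and $x\in X$, apply $g$-verticality to the point $g^{-1}(x)$, yielding $g^{-1}(x)\sim g(g^{-1}(x))=x$, where the final equivalence $g^{-1}(x)\sim x$ uses symmetry of $\sim$.

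There is no real obstacle here; the only thing to watch out for is the slight asymmetry in the definition of $\sim$-vertical (the condition $x\sim g(x)$ is applied to the appropriate point, not to $x$ itself, when handling inverses). Beyond that, the proof is a one-line verification of each of the three axioms, and can be written very compactly.
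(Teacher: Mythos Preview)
Your proof is correct and follows essentially the same approach as the paper: verify closure under products via transitivity (using $x\sim g_2 x\sim g_1g_2 x$) and closure under inverses by applying verticality at the point $g^{-1}(x)$. The paper omits the explicit identity check, but otherwise the arguments are identical.
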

\begin{proof}
For any $g_1,g_2\in G^{(\sim)}$, for any $x\in X$ we have $x\sim g_2 x\sim g_1g_2x$, so $x\sim g_1g_2x$, so $g_1g_2\in G^{(\sim)}$, and $x=g_1 (g_1^{-1} x)\sim g_1^{-1} x$, so $g_1^{-1}\in G^{(\sim)}$.
\end{proof}
Let us say that a group $H$ acting on $X$ is \emph{finer} (or that its action is finer) than an equivalence relation $\sim$ on $X$ if the orbit relation of $H$ (i.e.\ the equivalence relation whose classes are the orbits of $H$) is finer than $\sim$ (i.e.\ every orbit of $H$ is a subset of some equivalence class of $\sim$). Note that $H$ is finer than $\sim$ if and only if every $g\in H$ acts as a $\sim$-vertical map on $X$. Thus, the group $G^{(\sim)}$ is the largest subgroup of $G$ (relative to inclusion) that is finer than $\sim$.

We can now define the announced special type of congruence.
\begin{defn}[Groupable congruence]\label{def:groupable}
Let $\ns$ be a nilspace. A \emph{groupable congruence} on $\ns$ is an equivalence relation $\sim$ on $\ns$ that satisfies the following property:
\begin{eqnarray}\label{eq:groupable}
&\textrm{for every $i\geq 0$, for all $x,y\in \ns$ such that $x\sim y$ and $\pi_i(x)=\pi_i(y)$,}&\nonumber \\
&\textrm{there exists $g\in \tran(\ns)^{(\sim)}\cap \tran_{i+1}(\ns)$ such that $g\,x=y$.}&
\end{eqnarray}
\end{defn}
\noindent Note that if $\ns$ is $k$-step then \eqref{eq:groupable} holds trivially for $i\geq k$ (as then $\pi_i(x)=\pi_i(y)$ implies that $x=y$, so the required conclusion holds with $g=\id$, the identity map). Note also that the case $i=0$ of  \eqref{eq:groupable} implies that $\tran(\ns)^{(\sim)}$ acts transitively on each equivalence class of $\sim$, whence $\sim$ is equal to the orbit relation of the group $\tran(\ns)^{(\sim)}$ (this motivates the term ``groupable").

\begin{remark}\label{rem:noncongorbit}
An orbit relation of a subgroup of $\tran(\ns)$ is not necessarily a congruence. For instance, let $\ns:=\mc{D}_1(\mb{Z}_2)\times \mc{D}_2(\mb{Z}_2)$ and $\alpha\in \tran(\ns)$ be the map $(x,y)\mapsto (x,x+y)$ (this is a translation by Theorem \ref{thm:decrip-trans-group}). Then letting $G=\langle \alpha \rangle$, it turns out that the relation  $\sim$ induced by $G$ (i.e., $x\sim y$ iff there exists $\beta\in G$ such that $\beta(x)=y$) is not a congruence. To prove this, consider the cubes $\q_1,\q_2\in \cu^3(\ns)$ defined as $\q_1(v_1,v_2,v_3)=(1-v_3,0)$ and $\q_2(v_1,v_2,v_3)= (v_3,v_1v_2)$. Note that the cubes $\pi_\sim \co \q_1 $ and $\pi_\sim \co \q_2$ are adjacent (see \cite[Definition 3.1.6]{Cand:Notes1}) in the sense that $\pi_\sim \co \q_1(v_1,v_2,0)= \pi_\sim \co \q_2(v_1,v_2,1)$. Thus, by \cite[Lemma 3.1.7]{Cand:Notes1} we have that the function $\q_3\in (\pi_\sim(\ns))^{\db{3}}$ defined as $\q_3(v_1,v_2,v_3):=\pi_\sim(0,0)$ for $(v_1,v_2,v_3)\not=(1,1,0)$ and $\q_3(1,1,0)=\pi_\sim(0,1)$ should be in $\cu^3(\pi_\sim(\ns)):=\{\pi_\sim \co \q:\q\in \cu^3(\ns)\}$. It can be checked that $\q_3$ does not equal $\pi_\sim\co \q$ for any $\q\in \cu^3(\ns)$. Moreover, it is easy to check that $\tran(\ns)^{(\sim)} = G$. This shows that some condition on a subgroup $G$ of $\tran(\ns)$ (or the congruence that it generates) is needed to ensure that $\sim_G$ is a congruence.
\end{remark}

\noindent The next lemma is a fundamental fact about groupable congruences, and its proof is elementary. 

\begin{lemma}\label{lem:gpequivcong}
Let $\sim$ be a groupable congruence on a nilspace $\ns$. Then $\sim$ is a congruence and $\pi_{\sim}$ is a fibration from $\ns$ to the nilspace $\pi_{\sim}(\ns)$.
\end{lemma}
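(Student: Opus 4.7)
Both conclusions will follow from the key lifting property $(\ast)$: for every $n \ge 0$, every corner $c \in \cor^n(\ns)$, and every image cube $\q' \in \cu^n(\pi_\sim(\ns))$ with $\pi_\sim \co c = \q'|_{\db{n}\setminus\{1^n\}}$, there exists $\q \in \cu^n(\ns)$ extending $c$ such that $\pi_\sim \co \q = \q'$. To prove $(\ast)$, pick any $\tilde\q \in \cu^n(\ns)$ with $\pi_\sim\co\tilde\q=\q'$ (possible by the definition of the image cube sets), so that $\tilde\q(w)\sim c(w)$ for every $w\in\db{n}\setminus\{1^n\}$. Enumerate the corner vertices $v_1,\ldots,v_{2^n-1}$ in non-decreasing Hamming weight $|v|$ (so $v_1=0^n$), and construct inductively $\q_0:=\tilde\q, \q_1,\ldots,\q_{2^n-1}\in\cu^n(\ns)$ with $\pi_\sim\co\q_j=\q'$ and $\q_j(v_\ell)=c(v_\ell)$ for all $\ell\le j$; then $\q:=\q_{2^n-1}$ yields $(\ast)$.

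At step $j\ge 1$ put $i:=\max(|v_j|-1,0)$. For $j\ge 2$ (so $|v_j|\ge 1$), the restrictions $\pi_i\co\q_{j-1}|_{\{w\le v_j\}}$ and $\pi_i\co c|_{\{w\le v_j\}}$ are $(i+1)$-cubes in the $i$-step nilspace $\ns_i$ that agree on the $(i+1)$-corner $\{w\le v_j\}\setminus\{v_j\}$ by the induction hypothesis and the chosen ordering; by uniqueness of $(i+1)$-cube completion in an $i$-step nilspace this forces $\pi_i(\q_{j-1}(v_j))=\pi_i(c(v_j))$. (For $j=1$ this equality is automatic since $\ns_0$ is a singleton.) The groupable congruence property \eqref{eq:groupable} then yields $g_j\in\tran_{i+1}(\ns)\cap\tran(\ns)^{(\sim)}$ with $g_j\,\q_{j-1}(v_j)=c(v_j)$, and I define $\q_j(w):=g_j(\q_{j-1}(w))$ if $w\ge v_j$ componentwise, and $\q_j(w):=\q_{j-1}(w)$ otherwise.

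The main obstacle is verifying that $\q_j\in\cu^n(\ns)$. For $j=1$ this is immediate, since $\q_1=g_1\co\q_0$ and $g_1$ is a translation. For $j\ge 2$, after relabeling coordinates so that $v_j=(1^{|v_j|},0^{n-|v_j|})$, one checks that $\q_j$ coincides with $\langle\q_{j-1},\,g_j\co\q_{j-1}\rangle_{|v_j|}\co\phi$, where $\phi:\db{n}\to\db{n+|v_j|}$ sends $w\mapsto(w,w_1,\ldots,w_{|v_j|})$ and is a discrete cube morphism (each coordinate is a projection), and the $|v_j|$-arrow lies in $\cu^{n+|v_j|}(\ns)$ by the characterization of $\tran_{|v_j|}(\ns)$ in \cite[Lemma 3.2.32]{Cand:Notes1}; closure of cube sets under discrete cube morphisms then gives $\q_j\in\cu^n(\ns)$. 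That $\pi_\sim\co\q_j=\q'$ follows because $g_j$ is $\sim$-vertical, and $\q_j(v_\ell)=c(v_\ell)$ is preserved for $\ell<j$ because $v_\ell\not\ge v_j$ (otherwise the ordering forces $v_\ell=v_j$), while $\q_j(v_j)=c(v_j)$ by construction.

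From $(\ast)$ both parts of the lemma follow routinely. Closure under precomposition with discrete cube morphisms and ergodicity are trivial for the image cube sets. For corner completion in $\pi_\sim(\ns)$, given $c'\in\cor^n(\pi_\sim(\ns))$ I lift it to a corner $c\in\cor^n(\ns)$ vertex-by-vertex: in the same non-decreasing $|v|$ order, at each $v\ne 1^n$ apply $(\ast)$ to the already-constructed $|v|$-corner $c|_{\{w\le v\}\setminus\{v\}}$ together with the $|v|$-cube $c'|_{\{w\le v\}}$ (a cube in $\pi_\sim(\ns)$ because $c'\in\cor^n(\pi_\sim(\ns))$ and $v\ne 1^n$); this makes each $c|_{\{w\le v\}}$ a cube in $\ns$, and by the composition axiom $c|_F$ is then a cube for every face $F\subseteq\db{n}\setminus\{1^n\}$ (each such $F$ lies inside $\{w\le\max F\}$). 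Completing $c$ to a cube in $\ns$ and projecting yields a completion of $c'$. Finally, when $\ns$ is $k$-step, uniqueness of $(k+1)$-cube completion in $\pi_\sim(\ns)$ follows from $(\ast)$: any two completions in $\pi_\sim(\ns)$ of the same $c'\in\cor^{k+1}(\pi_\sim(\ns))$ lift to two completions in $\ns$ of a common corner, which must coincide by the $k$-step property of $\ns$. Hence $\sim$ is a congruence and $\pi_\sim$ is a fibration.
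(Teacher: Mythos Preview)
Your proof is correct and uses essentially the same technique as the paper: an induction on Hamming weight in which, at each vertex $v$, unique completion in the $(|v|-1)$-step factor $\ns_{|v|-1}$ yields the hypothesis $\pi_{|v|-1}(\cdot)=\pi_{|v|-1}(\cdot)$ needed to invoke \eqref{eq:groupable}, and the resulting $\sim$-vertical element of $\tran_{|v|}(\ns)$ is applied on the upper face $\{w\ge v\}$ (your arrow-and-cube-morphism justification of $\q_j\in\cu^n(\ns)$ is just a more explicit version of the paper's ``multiplying on an upper face of codimension $|v|$''). The only difference is organizational: you isolate the fibration-type lifting property $(\ast)$ first and derive both the completion axiom and the fibration from it, whereas the paper runs the same induction twice, once to glue lower-face lifts into a corner and once to prove the fibration.
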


\begin{proof}
To prove that $\pi_{\sim}$ is a congruence, we need to show that $\pi_{\sim}(\ns)$, equipped with the cube sets $\{\pi_{\sim}\co\q : \q\in \cu^n(\ns)\}$, $n\in\mb{N}$, is a nilspace. The ergodicity and composition axioms are clear. To prove the completion axiom, let $\q'$ be an $n$-corner on $\pi_{\sim}(\ns)$, and for each $i\in [n]$ let $F_i$ denote the lower face $\{v\in \db{n}:v\sbr{i}=0\}$. Then for each $i$, by definition there is a cube $\q_{F_i}\in \cu^{n-1}(\ns)$ such that $\pi_{\sim}\co \q_{F_i} = \q'|_{F_i}$ (where by an abuse of notation we identify $F_i$ and $\db{n-1}$). We now argue by induction on the height $|v|$ of $v\in \db{n}$ to show that the cubes $\q_{F_i}$ can be modified, by applying $\sim$-vertical maps to their vertices, in order to obtain cubes $\q_i$ that agree at every $w\leq v$ (where by ``agree at $w$" we mean that for any $i,j\in [n]$ such that $w\in F_i\cap F_j$, we have $\q_i(w)=\q_j(w)$). For $v=0^n$, by \eqref{eq:groupable}, for each $i\in [2,n]$ there is $g_i\in \tran(\ns)$ such that the $(n-1)$-cube $g_i^{F_i}\q_{F_i}:= v\mapsto g\cdot \q_{F_i}(v)$ satisfies $g^{F_i}\q_{F_i}(0^n)=\q_{F_1}(0^n)$. Thus, relabeling these cubes $g_i^{F_i}\q_{F_i}$ as our new $\q_i$ for $i\in [2,n]$, and relabeling $\q_{F_1}$ as $\q_1$, we now have that all $\q_i$ agree at $0^n$. Now let $v\in \db{n}$ with  $|v|>0$ and suppose by induction that the $\q_i$ all agree at any $w < v$, i.e.\ at any $w\in \db{n}$ with $w\sbr{j}\leq v\sbr{j}$ for all $j\in [n]$ and $w\neq v$ (i.e.\ for any such $w$, for all $i\in [n]$ such that  $w\in F_i$ the value $\q_i(w)$ is the same). Let $I_v\subset [n]$ be the set of all $i$ such that $v\in F_i$ (note that $I_v=\{i\in [n]: v\sbr{i}=0\}$). Let $i_0=\min I_v$. Then for every $i>i_0$ in $I_v$, since $\q_i(w)=\q_{i_0}(w)$ for every $w<v$, we have $\pi_{|v|-1}\co\q_i(v)=\pi_{|v|-1}\co\q_{i_0}(v)$, so by \eqref{eq:groupable} there exists a $\sim$-vertical $g_{i,v}\in \tran_{|v|}(\ns)$ such that $g_{i,v} \q_i(v) = \q_{i_0}(v)$. Also, since $g_{i,v}\in \tran_{|v|}(\ns)$, we can multiply every value $\q_i(u)$ at any vertex $u\in F_i$ with $v\leq u$, and still thus get an $(n-1)$-cube (because we are multiplying on an upper face of codimension $|v|$ by an element of $\tran_{|v|}(\ns)$). Doing this for each $i\in I_v$ greater than $i_0$, we upgrade our set of cubes $\q_i$ to agree now also at $v$, with value $\q_{i_0}(v)$ at this vertex, and note that this upgrade preserves the agreement of the $\q_i$ at every other vertex $w$ with $|w|\leq |v|$ previously upgraded. This completes the inductive step. 

The above induction eventually ``glues" adequately all the initial cubes $\q_{F_i}$ to yield an $n$-corner $\q''$ on $\ns$ with $\pi_{\sim}\co \q'' = \q'$. Then, by completing $\q''$ in $\ns$ and projecting this completion to $\pi_\sim(\ns)$, the completion axiom follows.

To complete the proof we establish that $\pi_\sim$ is a fibration. Let $\q'$ be an $n$-corner on $\ns$ and let $\q\in \cu^n(\ns/\!\sim)$ be a completion of $\pi_\sim\co \q'$. We know by definition of the cube structure on $\ns/\!\sim$ that there is a cube $\q_0\in\cu^n(\ns)$ such that $\pi_\sim\co \q_0=\q$. We now want to move progressively the values $\q_0(v)$, $v\neq 1^n$, by $\sim$-vertical transformations, to get a cube $\tilde \q\in \cu^n(\ns)$ completing $\q'$; then $\pi_\sim \co \tilde \q$ will still be equal to $\q$, and this will complete the proof that $\pi_{\sim}$ is a fibration. 

To get $\tilde \q$ we can argue as we did above. First, since $\q_0(0^n)\sim \q'(0^n)$, there is a translation $g$ such that if we multiply every value of $\q_0$ by $g$ then we obtain a new cube $\q_1$ that is pointwise $\sim$equivalent to $\q_0$ and such that  $\q_1(0^n)=\q'(0^n)$. Relabel $\q_1$ as $\q_0$. Now let $|v|>0$ and suppose by induction that $\q_0(w)=\q'(w)$ for all $w$ with $|w|<|v|$, thus in particular for all $w<v$. Then we deduce again that $\pi_{|v|-1}\co \q_0(v)=\pi_{|v|-1}\co\q'(v)$ and so by \eqref{eq:groupable} there is a $\sim$-vertical $g\in \tran_{|v|}(\ns)$ such that $g\q_0(v)= \q'(v)$. Applying $g$ to $\q_0(u)$ for every $u\in\db{n}$ with $v\leq u$, we upgrade $\q_0$ to a new cube $\q_1$ that now also agrees with $\q'$ at $v$ (and still agrees at any other $w$ with $|w|\leq |v|$ previously upgraded). The result follows.
\end{proof}
\noindent Recall from \cite[Definition 3.2.27]{Cand:Notes1} that given a nilspace $\ns$, the group of translations $\tran(\ns)$ is naturally endowed with a filtration $\tran_\bullet(\ns)=(\tran_i(\ns))_{i\ge 0}$, which is of degree $k$ if $\ns$ is $k$-step. Let us also say that a filtered group $(G,G_\bullet)$ is a \emph{filtered subgroup} of another filtered group $(H,H_\bullet)$ if $G_i\leq H_i$ for all $i\geq 0$. We shall also say that $G_\bullet$ is a \emph{subfiltration} of $H_\bullet$. When we only write ``$G_\bullet$",  the associated filtered group $(G,G_\bullet)$ is always meant to be the one with $G$ being the first term of this filtration $G_\bullet$, i.e.\ $G=G_0=G_1$. Finally, whenever we have an action of a group $G$ on a set $X$, we denote the orbit relation of $G$ on $X$ by $\sim_G$.

We have seen that a groupable congruence $\sim$ on $\ns$ is equal to the orbit relation of the subgroup $G=\tran(\ns)^{(\sim)}$ of $\tran(\ns)$, where the filtration $G_\bullet$ induced by $\tran_\bullet(\ns)$ on $G$ (i.e.\ the filtration with $i$-th term $G_i=G\cap \tran_i(\ns)$) satisfies the following property, implied by \eqref{eq:groupable}:  
\begin{equation}\label{eq:ft-fil}
\textrm{if $x,y\in \ns$ satisfy $x \sim_G y$ and $\pi_i(x)=\pi_i(y)$ for some $i\geq 0$,  then $x\sim_{G_{i+1}} y$.}
\end{equation}
It can be useful to treat groupable congruences from the viewpoint of subfiltrations   $G_\bullet$ of $\tran_\bullet(\ns)$ satisfying property \eqref{eq:ft-fil}. This property tells us that each subgroup $G_i$ in the filtration acts transitively on each fiber of $\pi_{i-1}$ intersected with a $G$-orbit. Let us capture this as follows. 
\begin{defn}[Fiber-transitive filtrations and groups]\label{def:ft-fil}
Let $\ns$ be a nilspace and let $G_\bullet$ be a subfiltration of $\tran_\bullet(\ns)$. We say that $G_\bullet$ is a \emph{fiber-transitive filtration on} $\ns$ (or that $(G,G_\bullet)$ is a \emph{fiber-transitive} filtered group on $\ns$) 
if it satisfies property \eqref{eq:ft-fil}. A subgroup $\Gamma$ of $\tran(\ns)$ will be said to be \emph{fiber-transitive on} $\ns$ if the filtration $(\Gamma\cap \tran_i(\ns))_{i\geq 0}$ is fiber-transitive on $\ns$.
\end{defn}
\noindent Clearly, if $G_\bullet$ is fiber-transitive on $\ns$ then $\sim_G$ is a groupable congruence on $\ns$ (where, as mentioned earlier, we always take $G=G_0$). Conversely, given a groupable congruence $\sim$ on $\ns$, the group $G:=\tran(\ns)^{(\sim)}$ equipped with the filtration induced by $\tran_\bullet(\ns)$ is fiber-transitive. However, it may be the case that $\Gamma$ induces a groupable congruence $\sim_\Gamma$ but $\Gamma$ (with filtration $\Gamma_\bullet:=(\Gamma\cap \tran_i(\ns))_{i\in[k]}$) is not fiber-transitive. For example let $F:=\mc{D}_1(\mb{R})\times \mc{D}_3(\mb{R})$ and $\Gamma:=\{ \alpha_r \}_{r\in \mb{R}}$ where $\alpha_r(x,y)=(x,y+r(x^2+1))$. This group is not fiber-transitive. Indeed we have $(0,0)\sim_\Gamma (0,1)$ (using $\alpha_1$) and $\pi_2(0,0)=\pi_2(0,1)$, but the only translation of height 3 that could take $(0,0)$ to $(0,1)$, namely the translation $(x,y)\mapsto (x,y+1)$, is not in $\Gamma$. On the other hand, the relation $\sim_\Gamma$ is simply given by $(x,y)\sim_\Gamma (x',y')$ if and only if $x=x'$ (to see this, fix any $(x_0,y_0)\in F$ and note that $\alpha_r(x_0,y_0)=(x_0,y_0+r(x_0^2+1))$ and thus, as $r\in \mb{R}$ the class of $(x_0,y_0)$ is $(x_0,\mb{R})$). But this relation is a groupable congruence. Indeed, the group $\Gamma':=\{(x,y)\mapsto (x,y+r):r\in \mb{R}\}$ is fiber-transitive and $\sim_\Gamma = \sim_{\Gamma'}$.

Given a groupable congruence $\sim$, the group $\tran(\ns)^{(\sim)}$ is the largest subfiltration of $\tran_\bullet(\ns)$  (relative to inclusion of each subgroup) that yields the given congruence. However, there may be other fiber-transitive filtrations on $\ns$ that generate the same groupable congruence. In some situations, a desired property of this congruence can be established by working with some such filtration rather than some other; this will be discussed and illustrated in Remark \ref{rem:different-filtr}.

Our next aim is to detail some algebraic and topological properties of groupable congruences and fiber-transitive filtrations. Before this, we record some remarks on these notions.

\begin{remark}\label{rem:cosnilex}
Important (and particularly simple) examples of nilspaces that arise as quotients by groupable congruences are \emph{coset nilspaces}. Recall (e.g.\ from  \cite[Proposition 2.3.1]{Cand:Notes1}) that a $k$-step coset nilspace $\ns$ is constructed starting with a filtered group $(G,G_\bullet)$ of degree $k$ and a subgroup $\Gamma\subset G$, and letting $\ns$ be the set $G/\Gamma = \pi_\Gamma(G)$ of left cosets of $\Gamma$ in $G$ equipped with the cube sets $\pi_\Gamma^{\db{n}}(\cu^n(G_\bullet))$, $n\geq 0$. Then the orbit relation $\sim_\Gamma$ (where elements of $\Gamma$ act by right-multiplication on $G$) is a groupable congruence on the \emph{group nilspace}  $\wt{\ns}$ associated with $(G,G_\bullet)$. Indeed if $g,g'\in G$ satisfy $g'= g\gamma$ for some $\gamma\in \Gamma$ and $g G_{i+1} = g' G_{i+1}$ (i.e.\ $\pi_i(g)=\pi_i(g')$ in the group nilspace), then clearly $\gamma\in G_{i+1}$, so the translation on $\wt{\ns}$ consisting in right-multiplication by $\gamma$ is already in $\tran_{i+1}(\wt{\ns})$, and therefore \eqref{eq:groupable} holds. In Section \ref{sec:DC} we shall see a more general example of groupable congruences, involving \emph{double}-coset spaces.
\end{remark}

\begin{remark}\label{rem:congfibr}
The following example shows that not all cube-surjective morphisms are fibrations, thus confirming the fact mentioned earlier, that not every congruence induces a fibration.
Let $\ns=\mc{D}_1(\mb{Z}_2)\times \mc{D}_2(\mb{Z}_2)$, and let $\mb{F}_2^\omega=\bigoplus_{n=1}^\infty \mb{F}_2$. It can be proved that there exists a map $f:\mc{D}_1(\mb{F}_2^\omega)\to \ns$ that is arbitrarily well-balanced, meaning that for every $b>0$ there exists $N_b>0$ such that for any $n\ge N_b$ the map $f_n:\mc{D}_1(\mb{F}_2^n)\to \ns$, $v\mapsto f(v\times 0^{\mb{N}\setminus [n]})$ is $b$-balanced in the sense of  \cite[Definition 5.1]{CSinverse} (this is a non-trivial fact about nilspaces, but we omit the details in this paper as we will avoid such examples).

Let $\sim$ be the equivalence induced by $f$, i.e.\ for $x,y\in \mc{D}_1(\mb{F}_2^\omega)$ we have $x\sim y$ $\Leftrightarrow$ $f(x)=f(y)$. Then $\sim$ is a nilspace congruence and $\mc{D}_1(\mb{F}_2^\omega)/\!\!\sim$ is isomorphic to the nilspace $\ns$. Indeed, first note that the map $f_\sim:\mc{D}_1(\mb{F}_2^\omega)/\!\!\sim\, \to \ns$, $\pi_\sim(x) \mapsto f(x)$ is well-defined and injective. We claim that for all $n\ge 0$ the map $f^{\db{n}}:\cu^n(\mc{D}_1(\mb{F}_2^\omega))\to \cu^n(\ns)$ is surjective (hence in particular $f_\sim$ is surjective, hence bijective). Let us denote by $i_n:\mc{D}_1(\mb{F}_2^n)\to \mc{D}_1(\mb{F}_2^\omega)$ the usual inclusion. As $\ns$ is finite, so is $\cu^m(\ns)$ for any $m\ge 0$. The Haar probability measure $\mu_{\cu^m(\ns)}$ is then the normalized counting measure. Fixing any $\q\in\cu^m(\ns)$, we therefore have $\mu_{\cu^m(\ns)}(\{\q\})\gg_{m} 1$. Then, by definition of balance, there is $N_{\q}>0$ such that for $n\geq N_{\q}$ the pushforward measure $\nu:=\mu_{\cu^n(\mc{D}_1(\mb{F}_2^n))}\co (f\co i_n)^{-1}$ is sufficiently close to $\mu_{\cu^m(\ns)}$ in the vague topology to be able to conclude that $\nu(\{\q\})>0$, which implies the claimed surjectivity of $f^{\db{n}}$. Hence $f$ is  a cube-surjective morphism. Now note that $f_\sim:\pi_\sim(x)\mapsto f(x)$ is a nilspace a morphism, and its inverse is a morphism as well, because for any $\q\in \cu^m(\ns)$, by the cube-surjectivity we know that there exists $\q^*\in \cu^m(\mc{D}_1(\mb{F}_2^\omega))$ such that $f\co \q^*=\q$. But $f$ factors through $\mc{D}_1(\mb{F}_2^\omega)/\sim$, $f=f_\sim \co \pi_\sim$ and as $f_\sim$ is a bijection we have that $f_\sim^{-1}\co \q = \pi_\sim \co \q^*\in \cu^m(\mc{D}_1(\mb{F}_2^\omega)/\sim)$. We have thus proved that $f$ is a cube-surjective morphism. However $f$ cannot be a fibration, because the image nilspace of a fibration is always of step at most as large as the step of its domain nilspace, and here  $\mc{D}_1(\mb{F}_2^\omega)$ is 1-step whereas $\ns$ is 2-step.
\end{remark}

\begin{remark}\label{rem:addgpcongprops}
There are several additional pleasant features of groupable congruences, though we shall not directly use them in this paper. For example, it can be shown that if $\sim$ is a groupable congruence on a nilspace $\ns$, then every equivalence class of $\sim$ is a coset nilspace. More precisely, since $\pi_{\sim}$ is a fibration, it already follows from basic results (e.g.\ \cite[Lemma 3.2]{CGSS}) that every fiber of $\pi_{\sim}$ is a sub-nilspace  $\nss$ of $\ns$, and it can then be shown using \eqref{eq:groupable} that $G:=\tran(\ns)^{(\sim)}$ has $\cu^n(G_\bullet)$ acting transitively on every cube set $\cu^n(\nss)$ (where $G_\bullet$ is the filtration on $G$ induced by $\tran_\bullet(\ns)$). 
\end{remark}

\subsection{Algebraic aspects of groupable congruences and fiber-transitive filtrations}\hfill\smallskip\\
Given a $k$-step nilspace $\ns$, recall the homomorphisms $\eta_j:\tran(\ns)\to\tran(\ns_j)$, $j\in [k]$, from \eqref{eq:etaj}.
\begin{lemma}\label{lem:group-cong-to-factors}
If $H_\bullet$ is a fiber-transitive filtration on a $k$-step nilspace $\ns$, then for every $j\ge 0$ the filtration $\eta_j(H_\bullet):=(\eta_j(H_i))_{i\ge 0}$ is fiber-transitive on $\ns_j$. Equivalently, every groupable congruence $\sim$ on $\ns$ induces a groupable congruence $\sim_{(j)}$ on $\ns_j$ defined by $\pi_j(x)\sim_{(j)}\pi_j(y) \Leftrightarrow \exists x'\sim x$, $\pi_j(x')=\pi_j(y)$.
\end{lemma}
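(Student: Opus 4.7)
The plan is to verify directly that $\eta_j(H_\bullet)$ is a subfiltration of $\tran_\bullet(\ns_j)$ satisfying property \eqref{eq:ft-fil}, and then to check that the equivalence-relation reformulation in the lemma is just a restatement of this in terms of orbit relations. First I would note that $\eta_j:\tran(\ns)\to\tran(\ns_j)$ is a filtered group homomorphism, in the sense that $\eta_j(\tran_i(\ns))\subseteq \tran_i(\ns_j)$ for every $i\ge 0$ (this is immediate from the definition \eqref{eq:etaj}, since if $\alpha\in \tran_i(\ns)$ then the commuting translation $\alpha_j$ on $\ns_j$ preserves all $m$-corners-with-two-cubes-agreeing-on-an-$i$-face, as these structures lift through $\pi_j$). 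Hence $\eta_j(H_\bullet)=(\eta_j(H_i))_{i\ge 0}$ is a filtration of the subgroup $\eta_j(H)$ of $\tran(\ns_j)$.

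Next I would verify the fiber-transitivity property \eqref{eq:ft-fil} for $\eta_j(H_\bullet)$ on $\ns_j$, splitting into cases according to the relative size of $i$ and $j$. For $i\ge j$ the claim is trivial: since $\ns_j$ is at most $j$-step, the factor map $\pi_i^{\ns_j}:\ns_j\to (\ns_j)_i$ is injective (in fact the identity), so $\pi_i^{\ns_j}(\bar x)=\pi_i^{\ns_j}(\bar y)$ forces $\bar x=\bar y$ and one may take $g=\id\in \eta_j(H_{i+1})$. The only real content is in the case $i<j$. Here, given $\bar x,\bar y\in \ns_j$ with $\bar x\sim_{\eta_j(H)}\bar y$ and $\pi_i^{\ns_j}(\bar x)=\pi_i^{\ns_j}(\bar y)$, one picks $x\in \pi_j^{-1}(\bar x)$ and writes $\bar y=\eta_j(h)\bar x$ for some $h\in H$; setting $x':=hx$ gives $\pi_j(x')=\bar y$, and the compatibility $\pi_i^{\ns}=\pi_i^{\ns_j}\co \pi_j$ yields $\pi_i^{\ns}(x)=\pi_i^{\ns}(x')$. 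Applying the assumed fiber-transitivity of $H_\bullet$ on $\ns$ then produces $g\in H_{i+1}$ with $gx=x'$, so $\eta_j(g)\in \eta_j(H_{i+1})$ and $\eta_j(g)\bar x=\pi_j(gx)=\pi_j(x')=\bar y$, as required.

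Finally, for the ``equivalently" part, I would observe that a groupable congruence $\sim$ on $\ns$ equals the orbit relation $\sim_H$ of $H=\tran(\ns)^{(\sim)}$ (by the remark following Definition \ref{def:ft-fil}) and that, by definition of $\eta_j$, the orbit relation $\sim_{\eta_j(H)}$ on $\ns_j$ is given by $\bar x \sim_{\eta_j(H)}\bar y \iff \exists\, h\in H$ with $\pi_j(hx)=\bar y$ for some (equivalently, every) lift $x$ of $\bar x$. Unpacking this, one sees this is exactly the relation $\sim_{(j)}$ defined in the lemma (where ``$\exists\, x'\sim x$ with $\pi_j(x')=\pi_j(y)$" just says that some $H$-translate of a lift of $\bar x$ lifts $\bar y$), and the first part of the lemma shows $\sim_{(j)}$ is a groupable congruence.

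The only step that requires a little attention is the case $i<j$ of the fiber-transitivity check, which is the main computational content; everything else is unpacking definitions and using that $\eta_j$ is a filtered homomorphism. I do not anticipate any real obstacle.
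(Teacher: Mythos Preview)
Your proposal is correct and follows essentially the same approach as the paper. The only cosmetic difference is that the paper reduces to the case $j=k-1$ (and then argues by induction on $k$), choosing arbitrary lifts $x,y$ of $\bar x,\bar y$ and adjusting $x$ by an element of $\ab_k$ so that $g(x+z)=y$; your version picks $x$ and defines $x':=hx$ as the lift of $\bar y$ directly, which avoids invoking the commutation with $\ab_k$ and works uniformly for all $j$ via the compatibility $\pi_i^{\ns}=\pi_i^{\ns_j}\co\pi_j$.
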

\begin{proof}
By induction it suffices to prove this for $j=k-1$. If $\pi_i(x)=\pi_i(y)$ and $g_{k-1}\pi_{k-1}(x)=\pi_{k-1}(y)$ we have $gx+z=y$ for some $z$ in the $k$-th structure group $\ab_k$ of $\ns$. The translation $g$ commutes with the action of $\ab_k$ (by \cite[Lemma 3.2.37]{Cand:Notes1}). Hence, $g(x+z)=y$. Applying the hypothesis,  we get that there exists $g'\in H_{i+1}$ such that $g'(x+z)=y$ and thus $g'_{k-1}\pi_{k-1}(x)=\pi_{k-1}(y)$ where $g'_{k-1}\in \eta_{k-1}(H_{i+1})$. 
\end{proof}
\noindent Given a fiber-transitive filtration $H_\bullet$ on $\ns$, we denote by $\pi_H$ the quotient map for the groupable congruence $\sim_H$ induced by $H=H_0$.

We mentioned in Remark \ref{rem:addgpcongprops} that the fibers of a groupable congruence are coset nilspaces. The following lemma details this for fiber-transitive filtrations, and describes the structure groups of the quotient nilspace.
\begin{lemma}\label{lem:cong-equi}
Let $\ns$ be a $k$-step nilspace, with structure groups $\ab_i$, $i\in [k]$, and let $H_\bullet$ be a fiber-transitive filtration on $\ns$. Then for each $i\in[k]$ the $i$-th structure group of $\pi_H(\ns)$ is $\ab_i/\eta_i(H_i)$. Moreover, for every $n\geq 0$, for each $\q\in\cu^n(\ns)$ the group $\cu^n(H_\bullet)$ acts transitively on $\{\q'\in\cu^n(\ns): \pi_H \co \q' = \pi_H \co \q\}$.
\end{lemma}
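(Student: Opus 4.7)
My plan for part (1) is to identify $\pi_H(\ns)_i$ with $\pi_{\eta_i(H)}(\ns_i)$, and then compute the top structure group of the latter. For the identification, the fibration $\pi_H$ composed with the canonical projection $\pi_H(\ns)\to \pi_H(\ns)_i$ factors through $\pi_i^{\ns}$ by the universal property of factors, yielding a fibration $(\pi_H)_i\colon\ns_i\to \pi_H(\ns)_i$. I would then show that the congruence on $\ns_i$ induced by $(\pi_H)_i$ coincides with $\sim_{\eta_i(H)}$: one inclusion is immediate, and for the reverse, given $(\pi_H)_i(\pi_i x)=(\pi_H)_i(\pi_i y)$, I would use the cube-lifting property of the fibration $\pi_H$ to lift an $(i+1)$-cube witnessing $\pi_H(x)\sim_i \pi_H(y)$ in $\pi_H(\ns)$ back to an $(i+1)$-cube in $\ns$ whose appropriate vertices $x_*,y_*$ satisfy $x_*\sim_H x$, $y_*\sim_H y$, and $x_*\sim_i y_*$. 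Writing $x_*=\alpha x$, $y_*=\alpha'y$ with $\alpha,\alpha'\in H$ and projecting by $\pi_i$ then gives $\eta_i(\alpha)(\pi_i x)=\eta_i(\alpha')(\pi_i y)$, i.e.\ $\pi_i x\sim_{\eta_i(H)}\pi_i y$.

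Given this identification, since $\ns_i$ is $i$-step, its top structure group $\ab_i$ equals $\tran_i(\ns_i)$; thus $\eta_i(H_i)$ sits inside $\ab_i$ as an additive subgroup. The $\pi_{i-1}$-fibers of $\ns_i$ are precisely the $\ab_i$-orbits, and two points $x_0+z$, $x_0+z'$ of such an orbit are $\sim_{\eta_i(H)}$-equivalent iff, by the fiber-transitivity condition \eqref{eq:ft-fil} of $\eta_i(H_\bullet)$ at index $i-1$ (which holds by Lemma \ref{lem:group-cong-to-factors}), a witnessing translation can be found in $\eta_i(H_i)\subseteq\ab_i$, i.e.\ iff $z'-z\in\eta_i(H_i)$. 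Hence the $\pi_{i-1}$-fiber in $\pi_{\eta_i(H)}(\ns_i)$ is $\ab_i/\eta_i(H_i)$, giving the claimed $i$-th structure group of $\pi_H(\ns)$.

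For part (2), I would construct $c\in\cu^n(H_\bullet)$ with $c\cdot\q=\q'$ by a vertex-by-vertex induction on the height $|v|$. Starting from $c=\id$, I process the vertices of $\db{n}$ in order of increasing height. Assuming inductively that $(c\cdot\q)(w)=\q'(w)$ for every $|w|<m$, fix $v$ with $|v|=m$. Both $c\cdot\q$ and $\q'$ are cubes agreeing on $\{w<v\}$, and by uniqueness of $|v|$-cube completion in the $(|v|-1)$-step factor $\ns_{|v|-1}$ applied to the subcube $\{u\leq v\}$, this forces $\pi_{m-1}(c\cdot\q(v))=\pi_{m-1}(\q'(v))$. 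Since $c$ acts vertex-wise by elements of $H$, $\pi_H\co(c\cdot\q)=\pi_H\co\q=\pi_H\co\q'$, so $c\cdot\q(v)\sim_H\q'(v)$. The fiber-transitivity condition \eqref{eq:ft-fil} at index $m-1$ then yields $\gamma_v\in H_m$ with $\gamma_v(c\cdot\q(v))=\q'(v)$. I update $c$ by left-multiplying with the upper-face cube $\gamma_v^{F_v}$ where $F_v=\{u\in\db{n}:u\geq v\}$ has codimension $m$; since $\gamma_v\in H_m$, this upper-face element is a standard generator of $\cu^n(H_\bullet)$. The update fixes the value at $v$ and leaves $(c\cdot\q)(u)$ unchanged for all $u\not\geq v$, in particular at every previously-processed vertex and at every other vertex of height $m$.

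The main delicate step is the identification $\pi_H(\ns)_i\cong\pi_{\eta_i(H)}(\ns_i)$ in part (1), which requires the full cube-lifting property of the fibration $\pi_H$ (and not merely the $H$-orbit description of $\sim_H$-classes). Once this is in place, the structure-group computation reduces to an orbit count using the fiber-transitivity hypothesis, and the inductive construction in part (2) goes through provided one orders vertices by height and tracks that upper-face adjustments preserve cube membership in $\cu^n(H_\bullet)$ while not disturbing previously-fixed vertices.
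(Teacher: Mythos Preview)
Your proposal is correct. For the structure-group claim, both you and the paper reduce to computing the top structure group of $\pi_{\eta_i(H)}(\ns_i)$ via the fiber-transitivity condition; you are more explicit than the paper about justifying the identification $\pi_H(\ns)_i\cong\pi_{\eta_i(H)}(\ns_i)$ (the paper invokes it tacitly when passing to $\ns_{k-1}$ in its induction on $k$), and your use of the fibration property of $\pi_H$ to establish the reverse inclusion of congruences is the right tool here.

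For the transitivity claim your route is genuinely different from the paper's. The paper argues by induction on $k$: project $\q,\q'$ to $\ns_{k-1}$, use the inductive hypothesis to find $\q_0\in\cu^n(\eta_{k-1}(H_\bullet))$ aligning them there, lift $\q_0$ to $\tilde\q_0\in\cu^n(H_\bullet)$, and then observe that the residual difference $\q'-\tilde\q_0\cdot\q$ lies in $\cu^n(\mc{D}_k(H_k))$ by the fiber-transitivity condition at level $k-1$. Your vertex-by-vertex construction instead mirrors the corner-gluing argument used in the proof of Lemma~\ref{lem:gpequivcong}: it is more elementary (no induction on the step, no need to lift cubes from $\cu^n(H_\bullet/H_k)$ to $\cu^n(H_\bullet)$) and makes the role of \eqref{eq:ft-fil} at each height completely explicit. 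The paper's approach is shorter and more structural, and reuses the same inductive skeleton for both halves of the lemma; yours is self-contained and arguably easier to verify line by line. Both are fine.
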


\begin{proof}
 We already know by Lemma \ref{lem:gpequivcong} that $\pi_H$ is a fibration. 
 
 For the claim about the structure groups, note that by induction on $k$ it suffices to prove it for $i=k$. Indeed, for $i<k$, by the previous lemma $\eta_{k-1}(H_\bullet)$ is fiber-transitive on $\ns_{k-1}$, so by induction the $i$-th structure group of $\pi_{\eta_{k-1}(H)}(\ns_{k-1})$ is $\ab_i(\ns_{k-1})/\eta_i(\eta_{k-1}(H_i))=\ab_i(\ns)/\eta_i(H_i)$ as required. Now for $i=k$, the defining property \eqref{eq:ft-fil} clearly implies that when we quotient by $\sim_H$, in each $\pi_{k-1}$-fiber (which is an orbit of $\ab_k$) we quotient by the action of $H_k=\eta_k(H_k)$ (note that $\eta_k$ is the identity). Therefore the $k$-th structure group of $\ns$ is $\ab_k / H_k$ as required. 
 
For the last claim in the lemma it suffices to prove that if two cubes $\q,\q'\in \cu^n(\ns)$ satisfy $\pi_H\co\q'=\pi_H\co\q$ then there exists $\tilde\q\in \cu^n(H_\bullet)$ such that $\q'=\tilde\q\cdot \q$ (i.e.\ for every $v\in \db{n}$, the value $\q'(v)$ is the image of $\q(v)$ under the action of $\tilde\q(v)$). This can again be proved by induction on $k$: the $\pi_{k-1}$-images of $\q,\q'$ are pointwise equivalent under $\pi_{\eta_{jk-1}(H)}$, so by induction there is a cube $\q_0\in \cu^n(H_\bullet/H_k)$ such that $\pi_{k-1}\co \q' = \q_0\cdot \pi_{k-1}\co \q$. Then any lift $\tilde \q_0$ of $\q_0$ in $ \cu^n(H_\bullet)$ satisfies $\pi_{k-1}\co \q' = \pi_{k-1}\co (\tilde \q_0\cdot \q)$, and then the difference $ \q' -\tilde \q_0\cdot \q$ is a cube $\q''\in \cu^n(\mc{D}_k(\ab_k))$ which, by the assumption of point-wise $\pi_H$-equivalence of $\q,\q'$, must in fact be in $\cu^n(\mc{D}_k(H_k))$,  by \eqref{eq:ft-fil}. Setting $\tilde \q=\q''\cdot\tilde\q_0$, the result follows.
\end{proof}
\noindent Let us detail the relation between groupable congruences and fiber-transitive filtrations.
\begin{defn}\label{def:closure-of-fib-tran-group}
Given a fiber-transitive filtration $H_\bullet$ on a nilspace $\ns$, we denote by $\compl{H}$ the group $\tran(\ns)^{(\sim_H)}$ of all translations that are $\sim_H$-vertical, and call this the \emph{fiber-transitive closure} of $H$. We shall always take the filtration $\compl{H}_\bullet$ on this group to be the filtration induced by $\tran_\bullet(\ns)$ (i.e.\ with $i$-term ${\compl{H}}\cap \tran_i(\ns)$ for $i\ge 0$). Note that $H\subset {\compl{H}}$ and that $\compl{H}_\bullet$ is fiber-transitive on $\ns$.
\end{defn}
\noindent The group $\compl{H}$ may be larger than $H$. For example, let $\ns=\mc{D}_1(\mb{Z})\times \mc{D}_2(\mb{Z})$ and $H=\langle \alpha,\beta\rangle$ where $\alpha(x,y)=(x+2,y)$ and $\beta(x,y)=(x,y+2)$. The filtration $\langle \alpha,\beta\rangle= \langle \alpha,\beta\rangle \ge \langle \beta\rangle \ge \langle \id\rangle$ is a fiber-transitive filtration. Indeed, it is easy to check that $\pi_H(\ns)$ is isomorphic as a nilspace to $\mc{D}_1(\mb{Z}_2)\times \mc{D}_2(\mb{Z}_2)$. However, note that for example $\gamma(x,y):=(x,y+2x)$ is $\sim_H$-vertical but it is not in $H$, i.e.\ $\gamma\in \compl{H}\setminus H$.  

While ${\compl{H}}$ may be larger than $H$, it is natural to expect that these two groups yield isomorphic nilspace quotients of $\ns$. We formalize this as follows.
\begin{defn}[Equivalent filtrations on a nilspace]
Let $\ns$ be a $k$-step nilspace. We say that two subfiltrations $H^0_\bullet, H^1_\bullet$ of $\tran_\bullet(\ns)$ are \emph{equivalent} if for each $i\in\{0,1\}$, for every $\q\in \cu^n(\ns)$ and every $d\in \cu^n(H^i_\bullet)$, there exists $d^*=d^*_{\q,d}\in \cu^n(H^{1-i}_\bullet)$ such that $d\cdot \q = d^*\cdot \q$. 
\end{defn}
Note the key point here that $d^*$ may depend on both $\q$ and $d$.  For example, the filtration $\compl{H}_\bullet$ is equivalent to $H_\bullet$ (as we shall prove below). As mentioned earlier for groupable congruences, the filtration ${\compl{H}}_\bullet$ is the largest subfiltration of $\tran_\bullet(\ns)$ that is equivalent to $H_\bullet$.
\begin{lemma}\label{lem:ft-iff}
Let $\ns$ be a $k$-step nilspace and let $H^1_\bullet, H^2_\bullet$ be equivalent subfiltrations on $\ns$. Then $H^1_\bullet$ is fiber-transitive if and only if $H^2_\bullet$ is fiber-transitive.
\end{lemma}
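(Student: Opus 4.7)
The plan is to argue by a symmetric reduction: it suffices to assume $H^1_\bullet$ is fiber-transitive and deduce that $H^2_\bullet$ is. First, I would apply the equivalence property at dimension $n=0$: noting that $\cu^0(H^j_\bullet)=H^j$ and that $0$-cubes on $\ns$ are just points, the condition reads that for any $x\in\ns$ and any $d\in H^1$ there is $d^*\in H^2$ with $d\cdot x=d^*\cdot x$ (and symmetrically), yielding the orbit equality $\sim_{H^1_0}\,=\,\sim_{H^2_0}$. Hence, given $x,y\in\ns$ with $x\sim_{H^2_0}y$ and $\pi_i(x)=\pi_i(y)$, fiber-transitivity of $H^1_\bullet$ supplies some $h\in H^1_{i+1}$ with $hx=y$.

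The next step is to transfer this witness via the equivalence at dimension $i+1$. I would form the indicator cube $d\in\cu^{i+1}(H^1_\bullet)$ with $d(1^{i+1})=h$ and $d(v)=\id$ otherwise (a standard Host--Kra cube, valid because $h\in H^1_{i+1}$), and let $\q\equiv x$ be the constant $(i+1)$-cube in $\cu^{i+1}(\ns)$. Then $d\cdot\q$ equals $x$ at every vertex except $1^{i+1}$, where it equals $y$. The equivalence of $H^1_\bullet$ and $H^2_\bullet$ then delivers some $d^*\in\cu^{i+1}(H^2_\bullet)$ with $d^*\cdot\q=d\cdot\q$, so that $d^*(v)\cdot x=x$ for every $v\neq 1^{i+1}$ and $d^*(1^{i+1})\cdot x=y$. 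In particular, the corner $c:=d^*|_{\db{i+1}\setminus\{1^{i+1}\}}$ takes all its values in the stabilizer $S:=H^2\cap \stab(x)$.

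The principal obstacle is then to extract a single translation $g\in H^2_{i+1}$ with $gx=y$ out of the cube $d^*$. My approach would be to re-complete the corner $c$ inside the induced filtration $S_\bullet:=(S\cap H^2_j)_{j\ge 0}$ on $S$. By a standard inheritance property of Host--Kra cubes, any cube of $H^2_\bullet$ whose values all lie in a subgroup $S$ is automatically a cube of the induced filtration $S_\bullet$ (the cube conditions being commutator/filtration conditions inherited by subgroups and detectable through the standard parametrization of HK cubes). Thus $c$ is in particular an $(i+1)$-corner of $S_\bullet$ and admits some completion $e\in\cu^{i+1}(S_\bullet)\subseteq\cu^{i+1}(H^2_\bullet)$ with $e(1^{i+1})\in S\subseteq \stab(x)$. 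The pointwise product $d^*\cdot e^{-1}\in\cu^{i+1}(H^2_\bullet)$ is then the identity at every vertex except $1^{i+1}$, so the standard indicator-cube characterization forces $g:=d^*(1^{i+1})\,e(1^{i+1})^{-1}\in H^2_{i+1}$; since $e(1^{i+1})\in \stab(x)$, one computes $gx=d^*(1^{i+1})\cdot e(1^{i+1})^{-1}\cdot x=d^*(1^{i+1})\cdot x=y$, as required.
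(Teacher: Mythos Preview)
Your proof is correct and takes a genuinely different route from the paper's. Both arguments open the same way: transfer an indicator cube across the equivalence and examine the resulting cube $d^*$ (the paper's $s$) acting on the constant cube at $x$. They diverge at the extraction step.

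The paper first runs an induction on $k$ to reduce to $i=k-1$, so that the target group $H^1_k$ sits inside $\tran_k(\ns)\cong \ab_k(\ns)$ and consists of additions by structure-group elements; it then writes $s=g_{2^k-1}^{F_{2^k-1}}\cdots g_0^{F_0}$ via the canonical Host--Kra decomposition and checks vertex by vertex that each $g_j$ with $j<2^k-1$ fixes $x$, forcing the top generator $g_{2^k-1}\in H^1_k$ to be exactly the addition by $z=y-x$. Your argument instead stays at level $i+1$ and packages the same phenomenon into the inheritance fact ``a Host--Kra cube of $H^2_\bullet$ with all values in a subgroup $S$ lies in $\cu^n(S_\bullet)$ for the induced filtration'' (which is indeed correct: in the canonical decomposition $\q=\prod_I g_I^{F_I}$ each $g_I$ is a word in the values $\q(v)$, so $\q(v)\in S$ for all $v$ forces $g_I\in S\cap H^2_{|I|}=S_{|I|}$). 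This lets you complete the corner inside the stabilizer $S=H^2\cap\stab(x)$ and peel off $g\in H^2_{i+1}$ via the indicator-cube characterization, with no induction on $k$ needed. In effect, the paper's ``each $g_j$ fixes $x$'' computation is exactly the inheritance property unpacked by hand in the special case $i=k-1$; your formulation is more conceptual and slightly shorter, at the cost of invoking inheritance as a black box rather than deriving it in situ.
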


\begin{proof}
Without loss of generality assume that $H^2_\bullet$ is fiber-transitive. We have to prove that if $\alpha(x)=y$ for some $\alpha\in H^1$ and $\pi_i(x)=\pi_i(y)$ then there exists $\gamma\in H^1_{i+1}$ such that $\gamma(x)=y$. Let us see first that by induction on $k$ we can assume that $i=k-1$. Note that $\eta_{k-1}(H_\bullet^1)$ is equivalent to $\eta_{k-1}(H_\bullet^2)$, so by induction $\eta_{k-1}(H_\bullet^1)$ is  fiber-transitive on $\ns_{k-1}$. Since $\eta_{k-1}(\alpha)(\pi_{k-1}(x)) = \pi_{k-1}(y)$ and $\pi_i(\pi_{k-1}(x))=\pi_i(\pi_{k-1}(y))$, it follows that there exists $\beta_{k-1}\in \eta_{k-1}(H^1_{i+1})$ such that $\beta_{k-1}(\pi_{k-1}(x)) = \pi_{k-1}(y)$. If $\beta_{k-1}=\eta_{k-1}(\beta)$ we have that for some $t\in \ab_k(\ns)$, $\beta(x)=y+t=\alpha(x)+t$. Since $\ab_k$ is central in $\tran(\ns)$, we have $x-t=\beta^{-1}\alpha(x)$. If now we define $y':=x-t$ note that $\pi_{k-1}(y')=\pi_{k-1}(x)$ and that $y'=\beta^{-1}\alpha(x)$ where $\beta^{-1}\alpha\in H^1$. Hence, if we knew that the result was true for $i=k-1$ we could conclude from here that there exists $h\in H^1_k$ such that $x+h=y'$. But this would imply that $x+h = \beta^{-1}\alpha(x)$ and so $\beta(x+h)=\alpha(x)=y$. As the function $\beta(\cdot+t)\in H^1_{i+1}$, this would complete the proof.

Thus assume that $x,y\in\ns$ satisfy $\alpha(x)=y$ for some $\alpha\in H^1$ and $\pi_{k-1}(x)=\pi_{k-1}(y)$. We need to prove that $y-x\in H^1_k$. By the equivalence of $H^1_\bullet$ and $H^2_\bullet$, there exists $\beta\in H^2$ such that $\beta(x)=y$. As $H^2_\bullet$ satisfies \eqref{eq:ft-fil} we have $z:=y-x\in H^2_k$. Let $\q_x\in\cu^k(\ns)$ be the constant cube with value $x\in \ns$ and let $d\in\cu^k(H^2_\bullet)$ be the cube such that $d(v)=0$ for all $v\not=1^k$ and $d(1^k)=z$. By the assumed equivalence, there exists $s\in\cu^k(H^1_\bullet)$ such that $s\co \q_x = d+\q_x$. By \cite[Lemma 2.2.5]{Cand:Notes1} we can write $s$ as $g_{2^k-1}^{F_{2^k-1}}\cdots g_1^{F_1}g_0^{F_0}$ where $g_i\in H^1_{\codim(F_i)}$ (here the order is the inverse as the one in \cite[Lemma 2.2.5]{Cand:Notes1}, but as $\cu^k(H^1_\bullet)$ is a group, we can just take the inverse). From this expression it is  readily seen that $g_0$ stabilizes $x$. In fact, evaluating at $v=0^k$ we see that $s(\q_x(0^k))=g_0(x)=x=(d+\q_x)(0^k)$. Then we repeat evaluating at points $v\in \db{k}$ which are all $0$s and just one element equal to $1$. This shows that $g_1,\ldots,g_k$ also stabilizes $x$. We repeat this process for $v\in \db{k}$ with two non-zero elements, then three, etc..

In the last step of this process we have the fact that $g_{2^k-1}(x)=x+z$ (where $F_{2^k-1}=\{1^k\}$). Since $H^1$ is of degree $k$, we have that $g_{2^k-1}$ is just addition by an element of the last structure group, so it has to be the element $z$. Hence $z\in H^1_k$.
\end{proof}

\begin{corollary}\label{cor:same-last-str-gr}
Let $\ns$ be a $k$-step nilspace, and let $H^1_\bullet,H^2_\bullet$ be equivalent fiber-transitive filtrations on $\ns$. Then for every $j\in [k]$, the $j$-th groups in $\eta_j(H^1_\bullet)$ and $\eta_j(H^2_\bullet)$ are equal.
\end{corollary}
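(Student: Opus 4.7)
The plan is to prove $\eta_j(H^1_j) \subseteq \eta_j(H^2_j)$; the reverse inclusion will then follow by swapping the roles of $H^1$ and $H^2$. The argument will closely mirror the final paragraph of the proof of Lemma \ref{lem:ft-iff}, exploiting equivalence of the two filtrations via a single ``indicator-type'' cube supported at the top corner $1^j$ of $\db{j}$.

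Fix $j \in [k]$, any $\alpha \in H^1_j$, and any $x \in \ns$. I will let $\q_x \in \cu^j(\ns)$ denote the constant cube at $x$, and take $d \in \cu^j(H^1_\bullet)$ to be the Host--Kra generator $\alpha^{\{1^j\}}$, i.e.\ the cube with $d(1^j) = \alpha$ and $d(v) = \id$ elsewhere; this is a legitimate element of $\cu^j(H^1_\bullet)$ since $\{1^j\}$ is an upper face of $\db{j}$ of codimension $j$ and $\alpha \in H^1_j$, the very convention used when decomposing $s = g_{2^k-1}^{F_{2^k-1}}\cdots g_0^{F_0}$ at the end of the proof of Lemma \ref{lem:ft-iff}. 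Applying the equivalence of $H^1_\bullet$ and $H^2_\bullet$ to the pair $(\q_x, d)$ will produce $d^* \in \cu^j(H^2_\bullet)$ with $d \cdot \q_x = d^* \cdot \q_x$; evaluating at $v = 1^j$ then yields $d^*(1^j)(x) = \alpha(x)$. Setting $\beta_0 := d^*(1^j) \in H^2$ and using $\alpha \in \tran_j(\ns)$ to get $\pi_{j-1}(\beta_0(x)) = \pi_{j-1}(\alpha(x)) = \pi_{j-1}(x)$, the fiber-transitivity of $H^2_\bullet$ (property \eqref{eq:ft-fil} with $i = j - 1$) will produce $\beta \in H^2_j$ with $\beta(x) = \alpha(x)$.

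With this $\beta$ in hand, applying $\pi_j$ to $\alpha(x) = \beta(x)$ gives $\eta_j(\alpha)(\pi_j(x)) = \eta_j(\beta)(\pi_j(x))$. Since $\ns_j$ is $j$-step, both $\eta_j(\alpha)$ and $\eta_j(\beta)$ lie in $\tran_j(\ns_j)$, which is naturally identified with the $j$-th structure group $\ab_j$ acting freely on each $\pi_{j-1}$-fiber of $\ns_j$. Agreement at the single point $\pi_j(x)$ therefore forces $\eta_j(\alpha) = \eta_j(\beta) \in \eta_j(H^2_j)$, completing the desired inclusion. The only step that demands genuine attention is the verification that $\alpha^{\{1^j\}}$ really belongs to $\cu^j(H^1_\bullet)$ under the Host--Kra cube-group convention used throughout the paper; this is the same generator convention already exploited in the proof of Lemma \ref{lem:ft-iff}, so no real obstacle arises, and in particular no induction on $k$ or on $j$ is required.
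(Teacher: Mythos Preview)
Your proof is correct and follows essentially the same approach as the paper's: build the indicator cube $\alpha^{\{1^j\}}$ on the constant cube, invoke equivalence to get $d^*\in\cu^j(H^2_\bullet)$, use fiber-transitivity to upgrade $d^*(1^j)$ to some $\beta\in H^2_j$, and conclude via the free action of $\tran_j(\ns_j)\cong\ab_j$. The only organizational difference is that the paper first reduces to the case $j=k$ (implicitly by passing to the factor $\ns_j$, on which $\eta_j(H^1_\bullet),\eta_j(H^2_\bullet)$ remain equivalent fiber-transitive filtrations), whereas you carry out the argument directly for arbitrary $j$ using $j$-cubes in $\ns$ and then project; this avoids the reduction step but is otherwise the same idea.
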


\begin{proof}
Note that it is enough to assume that $\ns$ is $k$-step and prove that $H^0_k=H^1_k$. Let $z\in H^0_k$ and any $x\in \ns$. Let $\q\in \cu^k(\ns)$ be the constant map equal to $x$, and let $d\in \cu^k(H^0_k)$ be the map whose values equal $\id$ in every point except in $v=1^k$, where it equals $z$ (recall that $\tran_k(\ns)$ equals the $k$th structure group of $\ns$). By definition of equivalence, there exists $d^*\in \cu^k(H^1_\bullet)$ such that $d\cdot \q=d^*\cdot \q$. Thus $d^*(1^k)(x)=d(1^k)(x)=x+z$ and $\pi_{k-1}(x)=\pi_{k-1}(x+z)$. As $H^1_\bullet$ is fiber-transitive, there exists $z'\in H_k^1$ such that $x+z'=x+z$. Hence $z=z'$ and we have that $H_k^0\subset H_k^1$. The opposite inclusion is proved similarly and the result follows.
\end{proof}

\begin{lemma}\label{lem:eq-sub-implies-eq-nil}
Let $\ns$ be a $k$-step nilspace, and let $H^1_\bullet,H^2_\bullet$ be equivalent filtrations on $\ns$.  If any of these filtrations is fiber-transitive, then so is the other and $\pi_{H^1}(\ns)\cong \pi_{H^2}(\ns)$.
\end{lemma}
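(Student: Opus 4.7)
The fiber-transitivity part is already settled by Lemma \ref{lem:ft-iff}, so the essential content is the isomorphism of the quotient nilspaces. My plan is to argue that the orbit relations $\sim_{H^1}$ and $\sim_{H^2}$ on $\ns$ in fact coincide, after which the isomorphism of the quotient nilspaces becomes a formality.

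The key step will be to show $\sim_{H^1} \,=\, \sim_{H^2}$. Fix any $x\in \ns$ and any $\alpha\in H^1$, and let $\q\in\cu^1(\ns)$ be the constant $1$-cube with value $x$, and $d\in\cu^1(H^1_\bullet)$ the constant $1$-cube with value $\alpha$ (constants from $H^1_0=H^1$ are automatically in $\cu^1(H^1_\bullet)$). By the equivalence hypothesis applied with $i=1$, there exists $d^*\in \cu^1(H^2_\bullet)$ such that $d\cdot \q=d^*\cdot \q$. Evaluating at $0\in\db{1}$ gives $d^*(0)\cdot x = \alpha\cdot x$ with $d^*(0)\in H^2_0=H^2$. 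Hence the $H^1$-orbit of $x$ is contained in its $H^2$-orbit. The symmetric argument gives the reverse inclusion, so $\sim_{H^1}$ and $\sim_{H^2}$ agree.

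Once orbit equality is in hand, define $\phi:\pi_{H^1}(\ns)\to\pi_{H^2}(\ns)$ by $\pi_{H^1}(x)\mapsto \pi_{H^2}(x)$. This is a well-defined bijection of the underlying sets because the two equivalence relations coincide, and it satisfies $\phi\co \pi_{H^1}=\pi_{H^2}$. Since $\cu^n(\pi_{H^{i}}(\ns))=\{\pi_{H^i}\co \q:\q\in\cu^n(\ns)\}$ by definition of the quotient cube structure, applying $\phi^{\db{n}}$ sends $\cu^n(\pi_{H^1}(\ns))$ bijectively onto $\cu^n(\pi_{H^2}(\ns))$. Thus $\phi$ is a nilspace isomorphism.

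I do not anticipate a serious obstacle here; the proof is essentially a one-line application of the equivalence definition with $n=1$ plus the tautological comparison of the two quotient cube structures. The only point requiring care is to choose $n$ and a cube pair $(\q,d)$ for which the equivalence relation yields an honest equality $\beta\cdot x=\alpha\cdot x$ at the point level (rather than only at some vertex buried inside a higher-dimensional cube). Using constant cubes in dimension $1$ and evaluating at $0$ avoids this issue entirely.
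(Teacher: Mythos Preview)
Your proof is correct and follows essentially the same route as the paper. The paper is simply more terse: it invokes the equivalence hypothesis directly to assert that for $\alpha\in H^1$ with $\alpha(x)=y$ there exists $\beta\in H^2$ with $\beta(x)=y$, which is exactly the orbit-equality step you spell out with constant $1$-cubes (indeed $n=0$ would already suffice), and then defines the same map $\pi_{H^1}(x)\mapsto\pi_{H^2}(x)$ and checks it is a nilspace isomorphism via the quotient cube structures.
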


\begin{proof}
By Lemma \ref{lem:ft-iff} we have that if either $H^1_\bullet$ or $H^2_\bullet$ is fiber-transitive so is the other. Thus both quotients $\pi_{H^1}(\ns)$ and $\pi_{H^2}(\ns)$ define nilspaces. We prove that the map $\pi_{H^1}(\ns) \to \pi_{H^2}(\ns)$, $\pi_{H^1}(x)\mapsto \pi_{H^2}(x)$ is a well-defined nilspace isomorphism. To see that it is well-defined, note that if $\pi_{H^1}(x)=\pi_{H^1}(y)$ then there exists $\alpha\in H^1$ such that $\alpha(x)=y$. By hypothesis, there exists $\beta\in H^2$ such that $y=\alpha(x)=\beta(x)$. Hence $\pi_{H^2}(x)=\pi_{H^2}(y)$. This map is a morphism because given a cube $\pi_{H^1}\co \q$ for some $\q\in\cu^n(\ns)$ we have that its image is $\pi_{H^2}\co \q\in\cu^n(\pi_{H^2}(\ns))$. Taking inverses it follows that this map is a nilspace isomorphism.
\end{proof}

\begin{proposition}
Let $\ns$ be a $k$-step nilspace, let $H_\bullet$ be a fiber-transitive filtration on $\ns$, and let ${\compl{H}}$ be the fiber-transitive closure of $H$. Then $\pi_H(\ns) \cong \pi_{{\compl{H}}}(\ns)$.
\end{proposition}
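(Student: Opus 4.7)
The plan is to deduce this directly from Lemma \ref{lem:eq-sub-implies-eq-nil} by showing that $H_\bullet$ and $\compl{H}_\bullet$ are equivalent filtrations in the sense defined just before Lemma \ref{lem:ft-iff}. Both filtrations are already known to be fiber-transitive (by hypothesis for $H_\bullet$, and by the definition of $\compl{H}$ for $\compl{H}_\bullet$), so once we establish equivalence, Lemma \ref{lem:eq-sub-implies-eq-nil} yields the desired isomorphism $\pi_H(\ns) \cong \pi_{\compl{H}}(\ns)$ at once.

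To verify equivalence, observe first that the inclusion $H \subseteq \compl{H}$ (which comes from the definition of fiber-transitive closure) makes one half of the equivalence condition trivial: given $\q \in \cu^n(\ns)$ and $d \in \cu^n(H_\bullet)$, we have $d \in \cu^n(\compl{H}_\bullet)$ since for each $i$, $H_i \subseteq \compl{H}_i$, so we may take $d^* = d$. The essential direction is therefore: for any $\q \in \cu^n(\ns)$ and any $d \in \cu^n(\compl{H}_\bullet)$, produce some $d^* \in \cu^n(H_\bullet)$ with $d\cdot \q = d^*\cdot \q$.

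For this, I would set $\q' := d\cdot \q$ and first observe two things. On one hand, $\q' \in \cu^n(\ns)$ because translations preserve cubes. On the other hand, for each vertex $v \in \db{n}$, the translation $d(v)$ lies in $\compl{H} = \tran(\ns)^{(\sim_H)}$, so by definition $d(v)$ is $\sim_H$-vertical, whence $\q'(v) \sim_H \q(v)$ for every $v$. This shows that $\pi_H \co \q' = \pi_H \co \q$. Now invoke the transitivity part of Lemma \ref{lem:cong-equi}, applied to the fiber-transitive filtration $H_\bullet$: the group $\cu^n(H_\bullet)$ acts transitively on the set $\{\q'' \in \cu^n(\ns) : \pi_H \co \q'' = \pi_H \co \q\}$. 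Therefore there exists $d^* \in \cu^n(H_\bullet)$ with $\q' = d^* \cdot \q$, so $d \cdot \q = d^* \cdot \q$, as required.

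Having thus shown that $H_\bullet$ and $\compl{H}_\bullet$ are equivalent, the conclusion follows by Lemma \ref{lem:eq-sub-implies-eq-nil}. I do not anticipate any real obstacle here: the argument is essentially a two-line application of the previously established transitivity of $\cu^n(H_\bullet)$ on $\pi_H$-fibers of $n$-cubes, combined with the fact that $\sim_H$ and $\sim_{\compl{H}}$ coincide by construction of $\compl{H}$.
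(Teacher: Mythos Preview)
Your proposal is correct and follows the same overall strategy as the paper: both reduce the isomorphism to showing that $H_\bullet$ and $\compl{H}_\bullet$ are equivalent filtrations, then invoke Lemma \ref{lem:eq-sub-implies-eq-nil}. The only difference is in how the non-trivial direction of equivalence is established. The paper re-proves it by induction on $k$ (passing to $\ns_{k-1}$, using $\eta_{k-1}(\compl{H})\subset \compl{\eta_{k-1}(H)}$, lifting, and correcting by an element of $\cu^n(\mc{D}_k(H_k))$), whereas you simply cite the transitivity clause of Lemma \ref{lem:cong-equi}, which was already proved earlier by precisely that inductive argument. Your version is thus a clean streamlining: the paper's inductive proof here is a near-verbatim repetition of the last part of the proof of Lemma \ref{lem:cong-equi}, and you avoid that redundancy.
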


\begin{proof}
By Lemmas \ref{lem:ft-iff} and \ref{lem:eq-sub-implies-eq-nil} it suffices to see that the filtrations are equivalent. The only non-trivial part is to prove that if $\q\in\cu^n(\ns)$ and $d\in\cu^n({\compl{H}})$ then there exists $d^*\in \cu^n(H)$ such that $d\co \q = d^*\co \q$.

We prove this by induction on $k$, with the case $k=0$ being trivial. Note that $\pi_{k-1}\co \q\in \cu^n(\ns_{k-1})$ and $\eta_{k-1}(d)\in\cu^n(\eta_{k-1}({\compl{H}}))$. It is easy to see that $\eta_{k-1}({\compl{H}})\subset \compl{\eta_{k-1}(H)}$. Hence, by induction we have that there exists $d_{k-1}^*\in \cu^n(\eta_{k-1}(H))$ such that $\eta_{k-1}(d) \co (\pi_{k-1}\co \q) = d_{k-1}^*\co \pi_{k-1}\co \q$. Therefore, if $d_{k-1}^* = \eta_{k-1}\co d^*$ for some $d^*\in \cu^n(H)$ we get that $\pi_{k-1}(d\co \q)=\pi_{k-1}(d^*\co \q)$. But by definition of ${\compl{H}}$ we have that for every $v\in \db{n}$, $d(\q(v))\sim_H d^*(\q(v))$. Since $H_\bullet$ is fiber-transitive, this implies that $d(\q(v))-d^*(\q(v))\in H_k$ for all $v\in \db{n}$ and so $d\co\q-d^*\co \q\in \cu^n(\mc{D}_k(H_k))$. Therefore $d\co \q = d^*\co \q+(d\co\q-d^*\co \q)$ which is the composition of $\q$ with the cube $d^*\co (\cdot)+(d\co\q-d^*\co \q)\in \cu^n(H_\bullet)$.
\end{proof}
\begin{corollary}\label{cor:equivalent-filtrations}
Let $\ns$ be a $k$-step nilspace and let $H_\bullet$ be a fiber-transitive filtration on $\ns$. Let $H'_\bullet$ be the filtration with $i$-th term $H\cap \tran_i(\ns)$ for each $i\geq 0$. Then $\pi_H(\ns)\cong \pi_{H'}(\ns)$.
\end{corollary}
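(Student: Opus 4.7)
The plan is to reduce this to the preceding proposition together with the observation that the underlying congruence on $\ns$ depends only on the $0$-th term of the filtration, not on its higher terms.

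First I would note that since $H_\bullet$ and $H'_\bullet$ share the same zeroth (and first) group $H=H_0=H'_0$, the orbit relations $\sim_H$ and $\sim_{H'}$ on $\ns$ coincide. Consequently, the fiber-transitive closures $\compl{H}=\tran(\ns)^{(\sim_H)}$ and $\compl{H'}=\tran(\ns)^{(\sim_{H'})}$ are equal as filtered groups (both being the unique largest subfiltration of $\tran_\bullet(\ns)$ consisting of $\sim_H$-vertical translations, with filtration induced by $\tran_\bullet(\ns)$; recall Definition \ref{def:closure-of-fib-tran-group}).

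Next I would verify that $H'_\bullet$ is itself a fiber-transitive filtration on $\ns$, so that the preceding proposition applies to it. The inclusion $H_i\subseteq H\cap\tran_i(\ns)=H'_i$ holds for all $i\ge 0$, because $H_i\subseteq H_1=H$ by the filtration property and $H_i\subseteq\tran_i(\ns)$ by the subfiltration assumption. Given $x,y\in\ns$ with $x\sim_{H'}y=\sim_H y$ and $\pi_i(x)=\pi_i(y)$, fiber-transitivity of $H_\bullet$ supplies some $g\in H_{i+1}\subseteq H'_{i+1}$ with $gx=y$, confirming \eqref{eq:ft-fil} for $H'_\bullet$.

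Finally I would apply the preceding proposition twice: once to $H_\bullet$, yielding $\pi_H(\ns)\cong \pi_{\compl{H}}(\ns)$; and once to $H'_\bullet$, yielding $\pi_{H'}(\ns)\cong \pi_{\compl{H'}}(\ns)=\pi_{\compl{H}}(\ns)$. Chaining these isomorphisms gives $\pi_H(\ns)\cong \pi_{H'}(\ns)$. No step here looks to be a serious obstacle: the only potential pitfall is the verification of fiber-transitivity for $H'_\bullet$, but this is immediate from the inclusion of filtrations noted above, and everything else is a direct invocation of the preceding proposition.
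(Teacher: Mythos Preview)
Your proof is correct and follows exactly the approach the paper implies by placing this as a corollary of the preceding proposition: verify that $H'_\bullet$ is fiber-transitive (which you do via the inclusion $H_i\subseteq H'_i$), observe that $\compl{H}=\compl{H'}$ since $\sim_H=\sim_{H'}$, and apply the proposition twice. One could note that since $\pi_H$ and $\pi_{H'}$ are literally the same quotient map (both being $\pi_{\sim_H}$) and the cube sets are defined as images under this map, the two quotient nilspaces are in fact \emph{equal}, not merely isomorphic; but your route via $\compl{H}$ is the one naturally suggested by the paper's organization.
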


\subsection{Topological aspects of groupable congruences and fiber-transitive filtrations}\hfill\smallskip\\
The main goal in this subsection is to find convenient topological properties that can be required of a groupable congruence (equivalently, a fiber-transitive filtration $H_\bullet$) on a Lie-fibered nilspace $\ns$, in order to ensure that the corresponding quotient of $\ns$ is also a Lie-fibered nilspace. Recall that by Theorem \ref{thm:LFnstransLie}, for all $i\in[k]$ the group $\tran_i(\ns)$ is Lie.
\begin{proposition}
Let $\ns$ be a $k$-step Lie-fibered nilspace. Then for every $i\in [k]$ the homomorphism $\eta_i:\tran(\ns)\to \tran(\ns_i)$ from \eqref{eq:etaj} is continuous.\footnote{Here it is convenient to assume that $\ns$ is Lie-fibered, so that by Theorem \ref{thm:trans-group-polish} we know that $\tran(\ns)$ is a topological (Polish) group. This is not clear if we just assume $\ns$ to be an \textsc{lch} nilspace.}
\end{proposition}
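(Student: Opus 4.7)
The plan is to reduce the proposition to the already-established Lemma \ref{lem:h-is-cont} (the case $i=k-1$) by factoring $\eta_i$ as a composition of continuous homomorphisms of that form. First I would observe that for each $j\in[k]$, the canonical factor $\ns_j$ is itself a $j$-step Lie-fibered nilspace: by Theorem \ref{thm:factor-of-lch-nil} it is an \textsc{lch} nilspace, and its structure groups $\ab_1,\ldots,\ab_j$ coincide with the first $j$ structure groups of $\ns$, all of which are abelian Lie by assumption. Hence, for each pair $i<j\leq k$, the analogue of \eqref{eq:etaj} applied to $\ns_j$ produces a homomorphism $\eta_{i,j}:\tran(\ns_j)\to\tran(\ns_i)$, defined by $\eta_{i,j}(\beta)\co \pi^{(j)}_i = \pi^{(j)}_i\co \beta$, where $\pi^{(j)}_i:\ns_j\to\ns_i$ is the canonical factor map between the two Lie-fibered factors.

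The key step is the factorization
\[
\eta_i \;=\; \eta_{i,i+1}\co\eta_{i+1,i+2}\co\cdots\co\eta_{k-1,k},
\]
which I would verify by a short diagram chase: given $\alpha\in\tran(\ns)$, applying $\pi^{(k)}_{k-1}$ to the identity $\alpha \co \id = \id \co \alpha$ on $\ns$ yields $\eta_{k-1,k}(\alpha)$; applying $\pi^{(k-1)}_{k-2}$ to the defining identity for this new translation yields $\eta_{k-2,k-1}(\eta_{k-1,k}(\alpha))$; iterating down to the projection $\pi^{(i+1)}_i$ gives a translation on $\ns_i$ satisfying the defining equation of $\eta_i(\alpha)$, and by uniqueness of the induced translation these agree.

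Each factor $\eta_{j-1,j}:\tran(\ns_j)\to\tran(\ns_{j-1})$ is precisely the homomorphism treated in Lemma \ref{lem:h-is-cont}, applied to the Lie-fibered nilspace $\ns_j$ rather than $\ns$, so each $\eta_{j-1,j}$ is continuous with respect to the compact-open topologies on the translation groups. Continuity of $\eta_i$ therefore follows as a composition of continuous maps between Polish topological groups (recall Theorem \ref{thm:trans-group-polish}).

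I do not expect any genuine obstacle: the proposition is essentially a packaging of Lemma \ref{lem:h-is-cont} iterated along the tower $\ns\to\ns_{k-1}\to\cdots\to\ns_i$. The only point warranting care is the factorization identity, which depends on the compatibility $\pi^{(j)}_i\co\pi^{(k)}_j = \pi_i$ of the canonical projections and the uniqueness of induced translations on each factor. An alternative route, should one prefer to avoid the factorization, is to mimic the proof of Lemma \ref{lem:h-is-cont} verbatim, replacing $\pi_{k-1}$ by $\pi_i$ throughout and lifting compacts from $\ns_i$ to $\ns$ by iterated application of Corollary \ref{cor:cpct-sets-are-im-of-cpct-sets}; but the compositional approach is cleaner.
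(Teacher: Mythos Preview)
Your proposal is correct and takes essentially the same approach as the paper: reduce to the case $i=k-1$ by factoring $\eta_i$ through the tower of canonical projections, and handle that base case via Lemma \ref{lem:h-is-cont}. The paper phrases the reduction as an induction on $k$ (writing $\eta_i$ as the composition of $\eta_{k-1}$ on $\tran(\ns)$ with $\eta_i$ on $\tran(\ns_{k-1})$) and then re-proves the $i=k-1$ case by the same sequence argument as Lemma \ref{lem:h-is-cont}, so your version---citing that lemma directly---is if anything slightly more economical.
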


\begin{proof}
It suffices to prove this for $i=k-1$, because for $i<k-1$ the homomorphism $\eta_i$ is the composition of $\eta_{k-1}$ on $\tran(\ns)$ with the map $\eta_i$ on $\tran(\ns_{k-1})$, and the latter map can be assumed to be continuous by induction on $k$. Let $(\alpha_n)_{n\in \mb{N}}$ be a convergent sequence in $\tran(\ns)$ with limit $\alpha$ (which is in $\tran(\ns)$, since $\tran(\ns)$ is closed by Lemma \ref{lem:tran-i-closed-subset}). Suppose for a contradiction that $\eta_{k-1}(\alpha_n)\not\to\eta_{k-1}(\alpha)$, so there exists a compact set $K\subset \ns_{k-1}$ and $\epsilon_0>0$ such that $\sup_{\pi_{k-1}(x)\in K}d_{\ns_{k-1}}(\eta_{k-1}(\alpha_n)(\pi_{k-1}(x)),\eta_{k-1}(\alpha)(\pi_{k-1}(x))) > \epsilon$ for infinitely many $n$. Passing to a subsequence if necessary, we can assume that this inequality holds for all $n$. In particular, for each $n$ there exists $\pi_{k-1}(x_n)\in K$ such that
\begin{equation}\label{eq:convfail}
d_{\ns_{k-1}}(\eta_{k-1}(\alpha_n)(\pi_{k-1}(x_n)),\eta_{k-1}(\alpha)(\pi_{k-1}(x_n))) > \epsilon.
\end{equation} 
By compactness of $K$, and passing to a subsequence if necessary, we can assume that there exists $x\in \ns$ such that $\pi_{k-1}(x_n)\to \pi_{k-1}(x)$ as $n\to\infty$. By Lemma \ref{lem:conv-quot} there exists a sequence $z_n\in \ab_k(\ns)$ such that $x_n+z_n\to x$ in $\ns$. Since $\ns$ is \textsc{lch}, the evaluation map is continuous (see e.g.\ \cite[Theorem 46.10]{Mu}), so $\alpha_n(x_n+z_n)\to \alpha(x)$ as $n\to\infty$. By continuity of $\pi_{k-1}$, it follows that $\eta_{k-1}(\alpha_n)(\pi_{k-1}(x_n)) = \eta_{k-1}(\alpha_n)(\pi_{k-1}(x_n+z_n)) = \pi_{k-1} (\alpha_n (x_n+z_n))    \to   \pi_{k-1} (\alpha x) = \eta_{k-1}(\alpha)(\pi_{k-1}(x))$ as $n\to\infty$. Similarly,  by continuity of $\alpha$ and $\pi_{k-1}$ we have $\eta_{k-1}(\alpha)(\pi_{k-1}(x_n))\to \eta_{k-1}(\alpha)(\pi_{k-1}(x))$. This implies that $d_{\ns_{k-1}}(\eta_{k-1}(\alpha_n)(\pi_{k-1}(x_n)),\eta_{k-1}(\alpha)(\pi_{k-1}(x_n)))\to 0$ as $n\to\infty$, contradicting \eqref{eq:convfail}.
\end{proof}

\noindent The topological condition that we will use is the following, which requires the congruence to be closed ``at all levels". The associated natural formulation of this condition for fiber-transitive filtrations can be viewed as an analogue of properties used for quotients of topological groups. 

\begin{defn}[Closed fiber-transitive filtrations]\label{def:oldclosed}
Let $\ns$ be a $k$-step \textsc{lch} nilspace, and let $H_\bullet$ be a fiber-transitive filtration on $\ns$. We say that $H_\bullet$ is \emph{closed} if for every $j\in [k]$ the group $\eta_j(H)$ is a closed subgroup of $\tran(\ns_j)$. We say that a groupable congruence $\sim$ on $\ns$ is \emph{closed} if the associated fiber-transitive filtration $\big(\tran(\ns)^{(\sim)}\cap \tran_i(\ns)\big)_{i\geq 0}$ is closed.
\end{defn}

\begin{remark}
Recall that for a $k$-step \textsc{lch} nilspace $\ns$, its $k$-th structure group $\ab_k(\ns)$ is  homeomorphic to any fiber $\pi_{k-1}^{-1}(\pi_{k-1}(x_0))$ (for any $x_0\in \ns$) equipped with the subspace topology as a subset of $\ns$. Also, it is known (\cite[Lemma 3.2.37]{Cand:Notes1}) that $\tran_k(\ns)$ is (algebraically) isomorphic as a group to $\ab_k(\ns)$ where the isomorphism is given by $\ab_k(\ns)\to \tran_k(\ns)$, $z\mapsto g_z$ where $g_z(x):=x+z$. In particular, $\tran_k(\ns)$ is naturally endowed with the subspace topology as a subset of $\tran(\ns)$ (which is endowed with the compact-open topology by \S \ref{sec:trans-group}). We leave it as an exercise for the reader to check that the map $\ab_k(\ns)\to \tran_k(\ns)$ is a topological group isomorphism with respect to the aforementioned topologies.
\end{remark}
\noindent Our first observation about Definition \ref{def:oldclosed} is the following.
\begin{lemma}\label{lem:closedconseq}
Let $\ns$ be a $k$-step Lie-fibered nilspace and let $H_\bullet$ be a fiber-transitive filtration on $\ns$ such that for every $i\in [k]$ the group $\eta_i(H_i)$ is a discrete subgroup of $\ab_i(\ns)$. Then for all $i,j\in [k]$ the group $\eta_j(H_i)$ is a discrete subgroup of $\tran_i(\ns_j)$, whence closed. In particular $H_\bullet$ is then a closed fiber-transitive filtration on $\ns$.
\end{lemma}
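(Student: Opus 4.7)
\textit{Proof plan.} I would proceed by induction on $k$ to prove that $H_i$ itself is discrete in $\tran_i(\ns)$ for every $i \in [k]$. Granted this, the full conclusion -- that $\eta_j(H_i)$ is discrete in $\tran_i(\ns_j)$ for all $i, j \in [k]$ with $i \leq j$ -- follows by applying it to each factor $\ns_j$ equipped with the induced filtration $\eta_j(H_\bullet)$, which is fiber-transitive on $\ns_j$ by Lemma \ref{lem:group-cong-to-factors} and retains the top-level discreteness hypothesis since $\eta_i \co \eta_j = \eta_i$ for $i \leq j$. Closedness then comes for free, as discrete subgroups of Hausdorff topological groups are closed.

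The base case $k = 1$ is immediate ($\tran_1(\ns) = \ab_1(\ns)$, and $H_1 = \eta_1(H_1)$ is discrete by hypothesis). For the inductive step, fix $i \in [k]$; the case $i = k$ is again immediate since $\eta_k = \id$ on $\tran_k(\ns) = \ab_k$, so assume $i < k$. The map $\eta_{k-1}: \tran_i(\ns) \to \tran_i(\ns_{k-1})$ is continuous (by the argument of Lemma \ref{lem:h-is-cont}) with closed kernel $K_i := \tran_i(\ns) \cap \ker(\eta_{k-1})$. By the inductive hypothesis applied to $\ns_{k-1}$ with $\eta_{k-1}(H_\bullet)$, the group $\eta_{k-1}(H_i)$ is discrete in $\tran_i(\ns_{k-1})$. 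A standard short-exact-sequence argument reduces the inductive step to showing that $H_i \cap K_i$ is discrete in $K_i$: picking open $V \ni \id$ in $\tran_i(\ns_{k-1})$ with $V \cap \eta_{k-1}(H_i) = \{\id\}$ and open $U \ni \id$ in $\tran_i(\ns)$ with $U \cap (H_i \cap K_i) = \{\id\}$, the neighborhood $W := U \cap \eta_{k-1}^{-1}(V)$ satisfies $W \cap H_i = \{\id\}$, since any $\alpha \in W \cap H_i$ has $\eta_{k-1}(\alpha) \in V \cap \eta_{k-1}(H_i) = \{\id\}$, hence $\alpha \in K_i \cap H_i \cap U = \{\id\}$.

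To establish discreteness of $H_i \cap K_i$, I would describe $K_i$ concretely, adapting the argument from Lemma \ref{lem:ker-h-Lie} and the proof of Theorem \ref{thm:splitext}. For $\alpha \in K_i$, the map $\xi_\alpha : x \mapsto \alpha(x) - x$ is $\ab_k$-valued; by centrality of $\tran_k(\ns) = \ab_k$ in $\tran(\ns)$ (see \cite[Lemma 3.2.37]{Cand:Notes1}), $\alpha$ commutes with the $\ab_k$-action on $\ns$, so $\xi_\alpha$ is $\ab_k$-invariant and therefore factors through $\pi_{k-1}$ as a nilspace morphism $\bar\xi_\alpha \in \hom(\ns_{k-1}, \mc{D}_{k-i}(\ab_k))$; the degree $k-i$ comes from the $i$-arrow construction combined with \cite[Lemma 2.2.19]{Cand:Notes1}. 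The assignment $\alpha \mapsto \bar\xi_\alpha$ yields a topological group isomorphism $K_i \cong \hom(\ns_{k-1}, \mc{D}_{k-i}(\ab_k))$. When moreover $\alpha \in H_i$, then for every $x \in \ns$ we have $\alpha(x) \sim_H x$ and $\pi_{k-1}(\alpha(x)) = \pi_{k-1}(x)$, so fiber-transitivity at level $k-1$ supplies $\beta \in H_k$ with $\alpha(x) = \beta \cdot x$, placing $\bar\xi_\alpha(\pi_{k-1}(x)) \in H_k$. Hence $\bar\xi_\alpha \in \hom(\ns_{k-1}, \mc{D}_{k-i}(H_k))$.

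The last and, in my view, most delicate step is the comparison of topologies: showing that $\hom(\ns_{k-1}, \mc{D}_{k-i}(H_k))$ sits as a \emph{discrete subgroup} of $\hom(\ns_{k-1}, \mc{D}_{k-i}(\ab_k))$ -- note that its intrinsic discreteness, from Corollary \ref{cor:hom-is-discrete}, is not \emph{a priori} the same as discreteness in the ambient topology. I would use a fibration $F \to \ns_{k-1}$ from a free nilspace (Theorem \ref{thm:lcfr-factor-of-free}) and apply the Taylor expansions of Lemmas \ref{lem:Taylor-discrete}, \ref{lem:poly-free-to-R} and \ref{lem:Taylor-cont} to identify $\hom(F, \mc{D}_{k-i}(\ab_k))$ and $\hom(F, \mc{D}_{k-i}(H_k))$ with finite direct products of copies of $\ab_k$ and $H_k$ respectively, as topological groups; since $H_k$ is discrete in $\ab_k$, the latter is discrete in the former. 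Intersecting with the closed subgroup of morphisms that descend to $\ns_{k-1}$, which is topologically isomorphic to $\hom(\ns_{k-1}, \mc{D}_{k-i}(\ab_k))$ as in the proof of Corollary \ref{cor:hom-is-lie}, transfers the discreteness to $\hom(\ns_{k-1}, \mc{D}_{k-i}(H_k)) \subset \hom(\ns_{k-1}, \mc{D}_{k-i}(\ab_k))$, and hence to $H_i \cap K_i$ in $K_i$, completing the induction.
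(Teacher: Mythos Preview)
Your approach is essentially the paper's: induct on $k$, handle $i=k$ directly, and for $i<k$ use the kernel identification $K_i\cong\hom(\ns_{k-1},\mc{D}_{k-i}(\ab_k))$ together with fiber-transitivity to land in $\hom(\ns_{k-1},\mc{D}_{k-i}(H_k))$, then invoke Corollary \ref{cor:hom-is-discrete}. The paper phrases the final step sequentially (a convergent sequence in $H_i$ is eventually constant) rather than with neighborhoods, but this is cosmetic.

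Your ``delicate step'' is not actually delicate, and the paper simply skips it. The compact-open topology on $C(X,Y)$ for a subspace $Y\subset Z$ coincides with the subspace topology inherited from $C(X,Z)$: the subbasic sets $\{f:f(K)\subset U\cap Y\}$ are exactly the restrictions of $\{f:f(K)\subset U\}$. Since $H_k$ carries the discrete topology as a subspace of $\ab_k$, the compact-open topology on $\hom(\ns_{k-1},\mc{D}_{k-i}(H_k))$ \emph{is} the subspace topology from $\hom(\ns_{k-1},\mc{D}_{k-i}(\ab_k))$, and Corollary \ref{cor:hom-is-discrete} applies directly. Your workaround via free nilspaces and Taylor expansions is therefore unnecessary, and as stated it also contains an imprecision: for a general abelian Lie group $\ab_k$ (say $\ab_k=\mb{T}$ and $F=\mc{D}_1(\mb{R})$), the group $\hom(F,\mc{D}_{k-i}(\ab_k))$ is \emph{not} a finite product of copies of $\ab_k$ (in that example it is $\mb{R}\times\mb{T}$). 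This does not break the argument---one can still verify the inclusion is discrete---but the clean route is the one-line compact-open observation above.
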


\begin{proof}
We argue by induction on $k$, with the case $k=1$ being trivial. By induction it suffices to prove that $H_i$ is a discrete subgroup of $\tran(\ns)$ for every $i\in[k]$. Note that the case $i=k$ holds by assumption ($\eta_k$ is the identity map). For every $i\in [k-1]$ consider the continuous map $\eta^{(i)}:\tran_i(\ns)\to \tran_i(\ns_{k-1})$, $\alpha\mapsto \eta_{k-1}(\alpha)$. This homomorphism factors through $\tran_i(\ns)/\ker(\eta^{(i)})$. Thus, there is a continuous homomorphism $\overline{\eta^{(i)}}:\tran_i(\ns)/\ker(\eta^{(i)})\to \tran_i(\ns_{k-1})$ such that, letting $\pi:\tran_i(\ns)\to\tran_i(\ns)/\ker(\eta^{(i)})$ be the canonical quotient map, we have $\eta^{(i)} = \overline{\eta^{(i)}}\co \pi$ (here, as usual $\tran_i(\ns)/\ker(\eta^{(i)})$ is endowed with the quotient topology).

Let $(\alpha_n)$ be a sequence in $H_i$ converging to some $\alpha\in \tran_i(\ns)$. We want to prove that $\alpha_n=\alpha$ for $n$ large enough. By continuity the sequence $(\eta^{(i)}(\alpha_n))$ is convergent in $\tran_i(\ns_{k-1})$. By induction, this sequence is constant for $n$ large enough,  so by continuity of $\eta^{(i)}$ we have  $\eta^{(i)}(\alpha_n) = \eta^{(i)}(\alpha)$ for $n\ge N$. In particular, for $n,m\ge N$ we have  $\alpha_n^{-1}\alpha_m\in \ker(\eta^{(i)})$. By \cite[Lemma 2.9.5]{Cand:Notes2} we know that $\ker(\eta^{(i)})=\hom\big(\ns_{k-1},\mc{D}_{k-i}(\ab_k(\ns))\big)$. That is, the elements $f\in \ker(\eta^{(i)})$ are precisely the maps of the form $f(x)=x+\tilde{f}(\pi_{k-1}(x))$ for some $\tilde{f}\in \hom\big(\ns_{k-1},\mc{D}_{k-i}(\ab_k(\ns))\big)$. Moreover, for any $x\in \ns$ we have  $\alpha_n^{-1}\alpha_m(x)=x+z$ for some unique $z\in \ab_k(\ns)$, and since $H_\bullet$ is fiber-transitive, by \eqref{eq:ft-fil} we must have $z\in H_k$. Hence $\alpha_n^{-1}\alpha_m \in \hom\big(\ns_{k-1},\mc{D}_{k-i}(H_k)\big) $ for $n,m\ge N$. In particular, note that $(\alpha_n^{-1}\alpha_N)_n$ is a convergent sequence in $\hom(\ns_{k-1},\mc{D}_{k-i}(H_k))$, the latter being discrete by Corollary \ref{cor:hom-is-discrete}, so $\alpha_n^{-1}\alpha_N = \alpha_M^{-1}\alpha_N$ for $n\ge \max(N,M)$. Thus $\alpha_n=\alpha_N (\alpha_M^{-1}\alpha_N)^{-1} $ for $n$ large enough.
\end{proof}
\noindent The main goal in the rest of this subsection is to prove that a quotient of a Lie-fibered nilspace by a closed groupable congruence is a Lie-fibered nilspace. The proof relies on the following.
\begin{lemma}\label{lem:re-param-conv-seq}
Let $\ns$ be a $k$-step Lie-fibered nilspace and let $H_\bullet$ be a closed fiber-transitive filtration on $\ns$. Let $i\in [k]$, let $(x_n)_{n\in\mb{N}}$ be a sequence in $\ns$ and $(h_n)_{n\in\mb{N}}$ be a sequence in $H_i$ such that for some $x,x'\in\ns$ we have $x_n\to x$ and $h_n(x_n)\to x'$ as $n\to \infty$. Then there is a sequence $(h_n')_{n\in \mb{N}}$ in $H_i$ and $h\in H_i$ with $h_n'\to h$ as $n\to\infty$ and $h_n'(x_n)=h_n(x_n)$ for all $n$.
\end{lemma}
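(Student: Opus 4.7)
The proof will proceed by induction on $k$, with the base case $k=1$ being essentially the continuity of subtraction in an abelian Lie group. For $k=1$, $\tran_1(\ns)=\ab_1$ acts by translation, so $h_n(x_n)=x_n+h_n\to x'$ together with $x_n\to x$ forces $h_n\to x'-x$ in $\ab_1$ (using Lemma \ref{lem:diff-cont}); the limit lies in $H_1$ by closedness, and we take $h_n'=h_n$.

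For the inductive step $k>1$, I will first apply the induction hypothesis on $\ns_{k-1}$. By Lemma \ref{lem:group-cong-to-factors} the pushed-forward filtration $\eta_{k-1}(H_\bullet)$ is fiber-transitive on $\ns_{k-1}$, and it remains closed since $\eta_j(\eta_{k-1}(H_i))=\eta_j(H_i)$ for $j\le k-1$. Applied to the sequences $\pi_{k-1}(x_n)\to\pi_{k-1}(x)$ and $\eta_{k-1}(h_n)(\pi_{k-1}(x_n))=\pi_{k-1}(h_n(x_n))\to \pi_{k-1}(x')$, induction yields $g_n\to g$ in $\eta_{k-1}(H_i)$ with $g_n(\pi_{k-1}(x_n))=\eta_{k-1}(h_n)(\pi_{k-1}(x_n))$ for every $n$.

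The next step is to lift this convergent sequence into $H_i$. Taking $j=k$ in the closedness hypothesis, $H_i$ is closed in $\tran_i(\ns)$, which is Polish by Theorem \ref{thm:trans-group-polish}; similarly $\eta_{k-1}(H_i)$ is closed in the Polish group $\tran_i(\ns_{k-1})$. The restriction $\eta_{k-1}|_{H_i}\colon H_i\to\eta_{k-1}(H_i)$ is thus a continuous surjective homomorphism of Polish groups, hence open by the Banach open mapping theorem for Polish groups. Openness combined with a countable neighborhood basis of a chosen preimage of $g$ provides a convergent lift: there exist $\tilde{h}_n\to \tilde{h}$ in $H_i$ with $\eta_{k-1}(\tilde{h}_n)=g_n$ and $\eta_{k-1}(\tilde{h})=g$.

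The final step is the fiber-transitivity adjustment, which is the main substantive point. For each $n$, the points $\tilde{h}_n(x_n)$ and $h_n(x_n)$ both lie in the $H$-orbit of $x_n$ (since $\tilde{h}_n,h_n\in H_i\subset H$), and in the same $\pi_{k-1}$-fiber (since $\eta_{k-1}(\tilde{h}_n)$ and $\eta_{k-1}(h_n)$ agree at $\pi_{k-1}(x_n)$). Fiber-transitivity \eqref{eq:ft-fil} with index $k-1$ then provides a (unique) element $z_n\in H_k\subset \ab_k$ with $\tilde{h}_n(x_n)=h_n(x_n)+z_n$. Setting $h_n':=\tilde{h}_n-z_n$ (i.e.\ composition with the translation $y\mapsto y-z_n$ in $\tran_k(\ns)\subset \tran_i(\ns)$) gives $h_n'\in H_i$ with $h_n'(x_n)=h_n(x_n)$. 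Continuity of evaluation (as $\tilde{h}_n\to \tilde{h}$ uniformly on compacts and $x_n\to x$) yields $z_n\to z:=\tilde{h}(x)-x'$ in $\ab_k$; closedness of $H_k$ in $\ab_k$ (case $j=i=k$ of Definition \ref{def:oldclosed}) forces $z\in H_k$. Hence $h_n'=\tilde{h}_n-z_n\to \tilde{h}-z=:h$ in $\tran_i(\ns)$, with $h\in H_i$. The main obstacle I expect is the interplay between the open-mapping lift in $\tran_i(\ns)$ and the pointwise $\ab_k$-correction; everything hinges on the fact that fiber-transitivity forces the correction terms $z_n$ to lie in the \emph{closed} subgroup $H_k$, so that their limit remains in $H_k$ and the adjusted sequence converges inside $H_i$.
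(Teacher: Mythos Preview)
Your proof is correct and follows essentially the same route as the paper's: induct on $k$, push down to $\ns_{k-1}$ to get a convergent sequence in $\eta_{k-1}(H_i)$, lift it back to a convergent sequence in $H_i$ via the open mapping theorem for Polish groups (the paper invokes this as Lemma~\ref{lem:conv-polish-group}), and then use fiber-transitivity to correct by elements of $H_k$, whose convergence follows from continuity of the difference map and whose limit stays in $H_k$ by closedness. The only cosmetic differences are that the paper starts the induction at $k=0$ rather than $k=1$, and that it writes the final sequence as $\tilde h_n+\gamma_n$ (adding the $H_k$-correction) whereas you write it as $\tilde h_n-z_n$; these are the same argument with opposite sign conventions.
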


\begin{proof}
We prove a stronger version of the result: instead of assuming that $h_n\in H_i$ for a fixed $i$, we assume that for a fixed $i\in[k]$ and for all $n\in\mb{N}$ we have $\eta_{i-1}(h_n)\in \stab_{\ns_{i-1}}(\pi_{i-1}(x_n))$. Without loss of generality, we may assume that $H_i=H\cap\tran_i(\ns)$ for all $i\in[k]$. Moreover, recall that $H$ being closed fiber-transitive implies that $\eta_i(H_i)$ is a closed subgroup of $\tran_i(\ns_i)$ for all $i\in[k]$. Fix $k\in\mb{N}$. The proof is a reverse induction in $i\in[k]$.

If $i=k$, note that $h_n(x_n)=x_n+z_n$ where $z_n\in H_k$ by the fiber-transitive property. Hence $z_n=h_n(x_n)-x_n\to x'-x$ as $n\to\infty$. As $H_k$ is closed we have that $x'-x\in H_k$ and the proof follows just by letting $h_n'$ be be map that adds $z_n$ for all $n\in\mb{N}$.

For $i<k$ note that $\eta_i(\pi_i(x_n))=\pi_i(x_n)+z_n$ for some $z_n\in \ab_i(\ns)$. Moreover, by the fiber-transitive property we have that for all $n\in \mb{N}$, $z_n\in \eta_i(H_i)$. As $h_n(x_n)$ and $x_n$ both converge as $n\to\infty$, we deduce that $z_n\to z$ for some $z\in\eta_i(H_i)$ (using that the latter group is closed). Since $\tran(\ns)$ and $\tran(\ns_{i})$ are Polish groups (by Theorem \ref{thm:trans-group-polish}) and by assumption $H_i$ and $\eta_i(H_i)$ are closed subgroups of these respectively, it follows from \cite[Proposition 1.2.1]{BeKe} that $H_i$ and $\eta_i(H_i)$ are Polish groups. By Lemma \ref{lem:conv-polish-group} applied to $\eta_i:H_i\to \eta_i(H_i)$ we can assume that there exists $h_n'\in H_i\to h'\in H_i$ as $n\to\infty$ such that $\eta_i(h_n')$ is the map that adds $z_n$. Hence, we have that $\eta_i(h_n)(\pi_i(x_n))=\eta_i(h_n')(\pi_i(x_n))$. In particular, note that $(h_n')^{-1}h_n(x_n)$ is a sequence that converges to $(h')^{-1}(x')$ and that for all $n\in \mb{N}$ we have $\eta_{i}((h_n')^{-1}h_n)\in \stab_{\ns_i}(\pi_i(x_n))$. Thus, by induction we may choose some $h_n''\in H_{i+1}$ such that $(h_n')^{-1}h_n(x_n)=h_n''(x_n)$ and $h_n''\to h''$ as $n\to \infty$. Therefore, $h_n'h_n''\in H_i$ is a convergent sequence such that $h_n'h_n''(x_n)=h_n(x_n)$ and thus the result follows.\end{proof}

\begin{example}
It may be tempting to try to strengthen Lemma \ref{lem:closedconseq} by weakening the \emph{discreteness} assumption to just a \emph{closure} assumption on the subgroups $\eta_i(H_i)$. However, the following example shows that even if $\eta_i(H_i)$ is closed for each $i\in [k]$, it may happen that $H$ is not closed, even assuming that $H_\bullet$ is fiber-transitive. Let $F:=\mc{D}_1(\mb{R})\times \mc{D}_2(\mb{R})$, and let $f:\mb{R}\to \mb{R}$ be a linear map that is not continuous (it is known that such maps can be defined using a Hamel basis for $\mb{R}$ as a vector space over $\mb{Q}$). For each $\lambda\in \mb{R}$ let $\alpha_\lambda$ be the translation  $(x,y)\mapsto (x+\lambda,y+f(\lambda))$ on $F$, and let $H$ denote the group $\{\alpha_\lambda:\lambda\in\mb{R}\}$. The filtration $H=H_0=H_1\ge \{\id\}$ is fiber-transitive on $F$. Indeed, if $\alpha_\lambda$ fixes the $\pi_1$ factor then clearly $\lambda=0$ so there is nothing to prove in this case. Also, $\eta_1(H_1)=\mb{R}$ and $\eta_2(H_2)=\{\id\}$, so both of these groups are closed. But clearly $H$ is not a closed subgroup of $\tran(F)$, by discontinuity of $f$.
\end{example}

\begin{example}
Another seemingly natural condition on $H_\bullet$ that one could have tried to use instead of Definition \ref{def:oldclosed} is to require just that each subgroup $H_i$ is closed in $\tran(\ns)$. However, this leaves open the possibility that at the level of some lower-step factor of $\ns$ a failure of closure occurs, making the quotient of $\ns$ by $\sim_H$ non-viable topologically because some of its factors are not Hausdorff. Indeed, for example, let $\ns$ be the group nilspace $\mb{R}$ with the 2-step filtration $\mb{R}=\mb{R}\ge \mb{Z}\ge \{0\}$. Let $H\subset \tran(\ns)$ be the subgroup of translations $\{ x\mapsto x+\sqrt{2}n: n\in\mb{Z}\}$. Clearly $H$ is a discrete subgroup, hence closed. Moreover, no translation in $H$ is in $\tran_2(\ns)\cong\mb{Z}$, so $H_2=\{\id\}$ and is therefore also closed. However, the 1-step factor of $\ns$ is $\ns_1 = \mb{R}/\mb{Z}$, and $\eta_1(H)$ acts on $\ns_1$ as the group $\{\sqrt{2}n\!\!\mod 1:n\in \mb{Z}\}$, a non-closed subgroup of $\tran_1(\ns_1) = \mb{R}/\mb{Z}$. Thus, the quotient $\ns/H$ (which algebraically is a perfectly valid quotient nilspace) has 1-step factor the group $\sqrt{2}\mb{Z}\backslash (\mb{R}/\mb{Z})\cong  \mb{R}/(\mb{Z}+\sqrt{2}\mb{Z})$, which is not Hausdorff.
\end{example}

\begin{proposition}\label{prop:quot-lcfr-nil}
Let $\ns$ be a $k$-step Lie-fibered nilspace and let $H_\bullet$ be a closed fiber-transitive filtration on $\ns$. Then $\pi_H(\ns)$ equipped with the quotient topology is an \textsc{lch} space.
\end{proposition}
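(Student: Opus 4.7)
The plan is to verify the three conditions (local compactness, Hausdorffness, second countability) by viewing $\pi_H(\ns)$ as the orbit space of a continuous group action and exploiting the standard criteria for quotients of \textsc{lch} spaces by such actions (as used earlier in Lemma \ref{lem:quo-by-cont-gr-act}, Corollary \ref{cor:quotient-top-k-1-factor}, and Lemma \ref{lem:fib-prod-top-nil}).

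First I would check that the quotient map $\pi_H:\ns\to \pi_H(\ns)$ is open. Since each $h\in H$ is a continuous translation on the Lie-fibered nilspace $\ns$, Corollary \ref{cor:inv-cont-lcfr} gives that $h$ is a homeomorphism of $\ns$. Hence for every open $U\subset \ns$ the saturation $\pi_H^{-1}(\pi_H(U))=\bigcup_{h\in H} h(U)$ is a union of open sets, so open in $\ns$, which shows that $\pi_H$ is open. Local compactness and second countability of $\pi_H(\ns)$ will then follow from the corresponding properties of $\ns$ together with openness of $\pi_H$: second countability is preserved under continuous open surjections, and local compactness follows (once we have Hausdorffness) from the standard argument used in the proof of Corollary \ref{cor:quotient-top-k-1-factor} (via \cite[p.\ 107, Proposition 10]{BourbakiGT1}).

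The main task, and the step I expect to require the closedness hypothesis on $H_\bullet$, is to show that the graph $R=\{(x,y)\in\ns\times\ns : x\sim_H y\}$ is closed in $\ns\times\ns$; this will imply Hausdorffness of the open quotient by \cite[p.\ 79, Proposition 8]{BourbakiGT1}. To see that $R$ is closed, take a sequence $((x_n,y_n))_n$ in $R$ converging to some $(x,y)\in \ns\times\ns$. For each $n$ pick $h_n\in H=H_1$ such that $h_n(x_n)=y_n$. Then $x_n\to x$ and $h_n(x_n)\to y$, so Lemma \ref{lem:re-param-conv-seq} (applied with $i=1$, which is where closedness of $H_\bullet$ is used) provides a sequence $(h_n')$ in $H$ and some $h\in H$ with $h_n'\to h$ and $h_n'(x_n)=h_n(x_n)=y_n$ for all $n$. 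Continuity of evaluation on an \textsc{lch} nilspace (e.g.\ via \cite[Theorem 46.10]{Mu}) then yields $h_n'(x_n)\to h(x)$, whence $y=h(x)$ and therefore $(x,y)\in R$, as desired.

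Putting these three ingredients together gives the conclusion: $\pi_H(\ns)$ is Hausdorff (by closedness of $R$ plus openness of $\pi_H$), locally compact (by openness of $\pi_H$ and local compactness of $\ns$), and second countable (by openness of $\pi_H$ and second countability of $\ns$), which is exactly the definition of an \textsc{lch} space used throughout the section.
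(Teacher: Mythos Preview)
Your proposal is correct and follows essentially the same approach as the paper: openness of $\pi_H$ via the action by homeomorphisms, then second countability and local compactness from openness, and Hausdorffness by showing the graph of $\sim_H$ is closed using Lemma~\ref{lem:re-param-conv-seq}. The only cosmetic differences are in the references cited for local compactness and for the continuity of the evaluation map.
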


\begin{proof}
Since $\pi_H(\ns)$ is the quotient of an \textsc{lch} space by the action of a group of homeomorphisms, the map $\pi_H$ is continuous and open. Hence $\pi_H(\ns)$ is second-countable, and it is also locally compact \cite[Ex.\ 29.3]{Mu}. To prove that $\pi_H(\ns)$ is Hausdorff it suffices to show that the set $C=\{(x,y)\in\ns^2:x\sim_H y\}$ is closed in the product topology \cite[p.\ 79, Prop.\ 8]{BourbakiGT1}. Since $\ns$ is \textsc{lch}, it is metrizable \cite[Ex.\ 32.3 and Thm.\ 34.1]{Mu}, so closure in $\ns$ is equivalent to sequential closure. Let $((x_n,h_n x_n))_n$ be a sequence in $C$ convergent to $(x,y)\in \ns^2$. By Lemma \ref{lem:re-param-conv-seq} there is a convergent sequence $(h_n')_n$ in $H$ with limit $h\in H$ such that $h_nx_n=h_n'x_n$ for all $n$. Since $x_n\to x$, the continuity of the action implies $h_n' x_n\to hx$, so $y=hx$ and $C$ is closed.
\end{proof}
\noindent Next, we have to prove the two conditions for being an \textsc{lch} nilspace from Lemma \ref{lem:eq-def-lch-nil}, namely, that the cube sets are closed and that the completion function is continuous. 

To check the first of these conditions we will use the following lemma.

\begin{lemma}\label{lem:closure-sum-of-cu-and-h}
Let $\ab$ be a Polish abelian group and let $H$ be a closed subgroup of $\ab$. Then for all integers $n\ge 0$ and $k\ge 1$ the following holds: for every convergent sequence $(g_m)_{m\in \mb{N}}$ in $\cu^n(\mc{D}_k(\ab))+H^{\db{n}}$ (in the product topology on $\ab^{\db{n}}$), there exist convergent sequences $(\q_m)_m$ in $\cu^n(\mc{D}_k(\ab))$ and $(d_m)_m$ in $H^{\db{n}}$ such that $\q_m+d_m=g_m$ for all $m$. In particular, the group $\cu^n(\mc{D}_k(\ab))+H^{\db{n}}$ is a closed subgroup of $\ab^{\db{n}}$.
\end{lemma}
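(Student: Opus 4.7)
The plan is to parameterise $\cu^n(\mc{D}_k(\ab))$ by restriction to the simplicial set $\db{n}_{\le k}:=\{v\in\db{n}:|v|\le k\}$ and use the resulting explicit extension formula to produce a canonical, continuous decomposition of $g_m$. The case $n\le k$ is trivial since then $\cu^n(\mc{D}_k(\ab))=\ab^{\db{n}}$ and one simply takes $\q_m:=g_m$, $d_m:=0$, so the focus will be on $n>k$.

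The key structural fact to exploit is that the restriction map $\rho:\cu^n(\mc{D}_k(\ab))\to\ab^{\db{n}_{\le k}}$ is a group isomorphism whose inverse is given by Möbius inversion on the Boolean lattice: any $f:\db{n}_{\le k}\to\ab$ extends uniquely to a cube $\widetilde f\in\cu^n(\mc{D}_k(\ab))$ by a fixed $\mb{Z}$-linear formula in the values $f(\mf{1}_S)$, $|S|\le k$ (one recovers the polynomial coefficients $a_T=\sum_{S\subseteq T}(-1)^{|T\setminus S|}f(\mf{1}_S)$ and sets $\widetilde f(v)=\sum_{T\subseteq\supp(v),\,|T|\le k}a_T$). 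From this the plan extracts two consequences: first, both $\rho$ and $\rho^{-1}$ are continuous in the product topologies, so no open mapping theorem will be needed; second, if the values of $f$ all lie in a subgroup $H\subseteq\ab$ then so do all values of $\widetilde f$, i.e.\ $\cu^n(\mc{D}_k(\ab))\cap H^{\db{n}}=\cu^n(\mc{D}_k(H))$.

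Given the sequence $(g_m)$, the construction is to define $\q_m$ to be the unique cube in $\cu^n(\mc{D}_k(\ab))$ satisfying $\rho(\q_m)=g_m|_{\db{n}_{\le k}}$ and set $d_m:=g_m-\q_m$. The only nontrivial point to verify will be that $d_m\in H^{\db{n}}$. For this, fix any auxiliary decomposition $g_m=\q_m^{(0)}+d_m^{(0)}$ with $\q_m^{(0)}\in\cu^n(\mc{D}_k(\ab))$ and $d_m^{(0)}\in H^{\db{n}}$; then $\q_m-\q_m^{(0)}$ is a cube in $\cu^n(\mc{D}_k(\ab))$ whose restriction to $\db{n}_{\le k}$ coincides with $d_m^{(0)}|_{\db{n}_{\le k}}$ and hence takes values in $H$. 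The second consequence then forces $\q_m-\q_m^{(0)}\in H^{\db{n}}$, so $d_m=d_m^{(0)}-(\q_m-\q_m^{(0)})$ lies in $H^{\db{n}}$ as required.

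Convergence of $(\q_m)$ and $(d_m)$ will then follow at once: $g_m\to g$ coordinatewise implies $g_m|_{\db{n}_{\le k}}\to g|_{\db{n}_{\le k}}$, so by the first consequence $\q_m=\rho^{-1}(g_m|_{\db{n}_{\le k}})$ converges to some $\q\in\cu^n(\mc{D}_k(\ab))$ and $d_m=g_m-\q_m$ converges to $g-\q$, which lies in $H^{\db{n}}$ by closure of $H$. This simultaneously delivers the in-particular statement that $\cu^n(\mc{D}_k(\ab))+H^{\db{n}}$ is closed. No serious obstacle is anticipated; the only step requiring care is spelling out the Möbius extension formula cleanly, and the crucial feature to highlight is that the extension map is defined over $\mb{Z}$, so it automatically preserves subgroups while being trivially continuous, bypassing any appeal to an open mapping theorem.
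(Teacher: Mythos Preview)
Your proposal is correct and takes essentially the same approach as the paper: both restrict $g_m$ to $\db{n}_{\le k}$, take $\q_m$ to be the unique cube in $\cu^n(\mc{D}_k(\ab))$ extending this restriction, and then argue that $d_m:=g_m-\q_m$ lies in $H^{\db{n}}$ and that convergence follows from the continuity of the extension map. The only cosmetic difference is in the verification that $d_m\in H^{\db{n}}$: the paper applies the Gray-code form $\sigma_{k+1}$ on $(k{+}1)$-dimensional faces, whereas you invoke the equivalent fact that the Möbius extension formula is $\mb{Z}$-linear and hence sends $H$-valued data to $H$-valued cubes.
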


\begin{proof}
Note first that the result is trivial if $k\le n$, as then $\cu^n(\mc{D}_k(\ab))=\ab^{\db{n}}$. Hence, we assume that $n\ge k+1$. We prove the result for $n=k+1$, as the idea of the proof is clearer in this case and other cases are similar. Let $(g_m)_m$ be a convergent sequence in $\cu^{k+1}(\mc{D}_k(\ab))+H^{\db{k+1}}$. Let us define $\q_m'\in \cor^{k+1}(\mc{D}_k(\ab))$ as $\q_m':=g_m|_{\db{k+1}\setminus\{1^{k+1}\}}$ (note that $\q_m'$ is indeed in $\cor^{k+1}(\mc{D}_k(\ab))$ since any map $\db{k}\to \ab$ is in $\cu^k(\mc{D}_k(\ab))$). As $(g_m)$ is convergent, so is $(\q_m')$. For every $m$ let $\q_m^*\in \cu^{k+1}(\mc{D}_k(\ab))$ be the unique completion of $\q_m'$. By continuity of the completion function,  the sequence $(\q_m^*)$ converges to a limit in $\cu^{k+1}(\mc{D}_k(\ab))$ (equal to the completion of the limit of $(\q_m')$). Finally, let us define $f_m:=g_m-\q_m^*\in \ab^{\db{k+1}}$. As a pointwise difference of two convergent sequences, the sequence $(f_m)$ converges. We now prove that $f_m\in H^{\db{k+1}}$ for every $m$, which will complete the proof in this case (since $H$ is closed). First note that clearly $f_m\sbr{v}=0$ for all $v\not=1^{k+1}$ so it suffices to prove that $f_m(1^{k+1})\in H$. Recall from  \cite[Definition 2.2.22]{Cand:Notes1} the Gray-code map $\sigma_{k+1}:\ab^{\db{k+1}}\to\ab$, $r\mapsto \sum_{v\in \db{k+1}} (-1)^{|v|}r\sbr{v}$, where $|v|:=v\sbr{1}+\cdots+v\sbr{k+1}$. We have $(-1)^{k+1}f_m(1^{k+1}) = \sigma_{k+1}(f_m) = \sigma_{k+1}(g_m-\q_m^*)=\sigma_{k+1}(g_m)-\sigma_{k+1}(\q_m^*)$, and this lies in $H$ because both of $\sigma_{k+1}(g_m), \sigma_{k+1}(\q_m^*)$ do (the former term does because by assumption $g_m\in \cu^{k+1}(\mc{D}_k(\ab))+H^{\db{k+1}}$ and $\sigma_{k+1}(\q)=0$ for any $\q\in \cu^{k+1}(\mc{D}_k(\ab))$). This completes the proof in this case $n=k+1$.

For $n\ge k+2$ let us sketch the argument. First, given any such converging sequence $(g_m)_m$ in $\cu^n(\mc{D}_k(\ab))+H^{\db{n}}$ we define\footnote{Recall that given $v\in \db{n}$ we define $|v|:=v\sbr{1}+\cdots+v\sbr{n}$.} $\q_m':=g_m|_{\{v\in \db{n}:|v|\le k\}}\in \ab^{\{v\in \db{n}:|v|\le k\}}$. Note that there exists a unique cube $\q_m^*\in \cu^n(\mc{D}_k(\ab))$ such that $\q_m^*|_{\{v\in \db{n}:|v|\le k\}} = \q_m'$. We then let  $f_m:=g_m-\q_m^*$. Applying the map $\sigma_{k+1}$ on faces of dimension $k+1$ on the function $f_m\in \ab^{\db{n}}$ it follows similarly as before that $f_m\in H^{\db{n}}$. The result follows.
\end{proof}

\begin{proposition}\label{prop:closure-cubes-coset}
Let $\ns$ be a $k$-step Lie-fibered nilspace and let $H_\bullet$ be a closed fiber-transitive filtration on $\ns$. Then for every $n\ge 0$ the cube set $\cu^n(\pi_H(\ns))$ is a closed subset of $\pi_H(\ns)^{\db{n}}$.
\end{proposition}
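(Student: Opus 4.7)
The plan is to reduce the statement to showing that the preimage $T := (\pi_H^{\db{n}})^{-1}(\cu^n(\pi_H(\ns)))$ is closed in $\ns^{\db{n}}$. Since $\pi_H$ is an open continuous surjection (by Lemma \ref{lem:gpequivcong} and Proposition \ref{prop:quot-lcfr-nil}), the product map $\pi_H^{\db{n}}$ is again an open quotient map (openness ensures the product of quotients equals the quotient of the product), so closure of $T$ in $\ns^{\db{n}}$ is equivalent to closure of $\cu^n(\pi_H(\ns))$ in $\pi_H(\ns)^{\db{n}}$. I would then argue by induction on the step $k$ of $\ns$; the case $k=0$ is vacuous, and for the inductive step one notes that $\eta_{k-1}(H_\bullet)$ is a closed fiber-transitive filtration on $\ns_{k-1}$ (by Lemma \ref{lem:group-cong-to-factors} and the closure hypothesis), so the inductive hypothesis gives that the analogous preimage $T_{k-1} \subset \ns_{k-1}^{\db{n}}$ is closed.

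Given a convergent sequence $(g_m)$ in $T$ with limit $g \in \ns^{\db{n}}$, projection through $\pi_{k-1}$ gives a convergent sequence in $T_{k-1}$, so $\pi_{k-1}^{\db{n}} \co g \in T_{k-1}$. Using the fibration property of $\pi_{k-1}$ together with the surjectivity of $\eta_{k-1}$ onto $\eta_{k-1}(H)$, I would construct a reference element $g' \in T$ of the form $\tilde\gamma \cdot \q''$ with $\q''\in \cu^n(\ns)$ and $\tilde\gamma:\db{n}\to H$, satisfying $\pi_{k-1}^{\db{n}} \co g' = \pi_{k-1}^{\db{n}} \co g$. The pointwise difference $g - g'$ is then a well-defined element of $\ab_k^{\db{n}}$, and by Lemma \ref{lem:cong-equi} the quotient $\pi_H(\ns)$ is a degree-$k$ algebraic extension of its $(k-1)$-factor by $\ab_k/H_k$, so the condition $g \in T$ is equivalent to $g - g' \in \cu^n(\mc{D}_k(\ab_k)) + H_k^{\db{n}}$.

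To establish this membership, I would produce an approximating sequence in $\cu^n(\mc{D}_k(\ab_k)) + H_k^{\db{n}}$ converging to $g-g'$, and then apply Lemma \ref{lem:closure-sum-of-cu-and-h} to the abelian Lie group $\ab_k$ and its closed subgroup $H_k$. This requires constructing, for each sufficiently large $m$, an element $g'_m \in T$ satisfying $\pi_{k-1}^{\db{n}} \co g'_m = \pi_{k-1}^{\db{n}} \co g_m$ and $g'_m \to g'$ in $\ns^{\db{n}}$. Having such $g'_m$, each difference $g_m - g'_m$ lies in $\ab_k^{\db{n}}$ (by equality of the $(k-1)$-projections) and, by the same algebraic characterization applied to the pair $(g_m, g'_m)$, lies in $\cu^n(\mc{D}_k(\ab_k)) + H_k^{\db{n}}$; moreover $g_m - g'_m \to g - g'$ by the continuity of the difference map (Lemma \ref{lem:diff-cont}).

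The main obstacle is the construction of $g'_m$. To achieve this, I would exploit the local triviality of the principal $\ab_k$-bundle $\pi_{k-1}: \ns \to \ns_{k-1}$ (Theorem \ref{thm:local-triviality-bundle}), together with Lemma \ref{lem:re-param-conv-seq}, to produce cubes $\q''_m \in \cu^n(\ns)$ converging to $\q''$ and such that the pointwise action $\tilde\gamma \cdot \q''_m$ has $(k-1)$-projection equal to $\pi_{k-1}^{\db{n}} \co g_m$; then setting $g'_m := \tilde\gamma \cdot \q''_m$ ensures $g'_m \in T$ (since $\pi_H \co g'_m = \pi_H \co \q''_m$ is automatically a cube in $\pi_H(\ns)$). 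The delicate point is matching the $(k-1)$-projections while keeping the $\q''_m$ as cubes in $\ns$: this uses the inductive decomposition of elements of $T_{k-1}$ into an $\eta_{k-1}(H)^{\db{n}}$-action on a cube in $\cu^n(\ns_{k-1})$, combined with Lemma \ref{lem:re-param-conv-seq} to extract convergent cube lifts from the convergent sequence $\pi_{k-1}^{\db{n}} \co g_m$, and finally the fibration lift through $\pi_{k-1}$ to return to $\ns$.
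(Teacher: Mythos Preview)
Your overall architecture matches the paper: pull back to $T=(\pi_H^{\db{n}})^{-1}\cu^n(\pi_H(\ns))=H^{\db{n}}\cdot\cu^n(\ns)$, induct on $k$, identify the fiber over $T_{k-1}$ as $\cu^n(\mc{D}_k(\ab_k))+H_k^{\db{n}}$, and conclude via Lemma~\ref{lem:closure-sum-of-cu-and-h}. However, there is a real gap in the construction of the $g'_m$.

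You fix a single $\tilde\gamma\in H^{\db{n}}$ (coming from the limit decomposition $g'=\tilde\gamma\cdot\q''$) and try to manufacture cubes $\q''_m\in\cu^n(\ns)$ with $\pi_{k-1}^{\db{n}}(\tilde\gamma\cdot\q''_m)=\pi_{k-1}^{\db{n}}\co g_m$. That forces $\pi_{k-1}\co\q''_m=\eta_{k-1}(\tilde\gamma)^{-1}\cdot(\pi_{k-1}\co g_m)$. But $\pi_{k-1}\co g_m$ is only an element of $T_{k-1}=\eta_{k-1}(H)^{\db{n}}\cdot\cu^n(\ns_{k-1})$, and translating it by an arbitrary element of $\eta_{k-1}(H)^{\db{n}}$ does not land you back in $\cu^n(\ns_{k-1})$: the set $T_{k-1}$ is $\eta_{k-1}(H)^{\db{n}}$-invariant, not equal to $\cu^n(\ns_{k-1})$. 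So there is no reason for $\q''_m$ to be a cube, and neither Lemma~\ref{lem:re-param-conv-seq} (which is pointwise and ignores cube constraints) nor local triviality of $\pi_{k-1}$ addresses this.

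The fix, which is what the paper does, is to \emph{strengthen the inductive hypothesis}: prove that for any convergent sequence in $H^{\db{n}}\cdot\cu^n(\ns)$ there exist convergent sequences $(d'_m)\subset H^{\db{n}}$ and $(\q'_m)\subset\cu^n(\ns)$ with $d'_m\cdot\q'_m=g_m$. Apply this inductively at level $k-1$ to get convergent factorizations $\eta_{k-1}(d'_m)\cdot(\pi_{k-1}\co\q'_m)=\pi_{k-1}\co g_m$, then lift the two factors separately: the $H$-factor via Lemma~\ref{lem:conv-polish-group} (applied to $\eta_{k-1}:H^{\db{n}}\to\eta_{k-1}(H)^{\db{n}}$, both Polish), and the cube factor via Lemma~\ref{lem:open-projection-of-cubes}. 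Now $d'_m\cdot\q'_m$ and $g_m$ both converge and agree modulo $\pi_{k-1}$, so their difference is a convergent sequence in $\cu^n(\mc{D}_k(\ab_k))+H_k^{\db{n}}$, and Lemma~\ref{lem:closure-sum-of-cu-and-h} finishes exactly as you described. The point is that the group element must be allowed to vary with $m$; once you accept that, the stronger inductive statement is what makes the argument close up.
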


\begin{proof}
It suffices to prove that $\cu^n(\pi_H(\ns))$ is closed for $n\leq k+1$ (arguing as for  \cite[Lemma 2.1.1]{Cand:Notes2}).
The map $\pi_H^{\db{n}}:\ns^{\db{n}}\to (\pi_H(\ns))^{\db{n}}$ is open, so to prove the closure of $\cu^n(\pi_H(\ns))$ it suffices to prove that $(\pi_H^{\db{n}})^{-1}\cu^n(\pi_H(\ns))=H^{\db{n}} \cdot \cu^n(\ns)$ is closed. Note that it suffices to prove that if $(g_m)$ is a convergent sequence in $H^{\db{n}}\cdot \cu^n(\ns)$ then there exist convergent sequences $(d_m')$ in $H^{\db{n}}$ and $(\q_m')$ in $\cu^n(\ns)$ such that $d_m'\cdot \q_m' = g_m$ for all $m$. We prove this by induction on $k$, the case $k=0$ being trivial. 

For $k>0$, we first prove that $H^{\db{n}} \cdot \cu^n(\ns)$ is (algebraically) an abelian bundle over $\eta_{k-1}(H)^{\db{n}} \cdot \cu^n(\ns_{k-1})$ with structure group $\cu^n(\mc{D}_k(\ab_k))+H_k^{\db{n}}$, where $\ab_k=\ab_k(\ns)$.

For $n\le k$ note that the claimed structure group $\cu^n(\mc{D}_k(\ab_k))+H_k^{\db{n}}$ is simply $\ab_k^{\db{n}}$. But if $d\cdot \q, d'\cdot \q'\in  H^{\db{n}} \cdot\cu^n(\ns)$ have the same image under $\pi_{k-1}^{\db{n}}$ then they indeed differ in an element $f\in \ab_k^{\db{n}}$ by definition, so our claim holds.

For $n= k+1$, suppose again that $d\cdot \q, d'\cdot \q'\in H^{\db{n}}\cdot \cu^n(\ns)$ are equal modulo $\pi_{k-1}^{\db{n}}$. For each $v\in \db{k+1}\setminus\{1^{k+1}\}$, let $f(v)\in \ab_k$ be the unique element such that $(d\cdot \q)(v)+f(v)=(d'\cdot\q')(v)$. By uniqueness of $(k+1)$-corner completion on $\mc{D}_k(\ab_k)$, the map $f$ extends uniquely to an element $f\in \cu^n(\mc{D}_k(\ab_k))$. Since the action of $\ab_k$ commutes with translations, we can assume  (renaming $\q+f$ as $\q$) that $d\cdot \q(v)=d' \cdot \q'(v)$ for every $v\neq 1^{k+1}$, and $\pi_{k-1}(d\cdot \q(1^{k+1}))=\pi_{k-1}(d' \cdot \q'(1^{k+1}))$. Hence there is $z\in \ab_k$ such that $(d\cdot \q)(1^{k+1})=(d'\cdot \q')(1^{k+1})+z$. We claim that $z\in H_k$. Let us denote by $\q_z\in \ab_k^{\db{k+1}}$ the map that equals $z$ at  $1^{k+1}$ and 0 elsewhere. Then $\q+\q_z=d^{-1}d'\cdot \q'$, and we want to deduce that $z\in H_k$. Note that $(d^{-1}d'\cdot \q')|_{\db{k+1}\setminus\{1^{k+1}\}} = \q|_{\db{k+1}\setminus\{1^{k+1}\}}$, so this is in $\cor^{k+1}(\ns)$. By the last sentence of Lemma \ref{lem:cong-equi}, there exists $d^*\in \cu^{k+1}(H_\bullet)$ such that $(d^{-1}d'\cdot \q')|_{\db{k+1}\setminus\{1^{k+1}\}} = (d^*\cdot \q')|_{\db{k+1}\setminus\{1^{k+1}\}}$. Let $\gamma:=(d^{-1}d'(d^*)^{-1})(1^{k+1})\in H$, and let $d_\gamma\in H^{\db{k+1}}$ be the map with $d_\gamma(1^{k+1})=\gamma$ and $d_\gamma(v)=\id_H$ otherwise. Then $d^{-1}d'\cdot \q' = d_\gamma d^*\cdot \q'$.  Hence $\q+\q_z = d_\gamma d^*\cdot \q'$. But $\q$ and $d^*\cdot \q'$ are elements of $\cu^{k+1}(\ns)$ that are equal when restricted to $\db{k+1}\setminus\{1^{k+1}\}$. By uniqueness of completion we then have $\q(1^{k+1})+z= \gamma(d^*\cdot\q'(1^{k+1})) = \gamma(\q(1^{k+1}))$. By property \eqref{eq:ft-fil}, this implies that $z\in H_k$.

Now let $(d_m\cdot \q_m)$ be a convergent sequence in $H^{\db{n}}\cdot \cu^n(\ns)$. This implies that the sequence $(\eta_{k-1}(d_m)\cdot \pi_{k-1}(\q_m))$ converges in $\eta_{k-1}(H)^{\db{n}}\cdot \cu^n(\ns_{k-1})$. By induction, there exist $d_m'\in H^{\db{n}}$ and $\q_m'\in \cu^n(\ns_{k-1})$ such that the sequence  $\eta_{k-1}(d_m')$ converges in $\eta_{k-1}(H)^{\db{n}}$, $(\pi_{k-1}(\q_m'))$ converges in $\cu^n(\ns_{k-1})$, and $\eta_{k-1}(d_m')\pi_{k-1}(\q_m')=\eta_{k-1}(d_m)\cdot \pi_{k-1}(\q_m)$. By Lemma \ref{lem:conv-polish-group} we can assume that $d_m'$ converges in $H^{\db{n}}$ and by Lemma \ref{lem:open-projection-of-cubes} we can assume that $\q_m'$ converges in $\cu^n(\ns)$ as well. Since $d_m'\cdot \q_m'$ is an element of $H^{\db{n}}\cdot \cu^n(\ns)$ equal to $d_m\cdot \q_m$ modulo $\pi_{k-1}$, by the previous paragraph  we have $d_m'\cdot \q_m'-d_m\cdot \q_m\in \cu^n(\mc{D}_k(\ab_k))+H_k^{\db{n}}$. As both $(d_m'\cdot \q_m')$ and $(d_m\cdot \q_m)$ converge, so does $(d_m'\cdot \q_m'-d_m\cdot \q_m)$. Moreover, by Lemma \ref{lem:closure-sum-of-cu-and-h} there exist convergent sequences $(f_m)$ in $\cu^n(\mc{D}_k(\ab_k))$ and $(\gamma_m)$ in $H_k^{\db{n}}$ such that $d_m'\cdot \q_m'-d_m\cdot \q_m=f_m+\gamma_m$. In particular $d_m\cdot \q_m = d_m'\cdot\q_m'-f_m-d_m = (d_m'-\gamma_m)\cdot (\q_m'-f_m)$. But $(\q_m'-f_m)$ is a convergent sequence in the closed set $\cu^n(\ns)$, with limit $\q$ say, and so is  $(d_m'-\gamma_m)$ in $H^{\db{n}}$, with limit $d$, so $d_m\cdot \q_m$ converges to $d\cdot \q\in H^{\db{n}}\cdot \cu^n(\ns)$, which proves the claimed closure.
\end{proof}

Before continuing, note that we have the following commutative diagram for any $n\le k$:
\begin{equation}\label{diag:quots}
\begin{aligned}[c]
\begin{tikzpicture}
  \matrix (m) [matrix of math nodes,row sep=3em,column sep=4em,minimum width=2em]
  {\ns & \ns_{n}  \\
     \pi_H(\ns) & \pi_{\eta_{n}(H)}(\ns_{n}). \\};
  \path[-stealth]
    (m-1-1) edge node [above] {$\pi_n$} (m-1-2)
    (m-1-1) edge node [right] {$\pi_H$} (m-2-1)
    (m-1-2) edge node [right] {$\pi_{\eta_{n}(H)}$} (m-2-2)
    (m-2-1) edge node [above] {$\pi$} (m-2-2);
\end{tikzpicture}
\end{aligned}
\end{equation}

\vspace{-0.3cm}

\noindent We know from the algebraic part of the theory that this diagram commutes. Moreover $\pi_H, \pi_n$ and $\pi_{\eta_{n}(H)}$ are continuous open maps. Hence so is $\pi$. As this map is also the $n$-step factor map of the (for now only algebraic) nilspace $\pi_H(\ns)$, we will denote it also by $\pi_{n}$ (it will be clear from the context which domain space a map $\pi_{n}$ is considered on, e.g.\ $\ns$ or $\pi_H(\ns)$).

We now turn to proving the continuity of corner completion on $\pi_H(\ns)$. According to Lemma \ref{lem:eq-def-lch-nil} we need to prove this continuity for the completion function $\mc{K}_{n+1}$ for each $n\in [k]$. But we can make the following observation straighataway.
\begin{remark}\label{rem:comdiag}
It suffices to prove continuity $\mc{K}_{k+1}$ on $\pi_H(\ns)$. Indeed, then the same result applied with $k=n$ will yield the continuity of completion of $\mc{K}_{n+1}$ on $\pi_{\eta_{n}(H)}(\ns_{n})$, for any $n\le k$, when considering $\pi_{\eta_{n}(H)}(\ns_{n})$ as the quotient of $\ns_n$ modulo the congruence  induced by $\eta_n(H)$ (with the quotient topology). Now note that what we actually need is continuity of $\mc{K}_{n+1}$ on $\pi_{\eta_n(H)}(\ns_n)$ relative to the topology on the latter space obtained as the quotient of the $\pi_H(\ns)$ topology under the map $\pi_n:\pi_H(\ns)\to \pi_{\eta_n(H)}(\ns_n)$. But by the above-mentioned commutativity of \eqref{diag:quots}, the two quotient topologies on $\pi_{\eta_{n}(H)}(\ns_{n})$ (coming from $\pi_H(\ns)$ and $\ns_n$) are equal.
\end{remark}

To prove continuity of $\mc{K}_{k+1}$ on $\pi_H(\ns)$ we will use the following lemma.

\begin{lemma}\label{lem:cont-diff-map-coset}
Let $\ns$ be a $k$-step Lie-fibered nilspace, with $k$-th structure group $\ab_k$, and let $H_\bullet$ be a closed fiber-transitive filtration on $\ns$. Then $\pi_H(\ns)$ is a Cartan principal $\ab_k/H_k$-bundle over $\pi_{k-1}(\pi_H(\ns))$.
\end{lemma}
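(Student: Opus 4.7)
The algebraic principal $\ab_k/H_k$-bundle structure of $\pi_H(\ns)$ over $\pi_{k-1}(\pi_H(\ns))$ is already furnished by Lemma \ref{lem:cong-equi}, which identifies the $k$-th structure group of $\pi_H(\ns)$ as $\ab_k/\eta_k(H_k) = \ab_k/H_k$. Since $H_k$ is closed in the Lie group $\ab_k$ by the closure assumption in Definition \ref{def:oldclosed}, the quotient $\ab_k/H_k$ is a topological group that is also \textsc{lch}. Continuity of the induced action on $\pi_H(\ns)$ follows from the continuity of the $\ab_k$-action on $\ns$ (Proposition \ref{prop:k-th-group-action}) together with Lemma \ref{lem:conv-quot} applied to lift convergent sequences through $\pi_H$.

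The main content is therefore the Cartan property. The plan is to take a convergent sequence $(b_n, b'_n) \to (b, b')$ in the set $\{(b, b') \in \pi_H(\ns)^2 : \pi_{k-1}(b) = \pi_{k-1}(b')\}$, transfer the problem back to $\ns$ (where the Cartan property is already available via Lemma \ref{lem:diff-cont}), and project the resulting difference to $\ab_k/H_k$. Applying Lemma \ref{lem:conv-quot} to the congruence $\sim_H$ on $\ns$ (closure of $\sim_H$ on $\ns^2$ being ensured by Proposition \ref{prop:quot-lcfr-nil}), one first lifts $b_n \to b$ to $y_n \to y$ and $b'_n \to b'$ to $\tilde y'_n \to \tilde y'$ in $\ns$. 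The remaining task is then to modify $\tilde y'_n$ by the $H$-action so that the resulting sequence lies in the same $\pi_{k-1,\ns}$-fiber as $y_n$, while still converging in $\ns$.

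To do this, note that by Lemma \ref{lem:group-cong-to-factors} there exist $\bar h_n \in \eta_{k-1}(H)$ with $\bar h_n(\pi_{k-1}(\tilde y'_n)) = \pi_{k-1}(y_n)$. Applying Lemma \ref{lem:re-param-conv-seq} inside the Lie-fibered nilspace $\ns_{k-1}$, equipped with the closed fiber-transitive filtration $\eta_{k-1}(H)_\bullet$ (which is closed by Definition \ref{def:oldclosed} and fiber-transitive by Lemma \ref{lem:group-cong-to-factors}), we re-parametrize these elements to obtain $\bar h'_n \to \bar h$ in $\eta_{k-1}(H)$ still satisfying $\bar h'_n(\pi_{k-1}(\tilde y'_n)) = \pi_{k-1}(y_n)$. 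The restriction $\eta_{k-1}|_H : H \to \eta_{k-1}(H)$ is a continuous surjective homomorphism between Polish groups (since $H$ and $\eta_{k-1}(H)$ are closed in their respective Polish translation groups $\tran(\ns)$ and $\tran(\ns_{k-1})$), and hence is a quotient map by the open mapping theorem; so the convergent sequence $\bar h'_n \to \bar h$ lifts to a convergent $h'_n \to h'$ in $H$. Setting $y'_n := h'_n \tilde y'_n$ yields lifts of $b'_n$ with $\pi_{k-1,\ns}(y'_n) = \pi_{k-1,\ns}(y_n)$ and $y'_n \to h' \tilde y' =: y^*$ in $\ns$.

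Now $(y_n, y'_n)$ lies in the domain of the continuous $\ab_k$-valued difference map on $\ns$, so by Lemma \ref{lem:diff-cont} we obtain $z_n := y'_n - y_n \to z^* := y^* - y$ in $\ab_k$. Projecting via the continuous quotient homomorphism $\ab_k \to \ab_k/H_k$ yields $z_n + H_k \to z^* + H_k$. By construction of the bundle structure on $\pi_H(\ns)$, the element $z_n + H_k$ is precisely the difference $b'_n - b_n$ in $\ab_k/H_k$, and similarly for the limit; thus the difference map on $\pi_H(\ns)$ is sequentially continuous, hence continuous (the spaces being metrizable). The main obstacle in this strategy is the simultaneous control of convergence in $\ns$ and the same-$\pi_{k-1}$-fiber constraint, which forces the tandem use of Lemma \ref{lem:re-param-conv-seq} at the level of $\ns_{k-1}$ together with the open-mapping lift from $\eta_{k-1}(H)$ back to $H$.
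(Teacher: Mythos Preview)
Your proof is correct and follows essentially the same route as the paper's: both arguments lift a convergent pair through $\pi_H$ via Lemma \ref{lem:conv-quot}, use Lemma \ref{lem:re-param-conv-seq} on $\ns_{k-1}$ to obtain a convergent sequence in $\eta_{k-1}(H)$, lift this to $H$ via the open mapping theorem for Polish groups, and then invoke the Cartan property of $\ns$ (Lemma \ref{lem:diff-cont}). The only cosmetic difference is the order of operations: the paper first introduces $z_n\in\ab_k$ from the same-fiber condition and then re-parametrizes the group elements, whereas you first re-parametrize the lifts of $b'_n$ into the $\pi_{k-1}$-fiber of $y_n$ and then compute the difference; these are equivalent bookkeepings of the same argument.
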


\begin{proof}
By Lemma \ref{lem:cong-equi}, algebraically $\pi_H(\ns)$ is a $\ab_k/H_k$-bundle over $\pi_{k-1}(\pi_H(\ns))$. By the paragraph below \eqref{diag:quots} we know that $\pi_{k-1}:\pi_H(\ns)\to \pi_{\eta_{k-1}(H)}(\ns_{k-1})$ is an open continuous map. In order to prove that $\pi_H(\ns)$ is a principal $\ab_k/H_k$-bundle it suffices then to prove that the addition map $A_H:\pi_H(\ns)\times (\ab_k/H_k)\to \pi_H(\ns)$ defined as $(\pi_H(x),z+H_k)\mapsto \pi_H(x+z)$ is continuous. We have the following commutative diagram:
\begin{equation}\label{diag:quot-sum}
\begin{aligned}[c]
\begin{tikzpicture}
  \matrix (m) [matrix of math nodes,row sep=3em,column sep=4em,minimum width=2em]
  {\ns\times \ab_k & \ns  \\
     \pi_H(\ns)\times \ab_k/H_k & \pi_H(\ns), \\};
  \path[-stealth]
    (m-1-1) edge node [above] {$A$} (m-1-2)
    (m-1-1) edge node [right] {$\pi_H\times \pi_{H_k}$} (m-2-1)
    (m-1-2) edge node [right] {$\pi_{H}$} (m-2-2)
    (m-2-1) edge node [above] {$A_{H}$} (m-2-2);
\end{tikzpicture}
\end{aligned}
\end{equation}

\vspace{-0.3cm}

\noindent where $A:\ns\times \ab_k\to \ns$ is the addition action of $\ab_k$ on $\ns$ and $\pi_{H_k}:\ab_k\to \ab_k/H_k$ is the quotient map. As $A$ and $\pi_H$ are continuous and $\pi_H\times \pi_{H_k}$ is open it follows that $A_H$ is continuous.

It remains to check continuity of the difference map. As $\pi_H(\ns)$ is equipped with an \textsc{lch} topology by Proposition \ref{prop:quot-lcfr-nil}, let $(\pi_H(x_n),\pi_H(x_n')) \to (\pi_H(x),\pi_H(x'))$ for some elements $x_n,x_n',x,x'\in \ns$ such that $(\pi_H(x_n),\pi_H(x_n')),(\pi_H(x),\pi_H(x'))$ are in the fiber-product $(\pi_H(\ns))\times_{\ns_{k-1}}(\pi_H(\ns))$. By Lemma \ref{lem:conv-quot} we can assume without loss of generality that $x_n\to x$ and $x_n'\to x'$ in $\ns$ as $n\to\infty$. Since $\pi_{k-1}(\pi_H(x_n)) = \pi_{k-1}(\pi_H(x'_n))$ there exists $z_n\in \ab_k$ such that $\pi_H(x_n) = \pi_H(x_n')+z_n+H_k= \pi_H(x_n'+z_n)$. Hence, for every $n\in \mb{N}$ there exists $\gamma_n\in H$  such that $\gamma_n(x_n)=x_n'+z_n$. In particular, applying $\pi_{k-1}$ on both sides of this equation we have $\eta_{k-1}(\gamma_n)(\pi_{k-1}(x_n)) = \pi_{k-1}(x_n')$. By Lemma \ref{lem:re-param-conv-seq} there exists $\gamma_n'\in H$ such that $\eta_{k-1}(\gamma_n)(\pi_{k-1}(x_n)) = \eta_{k-1}(\gamma_n')(\pi_{k-1}(x_n))$ and $\eta_{k-1}(\gamma_n')\to \eta_{k-1}(\gamma)$ for some $\gamma\in H$ as $n\to\infty$. By Lemma \ref{lem:conv-polish-group} we can assume without loss of generality that indeed $\gamma_n'\to \gamma$ in $\tran(\ns)$ as $n\to\infty$. Hence, since now $\pi_{k-1}(\gamma_n(x_n)) = \pi_{k-1}(\gamma_n'(x_n))$, there exists $g_n\in \ab_k$ such that $\gamma_n(x_n) = \gamma_n'(x_n)+g_n$. By \eqref{eq:ft-fil} we have  $g_n\in H_k$. Therefore $\gamma_n'(x_n)+g_n = \gamma_n(x_n)=x_n'+z_n$. This implies that $\gamma_n'(x_n)-x_n' = z_n-g_n$. By construction, and continuity of difference on $\ns$, the left hand side of this equation converges (as $\gamma_n'\to \gamma$, $x_n\to x$ and $x_n'\to x'$). Hence $z_n-g_n\to \gamma(x)-x'$. Since $\pi_H(x_n)-\pi_H(x_n') = z_n+H_k = z_n-g_n+H_k$, we deduce that this converges to $(\gamma(x)-x')+H_k=\pi_H(x)-\pi_H(x')$.
\end{proof}
\noindent We shall also need a result telling us that convergent sequences of cubes in $\pi_H^{\db{n}}(\cu^n(\ns))$ can be lifted to convergent sequences in $\cu^n(\ns)$. For technical reasons we establish the following lifting result that holds for more general objects than cubes. Recall from \cite{Cand:Notes1} the notions of discrete-cube morphism $\phi:\db{m}\to\db{n}$ and of simplicial subsets of $\db{n}$. Recall also from the paragraph preceding Lemma \ref{lem:simplicial-proy} the definition of $\hom(S,\ns)$ for a simplicial set $S$ and a nilspace $\ns$. As usual, when $\ns$ is \textsc{lch} we equip $\hom(S,\ns)$ with the subspace topology induced by the product topology on $\ns^S$.
\begin{proposition}\label{prop:lift-of-simplicial}
Let $\ns$ be a $k$-step Lie-fibered nilspace, let $H_\bullet$ be a closed fiber-transitive filtration on $\ns$, and let $S$ be a simplicial subset of $\db{n}$. Then for every convergent sequence $(\pi_H\co\q_n)_{n\in\mb{N}}$ in $\hom(S,\pi_H(\ns))$,  there exists $\q_n'\in \hom(S,\ns)$ such that $\pi_H\co\q_n=\pi_H\co \q_n'$ and $(\q_n')_n$ converges in $\hom(S,\ns)$.
\end{proposition}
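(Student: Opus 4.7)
The plan is to argue by induction on $|S|$, the cardinality of the simplicial set $S$. For the base case $|S|=1$, with $S=\{v\}$, we identify $\hom(S,\ns)$ with $\ns$ and $\hom(S,\pi_H(\ns))$ with $\pi_H(\ns)$. The assertion reduces to finding a sequence $(h_n)$ in $H$ such that $(h_n\cdot\q_n(v))_n$ converges in $\ns$, which is Lemma \ref{lem:conv-quot} applied to the continuous $H$-action on $\ns$ and the convergent sequence $(\pi_H(\q_n(v)))_n$; we then set $\q_n'(v):=h_n\cdot\q_n(v)$.

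For the inductive step with $|S|>1$, choose a maximal vertex $v\in S$ (no $w\in S$ satisfies $w>v$ coordinatewise) and set $S':=S\setminus\{v\}$, which is still simplicial. The induction hypothesis applied to $S'$ and the restricted sequence $(\pi_H\co\q_n|_{S'})_n$ yields a convergent lift $\q_n''\in\hom(S',\ns)$ with $\pi_H\co\q_n''=\pi_H\co\q_n|_{S'}$. Define $\q_n'|_{S'}:=\q_n''$; the remaining task is to specify values $\q_n'(v)\in\ns$ satisfying (a) $\q_n'\in\hom(S,\ns)$, (b) $\pi_H(\q_n'(v))=\pi_H(\q_n(v))$, and (c) $\q_n'(v)$ converges in $\ns$. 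The maximality of $v$ ensures that every face of $\db{n}$ contained in $S$ and containing $v$ equals $F_v:=\{w\leq v\}$, so condition (a) reduces to requiring that $\q_n'(v)$ completes the corner $\q_n''|_{F_v\setminus\{v\}}\in\cor^{|v|}(\ns)$ in $\ns$.

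To construct $\q_n'(v)$, proceed in two steps. First, using that $\ns$ is LCH and $p^{\db{|v|}}:\cu^{|v|}(\ns)\to\cor^{|v|}(\ns)$ is an open map whose fibers are $\ab_{|v|}(\ns)$-torsors acting at the top vertex, apply Lemma \ref{lem:conv-quot} to obtain a convergent sequence $\tilde y_n\to y$ in $\ns$ with each $\tilde y_n$ completing $\q_n''|_{F_v\setminus\{v\}}$. Second, correct the $\pi_H$-class: both $\pi_H(\tilde y_n)$ and $\pi_H(\q_n(v))$ are completions of the corner $\pi_H\co\q_n|_{F_v\setminus\{v\}}$ in $\pi_H(\ns)$, so by Lemma \ref{lem:cong-equi} their difference $\bar z_n\in\ab_{|v|}(\pi_H(\ns))=\ab_{|v|}(\ns)/\eta_{|v|}(H_{|v|})$ is well-defined. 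Applying Lemma \ref{lem:cont-diff-map-coset} to the LCH Lie-fibered nilspace $\ns_{|v|}$ equipped with the closed fiber-transitive filtration $\eta_{|v|}(H_\bullet)$ (obtained via Theorem \ref{thm:factor-of-lch-nil}, Lemma \ref{lem:group-cong-to-factors}, and the closure hypothesis in Definition \ref{def:oldclosed}), the $|v|$-th factor of $\pi_H(\ns)$ is a Cartan bundle, so $\bar z_n$ converges. Lifting $\bar z_n$ to a convergent $z_n\to z$ in $\ab_{|v|}(\ns)$ is possible because the quotient map $\ab_{|v|}(\ns)\to\ab_{|v|}(\ns)/\eta_{|v|}(H_{|v|})$ is a continuous surjective homomorphism of abelian Lie groups, hence open by the open mapping theorem (using closure of $\eta_{|v|}(H_{|v|})$). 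Setting $\q_n'(v):=\tilde y_n+z_n$ (the action of $z_n$ at the top vertex producing another valid completion) gives convergence in $\ns$, the correct $\pi_H$-class, and membership in $\hom(S,\ns)$.

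The main obstacle is reconciling two potentially competing demands on $\q_n'(v)$: convergence in $\ns$, and lying in the prescribed $\pi_H$-equivalence class pointwise. The two-step construction resolves this tension: the LCH structure of $\ns$ guarantees \emph{some} convergent completion $\tilde y_n$, while the Cartan bundle structure at level $|v|$ of $\pi_H(\ns)$ (obtained by applying Lemma \ref{lem:cont-diff-map-coset} to the appropriate factor) combined with the closure of $\eta_{|v|}(H_{|v|})$ in $\ab_{|v|}(\ns)$ enables a convergent correction in the $\ab_{|v|}(\ns)$-torsor of completions to arrive at the desired $\pi_H$-class without destroying convergence.
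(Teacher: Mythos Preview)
Your induction on $|S|$ has a genuine gap in the correction step (step 2). The claim that two completions of a $|v|$-corner in $\pi_H(\ns)$ differ by an element of $\ab_{|v|}(\pi_H(\ns))$ is false when $|v|<k$. In a $k$-step nilspace, the set of completions of an $m$-corner (for $m\le k$) is exactly the fiber $\pi_{m-1}^{-1}(\ast)$ of the unique completion $\ast$ in $\ns_{m-1}$ (this is \cite[Remark 3.2.12]{Cand:Notes1} applied iteratively). This fiber is an iterated bundle with successive structure groups $\ab_m,\ab_{m+1},\ldots,\ab_k$, not a torsor under $\ab_{|v|}$ alone. So the ``difference'' $\bar z_n$ you write down is not a well-defined element of $\ab_{|v|}(\pi_H(\ns))$. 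If instead you pass to the $|v|$-step factor $\pi_H(\ns)_{|v|}$ to compute a genuine $\ab_{|v|}$-valued difference (as your reference to Lemma~\ref{lem:cont-diff-map-coset} for $\ns_{|v|}$ suggests), the lifted correction $z_n\in\ab_{|v|}(\ns)$ cannot be ``added'' to $\tilde y_n\in\ns$: the group $\ab_{|v|}(\ns)$ acts on $\ns_{|v|}$, not on $\ns$, so the formula $\q_n'(v):=\tilde y_n+z_n$ is undefined when $|v|<k$. The same issue affects your step 1: the fibers of $p^{\db{|v|}}$ are not $\ab_{|v|}$-torsors either (though that part is salvageable via openness of $p^{\db{|v|}}$ alone).

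The paper avoids this by inducting on $k$ rather than on $|S|$. It first reduces to simplicial sets $S$ with $|v|\le k$ for all $v\in S$; for such $S$ one has $\hom(S,\mc{D}_k(\ab_k))=\ab_k^S$, so arbitrary $\ab_k$-valued corrections can be made pointwise without leaving $\hom(S,\ns)$. The induction step then projects to $\ns_{k-1}$, lifts there by hypothesis, and corrects globally in $\ab_k$ using Lemma~\ref{lem:cont-diff-map-coset}. Your approach could in principle be repaired by nesting an induction on the step (from $k$ down to $|v|$) inside each vertex-correction, but that essentially reproduces the paper's argument with extra bookkeeping.
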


\begin{proof}
For $v\in \db{n}$ recall that $|v|:=v\sbr{1}+\cdots+v\sbr{n}$. It suffices to prove the result for any $S\subset \db{n}$ such that $|v|\le k$ for any $v\in S$. Indeed, if this case holds, then given a simplicial set $S'\subset \db{n}$, let $S^*:=\{v\in S:|v|\le k\}$, and note that if a  sequence $(\pi_H\co \q_n)_n$ in $\hom({S'},\pi_H(\ns))$ converges then $(\pi_H\co\q_n|_{S^*})_n$ in $\hom({S^*},\pi_H(\ns))$  converges as well. Then by assumption there is a sequence $(\q_n')_n$ in $\hom({S^*},\ns)$ such that $\pi_H\co\q_n' = \pi_H\co\q_n|_{S^*}$ and such that $(\q_n')_n$ converges. It is then not hard to see that $\q_n'$ can be extended uniquely and continuously to an element of $\hom({S'},\ns)$ in such a way that these extensions also converge as $n\to\infty$. Abusing the notation let us consider now $\q_n'\in \hom({S'},\ns)$ such that $(\q_n')_n$ converges and $\pi_H\co\q_n'|_{S^*} = \pi_H\co \q_n|_{S^*}$. By uniqueness of completion we have  $\pi_H\co\q_n' = \pi_H\co \q_n$, so the sequence $(\q_n')_n$ satisfies the desired conclusion.

We have thus indeed reduced the task to proving that, given a $k$-step Lie-fibered nilspace $\ns$, the proposition's conclusion holds for simplicial sets $S\subset \db{n}$ such that $|v|\le k$ for all $v\in S$. We prove this by induction on $k$. The case $k=0$ is trivial. For $k>0$, assuming the case $k-1$, let $\q_n\in \hom(S,\ns)$ be such that $\pi_H\co\q_n$ converges as $n\to\infty$. Then $\pi_{k-1}\co\pi_H\co\q_n = \pi_{\eta_{k-1}(H)}\co \pi_{k-1}\co \q_n$ is a convergent sequence in $\hom\big(S,\pi_{\eta_{k-1}(H)}(\ns_{k-1})\big)$. By induction there exists $\pi_{k-1}\co \q_n'\in \hom(S,\ns_{k-1})$ such that $(\pi_{k-1}\co\q_n')_n$ converges and $\pi_{\eta_{k-1}(H)} \co \pi_{k-1}\co \q_n' = \pi_{\eta_{k-1}(H)}\co \pi_{k-1}\co \q_n$. As $S$ does not contain elements $v$ with $|v|>k$, note that we can correct pointwise $\q_n'\in \hom(S,\ns)$ using Lemma \ref{lem:conv-quot} by adding an element $d_n\in \ab_k^S$ in such a way that the sequence $(\q_n'+d_n)_n$ still lies in $\hom(S,\ns)$ (this is because $\hom(S,\mc{D}_k(\ab_k))=\ab_k^S$, since $|v|\le k$ for all $v\in S$) but now converges as well. Thus, without loss of generality we can assume that $\q_n'$ itself converges in $\hom(S,\ns)$.

The cube $\pi_H\co\q_n'$ is almost what we need. Indeed, we know that $\pi_H\co\q_n' = \pi_H\co \q_n+\overline{z_n}$ for some $\overline{z_n}\in (\ab_k/H_k)^{S}$. This follows from the fact that $\pi_H\co\q_n', \pi_H\co \q_n$ are equal modulo the factor map $\pi_{k-1}:\pi_H(\ns)\to \pi_{\eta_{k-1}(H)}(\ns_{k-1})$. But then note that $\overline{z_n} = \pi_H\co\q_n'-\pi_H\co \q_n$ and both sequences $(\pi_H\co \q_n)_n$ and $(\pi_H\co \q_n')_n$ converge by hypothesis. Hence, by Lemma \ref{lem:cont-diff-map-coset} we have that $\overline{z_n}$ converges as well. By Lemma \ref{lem:conv-quot} there is a convergent sequence $(z_n)_n$ in $\ab_k^S$ such that $z_n+H_k^S = \overline{z_n}$ for every $n$. Then $\q_n'-z_n$ is in $\hom(S,\ns)$ (again using that $\hom(S,\mc{D}_k(\ab_k))=\ab_k^S$), converges, and satisfies $\pi_H\co (\q_n'-z_n)=\pi_H\co \q_n$, as required.
\end{proof}

\begin{proposition}\label{prop:cont-completion-coset}
Let $\ns$ be a $k$-step Lie-fibered nilspace and let $H_\bullet$ be a closed fiber-transitive filtration on $\ns$. Then the completion function $\mc{K}:\cor^{k+1}(\pi_H(\ns))\to \pi_H(\ns)$ is continuous.
\end{proposition}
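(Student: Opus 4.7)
The plan is to reduce the continuity of $\mc{K}$ on $\cor^{k+1}(\pi_H(\ns))$ to the continuity of corner-completion on $\ns$ itself, by lifting a convergent sequence of corners on $\pi_H(\ns)$ to a convergent sequence of corners on $\ns$. The lifting tool is already available (Proposition \ref{prop:lift-of-simplicial}), and completion on $\ns$ is continuous because $\ns$ is Lie-fibered and thus \textsc{lch} (Lemma \ref{lem:cont-if-open}). The only subtlety is to match the completion in $\pi_H(\ns)$ with the projection of the completion in $\ns$, which follows from the uniqueness of $(k+1)$-corner completion in the $k$-step nilspace $\pi_H(\ns)$.

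Concretely, first I would take an arbitrary convergent sequence $(\q'_n)_n$ in $\cor^{k+1}(\pi_H(\ns))$ with limit $\q'\in \cor^{k+1}(\pi_H(\ns))$. Since $\pi_H$ is cube-surjective (by Lemma \ref{lem:gpequivcong}, $\pi_H$ is a fibration), for every $n$ there exists $\widetilde{\q}_n\in \cor^{k+1}(\ns)$ with $\pi_H\co \widetilde{\q}_n = \q'_n$, and similarly there exists $\widetilde{\q}\in \cor^{k+1}(\ns)$ with $\pi_H\co \widetilde{\q} = \q'$. The set $S:=\db{k+1}\setminus\{1^{k+1}\}$ is simplicial in $\db{k+1}$, and $\cor^{k+1}(\ns) = \hom(S,\ns)$, $\cor^{k+1}(\pi_H(\ns)) = \hom(S,\pi_H(\ns))$. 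Applying Proposition \ref{prop:lift-of-simplicial} to the convergent sequence $(\q'_n)_n$ in $\hom(S,\pi_H(\ns))$, I obtain corners $\widehat{\q}_n\in \cor^{k+1}(\ns)$ with $\pi_H\co \widehat{\q}_n = \q'_n$ such that $(\widehat{\q}_n)_n$ converges in $\cor^{k+1}(\ns)$ to some limit $\widehat{\q}\in \cor^{k+1}(\ns)$ with $\pi_H\co\widehat{\q}=\q'$.

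Next, since $\ns$ is a $k$-step \textsc{lch} nilspace, Lemma \ref{lem:cont-if-open} gives that the completion function $\mc{K}_\ns:\cor^{k+1}(\ns)\to \ns$ is continuous. Therefore the sequence $\q_n:=\mc{K}_\ns(\widehat{\q}_n)$ is the value at $1^{k+1}$ of a sequence of cubes in $\cu^{k+1}(\ns)$ extending $\widehat{\q}_n$, and these values converge to $\q:=\mc{K}_\ns(\widehat{\q})$. Projecting these extensions via $\pi_H$ produces cubes in $\cu^{k+1}(\pi_H(\ns))$ that extend $\q'_n$ and $\q'$ respectively, so by the uniqueness of corner completion on the $k$-step nilspace $\pi_H(\ns)$ we get $\mc{K}(\q'_n)=\pi_H(\q_n)$ and $\mc{K}(\q')=\pi_H(\q)$. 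By continuity of $\pi_H$ we conclude $\mc{K}(\q'_n)\to \mc{K}(\q')$.

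The main structural difficulty has already been handled upstream, in Proposition \ref{prop:lift-of-simplicial}, which required the closure of $H_\bullet$ and the Cartan structure from Lemma \ref{lem:cont-diff-map-coset}; once that lifting is granted, the argument above reduces everything to the continuity of completion on $\ns$ and to the uniqueness of $(k+1)$-corner completion. The only thing to watch for is that $\pi_H(\ns)$ is already known to be a $k$-step nilspace (via Lemma \ref{lem:gpequivcong} and Lemma \ref{lem:cong-equi}), so uniqueness of $(k+1)$-corner completion is legitimate; this is what allows the identification $\mc{K}(\q'_n)=\pi_H(\mc{K}_\ns(\widehat{\q}_n))$ to be applied without having yet established continuity of $\mc{K}$.
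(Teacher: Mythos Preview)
Your proof is correct and follows essentially the same approach as the paper: lift the convergent sequence of corners via Proposition \ref{prop:lift-of-simplicial}, use continuity of completion on $\ns$, then project back via $\pi_H$ and invoke uniqueness of $(k+1)$-corner completion on the $k$-step nilspace $\pi_H(\ns)$. Your write-up is somewhat more explicit about the justifications (e.g.\ noting that $\pi_H\co\widehat{\q}=\q'$ for the limit, and that $\pi_H(\ns)$ is already known algebraically to be $k$-step), but the argument is the same.
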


\begin{proof}
Let $\pi_H\co\q_n'$ be a convergent sequence in $\cor^{k+1}(\pi_H(\ns))$. By Proposition \ref{prop:lift-of-simplicial} we can assume that $\q_n'\in \cor^{k+1}(\ns)$ and  converges to $\q'\in \cor^{k+1}(\ns)$. For each $n\in \mb{N}$ let $\q_n\in \cu^{k+1}(\ns)$ be the unique completion of $\q_n'$. Then by continuity of completion on $\ns$, the cubes $\q_n$ converge to the cube $\q\in\cu^{k+1}(\ns)$ which completes $\q'$. But then $\mc{K}(\pi_H\co\q_n')=\pi_H(\q_n(1^{k+1}))$, which converges (by continuity of $\pi_H$) to $\pi_H(\q(1^{k+1}))=\mc{K}(\pi_H\co \q')$.
\end{proof}

\begin{theorem}\label{thm:coset-quot-closed}
Let $\ns$ be a $k$-step Lie-fibered nilspace, and let $H_\bullet$ be a closed fiber-transitive filtration on $\ns$. Then $\pi_H(\ns)$ equipped with the quotient topology is a $k$-step  Lie-fibered nilspace.
\end{theorem}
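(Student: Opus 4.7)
The plan is to verify the two defining conditions of a $k$-step Lie-fibered nilspace from Definition~\ref{def:lcfr-nil}: first, that $\pi_H(\ns)$ is an \textsc{lch} nilspace, and second, that each of its structure groups is a compactly generated abelian Lie group. Essentially all of the analytic work has been done in the preceding propositions, so the proof amounts to a careful assembly of those results together with an induction on $k$.

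For the first condition, I will invoke Lemma~\ref{lem:eq-def-lch-nil}. The underlying topology on $\pi_H(\ns)$ is \textsc{lch} by Proposition~\ref{prop:quot-lcfr-nil}, and the cube sets are closed by Proposition~\ref{prop:closure-cubes-coset}. It remains to verify continuity of the corner-completion maps $\mc{K}_{n+1}$ for every $n\in[k]$. The case $n=k$ is exactly Proposition~\ref{prop:cont-completion-coset}. For $n<k$, I plan to apply that same proposition with $\ns$ replaced by the $n$-step factor $\ns_n$ endowed with the filtration $\eta_n(H_\bullet)$: this filtration is fiber-transitive on $\ns_n$ by Lemma~\ref{lem:group-cong-to-factors}, and it inherits the closure property from $H_\bullet$ because the conditions of Definition~\ref{def:oldclosed} on $\ns_n$ form a subfamily of those assumed for $H_\bullet$ on $\ns$. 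Remark~\ref{rem:comdiag} then guarantees that the continuity obtained for completion on $\pi_{\eta_n(H)}(\ns_n)$ is exactly the continuity needed on the $n$-th factor of $\pi_H(\ns)$, since the two ambient quotient topologies agree.

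For the second condition, I will proceed by induction on $k$, the case $k=0$ being trivial. By Lemma~\ref{lem:cong-equi}, the $i$-th structure group of $\pi_H(\ns)$ is algebraically isomorphic to $\ab_i(\ns)/\eta_i(H_i)$. For $i=k$, Lemma~\ref{lem:cont-diff-map-coset} tells us that $\pi_H(\ns)$ is a Cartan principal $\ab_k(\ns)/H_k$-bundle over its $(k-1)$-step factor; combining this with Proposition~\ref{prop:k-th-group-action} identifies the $k$-th structure group topologically with $\ab_k(\ns)/H_k$. Since $H_k$ is closed in $\ab_k(\ns)$ by the closure hypothesis on $H_\bullet$, and $\ab_k(\ns)$ is a compactly generated abelian Lie group, the quotient $\ab_k(\ns)/H_k$ is a compactly generated Lie group by \cite[Theorem~2.6]{Mosk} (compact generation passes to continuous surjective images). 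For $i<k$, the $i$-th structure group of $\pi_H(\ns)$ coincides with the $i$-th structure group of its $(k-1)$-step factor, which by the first part of the proof together with Remark~\ref{rem:comdiag} is isomorphic as an \textsc{lch} nilspace to $\pi_{\eta_{k-1}(H)}(\ns_{k-1})$; the inductive hypothesis applied to this latter $(k-1)$-step Lie-fibered nilspace (with its closed fiber-transitive filtration $\eta_{k-1}(H_\bullet)$) then yields that all lower-step structure groups are Lie.

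I do not anticipate a genuinely hard step here: every substantive ingredient (the Cartan bundle structure on $\pi_H(\ns)$, closure of image cube sets, continuity of $\mc{K}_{k+1}$, algebraic identification of structure groups, the transfer of ``closedness'' of a fiber-transitive filtration down to factors) has been isolated in earlier results. The only mildly delicate point is to make sure that when we identify the $(k-1)$-step factor of $\pi_H(\ns)$ with $\pi_{\eta_{k-1}(H)}(\ns_{k-1})$ in order to run the induction, the two quotient topologies involved coincide; this is precisely what Remark~\ref{rem:comdiag} and the commutativity of diagram~\eqref{diag:quots} were set up to provide.
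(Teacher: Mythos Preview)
Your proposal is correct and follows essentially the same approach as the paper's proof: assemble Propositions~\ref{prop:quot-lcfr-nil}, \ref{prop:closure-cubes-coset}, and \ref{prop:cont-completion-coset} via Lemma~\ref{lem:eq-def-lch-nil} and Remark~\ref{rem:comdiag} for the \textsc{lch}-nilspace part, then identify the structure groups as $\ab_i(\ns)/\eta_i(H_i)$ and check they are Lie. The only minor variation is in the topological identification of the $k$-th structure group with $\ab_k(\ns)/H_k$: the paper restricts the fibration $\pi_H$ to a single $\pi_{k-1}$-fiber to get a continuous surjective homomorphism $\ab_k(\ns)\to\ab_k(\pi_H(\ns))$ with kernel $H_k$ and then invokes the open mapping theorem for Polish groups, whereas you route through the Cartan-bundle statement of Lemma~\ref{lem:cont-diff-map-coset}; these are equivalent implementation details.
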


\begin{proof}
Since $\sim_H$ is a groupable congruence, the algebraic part (i.e.\ that $\pi_H(\ns)$ is a $k$-step nilspace) follows from Lemma \ref{lem:gpequivcong} (as used in Lemma \ref{lem:cong-equi}). 

The topology on $\pi_H(\ns)$ is \textsc{lch} by Proposition \ref{prop:quot-lcfr-nil}. Now we want to check that the hypotheses of Lemma \ref{lem:eq-def-lch-nil} are satisfied. The cube sets on $\pi_H(\ns)$ are closed, by Proposition \ref{prop:closure-cubes-coset}. The continuity of the completion of $(k+1)$-corners in $\pi_H(\ns)$ follows from Proposition \ref{prop:cont-completion-coset}. As explained in Remark \ref{rem:comdiag}, this implies that  $\mc{K}_{n+1}$ is continuous on $\pi_{\eta_n(H)}(\ns_n)$. This shows that $\pi_H(\ns)$ is an \textsc{lch} $k$-step nilspace.

Finally, we need to prove that the structure groups of $\pi_H(\ns)$ are Lie groups. Let us prove that $\ab_k(\pi_H(\ns)) \cong \ab_k(\ns)/H_k$ (as topological groups) and note that for any $i\in[k]$ the proof of $\ab_i(\pi_{\eta_i(H)}(\ns_i)) \cong \ab_i(\ns)/\eta_i(H_i)$ follows similarly. The map $\pi_H:\ns\to \pi_H(\ns)$ is a continuous and open fibration. Restricting to a fiber\footnote{Recall from Proposition \ref{prop:k-th-group-action} the definition of the $k$-th structure group of a $k$-step  \textsc{lch} nilspace.} of $\pi_{k-1}$, we deduce that the $k$-th structure morphism $\phi_k:\ab_k(\ns)\to \ab_k(\pi_H(\ns))$ is a continuous surjective homomorphism. The kernel of this map is precisely $H_k$. Indeed, for any $z\in \ab_k(\ns)$ we have $\phi_k(z)=0$ if and only if for any fixed $e\in \ns$ we have that $\pi_H(e+z)=\pi_H(e)$. Then, by \eqref{eq:ft-fil} we have that $z\in H_k$. By the Open mapping theorem for Polish groups we have that $\ab_k(\ns)/H_k \cong \ab_k(\pi_H(\ns))$ as topological groups. But now both $\ab_k(\ns)$ and $H_k$ are Lie groups (the first by hypothesis and the second because by Cartan's theorem it is a closed subgroup of a Lie group). Thus $\ab_k(\pi_H(\ns))$ is a Lie group.
\end{proof}

With the same assumptions as in Theorem \ref{thm:coset-quot-closed}, if in addition $\eta_i(H_i)$ is discrete and cocompact for all $i\in[k]$ then the quotient Lie-fibered nilspace is in fact a \textsc{cfr} nilspace. To detail this let us extend the notions of fiber discreteness and cocompactness, from free nilspaces (as in Definition \ref{def:FDCA}) to all Lie-fibered nilspaces.

\begin{defn}\label{def:FDCA-general}
Let $\ns$ be a $k$-step Lie-fibered nilspace and let $\Gamma$ be a fiber-transitive group on $\ns$. We say that $\Gamma$ is \emph{fiber-discrete} if for every $j\in[k]$, the group $\eta_j(\Gamma)\cap \tran_j(\ns_j)$ is a discrete subgroup of $\tran_j(\ns_j)$. We say that $\Gamma$ is \emph{fiber-cocompact} if for every $j\in[k]$, the group $\eta_j(\Gamma)\cap \tran_j(\ns_j)$ is cocompact in $\tran_j(\ns_j)$.
\end{defn}

\begin{corollary}\label{cor:quo-cfr-nil}
Let $\ns$ be a $k$-step Lie-fibered nilspace, and let $H_\bullet$ be a closed fiber-transitive and fiber-cocompact filtration on $\ns$. Then $\pi_H(\ns)$ with the quotient topology is a $k$-step \textsc{cfr} nilspace.
\end{corollary}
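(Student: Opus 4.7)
The plan is to combine Theorem \ref{thm:coset-quot-closed} with an inductive compactness argument, using the local triviality from Theorem \ref{thm:local-triviality-bundle} and the explicit description of the structure groups of $\pi_H(\ns)$ from Lemma \ref{lem:cong-equi}. Since Theorem \ref{thm:coset-quot-closed} already delivers that $\pi_H(\ns)$, with the quotient topology, is a $k$-step Lie-fibered nilspace, the only remaining task is to verify that $\pi_H(\ns)$ is compact; compactness of the structure groups will then follow from continuity of the factor maps, yielding the \textsc{cfr} property.

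First I would verify that the hypotheses pass to lower-step factors. By Lemma \ref{lem:group-cong-to-factors}, $\eta_{k-1}(H_\bullet)$ is fiber-transitive on $\ns_{k-1}$. It is also closed in the sense of Definition \ref{def:oldclosed}, because for each $i\in[k-1]$ and $j\in[i,k-1]$ we have $\eta_j(\eta_{k-1}(H_i))=\eta_j(H_i)$, which is closed in $\tran_i((\ns_{k-1})_j)=\tran_i(\ns_j)$ by hypothesis. Similarly it is fiber-cocompact on $\ns_{k-1}$: for each $j\in[k-1]$, the group $\eta_j(\eta_{k-1}(H_j))\cap\tran_j((\ns_{k-1})_j)=\eta_j(H_j)\cap\tran_j(\ns_j)$ is cocompact in $\tran_j(\ns_j)$ by hypothesis.

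Next I would argue by induction on $k$. The case $k=0$ is trivial, $\pi_H(\ns)$ being a point. For $k>0$, from the commutativity of diagram \eqref{diag:quots} and Theorem \ref{thm:coset-quot-closed} applied both to $\ns$ and to $\ns_{k-1}$, we obtain an isomorphism of Lie-fibered nilspaces $\pi_H(\ns)_{k-1}\cong \pi_{\eta_{k-1}(H)}(\ns_{k-1})$, which by the inductive hypothesis (applied to the closed fiber-transitive and fiber-cocompact filtration $\eta_{k-1}(H_\bullet)$ on $\ns_{k-1}$) is compact. On the other hand, by Lemma \ref{lem:cong-equi} the $k$-th structure group of $\pi_H(\ns)$ is $\ab_k(\ns)/H_k$, where $H_k=\eta_k(H_k)\cap \tran_k(\ns_k)$ is cocompact in $\tran_k(\ns_k)\cong\ab_k(\ns)$ by fiber-cocompactness; hence $\ab_k(\ns)/H_k$ is a compact abelian Lie group.

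Finally, since $\pi_H(\ns)$ is Lie-fibered, Theorem \ref{thm:local-triviality-bundle} gives an open cover $\{U_\alpha\}$ of $\pi_H(\ns)_{k-1}$ such that $\pi_{k-1}^{-1}(U_\alpha)$ is homeomorphic to $U_\alpha\times (\ab_k(\ns)/H_k)$. By compactness of $\pi_H(\ns)_{k-1}$, we may extract a finite subcover, and each $\overline{U_\alpha}$ is compact, so $\pi_{k-1}^{-1}(\overline{U_\alpha})$ is a closed subset of $\pi_H(\ns)$ that is homeomorphic to a closed subset of the compact space $\overline{U_\alpha}\times (\ab_k(\ns)/H_k)$, hence compact. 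Writing $\pi_H(\ns)$ as the finite union of these compact sets shows that $\pi_H(\ns)$ is compact. Combined with Theorem \ref{thm:coset-quot-closed}, this gives the \textsc{cfr} property. The only subtle point in this plan is the verification that the closed and fiber-cocompact hypotheses descend to each lower-step factor in a form compatible with the inductive hypothesis; once this is in hand, the remaining steps are standard bundle-theoretic considerations.
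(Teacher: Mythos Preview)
Your overall strategy matches the paper's: invoke Theorem \ref{thm:coset-quot-closed} for the Lie-fibered structure, then prove compactness of $\pi_H(\ns)$ by induction on $k$, using that the base $(\pi_H(\ns))_{k-1}$ is compact by induction and the fiber $\ab_k(\ns)/H_k$ is compact by fiber-cocompactness, and finishing with local triviality of the bundle.

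There is, however, a genuine gap in your final paragraph. You pass from a finite subcover $\{U_\alpha\}$ of trivializing open sets to the claim that $\pi_{k-1}^{-1}(\overline{U_\alpha})$ is homeomorphic to a closed subset of $\overline{U_\alpha}\times(\ab_k(\ns)/H_k)$. But the trivialization is only defined on $U_\alpha$, and there is no reason for it to extend to $\overline{U_\alpha}$; in general $\overline{U_\alpha}$ need not lie in any trivializing chart. The paper addresses exactly this point: after fixing a finite trivializing cover $\{U_j\}$, it applies Lebesgue's number lemma to obtain $\delta>0$ such that every set of diameter $<\delta$ lies in some $U_j$, then covers $(\pi_H(\ns))_{k-1}$ by finitely many open balls of radius $\delta/2$, whose \emph{closures} are compact and each contained in some $U_j$. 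Then $\pi_{k-1}^{-1}$ of each such closed ball is indeed a closed subset of a trivializing chart, hence compact. You could equally fix your argument by invoking regularity of the compact Hausdorff base to shrink each $U_\alpha$ to an open $V_\alpha$ with $\overline{V_\alpha}\subset U_\alpha$ before extracting the finite subcover.
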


\begin{proof}
By Theorem \ref{thm:coset-quot-closed} we have that $\pi_H(\ns)$ is a $k$-step Lie-fibered nilspace. To prove that $\pi_H(\ns)$ is a compact nilspace, by \cite[Definition 1.0.2]{Cand:Notes2} it suffices to prove that the topology on $\pi_H(\ns)$ is compact (we already know that $\cu^n(\pi_H(\ns))$ is closed for each $n$, by Theorem \ref{thm:coset-quot-closed}).

We argue by induction on $k$, with the case $k=0$ being trivial. Assuming the case $k-1$, by \cite[Theorem 3.3]{Gleason} we have that $\ns$ is a $Z_k/H_k$-bundle over $\ns_{k-1}$ where $Z_k/H_k$ is a compact abelian Lie group. Thus for every $\pi_{k-1}(x)\in \ns_{k-1}$ there exists an open set $U\subset \ns_{k-1} $ such that $\pi_{k-1}^{-1}(U)$ is homeomorphic to $U\times Z_k/H_k$. For every $\pi_{k-1}(x)$ let $U_{\pi_{k-1}(x)}$ be such an open set. By compactness of $\ns_{k-1}$ there is a finite cover $(U_{\pi_{k-1}(x_j)})_{j=1}^b$. Then, given any open covering $\cup_{i\in I} V_i$ of $\ns$, by taking intersections we can assume that each $V_i$ is included in one of the open sets $\pi_{k-1}^{-1}(U_{\pi_{k-1}(x_j)})$.

By Lebesgue's number lemma, there exists $\delta>0$ such that if a subset $C\subset \ns_{k-1}$ has diameter $\diam(C)<\delta$, then $C$ is included in $U_{\pi_{k-1}(x_j)}$ for some $j\in [b]$. By compactness of $\ns_{k-1}$, there exists a finite covering of $\ns_{k-1}$ by balls of the form $B(\pi_{k-1}(x'_s),\delta/2)$, $s\in [m]$, for some $m\in \mb{N}$. The closed (hence compact)  balls $\overline{B(\pi_{k-1}(x'_s),\delta/2)}$ clearly cover $\ns_{k-1}$, and each of these compact sets is included in some $U_{\pi_{k-1}(x_{j_s})}$ for some $j_s\in[b]$. Thus, for each $s\in[m]$, since $\overline{B(\pi_{k-1}(x'_s),\delta/2)}\times (\ab_k/H_k)\subset U_{\pi_{k-1}(x_{j_s})}\times (\ab_k/H_k)$ and $U_{\pi_{k-1}(x_{j_s})}\times (\ab_k/H_k)$ is homeomorphic to $\pi_{k-1}^{-1}(U_{\pi_{k-1}(x_{j_s})})$, we have that $\cup_{i\in I} [V_i\cap \pi_{k-1}^{-1}(U_{\pi_{k-1}(x_{j_s})})]$ is an open covering of the compact set $\overline{B(\pi_{k-1}(x'_s),\delta/2)}\times (\ab_k/H_k)$. Thus, we can find a finite sub-cover, with corresponding finite index set $I_s\subset I$. Repeating this for every $s\in [m]$ and collecting, we deduce that $\cup_{s\in[m]}\cup_{i\in I_s} V_i$ is a finite sub-cover of $\cup_{i\in I} V_i$.
\end{proof}

\begin{example}\label{ex:fiber-co-comp-not-co-comp}
Given a nilspace $\ns$ and a closed fiber-transitive group $H\leq \tran(\ns)$ that is also fiber-cocompact, in general $H$ need not be a cocompact subgroup of $\tran(\ns)$. For example, let $F=\mc{D}_1(\mb{R})\times \mc{D}_2(\mb{R})$ and consider $H=\langle \alpha,\beta \rangle$ where $\alpha(x,y)=(x+1,y)$ and $\beta(x,y)=(x,y+1)$. This group $H$ is straightforwardly checked to be fiber-cocompact in $\tran(F)$. On the other hand we have $\tran(F)=\{(x,y)\mapsto(x+a,y+b+cx):a,b,c\in \mb{R}\}$. Note that this group is isomorphic to the Heisenberg group $\mc{H}= \begin{psmallmatrix} 1 & \mb{R} & \mb{R}\\[0.1em]0  & 1 & \mb{R}\\[0.1em] 0 & 0 & 1 \end{psmallmatrix}$. Letting $\gamma_{a,b,c}\in \tran(F)$ be the map $(x,y)\mapsto (x+a,y+b+cx)$,  it is straightforwardly checked that the map $\varphi:\tran(F)\to \mc{H}$, $\gamma_{a,b,c}\mapsto \begin{psmallmatrix} 1 & c & b\\[0.1em]0  & 1 & a\\[0.1em] 0 & 0 & 1 \end{psmallmatrix}$ is a group isomorphism. Thus, in order to show that $H\subset \tran(F)$ is not co-compact, it suffices to check that $\varphi(H)\subset \mc{H}$ is not co-compact. A simple calculation shows that $\varphi(H)=\langle \varphi(\alpha),\varphi(\beta) \rangle=\begin{psmallmatrix} 1 & 0 & \mb{Z}\\[0.1em]0  & 1 & \mb{Z}\\[0.1em] 0 & 0 & 1 \end{psmallmatrix}$ is a subgroup of $R:= \begin{psmallmatrix} 1 & 0 & \mb{R}\\[0.1em]0  & 1 & \mb{R}\\[0.1em] 0 & 0 & 1 \end{psmallmatrix}$. Thus, if $\varphi(H)$ is co-compact then  $R$ should also be co-compact. But $R$ is a normal subgroup of $\mc{H}$ and a direct computation shows that $\mc{H}/R\cong \mb{R}$. Hence $\varphi(H)$ cannot be co-compact.
\end{example}

\begin{remark}
One may naturally wonder to what extent the closure property in Definition \ref{def:oldclosed} is \emph{necessary} to ensure that the quotient of $\ns$ by $\sim_H$ is a Lie-fibered nilspace. It turns out that for $\ns$ a \emph{free} nilspace, a variant  condition can be proved to be necessary and sufficient. More precisely, if $F$ is a $k$-step free nilspace and $H$ is a fiber-transitive group on $F$, it can be proved that $\pi_H(F)$ is a Lie-fibered nilspace if and only if for every $i\in [k]$ the group $\eta_i(\compl{H})$ is a closed subgroup of $\tran(F_i)$. We shall not need this fact in this paper, so let us just outline the argument here. First one proves that more generally if $H$ is a fiber-transitive group on a Lie-fibered nilspace, then $\compl{H}=\ker(\wh{\pi_H})$ and so $\compl{H}$ must be a closed subgroup of $\tran(F)$. Then, the same fact applied on $F_i$ implies that $\compl{\eta_i(H)}$ is closed in $\tran(F_i)$. This yields the necessity of the above condition. Then, using that $F$ is a free nilspace, the additional property that for every $i\in [k]$ we obtain the additional fact that $\eta_i(\compl{H})=\compl{\eta_i(H)}$, and this implies the sufficiency of the above condition (indeed we then need not ask for all the terms in the filtration of ${\compl{H}}$ to be closed, because the filtration is $({\compl{H}}\cap \tran_i(F))_{i\in[k]}$, and similarly for lower steps. Hence, as $\tran_i(F)$ is always closed, it suffices to require ${\compl{H}}$ to be closed, and for lower-step factors it suffices to require $\eta_i(\compl{H})$ to be closed).
\end{remark}

\subsection{Pure groupable congruences and embeddings}\hfill

\noindent Recall that for any nilspace $\ns$ we have the canonical homomorphisms $\eta_i:\tran(\ns)\to \tran(\ns_i)$, and note that $
\tran(\ns)=\ker(\eta_0)\geq \ker(\eta_1) \geq \cdots \geq \ker(\eta_{k-1})\geq \ker(\eta_k=\id)=\{\id\}$.
Thus we have the partition
\begin{equation}\label{eq:transparti}
\tran(\ns)=\{\id\}\cup\bigsqcup_{i\in [k]} \ker(\eta_{i-1})\setminus \ker(\eta_i).
\end{equation}
We also have in general $\ker(\eta_{i-1})\supset \tran_i(\ns)$ and this inclusion can be strict, which can often complicate the analysis of the structure of $\ns$. It will therefore be useful to focus on groups of translations which avoid this potential problem.
\begin{defn}[Pure translations]\label{def:pure} Let $\ns$ be a nilspace. We say that a translation $\alpha\in\tran(\ns)$ is \emph{pure} if for every $i\in [k]$ we have $\alpha\in \ker(\eta_{i-1})\Rightarrow \alpha\in\tran_i(\ns)$. Equivalently, $\alpha$ is pure if, for the maximal $i\in [k]$ such that $\alpha\in\ker(\eta_{i-1})$, we have $\alpha\in \tran_i(\ns)$. Thus (using \eqref{eq:transparti}) the set of pure translations is $\{\id\}\cup\bigsqcup_{i\in [k]} \tran_i(\ns)\setminus \ker(\eta_i)$. We say that a group $G\leq\tran(\ns)$ is pure if every translation in $G$ is pure, which is equivalent to the following property: for every $i\in [k]$, we have $G\cap \ker(\eta_{i-1})=G\cap \tran_i(\ns)$. Finally, we say that a groupable congruence $\sim$ on $\ns$ is \emph{pure} if there exists a subfiltration $H_\bullet$ of $\tran(\ns)^{(\sim)}_\bullet$ such that $\sim=\sim_H$ and such that $H$ is pure.
\end{defn}
\noindent Note that, by Proposition \ref{prop:eq-free-action} and Remark \ref{rem:pure-implies-fib-tan} below, pureness of $H$ implies fiber-transitivity.

\begin{remark}\label{rem:different-filtr}
Note that for $\sim$ to be pure we indeed require just \emph{some} fiber-transitive filtration realizing $\sim$ to be pure. Indeed, there can be more than one fiber-transitive filtration yielding the same quotient nilspace. For example, let $F=\mc{D}_1(\mb{Z})\times \mc{D}_2(\mb{Z})$ and let $\alpha(x,y):=(x+2,y)$, $\beta(x,y)=(x,y+2)$ and $\gamma(x,y):=(x,y+2x)$. It is easy to see that $H:=\langle \alpha,\beta\rangle$ and $H':=\langle \alpha,\beta,\gamma\rangle$ generate the same congruence and so $\pi_H(F) \cong \pi_{H'}(F)$ (putting on both $H$ and $H'$ the filtration induced by $\tran_\bullet (F)$). However  $H$ is pure, whereas $H'$ is not because $\gamma$ is not a pure translation (it lies in $\ker(\eta_1)\setminus \tran_2(F))$.
\end{remark}

\noindent The following result establishes equivalences between pureness and other previous properties.

\begin{proposition}\label{prop:eq-free-action}
Let $F=\prod_{i=1}^k\mc{D}_i(\mb{Z}^{a_i}\times\mb{R}^{b_i})$ be a $k$-step free nilspace and let $\Gamma$ be a subgroup of $\tran(F)$. The following properties are equivalent.
\setlength{\leftmargini}{0.7cm}
\begin{enumerate}
\item\label{it:pure-3} $\Gamma$ is fiber-transitive and for every $\gamma\in \Gamma$, letting $(\underline{x_i})_{i\in[k]}\mapsto (\underline{x_1}+T_1,\underline{x_2}+T_2(\underline{x_1}),\ldots)$ be the expression of $\gamma$ given by Theorem \ref{thm:decrip-trans-group}, and $j\in [k]$  minimal such that $T_j\not=0$, the map $T_j$ is constant.
\item\label{it:pure-2} $\Gamma$ is pure, i.e.\ for every $i\in[k]$ we have $\Gamma\cap \ker(\eta_{i-1})=\Gamma\cap\tran_i(F)$.
\item\label{it:pure-1} The action of $\Gamma$ is fiber-transitive and free.
\end{enumerate}
\end{proposition}
Here by the action of $\Gamma$ being \emph{free} we mean the standard notion that for every $\gamma\in\Gamma$, if there exists $x\in F$ such that $\gamma(x)=x$, then $\gamma=\id$.
\begin{proof}
We prove the implications \eqref{it:pure-3} $\implies$ \eqref{it:pure-2} $\implies$ \eqref{it:pure-1} $\implies$ \eqref{it:pure-3}.

To prove \eqref{it:pure-3} $\implies$ \eqref{it:pure-2}, first note that the case $i=k$ follows directly from \eqref{it:pure-3}. Note that the inclussion $\Gamma\cap\tran_{i+1}(F)\subset \Gamma\cap\ker(\eta_i)$ always holds, so it suffices to prove the reverse inclusion. We are going to prove that \eqref{it:pure-2} holds for any $i$ by (reversed) induction.  That is, assume that it holds for $i\ge i_0$ and we want to prove it for $i_0-1$. Let $\gamma\in \Gamma\cap\ker(\eta_{i_0-1})$. By \eqref{it:pure-3}, we have that the action of $\gamma$ in the $i_0$-th degree term is given by a constant. Thus, by the fiber-transitive property, there exists $\gamma'\in \tran_{i_0}(F)\cap\Gamma$ such that $\eta_{i_0-1}(\gamma)=\eta_{i_0-1}(\gamma')$. But then clearly $\gamma'\gamma^{-1}\in\ker(\eta_{i_0})$. By induction, $\gamma'\gamma^{-1}\in \Gamma\cap\tran_{i_0+1}(F)$ and thus $\gamma\in \tran_{i_0}(F)$ and \eqref{it:pure-2} follows.

Let us now show that \eqref{it:pure-2} $\implies$ \eqref{it:pure-1}. Let us start by showing that $\Gamma$ acts freely. We are going to prove this by induction on the step $k$ of the free nilspace. Note that if $k=1$ the result is trivial. Assuming it holds for $k-1$, let $\gamma\in \Gamma$ and $x\in F$ be such that $\gamma(x)=x$. Then clearly $\eta_{k-1}(\gamma)(\pi_{k-1}(x))=\pi_{k-1}(x)$. By induction, we have that $\eta_{k-1}(\gamma)=\id$. But this means that $\gamma\in \ker(\eta_{k-1})$. By \eqref{it:pure-2}, we have that $\gamma\in \Gamma\cap\tran_k(F)$. But $\tran_k(F)$ are just the shifts by a constant by \cite[Lemma 3.2.37]{Cand:Notes1}. As the action of the last structure group is free we have that the only possibility for $\gamma$ to fix $x$ is if $\gamma=\id$.

To prove that $\Gamma$ is fiber-transitive, let $x,y\in F$, $\gamma\in \Gamma$, and $i\in[k]$ be such that $\pi_i(x)=\pi_i(y)$ and $\gamma(x)=y$. It is direct to see that if $\Gamma$ satisfies \eqref{it:pure-2}, then so does $\eta_i(\Gamma)$ for any $i\in[k]$. Note also that $\eta_i(\gamma)(\pi_i(x))=\pi_i(y)=\pi_i(x)$ by assumption. As $\eta_i(\Gamma)$ satisfies \eqref{it:pure-2}, we know by the previous paragraph that $\eta_i(\Gamma)$ acts freely on $F_i$ and therefore $\eta_i(\gamma)=\id$. But this means in turn that $\gamma\in \ker(\eta_i)$, and using \eqref{it:pure-2} we deduce that $\gamma\in \tran_{i+1}(F)$. Hence, $\Gamma$ is fiber-transitive.

To prove \eqref{it:pure-1} $\implies$ \eqref{it:pure-3}, let $\gamma\in \Gamma$ and $j\in[k]$ be such that $T_j\not=0$. Note that without loss of generality we can assume that $j=k$, as otherwise we may simply project to the $j$-th factor. For any fixed $x\in F$ note that $\pi_{k-1}(\gamma(x))=\pi_{k-1}(x)$ by definition of the action of $\gamma$. Hence, by the fiber-transitive property, there must exists $\gamma'\in \Gamma_k$ such that $\gamma(x)=x+\gamma'$. But by \eqref{it:pure-1} the action of $\Gamma$ is free and so $\gamma=\gamma'$ and \eqref{it:pure-3} follows.\end{proof}

\begin{remark}\label{rem:pure-implies-fib-tan}
Properties \eqref{it:pure-2} and \eqref{it:pure-1} can be seen to be equivalent for any $\Gamma\le \tran(\ns)$ where $\ns$ is any $k$-step nilspace. Indeed note that the proof of \eqref{it:pure-2} $\implies$ \eqref{it:pure-1} works for any $\ns$ and $\Gamma$.
\end{remark}

\begin{remark}
Recall from \eqref{eq:ft-fil} that if $\sim$ is a groupable congruence on $\ns$ then, letting $G_\bullet$ be the corresponding filtration on $\big(\tran(\ns)^{(\sim)}\cap \tran_i(\ns)\big)_{i\geq 0}$, the action of $G_i$ on each intersection of a $\sim$-fiber and a $\pi_{i-1}$-fiber is transitive. Hence, if $\sim$ is also pure, then each intersection of a $\sim$-fiber and a $\pi_{i-1}$-fiber is a \emph{principal homogeneous space} of $G_i$.
\end{remark}

\begin{remark}
If $H_\bullet$ is a pure filtration (meaning just that $H$ is pure, as then all subgroups in $H_\bullet$ are also pure) on $\ns$, then for each $i\in[k]$ the filtration  $\eta_j(H_\bullet)$ is pure on $\ns_i$. To confirm this note that by induction it suffices to prove the case $i=k-1$. Furthermore, it suffices to check that if $\eta_{k-1}(g)\pi_{k-1}(x)=\pi_{k-1}(x)$ for some $x\in \ns$ and $g\in H$ then $\eta_{k-1}(g)=\id$. But the element $y:=gx$ then satisfies $x\sim_H y$ and $\pi_{k-1}(x)=\pi_{k-1}(y)$. By \eqref{eq:ft-fil} there is therefore some $g'\in H_k$ such that $y=g'x$. Since the action of $H$ is free by \eqref{it:pure-1}, we conclude that $g=g'\in H_k$, so $\eta_{k-1}(g)=\id$ as required.
\end{remark}

\noindent Pure filtrations are especially useful on free nilspaces because they enable us to embed the corresponding quotient nilspace into a \emph{coset nilspace}. In particular this provides a connection with nilmanifolds that is relevant to the Jamneshan-Tao conjecture (see Proposition \ref{prop:embeb-strong-good} below). To explain this we shall use the following notion.
\begin{defn}\label{def:ctsclosure}
The \emph{continuous closure} of a free nilspace $F=\prod_{i=1}^k\mc{D}_i(\mb{Z}^{a_i})\times \mc{D}_i(\mb{R}^{b_i})$ is the nilspace $F^*=\prod_{i=1}^k\mc{D}_i(\mb{R}^{a_i})\times\mc{D}_i( \mb{R}^{b_i})$. The inclusion map $\iota:F\to F^*$ is a nilspace morphism.
\end{defn}

By Theorem \ref{thm:decrip-trans-group} and \eqref{prop:lift-disc}, \eqref{prop:lift-cont}, it follows that there is an injective filtered homomorphism $\wh{\iota}:\tran(F)\to \tran(F^*)$, consising simply in extending each polynomial involved in the expression \eqref{eq:str-trans-ab-nil} of a translation, and defined on a power of $\mb{Z}$, to the same polynomial defined on the same power of $\mb{R}$. Note that clearly for any $\alpha\in \tran(F)$ and any $x\in F$ we have $\iota(\alpha(x))=\wh{\iota}(\alpha)(\iota(x))$.

\begin{proposition}\label{prop:embeb-strong-good}
Let $F$ be a $k$-step free nilspace, and let $H_\bullet$ be a pure filtration on $F$ that is fiber-discrete\footnote{Hence $H$ is discrete, and thus finitely generated, as a discrete subgroup of a Lie group.}, and such that $\pi_H(F)$ is a compact nilspace. Then there exists a toral nilspace $\ns$ and a continuous injective morphism from $\pi_H(F)$ into $\ns$.
\end{proposition}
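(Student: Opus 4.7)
The plan is to embed $\pi_H(F)$ into the quotient of the continuous closure $F^*$ of $F$ by the corresponding ``continuous'' closure of the group $H$. Let $\phi:F\to F^*$ denote the inclusion morphism from Definition \ref{def:ctsclosure}, and let $\iota:\tran(F)\to \tran(F^*)$ be the injective filtered homomorphism described just after that definition, which extends each discrete-variable polynomial of a translation to the same polynomial on the corresponding continuous variables. Set $H^*:=\iota(H)\subseteq \tran(F^*)$, and equip it with the filtration $H^*_i:=\iota(H_i)$. A direct check shows $H^*\cap \tran_i(F^*)=\iota(H\cap \tran_i(F))=H^*_i$, so $H^*_\bullet$ is the filtration induced on $H^*$ by $\tran_\bullet(F^*)$.

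Next I would verify that $H^*_\bullet$ satisfies the hypotheses of Corollary \ref{cor:quo-cfr-nil} on $F^*$. Using Lemma \ref{lem:equi-strong-good}, $H^*_\bullet$ is free fiber-transitive on $F^*$: an element $\iota(\alpha)\in H^*$ has the same polynomial data as $\alpha\in H$, so its lowest non-zero $T_i^*$ is the same constant as the lowest non-zero $T_i$ of $\alpha$. For the topological conditions, note that the $i$-th structure group of $F^*_i$ is $\mb{R}^{a_i+b_i}$, with $\tran_i(F_i)\cong \mb{Z}^{a_i}\times \mb{R}^{b_i}$ sitting inside it as a closed subgroup. Commutativity of $\iota$ with the factor projections gives $\eta_i(H^*)\cap \tran_i(F^*_i)=\iota(\eta_i(H)\cap \tran_i(F_i))$. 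By hypothesis, this latter group is a finitely generated discrete subgroup of $\mb{Z}^{a_i}\times \mb{R}^{b_i}$, hence a discrete subgroup of $\mb{R}^{a_i+b_i}$; so $H^*_\bullet$ is closed (via Lemma \ref{lem:closedconseq}) and fiber-discrete on $F^*$. By Corollary \ref{cor:quo-cfr-nil} applied to $H$ on $F$, the compactness of $\pi_H(F)$ implies $\eta_i(H)\cap \tran_i(F_i)$ is cocompact in $\mb{Z}^{a_i}\times \mb{R}^{b_i}$; since $\mb{R}^{a_i+b_i}/(\mb{Z}^{a_i}\times \mb{R}^{b_i})\cong \mb{T}^{a_i}$ is compact, it follows that $\eta_i(H^*)\cap \tran_i(F^*_i)$ is cocompact in $\mb{R}^{a_i+b_i}$, so $H^*_\bullet$ is fiber-cocompact on $F^*$.

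Applying Corollary \ref{cor:quo-cfr-nil} to $(F^*,H^*_\bullet)$, the quotient $\ns:=\pi_{H^*}(F^*)$ is a $k$-step \textsc{cfr} nilspace whose $i$-th structure group is $\mb{R}^{a_i+b_i}$ modulo a discrete cocompact subgroup (a full-rank lattice), hence a torus. Thus $\ns$ is a toral nilspace. Now define $\overline\phi:\pi_H(F)\to \ns$ by $\overline\phi(\pi_H(x)):=\pi_{H^*}(\phi(x))$. This is well-defined: if $\alpha\in H$ satisfies $\alpha(x)=y$, then $\iota(\alpha)(\phi(x))=\phi(\alpha(x))=\phi(y)$, so $\phi(x)\sim_{H^*}\phi(y)$. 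It is a nilspace morphism because $\phi$ is a morphism and $\pi_H$, $\pi_{H^*}$ are fibrations, and it is continuous because $\pi_{H^*}\co \phi$ is continuous and $\pi_H$ is a quotient map.

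Finally, I would check injectivity: suppose $\overline\phi(\pi_H(x))=\overline\phi(\pi_H(y))$; then $\pi_{H^*}(\phi(x))=\pi_{H^*}(\phi(y))$, so there is $\alpha\in H$ with $\iota(\alpha)(\phi(x))=\phi(y)$. The identity $\iota(\alpha)\co \phi=\phi\co \alpha$ gives $\phi(\alpha(x))=\phi(y)$, and the injectivity of the inclusion $\phi$ forces $\alpha(x)=y$, whence $\pi_H(x)=\pi_H(y)$. The main (routine but not entirely trivial) obstacle is the topological bookkeeping establishing that $H^*$ inherits from $H$ the discreteness and cocompactness required in $\mb{R}^{a_i+b_i}$ rather than in the smaller $\mb{Z}^{a_i}\times \mb{R}^{b_i}$; the discreteness follows immediately because $\mb{Z}^{a_i}\times \mb{R}^{b_i}$ is closed in $\mb{R}^{a_i+b_i}$, and the cocompactness follows from compactness of the additional torus factor $\mb{R}^{a_i}/\mb{Z}^{a_i}$.
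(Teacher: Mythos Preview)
Your approach is essentially identical to the paper's: pass to the continuous closure $F^*$, push $H$ forward via $\iota$, verify that $\iota(H)_\bullet$ is free fiber-transitive using the criterion of Lemma \ref{lem:equi-strong-good}, show the quotient $\pi_{\iota(H)}(F^*)$ has torus structure groups (hence is toral), and check that $\pi_H(x)\mapsto \pi_{\iota(H)}(\phi(x))$ is a well-defined continuous injective morphism. You are in fact more careful than the paper in explicitly invoking Corollary \ref{cor:quo-cfr-nil} to confirm that $\pi_{\iota(H)}(F^*)$ is a \textsc{cfr} nilspace.

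One small slip: when you write ``By Corollary \ref{cor:quo-cfr-nil} applied to $H$ on $F$, the compactness of $\pi_H(F)$ implies $\eta_i(H)\cap \tran_i(F_i)$ is cocompact'', you are using that corollary in the wrong direction. What you actually need here is Lemma \ref{lem:cong-equi}, which identifies the $i$-th structure group of $\pi_H(F)$ as $\ab_i(F)/\eta_i(H_i)$; compactness of $\pi_H(F)$ then forces each such quotient to be compact, hence $\eta_i(H_i)$ is cocompact in $\mb{Z}^{a_i}\times\mb{R}^{b_i}$. The paper makes exactly this move (citing Lemma \ref{lem:cong-equi}). With that correction your argument is complete.
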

\begin{remark}
Recall from \cite[Theorem 2.9.17]{Cand:Notes2} that a toral nilspace is isomorphic to a compact connected filtered nilmanifold $(G/\Gamma,G_\bullet)$ equipped with the coset nilspace structure associated with $G_\bullet$. Thus, under the assumptions of Proposition \ref{prop:embeb-strong-good}, we can embed $\pi_H(F)$ as a \textsc{cfr} nilspace into a connected nilmanifold. See also Remark \ref{rem:2-step-embed} concerning the  2-step case.
\end{remark}

\begin{proof}
We want to prove that the natural inclusions $\iota:F\to F^*$ and $\wh{\iota}:\tran(F)\to \tran(F^*)$ induce a well-defined continuous injective morphism $\varphi:\pi_H(F) \to \pi_{\wh{\iota}(H)}(F^*)$, $\pi_H(x)\mapsto \pi_{\wh{\iota}(H)}(\iota(x))$. Let us first note that $H_\bullet$ being pure implies that $(\wh{\iota}(H_i))_{i\geq 0}$ is also pure, since the latter filtration satisfies the criterion \eqref{it:pure-2} in Proposition \ref{prop:eq-free-action}.

The map $\varphi$ is well-defined, indeed for any $x\in F$ and $\alpha\in H$ (thus $\alpha(x)\sim_H x$) we have  $\varphi(\pi_H(\alpha(x)))=\pi_{\wh{\iota}(H)}(\iota(\alpha(x)))=\pi_{\wh{\iota}(H)}(\wh{\iota}(\alpha)(\iota(x)))=\pi_{\wh{\iota}(H)}(\iota(x))=\varphi(\pi_H(x))$. Moreover, Lemma \ref{lem:cong-equi} implies that $\varphi$ is a morphism, as cubes in $\pi_H(F)$ are of the form $\pi_H\co  \q$ for $\q\in \cu^n(F)$, whence $\varphi\co \pi_H\co \q = \pi_{\wh{\iota}(H)}\co \iota\co \q\in \cu^n(F^*/\sim_{\wh{\iota}(H)})$. The injectivity of the map $\varphi$ is proved as follows: supposing that $\pi_{\wh{\iota}(H)}(\iota(x)) = \pi_{\wh{\iota}(H)}(\iota(y))$, there exists $\alpha\in H$ such that $\iota(y)=\wh{\iota}(\alpha)(\iota(x))= \iota(\alpha(x))$, and since $\iota$ is injective we have $\alpha(x)=y$, so $\pi_H(x)=\pi_H(y)$. Hence $\varphi$ is indeed a morphism  embedding $\pi_H(F)$ into the nilspace $\pi_{\wh{\iota}(H)}(F^*)$.

Finally we have to prove that $\pi_{\wh{\iota}(H)}(F^*)$ is a toral nilspace. Since by assumption $\pi_H(F)$ is compact, its structure groups are the compact abelian groups $\ab_i(F)/\eta_i(H_i)$ for $i\in[k]$ (by Lemma \ref{lem:cong-equi}). By  assumption $\eta_i(H_i)$ is a discrete cocompact subgroup pf $\ab_i(F)$. It is then easy to see that $\eta_i(\wh{\iota}(H_i))$ is also a discrete cocompact subgroup of $\ab_i(F^*)$, so $\ab_i(F^*)/\eta_i(\wh{\iota}(H_i))$ is a torus. Hence $\pi_{\wh{\iota}(H)}(F^*)$ is a toral nilspace.
\end{proof}

\begin{remark}\label{rem:isometric-embedding}
The toral nilspace $\ns$ in which we embed $\pi_H(F)$ can be endowed with an arbitrary metric that generates its topology. Given any such metric $d$ on $\ns$, we can define a metric $d'$ on $\pi_H(F)$ simply by declaring $d'(x,y):=d(\varphi(x),\varphi(y))$. As $\varphi$ is a homeomorphism onto its image this is indeed a metric on $\pi_H(F)$. Relative to $d'$ and $d$, the map $\varphi$ is an isometry.
\end{remark}

\section{\textsc{cfr} nilspaces as quotients of free nilspaces by groupable congruences}\label{sec:groupequivrep}
\noindent In this subsection we prove the following theorem, a central result of this paper, which will also be used in Section \ref{sec:DC} to prove the double-coset representation result, Theorem \ref{thm:cfr=double-coset}.

\begin{theorem}\label{thm:gpcongrep}
Let $\ns$ be a $k$-step \textsc{cfr} nilspace. Then there exists a $k$-step free nilspace $F$ and a closed groupable congruence $\sim$ on $F$ such that $\pi_\sim(F)$, $\ns$ are isomorphic as \textsc{cfr} nilspaces.
\end{theorem}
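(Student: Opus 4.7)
I would prove the theorem by induction on the step $k$, with the base case $k=0$ being trivial. For the inductive step, the $(k-1)$-step factor $\ns_{k-1}$ is again \textsc{cfr}, so by induction there exist a $(k-1)$-step free nilspace $F_{k-1}$ and a closed groupable congruence $\sim_{k-1}$ on $F_{k-1}$ such that $\pi_{\sim_{k-1}}(F_{k-1})$ is isomorphic as a \textsc{cfr} nilspace to $\ns_{k-1}$. I would identify these and denote the resulting continuous fibration $\varphi_{k-1}:F_{k-1}\to\ns_{k-1}$, writing $\wt G:=\tran(F_{k-1})^{(\sim_{k-1})}$ equipped with the filtration induced by $\tran_\bullet(F_{k-1})$.

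Following the scheme in the proof of Theorem \ref{thm:lcfr-factor-of-free}, I would form the fiber product $\nss:=F_{k-1}\times_{\ns_{k-1}}\ns$, which is Lie-fibered by Lemma \ref{lem:fib-prod-top-nil} and is a degree-$k$ continuous extension of $F_{k-1}$ by $\ab_k:=\ab_k(\ns)$. Theorem \ref{thm:splitext} then yields a continuous cross-section $\sigma:F_{k-1}\to\nss$ for the first projection, and I set $g_0:=p_2\co\sigma:F_{k-1}\to\ns$, a continuous nilspace morphism satisfying $\pi_{k-1,\ns}\co g_0=\varphi_{k-1}$. Letting $\ab_k'$ be the covering abelian Lie group of $\ab_k$ with covering homomorphism $\pi:\ab_k'\to\ab_k$, I set $F:=F_{k-1}\times\mc{D}_k(\ab_k')$ (a $k$-step free nilspace) and define a continuous fibration $\varphi:F\to\ns$ by $\varphi(f,z):=g_0(f)+\pi(z)$.

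Let $\sim$ be the equivalence relation on $F$ given by $\varphi$, and take $G:=\tran(F)^{(\sim)}$ with filtration $G_i:=G\cap\tran_i(F)$. Using Theorem \ref{thm:decrip-trans-group} applied to $F=F_{k-1}\times\mc{D}_k(\ab_k')$, a translation of $F$ of height $i$ has the form $\beta(f,z)=(\alpha(f),z+T(f))$ for some $\alpha\in\tran_i(F_{k-1})$ and $T\in\hom(F_{k-1},\mc{D}_{k-i}(\ab_k'))$, and such $\beta$ lies in $G_i$ precisely when $\alpha\in\wt G\cap\tran_i(F_{k-1})$ and $\pi\co T=-c_\alpha$, where $c_\alpha(f):=g_0(\alpha(f))-g_0(f)\in\ab_k$. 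A standard $i$-arrow computation --- starting from $\langle\q,\alpha\co\q\rangle_i\in\cu^{n+i}(F_{k-1})$, applying $g_0$ and projecting to $\ns_{k-1}$, then invoking \cite[Lemma~2.2.19]{Cand:Notes1} --- shows $c_\alpha\in\hom(F_{k-1},\mc{D}_{k-i}(\ab_k))$. The inductive fiber-transitivity of the filtration realizing $\sim_{k-1}$, combined with the lifting result Theorem \ref{thm:lift-mor-cfr-ab-gr} (applied to $-c_\alpha$ to produce $h_\alpha\in\hom(F_{k-1},\mc{D}_{k-i-1}(\ab_k'))$ for $\alpha\in\tran_{i+1}(F_{k-1})$), and the fact that $G\cap\tran_k(F)=\ker(\pi)$ acting by translation of the second coordinate, then yield fiber-transitivity of $G_\bullet$. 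Fiber-cocompactness holds at the top level since $\ab_k'/\ker(\pi)\cong\ab_k$ is compact, and at lower levels by the inductive fiber-cocompactness of the filtration realizing $\sim_{k-1}$ together with Lemma \ref{lem:group-cong-to-factors}.

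The main obstacle will be verifying the closedness of $G_\bullet$ in the sense of Definition \ref{def:oldclosed}, i.e., that each $\eta_j(G_i)$ is closed in $\tran_i(F_j)$. The map $\eta_{k-1}:G\to\wt G$ has fibers that are affine subspaces of $\hom(F_{k-1},\mc{D}_{k-i}(\ab_k'))$ cut out by the constraint $\pi\co T=-c_\alpha$; closedness of $\wt G\cap\tran_i(F_{k-1})$ in $\tran_i(F_{k-1})$ follows from the Hausdorffness of $\ns_{k-1}$ (via Lemma \ref{lem:closure}), while closedness of the fibered constraint follows from Lemma \ref{lem:hom-Lie} together with the continuous dependence of $c_\alpha$ on $\alpha$ (inherited from the continuity of $g_0$). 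Closedness at lower levels $j<k-1$ then reduces, via Lemma \ref{lem:group-cong-to-factors} and the equivalence of fiber-transitive filtrations generating the same congruence (Corollary \ref{cor:equivalent-filtrations}), to the inductive closedness of the filtration realizing $\sim_{k-1}$. With these verifications in hand, Corollary \ref{cor:quo-cfr-nil} yields that $\pi_\sim(F)$ is a $k$-step \textsc{cfr} nilspace, and the bijective continuous nilspace morphism $\pi_\sim(F)\to\ns$ induced by $\varphi$ is an isomorphism of \textsc{cfr} nilspaces by Theorem \ref{thm:open-mapping-thm}.
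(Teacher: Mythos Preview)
Your proof is essentially correct and follows the same overall inductive scheme as the paper (fiber product with $F_{k-1}$, splitting via Theorem~\ref{thm:splitext}, passage to the covering group $\ab_k'$), but it diverges from the paper's argument in an interesting way at the core step. The paper actually proves the stronger Theorem~\ref{thm:groupequivrep}: it carries a \emph{finitely generated, fiber-discrete, fiber-cocompact} filtration through the induction, constructing $H'$ explicitly by lifting a finite generating set of $H$ via Theorem~\ref{thm:lift-trans} and adjoining generators of $\ker(\pi)$. Closedness is then automatic from fiber-discreteness via Lemma~\ref{lem:closedconseq}. You instead take the maximal group $G=\tran(F)^{(\sim)}$ and verify closedness of $G_\bullet$ directly, exploiting the explicit description of $G_i$ as pairs $(\alpha,T)$ with $\pi\co T=-c_\alpha$ and the continuity of $\alpha\mapsto c_\alpha$. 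This works (your identification $\eta_{k-1}(G_i)=\wt G_i$ via Theorem~\ref{thm:lift-mor-cfr-ab-gr} is correct, and closedness at levels $j<k-1$ does reduce to the inductive closedness of $\wt G_\bullet$), and is arguably more direct for Theorem~\ref{thm:gpcongrep} alone. What the paper's route buys is the extra structure (finite generation, fiber-discreteness) needed downstream for Corollary~\ref{cor:k=2-str-good} and the double-coset representation Theorem~\ref{thm:cfr=double-coset}.

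One small wrinkle: you invoke ``inductive fiber-cocompactness of the filtration realizing $\sim_{k-1}$'' to apply Corollary~\ref{cor:quo-cfr-nil}, but your stated inductive hypothesis only gives a \emph{closed} congruence. This is not a real gap, because fiber-cocompactness of $\wt G$ follows from the CFR isomorphism $\pi_{\sim_{k-1}}(F_{k-1})\cong\ns_{k-1}$: by Lemma~\ref{lem:cong-equi} the $j$-th structure group of the quotient is $\ab_j(F_{k-1})/\eta_j(\wt G_j)$, and this is compact since $\ns_{k-1}$ is. You should either state this derivation, or (cleaner) bypass Corollary~\ref{cor:quo-cfr-nil} entirely: Theorem~\ref{thm:coset-quot-closed} already gives that $\pi_\sim(F)$ is Lie-fibered, and then your final sentence using Theorem~\ref{thm:open-mapping-thm} on the continuous bijective fibration $\pi_\sim(F)\to\ns$ suffices to conclude the CFR isomorphism without separately establishing compactness.
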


We shall in fact prove the following version of the result,  which yields additional information on parts of the structure.

\begin{theorem}\label{thm:groupequivrep}
Let $\ns$ be a $k$-step \textsc{cfr} nilspace. Then there exists a $k$-step free nilspace $F$, and a fiber-transitive filtration $H_\bullet$ on $F$ which is  finitely-generated, fiber-discrete and fiber-cocompact, such that $\pi_H(\ns)$ and  $\ns$ are isomorphic as \textsc{cfr} nilspaces.
\end{theorem}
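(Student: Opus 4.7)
The plan is to prove Theorem \ref{thm:groupequivrep} by induction on the step $k$, following the architecture of the proof of Theorem \ref{thm:lcfr-factor-of-free} while additionally tracking a fiber-transitive filtration on the free nilspace. For the base case $k=1$, any $1$-step \textsc{cfr} nilspace is a compact abelian Lie group $\ab$, which by the structure theorem admits a presentation $\ab\cong B/\Lambda_1$ with $B=\mb{Z}^r\times\mb{R}^s$ the covering group and $\Lambda_1$ a discrete cocompact finitely generated subgroup. Taking $F:=\mc{D}_1(B)$ and letting $H$ act on $F$ by translation by $\Lambda_1$, with filtration $H_0=H_1=\Lambda_1$ and $H_i=\{\id\}$ for $i\ge 2$, all required properties are immediate.

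For the inductive step $k>1$, applying the inductive hypothesis to $\ns_{k-1}$ yields a $(k-1)$-step free nilspace $F_{k-1}$ and a finitely-generated, fiber-discrete, fiber-cocompact, fiber-transitive filtration $H'_\bullet$ such that $\pi_{H'}(F_{k-1})\cong\ns_{k-1}$ via a continuous fibration $\varphi_{k-1}$. Mimicking the proof of Theorem \ref{thm:lcfr-factor-of-free}, I would form the fiber product $\nss:=F_{k-1}\times_{\ns_{k-1}}\ns$ (a degree-$k$ extension of $F_{k-1}$ by $\ab_k:=\ab_k(\ns)$, Lie-fibered by Lemma \ref{lem:fib-prod-top-nil}), and invoke Theorem \ref{thm:splitext} to obtain a continuous cross-section $s:F_{k-1}\to\nss$ realizing an isomorphism $\nss\cong F_{k-1}\times\mc{D}_k(\ab_k)$; letting $t:=p_2\circ s$, the composite fibration $p_2:\nss\to\ns$ becomes $(f,z)\mapsto t(f)+z$. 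With $\ab_k'=\mb{Z}^{r_k}\times\mb{R}^{s_k}$ denoting the covering group of $\ab_k$, surjection $\pi_0:\ab_k'\to\ab_k$ and discrete cocompact kernel $\Lambda_k$, I set $F:=F_{k-1}\times\mc{D}_k(\ab_k')$ (a $k$-step free nilspace) and $\varphi(f,b):=t(f)+\pi_0(b)$, obtaining a continuous fibration $\varphi:F\to\ns$ as in diagram \eqref{diag:lift-1}.

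I would then construct $H\subset\tran(F)$ as the group generated by two families: first, for each $\beta$ in a fixed finite generating set of $H'$, with $\beta\in H'_{i(\beta)}$, a lift $\tilde\beta(f,b):=(\beta(f),b+S_\beta(f))$, where $S_\beta\in\hom(F_{k-1},\mc{D}_{k-i(\beta)}(\ab_k'))$ is chosen so that $\pi_0\circ S_\beta(f)=t(f)-t(\beta(f))$; and second, the finitely many lattice translations $(f,b)\mapsto(f,b+\lambda)$ for $\lambda$ in a finite generating set of $\Lambda_k$. Existence of $S_\beta$ would follow by verifying that $T_\beta(f):=t(f)-t(\beta(f))$ is a nilspace morphism in $\hom(F_{k-1},\mc{D}_{k-i(\beta)}(\ab_k))$ (well-defined in $\ab_k$ because $\beta\in H'$ preserves $\varphi_{k-1}$-fibers, and of the correct filtered degree by a calculation akin to \cite[Lemma 2.9.5]{Cand:Notes2} combined with Theorem \ref{thm:decrip-trans-group}), then applying Theorem \ref{thm:lift-mor-cfr-ab-gr} to lift $T_\beta$ through $\pi_0$ to $S_\beta$. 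By Theorem \ref{thm:decrip-trans-group} applied to $F$, this $\tilde\beta$ lies in $\tran_{i(\beta)}(F)$, and $\varphi\circ\tilde\beta=\varphi$ by construction; since each lattice generator also preserves $\varphi$, one has $H\subseteq K:=\{\alpha\in\tran(F):\varphi\circ\alpha=\varphi\}$. With $H_i:=H\cap\tran_i(F)$, I would then verify: fiber-transitivity (given $(f,b)\sim_H(f',b')$ with $\pi_i(f,b)=\pi_i(f',b')$, inductive fiber-transitivity on $F_{k-1}$ produces $\beta\in H'_{i+1}$ with $\beta(f)=f'$; the corresponding $\tilde\beta\in H_{i+1}$ brings $(f,b)$ to $(f',b+S_\beta(f))$, and the residual $b'-b-S_\beta(f)\in\Lambda_k\subset H_k\subseteq H_{i+1}$ finishes the move); the identification $\pi_H(F)\cong\ns$ (since $\sim_H=\sim_\varphi$ and $\varphi$ is a surjective fibration); fiber-discreteness and fiber-cocompactness at levels $j<k$ (where $\eta_j$ annihilates all $S_\beta$-corrections and all $\Lambda_k$-translations, so $\eta_j(H)=\eta_j(H')$ inside $\tran(F_j)=\tran((F_{k-1})_j)$, reducing to the inductive hypothesis); and at $j=k$, the identity $H\cap\tran_k(F)=\Lambda_k$, coming from $H\subseteq K$ and $K\cap\tran_k(F)=\ker\pi_0=\Lambda_k$, which is discrete and cocompact in $\tran_k(F)\cong\ab_k'$. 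Corollary \ref{cor:quo-cfr-nil} then upgrades the algebraic nilspace isomorphism $\pi_H(F)\cong\ns$ to one of \textsc{cfr} nilspaces.

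The main technical obstacle will be the construction of the lifts $\tilde\beta$: verifying that the cocycle-type correction $T_\beta(f)=t(f)-t(\beta(f))$ is genuinely a nilspace morphism into $\mc{D}_{k-i(\beta)}(\ab_k)$ rather than merely a continuous map into $\ab_k$. This property is precisely what forces $\tilde\beta$ to land in $\tran_{i(\beta)}(F)$ (and not in a smaller translation subgroup), which in turn is essential both for the fiber-transitivity of $H_\bullet$ and for the clean containment $H\subseteq K$ that pins $H\cap\tran_k(F)$ exactly to $\Lambda_k$. Once this morphism property is established via the structure theorem for translations (Theorem \ref{thm:decrip-trans-group}) and the lifting result (Theorem \ref{thm:lift-mor-cfr-ab-gr}), the remaining verifications follow fairly directly by induction.
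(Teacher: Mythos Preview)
Your approach is essentially the paper's, with the same inductive architecture (fiber product, split via Theorem~\ref{thm:splitext}, cover $\ab_k$ by $\ab_k'$, generate $H$ by lifted generators together with the lattice $\Lambda_k$). Two points deserve comment.

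First, the obstacle you single out is handled more cheaply in the paper. Rather than verifying directly that $T_\beta(f)=t(f)-t(\beta(f))$ lies in $\hom(F_{k-1},\mc{D}_{k-i(\beta)}(\ab_k))$, the paper works on the fiber product $\nss=F_{k-1}\times_{\ns_{k-1}}\ns$ \emph{before} splitting: for $h\in H'\cap\tran_i(F_{k-1})$ one sets $\tilde h(f,x):=(h(f),x)$ and checks in two lines that $\tilde h\in\tran_i(\nss)$ (because $h$ is $\varphi_{k-1}$-vertical, so $(h^E\!\cdot\q_1)\times\q_2$ stays a cube in $\nss$). Transporting through the splitting isomorphism $\iota$ then gives an element of $\tran_i(F_{k-1}\times\mc{D}_k(\ab_k))$ automatically, and the packaged Theorem~\ref{thm:lift-trans} (which internally invokes Theorem~\ref{thm:lift-mor-cfr-ab-gr}, exactly as you propose) produces the lift to $\tran_i(F)$. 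Your cocycle computation is equivalent but more laborious, and the appeal to \cite[Lemma~2.9.5]{Cand:Notes2} is not quite the right tool here (that lemma treats translations in $\ker\eta_{k-1}$, which is not the present situation).

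Second, there is a genuine, though easily repaired, gap in your fiber-transitivity check. When you write ``the corresponding $\tilde\beta\in H_{i+1}$'' for an arbitrary $\beta\in H'_{i+1}$, you are assuming $\beta$ can be written as a word in generators each already of height $\ge i+1$, so that the product of their individual lifts lands in $H\cap\tran_{i+1}(F)$. This is not automatic: a general $\beta\in H'_{i+1}$ might only be expressible via lower-height generators, whose lifts need not lie in $\tran_{i+1}(F)$. The paper secures this by invoking Lemma~\ref{lem:noeth-group} (finitely generated nilpotent groups are Noetherian) to choose the generating set $\{h_1,\ldots,h_n\}$ of $H'$ so that each $H'_j$ is generated by a subset of it, and then lifts each $h_i$ into $\tran_{j}(F)$ for the \emph{largest} $j$ with $h_i\in H'_j$. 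With this setup your argument goes through; without it the step ``$\tilde\beta\in H_{i+1}$'' is unjustified.
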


In the proof we use the following fact about finitely generated nilpotent groups.
\begin{lemma}\label{lem:noeth-group}
Let $H$ be a $k$-step nilpotent finitely-generated group. Then for every filtration $H_\bullet=(H_i)_{i\ge 0}$ on $H$ there exist $h_1\ldots,h_n\in H$, and a subset $I_j\subset [n]$ for each $j\in[k]$, such that for every $j\in[k]$ we have $H_j=\langle h_{i}\rangle_{i\in I_j}$ .
\end{lemma}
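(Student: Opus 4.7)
The plan is to combine the classical Noetherian property of finitely generated nilpotent groups with a stepwise lifting of generators through the abelian quotients $H_j/H_{j+1}$. The main obstacle here is the Noetherian property itself, namely the classical theorem of P.\ Hall that every subgroup of a finitely generated nilpotent group is itself finitely generated; once this is granted, the remaining steps are routine filtration bookkeeping. In particular, each $H_j$ (and also $H_{k+1}$) is then a finitely generated subgroup of $H$.

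Next I will extract two elementary consequences of the filtration axiom $[H_i,H_j]\subseteq H_{i+j}$. Taking $i=j\ge 1$ gives $[H_j,H_{j+1}]\subseteq H_{2j+1}\subseteq H_{j+1}$, so $H_{j+1}$ is normal in $H_j$; similarly $[H_j,H_j]\subseteq H_{2j}\subseteq H_{j+1}$, so $H_j/H_{j+1}$ is abelian. Combined with the previous paragraph, each $H_j/H_{j+1}$ is thus a finitely generated abelian group.

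For each $j\in[k]$ I will then pick a finite set of lifts $T_j\subseteq H_j$ of a generating set of $H_j/H_{j+1}$, and separately pick a finite generating set $T_{k+1}\subseteq H_{k+1}$ for $H_{k+1}$ itself. The claim is that for each $j\in\{1,\ldots,k+1\}$ the set $S_j:=\bigcup_{j'\ge j} T_{j'}$ generates $H_j$, which I would prove by downward induction on $j$: the base case $j=k+1$ holds by the choice of $T_{k+1}$; for the inductive step, $H_{j+1}=\langle S_{j+1}\rangle$ by induction, $T_j$ projects to generators of $H_j/H_{j+1}$, and $H_{j+1}$ is normal in $H_j$, so $S_j=T_j\cup S_{j+1}$ generates $H_j$. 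Concatenating $T_1,\ldots,T_{k+1}$ into a tuple $(h_1,\ldots,h_n)$ and letting $I_j\subseteq[n]$ record the indices of the elements in $S_j$ then yields the required conclusion.
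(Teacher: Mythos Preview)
Your argument is correct, and it rests on the same key input as the paper: the classical fact (P.\ Hall) that a finitely generated nilpotent group is Noetherian, so every $H_j$ is finitely generated. However, the paper's proof stops right there: for each $j\in[k]$ it simply picks a finite generating set of $H_j$, concatenates these into $h_1,\ldots,h_n$, and lets $I_j$ index the generators chosen for $H_j$. There is no need to pass to the abelian quotients $H_j/H_{j+1}$, to lift generators, or to run a downward induction. Your construction has the pleasant side effect that $I_1\supseteq I_2\supseteq\cdots\supseteq I_k$ (since $S_j\supseteq S_{j+1}$), but the lemma does not demand nested index sets, and the subsequent application in the paper does not use this either. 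So your proof is a valid but more elaborate route to the same conclusion; the paper's one-line argument avoids the filtration bookkeeping altogether.
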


\begin{proof}
The group $H$ is Noetherian. Thus, every subgroup is finitely generated, and we can take as our set $h_1,\ldots,h_n\in H$ the union of all those generators for $j\in[k]$.
\end{proof}

\begin{proof}[Proof of Theorem \ref{thm:groupequivrep}]
We argue by induction on the step $k$. For $k=1$ the result follows from the standard theory of compact abelian Lie groups.

Let $k>1$ and suppose by induction that $\ns_{k-1}$ satisfies the theorem's conclusion, so there is a $(k-1)$-step free nilspace $F_{k-1}$, and a finitely-generated fiber-transitive filtration $H_\bullet$ on $F_{k-1}$, fiber-discrete and fiber-cocompact, with corresponding quotient map the fibration $\varphi_{k-1}:F_{k-1}\to\ns_{k-1}$, such that the induced map $\overline{\varphi_{k-1}}:\pi_H(F_{k-1})\to \ns_{k-1}$ is an isomorphism of compact nilspaces. Recall from \eqref{eq:etaj} the definition of the maps $\eta_j$. By Corollary \ref{cor:equivalent-filtrations}  the filtration $(H\cap \tran_j(F_{k-1}))_{j\ge 0}$ defines the same quotient nilspace as $H_\bullet$.  By Corollary \ref{cor:same-last-str-gr} we then have  $\eta_j(H_j)=\eta_j(H\cap \tran_j(F_{k-1}))$ for every $j\in [k-1]$, so in particular for every such $j$ the group $\eta_j(H\cap \tran_j(F_{k-1}))$ is discrete and cocompact in $\ab_j(F_{k-1})$. Hence, we can assume that the group $H$ is finitely-generated, fiber-discrete and fiber-cocompact when considered with filtration $\big(H\cap \tran_j(F_{k-1})\big)_{j\geq 0}$, and that $\pi_H(F_{k-1})\cong \ns_{k-1}$. By Lemma \ref{lem:noeth-group} there is a set $\{h_1,\ldots,h_n\}$ generating $H$ and such that for every $j\in[k]$ the subgroup $H\cap \tran_j(F_{k-1})$ is generated by a subset of $\{h_1\ldots,h_n\}$. Hence, from now on we will assume that $H_j=H\cap \tran_j(F_{k-1})$ for all $j\in[k]$ and that $H_j$ is generated by some elements in $\{h_1,\ldots,h_n\}$.

Let us now consider the fiber product of $F_{k-1}$ and $\ns$, i.e., the nilspace $\nss:= F_{k-1}\times_{\ns_{k-1}}\ns$ $=\{(f,x)\in F_{k-1}\times \ns:\varphi_{k-1}(f)=\pi_{k-1}(x)\}$. This is a $k$-step Lie-fibered nilspace, by Lemma \ref{lem:fib-prod-top-nil}, and the following diagram commutes:
\begin{equation}
\begin{aligned}[c]
\begin{tikzpicture}
  \matrix (m) [matrix of math nodes,row sep=1.5em,column sep=4em,minimum width=2em]
  {\nss & \ns  \\
     F_{k-1} & \ns_{k-1}, \\};
  \path[-stealth]
    (m-1-1) edge node [above] {$p_2$} (m-1-2)
    (m-1-1) edge node [right] {$p_1$} (m-2-1)
    (m-1-2) edge node [right] {$\pi_{k-1}$} (m-2-2)
    (m-2-1) edge node [above] {$\varphi_{k-1}$} (m-2-2);
\end{tikzpicture}
\end{aligned}
\end{equation}

\vspace{-0.3cm}

\noindent where $p_1:\nss\to F_{k-1}$, $(f,x)\mapsto f$ and $p_2:\nss\to \ns$, $(f,x)\mapsto x$. It is checked straightforwardly that these maps are continuous fibrations.

The proof of Lemma \ref{lem:fib-prod-top-nil}  yields that $\nss$ is a degree-$k$ extension of $F_{k-1}$ by $\ab_k=\ab_k(\ns)$. By Theorem \ref{thm:splitext}, this extension splits. It follows that there is an isomorphism of \textsc{lch}  nilspaces $\iota:\nss\to F_{k-1}\times \mc{D}_k(\ab_k)$. Let $\ab_k'$ be the covering group of $\ab_k$ involved in Theorem \ref{thm:lift-mor-cfr-ab-gr}, with corresponding covering homomorphism $\pi:\ab_k'\to \ab_k$. In particular $\ab_k'=\mb{R}^c\times \mb{Z}^b$ for some integers $c,b\ge 0$, and there is a lattice $\Gamma_k=\ker(\pi)\leq \ab_k'$ such that $\ab_k'/\Gamma_k\cong \ab_k$. We can now define the free nilspace $F$ in the theorem's conclusion:
\begin{equation}
F:=  F_{k-1}\times \mc{D}_k(\ab_k'). 
\end{equation}
We define also the nilspace fibration
$\phi:F \to F_{k-1}\times \mc{D}_k(\ab_k), \; (f,z')\mapsto \big(f,\pi(z')\big)$.

Next, we observe that every translation $h\in \compl{H}\cap \tran_i(F_{k-1})$ (in particular every $h\in H_i$) yields a translation $\tilde{h}\in \tran_i(\nss)$ defined by
$\tilde{h}(f,x) = (h(f),x)$. Indeed, we have firstly that $\tilde{h}$ is $\nss$-valued, because since $\pi_H(f)=\pi_{k-1}(x)$ and $h$ is $\sim_{H}$-vertical, we also have $\pi_{H}(h(f))=\pi_{k-1}(x)$, so $ (h(f),x)\in \nss$. Secondly $\tilde{h}\in \tran_i(\nss)$, indeed for any $n$-cube $\q_1\times\q_2$ on $\nss$, and any $i$-codimensional face $E \subset \db{n}$, we have $h^{E}\cdot \q_1\in \cu^n(F)$ and $\pi_H\co (h^{E}\cdot \q_1) =\pi_H\co \q_1=\pi_{k-1}\co \q_2$, so  $(h^{E}\cdot \q_1)\times\q_2\in \cu^n(\nss)$. Recalling the notation from \cite[Lemma 1.5]{CGSS} we let $\widehat{\iota}:\tran(\nss)\to \tran(F_{k-1}\times \mc{D}_k(\ab_k))$ be the group isomorphism (in particular filtered homomorphism) induced by the nilspace isomorphism $\iota$. It is then checked straightforwardly that the map $\psi:H\to \tran\big(F_{k-1}\times \mc{D}_k(\ab_k)\big)$, $h\mapsto \widehat{\iota}(\tilde h)$ is an injective filtered homomorphism mapping $H_\bullet$ onto a fiber-transitive filtration on $F_{k-1}\times \mc{D}_k(\ab_k)$, whose corresponding quotient is isomorphic to $\ns$. 

Our aim now is to lift the filtration $\psi(H_\bullet)$ through $\phi$ to a fiber-transitive filtration on $F$ satisfying the theorem's conclusion. We shall do this by lifting the generators $\psi(h_i)$. By Theorem \ref{thm:lift-trans}, for every $\alpha\in \tran_j(F_{k-1}\times \mc{D}_k(\ab_k))$ there exists a lift $\alpha'$ of $\alpha$ through $\phi$, i.e.\ a (possibly non-unique) $\alpha'\in \tran_j(F)$ such that $\phi\co \alpha' = \alpha\co\phi$. For each $i\in[n]$, we thus choose some lift $h'_i\in \tran(F)$ of $\psi(h_i)$ through $\phi$, and we do so using the filtered-homomorphism property of $\psi$ in such a way that, if $j$ is the greatest index in $[k]$ such that $h_i\in H_j$, then $h_i'\in \tran_j(F)$ (this will be used below to establish the fiber-transitivity property  for the group generated by these lifts).

The lattice $\Gamma_k=\ker(\pi)$ is finitely generated, so let $z_1,\ldots,z_t\in \ab_k'$ form a generating set for $\Gamma_k$. Then for every $s\in[t]$ let $\gamma_s\in \tran_k(F)$ be the map $\gamma_s(f,a'):=(f,a'+z_s)$  (recall that $(f,a')\in F=F_{k-1}\times \mc{D}_k(\ab_k')$).

We claim that the filtered group $\big(H':=\langle h_1'\ldots,h_n',\gamma_1,\ldots,\gamma_t\rangle,\; H'_\bullet:=(H'\cap\tran_i(F))_{i\geq 0}\big)$, together with the map $\varphi:=p_2\co \iota^{-1}\co \phi: F\to \ns$, satisfy the conclusion of the theorem. 

To prove this, we first note that the map $\phi:F\to F_{k-1}\times \mc{D}_k(\ab_k)$ is $\tran(F)$-consistent in the sense of \cite[Definition 1.2]{CGSS}, i.e.\ for every $\alpha\in \tran(F)$, if $\phi((f,z))=\phi((f',z'))$ then $\phi(\alpha(f,z))=\phi(\alpha(f',z'))$. 
Indeed $\phi((f,z))=\phi((f',z'))$ implies that $f'=f$ and that $\pi(z)=\pi(z')$ (i.e.\ $z,z'$ are equal modulo $\Gamma_k$); the equality $f'=f$ implies (using the description \eqref{eq:str-trans-ab-nil} and the fact that the factor map $\pi_{k-1,F}:F\to F_{k-1}$ is simply the projection $(f,a')\mapsto f$) that  $\alpha(f,z)$, $\alpha(f',z')$ are equal modulo $\pi_{k-1,F}$ and that their $k$-th step components are respectively $z+t,z'+t$ for some $t\in \ab_k'$ depending on $\alpha$ and $f=f'$. Hence $\phi(\alpha(f',z'))$, $\phi(\alpha(f,z))$ have equal $\pi_{k-1}$ projection and $k$-th component respectively $\pi(z'+t),\pi(z+t)$, also equal, so $\phi(\alpha(f,z))=\phi(\alpha(f',z'))$ as claimed. Thus by \cite[Lemma 1.5]{CGSS} we have a well-defined filtered homomorphism $\widehat{\phi}:\tran(F)\to \tran(F_{k-1}\times \mc{D}_k(\ab_k))$, $\alpha\mapsto \big((f,\pi(z))\mapsto \phi (\alpha(f,z))\big)$.

Now a simple computation shows that $\pi_{k-1,F}$ equals the map $p_1\co \iota^{-1}\co \phi$. Let $\eta_{k-1}:\tran(F)\to \tran(F_{k-1})$ be the corresponding homomorphism as defined in \eqref{eq:etaj}. We then have the following fact, which we will use to obtain part of the fiber-transitive property by induction below, as well as the required topological properties thereafter:
\begin{equation}\label{eq:filtcorresp}
\textrm{$\eta_{k-1}(H'_j)=H_j$ for every $j\in [k-1]$.}
\end{equation}
 To see this note that by construction $\eta_{k-1}(H'_j)$ contains all the generators of $H_j$, so $\eta_{k-1}(H'_j)\supset H_j$, and since clearly $\eta_{k-1}(\gamma_i)=\id$ for all $i\in[t]$ (as $\gamma_i\in\tran_k(F)$), we deduce \eqref{eq:filtcorresp}. 

In addition to \eqref{eq:filtcorresp}, we also need the following description of $H_k'$ (where 
$\Gamma_k=\ker(\pi)$ is identified with the subgroup of $\tran(F)$ consisting of translations $f\mapsto f+z$ for $z\in \Gamma_k$):
\begin{equation}\label{eq:filtcorresp-2}
H'_k:=H'\cap \tran_k(F) = \Gamma_k.
\end{equation}
The inclusion $\Gamma_k\subset H'\cap \tran_k(F)$ is clear since the elements $\gamma_s$ generating $\Gamma_k$ are all in $\tran_k(F)\cap H'$ by construction, so we prove the opposite inclusion. Every element $\alpha\in \tran_k(F)$ is of the form $(f,a')\mapsto(f,a'+z)$ for some $z\in \ab_k'$, so we have to prove that if $\alpha=h'\in H'$ then $z\in \Gamma_k$. As an element of $\tran(F_{k-1})$, the translation $\eta_{k-1}(h')$ acts on $\nss=F_{k-1}\times_{\ns_{k-1}}\ns$ via the first component, so we have on one hand that $\widehat{\iota^{-1}}(\widehat{\phi}(h'))$  maps every $(f,x)\in  F_{k-1}\times_{\ns_{k-1}}\ns$ to $(\eta_{k-1}(h')f,x)$. On the other hand, since $h':(f,a')\mapsto (f,a'+z)$, we see by developing the definitions that the map $\widehat{\iota^{-1}}(\widehat{\phi}(h'))$ must take every $(f,x)\in \nss$ to $(f,x+\pi(z))$. Thus for all $(f,x)\in \nss$ we have $(\eta_{k-1}(h')f,x)=(f,x+\pi(z))$. Composing with $p_2$ we conclude that $z\in \ker(\pi)= \Gamma_k$, yielding the required inclusion.

Now let us prove that the filtration $H'_\bullet$ is fiber-transitive on $F$. First let us see that if $\pi_{k-1}(f_1,a_1')=\pi_{k-1}(f_2,a'_2)$ and $\alpha(f_1,a_1')=(f_2,a'_2)$ for some $\alpha\in H$ then we can find $z\in \Gamma_k=H'\cap\tran_k(F)$ such that $(f_1,a'_1)+z=(f_2,a'_2)$. Recall that  $\pi_{k-1}:F\to F_{k-1}$ is the map $(f,a')\mapsto p_1(\iota^{-1}(\phi(f,a')))=f$. Thus $f_1=f_2$ and there exists $z\in \ab_k'$ such that $a_1'+z=a_2'$. But also, as $\alpha=\alpha^{d_1}_1\cdots \alpha^{d_\ell}_\ell$ we have that $\iota^{-1}(\phi(f_2,a_2')) = \iota^{-1}(\phi(\alpha(f_1,a_1'))) = (\beta^{d_1}_1\cdots \beta^{d_\ell}_\ell(f_1),x(f_1,a_1')) $ where $\beta_u$ are again either some $h_i$ or the identity and $x(f_1,a_1')\in \ns$ is such that $\iota^{-1}(\phi(f_1,a_1'))=(f_1,x(f_1,a_1'))$. As computed before, we have $\beta^{d_1}_1\cdots \beta^{d_\ell}_\ell(f_1) = f_1$. Composing with $\iota$ in the equality $\iota^{-1}(\phi(f_2,a'_2))=(f_1,x(f_1,a_1'))$ we have that $(f_2,a_2'\mod \Gamma_k)=(f_1,a_1'\mod\Gamma_k)$. So in particular $z\in \Gamma_k$ and we have found that $(f_1,a_1')+z=(f_2,a_2')$ for some $z\in \Gamma_k=H'\cap \tran_k(F)$.

Now, let $\alpha(f_1,a_1')=(f_2,a_2')$ and $\pi_j(f_1,a_1')=\pi_j(f_2,a_2')$ for $j<k-1$. In particular, $\pi_j(f_1)=\pi_j(f_2)$ and $h(f_1)=f_2$ for some $h\in H$ (here, abusing the notation we indicate by $\pi_j$ the projection to the $j$-th factor, either from $F$ or from $F_{k-1}$). By hypothesis on $H$ there exists an element $h^*\in H_{j+1}$ such that $h^*(f_1)=f_2$. By definition, there exists then some $\tau\in H'_{j+1}$ such that $\pi_{k-1}(\tau(f_1,a_1)) = h^*(f_1)$ (just write $h^*$ in terms of the fixed generators $h_i$ and lift these one by one, using the above-mentioned guarantee that if $h_i\in H_{j+1}$, then $h_i'\in H'_{j+1}$). Then, we have that $\pi_{k-1}(\tau(f_1,a_1))=\pi_{k-1}(f_2,a'_2)$ and clearly they differ in an element of $H'$. By the previous paragraph we have that there exists $\gamma\in \Gamma_k$ such that $\gamma(\tau(f_1,a'_1))=(f_2,a_2')$, which concludes the proof.

For the topological aspects, note that the map $\pi_{H'}(f,a')\mapsto \varphi(f,a')=p_2(\iota^{-1}(\phi(f,a')))$ factors through $\pi_H$, and as $\varphi$ is continuous and $\pi_H$ is an open quotient map, the map $\overline{\varphi}\co \pi_H:=\varphi$ is continuous. By \eqref{eq:filtcorresp}, \eqref{eq:filtcorresp-2}, and induction on the step it follows that $H'$ is also fiber-discrete and fiber-cocompact. By Corollary \ref{cor:quo-cfr-nil} we know that $\pi_{H'}(F)$ is a compact nilspace. A simple computation yields that indeed $\overline{\varphi}$ is bijective. Hence, as a bijection between compact Hausdorff spaces, it is a homeomorphism. Both $\overline{\varphi}$ and $\overline{\varphi}^{-1}$ are easily checked to be morphisms.
\end{proof}

\noindent We close this section with the following corollary.

\begin{corollary}\label{cor:k=2-str-good}
In the case $k=2$ of Theorem \ref{thm:groupequivrep}, the filtration $H_\bullet$ is pure.
\end{corollary}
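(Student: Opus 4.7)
The plan is to verify the criterion of Lemma \ref{lem:equi-strong-good} for the filtration produced by the proof of Theorem \ref{thm:groupequivrep} when $k=2$; I will denote this filtration $H'_\bullet$ to match the notation used in that proof. Recall that in this case $F = F_1\times\mc{D}_2(\ab_2')$ with $F_1 = \mc{D}_1(A_1)$, $A_1 = \mb{Z}^{a_1}\times\mb{R}^{b_1}$, and $H' = \langle h_1',\ldots,h_n',\gamma_1,\ldots,\gamma_t\rangle$, where the $\gamma_s$ are the translations $(f,a')\mapsto(f,a'+z_s)$ generating $\Gamma_2 = \ker(\pi)$ and each $h_i'$ lifts $\psi(h_i)$ through $\phi$. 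By Theorem \ref{thm:decrip-trans-group}, every $\alpha\in\tran(F)$ has the form $\alpha(x_1,x_2) = (x_1+c_\alpha,\, x_2+T_\alpha(x_1))$ with $T_\alpha$ a morphism $\mc{D}_1(A_1)\to\mc{D}_1(\ab_2')$, hence affine. Since the criterion of Lemma \ref{lem:equi-strong-good} is automatic when $c_\alpha\neq 0$, the task reduces to showing that $T_\alpha$ is constant whenever $\alpha\in H'$ satisfies $c_\alpha = 0$.

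The main intermediate goal will be to establish $\ker(\wh{\phi}|_{H'}) = \Gamma_2$, where $\wh{\phi}:\tran(F)\to\tran(F_1\times\mc{D}_2(\ab_2))$ is the induced filtered homomorphism from \cite[Lemma 1.5]{CGSS}. As a preparatory step, I would arrange at the invocation of Lemma \ref{lem:noeth-group} in the proof of Theorem \ref{thm:groupequivrep} to pick $h_1,\ldots,h_n$ as a $\mb{Z}$-basis of $H$, which is possible because $H\subset A_1$ is finitely-generated and torsion-free, hence free abelian; this ensures that $\psi(H)\cong\mb{Z}^n$ is freely generated by the $\psi(h_i)$. Then I would: (i) show $[H',H']\subset\Gamma_2$, using that the $\gamma_s\in\tran_2(F)$ are central in $\tran(F)$ (so only the commutators $[h_i',h_j']$ matter), together with a short direct computation from the formula in Theorem \ref{thm:decrip-trans-group} showing that $[h_i',h_j']$ adds only a constant in the $x_2$-direction and hence lies in $H'\cap\tran_2(F) = \Gamma_2$ by \eqref{eq:filtcorresp-2}; (ii) deduce that $H'/\Gamma_2$ is abelian, generated by the $n$ images $[h_i']$, and observe that the composition $\mb{Z}^n\twoheadrightarrow H'/\Gamma_2\twoheadrightarrow\psi(H)\cong\mb{Z}^n$ sends the standard basis to itself and is therefore an isomorphism, forcing both intermediate surjections to be isomorphisms. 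This yields the desired kernel equality.

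The conclusion will then be immediate: for $\alpha\in H'$ with $c_\alpha = 0$, the translation $\wh{\phi}(\alpha)$ fixes the $F_1$-coordinate; but every element of $\psi(H)$ translates $F_1$ by a unique element of $H$ (as $H$ acts freely on $F_1$ by translation), so $\wh{\phi}(\alpha) = \psi(\id) = \id$, whence $\alpha\in\Gamma_2$. This gives $\alpha(f,a') = (f,a'+z)$ for some $z\in\Gamma_2$, so $T_\alpha = z$ is constant, and Lemma \ref{lem:equi-strong-good} applies. The main subtlety I anticipate lies in step (ii): the choice of a free $\mb{Z}$-basis for $H$ at the outset is precisely what prevents $H'/\Gamma_2$ from acquiring extra relations beyond those forced by $\psi(H)$, which is what makes the kernel computation work cleanly.
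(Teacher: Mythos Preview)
Your proof is correct and takes essentially the same approach as the paper's: both verify the criterion of Lemma~\ref{lem:equi-strong-good}, arrange for $h_1,\ldots,h_n$ to freely generate the lattice $H\cong\mb{Z}^n$, use $[H',H']\subset H'\cap\tran_2(F)=\Gamma_2$ (so $H'/\Gamma_2$ is abelian on $n$ generators), and deduce that $c_\alpha=0$ forces $\alpha\in\Gamma_2$. The paper does this last step more directly via the normal form $\alpha={h_1'}^{a_1}\cdots{h_n'}^{a_n}\gamma_1^{b_1}\cdots\gamma_t^{b_t}$ together with $\eta_1$, while you route through $\wh{\phi}$ and an explicit kernel computation, but the content is the same.
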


\begin{proof}
Using the same notation as for Theorem \ref{thm:groupequivrep}, by construction we can take $F=\mc{D}_1(\ab_1')\times \mc{D}_2(\ab_2')$ and take $H'=\langle h_1',\ldots,h_n',\gamma_1,\ldots,\gamma_t\rangle$ where  $h_1',\ldots,h_n'$ are lifts of the translations that freely generate the lattice $\eta_1(H')\subset \ab_1'$ and $\gamma_1,\ldots,\gamma_t$ freely generate the group $H_2'$. Note that in particular, any commutator of elements in $h_1',\ldots,h_n'$ lie necessarily in $H_2'$. As the elements $\gamma_i$ generate $H_2'$, we deduce that every element of $H'$ can be (uniquely) written as ${h_1'}^{a_1}\cdots{h_n'}^{a_n}\gamma_1^{b_1}\cdots\gamma^{b_t}_t$ for integers $a_i,b_j\in \mb{Z}$, $i\in[n]$ and $j\in[t]$. By Theorem \ref{thm:decrip-trans-group} any translation on $F$ has the form $(x,y)\in \ab_1'\times \ab_2'\mapsto (x+c,y+d+T(x))$ where $T:\mc{D}_1(\ab_1')\to \mc{D}_1(\ab_2')$ is a morphism, $c\in \ab_1'$ and $d\in \ab_2'$. Moreover, the elements $\gamma_j$ only add a fixed constant to $\ab_2'$. Now we check that the condition \eqref{it:pure-3} for free-transitivity given in Proposition \ref{prop:eq-free-action} hold. Given any $\alpha={h_1'}^{a_1}\cdots{h_n'}^{a_n}\gamma_1^{b_1}\cdots\gamma^{b_t}_t\in H'$, viewed as the map $\alpha(x,y)=(x+c,y+d+T(x))$, there are two cases. Firstly, if  $c\not=0$ then the first condition in Proposition \ref{prop:eq-free-action} clearly holds. If $c=0$ then $a_1=\cdots=a_n=0$, as $\eta_1(h_1'),\ldots,\eta_n(h_n')$ freely generate the lattice $\eta_1(H')$. Thus, condition \eqref{it:pure-3} in Proposition \ref{prop:eq-free-action} holds, as the $\gamma_i$ only add constants in the $\mc{D}_2$ component.
\end{proof}

\begin{remark}
The filtered group $(H,H_\bullet)$ in Theorem \ref{thm:groupequivrep} for $k>2$ is not necessarily pure. We leave as an exercise for the reader to check that the fiber-transitive group $H$ on $\mc{D}_1(\mb{Z}^3)\times\mc{D}_3(\mb{R})$ generated by $(x,y,z)\mapsto (x+3,y,z+xy)$, $(x,y,z)\mapsto (x,y+3,z+x^2+y^2)$ and $(x,y,z)\mapsto(x,y,z+3)$ is fiber-transitive, can appear as $H$ in Theorem \ref{thm:groupequivrep} and yet it is not pure.
\end{remark}

\begin{remark}\label{rem:2-step-embed}
Corollary \ref{cor:k=2-str-good}  and Proposition \ref{prop:embeb-strong-good} together imply that  any 2-step \textsc{cfr} nilspace can be embedded into a coset nilspace corresponding to a 2-step connected nilmanifold. This will enable us to recover and extend a recent inverse theorem of Jamneshan and Tao \cite{J&T} for the Gowers $U^3$-norm on finite abelian groups; see Theorem \ref{thm:jam-tao}. Note that a version of the above embedding result was proved  for finite 2-step nilspaces by Host and Kra in \cite[Theorem 3]{HK-par}.
\end{remark}

\begin{remark}
From Theorem \ref{thm:groupequivrep} it can be deduced that $k$-step Lie-fibered nilspaces are in fact (real) $C^\infty$ manifolds, and more generally that for every $n\in \mb{Z}{\geq 0}$ the cube-set $\cu^n(\ns)$ is a $C^\infty$ manifold. Moreover if $\varphi:\ns\to \nss$ is a continuous morphism between $k$-step Lie-fibered nilspaces, it can also be proved that $\varphi$ is automatically $C^\infty$. These facts are not needed in this paper but turn out to be useful in subsequent work; proofs can be found in \cite[Appendix A]{CGSS-spec}.
\end{remark}

\section{Double-coset nilspaces}\label{sec:DC}

\noindent Given a filtered group $(G,G_\bullet)$ and two subgroups $K,\Gamma$ of $G$, the \emph{double-coset space} $K\backslash G /\Gamma$ is the set of double cosets $K g \Gamma= \{kg\gamma:k\in K,\gamma\in \Gamma\}$, $g\in G$. The following is a natural generalization of the notion of coset nilspace for double-coset spaces.
\begin{defn}\label{def:dcn}
Let $(G,G_\bullet)$ be a filtered group let $K,\Gamma$ be subgroups of $G$. The associated \emph{double-coset nilspace} is the double-coset space $K\backslash G /\Gamma$ together with the cubes of the form $K\q \Gamma: \db{n}\to K\backslash G /\Gamma$, $v \mapsto K\q(v)\Gamma$ (for $\q\in\cu^n(G_\bullet)$).
\end{defn}
Recall from Remark \ref{rem:cosnilex} the basic example of groupable congruences constituted by coset nilspaces. Given a filtered group $(G,G_\bullet)$ and two subgroups $K,\Gamma$ of $G$, we can define an equivalence relation on $G$ as follows: we say that $g_1,g_2\in G$ are equivalent if the double cosets $Kg_1\Gamma$, $Kg_2\Gamma$ are equal. Naturally we may wonder when this defines a nilspace congruence (as per Definition \ref{def:cong}), and when the quotient map $G\to K\backslash G /\Gamma$ is a fibration (from the group nilspace associated with $G_\bullet$ to the double-coset nilspace $K\backslash G /\Gamma$).

\begin{defn}[Nilpair] 
Let $(G,G_\bullet)$ be a degree-$k$ filtered group, let $\ns$ be the associated group nilspace, and let  $K,\Gamma$ be subgroups of $G$. We say that $(K,\Gamma)$ is a \emph{nilpair}  in $(G,G_\bullet)$ if the equivalence relation on $G$ defined by $g_1\sim g_2 \iff Kg_1\Gamma=Kg_2\Gamma$ is a nilspace congruence on $\ns$ and the quotient map $G\to K\backslash G /\Gamma$ is a nilspace fibration. 
\end{defn}
\noindent To see if $(K,\Gamma)$ is a nilpair in $G$, a natural approach is to start by constructing the right-coset nilspace $K\backslash G$, and then study the equivalence induced on this coset space by \emph{right}-multiplication by elements of $\Gamma$, noting that this yields precisely the equivalence relation on $G$ that we are interested in. In terms of nilspace structures, multiplying on the right by elements of $\Gamma$ induces a  homomorphism $\Gamma\to \tran(K\backslash G)$. By Lemma \ref{lem:gpequivcong}, if this action of $\Gamma$ defines a groupable congruence on $K\backslash G$ then  $(K,\Gamma)$ is a nilpair. Let us record this result equivalently in terms of fiber-transitive filtrations (Definition \ref{def:ft-fil}) with the following lemma, which tells us exactly when the filtration $\Gamma_\bullet:=(\Gamma\cap G_i)_{i\ge 0}$ is fiber-transitive on the coset nilspace $K\backslash G$.
\begin{lemma}\label{lem:nil-pair-good-fil}
Let $(G,G_\bullet)$ be a filtered group of finite degree $k$. Let $K,\Gamma$ be subgroups of $G$. The following statements are equivalent:
\begin{enumerate}[leftmargin=0.7cm]
    \item The filtration $\Gamma_\bullet:=(\Gamma\cap G_i)_{i\ge 0}$ is fiber-transitive on the coset nilspace $K\backslash G$.
    \item For any $x\in G$ and all $i\ge 0$ we have $(Kx\Gamma)\cap(KxG_i)=Kx(\Gamma\cap G_i)$.
\end{enumerate}
\end{lemma}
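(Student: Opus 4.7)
My plan is to reduce the statement to a direct unwinding of the definition of fiber-transitivity in the concrete setting of the coset nilspace $K\backslash G$. The first step will be to verify that the filtration $\Gamma_\bullet = (\Gamma\cap G_i)_{i\ge 0}$ really does give a subfiltration of $\tran_\bullet(K\backslash G)$, so that talking about fiber-transitivity even makes sense. For this, right-multiplication by $\gamma\in G$ commutes with left-multiplication by $K$, hence descends to a well-defined map on $K\backslash G$; and right-multiplication by an element of $G_i$ is a translation of height $i$ on the group nilspace associated with $(G,G_\bullet)$ (a standard fact from \cite[Ch.\ 2]{Cand:Notes1}), which survives the descent to $K\backslash G$.

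Next, I would translate \eqref{eq:ft-fil} into explicit conditions on cosets. In the coset nilspace $K\backslash G$, the $j$-th characteristic factor identifies $Kx$ and $Kx'$ exactly when $Kx' \in KxG_{j+1}$; and $Kx\sim_\Gamma Kx'$ means $Kx\Gamma=Kx'\Gamma$, i.e.\ $x'\in Kx\Gamma$. Thus property \eqref{eq:ft-fil} applied to $\Gamma_\bullet$ is precisely the statement that for every $j\ge 0$ and every $x\in G$,
\[
(Kx\Gamma)\cap (KxG_{j+1}) \subseteq Kx(\Gamma\cap G_{j+1}),
\]
while the reverse inclusion holds tautologically.

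Finally, reindexing by setting $i=j+1$ gives the equality $(Kx\Gamma)\cap(KxG_i)=Kx(\Gamma\cap G_i)$ for all $i\ge 1$. The remaining case $i=0$ is trivial since $G_0=G$, forcing both sides to equal $Kx\Gamma$. Thus (i) and (ii) are equivalent. I do not expect any serious obstacle here: the argument is essentially a notational matching exercise, and the only subtle point is being careful about the indexing convention (namely that $\pi_j$-fibers are cosets modulo $G_{j+1}$) and about whether we view $\Gamma_\bullet$ abstractly or as a subgroup of $\tran(K\backslash G)$ via the right-multiplication homomorphism.
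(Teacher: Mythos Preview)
Your proposal is correct and takes essentially the same approach as the paper: both arguments unwind the definition of fiber-transitivity on $K\backslash G$, identify $\pi_j$-fibers with sets of the form $KxG_{j+1}$ and $\sim_\Gamma$-classes with $Kx\Gamma$, and observe that the nontrivial inclusion in (ii) is exactly the content of \eqref{eq:ft-fil}. Your version is slightly more streamlined (a direct set-theoretic matching rather than the paper's element chase), and you explicitly verify that $\Gamma_\bullet$ embeds as a subfiltration of $\tran_\bullet(K\backslash G)$, which the paper leaves implicit.
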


\begin{proof}
Let us recall a useful fact about coset nilspaces. If $(G,G_\bullet)$ is of degree $k$ and $K$ is a subgroup of $G$, the $(i-1)$-step nilspace factor of $K\backslash G$ is isomorphic to the coset nilspace associated with the filtered group $(G/G_i,(G_j/G_i)_{j\ge 0})$ and the subgroup $(K G_i)/G_i$.

Assume now that $(ii)$ holds. Suppose that for some $\gamma\in \Gamma$, $x,y\in G$ we have $Kx\gamma=Ky$ and $\pi_{i-1}(Kx)=\pi_{i-1}(Ky)$. The last equality means that 
\[
(K G_i/G_i) (xG_i) = (K G_i/G_i) (y G_i).
\]
Hence, for some $s\in K$ we have $(sG_i)(xG_i)=yG_i$. As $G_i$ is normal, this means that $sxG_i = yG_i$, so there exists $g\in G_i$ such that $xsg=y$. Since also $Kx\gamma=Ky$, for some $s'\in K$ we have $s'x\gamma=y$. Hence $s'x\gamma=xsg$. By $(ii)$ there exists some $t\in (\Gamma\cap G_i)$ such that $Ky=Kxh=K x t$ and $(i)$ follows.

The converse is similar. Assuming $(i)$, let us just prove the inclusion $(Kx \Gamma)\cap(KxG_i)\subset K x(\Gamma\cap G_i)$ (the opposite inclusion is clear). Let $k,k'\in K$, $\gamma\in \Gamma$, $g\in G_i$ and $x\in G$ be such that $kx\gamma = k'x g$. Then  $Kx \gamma = Kx g$ and $\pi_{i-1}(Kx)=\pi_{i-1}(Kxg)$, so by $(i)$ there exists $t\in \Gamma\cap G_i$ such that $Kxt=Kxg$. Thus for some $k''\in K$ we have $k''xt = k'xg$ and therefore $k'xg\in K x (\Gamma\cap G_i)$, as required.
\end{proof}
\noindent An analogous result holds if we start by taking the \emph{left}-coset nilspace $G/\Gamma$ and we let $K$ act on $G/\Gamma$ by left multiplication. Hence, it is natural to ask whether the order in which we do the two quotienting operations matters, i.e., whether  if $K_\bullet$ is a fiber-transitive filtration on $G/\Gamma$ then $\Gamma_\bullet$ is a fiber-transitive filtration on $K\backslash G$. This is confirmed by the next result.

\begin{lemma}\label{lem:sym-nil-pair}
Let $(G,G_\bullet)$ be a degree-$k$ filtered group and let ${\Gamma},K\subset G$ be subgroups. Then the following are equivalent:
\begin{enumerate}
    \item For all $x\in G$ and all $i\ge 0$ we have $(Kx{\Gamma})\cap(G_ix{\Gamma})=(K\cap G_i)x{\Gamma}$.
    \item For all $x\in G$ and all $i\ge 0$ we have $(Kx{\Gamma})\cap(KxG_i)=Kx({\Gamma}\cap G_i)$.
\end{enumerate}
\end{lemma}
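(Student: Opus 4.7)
The plan is to prove both implications directly, relying on the single structural fact that each $G_i$ is normal in $G$, which follows from $[G_i,G_0]\subset G_i$, a consequence of the filtration axiom.

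First I would reformulate both $(i)$ and $(ii)$ into a common symmetric form indexed by conjugates of $K$. Left-multiplying both sides of $(i)$ by $x^{-1}$ and using $x^{-1}G_ix=G_i$, condition $(i)$ at $x$ and $i$ becomes
\[
M\Gamma\cap G_i\Gamma = (M\cap G_i)\Gamma,\qquad \text{where } M:=x^{-1}Kx,
\]
which sharpens easily to
\[
(\ast)\qquad M\cap G_i\Gamma = (M\cap G_i)(M\cap\Gamma).
\]
Indeed the reverse inclusion of $(\ast)$ is immediate; for the forward inclusion, if $m\in M\cap G_i\Gamma$ then the displayed equality above yields $m=g'\gamma$ with $g'\in M\cap G_i$ and $\gamma\in\Gamma$, and then $\gamma=(g')^{-1}m\in M$, so $\gamma\in M\cap\Gamma$. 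An analogous left-multiplication of $(ii)$ by $x^{-1}$ gives $M\Gamma\cap MG_i = M(\Gamma\cap G_i)$, and then the same kind of reduction yields the equivalent statement
\[
(\ast\ast)\qquad \Gamma\cap MG_i = (\Gamma\cap M)(\Gamma\cap G_i).
\]
As $x$ ranges over $G$ the conjugate $M=x^{-1}Kx$ ranges over all conjugates of $K$, so it will suffice to prove $(\ast)\Leftrightarrow(\ast\ast)$ for each fixed $M\le G$ and $i\ge 0$.

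For $(\ast\ast)\Rightarrow(\ast)$: given $m\in M\cap G_i\Gamma$, write $m=g\gamma$ with $g\in G_i$, $\gamma\in\Gamma$; then $\gamma=g^{-1}m\in \Gamma\cap G_iM=\Gamma\cap MG_i$ by normality of $G_i$, so $(\ast\ast)$ gives a decomposition $\gamma=\mu\gamma_1$ with $\mu\in\Gamma\cap M$ and $\gamma_1\in\Gamma\cap G_i$; substituting,
\[
m = g\mu\gamma_1 = \bigl(g\mu\gamma_1\mu^{-1}\bigr)\mu,
\]
and the first factor lies in $G_i$ by normality and equals $m\mu^{-1}\in M$, hence lies in $M\cap G_i$, while $\mu\in M\cap\Gamma$, as required. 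The reverse implication $(\ast)\Rightarrow(\ast\ast)$ is symmetric: for $\gamma\in\Gamma\cap MG_i$ write $\gamma=mg$ with $m\in M$, $g\in G_i$; apply $(\ast)$ to $m=\gamma g^{-1}\in M\cap\Gamma G_i=M\cap G_i\Gamma$ to obtain $m=g'\mu'$ with $g'\in M\cap G_i$ and $\mu'\in M\cap\Gamma$; and write
\[
\gamma = g'\mu' g = \bigl(g'\mu' g(\mu')^{-1}\bigr)\mu',
\]
noting that $g'\mu' g(\mu')^{-1}=\gamma(\mu')^{-1}\in\Gamma$ and lies in $G_i$ by normality, so this yields a factorization in $(\Gamma\cap G_i)(\Gamma\cap M)=(\Gamma\cap M)(\Gamma\cap G_i)$, the last equality holding because $\Gamma\cap G_i$ is normal in $\Gamma$. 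No step requires anything beyond normality of the $G_i$ and elementary group manipulation, so no genuine obstacle is anticipated; the only delicate point is to keep careful track of the order of factors when moving elements of $G_i$ past elements of $\Gamma$ or $M$.
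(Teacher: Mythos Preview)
Your proof is correct and takes a genuinely different route from the paper's. The paper argues by induction on the degree $k$: it first treats the case $i=k$ directly, exploiting that $G_k$ is central in $G$, and then for $i<k$ passes to the quotient $G/G_k$ to invoke the inductive hypothesis, lifting the resulting decomposition back to $G$. Your argument, by contrast, is a single direct computation: after conjugating to replace $K$ by $M=x^{-1}Kx$, you reduce each condition to a statement of the form ``a certain intersection factors through the obvious subgroups'' (your $(\ast)$ and $(\ast\ast)$), and then show these two factorization statements are equivalent by a short element chase using only normality of $G_i$.

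What this buys: your proof is both more elementary and strictly more general --- it never uses that the filtration has finite degree, never uses centrality of the bottom term, and in fact works for any single normal subgroup $N\trianglelefteq G$ in place of $G_i$. The paper's approach, while heavier, is perhaps more in keeping with the inductive style used elsewhere in the paper for filtered-group arguments. One small point of presentation: you assert that the displayed equalities are \emph{equivalent} to $(\ast)$ and $(\ast\ast)$, but only write out one direction of each reduction; the reverse implications (e.g.\ $(\ast)\Rightarrow M\Gamma\cap G_i\Gamma=(M\cap G_i)\Gamma$) are indeed immediate by the same manipulation, and you need them for the implication $(ii)\Rightarrow(i)$, so a one-line remark to that effect would make the write-up airtight.
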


\begin{proof}
We prove the implication $(i)\Rightarrow (ii)$, the converse is proved similarly. Assuming that $(i)$ holds, it then suffices to prove for every $i\ge 0$ and $x\in G$ the inclusion $(Kx{\Gamma})\cap(KxG_i)\subset Kx({\Gamma}\cap G_i)$ (the opposite inclusion is clear). We argue by induction on the degree $k$. For $k=1$ all groups are abelian and the result is clear. For $i>k$ the result is trivial.

First we prove the required inclusion for $i=k$. Suppose that $gxs = g'xh$ is an element of $(Kx{\Gamma})\cap (KxG_k)$ for some $g,g'\in K$, $x\in G$, $s\in {\Gamma}$ and $h\in G_k$. Then, as the elements of $G_k$ commute with every other element of $G$ we have that $g'^{-1}gxs=hx$. Applying $(i)$ we get that this equals $txs'$ for some $t\in K\cap G_k$ and $s'\in {\Gamma}$. Again, as the elements of $G_k$ commute with everything, we get that $ht^{-1}=s'\in {\Gamma}$ and by definition  $ht^{-1}\in G_k$ as well. Hence $Kx({\Gamma}\cap G_k)\supset Kxht^{-1} = Kxh$ (again using that $t\in G_k$ commutes with everything).

Now we prove the inclusion of any $i<k$ by induction on $k$. Let $gxs = g'xh$ be an element of $(Kx{\Gamma})\cap (KxG_i)$. Passing to the quotient $G/G_k$, note that on the filtered group $(G/G_k,(G_i/G_k)_{i\ge 0})$ we have the action of  $KG_k/G_k$ on the left and of ${\Gamma}G_k/G_k$ on the right. Hence, by induction on $k$ we know that there exists $g''\in K$ and $t\in G_i\cap ({\Gamma}G_k)$ such that $gxsG_k = g'xhG_k=g''xtG_k$. We can write $t=s_tg_t$ for some elements $s_t\in {\Gamma}$ and $g_t\in G_k$. Thus, for some $z\in G_k$ we have that $gxss_t^{-1} =g'' xg_tz$. But $ss_t^{-1}\in {\Gamma}$ and $g_tz\in G_k$ so we can apply the case $i=k$ proved previously to conclude that for some $r\in {\Gamma}\cap G_k$ and $g'''\in K$ we have $gxss_t^{-1} =g'' xg_tz= g'''xr$. Hence $gxs = g''' x rs_t$, where $rs_t\in \Gamma$ and $rs_t = r s_tg_tg_t^{-1} = r  tg_t^{-1}\in G_i$ as $r,g_t^{-1}\in G_k$ and $t\in G_i$, so $rs_t\in \Gamma\cap G_i$ as required.
\end{proof}
\noindent Thus, the left-action of $K$ on $G/{\Gamma}$ induces a groupable congruence if and only if the right-action of ${\Gamma}$ on $K\backslash G$ yields a groupable congruence. This justifies the following notion.
\begin{defn}[Groupable nilpair]
Let $(G,G_\bullet)$ be a filtered nilpotent group of degree $k$ and let $K,\Gamma$ be  subgroups of $G$. We say that $(K,\Gamma)$ is a \emph{groupable nilpair} in $(G,G_\bullet)$ if any of the conditions in Lemma \ref{lem:sym-nil-pair} hold. We then denote the associated double-coset nilspace as $K\backslash G/\Gamma$.
\end{defn}

\noindent Thus, by Lemmas \ref{lem:nil-pair-good-fil} and \ref{lem:cong-equi}, if $(K,\Gamma)$ is a groupable nilpair in $(G,G_\bullet)$ then $K\backslash G/{\Gamma}$ is indeed a double-coset nilspace as per Definition \ref{def:dcn}, and the quotient map $G\to K\backslash G/\Gamma$ is a fibration (to see this, note that we can write it as a composition $G\to G/{\Gamma}\to K\backslash G/{\Gamma}$ where both maps are fibrations by Lemma \ref{lem:cong-equi}). We now proceed towards the  main result of this paper (Theorem \ref{thm:cfr=double-coset} below), showing that every $k$-step \textsc{cfr} nilspace $\ns$ is obtained this way, that is, that $\ns$ is a double-coset nilspace $G\to K\backslash G/{\Gamma}$ for some degree-$k$ filtered Lie group $(G,G_\bullet)$ and a groupable nilpair $(K,\Gamma)$ in $G$. This will require some topological preliminaries concerning double-coset nilspaces.

\subsection{Topological aspects of double-coset nilspaces}\hfill\smallskip\\
Suppose that $(G,G_\bullet)$ is a degree-$k$ filtered Lie group and that $(K,\Gamma)$ is a groupable nilpair in $G$. For our main result in the next subsection (Theorem \ref{thm:cfr=double-coset}), we will need the double-coset space $K\backslash G/{\Gamma}$ to be not only a nilspace \emph{algebraically}, but also a \emph{Lie-fibered} nilspace, which involves important topological requirements as per Definition \ref{def:lcfr-nil}. The following definition provides a \emph{sufficient} condition for this to hold, as we will prove in Theorem \ref{thm:top-double-coset}. The definition is arrived at by straightforwardly considering the property from Definition \ref{def:oldclosed} in the case of $H=\Gamma$ acting on a coset nilspace $\ns=K\backslash G$.

\begin{defn}[Right-closed groupable nilpair]\label{def:rcc}
Let $(G,G_\bullet)$ be a degree-$k$ filtered Lie group such that for every $i\in [k]$ the subgroup $G_i$ is closed, and let $(K,\Gamma)$ be a groupable nilpair in $G$. We say that  $(K,\Gamma)$ is a \emph{right-closed}  nilpair if for all $i\in[k]$, $KG_{i+1}/G_{i+1}$ is a closed subgroup of $G/G_{i+1}$ and the filtration $(\Gamma_j:=\Gamma\cap G_j)_{j\in[k]}$ is closed as a fiber-transitive filtration (acting by right multiplication) on the coset nilspace $K \backslash G$. 
\end{defn}

\begin{theorem}\label{thm:top-double-coset}
Let $(G,G_\bullet)$ be a degree-$k$ filtered Lie group such that for every $i\in [k]$ the subgroup $G_i$ is closed, and let $(K,{\Gamma})$ be a groupable right-closed nilpair. Then  $K\backslash G/{\Gamma}$ is a $k$-step Lie-fibered nilspace, and for each $i\in [k]$ the $i$-th structure group of $K\backslash G/ \Gamma$ is $G_i/(K_i \Gamma_i G_{i+1})$.
\end{theorem}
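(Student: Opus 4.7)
The plan is to realize $K\backslash G/\Gamma$ as the result of two successive nilspace quotients, each by a closed fiber-transitive filtration, so that Theorem \ref{thm:coset-quot-closed} can be applied twice. First I will use the group nilspace $\widetilde G$ associated with $(G,G_\bullet)$, which is $k$-step Lie-fibered under our hypotheses: the cube sets are closed since each $G_i$ is closed in $G$, and each structure group $G_i/G_{i+1}$ is a Lie group (using normality of $G_{i+1}$ in $G_i$, which holds for a Host--Kra filtration since $[G_i,G_{i+1}]\subset G_{2i+1}\subset G_{i+1}$). Left multiplication gives an inclusion $G\hookrightarrow \tran(\widetilde G)$, with $L_g\in \tran_i(\widetilde G)$ whenever $g\in G_i$.

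The filtration $K_\bullet:=(K\cap G_i)_{i\geq 0}$ (embedded via left multiplication) is automatically fiber-transitive on $\widetilde G$: if $y = kx$ with $k\in K$ and $\pi_i(x)=\pi_i(y)$, then $k = yx^{-1}\in xG_{i+1}x^{-1}=G_{i+1}$ by normality, so $k\in K_{i+1}$. To apply Theorem \ref{thm:coset-quot-closed} at this first step, one needs the closure condition of Definition \ref{def:oldclosed}: $\eta_j(K_i)$ is closed in $\tran_i((\widetilde G)_j)$ for all $i\leq j$. Identifying $(\widetilde G)_j$ with the group nilspace of $G/G_{j+1}$, the subgroup $\eta_j(K_i)$ acts as left multiplication by $K_iG_{j+1}/G_{j+1}$, and since $G_{j+1}\subset G_i$ for $j\ge i$ one checks that $K_iG_{j+1}/G_{j+1}=(KG_{j+1}/G_{j+1})\cap (G_i/G_{j+1})$; the hypothesis that $KG_{j+1}/G_{j+1}$ is closed in $G/G_{j+1}$, combined with the closedness of $G_i$, yields the required closure. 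Theorem \ref{thm:coset-quot-closed} then gives that $K\backslash G$ is a $k$-step Lie-fibered nilspace, and Lemma \ref{lem:cong-equi} identifies its $i$-th structure group as $G_i/(K_iG_{i+1})$.

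Next, $\Gamma$ acts on the right of $K\backslash G$, giving an inclusion $\Gamma\hookrightarrow \tran(K\backslash G)$ filtered by $\Gamma_\bullet=(\Gamma\cap G_i)_{i\geq 0}$ (with right multiplication by $\gamma\in G_i$ landing in $\tran_i$). By Lemma \ref{lem:nil-pair-good-fil}, the fact that $(K,\Gamma)$ is a groupable nilpair (via the equivalent conditions of Lemma \ref{lem:sym-nil-pair}) is precisely what ensures this filtration is fiber-transitive on $K\backslash G$; by the right-closed hypothesis it is also closed. A second application of Theorem \ref{thm:coset-quot-closed} then shows that $K\backslash G/\Gamma$ is a $k$-step Lie-fibered nilspace. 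Applying Lemma \ref{lem:cong-equi} once more, the $i$-th structure group is obtained by quotienting $G_i/(K_iG_{i+1})$ by the image of $\Gamma_i$, which is $\Gamma_iK_iG_{i+1}/(K_iG_{i+1})$, yielding the stated $G_i/(K_i\Gamma_iG_{i+1})$.

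The main obstacle will be the careful identification of the translation subgroups $\eta_j(K_i)\subset \tran_i((\widetilde G)_j)$ and of $\eta_j(\Gamma_\bullet)\subset \tran((K\backslash G)_j)$ with concrete quotients of $G$, so that the algebraic closedness hypotheses match the closedness required by Definition \ref{def:oldclosed}. This amounts to checking that the compact-open topology on $\tran_i$ of each factor agrees with the quotient Lie-group topology inherited from $G$, a point that must be made precise especially at the second stage, where $\tran(K\backslash G)$ is only known a priori to be a Polish group and the identification of $\eta_j(\Gamma_\bullet)$ with a group-theoretic image in $G$ requires using the explicit structure of right multiplications as translations.
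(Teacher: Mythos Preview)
Your proposal is correct and follows essentially the same two-step strategy as the paper's proof: first apply Theorem~\ref{thm:coset-quot-closed} to the closed fiber-transitive filtration $K_\bullet$ on the group nilspace of $(G,G_\bullet)$, then apply it again to the right action of $\Gamma$ on $K\backslash G$. The obstacle you flag at the first stage (matching closure in $G/G_{j+1}$ with closure in the compact-open topology on $\tran_i((\widetilde G)_j)$) is handled in the paper by the elementary Lemma~\ref{lem:closed-group-as-trans}, which is exactly the missing ingredient; at the second stage there is in fact no obstacle, since the closure of $\Gamma_\bullet$ as a fiber-transitive filtration on $K\backslash G$ is precisely what Definition~\ref{def:rcc} of a right-closed nilpair assumes.
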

\begin{remark}
The normality of each $G_i$ in $G$ implies that $KG_i$ is a subgroup of $G$. Similarly $K_iG_{i+1}$ is a  subgroup of $G_i$ ($\forall\, k_1,k_2\in K_i, g_1,g_2\in G_{i+1}$ we have $k_1g_1k_2g_2=k_1k_2(k_2^{-1}g_1k_2)g_2\in K_i G_{i+1}$). Moreover $K_iG_{i+1}$ is a normal subgroup of $G_i$, indeed for any $g\in G_{i+1}$, $h\in G_i$, $k\in K_i$, we have  $hkgh^{-1} = kh[h,k]gh^{-1}$, and as $[h,k]g\in  G_{i+1}$ (which is normal in $G_i$), we have $h([h,k]g)h^{-1}\in G_{i+1}$, so $hkgh^{-1} \in K_iG_{i+1}$ as required. It is seen similarly that $K_i \Gamma_i G_{i+1}$ is a subgroup of $G$. Moreover $G_i/(K_iG_{i+1})$ is an abelian group (as a quotient of  $G_i/G_{i+1}$). 
\end{remark}
To prove Theorem \ref{thm:top-double-coset} we use the following basic fact.
\begin{lemma}\label{lem:closed-group-as-trans}
Let $\ns$ be a group nilspace associated with a filtered Lie group of finite degree $(G,G_\bullet)$. Let $\tau:G\to \tran(\ns)$ be the injective homomorphism $g\mapsto \tau_g$ where $\tau_g(s):=gs$. Then for any closed subgroup $H$ of $G$, we have that $\tau(H)$ is a closed subgroup of $\tran(\ns)$.
\end{lemma}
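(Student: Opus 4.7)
The plan is to show directly that $\tau(H)$ contains all its limit points in $\tran(\ns)$. Recall that $\tran(\ns)$ is endowed with the compact-open topology, and that convergence in this topology implies pointwise convergence: for any single point $s\in\ns$, the singleton $\{s\}$ is compact (as $\ns$ is Hausdorff), so for any open $U\ni\alpha(s)$ the set $\{\beta:\beta(\{s\})\subset U\}$ is an open neighborhood of $\alpha$ in $\tran(\ns)$, forcing $\beta_\lambda(s)\in U$ eventually whenever $\beta_\lambda\to\alpha$. This observation is the only topological input we need.

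First I would take any net $(\tau_{h_\lambda})_\lambda$ in $\tau(H)$ converging to some $\alpha\in\tran(\ns)$, and evaluate at the identity element $e\in G$ (viewed as a point of $\ns$). By the pointwise-convergence observation, $h_\lambda=\tau_{h_\lambda}(e)\to\alpha(e)$ in $G$ (here using that the topology $\ns$ inherits as a group nilspace is that of $G$ as a Lie group). Let $h:=\alpha(e)$. Since $H$ is closed in $G$ by assumption, $h\in H$.

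Next I would verify that $\alpha=\tau_h$, which will finish the proof. For any fixed $s\in G=\ns$, on one hand pointwise convergence gives $\tau_{h_\lambda}(s)\to\alpha(s)$; on the other hand, $\tau_{h_\lambda}(s)=h_\lambda s$, and by continuity of the multiplication map $G\times G\to G$ (with $s$ fixed) we have $h_\lambda s\to h s=\tau_h(s)$. By uniqueness of limits in the Hausdorff space $G$, $\alpha(s)=\tau_h(s)$ for every $s$, so $\alpha=\tau_h\in\tau(H)$.

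There is essentially no obstacle here: the argument reduces the closure of $\tau(H)$ in $\tran(\ns)$ to the closure of $H$ in $G$ via the natural evaluation at the identity, which is a continuous map $\tran(\ns)\to \ns=G$. The only minor point to double-check is that $\tau_g$ is indeed a translation for every $g\in G$, which is standard for group nilspaces (left multiplication by $G=G_0$ preserves each Host--Kra cube group, hence is in $\tran_0(\ns)=\tran(\ns)$).
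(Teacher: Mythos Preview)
Your proof is correct and follows essentially the same approach as the paper's: both arguments reduce the question to evaluation at the identity $e\in G$, exploiting that $\tau_g(e)=g$ to pull the convergence back to $G$. The only cosmetic difference is that the paper phrases the argument via completeness (taking a Cauchy sequence in $\tau(H)$, showing the induced sequence in $G$ is Cauchy hence convergent, then verifying $\tau_{g_n}\to\tau_g$), whereas you argue closure directly by starting from a convergent net and identifying its limit; your version is slightly cleaner in that it avoids any implicit appeal to completeness of $G$.
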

A similar version holds with $\tau_g(s):=sg^{-1}$.
\begin{proof}
Let $(g_n)_{n\in \mb{N}}$ be a sequence in $H$ such that $(\tau_{g_n})_n$ is Cauchy in $\tran(\ns)$. Then for any compact set $C\subset G$ we have $\lim_{n,m\to\infty} \sup_{x\in C}d_G(\tau_{g_n}x,\tau_{g_m}x)=0$ (where $d_G$ is any compatible metric on $G$). The special case of this for $C=\{\id_G\}$ implies that $\lim_{n,m\to\infty} d_G(g_n,g_m)=0$, so $(g_n)_{n\in\mb{N}}$ is Cauchy in $G$. Thus $(g_n)$ has a limit $g\in H$ and a simple argument then shows that $\tau_{g_n}\to \tau_g$, so $\tau(H)$ is complete and therefore closed.
\end{proof}

\begin{proof}[Proof of Theorem \ref{thm:top-double-coset}]
That $K\backslash G/{\Gamma}$ is algebraically a (double-coset) nilspace follows from the assumption that $(K,\Gamma)$ is a groupable nilpair.

We claim that the fiber-transitive filtration $(K_i:=K\cap G_i)_{i\geq 0}$ is closed on $G$ as per Definition \ref{def:oldclosed}. Indeed, by Lemma \ref{lem:closed-group-as-trans}, for each $i$ the group 
$\eta_i(K)\cong KG_{i+1}/G_{i+1}$, which acts by left-multiplication on $G/G_{i+1}$,  is a closed subgroup of $\tran(G/G_{i+1})$ (since $KG_{i+1}/G_{i+1}$ is closed in $G/G_{i+1}$).

Applying Theorem \ref{thm:coset-quot-closed} to the group nilspace $(G,G_\bullet)$ and the closed fiber-transitive filtration $(K_i)_{i\geq 0}$ we get that $K\backslash G$ is a $k$-step Lie-fibered nilspace. Applying Theorem \ref{thm:coset-quot-closed} again but now to the action of $\Gamma$ by right multiplication on $K\backslash G$ (noting that the assumptions imply that this defines a closed groupable congruence), the result follows.
\end{proof}
\noindent In Definition \ref{def:rcc} we specify \emph{right}-closed to emphasize that the order of the quotienting operations matters: the first quotienting is $G\to K\backslash G$, and on the resulting Lie-fibered coset nilspace $K\backslash G$, we require the \emph{right}-action of $\Gamma$ to induce a groupable congruence which is \emph{closed} as per Definition \ref{def:oldclosed}. The relevance of the order of quotienting is seen with simple examples.
\begin{example}
Let $G=\mc{D}_1(\mb{R})$ and consider the subgroups $K:=\mb{R}$ and $\Gamma:=\mb{Q}$. It is trivial that $(K,\Gamma)$  is a right-closed nilpair, since the quotienting by $K$ yields a 1-point nilspace. However $(K,\Gamma)$ is not ``left-closed" (in the obvious sense) since $\Gamma= \mb{Q}$ is not closed in $\mb{R}$. 
\end{example}

\begin{example}
Let $G:=\mc{D}_1(\mb{R})$, $K:=\mb{Z}$ and ${\Gamma}:=\sqrt{2}\mb{Z}$. Clearly the latter two subgroups are closed (even discrete), but $\Gamma+K$ is dense (and not closed) in $G$. Thus  $K\backslash G/ \Gamma$ is not even a Hausdorff space. This does not satisfy Definition \ref{def:rcc}, however. 
\end{example}

\noindent While Theorem \ref{thm:top-double-coset} shows that the property in Definition \ref{def:rcc} suffices for the double-coset space to be a Lie-fibered nilspace, in our main proof in the next subsection we shall be able to guarantee  a stronger condition, namely Definition \ref{def:CRDCnilpair}, which we recall here.
\begin{defn}
Let $(G,G_\bullet)$ be a degree-$k$ filtered Lie group such that for each $i\in[k]$ the subgroup $G_i$ is closed in $G$. Let $(K,\Gamma)$ be a groupable nilpair in $G$. For each $i\in [k]$ let $K_i=K\cap G_i$, $\Gamma_i=\Gamma\cap G_i$. We say the nilpair $(K,\Gamma)$ is \emph{closed right-discrete} if for every $i\in[k]$ the group $(KG_{i+1})/G_{i+1}$ is a closed subgroup of $G/G_{i+1}$ and $K_i \Gamma_i G_{i+1} /(K_iG_{i+1})$ is a discrete subgroup of $G_i/(K_iG_{i+1})$. We say the nilpair $(K,\Gamma)$ is \emph{fiberwise cocompact} if for every $i\in[k]$, the group $K_i\Gamma_iG_{i+1}/(K_iG_{i+1})$ is a cocompact subgroup of $G_i/(K_iG_{i+1})$.
\end{defn}

\begin{remark}\label{rem:closed-right-discrete-implies-right-closed}
By Lemma \ref{lem:closedconseq} any closed right-discrete nilpair is in particular right-closed.
\end{remark}

\subsection{The double-coset representation theorem}\label{subsec:DoubleCosetRep}\hfill\smallskip\\
We can now prove one of the main results in this paper, Theorem \ref{thm:cfr=double-coset-intro}, which we recall here.

\begin{theorem}\label{thm:cfr=double-coset}
Let $\ns$ be a $k$-step \textsc{cfr} nilspace. Then there exists a degree-$k$ filtered Lie group $(G,G_\bullet)$ and two closed subgroups $K,\Gamma\subset G$ such that $(K,\Gamma)$ is a closed right-discrete groupable nilpair in $(G,G_\bullet)$, and $\ns$ and $K\backslash G/ \Gamma$ are isomorphic \textsc{cfr} nilspaces.
\end{theorem}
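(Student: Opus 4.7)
The plan is to derive Theorem~\ref{thm:cfr=double-coset} from Theorem~\ref{thm:groupequivrep} by realizing the free nilspace $F$ produced by the latter as a coset space of its own translation group modulo the point-stabilizer. First apply Theorem~\ref{thm:groupequivrep} to obtain a $k$-step free nilspace $F$ together with a finitely-generated, fiber-transitive, fiber-discrete and fiber-cocompact subgroup $\Gamma \subset \tran(F)$ such that $\ns \cong F/\Gamma$; by Corollary~\ref{cor:equivalent-filtrations} one may further assume $\Gamma_i = \Gamma \cap \tran_i(F)$ for every $i$. Set $G := \tran(F)$ with filtration $G_i := \tran_i(F)$; this is a degree-$k$ filtered Lie group whose terms are closed subgroups, by Corollary~\ref{cor:trans&structFree} and Lemma~\ref{lem:tran-i-closed-subset}. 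Fix any $x_0 \in F$ and define $K := \stab_G(x_0)$; this subgroup is closed by continuity of evaluation.

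The next step is to identify $F$ with $K\backslash G$ as Lie-fibered nilspaces. The underlying abelian group of $F$ embeds into $\tran(F)$ via $y \mapsto \tau_y$, a filtered group homomorphism, so $G$ acts transitively on $F$ and the orbit map $\psi: G \to F$, $g \mapsto g^{-1}(x_0)$, is a continuous surjection whose fibers are the right cosets of $K$. The induced bijection $K\backslash G \to F$ is cube-bijective: any Host--Kra cube $\q \in \cu^n(G_\bullet)$ descends to a cube on $K\backslash G$ which, via $\psi$, maps to $v \mapsto \q(v)^{-1}(x_0)$, a cube on $F$ by the evaluation-is-a-morphism principle of \cite[Lemma~3.2.32]{Cand:Notes1}; conversely every cube on $F$ lifts pointwise to $\cu^n(G_\bullet)$ through the embedding $\tau_\bullet$. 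Openness of $\psi$ follows from Theorem~\ref{thm:open-mapping-thm}, upgrading the bijection to an isomorphism of Lie-fibered nilspaces. Under this identification, the action of $\Gamma$ on $F$ transports to right multiplication by $\Gamma$ on $K\backslash G$, giving $\ns \cong F/\Gamma \cong K\backslash G / \Gamma$.

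It then remains to verify that $(K,\Gamma)$ is a closed right-discrete and cocompact groupable nilpair. The groupable property follows from Lemma~\ref{lem:nil-pair-good-fil} applied to the fact that $\Gamma_\bullet$ is fiber-transitive on $K\backslash G \cong F$. For the closure of $KG_{i+1}$ in $G$, the crucial observation is that $G_{i+1} = \tran_{i+1}(F)$ acts transitively on each fiber of $\pi_i: F \to F_i$ (since $F_{i+1} \hookrightarrow G_{i+1}$ via $\tau_\bullet$), so the $G_{i+1}$-orbit of $x_0$ is exactly the closed set $\pi_i^{-1}(\pi_i(x_0))$, whence $KG_{i+1} = \psi^{-1}(\pi_i^{-1}(\pi_i(x_0)))$ is closed in $G$. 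Finally, combining the nilspace isomorphism $K\backslash G \cong F$ with Lemma~\ref{lem:cong-equi} gives a topological-group identification $G_i/(K_iG_{i+1}) \cong \ab_i(F) \cong \mb{Z}^{a_i}\times\mb{R}^{b_i}$, under which the image of $\Gamma_i$ corresponds to $\eta_i(\Gamma_i) = \eta_i(\Gamma) \cap \tran_i(F_i)$; the fiber-discrete and fiber-cocompact hypotheses on $\Gamma$ translate directly into discreteness and cocompactness of this image. Theorem~\ref{thm:suffDC} then certifies that $K\backslash G/\Gamma$ is a \textsc{cfr} nilspace, completing the proof.

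The principal difficulty is twofold: upgrading the algebraic bijection $K\backslash G \to F$ to an isomorphism of Lie-fibered nilspaces, and establishing closure of $KG_{i+1}$ in $G$. Neither is purely formal; both rely essentially on the free structure of $F$ through the explicit embedding $F \hookrightarrow \tran(F)$ and the explicit description of translation groups in Theorem~\ref{thm:decrip-trans-group}, which ensures that orbits of $G_{i+1}$ on $F$ coincide with fibers of $\pi_i$, thereby controlling the structure groups of the resulting double-coset quotient.
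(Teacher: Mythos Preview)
Your proposal is correct and follows essentially the same approach as the paper: apply Theorem~\ref{thm:groupequivrep}, take $G=\tran(F)$, $K=\stab_G(x_0)$, realize $F\cong K\backslash G$ via $Kg\mapsto g^{-1}(x_0)$, and transport the fiber-transitive, fiber-discrete, fiber-cocompact properties of $\Gamma$ across this isomorphism. The only presentational differences are that the paper packages the isomorphism $K\backslash G\cong F$ as a separate lemma (Lemma~\ref{lem_aux-for-dou-coset}) and exhibits an explicit continuous inverse $f\mapsto K\tau_{x_0-f}$ rather than invoking the open-mapping theorem, and that the paper's argument for closure of $KG_{i+1}$ is phrased via $\eta_i^{-1}(\stab_{\tran(F_i)}(\pi_i(x_0)))$ rather than $\psi^{-1}(\pi_i^{-1}(\pi_i(x_0)))$, but these are the same computation.
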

\noindent Before proving Theorem \ref{thm:cfr=double-coset}, it is convenient to prove the following auxiliary result.

\begin{lemma}\label{lem_aux-for-dou-coset}
Let $F$ be a free nilspace, let $G=\tran(F)$ and fix any point $f_0\in F$. Let $K=\stab_G(f_0):=\{g\in G:g(f_0)=f_0\}$. Then the following properties hold:
\begin{enumerate}
    \item The right-coset nilspace $K\backslash G$ is a Lie-fibered nilspace.
    \item The map $\psi:K\backslash G\to F$, $Kg\mapsto g^{-1}(f_0)$ is an isomorphism of \textsc{lch} nilspaces.
    \item The map $\psi$ is  equivariant relative to the action of $G$ on $F$ by translations and the action of $G$ on $K\backslash G$ defined by $(Kg,h)\mapsto Kgh^{-1}$ for any $g,h\in G$. 
\end{enumerate}
\end{lemma}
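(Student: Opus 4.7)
My plan is to prove (ii) first, as it contains the main content, and then deduce (i) and (iii) as short consequences. The key preparatory step will be to exhibit a continuous \emph{filtered} cross-section $s\colon F\to G$ to the orbit map $g\mapsto g^{-1}(f_0)$.

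First I would construct $s$ by sending $f\in F$ to the constant translation $c_{f_0-f}\colon x\mapsto x+(f_0-f)$. By Theorem~\ref{thm:decrip-trans-group}, the $i=s$ summand in the description of $\tran_s(F)=\prod_{i=1}^k\hom(\prod_{j=1}^{i-s}\mc{D}_j(A_j),\mc{D}_{i-s}(A_i))$ is precisely the group of constant translations whose constant lies in $A_s$, and so the action described there shows that $f\mapsto c_f$ sends the $s$-th step $F_s=\{0\}^{s-1}\times\prod_{j\ge s}A_j$ of the product filtration of $F$ into $G_s=\tran_s(F)$. Consequently $f\mapsto c_f$ is a continuous filtered group homomorphism $(F,F_\bullet)\to(G,G_\bullet)$, and in particular sends $\cu^n(F)$ into the Host--Kra cube group $\cu^n(G_\bullet)$. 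By construction $s(f)^{-1}(f_0)=f$ and $s$ is continuous.

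For (ii), well-definedness and injectivity of $\psi$ are immediate from the definition of $K$ (if $g_1=kg_2$ with $k\in K$ then $g_1^{-1}(f_0)=g_2^{-1}k^{-1}(f_0)=g_2^{-1}(f_0)$, and conversely), and surjectivity is witnessed by $s$. To see that $\psi$ is a nilspace morphism, I would check that $G\to F$, $g\mapsto g^{-1}(f_0)$, is a morphism from the group nilspace $(G,G_\bullet)$ to $F$: for $\q\in\cu^n(G_\bullet)$, the pointwise inverse $\q^{-1}$ remains in $\cu^n(G_\bullet)$ (since Host--Kra cubes form a group under pointwise multiplication), and applying the translations $\q(v)^{-1}$ to the constant base cube $v\mapsto f_0$ gives a cube on $F$ by the defining property of translations. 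Since the quotient $G\to K\backslash G$ is a fibration (the standard coset-nilspace quotient; cf.\ Remark~\ref{rem:cosnilex} with trivial right-action), $\psi$ is a nilspace morphism. For the other direction, $\psi^{-1}\colon f\mapsto Ks(f)$ is a morphism because the preparatory step yields $v\mapsto s(\q(v))\in\cu^n(G_\bullet)$ for every $\q\in\cu^n(F)$, and the quotient map is a morphism. Continuity of $\psi$ follows from the joint continuity of inversion in the topological group $G$ (Theorem~\ref{thm:trans-group-polish}) and of evaluation at $f_0$; continuity of $\psi^{-1}$ follows from the continuity of $s$ and of the quotient map $G\to K\backslash G$. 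Thus $\psi$ is a homeomorphism and a bi-morphism, proving (ii).

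Part (i) is then immediate: via the isomorphism $\psi$, the coset space $K\backslash G$ inherits the Lie-fibered \textsc{lch} nilspace structure of $F$, whose structure groups $A_i=\mb{Z}^{a_i}\times\mb{R}^{b_i}$ are Lie by Definition~\ref{def:free-nil}. Part (iii) is the direct computation $\psi(Kgh^{-1})=(gh^{-1})^{-1}(f_0)=hg^{-1}(f_0)=h\cdot\psi(Kg)$. The only genuinely nontrivial step in the whole argument is the filtered nature of the embedding $f\mapsto c_f\colon F\to G$, and this is precisely where Theorem~\ref{thm:decrip-trans-group} is essential: without it one cannot lift cubes on $F$ back to Host--Kra cubes on $(G,G_\bullet)$, and the morphism property of $\psi^{-1}$ would break down.
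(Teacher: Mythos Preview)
Your proof is correct and follows essentially the same route as the paper's: the cross-section $s(f)=c_{f_0-f}$ is exactly the paper's $g_{f-f_0}$, and the morphism arguments for $\psi$ and $\psi^{-1}$ are the same (apply $\q^{-1}\in\cu^n(G_\bullet)$ to the constant cube $f_0^{\db{n}}$; conversely lift a cube on $F$ to a Host--Kra cube on $G$ via constant translations). Your packaging of the latter step as ``$f\mapsto c_f$ is a filtered homomorphism $(F,F_\bullet)\to(G,G_\bullet)$, hence sends cubes to cubes'' is a cleaner formulation of what the paper does more implicitly in a footnote.

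The one substantive difference is the order: you prove (ii) first and deduce (i) by transport of structure along $\psi$, whereas the paper proves (i) directly by showing that $KG_{i+1}=\eta_i^{-1}\big(\stab_{\tran(F_i)}(\pi_i(f_0))\big)$ is closed and then invoking Theorem~\ref{thm:coset-quot-closed}. Your ordering is more economical for the lemma as stated, but note that the paper's direct argument for (i) yields, as a by-product, the closedness of $KG_{i+1}/G_{i+1}$ in $G/G_{i+1}$, and this fact is explicitly reused in the proof of Theorem~\ref{thm:cfr=double-coset} to verify that the nilpair $(K,\Gamma)$ is closed right-discrete. With your approach that closedness would have to be established separately at that later point.
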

\noindent In other words, property $(iii)$
 states that for any $g,h\in G$ we have $\psi(Kgh^{-1})=h(\psi(Kg))$.
 \begin{proof}
First let us prove $(i)$. By Theorem \ref{thm:coset-quot-closed} it suffices to show that the fiber-transitive filtration induced by $K$ on the group nilspace associated with $(G,G_\bullet)$ is closed. Arguing as in the proof of Theorem \ref{thm:top-double-coset}, the only non-trivial thing to check is that for all $i\in[k]$, the subgroup $KG_{i+1}/G_{i+1}$ of $G/G_{i+1}$ is closed. Consider the continuous homomorphism $\eta_i:G=\tran(F)\to \tran(F_i)$ (Lemma \ref{lem:h-is-cont}). We claim that $\eta_i^{-1}(\stab_{\tran(F_i)}(\pi_i(f_0))) = KG_{i+1}$. The latter group is clearly included in the former. To prove the other inclusion, note that for any $g\in \eta_i^{-1}(\stab_{\tran(F_i)}(\pi_i(f_0)))$ we have $\pi_i(g(f_0))=\pi_i(f_0)$ and thus, since $f_0, g(f_0)$ are elements of the free nilspace $F$ (which is an abelian group nilspace in particular) we have that $f_0-g(f_0)$ is an element of the $(i+1)$-th term in the filtration of $F=\prod_{j=1}^k \mc{D}_j(\mb{Z}^{a_j}\times \mb{R}^{b_j})$, that is, we have $f_0-g(f_0) \in \prod_{j=1}^i \{0\}\times \prod_{j=i+1}^k \mc{D}_j(\mb{Z}^{a_j}\times \mb{R}^{b_j})$. In particular, by Theorem \ref{thm:decrip-trans-group} the map $h:F\to F$, $f\mapsto f+f_0-g(f_0)$ is in $\tran_{i+1}(F)=G_{i+1}$. Hence $hg\in K$ (as $hg(f_0)=g(f_0)+f_0-g(f_0)=f_0$) and thus $g = h^{-1}hg\in KG_{i+1}$. Since  $\stab_{\tran(F_i)}(\pi_i(f_0))$ is clearly closed, so is $KG_{i+1}$.

Next let us prove $(ii)$.  We claim that the map $\psi:K\backslash G\to F$, $Kg \mapsto g^{-1}(f_0)$ is a nilspace isomorphism. First note that $\psi$ is well-defined, since $Kg=Kg'$ means that there exists $k\in K$ such that $kg=g'$, and then ${g'}^{-1}(f_0)=(kg)^{-1}(f_0)=g^{-1}k^{-1}(f_0)=g^{-1}(f_0)$ by definition of $K$. The map $\psi$ is injective, since $g^{-1}(f_0)={g'}^{-1}(f_0)$ implies $g{g'}^{-1}\in K$. Moreover $\psi$ is surjective, since for every $f\in F$ the map $g_f:F\to F$, $x\mapsto x-f$ (using addition on the abelian group $F$) is a translation in $G$ and $\psi(Kg_{f_0-f})=g^{-1}_{f-f_0}(f_0)=f$. Next let us prove that both $\psi$ and $\psi^{-1}$ are nilspace morphisms. Any element of $\cu^n(K\backslash G)$ is of the form $K\q$ for some $\q\in \cu^n(G)$. Thus $\psi\co (K\q) = \q^{-1} \co f_0^{\db{n}}$ where $f_0^{\db{n}}\in \cu^n(F)$ is the constant cube equal to $f_0$. As $\cu^n(G)$ is a group we have that $\q^{-1}\in \cu^n(G)$ and hence $\q^{-1}\co f_0^{\db{n}}\in\cu^n(F)$. To prove that $\psi^{-1}$ is a morphism, note that given any cube in $\cu^n(F)$ we can write it as $\q^*\co f_0^{\db{n}}$ for some $\q^*\in \cu^n(G)$ where $\q^*$ is just a product of maps of the form $g_f:F\to F$ defined as $x\mapsto x-f$.\footnote{ If $f\in \prod_{i=1}^j\{0\}\times \prod_{i=j+1}^k \mc{D}_i(\mb{Z}^{a_i}\times \mb{R}^{b_i})\subset \prod_{i=1}^k \mc{D}_i(\mb{Z}^{a_i}\times \mb{R}^{b_i})$ then it is easy to check that $g_f\in \tran_{j+1}(F)$. By definition, letting $0^{\db{n}}\in \cu^n(F)$ be the constant 0 function, any element of $\cu^n(F)$ is of the form $(\prod_{d=1}^\ell g_{f_d}^{C_d} )\co 0^{\db{n}}$ where for any $d\in[\ell]$, $C_d\subset \db{n}$ is a face of $\db{n}$ and $f_d\in \prod_{i=1}^{\codim(C_d)-1}\{0\}\times \prod_{i=\codim(C_d)}^k \mc{D}_i(\mb{Z}^{a_i}\times \mb{R}^{b_i})$ and $g_{f_d}^{C_d}$ means that we apply the function $g_{f_d}$ only of the face $C_d$.} Therefore, $\psi^{-1} \co (\q^*\co f_0^{\db{n}}) = K(\q^*)^{-1}\in \cu^n(K\backslash G)$ and the result follows. This shows that $K\backslash G$ and $F$ are isomorphic algebraically as nilspaces. Finally, note that the map $\psi$ is continuous. Indeed, if $Kg_n\to Kg$ in $K\backslash G$ then  (using that the quotient map $G\to K\backslash G$ is open) there exists $k_n\in K$ such that $k_ng_n\to g$ in $G$. Thus $g_n^{-1}(f_0) = (k_ng_n)^{-1}(f_0)\to g^{-1}(f_0)$, so $\psi$ is indeed continuous. The map $\psi^{-1}$ can be defined as $f\mapsto Kg_{f_0-f}$, which is clearly continuous as well. This completes the proof of $(ii)$.

Property $(iii)$ follows from the definitions: for any $h,g\in G$ we have $\psi(Kgh^{-1})=hg^{-1}(f_0) = h(\psi(Kg))$.
\end{proof}

\begin{proof}[Proof of Theorem \ref{thm:cfr=double-coset}]
By Theorem \ref{thm:groupequivrep} there is a free nilspace $F$ and a  fiber-transitive group $\Gamma\subset \tran(F)$ which is fiber-discrete and  fiber-cocompact, such that $\pi_\Gamma(F)\cong \ns$. Let $G=\tran(F)$, which is a Lie group by Corollary \ref{cor:trans&structFree}, and let $G_\bullet$ be the filtration $(G_i:=\tran_i(F))_{i\geq 0}$. Let $\Gamma_\bullet$ be the filtration $(\Gamma_i:=\Gamma\cap G_i)_{i\geq 0}$. Fix any point $f_0\in F$ and let $K:=\stab_G(f_0)=\{g\in G:g(f_0)=f_0\}$, a closed subgroup of $G$. Similarly, for each $i\ge 0$ let $K_i:=K\cap G_i$. By Lemma \ref{lem_aux-for-dou-coset} we already know that $\psi:K\backslash G\to F$, $Kg\mapsto g^{-1}(f_0)$ is an equivariant nilspace isomorphism. In particular, it induces an isomorphism $\widehat{\psi}:\tran(K\backslash G)\to G$ between the translation groups of $K\backslash G$ and $F$.

We now claim that $(K,\Gamma)$ is a closed right-discrete groupable nilpair in $(G,G_\bullet)$. Via the equivariance of $\psi$, the action of $\Gamma$ on $F$ by translations corresponds to the right-action of $\Gamma$ on $K\backslash G$ defined by $(Kg,\gamma)\mapsto Kg\gamma^{-1}$. As  $\Gamma$ is a fiber-transitive group on $K\backslash G$ (by right-multiplication), by Lemma \ref{lem:nil-pair-good-fil} we have that $(K,\Gamma)$ is a groupable nilpair. To complete the proof of our claim, we now show that the groupable nilpair $(K,\Gamma)$ is closed right-discrete. To prove this, we note that the proof of Lemma \ref{lem_aux-for-dou-coset} already gives us that for every $i\in [k]$ the subgroup $KG_{i+1}/G_{i+1}$ of $G/G_{i+1}$ is closed. Thus it only remains to prove that for every $i\in [k]$ the subgroup $K_i\Gamma_i G_{i+1}/(K_i G_{i+1})$ of $G_i/(K_iG_{i+1})$ is discrete. To prove this we shall use again that $\psi$ is equivariant. Indeed, note that the $i$-th structure group of the $i$-step nilspace factor of $K\backslash G$ is isomorphic as a topological group to $(G_i/G_{i+1})\,/\, \big((KG_{i+1}/G_{i+1})\cap (G_i/G_{i+1})\big)$ by (the proof of) Theorem \ref{thm:coset-quot-closed}. This latter group in turn equals $(G_i/G_{i+1})\,/\, (K_iG_{i+1}/G_{i+1})$. Finally, using the third isomorphism theorem for topological groups we conclude that the $i$-th structure group of the $i$-step  factor of $K\backslash G$ is isomorphic to $G_i/(K_iG_{i+1})$. Using the equivariance of $\psi$ (which implies straightforwardly that the maps $\psi_i:(K\backslash G)_i\to F_i$ between factors are similarly equivariant) we see that the action of $\eta_i(\widehat{\psi^{-1}}(\Gamma_i))$ on the $i$-step factor of $K\backslash G$ is given by addition of the group  $\Gamma_iK_iG_{i+1}/(K_iG_{i+1})$ (which is a subgroup of $G_i/(K_iG_{i+1})$, the $i$-th structure group of the $i$-step factor of $K\backslash G$). Therefore, if $\eta_i(\Gamma_i)$, seen as a subgroup of the $i$-th structure group of $F_i$, is discrete, then so is $\Gamma_iK_iG_{i+1}/(K_iG_{i+1})$ as a subgroup of $G_i/(K_iG_{i+1})$ (simply because $\psi$ is an isomorphism of \textsc{lch} nilspaces and hence it induces isomorphisms between all the structure groups of $F$ and $K\backslash G$). This proves our claim.

Thus $(K,\Gamma)$ is a closed right-discrete groupable nilpair in $(G,G_\bullet)$, so by Theorem \ref{thm:top-double-coset} and Remark \ref{rem:closed-right-discrete-implies-right-closed}, $K\backslash G / \Gamma$ is a Lie-fibered nilspace. Moreover, the map $\psi$ induces a nilspace isomorphism $\psi_\Gamma:K\backslash G/\Gamma \to \pi_\Gamma(F)$ defined as $\psi_\Gamma(Kg\Gamma):=\pi_H(\psi(Kg))$. Using the equivariance of $\psi$, it is straightforwardly checked that $\psi_\Gamma$ is an isomorphism of \textsc{lch} nilspaces. As $\ns\cong \pi_\Gamma(F)$ we conclude that $\ns\cong K\backslash G /\Gamma$.
\end{proof}

\subsection{Inverse theorems in terms of free nilspaces and double-coset nilspaces}\hfill\smallskip\\
\noindent We now use the main results of the paper to obtain new versions of the inverse theorem.
\begin{proof}[Proof of Theorem \ref{thm:inv-double-coset-intro}]
The result follows by combining \cite[Theorem 1.6]{CSinverse}  with Theorem \ref{thm:cfr=double-coset} and noting that by the proof of Theorem \ref{thm:groupequivrep}, there is a deterministic way of obtaining one such representation of each \textsc{cfr} nilspace as a double coset nilspace. Thus, we can collect the countably many resulting triples $(G,K,\Gamma)$ to obtain the family $\mc{N}$ claimed in the theorem.
\end{proof}

\begin{proof}[Proof of Theorem \ref{thm:inv-high-ord-latti-intro}]
This proof is very similar to the previous one, except that it uses Theorem \ref{thm:groupequivrep} (instead of Theorem \ref{thm:cfr=double-coset}) to represent any $k$-step \textsc{cfr} nilspace as a quotient of a free nilspace by a fiber-transitive group action that is also fiber-discrete and fiber-cocompact. 
\end{proof}
\noindent Note that similar arguments will yield new versions of  regularity theorems for Gowers norms (such as \cite[Theorem 1.5]{CSinverse}) analogous to Theorems \ref{thm:inv-double-coset-intro} and \ref{thm:inv-high-ord-latti-intro}. The proofs are  similar to the above except we would start from the regularity result \cite[Theorem 1.5]{CSinverse} instead of the inverse theorem \cite[Theorem 1.6]{CSinverse}. We omit the details.

\subsection{On a result of Jamneshan and Tao}\hfill\smallskip\\
\noindent Conjecture 1.11 in \cite{J&T} was confirmed in \cite[Theorem 1.10]{J&T} for the $U^3$-norm. Here we give an alternative confirmation.

\begin{theorem}[Inverse theorem for $U^3(\ab)$ norm on finite abelian groups; Theorem 1.10 in \cite{J&T}]\label{thm:jam-tao}
Let $\epsilon>0$, let $\ab$ be a finite abelian group. Let $f:\ab\to \mb{C}$ be a 1-bounded function with $\|f\|_{U^{3}(\ab)}\ge \epsilon$. Then there exists a degree-2 connected nilmanifold $G/\Gamma$ drawn from some finite collection $\mc{N}_\epsilon$ of such manifolds (where each such nilmanifold is  endowed with an arbitrary compatible metric), a $O_\epsilon(1)$-Lipschitz function $F:G/\Gamma\to \mb{C}$ and a nilspace morphism $\phi:\mc{D}_1(\ab)\to G/\Gamma$, such that $|\mb{E}_{x\in \ab} f(x) \overline{F(\phi(x))}|\gg_{\epsilon} 1$.
\end{theorem}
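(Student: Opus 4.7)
The strategy is to combine the general inverse theorem for Gowers norms on finite abelian groups (namely \cite[Theorem 1.6]{CSinverse}) with our structural results on 2-step \textsc{cfr} nilspaces. First I would apply \cite[Theorem 1.6]{CSinverse} to $f$ in the case $k+1=3$. This yields a 2-step \textsc{cfr} nilspace $\ns$ drawn from a fixed countable collection, a nilspace morphism $\phi_0:\mc{D}_1(\ab)\to\ns$, and a function $F_0:\ns\to\mb{C}$ with Lipschitz norm $O_\epsilon(1)$ (relative to some fixed compatible metric on $\ns$), such that $|\mb{E}_{x\in \ab} f(x) \overline{F_0(\phi_0(x))}|\gg_\epsilon 1$. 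Since only finitely many $\ns$ from this countable list can occur for a given $\epsilon$ (this is standard: the correlating nilspaces have bounded complexity in terms of their structure groups and rank), we may restrict attention to a finite sub-collection depending only on $\epsilon$.

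The next step is to express each such $\ns$ as a quotient of a free nilspace. By Theorem \ref{thm:groupequivrep}, there exists a 2-step free nilspace $F$ and a finitely-generated, fiber-transitive, fiber-discrete, fiber-cocompact filtration $H_\bullet$ on $F$ with $\pi_H(F)\cong \ns$. By Corollary \ref{cor:k=2-str-good}, the filtration $H_\bullet$ is in fact \emph{free} fiber-transitive. Proposition \ref{prop:embeb-strong-good} then provides a toral nilspace $\nss$ together with a continuous injective nilspace morphism $\varphi:\pi_H(F)\hookrightarrow \nss$; by \cite[Theorem 2.9.17]{Cand:Notes2} the toral nilspace $\nss$ is isomorphic to a compact connected filtered nilmanifold $G/\Gamma$ of degree 2 (where $G$ is a connected 2-step nilpotent Lie group and $\Gamma\le G$ is a lattice), equipped with its coset nilspace structure. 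Composing with the original $\phi_0$ then gives a nilspace morphism $\phi:=\varphi\co\phi_0:\mc{D}_1(\ab)\to G/\Gamma$.

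It remains to transfer the correlating function $F_0$ from $\ns\cong\pi_H(F)$ to a Lipschitz function $F$ defined on the whole of $G/\Gamma$, with controlled Lipschitz norm. For this, as in Remark \ref{rem:isometric-embedding}, I equip $G/\Gamma$ with an arbitrary compatible metric $d$, and use its pullback $d':=d\co(\varphi\times\varphi)$ as a compatible metric on $\ns$; then the Lipschitz norm of $F_0$ relative to $d'$ is still $O_\epsilon(1)$ (here one uses that any two compatible metrics on the compact $\ns$ yield equivalent Lipschitz classes, with a quantitative comparison that depends only on the finitely many nilspaces in the collection, hence only on $\epsilon$). Since $\varphi$ is now an isometric embedding of $(\ns,d')$ into $(G/\Gamma,d)$, the McShane--Whitney extension theorem produces a function $F:G/\Gamma\to\mb{C}$ of the same Lipschitz norm, extending $F_0\co\varphi^{-1}$ from $\varphi(\ns)$ to all of $G/\Gamma$. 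Then $F(\phi(x))=F_0(\phi_0(x))$ for every $x\in\ab$, and the desired correlation bound follows. Collecting the finitely many nilmanifolds produced as $\ns$ ranges over the above-mentioned finite sub-collection yields the finite family $\mc{N}_\epsilon$.

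The main technical obstacle is, as often in this type of deduction, the quantitative control of the Lipschitz norm under the extension: one must check that the choice of compatible metrics on the finitely many nilmanifolds in $\mc{N}_\epsilon$ and on the corresponding \textsc{cfr} nilspaces can be made so that the embeddings $\varphi$ and the extensions $F_0\mapsto F$ introduce only an $\epsilon$-dependent (and not $\ab$-dependent) loss. This is where the fact that the family $\mc{N}_\epsilon$ is finite (rather than just countable) is essential, since it allows us to absorb all such losses into the implicit constants $O_\epsilon(1)$.
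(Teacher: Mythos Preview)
Your approach is essentially the same as the paper's, and it is correct in outline. There is, however, one point where your justification is not right and where the paper's ordering of the argument is cleaner. You write that ``any two compatible metrics on the compact $\ns$ yield equivalent Lipschitz classes, with a quantitative comparison that depends only on the finitely many nilspaces in the collection''. This is false in general: two metrics inducing the same topology on a compact space need not be bi-Lipschitz equivalent (take $[0,1]$ with $|x-y|$ and $|x-y|^{1/2}$), so a Lipschitz bound for $F_0$ in one metric need not transfer to another. The correct way around this, which is what the paper does and which you yourself gesture at in the final paragraph, is to exploit the freedom in the inverse theorem (as stated in Theorem~\ref{thm:inv-high-ord-latti-intro} or \cite[Theorem 1.6]{CSinverse}) to choose the metrization \emph{before} applying it: for each nilspace $\ns$ in the collection, first construct the embedding $\varphi:\ns\hookrightarrow G/\Gamma$, then pull back the metric on $G/\Gamma$ to obtain the metric $d'$ on $\ns$, and only then apply the inverse theorem with this metrization. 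The Lipschitz bound for $F_0$ then comes out directly relative to $d'$, the embedding $\varphi$ is isometric by construction, and the McShane--Whitney extension goes through without any metric comparison. With this adjustment your argument matches the paper's proof.
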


\begin{proof}
First we apply Theorem \ref{thm:groupequivrep} to any 2-step \textsc{cfr} nilspace. Recall that in the case $k=2$ of Theorem \ref{thm:groupequivrep}, on the resulting free nilspace $F'$, the fiber-transitive filtration $K_\bullet$ that we obtain is also pure, by Corollary \ref{cor:k=2-str-good}. Hence, by Proposition \ref{prop:embeb-strong-good} there is an injective morphism $\iota:\pi_K(F')\hookrightarrow H/\Gamma$ for some compact nilmanifold $H/\Gamma$. Indeed, by Remark \ref{rem:isometric-embedding}, given any metric $d$ on $H/\Gamma$ we can choose an appropriate metric on $\pi_K(F')$ in such a way that $\iota$ is an isometry. Thus, without loss of generality we apply Theorem \ref{thm:inv-high-ord-latti-intro} with a metrization $D$ such that each $2$-step \textsc{cfr} nilspace is endowed with such an appropriate metric.\footnote{Recall that in Theorem \ref{thm:inv-high-ord-latti-intro} we can choose an arbitrary metrization. The chosen metric for each 2-step \textsc{cfr} nilspace $\pi_K(F')$ is the one that makes $\iota:\pi_K(F')\to H/\Gamma$ an isometry.} Hence, the Lipschitz map $F:\pi_K(F')\to \mb{C}$ can be interpreted as a map $\tilde{F}:\iota(\pi_K(F'))\to \mb{C}$, $\iota(x)\mapsto F(x)$. Moreover, as $\iota$ is an isometry, relative to the (restriction of the) metric $d$ on the subset $\iota(\pi_K(F'))\subset H/\Gamma$ we have that $\tilde{F}$ is also Lipschitz with the same Lipschitz constant as $F$.

We can extend $\tilde{F}$ to the whole space $H/\Gamma$ in such a way that its Lipschitz constant is at most twice as large as the Lipschitz constant of $F$. A standard way of doing so (essentially using Kirszbraun's theorem) consists in separating the real ($\Re$) and imaginary ($\Im$) parts and defining $\tilde{F}^*:H/\Gamma\to \mb{C}$ as
\[
\tilde{F}^*(x):=\inf_{y\in \iota(\pi_k(F'))}(\Re(\tilde{F}(y))+\Lip(\Re(\tilde{F}))d(x,y))+i\inf_{y\in \iota(\pi_K(F'))}(\Im(\tilde{F}(y))+\Lip(\Im(\tilde{F}))d(x,y))
\]
where $\Lip(\cdot)$ is the Lipschitz constant of a function. This function extends $\tilde{F}$ to $H/\Gamma$ and has a Lipschitz constant $\Lip(\tilde{F}^*)\le \sqrt{2}\Lip(\tilde{F})=\sqrt{2}\Lip(F)$.

Thus, if $F\co \phi$ was the correlating function given by Theorem \ref{thm:inv-high-ord-latti-intro} we have that $F\co \phi = \tilde{F}\co\iota\co\phi$. Now note that $\iota\co \phi$ is a morphism and as the Lipschitz constant of $\tilde{F}$ is bounded in terms of the Lipschitz constant of $F$ the result follows. 
\end{proof}

\section{Compact nilspaces as quotients of pro-free nilspaces and as double-coset nilspaces}\label{sec:cpct-nil}
\noindent In this final section we prove Theorems  \ref{thm:cpct-as-quo-of-free-inf} and \ref{thm:cpct-nil-as-dou-coset}, arguing as outlined in Subsection \ref{subsec:introCompExt}. Recall the notion of pro-free graded nilspaces from Definition \ref{def:free-graded-nilspace}.

\begin{remark}\label{rem:not-gen-theory}
Note that pro-free nilspaces, unlike all previous topological nilspaces in this paper, may fail to be \textsc{lch} (indeed an infinite product of copies of $\mb{R}$ is not locally-compact). However they are still \emph{Polish} spaces. In this section we will not generalize the concept of \textsc{lch} nilspaces to more general \emph{Polish} nilspaces, as we will not need to do so for our main results. Indeed, this topology on a pro-free nilspace will not be used in the results below until Theorem \ref{thm:cpct-as-quotient-of-infinite-free}. In the latter theorem and subsequent two results we will carefully indicate how this topology is used.
\end{remark}

\begin{defn}[Graded translation group]\label{def:gr-tran}
Let $d\in \mb{N}\cup\{\omega\}$. For every $i< d$ let $F_i$ be a free nilspace and let $F=\prod_{i<d}F_i$ be the associated graded pro-free nilspace. The \emph{graded translation group} $\tran(F,(F_i)_{i< d})$ consists of maps $\alpha: F\to F$ such that, for every projection $\pi_j:F\to \prod_{i=1}^j F_i$, $j\in \mb{N}$, there is a translation $\alpha_j\in \tran(\prod_{i=1}^j F_i)$ satisfying $\alpha_j\co \pi_j = \pi_j\co \alpha$.
\end{defn}
\noindent Note that it follows from this definition that such a map $\alpha$ is continuous (by the last equality and definition of the product topology on $F$), and also that it is a translation on $F$ (by checking the translation property in each component $F_i$). Moreover, it is not hard to deduce from the definition that  these maps do form a group (under composition).
\begin{lemma}\label{lem:top-graded-trans-gr}
For any graded pro-free nilspace $F=\prod_{i<\omega} F_i$, the group $\tran(F,(F_i)_{i\in \mb{N}})$ equipped with the compact-open topology is a $k$-step nilpotent pro-Lie group. Moreover, this topology on $\tran(F,(F_i)_{i\in \mb{N}})$ is equivalent to the  topology as an inverse limit of the graded groups of translations $\tran(\prod_{i=1}^jF_i,(F_i)_{i\in [j]})$ for $j\in \mb{N}$.
\end{lemma}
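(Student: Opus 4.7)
Proof plan. The approach is to realize $G := \tran(F,(F_i)_{i<\omega})$ as the topological-group inverse limit of the graded translation groups $G^{(j)} := \tran\big(\prod_{i=1}^j F_i,(F_i)_{i\in[j]}\big)$ along the natural restriction maps $p_{j+1,j}$, and to verify that each $G^{(j)}$ is a $k$-step nilpotent Lie group. The pro-Lie conclusion and the ``moreover'' clause then follow by definition of pro-Lie group (see \cite[Chapter 3]{H&M-ProLie}).

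First I would handle the finite-level statement. For each $j$, $\prod_{i=1}^j F_i$ is a $k$-step free nilspace, so its full translation group is a $k$-step nilpotent Lie group by Corollary \ref{cor:trans&structFree} (the nilpotency follows from $[\tran_a,\tran_b]\subseteq\tran_{a+b}$ together with $\tran_{k+1}=\{\id\}$, which gives a central series of length $\le k$). The graded subgroup $G^{(j)}$ is closed in it, because the descent condition $p_{\ell,j}\co \alpha(x)=p_{\ell,j}\co \alpha(x')$ whenever $p_{\ell,j}(x)=p_{\ell,j}(x')$ (for $\ell\le j$) is stable under uniform convergence on compacta. Cartan's theorem then gives that $G^{(j)}$ is Lie, and nilpotency of class $\le k$ is inherited from the ambient group. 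Set-theoretically, Definition \ref{def:gr-tran} identifies $G$ with $\varprojlim_j G^{(j)}$: every compatible family $(\alpha_j)_j$ with $p_{j+1,j}\co \alpha_{j+1}\co\pi_{j+1}=\alpha_j\co\pi_j$ patches to a unique graded translation on $F$ via $\pi_j(\alpha(x)):=\alpha_j(\pi_j(x))$, using the structural fact that the first $j$ components of $\alpha(x)$ depend only on $\pi_j(x)$.

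The heart of the argument is the topology comparison. Equip $F$ with a product metric $d_F(x,y)=\sum_{i\ge 1} 2^{-i}\min(1,d_i(x_i,y_i))$, where each $d_i$ is a compatible metric on the Lie-fibered (hence metrizable) free nilspace $F_i$; since the target is a metric space, the compact-open topology on $G$ coincides with uniform convergence on compacta \cite[Theorem 46.8]{Mu}. Continuity of each restriction $r_j:G\to G^{(j)}$ (and of each bonding map $p_{j+1,j}$) follows by lifting a given compact $K_j\subseteq\prod_{i\le j} F_i$ to a compact $\tilde K\subseteq F$ with $\pi_j(\tilde K)=K_j$ (take $\tilde K=K_j\times\prod_{i>j}\{x_i^\ast\}$ for fixed basepoints); convergence uniformly on $\tilde K$ forces, after composition with $\pi_j$, convergence uniformly on $K_j$. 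Conversely, given a compact $K\subseteq F$ and $\varepsilon>0$, one first picks $J$ with $2^{-J}<\varepsilon/2$ and then $n$ large so that $\sup_{y\in\pi_J(K)}d_{\prod_{i\le J}F_i}(\alpha^{(n)}_J(y),\alpha_J(y))<\varepsilon/2$; because the first $J$ components of $\alpha(x)$ depend only on $\pi_J(x)$, this forces $\sup_{x\in K}d_F(\alpha^{(n)}(x),\alpha(x))<\varepsilon$. Both topologies therefore agree.

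Putting these pieces together, $G$ is a topological-group inverse limit of the Lie groups $G^{(j)}$ along continuous bonding maps, hence a pro-Lie group. Nilpotency of class $\le k$ passes to the limit because every $k$-fold iterated commutator in $G$ projects to the identity in each $G^{(j)}$, and the limit is Hausdorff. The main delicate step is the topology comparison; everything else is bookkeeping once one fixes on the product metric and exploits the grading-compatibility, which lets finitely many coordinates of the output be controlled by finitely many coordinates of the input.
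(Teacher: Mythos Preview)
Your proof is correct and follows essentially the same approach as the paper: both realize each $G^{(j)}$ as a closed subgroup of the Lie group $\tran(\prod_{i\le j}F_i)$ via Cartan's theorem, identify $G$ with the inverse limit, and compare the compact-open topology with the inverse-limit topology by metrizing $F$ with a weighted product metric and lifting compacts from the finite levels. In fact you supply more detail on the topology comparison (particularly the $\varepsilon/2$ argument for the converse direction) than the paper, which explicitly omits those details.
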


\begin{proof}
For any positive integers $j\le \ell$ consider the projection $p_{j,\ell}:\prod_{i=1}^\ell F_i \to \prod_{i=1}^j F_i$. By the proof of Theorem \ref{thm:LFnstransLie}, the group $C_{j,\ell}:=\{\alpha\in \tran(\prod_{i=1}^\ell F_i): \forall x,y\in \ns, \text{ if } p_{j,\ell}(x)=p_{j,\ell}(y) \text{ then } p_{j,\ell}(\alpha(x))=p_{j,\ell}(\alpha(y))\}$ is a closed subgroup of $\tran(\prod_{i=1}^\ell F_i)$. Therefore, the group $C_\ell:=\cap_{j=1}^\ell C_{j,\ell}$ is also a closed subgroup of $\tran(\prod_{i=1}^\ell F_i)$ and hence a Lie group, by Cartan's theorem. Note $C_\ell = \tran(\prod_{i=1}^\ell F_i,(F_i)_{i\in [\ell]})$. Let $\widehat{p_{j,\ell}}:\tran(\prod_{i=1}^\ell F_i,(F_i)_{i\in [\ell]})\to \tran(\prod_{i=1}^j F_i)$ be the homomorphism induced by $p_{j,\ell}$. Then $\widehat{p_{\ell-1,\ell}}:\tran(\prod_{i=1}^\ell F_i,(F_i)_{i\in [\ell]})\to \tran(\prod_{i=1}^{\ell-1} F_i,(F_i)_{i\in [\ell-1]})$ is a well-defined continuous surjective homomorphism.\footnote{$\widehat{p_{\ell-1,\ell}}$ is onto: $\forall\, \alpha\in \tran(\prod_{i=1}^{\ell-1} F_i,(F_i)_{i\in [\ell-1]})$ the map $(\alpha,\id)$ on $(\prod_{i=1}^{\ell-1} F_i)\times F_\ell$ is in $\tran(\prod_{i=1}^{\ell} F_i,(F_i)_{i\in [\ell]})$.} This defines an inverse system of $k$-step nilpotent Lie groups, so $\tran(F,(F_i)_{i\in \mb{N}})=\varprojlim \tran(\prod_{i=1}^\ell F_i,(F_i)_{i\in [\ell]})$ is a pro-Lie group.

Finally, we have to show that the topology on $\tran(F,(F_i)_{i\in \mb{N}})$ as a subset of the set of continuous functions from $F$ to $F$ with the compact-open topology is equal to the topology generated by the limit maps for the inverse system defined by  $\widehat{p_{\ell-1,\ell}}$ for $\ell\in \mb{N}$. By \cite[Theorem 46.8]{Mu} the compact-open topology in both $C(F,F)$ and $C(\tran(\prod_{i=1}^\ell F_i,(F_i)_{i\in [\ell]}),\tran(\prod_{i=1}^\ell F_i,(F_i)_{i\in [\ell]}))$ for $\ell\in \mb{N}$ coincides with the topology of compact convergence. Note that given any metric $d_\ell$ in $\prod_{i=1}^\ell F_i$ for $\ell\in \mb{N}$, we can define a bounded equivalent metric just by taking $\max(1,d_\ell(\cdot,\cdot))$. Furthermore, if $d_\ell$ is 1-bounded for all $\ell\in \mb{N}$ on $F=\prod_{i<\omega} F_i$ we can define the metric $d_F(x,x'):=\sum_{\ell=1}^\infty \frac{d_\ell(\pi_\ell(x),\pi_\ell(x'))}{2^\ell}$ where $\pi_\ell:F\to \prod_{i=1}^\ell F_i$ is the projection map. Also note that by construction of the maps $\pi_\ell$ we have the following property: for any compact $D_\ell\subset \prod_{i=1}^\ell F_i$ there exists a compact $D\subset F$ such that $\pi_\ell(D)=D_\ell$. Using this it follows that the topology of $\tran(F,(F_i)_{i\in \mb{N}})$ as an inverse limit and as a subset of $C(F,F)$ coincide. We omit the details.
\end{proof}

\noindent Recall that by \cite[Theorem 2.7.3]{Cand:Notes2} every compact $k$-step nilspace $\ns$ is the inverse limit of $k$-step \textsc{cfr} nilspaces $\ns_i$, $i\in \mb{N}$, and by Theorem \ref{thm:gpcongrep-intro} each $\ns_i$ is itself isomorphic to $F_i/H_i$ for some free nilspace $F_i$. We thus have a diagram as follows, where $\ns$ is the inverse limit of the  top row:
\begin{equation}\label{eq:inv-lim-1}
\begin{aligned}[c]
\begin{tikzpicture}
  \matrix (m) [matrix of math nodes,row sep=1em,column sep=4em,minimum width=2em]
  { & \cdots & \ns_2 & \ns_1 \\
      & \cdots & F_2 & F_1. \\};
  \path[-stealth]
    (m-1-2) edge node [above] {} (m-1-3)
    (m-1-3) edge node [right] {} (m-1-4)
    (m-2-3) edge node [right] {} (m-1-3)
    (m-2-4) edge node [above] {} (m-1-4);
\end{tikzpicture}
\end{aligned}
\end{equation}

\vspace{-0.3cm}

\noindent The idea now is to modify inductively the above diagram by replacing the nilspaces $F_i$ by new free nilspaces $\tilde{F_i}$ forming an inverse system. First we will replace $F_2$ with the \emph{fiber-product} of $F_1$ and $F_2$, denoted $\tilde{F_2}$, and prove that $\tilde{F_2}$ is also a free nilspace which has a natural fiber-transitive group acting on it, with the corresponding quotient being isomorphic to $\ns_2$. Then we will replace $F_3$ with the fiber-product $\tilde{F_3}$ of $\tilde{F_2}$ and $F_3$, and so on. This will yield the desired inverse system; we will then prove that its inverse limit is a graded pro-free nilspace, and that $\ns$ is the quotient of this graded nilspace by a pro-discrete fiber-transitive group.

To carry out the above plan we will use the following  results.
\begin{proposition}\label{prop:fiber-prod-of-free}
Let $\ns$, $\ns'$  be $k$-step \textsc{cfr} nilspaces, and let $\varphi:\ns'\to\ns$ be a fibration. Let $F,F'$ be free $k$-step nilspaces and $G,G'$ be fiber-transitive, fiber-cocompact and fiber-discrete subgroups of $\tran(F),\tran(F')$ respectively, such that $\ns\cong F/G$ and $\ns'\cong F'/G'$. Let $\tilde{F}$ be the fiber-product nilspace $\tilde{F}=\{(f,f')\in F\times F': \varphi\co\pi_{G'}(f')= \pi_{G}(f)\}$, where $\pi_G$, $\pi_{G'}$ are the quotient fibrations $F\to F/G$, $F'\to F'/G'$ respectively. Then $\tilde{F}$ is a sub-nilspace of $F\times F'$ which is itself a free nilspace. Moreover, letting the group $G\times G'$ act the natural way on $\tilde{F}$ \textup{(}namely $(g,g')(f,f')=(gf,g'f')$\textup{)}, we embed  $G\times G'$ as a subgroup of $\tran(\tilde{F})$ which is fiber-transitive on $\tilde{F}$ with $\ns'\cong \tilde{F}/H$, and we have the following commutative diagram, where $p_1,p_2$ are the restrictions to $\tilde{F}$ of the 1-st and 2-nd coordinate projections on $F\times F'$:
\begin{equation}\label{eq:k-stepdiag}
\begin{aligned}[c]
\begin{tikzpicture}
  \matrix (m) [matrix of math nodes,row sep=2em,column sep=4em,minimum width=2em]
  { \tilde{F} & F' & \ns'  \\
    & F & \ns, \\};
  \path[-stealth]
    (m-1-1) edge node [above] {$ p_2$} (m-1-2)
    (m-1-2) edge node [above] {$ \pi_{G'}$} (m-1-3)
    (m-1-1) edge node [right] {$p_1$} (m-2-2)
    (m-1-3) edge node [right] {$\varphi$} (m-2-3)
    (m-2-2) edge node [above] {$\pi_G$} (m-2-3);
\end{tikzpicture}
\end{aligned}
\end{equation}
\end{proposition}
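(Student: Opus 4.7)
The strategy is to prove that $\tilde{F}$ is a $k$-step free nilspace by induction on $k$; the remaining claims — the embedding and fiber-transitivity of $G\times G'$, commutativity of \eqref{eq:k-stepdiag}, and the identification $\tilde{F}/(G\times G')\cong \ns'$ — will then follow by direct algebraic verifications. For the base case $k=1$, one observes that $\tilde F$ is a closed sub-nilspace of $\mc{D}_1(\mb{Z}^{a_1+a'_1}\times \mb{R}^{b_1+b'_1})$; up to translation this is a closed subgroup of $\mb{Z}^{a_1+a'_1}\times \mb{R}^{b_1+b'_1}$, and such subgroups are themselves of the form $\mb{Z}^c\times \mb{R}^d$ by the classical structure theory of finitely generated LCH abelian Lie groups, so $\tilde F$ is free.

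For the inductive step, I would first pass to $(k-1)$-step factors. By Lemma \ref{lem:fib-prod-top-nil} (using the algebraic identification from \cite[Proposition A.20]{CGSS-p-hom}), the $(k-1)$-step factor of $\tilde F$ is canonically isomorphic to the fiber product $\tilde F_{k-1}:=F_{k-1}\times_{\ns_{k-1}}F'_{k-1}$, taken with respect to the induced fibration $\varphi_{k-1}:\ns'_{k-1}\to \ns_{k-1}$. Lemma \ref{lem:group-cong-to-factors} combined with Corollary \ref{cor:equivalent-filtrations} ensures that the induced groups $\eta_{k-1}(G)$ and $\eta_{k-1}(G')$ remain fiber-transitive, fiber-discrete and fiber-cocompact on $F_{k-1}$, $F'_{k-1}$ respectively; the inductive hypothesis then yields that $\tilde F_{k-1}$ is a $(k-1)$-step free nilspace.

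Next I would argue that $\tilde F$ is a degree-$k$ continuous extension of $\tilde F_{k-1}$ by a \emph{free} abelian Lie group, so that Theorem \ref{thm:splitext} applies and produces a splitting $\tilde F\cong \tilde F_{k-1}\times \mc{D}_k(\tilde \ab_k)$, whence $\tilde F$ is a $k$-step free nilspace. By Lemma \ref{lem:fib-prod-top-nil}, the $k$-th structure group is the pullback
\[
\tilde \ab_k=\{(z,z')\in \ab_k(F)\times \ab_k(F'):\phi_k(z)=\phi'_k(z')\},
\]
where $\phi_k,\phi'_k$ are the $k$-th structure homomorphisms of $\pi_G$ and $\varphi\circ \pi_{G'}$. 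Its projection to the second coordinate fits into a proper short exact sequence of LCH abelian groups
\[
0\to \ker(\phi_k)\to \tilde \ab_k\to \ab_k(F')\to 0,
\]
in which $\ker(\phi_k)=G\cap \tran_k(F)$ (by Lemma \ref{lem:cong-equi}) is discrete by the fiber-discreteness of $G$, hence isomorphic to $\mb{Z}^c$, while $\ab_k(F')=\mb{Z}^{a'_k}\times \mb{R}^{b'_k}$. The sequence splits by Lemma \ref{lem:abcasesplit}, giving $\tilde \ab_k\cong \mb{Z}^{c+a'_k}\times \mb{R}^{b'_k}$, and Theorem \ref{thm:splitext} then completes the induction.

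For the remaining algebraic content of the proposition: the formula $(g,g')(f,f')=(gf,g'f')$ preserves $\tilde F$ because $\pi_G(gf)=\pi_G(f)=\varphi\pi_{G'}(f')=\varphi\pi_{G'}(g'f')$; it acts coordinate-wise by translations, so defines an injective filtered homomorphism $G\times G'\hookrightarrow \tran(\tilde F)$, and the fiber-transitivity of this product group on $\tilde F$ reduces coordinate-wise to the fiber-transitivity of $G$ on $F$ and of $G'$ on $F'$. Commutativity of \eqref{eq:k-stepdiag} is immediate from the defining constraint on $\tilde F$, and the quotient $\tilde F/(G\times G')$ consists of pairs $(\pi_G(f),\pi_{G'}(f'))$ with $\pi_G(f)=\varphi\pi_{G'}(f')$, which second-coordinate projection identifies with $\ns'$ via a nilspace isomorphism. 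The main obstacle in this plan is ensuring that the extending structure group $\tilde \ab_k$ is a \emph{free} abelian Lie group of the form $\mb{Z}^c\times \mb{R}^d$ rather than a general LCH abelian Lie group; this is precisely where the fiber-discreteness hypothesis on $G$ is essential, since it provides the discreteness of $\ker(\phi_k)$ needed for the splitting argument.
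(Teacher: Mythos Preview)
Your proposal is correct and follows essentially the same approach as the paper's proof (which is split across Lemmas \ref{lem:fiber-prod-of-free-is-free} and \ref{lem:structure-of-fiber-prod-of-free}): induction on $k$, with the base case handled via the structure of closed subgroups of $\mb{Z}^a\times\mb{R}^b$, the inductive step handled by identifying $\tilde F_{k-1}$ as the corresponding fiber product and then applying Theorem \ref{thm:splitext}, the freeness of $\tilde\ab_k$ obtained from the same short exact sequence $0\to\ker(\phi_k)\to\tilde\ab_k\to\ab_k(F')\to 0$ and Lemma \ref{lem:abcasesplit}, and the fiber-transitivity and quotient identification verified coordinate-wise exactly as you outline. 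The only cosmetic difference is that the paper phrases the base case via projectivity of $\mb{Z}^{a'}\times\mb{R}^{b'}$ and an explicit short exact sequence rather than directly invoking the classification of closed subgroups, and it justifies the isomorphism $\tilde F/(G\times G')\cong\ns'$ by checking that all structure homomorphisms of $\overline{p_2}$ are isomorphisms (which is where the fiber-cocompactness computation enters), whereas your graph-of-$\varphi$ description is a legitimate shortcut to the same conclusion.
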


We split the proof into several lemmas. 

\begin{lemma}\label{lem:fiber-prod-of-free-is-free}
Under the same assumptions as in Proposition \ref{prop:fiber-prod-of-free}, the fiber-product $\tilde{F}$ is a free sub-nilspace of $F\times F'$.    
\end{lemma}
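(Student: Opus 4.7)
The plan is to proceed by induction on the nilspace step $k$, exploiting the observation that $\tilde{F}$ is precisely the fiber product $F\times_{\ns} F'$ in the sense of Definition~\ref{def:fib-prod}, where the two maps into $\ns$ are $\pi_G:F\to\ns$ and $\varphi\co\pi_{G'}:F'\to\ns$ (both continuous fibrations: the first by Lemma~\ref{lem:gpequivcong}, the second as a composition of fibrations). Consequently, the cube structure on $\tilde{F}$ coming from the fiber product coincides with the one inherited from $F\times F'$, so $\tilde{F}$ is indeed a sub-nilspace of $F\times F'$. By Lemma~\ref{lem:fib-prod-top-nil}, $\tilde{F}$ is an \textsc{lch} $k$-step nilspace whose $i$-th structure group $\ab_i(\tilde{F})$ is the kernel of the continuous homomorphism
\[
\ab_i(F)\times\ab_i(F')\;\longrightarrow\;\ab_i(\ns),\qquad (z,z')\longmapsto \psi_i(z)-\psi'_i(z'),
\]
where $\psi_i,\psi'_i$ are the structure homomorphisms at level $i$ of $\pi_G$ and $\varphi\co\pi_{G'}$ respectively. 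Since $\ab_i(F)\times\ab_i(F')\cong \mb{Z}^{a_i+a'_i}\times\mb{R}^{b_i+b'_i}$, and every closed subgroup of a group of the form $\mb{Z}^a\times\mb{R}^b$ is topologically isomorphic to $\mb{Z}^c\times\mb{R}^d$ for some $c,d\geq 0$ (this follows by combining the structure theorem for compactly generated \textsc{lca} groups with torsion-freeness and the absence of non-trivial compact subgroups, cf.\ \cite[Theorem~2.4]{Mosk}), each $\ab_i(\tilde{F})$ has this same free shape, making $\tilde{F}$ Lie-fibered.

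For the base case $k=1$ there is nothing more to do: $\tilde{F}$ coincides with $\ab_1(\tilde{F})\cong\mb{Z}^{c_1}\times\mb{R}^{d_1}$, which is by definition a $1$-step free nilspace. For $k>1$, Lemma~\ref{lem:fib-prod-top-nil} identifies the $(k-1)$-step factor as $\tilde{F}_{k-1}\cong F_{k-1}\times_{\ns_{k-1}}F'_{k-1}$, and by Lemma~\ref{lem:group-cong-to-factors} the induced actions of $\eta_{k-1}(G)$ and $\eta_{k-1}(G')$ on $F_{k-1}$ and $F'_{k-1}$ remain fiber-transitive, while the fiber-discreteness and fiber-cocompactness are preserved automatically at each filtration level. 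Hence the inductive hypothesis applied to the $(k-1)$-step fibration $\varphi_{k-1}:\ns'_{k-1}\to\ns_{k-1}$ yields that $\tilde{F}_{k-1}$ is a $(k-1)$-step free nilspace.

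To close the induction, note that $\tilde{F}$ is then a degree-$k$ continuous extension of the free nilspace $\tilde{F}_{k-1}$ by the abelian Lie group $\ab_k(\tilde{F})\cong \mb{Z}^{c_k}\times\mb{R}^{d_k}$, so Theorem~\ref{thm:splitext} applies and delivers an \textsc{lch}-nilspace isomorphism
\[
\tilde{F}\;\cong\;\tilde{F}_{k-1}\times\mc{D}_k(\ab_k(\tilde{F})),
\]
exhibiting $\tilde{F}$ as a product of free components and therefore as a $k$-step free nilspace. The main step requiring care is the identification of $\ab_i(\tilde{F})$ as a closed subgroup of $\mb{Z}^{a_i+a'_i}\times\mb{R}^{b_i+b'_i}$ and the verification that it inherits the desired shape $\mb{Z}^{c_i}\times\mb{R}^{d_i}$; once this is granted, the remainder is a clean application of the splitting theorem for degree-$k$ extensions of free nilspaces.
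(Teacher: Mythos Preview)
Your proof is correct and follows essentially the same approach as the paper: induction on $k$, identifying $\tilde{F}_{k-1}$ as the corresponding $(k-1)$-step fiber product (free by induction), then applying Theorem~\ref{thm:splitext} to split $\tilde{F}\cong\tilde{F}_{k-1}\times\mc{D}_k(\ab_k(\tilde{F}))$ with $\ab_k(\tilde{F})\cong\mb{Z}^c\times\mb{R}^d$. The only cosmetic difference is that the paper establishes the free shape of $\ab_k(\tilde{F})$ via a short exact sequence and the projectivity of $\mb{Z}^{a'}\times\mb{R}^{b'}$ (Moskowitz), whereas you invoke directly that closed subgroups of $\mb{Z}^a\times\mb{R}^b$ are of the same form; both arguments are valid and equivalent in substance.
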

\begin{proof}
    We argue by induction on the step $k$. For $k=1$, it follows from classical results on torsion-free abelian groups that a 1-step sub-nilspace of a free 1-step nilspace is again a free nilspace. Indeed, note that up to the addition of a constant, we can consider $\varphi\co \pi_{G'}$ and $\pi_G$ as surjective homomorphisms where we identify $F$ and $F'$ with two abelian groups of the form $\mb{Z}^a\times \mb{R}^b$ and $\mb{Z}^{a'}\times \mb{R}^{b'}$ respectively. Thus, the 1-step nilspace $\tilde{F}$ is identified with the following abelian group: $\ab:=\{(f,f')\in (\mb{Z}^a\times \mb{R}^b)\times (\mb{Z}^{a'}\times \mb{R}^{b'}):\varphi\co \pi_{G'}(f')=\pi_G(f)\}$. But now we have a short exact sequence $0\to \ker(\pi_G)\to \ab\to \mb{Z}^{a'}\times \mb{R}^{b'}\to 0$ where $f\in \ker(\pi_G)\mapsto (f,0)\in \ab$ and $(f,f')\in \ab\mapsto f'\in \mb{Z}^{a'}\times \mb{R}^{b'}$. By \cite[Theorem 3.3]{Mosk} the group $\mb{Z}^{a'}\times \mb{R}^{b'}$ is projective and thus $\ab$ is isomorphic (as a topological group) to $\mb{Z}^{a'}\times \mb{R}^{b'}\times \ker(\pi_G)$ by \cite[Theorem 3.5]{Mosk}. The closed subgroup $\ker(\pi_G)\leq \mb{Z}^{a}\times \mb{R}^b$ is also of the form $\mb{Z}^{a''}\times \mb{R}^{b''}$. The case $k=1$ follows.

    For $k>1$, we begin by observing that by Lemma \ref{lem:diff-cont} (see also \cite[Proposition A.20]{CGSS-p-hom}) the $(k-1)$-th factor $\tilde{F}_{k-1}$ of $\tilde{F}$ is isomorphic to the fiber-product $F_{k-1}\times_{\ns_{k-1}} F'_{k-1}$. Note that we have again a commutative diagram exactly as \eqref{eq:k-stepdiag}, but at step $k-1$ and with $\tilde{F}_{k-1}$. By induction, we can thus assume that $\tilde{F}_{k-1}$ is a free nilspace. Since $\tilde{F}$ is a degree-$k$ extension of this \emph{free} nilspace $\tilde{F}_{k-1}$, by Theorem \ref{thm:splitext} we have $\tilde{F}\cong \tilde{F}_{k-1}\times \mc{D}_k(\ab)$ where $\ab=\ab_k(\tilde{F})$, the $k$-th structure group of $\tilde{F}$. Moreover $\ab_k(\tilde{F})=\{(z,z')\in \ab_k(F)\times \ab_k(F'): \phi_k(z)=\phi_k'(z')\}$, by Lemma \ref{lem:diff-cont} (where $\ab_k(F)$ and $\ab_k(F')$ are the $k$-th structure groups of $F$ and $F'$ respectively, and $\phi_k:\ab_k(F)\to \ab_k(\ns)$, $\phi_k':\ab_k(F')\to \ab_k(\ns)$ are the $k$-th structure homomorphisms of $\pi_G$ and $\varphi\co \pi_{G'}$ respectively). By the facts for torsion-free abelian groups used in the case $k=1$ above, we have that  $\ab_k(\tilde{F})\cong \mb{Z}^a\times \mb{R}^b$, for some $a,b\in\mb{Z}_{\geq 0}$.
\end{proof}

\begin{lemma}\label{lem:structure-of-fiber-prod-of-free}
Under the assumptions of Proposition \ref{prop:fiber-prod-of-free}, the group $G\times G'$ leaves $\tilde{F}$ globally invariant, and \textup{(}viewed as a subgroup of $\tran(\tilde{F})$\textup{)} is fiber-transitive, fiber-discrete and fiber-cocompact on $\tilde{F}$. Moreover the quotient $\tilde{F}/(G\times G')$ is nilspace-isomorphic to $\ns'$.  
\end{lemma}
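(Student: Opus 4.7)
The plan proceeds by verifying (a) that $G\times G'$ embeds in $\tran(\tilde F)$ via coordinatewise action, (b) that this action has orbits equal to the $\psi$-fibers for a natural morphism $\psi:\tilde F\to\ns'$, and (c) that the induced filtration on $G\times G'$ is fiber-transitive, fiber-discrete and fiber-cocompact. First I would check that $(g,g')\cdot(f,f'):=(gf,g'f')$ preserves $\tilde F$, which is immediate from the fact that $g$ is $\sim_G$-vertical and $g'$ is $\sim_{G'}$-vertical: the defining equation $\varphi(\pi_{G'}(f'))=\pi_G(f)$ is preserved. Since $\tilde F$ is a sub-nilspace of $F\times F'$, the restriction of the product translation $(g,g')\in\tran(F\times F')$ is a translation on $\tilde F$; this gives a homomorphism $G\times G'\to\tran(\tilde F)$. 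Its injectivity uses the surjectivity of $\pi_G$, $\pi_{G'}$ and $\varphi$: if $(g,g')$ fixes every point of $\tilde F$, then since for each $f'\in F'$ we can pick $f\in F$ with $\pi_G(f)=\varphi(\pi_{G'}(f'))$, the condition $g'f'=f'$ holds for every $f'\in F'$, and symmetrically $g=\id$.

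To identify the quotient I would work with the continuous morphism $\psi:=\pi_{G'}\co p_2:\tilde F\to\ns'$. Surjectivity of $\psi$ follows from surjectivity of $\pi_G$. To see that the $\psi$-fibers coincide with the $G\times G'$-orbits, one inclusion is immediate from $G,G'$-invariance of $\pi_G,\pi_{G'}$; for the other, if $\psi(f_1,f_1')=\psi(f_2,f_2')$ there exists $g'\in G'$ with $g'f_1'=f_2'$, and applying $\varphi\co\pi_{G'}$ gives $\pi_G(f_1)=\pi_G(f_2)$, so some $g\in G$ satisfies $gf_1=f_2$, and $(g,g')$ carries $(f_1,f_1')$ to $(f_2,f_2')$. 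Hence $\tilde F/(G\times G')\cong\ns'$ as nilspaces.

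For the filtration-theoretic conditions, fiber-transitivity reduces componentwise to the hypothesis on $G$ and $G'$: if $(f_1,f_1')\sim_{G\times G'}(f_2,f_2')$ and $\pi_i(f_\nu)=\pi_i(f_{\nu+1})$, $\pi_i(f_\nu')=\pi_i(f_{\nu+1}')$, then one produces $h\in G\cap\tran_{i+1}(F)$ and $h'\in G'\cap\tran_{i+1}(F')$ by applying the fiber-transitivity of $G$ and $G'$ separately, and $(h,h')$ lies in $\tran_{i+1}(\tilde F)$ because it is the restriction of an element of $\tran_{i+1}(F)\times\tran_{i+1}(F')\subset\tran_{i+1}(F\times F')$. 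For fiber-discreteness I would use Theorem \ref{thm:decrip-trans-group} (applied to $\tilde F_j$, whose $j$-th structure group is computed as in Lemma \ref{lem:fiber-prod-of-free-is-free}) to write $\tran_j(\tilde F_j)=\{(z,z')\in\ab_j(F)\times\ab_j(F'):\phi_j(z)=\phi'_j(z')\}$, where $\phi_j,\phi'_j$ are the $j$-th structure homomorphisms of $\pi_G$ and $\varphi\co\pi_{G'}$. The intersection $\eta_j(G\times G')\cap\tran_j(\tilde F_j)$ then lies inside $(\eta_j(G)\cap\ab_j(F))\times(\eta_j(G')\cap\ab_j(F'))$, a product of discrete subgroups, hence is discrete in the closed subspace $\tran_j(\tilde F_j)$. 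Finally, fiber-cocompactness is obtained indirectly: having established fiber-transitivity of $G\times G'$ on $\tilde F$, Lemma \ref{lem:cong-equi} identifies the $j$-th structure group of $\tilde F/(G\times G')$ with $\tran_j(\tilde F_j)/(\eta_j(G\times G')\cap\tran_j(\tilde F_j))$; since this quotient is isomorphic to the corresponding structure group of $\ns'$, which is compact as $\ns'$ is a \textsc{cfr} nilspace, cocompactness follows.

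The main bookkeeping subtlety I anticipate is the correct identification of $\tran_j(\tilde F_j)$ and the way the induced filtration on $G\times G'$ interacts with the sub-nilspace structure of $\tilde F$ inside $F\times F'$; in particular, verifying that heights of translations are preserved upon restriction to $\tilde F$. This is precisely why the indirect route to cocompactness (via Lemma \ref{lem:cong-equi} and the compactness of $\ns'$) appears preferable to a direct measurement inside $\tran_j(\tilde F_j)$.
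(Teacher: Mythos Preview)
Your argument is correct and follows the same overall strategy as the paper for invariance, fiber-transitivity, fiber-discreteness, and the identification of the quotient with $\ns'$. The paper likewise proves fiber-transitivity componentwise via the projections $p_1,p_2$, and obtains fiber-discreteness from the fact that $G_k\times G'_k$ is discrete as a product of discrete groups.

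The one genuine difference is the cocompactness step. The paper argues directly: it considers the continuous surjective homomorphism $\psi_k:\ab_k(\tilde F)\to\ab_k(F')/G'_k$, $(z,z')\mapsto z'\bmod G'_k$, and computes by hand that its kernel is exactly $G_k\times G'_k$ (the nontrivial inclusion uses the defining equation of $\tilde F$ together with the commutativity of diagram \eqref{eq:k-stepdiag}); compactness of $\ab_k(F')/G'_k\cong\ab_k(\ns')$ then yields cocompactness immediately. Your indirect route---deduce cocompactness from the isomorphism $\tilde F/(G\times G')\cong\ns'$ and compactness of $\ab_j(\ns')$---also works, but needs more care than you give it. Two points. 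First, Lemma \ref{lem:cong-equi} gives the $j$-th structure group as $\ab_j(\tilde F)/\eta_j((G\times G')_j)$, not $\ab_j(\tilde F)/\big(\eta_j(G\times G')\cap\tran_j(\tilde F_j)\big)$; these need not coincide in general, though since the former is contained in the latter, cocompactness of the smaller group suffices for what you want. Second, to conclude cocompactness you need the isomorphism of structure groups to be \emph{topological}, not merely algebraic. This does follow: $\psi=\pi_{G'}\co p_2$ is a continuous fibration between Lie-fibered nilspaces, so its $j$-th structure homomorphism $\phi_j^\psi:\ab_j(\tilde F)\to\ab_j(\ns')$ is continuous and surjective; combining your orbit--fiber identification with the fiber-transitivity just established shows $\ker(\phi_j^\psi)=\eta_j((G\times G')_j)$, and the open mapping theorem then gives the topological isomorphism $\ab_j(\tilde F)/\eta_j((G\times G')_j)\cong\ab_j(\ns')$. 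Once this is filled in your route is sound; the paper's direct kernel computation simply sidesteps these considerations.
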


\begin{proof}
Let $(f,f')\in \tilde{F}$ and $(g,g')\in G\times G'$. Now $(f,f')\in \tilde{F}$ means by definition that $\varphi\co\pi_{G'}(f')=\pi_G(f)$, and then clearly $\varphi\co\pi_{G'}(g'f')=\pi_G(gf)$, so $G\times G'$ indeed leaves $\tilde{F}$ globally invariant. In particular we have 
$p_2(gf,g'f')=g'f'=g'p_2(f,f')$, and similarly with $p_1$ instead of $p_2$. Hence $G\times G'$ is consistent with both $p_1,p_2$ in the sense of \cite[Definition 1.2]{CGSS}.

We want to prove that $G\times G'$ is a fiber-transitive group on $\tilde{F}$ and that the quotient nilspace $\tilde{F}/(G\times G')$ is a compact nilspace isomorphic (as a compact nilspace) to $\ns'$.

Let us check the fiber-transitivity property. We have by assumption that $G$ is fiber-transitive on $F$ and so is $G'$ on $F'$. Suppose that $(f,f'), (h,h')$ in $\tilde{F}$ satisfy $\pi_{i,\tilde{F}} (f,f') = \pi_{i,\tilde{F}}(h,h')$ for some $i\in[k]$ and there is $(g,g')\in G\times G'$ such that $(g,g')(f,f')=(h,h')$. Letting $(p_1)_i:\pi_i(\tilde{F})\to \pi_i(F)$ be the morphism such that $(p_1)_i\co \pi_{i,\tilde{F}} = \pi_{i,F}\co p_1$, we deduce that $\pi_{i,F}(f)=\pi_{i,F}\co p_1(f,f') = (p_1)_i\co \pi_{i,\tilde{F}}(f,f') = (p_1)_i\co \pi_{i,\tilde{F}}(h,h') = \pi_{i,F}(h)$. Applying $p_1$ to both sides of $(g,g')(f,f')=(h,h')$ we also deduce that $gf = h$. Similarly, we have $\pi_{i,F'}(f')=\pi_{i,F'}(h')$ and $g' f'= h'$. By the fiber-transitivity of $G,G'$, we deduce that there exist $g_{i+1}\in G_{i+1}:=G\cap \tran_{i+1}(F)$, $g_{i+1}'\in G_{i+1}'$, such that $g_{i+1}f=h$ and $g_{i+1}'f'=h'$, whence $(g_{i+1},g_{i+1}')\in G_{i+1}\times G_{i+1}'$ satisfies $(g_{i+1},g_{i+1}')(f,f')=(h,h')$.

Next, we need to check that if $G,G'$ are fiber-discrete on $F,F'$ respectively then so is $G\times G'$ on $\tilde{F}$. Note that it suffices to check this for the last structure group, as then the result for lower factors follows by induction. Clearly $G_{k}\times G'_k$ is discrete (as a product of two discrete groups). 

To see that $G\times G'$ is fiber-cocompact, recall that by Lemma \ref{lem:diff-cont} we have $\ab_k(\tilde{F})=\{(z,z')\in \ab_k(F)\times \ab_k(F'):\phi_k(z)=\phi'_k(z')\}$ where $\phi_k,\phi_k'$ are the $k$-th structure homomorphisms of $\pi_G$ and $\varphi\co \pi_{G'}$ respectively. Then consider the map $\psi_k:\ab_k(\tilde{F})\to \ab_k(F')/G_k'$ defined by $(z,z')\mapsto z'\mod G_k'$. The map $\psi_k$ is clearly a continuous surjective homomorphism. We claim that its kernel equals precisely $G_k\times G_k'$. The inclusion $G_k\times G_k'\subset \ker(\psi_k)$ is clear. Now let $(z,z')\in \ab_k(\tilde{F})$ and suppose that $0=\psi_k(z,z')=z'\mod G_k'$. Hence $z'\in G_k'$. Fix any $(f,f')\in \tilde{F}$ and note that $\varphi\co \pi_{G'}\co p_2(f+z,f'+z') = \varphi\co \pi_{G'}(f'+z') = \varphi(\pi_{G'}(f')) = \pi_G(f)$ (by definition of $(f,f')$ being in $\tilde{F}$). On the other hand, using the commutativity of \eqref{eq:k-stepdiag} we have  $\varphi\co \pi_{G'}\co p_2(f+z,f'+z') = \pi_G(p_1(f+z,f'+z')) = \pi_G(f+z) = \pi_G(f)+(z\mod G_k)$. Combining this with the previous formula we conclude that $z\mod G_k=0$ and thus $z\in G_k$, which proves our claim. Hence, since $\ab_k(F')/G_k'$ is compact and isomorphic to $\ab_k(\tilde F)/(G_k\times G_k')$, we obtain that  $G_k\times G_k'$ is cocompact as desired.\footnote{Note that we have not used the hypothesis that $G$ is fiber-cocompact. This is not strictly necessary because if we are in a situation as in Proposition \ref{prop:fiber-prod-of-free}, the fact that $\ns'$ is compact forces $\ns$ to be compact. Since $\ns\cong F/G$, this implies directly that $G$ is fiber-cocompact.}

Combining the above facts we obtain that $G\times G'$ is a subgroup of $\tran(\tilde{F})$ which is  fiber-transitive, fiber-discrete and fiber-cocompact group on $\tilde{F}$. Moreover, the morphism $p_2:\tilde{F}\to F'$ is equivariant relative to the actions of $G\times G'$ and $G'$. This induces a well-defined morphism $\overline{p_2}:\tilde{F}/(G\times G')\to F'/G'$. By the previous paragraphs all structure groups are isomorphic, so $\overline{p_2}$ is an isomorphism.
\end{proof}

\begin{remark}\label{rem:lifting-of-consistent-trans} Note that under the same assumptions as in Proposition \ref{prop:fiber-prod-of-free}, we have the following property: if $(f,f'),(h,h')\in \tilde{F}$ satisfy $\pi_{G\times G'}(f,f')= \pi_{G\times G'}(h,h')$ and there exists $g\in G$ such that $gf=h$, then there exists $g'\in G'$ such that $(g,g')(f,f')=(h,h')$. To prove this, note that by definition of $\tilde{F}$ we can take any $g'\in G'$ such that $g'f'=h'$ and the pair $(g,g')$ works. Moreover, if $\pi_{i,\tilde{F}}(f,f')=\pi_{i,\tilde{F}}(h,h')$ for some $i\in[k]$, and $\pi_{G\times G'}(f,f')= \pi_{G\times G'}(h,h')$, and  for some $g_{i+1}\in G_{i+1}=G\cap \tran_i(F)$, $g_{i+1}f=h$, then using the fiber-transitive property, the same proof shows that there is $g_{i+1}'\in G'_{i+1}=G'\cap \tran_{i+1}(F')$ such that $(g_{i+1},g_{i+1}')(f,f')=(h,h')$. 
\end{remark}

\begin{lemma}\label{lem:strong-surjectivity}
Under the assumptions of Proposition \ref{prop:fiber-prod-of-free}, for any integer $n\geq 0$ and any cubes $\q_1\in\cu^n(\ns')$ and $\q_2\in \cu^n(F)$ such that $\varphi\co \q_1 = \pi_G\co \q_2$, there exists $\q_3\in \cu^n(\tilde{F})$ such that $\pi_{G'}\co p_2 \co \q_3 = \q_1$ and $p_1\co \q_3 = \q_2$.
\end{lemma}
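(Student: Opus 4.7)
The plan is to exploit the cube-surjectivity of the quotient fibration $\pi_{G'}: F'\to F'/G'\cong \ns'$ to lift $\q_1$ to a cube on $F'$, and then observe that the pair with $\q_2$ gives the desired cube on $\tilde{F}$ essentially for free, thanks to the fiber-product definition of $\tilde{F}$ over $\ns$.

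More concretely, since $G'$ is assumed to be a fiber-transitive group on $F'$, Lemma \ref{lem:gpequivcong} tells us that the quotient map $\pi_{G'}:F'\to \ns'\cong F'/G'$ is a nilspace fibration. In particular, it is cube-surjective, so for the given $\q_1\in\cu^n(\ns')$ there exists a lift $\q'\in\cu^n(F')$ with $\pi_{G'}\co\q' = \q_1$. Now define $\q_3:\db{n}\to F\times F'$ by $\q_3(v):=(\q_2(v),\q'(v))$. For every $v\in\db{n}$ the hypothesis $\varphi\co\q_1 = \pi_G\co\q_2$ gives
\[
\varphi\co\pi_{G'}\co\q'(v) \;=\; \varphi\co\q_1(v) \;=\; \pi_G\co\q_2(v),
\]
so $\q_3(v)\in\tilde{F}$. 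Since $\q_2\in\cu^n(F)$, $\q'\in\cu^n(F')$, and the compatibility $\pi_G\co\q_2=\varphi\co\pi_{G'}\co\q'$ holds, Definition \ref{def:fib-prod} applied to the fiber product $\tilde{F}=F\times_{\ns}F'$ (with the fibrations $\pi_G$ and $\varphi\co\pi_{G'}$) yields $\q_3\in\cu^n(\tilde{F})$. Finally, $p_1\co\q_3 = \q_2$ by construction, and $\pi_{G'}\co p_2\co\q_3 = \pi_{G'}\co\q' = \q_1$, as required.

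There is no real obstacle: the content of the lemma is simply that the two ``transverse'' lifting problems (lifting $\q_1$ through $\pi_{G'}$, and assembling the pair over the base $\ns$) are independent, because $\tilde{F}$ is built as a fiber product over $\ns$ rather than over some intermediate nilspace. The only ingredient used is that fibrations are cube-surjective, together with the explicit description of cubes on a fiber product.
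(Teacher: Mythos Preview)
Your proof is correct and follows exactly the same approach as the paper's: lift $\q_1$ through the fibration $\pi_{G'}$ to some $\q'\in\cu^n(F')$, and set $\q_3:=(\q_2,\q')$. The paper's proof is just a terser version of yours, omitting the verifications you spell out.
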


\begin{proof}
As $\pi_{G'}$ is a fibration, take any $\q'\in \cu^n(F')$ such that $\pi_{G'}\co \q' = \q_1$. Then note that $\q_3:=(\q_2,\q')$ satisfies the requirements of the lemma.
\end{proof}

\begin{proof}[Proof of Proposition \ref{prop:fiber-prod-of-free}] The result follows immediately from Lemmas \ref{lem:fiber-prod-of-free-is-free} and \ref{lem:structure-of-fiber-prod-of-free}.
\end{proof}
\noindent Applying Proposition \ref{prop:fiber-prod-of-free} inductively for each $i\in \mb{N}$ to the sequence of nilspaces $F_i$ in diagram \eqref{eq:inv-lim-1}, we obtain the following commutative diagram:
\begin{equation}\label{eq:inv-lim-2}
\begin{aligned}[c]
\begin{tikzpicture}
  \matrix (m) [matrix of math nodes,row sep=1.5em,column sep=4em,minimum width=2em]
  {& \cdots & \ns_2 & \ns_1 \\
      & \cdots & \tilde{F}_2 & \tilde{F}_1. \\};
  \path[-stealth]
    (m-1-2) edge node [above] {} (m-1-3)
    (m-1-3) edge node [right] {} (m-1-4)
    (m-2-3) edge node [right] {} (m-1-3)
    (m-2-4) edge node [above] {} (m-1-4)
    (m-2-2) edge node [right] {} (m-2-3)
    (m-2-3) edge node [right] {} (m-2-4);
\end{tikzpicture}
\end{aligned}
\end{equation}

\vspace{-0.3cm}

\noindent Moreover, we have a sequence of discrete groups $G_i\le \tran(\tilde{F}_i)$ for $i\in \mb{N}$ such that $\ns_i\cong \tilde{F}_i/G_i$ and all the groups $G_i$ are consistent with the fibrations $\psi_{i,j}:\tilde{F}_j\to \tilde{F}_i$ for any $i\le j$. In fact we have that $\widehat{\psi_{i,j}}(G_j)=G_i$. However, the maps $\psi_{i,j}$ may have  polynomial expressions that make it difficult to see directly that the inverse limit of the $(\tilde{F}_i,\psi_{i,j})$ is a graded pro-free nilspace. Indeed, a priori the maps $\psi_{i,j}$ need not be coordinate projections, they are just fibrations between free nilspaces. To see that the inverse limit of the $\tilde{F}_i$ form a graded pro-free nilspace, we need to rearrange the situation so that the maps in the inverse system can be regarded as projections to some of the coordinates. We do this as follows.

\begin{lemma}\label{lem:strong-splitting}
Let $F_1,F_2$ be free nilspaces and let $\phi:F_2\to F_1$ be a continuous fibration. Then there is a free nilspace $F_3$ such that, letting $p:F_1\times F_3\to F_1$ denote the projection to the first coordinate, there is a nilspace isomorphism $\tau:F_1\times F_3\to F_2$ such that $\phi\co\tau=p$.
\end{lemma}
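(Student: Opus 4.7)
The plan is to proceed by induction on the step $k$ of $F_2$. For the base case $k=1$, both $F_1$ and $F_2$ are $1$-step free nilspaces, hence abelian Lie groups of the form $\mb{Z}^a\times \mb{R}^b$; after translating we may assume $\phi$ is a continuous surjective group homomorphism, and then the short exact sequence $0\to \ker(\phi)\to F_2\to F_1\to 0$ of \textsc{lch} abelian groups splits by Lemma \ref{lem:abcasesplit}. Since $\ker(\phi)$ is a closed subgroup of $\mb{Z}^a\times \mb{R}^b$, it has the form $\mb{Z}^{a'}\times \mb{R}^{b'}$, so setting $F_3:=\ker(\phi)$ and letting $\tau$ be the inverse of the splitting isomorphism settles this case.

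For the inductive step $k>1$, I will first apply the inductive hypothesis to the induced fibration $\phi_{k-1}:(F_2)_{k-1}\to (F_1)_{k-1}$ between the $(k-1)$-step factors (still free nilspaces), obtaining a free nilspace $F_3'$ and an isomorphism $\tau':(F_1)_{k-1}\times F_3'\to (F_2)_{k-1}$ with $\phi_{k-1}\co\tau'=p'$, where $p'$ is the projection to $(F_1)_{k-1}$. Using the free-nilspace decomposition $F_2\cong (F_2)_{k-1}\times \mc{D}_k(\ab_k(F_2))$ and composing with $\tau'$ in the first factor, I will identify $F_2$ with $(F_1)_{k-1}\times F_3'\times \mc{D}_k(\ab_k(F_2))$. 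The relation $\pi_{k-1,F_1}\co \phi=\phi_{k-1}\co \pi_{k-1,F_2}$ then forces $\phi$ to take the form $((y,u),z)\mapsto (y,\psi(y,u,z))$ for some morphism $\psi$ valued in $\mc{D}_k(\ab_k(F_1))$ (trivial when $\mr{step}(F_1)<k$).

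When $\mr{step}(F_1)<k$ the second component is vacuous, so it suffices to set $F_3:=F_3'\times \mc{D}_k(\ab_k(F_2))$ with $\tau(y,(u,z)):=(\tau'(y,u),z)$. When $\mr{step}(F_1)=k$, Lemma \ref{lem:mor-free-nil-as-poly} shows $\psi$ is a polynomial of filtered degree at most $k$; as $z$ is a degree-$k$ variable, every $z$-monomial in $\psi$ must be linear in $z$ with constant scalar coefficient, so $\psi(y,u,z)=\phi_k(z)+\psi_0(y,u)$, where $\phi_k:\ab_k(F_2)\to \ab_k(F_1)$ is the $k$-th structure homomorphism of $\phi$ and $\psi_0:(F_1)_{k-1}\times F_3'\to \mc{D}_k(\ab_k(F_1))$ is a morphism. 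Since $\phi_k$ is surjective, Lemma \ref{lem:abcasesplit} furnishes a continuous section $s_k:\ab_k(F_1)\to \ab_k(F_2)$, and $\ker(\phi_k)$ is a closed subgroup of $\ab_k(F_2)$, again of the form $\mb{Z}^{a''}\times \mb{R}^{b''}$. I will then take $F_3:=F_3'\times \mc{D}_k(\ker(\phi_k))$ and define
\[
\tau\big((y,w),(u,h)\big):=\big(\tau'(y,u),\;s_k(w-\psi_0(y,u))+h\big)
\]
for $(y,w)\in (F_1)_{k-1}\times \mc{D}_k(\ab_k(F_1))=F_1$ and $(u,h)\in F_3'\times \mc{D}_k(\ker(\phi_k))=F_3$. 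Using the unique decomposition $z=h+s_k(\phi_k(z))$ with $h\in\ker(\phi_k)$, one verifies that $\tau$ is bijective with polynomial inverse, hence an isomorphism of free nilspaces, and a direct substitution yields $\phi\co\tau=p$.

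The main obstacle lies in this last case: $\phi$ depends affinely on the degree-$k$ variable $z$ but with a $(y,u)$-dependent ``twist'' $\psi_0$, so $\tau$ cannot simply be a permutation of coordinates. Absorbing $\psi_0$ through the change of variables built from the section $s_k$ of $\phi_k$, while simultaneously splitting off $\ker(\phi_k)$ as a direct summand of the top structure group, is the decisive algebraic step that makes $\tau$ both an isomorphism and compatible with $\phi$.
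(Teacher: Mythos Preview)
Your proof is correct and follows essentially the same approach as the paper's: induction on the step, applying the inductive hypothesis to the $(k-1)$-factors, then splitting the surjective $k$-th structure homomorphism $\phi_k$ via a section and absorbing the lower-order twist $\psi_0$ into the isomorphism. The only cosmetic difference is that the paper factors the final isomorphism as a composition of three simpler maps ($\tau',\tau'',\tau'''$)---first pulling back along $\tau'$, then shearing away the twist $g'(s,r,0)$, then splitting $\alpha$---whereas you write down the composite $\tau$ in one formula; the content is the same.
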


\begin{proof}
We argue by induction on  $k:=\max(k_1,k_2)$ where for $i\in[2]$, $F_i$ is $k_i$-step. 

For $k=1$ the result follows by an argument similar to the start of the proof of Lemma \ref{lem:fiber-prod-of-free-is-free}. 

For $k>1$, for $i=1,2$ let us write $F_i=\pi_{k-1}(F_i)\times \mathcal{D}_k(A_i)$ where $\pi_{k-1}(F_i)$ is $(k-1)$-step and $A_i$ is some abelian group.

By Lemmas \ref{lem:Taylor-discrete} and \ref{lem:poly-free-to-R} we can write the map $\phi:\pi_{k-1}(F_2)\times H_2\to \pi_{k-1}(F_1)\times H_1$ as $(x,y)\mapsto (f(x),g(x,y))$. By induction on $k$, there exists a free nilspace $F_3'$ and a nilspace isomorphism $\tau':\pi_{k-1}(F_1)\times F_3' \to \pi_{k-1}(F_2)$ such that $f\co \tau':\pi_{k-1}(F_1)\times F_3'\to \pi_{k-1}(F_1)$ is just the projection to the first coordinate.  Letting $(\tau',\id):\pi_{k-1}(F_1)\times  F_3'\times \mc{D}_k(A_2)\to \pi_{k-1}(F_2)\times \mc{D}_k(A_2)$ denote the map $ (a,b,c)\mapsto (\tau'(a,b),c)$  then $(\tau',\id)$ is an isomorphism and $\phi\co (\tau',\id)(s,r,y)=(s,g'(s,r,y))$ for some morphism $g':\pi_{k-1}(F_1)\times F_3' \times \mc{D}_k(A_2)\to \mc{D}_k(A_1)$. Let us define $\phi':=\phi\co (\tau',\id)$.

As both $F_3'$ and $\pi_{k-1}(F_1)$ are at most $(k-1)$-step, the map $g'(s,r,y)$ equals $g'(s,r,0)+\alpha(y)$ for some homomorphism $\alpha:A_2\to A_1$ (see \cite[Definition 3.3.1]{Cand:Notes1}). As $\phi$ is a fibration, so is $\phi\co (\tau',\id)$ and hence $\alpha$ is a surjective homomorphism. Since $\alpha$ is a surjective homomorphism between abelian Lie groups of the form $\mathbb{Z}^n\times\mathbb{R}^m$, by the case $k=1$ of the lemma we have that $A_2$ is isomorphic as an abelian Lie group to $A_1\times B$. In particular there exists a homomorphism $c:A_1\to A_2$ such that $\alpha\co c =\id_{A_1}$. Let us now consider the map $\tau'':\pi_{k-1}(F_1)\times  F_3'\times \mc{D}_k(A_2)\to \pi_{k-1}(F_1)\times  F_3' \times \mc{D}_k(A_2)$ defined as $(s,r,y)\mapsto(s,r,y-c\co g'(s,r,0))$. This is clearly a nilspace isomorphism and, letting $\phi'':=\phi'\co \tau''$,  we have $\phi''(s,r,y)=(s,\alpha(y))$.

Finally, again by the case $k=1$ of the lemma, there exists an isomorphism $\tau''':A_1\times B\to A_2$ such that $\alpha \co \tau'''(y',y'')=y'$. If we define $(\id,\tau'''):\pi_{k-1}(F_1)\times  F_3' \times \mc{D}_k(A_1\times B)\to \pi_{k-1}(F_1)\times  F_3' \times \mc{D}_k(A_2)$ as $(s,r,y',y'')\mapsto (s,r,\tau'''(y,y'))$ we get that $\phi''\co (\id,\tau''')$ is just the map $(s,r,y',y'')\mapsto (s,y')$ which completes the proof (just let $\tau:=(\tau',\id)\co \tau''\co(\id,\tau''')$).\end{proof}

\begin{corollary}\label{cor:trans-in-graded-trans-group}
Under the assumptions of Lemma \ref{lem:strong-splitting}, suppose that we have a subgroup $G\le \tran(F_2)$ consistent with $\phi$ and such that $\widehat{\phi}(G)=G'\le \tran(F_1)$. Then, under the isomorphism $\widehat{\tau}^{-1}:\tran(F_2)\to\tran(F_1\times F_3) $ the elements of $G$ are of the form $(x,y)\in F_1\times F_3\mapsto (\alpha(x),\beta(x,y))$ where $\alpha\in G'$ and $(\alpha,\beta)\in \tran(F_1\times F_3)$. In other words, $\widehat{\tau}^{-1}(G)$ is a subgroup of $\tran(F_1\times F_3,(F_1,F_3))$ the graded translation group of $F$ with grading $(F_1,F_3)$.
\end{corollary}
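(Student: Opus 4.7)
The plan is to unfold the definition of $\widehat{\tau}^{-1}$ and to exploit the key identity $\phi\co\tau=p$ furnished by Lemma \ref{lem:strong-splitting}, together with the $\phi$-consistency of $G$.

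First, I would fix $g\in G$ and set $g^{\sharp}:=\widehat{\tau}^{-1}(g)=\tau^{-1}\co g\co\tau\in\tran(F_1\times F_3)$. Writing $\alpha:=\widehat{\phi}(g)\in G'$, the consistency of $\phi$ with $G$ gives $\phi\co g=\alpha\co\phi$ on $F_2$ (recall \cite[Lemma 1.5]{CGSS}). Composing with $\tau$ on the right and using $\phi\co\tau=p$ yields
\[
p\co g^{\sharp} \;=\; \phi\co\tau\co\tau^{-1}\co g\co\tau \;=\; \phi\co g\co\tau \;=\; \alpha\co\phi\co\tau \;=\; \alpha\co p.
\]
In particular, evaluating at $(x,y)\in F_1\times F_3$, the first coordinate of $g^{\sharp}(x,y)$ equals $\alpha(x)$ and depends only on $x$. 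Writing the second coordinate as $\beta(x,y)$, we obtain the announced expression $g^{\sharp}(x,y)=(\alpha(x),\beta(x,y))$ with $\alpha\in G'$.

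Finally, I would verify that $g^{\sharp}$ belongs to the graded translation group $\tran(F_1\times F_3,(F_1,F_3))$ of Definition \ref{def:gr-tran}. For this grading one has only two projections: the identity $\pi_2=\id$, for which the required translation is $g^{\sharp}$ itself, and $\pi_1=p:F_1\times F_3\to F_1$, for which the identity $p\co g^{\sharp}=\alpha\co p$ established above provides $\alpha\in\tran(F_1)$ as the needed lift. Hence $\widehat{\tau}^{-1}(G)\subset\tran(F_1\times F_3,(F_1,F_3))$, which is the desired conclusion. No serious obstacle is expected: the whole argument is a direct chase through the intertwining identity $\phi\co\tau=p$, so the only point requiring a little care is simply to record that $\widehat{\phi}$ is well-defined on $G$ (guaranteed by the hypothesis of consistency), and that $g^{\sharp}$ is automatically a translation since $\widehat{\tau}^{-1}$ is a group isomorphism between translation groups.
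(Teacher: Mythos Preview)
Your argument is correct and is exactly the intended one: the paper states this corollary without proof, treating it as an immediate consequence of the intertwining identity $\phi\co\tau=p$ from Lemma \ref{lem:strong-splitting}, and your computation $p\co g^{\sharp}=\alpha\co p$ together with the check against Definition \ref{def:gr-tran} is precisely how one unpacks this.
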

We can now prove the following result.
\begin{theorem}\label{thm:covering-of-inverse limit}
Let $\ns$ be a compact $k$-step nilspace, and let $(\ns_j)_{j\in\mb{N}}$ be any inverse system of $k$-step \textsc{cfr} nilspaces such that $\ns=\varprojlim \ns_j$. Then there is a graded pro-free nilspace $(F',(F_i')_{i\in\mb{N}})$ such that for every $j\in \mb{N}$ there is a fiber-discrete, fiber-cocompact  subgroup $G_j$ of the graded translation group of $(\prod_{i=1}^j F_i',(F_i')_{i\le j})$ such that $(\ns_j\cong \prod_{i=1}^j F_i')/G_j$. Moreover the groups $G_j$ together with the homomorphisms $\widehat{p_{\ell,j}}:\tran(\prod_{i\le j} F_i',(F_i')_{i\le j})\to \tran(\prod_{i\le \ell} F_i',(F_i')_{i\le \ell})$,  $j,\ell\in \mb{N}$, form an inverse system of topological groups with $\widehat{p_{\ell,j}}(G_j)=G_\ell$ for all $j\ge \ell$.
\end{theorem}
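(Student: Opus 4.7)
The plan is to build inductively the compatible sequence of free nilspaces $\tilde F_j$ and groups $\tilde G_j$ ``covering'' the given inverse system, and only at the end convert the system into one where the connecting maps are coordinate projections, which is what exhibits the pro-free graded structure.

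First I would apply Theorem \ref{thm:gpcongrep-intro} to each $\ns_j$ to obtain a $k$-step free nilspace $F_j$ and a fiber-transitive, fiber-discrete and fiber-cocompact subgroup $H_j\le \tran(F_j)$ with $\ns_j\cong F_j/H_j$, as illustrated by diagram \eqref{eq:inv-lim-1}. I would then build an inverse system $\tilde F_1\leftarrow \tilde F_2\leftarrow \cdots$ of free nilspaces with compatible fiber-discrete, fiber-cocompact groups $\tilde G_j\le \tran(\tilde F_j)$, $\ns_j\cong \tilde F_j/\tilde G_j$, by induction on $j$. Base: $\tilde F_1:=F_1$, $\tilde G_1:=H_1$. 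Step: given $(\tilde F_{j-1},\tilde G_{j-1})$ with $\ns_{j-1}\cong\tilde F_{j-1}/\tilde G_{j-1}$ and the connecting fibration $\varphi_{j-1,j}:\ns_j\to\ns_{j-1}$, apply Proposition \ref{prop:fiber-prod-of-free} with the data $(\ns,\ns',F,F',G,G')=(\ns_{j-1},\ns_j,\tilde F_{j-1},F_j,\tilde G_{j-1},H_j)$. This yields a fiber-product $\tilde F_j:=\tilde F_{j-1}\times_{\ns_{j-1}}F_j$ which is itself a $k$-step free nilspace, together with the product group $\tilde G_j:=\tilde G_{j-1}\times H_j\le \tran(\tilde F_j)$, which by Lemma \ref{lem:structure-of-fiber-prod-of-free} is fiber-transitive, fiber-discrete and fiber-cocompact on $\tilde F_j$, satisfies $\ns_j\cong\tilde F_j/\tilde G_j$, and such that the first coordinate projection $\psi_{j-1,j}:=p_1:\tilde F_j\to \tilde F_{j-1}$ is a fibration with $\widehat{\psi_{j-1,j}}(\tilde G_j)=\tilde G_{j-1}$. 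Iterating gives an inverse system $(\tilde F_j,\psi_{\ell,j})$ of free nilspaces and a compatible system $(\tilde G_j)$ of groups.

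Second, I would convert this inverse system into one with coordinate-projection connecting maps, by inductively invoking Lemma \ref{lem:strong-splitting}. Set $F_1':=\tilde F_1$. Given an isomorphism $\tilde F_{j-1}\cong \prod_{i=1}^{j-1} F_i'$ identifying the maps $\psi_{\ell,j-1}$ with coordinate projections, apply Lemma \ref{lem:strong-splitting} to the fibration $\psi_{j-1,j}:\tilde F_j\to \tilde F_{j-1}$ to obtain a free nilspace $F_j'$ and an isomorphism $\tau_j:\tilde F_{j-1}\times F_j'\to \tilde F_j$ such that $\psi_{j-1,j}\co \tau_j$ is precisely the projection onto $\tilde F_{j-1}$. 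Composing with the inductive identification yields $\tilde F_j\cong \prod_{i=1}^j F_i'$ and makes every connecting map $\psi_{\ell,j}$ a coordinate projection. Under this identification, applying Corollary \ref{cor:trans-in-graded-trans-group} (at each step) shows that the transported group $G_j:=\widehat{\tau_j}^{-1}(\tilde G_j)$ lies in the graded translation group $\tran(\prod_{i=1}^j F_i',(F_i')_{i\le j})$, and the compatibility $\widehat{p_{\ell,j}}(G_j)=G_\ell$ is automatic since the $\psi_{\ell,j}$ are now coordinate projections and $\widehat{\psi_{j-1,j}}(\tilde G_j)=\tilde G_{j-1}$ was preserved at every step.

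Finally, the graded pro-free nilspace $(F',(F_i')_{i\in\mb{N}})$ with $F':=\prod_{i<\omega} F_i'$ is the desired object, and the fiber-discreteness and fiber-cocompactness of each $G_j$ are inherited directly from the corresponding properties of $\tilde G_j$ established in Lemma \ref{lem:structure-of-fiber-prod-of-free}, since the isomorphism $\tau_j$ is a nilspace isomorphism and thus preserves structure groups and their actions. The main conceptual obstacle is essentially handled by the algebraic groundwork already in place: Proposition \ref{prop:fiber-prod-of-free} guarantees that the fiber product of free nilspaces along a fibration remains free (which is the crux, and relies on Theorem \ref{thm:splitext}), while Lemma \ref{lem:strong-splitting} supplies the decomposition of a fibration between free nilspaces as a coordinate projection up to isomorphism. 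The only care required is bookkeeping: ensuring that when we replace $\tilde F_j$ by $\prod_{i\le j}F_i'$ via $\tau_j$, the previously established identifications on $\tilde F_{j-1}$ remain consistent, which is automatic because $\tau_j$ is built as an extension of the identity on $\tilde F_{j-1}$ (the form of the isomorphism in Lemma \ref{lem:strong-splitting}).
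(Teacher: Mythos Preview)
Your proposal is correct and follows essentially the same approach as the paper: build the $\tilde F_j$ inductively via fiber products using Proposition \ref{prop:fiber-prod-of-free}, then straighten the connecting maps into coordinate projections via Lemma \ref{lem:strong-splitting}, and transport the groups into the graded translation groups via Corollary \ref{cor:trans-in-graded-trans-group}. The paper's proof is much terser because it relies on the discussion surrounding diagrams \eqref{eq:inv-lim-1} and \eqref{eq:inv-lim-2} already having set up the $\tilde F_j$ system, but your write-up unpacks exactly the same construction.
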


\begin{proof}
Starting from diagram \eqref{eq:inv-lim-2} we apply successively Lemma \ref{lem:strong-splitting} to obtain that the sequence $\tilde F_1,\tilde F_2,\ldots$ from \eqref{eq:inv-lim-2} can be written (modulo an isomorphism) as a sequence of projections for some free nilspaces $F_1'=\tilde F_1, F_2',F_3',\ldots$ where for every $j\in \mb{N}$ we have $\tilde F_j\cong \prod_{i=1}^j F_i'$. Moreover, by Corollary \ref{cor:trans-in-graded-trans-group}, for every $j\in \mb{N}$ the fiber-transitive group acting on  $\tilde F_j$ is isomorphic to a subgroup of the graded translation group of $(\prod_{i=1}^j F_i',(F_i')_{i\le j})$. Recall that, by the paragraph below \eqref{eq:inv-lim-2} combined with Corollary \ref{cor:trans-in-graded-trans-group}, the fiber-transitive subgroups that we obtain are already fiber-discrete, fiber-cocompact and consistent with the maps $\widehat{p_{\ell,j}}$ for all $j\ge \ell$.
\end{proof}

\noindent We now consider the graded pro-free nilspace $F'=\prod_{i<\omega}F_i'$ and inverse system of groups $(G_i)_{i\in \mb{N}}$ from Theorem \ref{thm:covering-of-inverse limit}, we let $G:=\varprojlim G_i$ be the corresponding inverse limit group, and we shall now prove that $\ns$ is isomorphic as a compact nilspace to $F'/G$. To this end, we first make the following observation.

\begin{remark}\label{rem:lift-throu-inv-lim} The case $n=0$ of Lemma \ref{lem:strong-surjectivity} implies that in diagram \eqref{eq:inv-lim-2}, if $(x_i)_{i\in \mb{N}}\in \varprojlim \ns_i$ and for some $j\in \mb{N}$ we fix any preimage $y_j\in \tilde F_j$ of $x_j\in \ns_j$, then there exists  $(y_i')_{i\in \mb{N}}\in \varprojlim \tilde F_i$ such that $y_j'=y_j$ and $y_i'\mapsto x_i$ for all $i\in \mb{N}$. The proof consists in constructing inductively the sequence $y_i'$. Given $y'_j$, note that by taking its image through the maps $\tilde{F}_j\to \tilde{F}_{j-1}\to \cdots$ we can form $y'_i$ for $i\le j$ satisfying the requirements of the lemma. For larger $i$, inductively we apply Lemma \ref{lem:strong-surjectivity} we have that there exists $y'_{j+1}$ satisfying that $y'_{j+1}\mapsto x_{j+1}$. Repeating this process countably many times we find our element $(y_i')_{i\in \mb{N}}\in \varprojlim \tilde{F}_i$.
\end{remark}

\begin{theorem}\label{thm:cpct-as-quotient-of-infinite-free}
Let $\ns$ be a compact $k$-step nilspace. Then there exists a graded pro-free nilspace $(F'=\prod_{i<\omega} F_i',(F_i')_{i<\omega})$ and a sequence of subgroups $G_j\le \tran(\prod_{i\le j} F_i',(F_i')_{i\le j})$ satisfying the following. For every $j\in \mb{N}$, $G_j$ is fiber-discrete and fiber-cocompact on $(\prod_{i\le j} F_i',(F_i')_{i\le j})$, and for every $j\ge \ell$ we have that $\widehat{p_{\ell,j}}(G_j)=G_\ell$ where $\widehat{p_{\ell,j}}$ is defined as in Theorem \ref{thm:covering-of-inverse limit}. Moreover, letting $G:=\varprojlim G_j$ and endowing $F'$ with the product topology, $\ns$ is isomorphic as a compact nilspace to $F'/G$ (where $F'/G$ is endowed with the quotient topology).  
\end{theorem}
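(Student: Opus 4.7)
My plan is to combine Theorem \ref{thm:covering-of-inverse limit} (which already provides the graded pro-free nilspace $F'=\prod_{i<\omega}F_i'$ and the compatible system of fiber-discrete fiber-cocompact groups $G_j\le \tran(\prod_{i\le j}F_i',(F_i')_{i\le j})$) with the observations already established about inverse limits of \textsc{cfr} nilspaces, and then produce a concrete continuous surjective nilspace morphism $\varphi:F'\to\ns$ that is $G$-invariant and whose induced quotient map $\psi:F'/G\to\ns$ is both a nilspace isomorphism and a homeomorphism. Specifically, writing $\tilde F_j=\prod_{i\le j}F_i'$ and letting $\tilde\varphi_j:\tilde F_j\to \tilde F_j/G_j\cong \ns_j$ be the quotient fibration (which is continuous and open by Corollary \ref{cor:quo-cfr-nil} and Theorem \ref{thm:open-mapping-general}), I would define $\varphi_j:=\tilde\varphi_j\co p_j$ where $p_j:F'\to\tilde F_j$ is coordinate projection; these are compatible with the inverse system $\ns=\varprojlim\ns_j$ given by \cite[Theorem 2.7.3]{Cand:Notes2}, so they assemble into a continuous nilspace morphism $\varphi:F'\to\ns$.

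Next I would verify the three essential properties of $\varphi$. Surjectivity of $\varphi$ is immediate from Remark \ref{rem:lift-throu-inv-lim}. For $G$-invariance, note that an element $g=(g_j)_j\in G$ acts on each factor $\tilde F_j$ via $g_j\in G_j$, so $\varphi_j(g\cdot z)=\tilde\varphi_j(g_j\cdot p_j(z))=\tilde\varphi_j(p_j(z))=\varphi_j(z)$. This yields a continuous bijective nilspace morphism $\psi:F'/G\to\ns$ once we check injectivity: given $z,w\in F'$ with $\varphi(z)=\varphi(w)$, I would build the required $g\in G$ inductively, starting with some $g_1\in G_1$ taking $p_1(z)$ to $p_1(w)$ and, at each step, using Remark \ref{rem:lifting-of-consistent-trans} (which applies because the groups $G_{j+1}$ were constructed from $G_j$ via the fiber-product procedure of Proposition \ref{prop:fiber-prod-of-free} before the coordinate change of Lemma \ref{lem:strong-splitting}) to lift $g_j$ to some $g_{j+1}\in G_{j+1}$ with $\widehat{p_{j,j+1}}(g_{j+1})=g_j$ and $g_{j+1}\cdot p_{j+1}(z)=p_{j+1}(w)$. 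The consistent sequence then defines the desired $g\in G=\varprojlim G_j$.

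The remaining and somewhat delicate task is to show that $\psi$ is a \emph{homeomorphism}, for which it suffices to prove that $\varphi$ itself is an open map. The product topology on $F'$ has a basis consisting of cylinders $U=\prod_i U_i$ with $U_i\subset F_i'$ open and $U_i=F_i'$ for every $i>n$ (for some $n$ depending on $U$). Using Remark \ref{rem:lift-throu-inv-lim} once more, any consistent sequence whose $n$-th truncation lies in $\prod_{i\le n}U_i$ can be extended to a full sequence inside $U$, so I expect to obtain the identity
\[
\varphi(U)=\pi_n^{-1}\bigl(\tilde\varphi_n\bigl(\textstyle\prod_{i\le n}U_i\bigr)\bigr).
\]
Since $\tilde\varphi_n$ is a continuous fibration between $k$-step Lie-fibered nilspaces, Theorem \ref{thm:open-mapping-general} ensures it is open, so $\tilde\varphi_n(\prod_{i\le n}U_i)$ is open in $\ns_n$, and its preimage under the continuous projection $\pi_n:\ns\to\ns_n$ is open in $\ns$. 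This gives openness of $\varphi$, whence $\psi$ is a homeomorphism; it is also a nilspace isomorphism because $\pi_G$ and $\varphi$ are both nilspace fibrations and $\psi\co \pi_G=\varphi$.

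The main obstacle I expect is the injectivity step, i.e.\ assembling the consistent element $g\in G$ from the pointwise data $g_j\in G_j$. This is not automatic: absent the specific fiber-product origin of the groups $G_j$, consistency of the lifts $g_j\mapsto g_{j+1}$ could fail, and the set of admissible $g_j$ at each level need not be compact (the stabilizers of points under the $G_j$-action may well be infinite, since fiber-discreteness and fiber-cocompactness do not force free actions). It is precisely the content of Remark \ref{rem:lifting-of-consistent-trans}, encoding the structure inherited from the fiber-product construction of Proposition \ref{prop:fiber-prod-of-free}, that makes the inductive construction unconditional and sidesteps any compactness-of-stabilizers argument.
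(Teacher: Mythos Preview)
Your proposal is correct and follows essentially the same route as the paper: define the map $F'\to\ns$ via the inverse system, obtain injectivity by inductively building a consistent $g\in G$ through Remark \ref{rem:lifting-of-consistent-trans}, and prove openness via the identity $\varphi(p_n^{-1}(U))=\pi_n^{-1}(\tilde\varphi_n(U))$. The only loose ends are your unargued assertions that $\pi_G$ and $\varphi$ are nilspace fibrations: for $\pi_G$ this requires fiber-transitivity of $G$ on $F'$ (which is exactly the second half of Remark \ref{rem:lifting-of-consistent-trans}, and the paper invokes it explicitly), while for $\varphi$ you really only need cube-surjectivity, which follows by running the inductive lifting argument of Remark \ref{rem:lift-throu-inv-lim} with the general-$n$ case of Lemma \ref{lem:strong-surjectivity} rather than just $n=0$.
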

This implies Theorem \ref{thm:cpct-as-quo-of-free-inf}.
\begin{proof}
Let $(\ns_i)_{i\in\mb{N}}$ be a sequence of $k$-step \textsc{cfr} nilspaces such that $\ns=\varprojlim \ns_i$ (with limit maps $\zeta_i:\ns\to \ns_i$), let $F'$ and $G_i$ ($i\in\mb{N}$) be the objects resulting from applying Theorem \ref{thm:covering-of-inverse limit}, and let $G:=\varprojlim G_i$. For any $j\in \mb{N}$ we have the following commutative diagram:
\begin{equation}\label{eq:glueing-1}
\begin{aligned}[c]
\begin{tikzpicture}
  \matrix (m) [matrix of math nodes,row sep=2em,column sep=4em,minimum width=2em]
  {F' & F'/G & \ns \\
      \prod_{i=1}^jF_i'&  & \ns_j. \\};
  \path[-stealth]
    (m-1-1) edge node [above] {$\pi_G$} (m-1-2)
    (m-1-2) edge node [above] {$\varphi$} (m-1-3)
    (m-1-1) edge node [right] {$p_j$} (m-2-1)
    (m-2-1) edge node [above] {$\psi_j$} (m-2-3)
    (m-1-3) edge node [right] {$\zeta_j$} (m-2-3);
\end{tikzpicture}
\end{aligned}
\end{equation}

\vspace{-0.3cm}

We have to prove that $\varphi$ is a well-defined map and an isomorphism of compact nilspaces. 

We first define a map $\tilde\varphi:F'\to \ns$ using the inverse limit expression of $\ns$, namely $\tilde\varphi: (f_i)_{i\in \mb{N}}\in F'=\prod_{i=1}^\infty F_i' \mapsto (\psi_j(p_j((f_i)_{i\in \mb{N}}))))_{j\in \mb{N}}$ where $p_j$ is just the projection to the first $j$ coordinates. We claim that $\tilde \varphi$ factors through $\pi_G$, thus yielding a well-defined map $\varphi:F'/G\to \ns$ such that $\tilde\varphi=\varphi\co\pi_G$. To prove this, suppose that $\pi_G((f_i)_{i\in \mb{N}})=\pi_G((h_i)_{i\in \mb{N}})$. Then there exists $g = (g_j)_{j\in \mb{N}}\in G$ where $g_j\in G_j\subset \tran(\prod_{i=1}^j F_i',(F_i')_{i\in[j]})$ such that for all $j\in \mb{N}$, $g_j(f_1,\ldots,f_j)=(h_1,\ldots,h_j)$. As $\psi_j$ is invariant under the action of $G_j$ the claim follows.

Let us now prove that the well-defined map $\varphi$ is bijective. For injectivity, suppose that $\varphi(\pi_G((f_i)_{i\in \mb{N}}))=\varphi(\pi_G((h_i)_{i\in \mb{N}}))$. Then $\psi_j(f_1,\ldots,f_j)= \psi_j(h_1,\ldots,h_j)$ for every $j\in \mb{N}$. Starting with $\psi_1(f_1)= \psi_1(h_1)$, by definition of $\psi_1$ there is $g_1\in G_1$ such that $g_1(f_1)=h_1$. Then, by Remark \ref{rem:lifting-of-consistent-trans} there is $g_2\in G_2$ such that $g_2\mapsto g_1$ and $g_2(f_1,f_2)=(h_1,h_2)$. Continuing this process, we deduce that there exists $g=(g_j)\in \varprojlim G_j$ with $g(f_i)_{i\in \mb{N}}=(h_i)_{i\in \mb{N}}$, whence $\pi_G((f_i)_{i\in \mb{N}})=\pi_G((h_i)_{i\in \mb{N}})$, which proves injectivity. Surjectivity follows from Lemma \ref{lem:strong-surjectivity}. 

By the second part of Remark \ref{rem:lifting-of-consistent-trans} we have that $G=\varprojlim G_i$ is fiber-transitive on $F'$.

Lemma \ref{lem:strong-surjectivity} implies that $\varphi$ is an isomorphism of nilspaces (in the sense that $\varphi$ and $\varphi^{-1}$ send cubes to cubes). To see that $\varphi$ is a homeomorphism, it suffices prove that it is continuous and open. To prove that $\varphi$ is continuous note that by definition of inverse limit and the quotient topology on $F'/G$ it suffices to prove that $\zeta_j\co \varphi\co \pi_G$ is continuous for every $j\in \mb{N}$. However this map equals $\psi_j\co p_j$ which is continuous. To prove that $\varphi$ is open note first that the sets of the form $p_j^{-1}(U)$ for any $j\in \mb{N}$ and any $U\subset \prod_{i=1}^j F_i'$ form a basis of the topology of $F'$. Thus it is enough to prove that for any such $p_j^{-1}(U)$,  $\varphi\co \pi_G(p_j^{-1}(U))$ is open.  By Lemma \ref{lem:strong-surjectivity} (for $n=0$) we have that $\varphi(\pi_G(p_j^{-1}(U))) = \zeta_j^{-1}(\psi_j(U))$. As $\psi_j$ is an open map the result  follows.

Finally, we need to prove that for every $n\in \mb{N}$, the cube set $\cu^n(F'/G)$ is a closed subset of $(F'/G)^{\db{n}}$. However, as $\varphi^{\db{n}}:(F'/G)^{\db{n}}\to \ns^{\db{n}}$ is a homeomorphism  and $\varphi^{\db{n}}(\cu^n(F'/G))=\cu^n(\ns)$, the closure of $\cu^n(F'/G)$ follows from that of $\cu^n(\ns)$.
\end{proof}
This shows in particular that $F'/G\cong \varprojlim [(\prod_{i=1}^jF_i')/G_j]$ as  compact nilspaces.

\noindent Now, to go further and represent $\ns\cong F'/G$ as a double-coset space, we first extend the results in Lemma \ref{lem_aux-for-dou-coset} to the present setting of  graded pro-free nilspaces and graded translation groups.
\begin{lemma}\label{lem:inf-free-as-quo-of-trans-by-k}
Let $(F',(F_i')_{i=1}^\infty)$ be a graded pro-free nilspace, and let $G=\tran(F',(F_i')_{i=1}^\infty)$. Fix any $f_0\in F'$ and let $K:=\stab_G(f_0)=\{\alpha\in G: \alpha(f_0)=f_0\}$. The following properties hold:
\setlength{\leftmargini}{0.7cm}
\begin{enumerate}
    \item $\psi:K\backslash G\to F'$,  $Kg\mapsto g^{-1}(f_0)$ is a homeomorphism and a nilspace isomorphism.\footnote{Note that as we have not defined a category of \emph{Polish} nilspaces (as we highlighted in Remark \ref{rem:not-gen-theory}), here we simply show that $\psi$ is an isomorphism as an algebraic nilspace morphism  and also that it is a homeomorphism when $F'$ and $K\backslash G$ are endowed with the product and the compact-open topology respectively, see Lemma \ref{lem:top-graded-trans-gr}.}
    \item The map $\psi$ is equivariant with respect to the action of $G$ on $K\backslash G$ defined as $(Kg,g')\in (K\backslash G)\times G\mapsto Kg{g'}^{-1}$ and on $F'$ defined as  $(f,g)\in F'\times \tran(F',(F_i')_{i=1}^\infty)\mapsto g(f)$. 
\end{enumerate} 
\end{lemma}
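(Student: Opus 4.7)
The plan is to adapt the proof of Lemma \ref{lem_aux-for-dou-coset} to the graded pro-free setting, handling the topology carefully since we are outside the \textsc{lch} framework (see Remark \ref{rem:not-gen-theory}). First I would verify, as in the proof of Lemma \ref{lem_aux-for-dou-coset}, that $\psi$ is well-defined (if $Kg=Kg'$ with $g'=kg$ for some $k\in K$, then ${g'}^{-1}(f_0)=g^{-1}k^{-1}(f_0)=g^{-1}(f_0)$) and injective (if $g^{-1}(f_0)={g'}^{-1}(f_0)$, then $g{g'}^{-1}\in K$, so $Kg=Kg'$). For surjectivity, for each $f\in F'$ I would use the constant translation $g_f\colon F'\to F'$, $x\mapsto x-f$, where subtraction is performed coordinate-wise using the abelian group structure of each $F_i'$; the key point is that $g_f$ lies in $G=\tran(F',(F_i')_{i\in\mb{N}})$ because its projection to each partial product $\prod_{i\le j}F_i'$ is again a constant translation. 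Then $\psi(K g_{f_0-f})=g_{f_0-f}^{-1}(f_0)=f$ as in Lemma \ref{lem_aux-for-dou-coset}.

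Next I would prove the nilspace-morphism property of $\psi$ and $\psi^{-1}$ by the same argument as in Lemma \ref{lem_aux-for-dou-coset}. Any cube in $\cu^n(K\backslash G)$ is $K\q$ for some $\q\in\cu^n(G)$, and $\psi\co K\q=\q^{-1}\co f_0^{\db{n}}\in\cu^n(F')$ since $\cu^n(G)$ is a group and each $g\in G$ is in particular a translation of $F'$. Conversely, any cube in $\cu^n(F')$ is a finite composition of constant translations applied to the constant cube at $f_0$, and since these constant translations all lie in $G$ (as argued above) and the Host--Kra cube group $\cu^n(G_\bullet)$ contains them in the appropriate faces, this writes the cube in the form $\q^*\co f_0^{\db{n}}$ with $\q^*\in\cu^n(G)$. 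Hence $\psi^{-1}$ maps cubes to cubes.

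For the topological part, continuity of $\psi$ follows as in Lemma \ref{lem_aux-for-dou-coset}: if $Kg_n\to Kg$ in $K\backslash G$ then by openness of the quotient map $G\to K\backslash G$ we can find $k_n\in K$ with $k_ng_n\to g$ in the compact-open topology of $G$, and then $g_n^{-1}(f_0)=(k_ng_n)^{-1}(f_0)\to g^{-1}(f_0)$ in the product topology on $F'$ by continuity of evaluation. For continuity of $\psi^{-1}$, i.e.\ of the map $F'\to K\backslash G$, $f\mapsto Kg_{f_0-f}$, I would use the inverse-limit description of the topology on $G$ provided by Lemma \ref{lem:top-graded-trans-gr}: it suffices to check that for each $j$ the projection $f\mapsto p_j(g_{f_0-f})\in\tran(\prod_{i\le j}F_i')$ depends continuously on $\pi_j(f)$, and this is clear since $p_j(g_{f_0-f})$ is addition by the constant $\pi_j(f_0-f)$, which depends continuously on $\pi_j(f)$ in the compact-open topology of the Lie group $\tran(\prod_{i\le j}F_i')$. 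Finally, part $(ii)$ is a direct calculation: $\psi(Kgh^{-1})=(gh^{-1})^{-1}(f_0)=hg^{-1}(f_0)=h(\psi(Kg))$.

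The main obstacle is the topological verification rather than any algebraic subtlety, because we have not built a general category of ``Polish nilspaces'' to which Lemma \ref{lem_aux-for-dou-coset} could be mechanically transferred. In particular, I would need to be careful that the compact-open topology on $G$ is the right one for the equivariance statement and for the ``openness of the quotient map $G\to K\backslash G$'' used in the continuity argument; the inverse-limit description from Lemma \ref{lem:top-graded-trans-gr}, together with openness of the Polish quotient maps $\tran(\prod_{i\le j}F_i',(F_i')_{i\le j})\to K_j\backslash\tran(\prod_{i\le j}F_i',(F_i')_{i\le j})$ (where $K_j$ is the analogous stabilizer in the $j$-th level), lets us reduce the statement to the finite-step case already treated in Lemma \ref{lem_aux-for-dou-coset}, from which the result follows by passing to the inverse limit.
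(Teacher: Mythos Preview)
Your proposal is correct and follows essentially the same strategy as the paper: the algebraic parts (well-definedness, bijectivity, nilspace-morphism property, equivariance) are transferred verbatim from Lemma~\ref{lem_aux-for-dou-coset}, and the topological part is handled by reducing to the finite-level pieces via the inverse-limit description of $G$ from Lemma~\ref{lem:top-graded-trans-gr}.

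There is one minor presentational difference worth noting. For the homeomorphism claim, the paper works with the unfactored map $\psi':G\to F'$, $g\mapsto g^{-1}(f_0)$, and shows directly that $\psi'$ is continuous and \emph{open}: continuity because $p_j\co\psi'=\psi_j'\co p_j'$ for every $j$, and openness because the basic open sets $(p_j')^{-1}(U)$ satisfy $\psi'\big((p_j')^{-1}(U)\big)=p_j^{-1}(\psi_j'(U))$, where the equality uses a short lifting argument (any $g^{(j-1)}$ at level $j-1$ with a prescribed $\psi'$-image lifts to $G$). You instead prove continuity of $\psi^{-1}$ directly via the explicit global section $f\mapsto g_{f_0-f}$, checking continuity into $G$ level by level. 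Your route is slightly more concrete and avoids the lifting step; the paper's route avoids having to justify the sequence-lifting through the (Polish, not \textsc{lch}) quotient $G\to K\backslash G$ that you invoke for continuity of $\psi$. Both arguments are fine, and indeed your explicit-section argument for $\psi^{-1}$ could replace the paper's openness argument with no loss.
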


\begin{proof}
Many parts of this result follow by the same arguments as in Lemma \ref{lem_aux-for-dou-coset}. In particular, all algebraic statements ($\psi$ being a nilspace isomorphism and its claimed equivariance) follow by the same arguments. Hence, it only remains to check that  $\psi$ is a homeomorphism. To do so, note that for any $j\in \mb{N}$, letting $F_j:=\prod_{i=1}^j F_i'$, the following diagram commutes:
\begin{equation}\label{eq:inf-free-as-coset}
\begin{aligned}[c]
\begin{tikzpicture}
  \matrix (m) [matrix of math nodes,row sep=1.5em,column sep=4em,minimum width=2em]
  {\tran(F',(F_i')_{i\in \mb{N}})  & F' \\
      \tran(F_j,(F_i')_{i=1}^j) &  F_j. \\};
  \path[-stealth]
    (m-1-1) edge node [above] {$\psi'$} (m-1-2)
    (m-1-1) edge node [right] {$p_j'$} (m-2-1)
    (m-2-1) edge node [above] {$\psi_j'$} (m-2-2)
    (m-1-2) edge node [right] {$p_j$} (m-2-2);
\end{tikzpicture}
\end{aligned}
\end{equation}

\vspace{-0.3cm}

\noindent Here $p_j'$ is just the restriction to the $F_j$ coordinates, $p_j$ is the usual projection to $F_j$,  $\psi'(g):=g^{-1}(f_0)$, and $\psi_j'(p_j'(g)):=p_j'(g^{-1})(p_j(f_0))$. In order to analyze this diagram, it is useful to analyze first what happens between the different factors of the inverse limits of $F'$ and $\tran(F',(F_i')_{i\in \mb{N}})$. In fact, for any $j\in \mb{N}$ we have the following diagram:
\begin{equation}\label{eq:inf-free-as-coset-2}
\begin{tikzpicture}
  \matrix (m) [matrix of math nodes,row sep=1.5em,column sep=4em,minimum width=2em]
  {\tran(F',(F_i')_{i\in \mb{N}}) & \tran(F_j,(F_i')_{i=1}^j) & \tran(F_{j-1},(F_i')_{i=1}^{j-1}) &  \tran(F_1,(F_1')) \\
      F' & F_j & F_{j-1} & F_1. \\};
  \path[dotted,->]
    (m-1-1) edge node [above] {} (m-1-2)
    (m-2-1) edge node [above] {} (m-2-2)
    (m-1-3) edge node [above] {} (m-1-4)
    (m-2-3) edge node [above] {} (m-2-4);
   \path[-stealth]
    (m-1-2) edge node [above] {$p_{j-1,j}'$} (m-1-3)
    (m-1-2) edge node [right] {$\psi_j'$} (m-2-2)
    (m-2-2) edge node [above] {$p_{j-1,j}$} (m-2-3)
    (m-1-3) edge node [right] {$\psi_{j-1}'$} (m-2-3)
    (m-1-4) edge node [right] {$\psi_1'$} (m-2-4)
    (m-1-1) edge node [right] {$\psi'$} (m-2-1);
\end{tikzpicture}
\end{equation}
Now let $f\in F'$ and for any $j\in \mb{N}$ let $g^{(j-1)}\in \tran(F_{j-1},(F_i')_{i=1}^{j-1})$ be such that $\psi'_{j-1}(g)=p_j(f)$. We claim that there exists  $g\in \tran(F',(F_i')_{i\in \mb{N}})$ such that $\psi'(g)=f$ and $p_j'(g)=g^{(j-1)}$. To see this, note that by induction it suffices to lift $g^{(j-1)}$ to an element in $\tran(F_{j},(F_i')_{i=1}^{j})$ with the desired properties. In fact, since we are considering the graded translations, it is clear that we can find such an element. That is, any element of $F_{j}=(F_i')_{i=1}^{j}$ is of the form $(f',f^{(j-1)})\in F_j'\times F_{j-1}$ and we know that $(g^{(j-1)})^{-1}(f_0)=f^{(j-1)}$. The translation $g^{(j)}\in \tran(F_j,(F_i')_{i=1}^j)$ defined as $(a,b)\in F_j'\times F_{j-1}\mapsto(a+f_0-f',g^{(j-1)}(b))$ works. 

The map $\psi'$ factorizes as $\psi\co \pi_K$, where $\pi_K:G\to K\backslash G$ is the quotient map to the space of right cosets (recall that $G=\tran(F',(F_i')_{i\in \mb{N}})$). To prove that $\psi$ is a homeomorphism it suffices to check that $\psi'$ is open and continuous. Continuity follows from the fact that by definition of $F'$ it suffices to prove that $p_j\co \psi'$ is continuous for every $j\in \mb{N}$. Since this equals $\psi_j'\co p_j'$, which is clearly continuous, the continuity of $\psi'$ follows. To prove openness, note that it is enough to prove it for the basis of the topology on $G$ consisting of sets of the form ${p_j'}^{-1}(U)$ where $U\subset \tran(F_j,(F_i')_{i=1}^j)$ is open and $j\in \mb{N}$. Arguing just as in the proof of Theorem \ref{thm:cpct-as-quotient-of-infinite-free}, by the previous paragraph we have that $\psi'({p_j'}^{-1}(U)) = p_j^{-1}(\psi_j(U))$, and the openness follows.
\end{proof}
We can now prove the following result, which implies Theorem \ref{thm:cpct-nil-as-dou-coset}.
\begin{theorem}
Let $\ns$ be a $k$-step compact nilspace. Then there exists a $k$-step graded pro-free nilspace $(F',(F_i')_{i\in \mb{N}})$ such that, letting $G$ be  the degree-$k$ filtered pro-Lie group $\tran(F',(F_i')_{i\in \mb{N}})$, and letting $K$ be the closed pro-Lie subgroup\footnote{Here we took the stabilizer of the constant $\underline{0}$ element, but this can be replaced by any element of $F'=(F_i')_{i\in \mb{N}}$.} $\stab_G(\underline{0})$, there is a fiber-transitive pro-discrete subgroup $\Gamma$ of $G$ such that the following properties hold:
\setlength{\leftmargini}{0.7cm}
\begin{enumerate}
    \item For $j\in \mb{N}$ let $p_j':\tran(F',(F_i')_{i\in \mb{N}})\to \tran(\prod_{i=1}^jF_i',(F_i')_{i\le j})$ be the projection to the $j$-th factor. Then $p_j'(\Gamma)$ acting on $\prod_{i=1}^jF_i'$ is fiber-transitive, fiber-discrete and fiber-cocompact.
    \item The double-coset nilspace $K\backslash G/\Gamma$ endowed with the quotient topology is isomorphic as a compact nilspace to $\ns$.
\end{enumerate}
\end{theorem}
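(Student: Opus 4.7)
The plan is to combine Theorem \ref{thm:cpct-as-quotient-of-infinite-free} with the coset-nilspace realization of pro-free nilspaces from Lemma \ref{lem:inf-free-as-quo-of-trans-by-k}, following essentially the same strategy used in the \textsc{cfr} case (Theorem \ref{thm:cfr=double-coset}) but transported to the inverse-limit setting.

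First I would apply Theorem \ref{thm:cpct-as-quotient-of-infinite-free} to $\ns$, obtaining a $k$-step graded pro-free nilspace $(F'=\prod_{i<\omega} F_i',(F_i')_{i<\omega})$ and an inverse system $(G_j)_{j\in\mb{N}}$ with $G_j\le \tran(\prod_{i=1}^j F_i',(F_i')_{i\le j})$ fiber-discrete and fiber-cocompact, such that $\widehat{p_{\ell,j}}(G_j)=G_\ell$ for all $j\ge\ell$ and $\ns \cong F'/\Gamma$ as compact nilspaces, where $\Gamma := \varprojlim G_j$. By Lemma \ref{lem:top-graded-trans-gr}, $G := \tran(F',(F_i')_{i\in\mb{N}})$ is topologically isomorphic to $\varprojlim \tran(\prod_{i\le j} F_i',(F_i')_{i\le j})$, so $\Gamma$ is canonically a subgroup of $G$, and surjectivity of the bonding maps $\widehat{p_{\ell,j}}$ gives $p_j'(\Gamma)=G_j$ for every $j$. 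Moreover each $G_j$ is discrete, by Lemma \ref{lem:closedconseq} applied with fiber-discreteness, whence $\Gamma$ is pro-discrete; its fiber-transitivity on $F'$ was already verified in Theorem \ref{thm:cpct-as-quotient-of-infinite-free} (via Remark \ref{rem:lifting-of-consistent-trans}). This establishes all the structural properties of $\Gamma$ and yields property (i).

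Next I would set $K=\stab_G(\underline{0})$. Writing $K = \varprojlim K_j$ with $K_j = \stab_{\tran(\prod_{i\le j}F_i',(F_i')_{i\le j})}(p_j(\underline{0}))$, each $K_j$ is a closed subgroup of a Lie group hence Lie by Cartan's theorem, so $K$ is a closed pro-Lie subgroup of $G$ (as required in the statement). By Lemma \ref{lem:inf-free-as-quo-of-trans-by-k}, the map $\psi: K\backslash G \to F'$, $Kg\mapsto g^{-1}(\underline{0})$, is simultaneously a nilspace isomorphism and a homeomorphism, and is equivariant for the right-action $(Kg,h)\mapsto Kgh^{-1}$ of $G$ on $K\backslash G$ and the translation action of $G$ on $F'$. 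Transporting $\Gamma$ through $\psi$, the action of $\Gamma$ on $F'$ corresponds to right-multiplication by $\Gamma^{-1}$ on $K\backslash G$, so the induced filtration $\Gamma_\bullet$ is a fiber-transitive filtration on the coset nilspace $K\backslash G$; by Lemma \ref{lem:nil-pair-good-fil}, $(K,\Gamma)$ is a groupable nilpair in $(G,G_\bullet)$ and the double-coset nilspace $K\backslash G/\Gamma$ is well-defined.

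Finally, to obtain (ii) I would show that $\psi$ descends to an isomorphism of compact nilspaces $\overline{\psi}: K\backslash G/\Gamma \to F'/\Gamma$ and invoke the isomorphism $F'/\Gamma \cong \ns$ from Theorem \ref{thm:cpct-as-quotient-of-infinite-free}. The algebraic part is automatic from the equivariance of $\psi$, which forces the induced map on quotients to be a well-defined bijective nilspace morphism with inverse also a morphism. For the topological part, since $\psi$ is a homeomorphism intertwining the two $\Gamma$-actions, the quotient topologies correspond; combined with the fact (established in Theorem \ref{thm:cpct-as-quotient-of-infinite-free}) that the quotient topology on $F'/\Gamma$ coincides with the compact nilspace topology of $\ns$, this yields the desired isomorphism of compact nilspaces $K\backslash G/\Gamma \cong \ns$. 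The main potential obstacle is the topological bookkeeping ensuring that the two successive quotienting operations defining $K\backslash G/\Gamma$ are compatible with the $\varprojlim$-topology on $\ns$; however, once $\psi$ is available as an equivariant homeomorphism and Lemma \ref{lem:top-graded-trans-gr} identifies $G$ as a pro-Lie group, these checks reduce to standard arguments about equivariant quotients and present no new difficulty beyond what was handled in the \textsc{cfr} case.
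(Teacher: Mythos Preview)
Your proposal is correct and follows exactly the approach indicated in the paper, which proves this theorem by combining Lemma \ref{lem:inf-free-as-quo-of-trans-by-k} with Theorem \ref{thm:cpct-as-quotient-of-infinite-free}. You have simply spelled out in detail the steps that the paper leaves implicit (the identification $p_j'(\Gamma)=G_j$ via surjectivity of the bonding maps, the pro-discreteness of $\Gamma$, the inverse-limit description $K=\varprojlim K_j$, and the descent of the equivariant homeomorphism $\psi$ to the quotient), all of which are routine given the cited lemmas.
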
 

\begin{proof}
This follows from combining Lemma \ref{lem:inf-free-as-quo-of-trans-by-k} with Theorem \ref{thm:cpct-as-quotient-of-infinite-free}.
\end{proof}

\appendix

\section{Polynomials between free nilspaces}\label{app:Taylor}

\noindent The goal of this section is to understand continuous morphisms $\phi:F\to F'$ where $F,F'$ are free nilspaces and morphisms $\phi':F\to \mc{D}_k(A)$ where $A$ is an abelian Lie group (recall that for us this also means compactly generated). In particular, we will prove Lemmas \ref{lem:Taylor-discrete}, \ref{lem:poly-free-to-R} and \ref{lem:Taylor-cont}. As any abelian Lie group is isomorphic to $\mb{Z}^r\times \ab\times \mb{R}^s\times \mb{T}^{s'}$, where $\ab$ is a finite abelian group, composing with projections it suffices to study morphisms $\phi:F\to \mc{D}_k(A)$ where $A$ is either $\mb{T},\mb{R}$ or a finitely generated discrete abelian group.

Let $\phi:F\to \mc{D}_k(A)$ be a morphism (where $A$ is an abelian Lie group). Then we have the decomposition $\phi = \phi_k \co \pi_k$, where $\pi_k$ is the canonical projection to the $k$-th step nilspace factor $\prod_{i=1}^k \mc{D}_i(\mb{Z}^{a_i}\times \mb{R}^{b_i})$ of $F$ (thus $\pi_k$ is the projection to the coordinates of degree at most $k$ in $F$), and $\phi_k:\prod_{i=1}^k \mc{D}_i(\mb{Z}^{a_i}\times \mb{R}^{b_i})\to \mc{D}_k(A)$ is a morphism.

Using the above description of $F$, the analysis of $\phi$ naturally splits into two separate cases, the discrete case and the continuous case. In the case $A$ is discrete, note that a continuous morphism $\phi:F\to \mc{D}_k(A)$ cannot depend on the continuous part of $g\in F$, by the following simple observation.

\begin{lemma}\label{lem:non-dep-cont-coord}
Let $F=\prod_{i=1}^k \mc{D}_i(\mb{Z}^{a_i}\times \mb{R}^{b_i})$ be a free nilspace where $a_i,b_i\ge 0$ for $i\in[k]$. Let $A$ be a discrete finitely generated abelian group. Let $\phi:F\to \mc{D}_k(A)$ be a continuous morphism. Then  $\phi(x,y)=\phi(x,0)$
for every $x\in \prod_{i=1}^k \mc{D}_i(\mb{Z}^{a_i})$ and $y\in \prod_{i=1}^k \mc{D}_i(\mb{R}^{b_i})$. \footnote{Here we wrote $0$ for the element of $\prod_{i=1}^k \mc{D}_i(\mb{R}^{b_i})$ with all coordinates equal to zero.}
\end{lemma}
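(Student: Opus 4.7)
The plan is to exploit the tension between the continuity of $\phi$, the discreteness of $A$, and the connectedness of the continuous part of $F$.

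First I would fix an arbitrary $x \in \prod_{i=1}^k \mc{D}_i(\mb{Z}^{a_i})$ and consider the restriction $\phi_x : \prod_{i=1}^k \mc{D}_i(\mb{R}^{b_i}) \to \mc{D}_k(A)$ defined by $\phi_x(y) := \phi(x,y)$. This map is continuous as the restriction of the continuous map $\phi$ to the fiber $\{x\} \times \prod_{i=1}^k \mc{D}_i(\mb{R}^{b_i})$, equipped with the subspace topology (which agrees with the product topology on $\prod_{i=1}^k \mc{D}_i(\mb{R}^{b_i})$).

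Next I would observe that the underlying topological space of $\prod_{i=1}^k \mc{D}_i(\mb{R}^{b_i})$ is just $\mb{R}^{b_1 + \cdots + b_k}$, which is path-connected. On the other hand, since $A$ is a finitely generated discrete abelian group, it carries the discrete topology, and so does $\mc{D}_k(A)$ (as a topological nilspace, its underlying topological space is just $A$).

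Finally I would invoke the standard fact that any continuous map from a connected topological space to a discrete topological space is constant. Applied to $\phi_x$, this yields $\phi_x(y) = \phi_x(0)$ for every $y$, i.e. $\phi(x,y) = \phi(x,0)$, which is the desired identity. Since $x$ was arbitrary, no step here presents any real obstacle — the proof is essentially a one-line topological observation, and the only thing worth being careful about is making sure that the topology on the continuous part of $F$ really coincides with that of a Euclidean space (which is immediate from Definition \ref{def:free-nil} and the fact that the nilspace structure does not alter the underlying topology of the abelian group).
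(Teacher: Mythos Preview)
Your proof is correct and essentially identical to the paper's own argument: fix $x$, restrict to the connected fiber $\prod_{i=1}^k \mc{D}_i(\mb{R}^{b_i})$, and use that a continuous map from a connected space to a discrete space is constant. The paper likewise remarks that the morphism property plays no role and that the result is purely topological.
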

\begin{proof}
The morphism property of $\phi$ plays no role, this is purely a topological result. For any $x\in \prod_{i=1}^k\mc{D}_i(\mb{Z}^{a_i})$ let $\phi_x:\prod_{i=1}^k\mc{D}_i(\mb{R}^{b_i})\to \mc{D}_k(A)$ be the map $\phi_x(y):=\phi(x,y)$. Clearly, $\phi_x$ is also continuous, and as $\prod_{i=1}^k\mc{D}_i(\mb{R}^{b_i})$ is connected and $\mc{D}_k(A)$ is discrete we have that $\phi_x$ must be constant. Thus $\phi_x(y)=\phi_x(0)$ for any $y\in \prod_{i=1}^k\mc{D}_i(\mb{R}^{b_i})$, as claimed.
\end{proof}

\noindent For our analysis of morphisms $\phi:F\to \mc{D}_k(A)$ where $A$ is a discrete, finitely generated abelian group, Lemma \ref{lem:non-dep-cont-coord} implies that it suffices to study the case where $F=\prod_{i=1}^k \mc{D}_i(\mb{Z}^{a_i})$. In order to describe such morphisms we need some additional results.

The first one is the following lemma describing morphisms between higher-degree abelian groups. Recall that by Remark \ref{rem:D0} the  $\mc{D}_0(\ab)$-valued morphisms are just the constant maps.

\begin{lemma}\label{lem:higher-ab-hom-set}
Let $\ab$, $\ab'$ be abelian groups and let $k,\ell\in\mb{N}$. Then
\begin{equation}
 \hom(\mc{D}_{\ell}(\ab),\mc{D}_k(\ab'))=   \hom(\mc{D}_1(\ab),\mc{D}_{\lfloor k/\ell\rfloor}(\ab')).
\end{equation}
\end{lemma}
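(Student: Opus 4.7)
Set $q := \lfloor k/\ell\rfloor$. Both sides of the claimed equality are subsets of the set of functions $\ab\to\ab'$, and the plan is to prove both inclusions using the standard characterization $\hom(\mc{D}_1(\ab),\mc{D}_j(\ab'))=\{\phi\,:\,\partial_{h_1}\cdots\partial_{h_{j+1}}\phi\equiv 0 \text{ for all }h_i\in\ab\}$ (Leibman polynomial maps of degree $\le j$), where $\partial_h\phi(x):=\phi(x+h)-\phi(x)$.

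For the inclusion $\subseteq$, the plan is to use a test cube to force the polynomial-degree bound. Given $\phi\in\hom(\mc{D}_\ell(\ab),\mc{D}_k(\ab'))$ and arbitrary $x,h_1,\ldots,h_{q+1}\in\ab$, I would define $\q:\db{(q+1)\ell}\to\ab$ on $v=(v_{i,s})_{i\in[q+1],\,s\in[\ell]}$ by $\q(v):=x+\sum_{i=1}^{q+1}h_i\prod_{s=1}^\ell v_{i,s}$. Each $\prod_s v_{i,s}$ is a degree-$\ell$ monomial, so $\q\in\cu^{(q+1)\ell}(\mc{D}_\ell(\ab))$, and by assumption $\phi\co\q\in\cu^{(q+1)\ell}(\mc{D}_k(\ab'))$. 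Since $(q+1)\ell\ge k+1$, the Gray-code operator $\sigma_{(q+1)\ell}$ annihilates this cube. Grouping the summands of $\sigma_{(q+1)\ell}(\phi\co\q)=\sum_v(-1)^{|v|}\phi(\q(v))$ by the vector $(Y_i:=\prod_s v_{i,s})_i\in\{0,1\}^{q+1}$ and using the identity $\sum_{u\in\{0,1\}^\ell\setminus\{1^\ell\}}(-1)^{|u|}=(-1)^{\ell+1}$ for coordinates with $Y_i=0$, a direct calculation yields $\sigma_{(q+1)\ell}(\phi\co\q)=\pm\,\partial_{h_1}\cdots\partial_{h_{q+1}}\phi(x)$. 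Hence this iterated difference vanishes for all $x,h_i$, so $\phi$ is polynomial of degree $\le q$.

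For the inclusion $\supseteq$, the plan is to show that composing a polynomial $\phi$ of degree $\le q$ with any $\q\in\cu^n(\mc{D}_\ell(\ab))$ produces a cube in $\cu^n(\mc{D}_{q\ell}(\ab'))\subset\cu^n(\mc{D}_k(\ab'))$, i.e.\ that $v\mapsto\phi(\q(v))$ is polynomial of degree $\le q\ell$ on $\{0,1\}^n$. Write $\q(v)=a_\emptyset+\sum_{1\le|S|\le\ell}a_S\prod_{i\in S}v_i$, enumerate the nonempty indexing subsets as $S_1,\ldots,S_M$, and consider $F(w):=\phi\bigl(a_\emptyset+\sum_\alpha a_{S_\alpha}w_\alpha\bigr)$. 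Because $\phi$ is polynomial of degree $\le q$ and pre-composition with the homomorphism $L:w\mapsto\sum_\alpha a_{S_\alpha}w_\alpha$ preserves the iterated-difference vanishing (via $\partial_h(\phi\co L)=\partial_{L(h)}\phi\co L$), the map $F$ is polynomial of degree $\le q$ on $\mb{Z}^M$, hence admits on $\{0,1\}^M$ the unique expansion $F(w)=\sum_{J\subset[M],\,|J|\le q}c_J\prod_{\alpha\in J}w_\alpha$ with $c_J\in\ab'$. Substituting $w_\alpha(v):=\prod_{i\in S_\alpha}v_i$ gives $\prod_{\alpha\in J}w_\alpha(v)=\prod_{i\in\bigcup_{\alpha\in J}S_\alpha}v_i$, a monomial in $v$ of degree $|\bigcup_{\alpha\in J}S_\alpha|\le\sum_{\alpha\in J}|S_\alpha|\le q\ell$. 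Therefore $v\mapsto\phi(\q(v))=F(w(v))$ is polynomial of degree $\le q\ell$, as required.

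The main technical point will be the backward direction, specifically justifying the transition ``$\phi$ polynomial of degree $\le q$ on $\ab$ $\Rightarrow$ $F$ polynomial of degree $\le q$ on $\mb{Z}^M$''. This rests on the fact that Leibman polynomial maps are closed under pre-composition with group homomorphisms (via the identity above), an assertion that holds without any finiteness or torsion hypothesis on the abelian groups, and on the (Möbius-inversion-based) uniqueness of the indicator-monomial expansion of $\ab'$-valued functions on $\{0,1\}^M$. Together these make the argument uniform in $\ab$ and $\ab'$.
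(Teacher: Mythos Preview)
Your proof is correct. Both inclusions go through as you describe; in particular your Gray-code computation for $\subseteq$ is accurate (the sign works out to $(-1)^{\ell(q+1)}$), and for $\supseteq$ your reduction via the homomorphism $L:\mb{Z}^M\to\ab$ and the monomial expansion of $F|_{\{0,1\}^M}$ is clean and torsion-insensitive.

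That said, your route is genuinely different from the paper's. The paper invokes the general characterization of morphisms between group nilspaces as filtered polynomial maps (specifically \cite[Theorem 2.2.14]{Cand:Notes1}), after which both inclusions reduce to pure filtration-index arithmetic: for $\subseteq$, view each differencing element as lying in the $\ell$-th term of the $\mc{D}_\ell$ filtration, so $(q+1)$ derivatives land in degree $(q+1)\ell\ge k+1$, hence in $\{0\}$; for $\supseteq$, if $\sum j_i\ge k+1$ with each nonzero $z_i$ forcing $j_i\le\ell$, then $t\ge\lceil(k+1)/\ell\rceil=q+1$, so the iterated difference vanishes. This is a couple of lines once that theorem is available. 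Your argument is more self-contained, avoiding the filtered-polynomial machinery at the cost of building the explicit test cube $\q(v)=x+\sum_i h_i\prod_s v_{i,s}$ and the substitution calculus for $\supseteq$. The trade-off is brevity versus dependence on an external structural result; both are legitimate and arrive at the same place.
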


\begin{proof}
Assume first that $\ell\le k$. To see that $\hom(\mc{D}_{\ell}(\ab),\mc{D}_k(\ab'))\subset \hom(\mc{D}_1(\ab),\mc{D}_{\lfloor k/\ell\rfloor}(\ab'))$, fix any $f\in \hom(\mc{D}_\ell(\ab),\mc{D}_k(\ab'))$ and note that by \cite[Theorem 2.2.14]{Cand:Notes1} it suffices to check that for every $z_1,\ldots,z_t\in \ab$ the derivative $\partial_{z_1}\cdots \partial_{z_t}f$ takes values in the $t$-th term of the filtration defining  $\mc{D}_k(\ab')$. Note that this holds trivially if any of the $z_i$ is 0 (as then the derivative vanishes). Note that it is also clear if $t\le \lfloor k/\ell\rfloor$, since then we have to check that the derivative is $\ab'$-valued, which holds trivially. Thus, the only non-trivial case is when $t\ge \lfloor k/\ell\rfloor+1$, in which case we have to check that $\partial_{z_1}\cdots \partial_{z_t}f$ is the 0 function. Since $f\in \hom(\mc{D}_\ell(\ab),\mc{D}_k(\ab'))$, viewing the elements $z_i$ as elements of the $\ell$-th term of the filtration defining $\mc{D}_\ell(\ab)$ (this term being $\ab$), we see that $\partial_{z_1}\cdots \partial_{z_t}f$ takes values in the $t\ell$-th term of the filtration $\mc{D}_k(\ab')$. By assumption $t\ell\ge \ell(\lfloor k/\ell\rfloor+1)\ge k+1$, so the derivative takes values in the $(k+1)$-th term of the filtration associated with $\mc{D}_k(\ab')$, which is $\{0\}$ as required. This proves the desired inclusion. To see the opposite inclusion, let $f\in \hom(\mc{D}_1(\ab),\mc{D}_{\lfloor k/\ell\rfloor}(\ab'))$ and note that, again, the only non-trivial case consists in checking that for every $z_1,\ldots,z_t\in \ab$ with $\ell t\ge k+1$ we have $\partial_{z_1}\cdots \partial_{z_t}f=0$. Note that the last inequality implies that $t\ge \lceil (k+1)/\ell\rceil$, and it is easily shown that we always have $\lceil (k+1)/\ell\rceil=\lfloor k/\ell\rfloor+1$, so $t\ge \lfloor k/\ell\rfloor+1$ and therefore, since $f\in \hom(\mc{D}_1(\ab),\mc{D}_{\lfloor k/\ell\rfloor}(\ab'))$, the derivative in question is indeed 0. This proves the case $\ell\leq k$.

To complete the proof, suppose that  $\ell>k$, and  fix any $f\in \hom(\mc{D}_l(\ab),\mc{D}_k(\ab'))$. To see that $f$ must be constant, fix any $z\in \ab$ and consider $\q_z\in \cu^{k+1}(\ab)$ the cube such that $\q_z(1^{k+1})=z$ and $\q_z(v)=0$ for any $v\neq 1^{k+1}$. Then $f\co \q_z\in \cu^{k+1}(\ab')$. By unique corner-completion in the latter nilspace, we have that $f\co\q_z$ is the constant cube with value $f(0)$, so in particular $f\co \q_z(1^{k+1})=f(z)=f(0)$. Since $z$ was arbitrary, we deduce that $f$ is indeed constant.
\end{proof}

\begin{lemma}\label{lem:binompoly}
Let $n$ be a non-negative integer, and let  $\binom{\cdot}{n}$ denote the map $\mb{Z}\to \mb{Z}$ sending $x$ to the binomial coefficient $\binom{x}{n}$. Then for every $i\ge 1$ the map $\binom{\cdot}{n}$ is a morphism $\mc{D}_i(\mb{Z})\to \mc{D}_{ni}(\mb{Z})$.
\end{lemma}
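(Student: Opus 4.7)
My plan is to reduce the claim to the case $i=1$ via Lemma \ref{lem:higher-ab-hom-set}, and then verify the $i=1$ case by the standard derivative criterion for morphisms between degree-$k$ abelian nilspaces.

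First, I would apply Lemma \ref{lem:higher-ab-hom-set} with $\ab=\ab'=\mb{Z}$, $\ell=i$, $k=ni$. Since $\lfloor ni/i\rfloor = n$, the lemma gives the identity of sets
\[
\hom(\mc{D}_i(\mb{Z}),\mc{D}_{ni}(\mb{Z})) \;=\; \hom(\mc{D}_1(\mb{Z}),\mc{D}_n(\mb{Z})),
\]
so it suffices to show that $\binom{\cdot}{n}:\mc{D}_1(\mb{Z})\to\mc{D}_n(\mb{Z})$ is a morphism.

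To verify this reduced case, I would invoke \cite[Theorem 2.2.14]{Cand:Notes1}, which (applied to $\mc{D}_1(\mb{Z})$ and $\mc{D}_n(\mb{Z})$) says that a map $f:\mb{Z}\to\mb{Z}$ is such a morphism if and only if for every $t\in \mb{N}$ and every $z_1,\dots,z_t\in\mb{Z}$, the discrete derivative $\partial_{z_1}\cdots\partial_{z_t}f$ takes values in the $t$-th term of the filtration defining $\mc{D}_n(\mb{Z})$, i.e.\ in $\mb{Z}$ for $t\leq n$ and in $\{0\}$ for $t\geq n+1$. The first condition is automatic, so the only thing to check is that $\partial_{z_1}\cdots\partial_{z_{n+1}}\binom{x}{n}\equiv 0$ for all $z_1,\dots,z_{n+1}\in\mb{Z}$. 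This is the classical fact that $\binom{x}{n}$ is a polynomial of degree $n$ in $x$: one computes by Pascal's identity that $\partial_z\binom{x}{n}=\binom{x+z}{n}-\binom{x}{n}$ is, for fixed $z$, a polynomial in $x$ of degree at most $n-1$, and iterating this $n+1$ times yields the zero function.

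No real obstacle is expected; the proof is entirely routine once Lemma \ref{lem:higher-ab-hom-set} is in hand, since the degree reduction from $ni$ to $n$ is precisely what that lemma delivers, and the degree-$n$ polynomiality of $\binom{x}{n}$ is classical.
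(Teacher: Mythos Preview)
Your proof is correct and follows essentially the same approach as the paper: reduce to the case $i=1$ via Lemma \ref{lem:higher-ab-hom-set}, then verify that $\binom{\cdot}{n}$ is a degree-$n$ polynomial map on $\mb{Z}$ by the discrete derivative criterion. The paper's version is more terse (it just notes $\partial_1\binom{x}{n}=\binom{x}{n-1}$), but the structure is identical.
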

\begin{proof}
By Lemma \ref{lem:higher-ab-hom-set}, it suffices to prove the case $i=1$, namely, that $\binom{\cdot}{n}$ is a polynomial map of degree $n$. This is clear (note that $\partial_1 \binom{x}{n}=\binom{x}{n-1}$ for any $x\in \mb{Z}$).
\end{proof}

\begin{lemma}\label{lem:prodpolys}
For $i\in\{0,1\}$, let $(G^{(i)},G^{(i)}_\bullet)$ be  a filtered abelian group and for some $q_i\in \mb{N}$ let $\phi^{(i)}:G^{(i)}\to \mc{D}_{q_i}(\mb{Z})$ be a morphism. Let $\ns$ be the nilspace associated with the group $G^{(0)}\times G^{(1)}$ equipped with the product filtration $G^{(0)}_\bullet\times G^{(1)}_\bullet$. Then the map $\psi:\ns \to \mc{D}_{q_0+q_1}(\mb{Z})$, $(x_0,x_1)\mapsto\phi^{(0)}(x_0)\,\phi^{(1)}(x_1)$ is a morphism.
\end{lemma}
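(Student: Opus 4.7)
My plan is to prove this via the derivative characterization of polynomial maps between filtered abelian groups (as already used in Lemma~\ref{lem:higher-ab-hom-set}), combined with the discrete Leibniz rule. Recall from \cite[Theorem 2.2.14]{Cand:Notes1} that a map $\phi$ from a filtered abelian group $(G,G_\bullet)$ into $\mc{D}_k(\mb{Z})$ is a morphism iff, for every $t\ge 1$ and every choice of $z_1,\dots,z_t\in G$ with $z_j\in G_{i_j}$, the iterated finite difference $\partial_{z_1}\cdots\partial_{z_t}\phi$ vanishes identically whenever $i_1+\cdots+i_t>k$. I will verify this criterion for $\psi$.

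First I would reduce to a product of two morphisms on the same domain. Define $f,g:\ns\to\mb{Z}$ by $f(x_0,x_1):=\phi^{(0)}(x_0)$ and $g(x_0,x_1):=\phi^{(1)}(x_1)$. For any $z=(z_0,z_1)\in G^{(0)}_{i}\times G^{(1)}_{i}$ we have $\partial_{z}f(x_0,x_1)=\partial_{z_0}\phi^{(0)}(x_0)$, which depends only on $x_0$ and only on $z_0\in G^{(0)}_{i}$; iterating, the iterated derivatives of $f$ on $\ns$ reduce to iterated derivatives of $\phi^{(0)}$ on $(G^{(0)},G^{(0)}_\bullet)$, and hence vanish whenever $i_1+\cdots+i_t>q_0$. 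This shows $f\in\hom(\ns,\mc{D}_{q_0}(\mb{Z}))$, and symmetrically $g\in\hom(\ns,\mc{D}_{q_1}(\mb{Z}))$. Since $\psi=f\cdot g$ pointwise, it suffices to prove the following general claim: if $f_0\in\hom(\ns,\mc{D}_{q_0}(\mb{Z}))$ and $f_1\in\hom(\ns,\mc{D}_{q_1}(\mb{Z}))$, then $f_0f_1\in\hom(\ns,\mc{D}_{q_0+q_1}(\mb{Z}))$.

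Second, I would establish a Leibniz-type expansion for $\partial_{z_1}\cdots\partial_{z_t}(f_0f_1)$. Writing $T_z$ for the shift $(T_zh)(x)=h(x+z)$ and noting that $\partial_z=T_z-\mathrm{id}$ and $T_z(f_0f_1)=(T_zf_0)(T_zf_1)$, one checks by induction on $t$ that
\begin{equation}\label{eq:leibniz-exp}
\partial_{z_1}\cdots\partial_{z_t}(f_0f_1)\;=\;\sum_{S\subseteq[t]}\Bigl(\prod_{j\in S}\partial_{z_j}\,f_0\Bigr)\cdot\Bigl(T_{\sum_{j\in S}z_j}\prod_{j\notin S}\partial_{z_j}\,f_1\Bigr),
\end{equation}
where $\prod_{j\in S}\partial_{z_j}$ denotes the composition of the corresponding difference operators. (The base case $t=1$ is the identity $\partial_z(f_0f_1)=(\partial_zf_0)\cdot T_zf_1+f_0\cdot\partial_zf_1$, and the inductive step uses $\partial_z=T_z-\mathrm{id}$ together with the multiplicativity of $T_z$.) The key point is that every summand factors as an iterated derivative of $f_0$ of order $|S|$ along $\{z_j:j\in S\}$, times a translate of an iterated derivative of $f_1$ of order $t-|S|$ along $\{z_j:j\notin S\}$.

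Third, the conclusion is a pigeonhole. Suppose $z_j\in G^{(0)}_{i_j}\times G^{(1)}_{i_j}$ with $i_1+\cdots+i_t>q_0+q_1$. For any $S\subseteq[t]$, either $\sum_{j\in S}i_j>q_0$ or $\sum_{j\notin S}i_j>q_1$; in the first case the $f_0$-factor in the corresponding term of \eqref{eq:leibniz-exp} vanishes because $f_0\in\hom(\ns,\mc{D}_{q_0}(\mb{Z}))$, and in the second case the (translated) $f_1$-factor vanishes because $f_1\in\hom(\ns,\mc{D}_{q_1}(\mb{Z}))$ and shifts commute with finite differences. Hence every term in \eqref{eq:leibniz-exp} is zero, and $\psi=f_0f_1$ satisfies the derivative criterion for being a morphism to $\mc{D}_{q_0+q_1}(\mb{Z})$. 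The only mildly delicate step is verifying \eqref{eq:leibniz-exp}, which is a routine induction once one carefully keeps track of the shifts generated by the relation $\partial_zh=T_zh-h$.
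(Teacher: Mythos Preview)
Your proof is correct. Both your argument and the paper's use the derivative criterion from \cite[Theorem~2.2.14]{Cand:Notes1} and conclude by pigeonhole, but the routes differ in an interesting way. The paper exploits the product structure of the \emph{domain}: it writes $\partial_{(g_0,g_1)}\psi = \partial_{(0,g_1)}\psi(\cdot+(g_0,0)) + \partial_{(g_0,0)}\psi$, thereby reducing to iterated derivatives along ``pure'' directions $(g_0,0)$ and $(0,g_1)$; for such derivatives the product $\phi^{(0)}(x_0)\phi^{(1)}(x_1)$ factors cleanly without any shift bookkeeping, since each factor depends on only one coordinate. You instead prove a strictly more general fact, namely that the pointwise product of any $f_0\in\hom(\ns,\mc{D}_{q_0}(\mb{Z}))$ and $f_1\in\hom(\ns,\mc{D}_{q_1}(\mb{Z}))$ lies in $\hom(\ns,\mc{D}_{q_0+q_1}(\mb{Z}))$, via the full discrete Leibniz expansion \eqref{eq:leibniz-exp} with its shift terms. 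Your route buys the extra generality (and would be the natural statement if one wanted a graded-ring structure on $\bigoplus_q\hom(\ns,\mc{D}_q(\mb{Z}))$), at the cost of having to track the translates $T_{\sum_{j\in S}z_j}$; the paper's route avoids those translates entirely by leaning on the coordinate split, which is slightly slicker for the specific lemma at hand.
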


\begin{proof}
By \cite[Theorem 2.2.14]{Cand:Notes1} it suffices to show that $\psi$ is a polynomial map between the corresponding filtered groups. Note that if $g_i\in G^{(i)}$ for $i\in\{0,1\}$ we have that $\partial_{(g_0,g_1)}\psi(x_0,x_1)=\partial_{(0,g_1)}\psi(x_0+g_0,x_1)+\partial_{(g_0,0)}\psi(x_0,x_1)$. Thus it suffices to prove that the derivatives lie in the correct term in the filtration when we derivate only with respect to elements that are 0 in either the first or the second coordinate. By construction of $\psi$, if we take the derivative with respect to the elements $(g_0^1,0),\ldots,(g_0^t,0),(0,g_1^1),\ldots,(0,g_1^s)\in G^{(0)}\times G^{(1)}$ for some integers $t,s\ge 0$ we have that
\[
\partial_{(g_0^1,0)}\cdots \partial_{(g_0^t,0)}\partial_{(0,g_1^1)}\cdots \partial_{(0,g_1^s)}\psi(x_0,x_1)= \partial_{(g_0^1,0)}\cdots \partial_{(g_0^t,0)}\phi^{(0)}(x_0) \partial_{(0,g_1^1)}\cdots \partial_{(0,g_1^s)}\phi^{(1)}(x_1).
\]
Thus, if the elements $g_0^i\in G^{(0)}_{u_i}$ for all $i\in [t]$ for some integers $u_i\ge 1$ and $g_1^j\in G^{(1)}_{v_j}$ for all $j\in [t]$ for some integers $v_j\ge 0$ we have $
\begin{cases}
 \partial_{(g_0^1,0)}\cdots \partial_{(g_0^t,0)}\phi^{(0)}(x_0)\in G^{(0)}_{u_1+\cdots+u_t} \\
 \partial_{(0,g_1^1)}\cdots \partial_{(0,g_1^s)}\phi^{(1)}(x_1) \in G^{(1)}_{v_1+\cdots+v_s}.
\end{cases}$. Hence, if $(u_1+\cdots+u_t)+(v_1+\cdots+v_s)>q_0+q_1$ it must happen that either $(u_1+\cdots+u_t)>q_0$ or $(v_1+\cdots+v_s)>q_1$. In either case we conclude that the corresponding derivative of $\psi$ is $0$.
\end{proof}
\noindent Given a free nilspace $F=\prod_{i=1}^k \mc{D}_i(\mb{Z}^{a_i})$ and $m:=((m_{i,j})_{j\in a_i})_{i\in[k]} \in \prod_{i=1}^k \mb{Z}_{\ge 0}^{a_i}$, recall from Definition \ref{def:height-lattice} the \emph{filtered degree} $|m|:=\sum_{i=1}^k i\sum_{j=1}^{a_i}m_{i,j}$. Combining Lemmas \ref{lem:binompoly} and \ref{lem:prodpolys}, we deduce the following. 
\begin{corollary}\label{cor:binom-is-poly}
Let $F$ be the free nilspace $\prod_{i=1}^k \mc{D}_i(\mb{Z}^{a_i})$, let $m \in \prod_{i=1}^k \mb{Z}_{\ge 0}^{a_i}$. Then the map $((x_{i,j})_{j\in a_i})_{i\in [k]} \mapsto \binom{x}{m}=\prod_{i=1}^k\prod_{j=1}^{a_i} \binom{x_{i,j}}{m_{i,j}}$ is a morphism $F\to \mc{D}_{|m|}(\mb{Z})$.\footnote{Note that we write $m$ instead of $(m,0)$ because the free nilspace $F$ does not have continuous components.}
\end{corollary}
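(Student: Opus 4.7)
The proof will be a direct combination of Lemmas \ref{lem:binompoly} and \ref{lem:prodpolys}, since the free nilspace $F = \prod_{i=1}^k \mc{D}_i(\mb{Z}^{a_i})$ decomposes further as the product nilspace $\prod_{i=1}^k \prod_{j=1}^{a_i} \mc{D}_i(\mb{Z})$, with one factor per coordinate $x_{i,j}$. Under this identification the map in question has the form $\prod_{i=1}^k \prod_{j=1}^{a_i} \phi_{i,j}(x_{i,j})$, where each $\phi_{i,j}$ is a single-variable function on its own factor.

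First I would invoke Lemma \ref{lem:binompoly} with $n = m_{i,j}$ and degree $i$ to conclude that each single-variable map $\phi_{i,j}: x_{i,j}\mapsto \binom{x_{i,j}}{m_{i,j}}$ is a morphism $\mc{D}_i(\mb{Z}) \to \mc{D}_{i\, m_{i,j}}(\mb{Z})$. This produces, for each pair $(i,j)$, a morphism defined on the single factor $\mc{D}_i(\mb{Z})$ sitting inside the product decomposition of $F$.

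Next I would apply Lemma \ref{lem:prodpolys} inductively. Enumerate the pairs $(i,j)$ in any order as $(i_1,j_1),\dots,(i_N,j_N)$, where $N = \sum_{i=1}^k a_i$. At step $t$, take $(G^{(0)},G^{(0)}_\bullet)$ to be the product of the first $t$ factors $\mc{D}_{i_s}(\mb{Z})$ equipped with the product filtration, with morphism $\prod_{s\le t}\phi_{i_s,j_s}$ going into $\mc{D}_{q_0}(\mb{Z})$ where $q_0 = \sum_{s\le t} i_s\, m_{i_s,j_s}$; take $(G^{(1)},G^{(1)}_\bullet) = \mc{D}_{i_{t+1}}(\mb{Z})$ with morphism $\phi_{i_{t+1},j_{t+1}}$ going into $\mc{D}_{q_1}(\mb{Z})$ where $q_1 = i_{t+1}\,m_{i_{t+1},j_{t+1}}$. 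Lemma \ref{lem:prodpolys} then yields a morphism from the product of the first $t+1$ factors into $\mc{D}_{q_0+q_1}(\mb{Z})$. Iterating $N-1$ times, one obtains that $\prod_{i,j}\binom{x_{i,j}}{m_{i,j}}$ is a morphism from $F$ to $\mc{D}_{D}(\mb{Z})$ with $D = \sum_{i=1}^k \sum_{j=1}^{a_i} i\, m_{i,j} = |m|$, which is exactly the desired conclusion.

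Since both ingredients are already established and the product filtration on $F$ matches the decomposition used above, there is no genuine obstacle here; the only mildly delicate point is keeping track of the filtered degrees through the iterative application of Lemma \ref{lem:prodpolys}, which is straightforward by induction.
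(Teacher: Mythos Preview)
Your proposal is correct and follows exactly the approach the paper indicates: the paper's proof consists of the single sentence ``Combining Lemmas \ref{lem:binompoly} and \ref{lem:prodpolys}, we deduce the following,'' and you have accurately spelled out how that combination works by decomposing $F$ into its one-dimensional factors and iterating Lemma \ref{lem:prodpolys}. The only cosmetic point is that factors with $m_{i,j}=0$ give $\binom{x_{i,j}}{0}=1$ and can simply be omitted from the induction, so the potential issue of $q_1=0$ never arises.
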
 

\begin{theorem}\label{thm:Taylor-discrete-app}
Let $F=\prod_{i=1}^k \mc{D}_i(\mb{Z}^{a_i})$ be a discrete nilspace for some $k\ge 0$ and some integers $a_i\ge 0$, $i\in[k]$. Let $A$ be an abelian group and $\phi:F\to \mc{D}_k(A)$ be a morphism. Then there exists elements $a_m\in A$ for $m= ((m_{i,j})_{j\in a_i})_{i\in[k]} \in \prod_{i=1}^k \mb{Z}_{\ge 0}^{a_i}$ such that
\begin{equation}\label{eq:ex-taylor-from-discrete}
\phi(x)=\sum_{|m|\le k}a_m \binom{x}{m} =\sum_{|m|\le k}a_m \prod_{i=1}^k\prod_{j=1}^{a_i} \binom{x_{i,j}}{m_{i,j}}.
\end{equation}
\end{theorem}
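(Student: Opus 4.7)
The plan is to prove this by induction on the total integer dimension $n := \sum_{i=1}^k a_i$ of the free nilspace $F$, adding one $\mb{Z}$-component at a time and extracting the coefficients $a_m$ by iterated finite differences. The two main ingredients are the base case of one variable (where everything reduces to the classical fact that $A$-valued polynomials on $\mb{Z}$ of degree $\le d$ have a binomial basis), and the fact that on a free nilspace, discrete derivatives decrease the filtered degree by the correct amount, so the coefficients extracted at each stage are themselves morphisms into a lower-step structure.

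For the \textbf{base case} $F=\mc{D}_\ell(\mb{Z})$, Lemma~\ref{lem:higher-ab-hom-set} identifies $\hom(\mc{D}_\ell(\mb{Z}),\mc{D}_k(A))$ with $\hom(\mc{D}_1(\mb{Z}),\mc{D}_{\lfloor k/\ell\rfloor}(A))$, i.e.\ the set of polynomial maps $\phi:\mb{Z}\to A$ with $\Delta^{d+1}\phi=0$ for $d=\lfloor k/\ell\rfloor$, where $\Delta\phi(y)=\phi(y+1)-\phi(y)$. Defining $a_j:=(\Delta^j\phi)(0)\in A$ for $0\le j\le d$, the standard finite-difference identity gives $\phi(y)=\sum_{j=0}^d a_j\binom{y}{j}$, and since $\ell j\le k$ exactly for $j\le d$, this matches \eqref{eq:ex-taylor-from-discrete}. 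For the \textbf{inductive step}, write $F=F'\times\mc{D}_\ell(\mb{Z})$ after removing one $\mb{Z}$-coordinate of degree $\ell$, call it $y$, and apply the inductive hypothesis to $x\mapsto\phi(x,y)$ for each fixed $y$ to get $\phi(x,y)=\sum_{|m'|\le k}a_{m'}(y)\binom{x}{m'}$. By Möbius inversion in the binomial basis, $a_{m'}(y)=(\Delta_x^{m'}\phi)(0,y)$, which is a fixed $\mb{Z}$-linear combination of values $\phi(x_0,y)$ at various $x_0\in F'$, hence a morphism $\mc{D}_\ell(\mb{Z})\to\mc{D}_k(A)$ (via the embedding $y\mapsto(0,y)$, which is clearly a nilspace morphism). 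Applying the base case to $a_{m'}$ yields $a_{m'}(y)=\sum_n b_{m',n}\binom{y}{n}$, which, after relabeling indices so that $(m',n)$ becomes the new multi-index $m$, gives exactly \eqref{eq:ex-taylor-from-discrete} up to the filtered degree bookkeeping.

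The \textbf{key technical point}, and the place to be most careful, is verifying that the extracted coefficient $a_{m'}$ actually lands in $\mc{D}_{k-|m'|}(A)$ and not merely $\mc{D}_k(A)$, because only then does the base case force $n\ell\le k-|m'|$, i.e.\ $|m|=|m'|+\ell n\le k$, matching the hypothesis $|m|\le k$ in the sum. For this, I will use \cite[Theorem 2.2.14]{Cand:Notes1} to view $\phi$ as a polynomial map in the filtered sense and observe that if $v$ is a basis vector of $\mb{Z}^{a_i}$ sitting in the $i$-th term of the filtration of $F'$, then the discrete derivative $\Delta_v\phi(x,y)=\phi(x+v,y)-\phi(x,y)$ is a morphism $F'\times\mc{D}_\ell(\mb{Z})\to\mc{D}_{k-i}(A)$. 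Iterating this for the multi-index $m'$ contributes $|m'|=\sum_i i\sum_j m'_{i,j}$ to the drop in filtered degree, so $\Delta_x^{m'}\phi$ is a morphism into $\mc{D}_{k-|m'|}(A)$, and restricting to $x=0$ (composing with the morphism $\mc{D}_\ell(\mb{Z})\hookrightarrow F'\times\mc{D}_\ell(\mb{Z})$, $y\mapsto(0,y)$) preserves this target. This is the main obstacle: checking that discrete partial derivatives respect the filtration in the precise quantitative way dictated by the filtered degree $|m|$. Once this is in hand, the induction closes cleanly and the converse direction (every sum of the claimed form is a morphism) follows from Corollary~\ref{cor:binom-is-poly} together with the fact that $\hom(F,\mc{D}_k(A))$ is closed under $A$-linear combinations and under inclusions $\mc{D}_j(A)\hookrightarrow \mc{D}_k(A)$ for $j\le k$.
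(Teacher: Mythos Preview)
Your proof is correct and proceeds by a genuinely different induction scheme from the paper's. The paper argues by induction on the size of the \emph{support} of $\phi$ (the set of multi-indices $m$ for which the iterated derivative $\partial_{e_{1,1}}^{m_{1,1}}\cdots\partial_{e_{k,a_k}}^{m_{k,a_k}}\phi$ does not vanish identically), picking a maximal element $m$ of the support, observing that the corresponding iterated derivative is constant, and subtracting off the monomial $\lambda\binom{x}{m}$ to strictly shrink the support. Your argument instead inducts on the number of $\mb{Z}$-coordinates, splitting off one variable at a time and using the degree-drop property of discrete derivatives (your ``key technical point'') to control the target of the extracted coefficient functions. Your approach is slightly more constructive, giving the coefficients directly as $a_m=(\Delta^m\phi)(0)$; the paper's approach is coordinate-free and en route establishes the mildly stronger statement that \emph{any} function $\psi:\prod_i\mb{Z}^{a_i}\to A$ with finite support (not only a nilspace morphism) admits such a binomial expansion. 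Both routes rely on the same analytic kernel, namely the identity $\partial_{e_{i,j}}\binom{x_{i,j}}{t}=\binom{x_{i,j}}{t-1}$.
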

\begin{remark}\label{rem:Taylor-discrete-app}
Theorem \ref{thm:Taylor-discrete-app} holds even if $F$ has continuous components, provided that $A$ is finitely generated and discrete. The continuous components then do not appear in the Taylor expansion by Lemma \ref{lem:non-dep-cont-coord}. Moreover, by Corollary \ref{cor:binom-is-poly} any expression of the form \eqref{eq:ex-taylor-from-discrete} is a morphism from $F$ to $\mc{D}_k(A)$. In particular, this yields Lemma \ref{lem:Taylor-discrete}.
\end{remark}

\begin{proof}[Proof of Theorem \ref{thm:Taylor-discrete-app}]
First note that the set $T:=\{m\in \prod_{i=1}^k \mb{Z}_{\ge 0}^{a_i}: |m|\le k\}$ is a finite set. Let us define $e_{i_0,j_0}\in \prod_{i=1}^k \mb{Z}^{a_i}$ as the element that equals 1 only at the coordinate indexed by $i_0\in[k]$ and $j_0\in[a_{i_0}]$ and 0 otherwise. Note that if $m\notin T$, by \cite[Theorem 2.2.14]{Cand:Notes1} we have $\partial_{e_{1,1}}^{m_{1,1}}\cdots \partial_{e_{k,a_k}}^{m_{k,a_k}}\phi(x)=0$.
Given two elements $m,m'\in \prod_{i=1}^k \mb{Z}_{\ge 0}^{a_i}$, we write $m\le m'$ if $m_{i,j}\le m'_{i,j}$ for all $i\in[k]$ and $j\in[a_i]$. We write $m<m'$ if $m\not=m'$ and $m\le m'$. Let $S\subset \prod_{i=1}^k \mb{Z}_{\ge 0}^{a_i}$ be any finite subset. We say that $S$ is \emph{simplicial} if for every $m\in S$ and every $m'\le m$ we have $m'\in S$. 

For any function $\psi:\prod_{i=1}^k \mb{Z}^{a_i}\to A$, let us define the \emph{support} of $\psi$ as follows:
\[
\supp(\psi):=\prod_{i=1}^k \mb{Z}_{\ge 0}^{a_i}\setminus\left\{m \in \prod_{i=1}^k \mb{Z}_{\ge 0}^{a_i}: \partial_{e_{1,1}}^{m_{1,1}}\cdots \partial_{e_{k,a_k}}^{m_{k,a_k}}\phi(x)=0 \text{ for all }x\in \prod_{i=1}^k \mb{Z}^{a_i}\right\}.
\]
Note that if $\phi:F\to \mc{D}_k(A)$ is a morphism then its support is finite, since $\supp(\phi)\subset \{m \in \prod_{i=1}^k \mb{Z}_{\ge 0}^{a_i}:|m|\le k\}$.  Note also that $\supp(\psi)$ is always a simplicial set.

We shall now prove that if $\psi:\prod_{i=1}^k \mb{Z}^{a_i}\to A$ has finite support then $\psi(x)=\sum_{m\in \supp(\psi)}a_m \binom{x}{m}$ for some coefficients $a_m\in A$. In particular this will imply the result for morphisms on discrete free nilspaces. The proof will argue by induction on the size of $\supp(\psi)$. Clearly, if $|\supp(\psi)|=0$ then the result holds trivially.

Suppose then that $k:=|\supp(\psi)|>0$, and let $m\in \supp(\psi)$ be any element such that there is no other $m'\in \supp(\psi)$ with $m<m'$. We claim that $\psi'(x):=\partial_{e_{1,1}}^{m_{1,1}}\cdots \partial_{e_{k,a_k}}^{m_{k,a_k}}\psi(x)$ is a constant  function of $x\in \prod_{i=1}^k \mb{Z}^{a_i}$. Indeed, note that for any $a\in \prod_{i=1}^k \mb{Z}^{a_i}$ we have $\partial_a \psi'(x)=\psi'(x+a)-\psi'(x)=0$ for all $x\in \prod_{i=1}^k \mb{Z}^{a_i}$, since we can decompose this derivative as a sum of derivatives with respect to the elements $e_{i,j}$. As those are all 0,  so is $\partial_a\psi'(x)$, and therefore $\psi'(x)$ is indeed constant. Let us denote this constant by $\lambda$. 
We then claim that $\psi^*(x):=\psi(x)-\lambda\binom{x}{m}
$ is a map from $\prod_{i=1}^k \mb{Z}^{a_i}$ to $A$ such that $\supp(\psi^*)\subset \supp(\psi)\setminus \{m\}$.

The fact that $n$ is not included in the support of $\psi^*$ follows by definition and noting that $\partial_{e_{i,j}} \binom{x_{i,j}}{t} = \binom{x_{i,j}}{t-1}$ for any integer $t\ge 0$. Furthermore, using the latter formula it is easy to check that the term
\[
\partial_{e_{1,1}}^{m'_{1,1}}\cdots \partial_{e_{k,a_k}}^{m'_{k,a_k}}\binom{x}{m}=\partial_{e_{1,1}}^{m'_{1,1}}\cdots \partial_{e_{k,a_k}}^{m'_{k,a_k}}\left(\lambda\prod_{i=1}^k\prod_{j=1}^{a_i} \binom{x_{i,j}}{n_{i,j}}\right) = \lambda\prod_{i=1}^k\prod_{j=1}^{a_i}\partial_{e_{i,j}}^{n'_{i,j}}\binom{x_{i,j}}{m_{i,j}}
\]
vanishes as soon as there exists some $(i,j)\in [k]\times [a_i]$ such that $m'_{i,j}>m_{i,j}$. This confirms that $\supp(\psi^*)\subset \supp(\psi)\setminus \{m\}$. By induction we have $\psi^*(x)=\sum_{m'\in \supp(\psi^*)}a_{m'} \binom{x}{m'}$, and then $\psi(x) =\lambda\binom{x}{m}+ \sum_{m'\in \supp(\psi^*)}a_{m'} \binom{x}{m'}$. The result follows.
\end{proof}
Our next goal is to prove Lemmas \ref{lem:poly-free-to-R} and \ref{lem:Taylor-cont}. To this end we need some auxiliary results.

\begin{lemma}\label{lem:fact-hom}
Let $a,b\ge 0$ be two integers and let $\phi:\mb{Z}^a\times \mb{R}^b\to \mb{T}$ be a continuous homomorphism. Then there exists a continuous homomorphism $\psi:\mb{Z}^a\times \mb{R}^b\to \mb{R}$ of the form $(x,y)\mapsto \alpha\cdot x+\beta\cdot y$ for some $\alpha\in\mb{R}^a$ and $\beta\in \mb{R}^b$ such that $\phi = \pi\co \psi$ where $\pi:\mb{R}\to \mb{T}$ is the quotient map.
\end{lemma}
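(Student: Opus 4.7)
The plan is to prove this by handling the continuous and discrete parts of the domain separately and then combining them using the fact that $\phi$ is a homomorphism. The key tool is that $\pi:\mb{R}\to\mb{T}$ is the universal covering map.

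First I would treat the continuous part $\phi_c:=\phi|_{\{0\}\times \mb{R}^b}:\mb{R}^b\to \mb{T}$. Since $\mb{R}^b$ is simply connected and $\pi:\mb{R}\to \mb{T}$ is a covering map with discrete fiber $\mb{Z}$, the standard lifting lemma from covering space theory produces a unique continuous map $\tilde\phi_c:\mb{R}^b\to \mb{R}$ with $\tilde\phi_c(0)=0$ and $\pi\co \tilde\phi_c = \phi_c$. I would then verify that $\tilde\phi_c$ is itself a homomorphism: the two continuous maps $(y,y')\mapsto \tilde\phi_c(y+y')$ and $(y,y')\mapsto \tilde\phi_c(y)+\tilde\phi_c(y')$ are both continuous lifts of $(y,y')\mapsto \phi_c(y+y')=\phi_c(y)+\phi_c(y')$ from $\mb{R}^b\times \mb{R}^b$ (simply connected) to $\mb{R}$, and they agree at $(0,0)$, so uniqueness of the lift forces them to be equal. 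Since continuous group homomorphisms $\mb{R}^b\to \mb{R}$ are well known to be $\mb{R}$-linear (e.g.\ by Cauchy's functional equation with continuity), there exists $\beta\in \mb{R}^b$ with $\tilde\phi_c(y)=\beta\cdot y$ for all $y\in \mb{R}^b$.

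Next I would handle the discrete part. For each standard generator $e_i\in \mb{Z}^a$ ($i\in [a]$), the element $\phi(e_i,0)\in \mb{T}$ has at least one preimage under $\pi$ in $\mb{R}$; choose any such preimage and call it $\alpha_i\in\mb{R}$, then set $\alpha=(\alpha_1,\ldots,\alpha_a)\in \mb{R}^a$. Now define
\[
\psi:\mb{Z}^a\times \mb{R}^b\to \mb{R},\qquad \psi(x,y):=\alpha\cdot x+\beta\cdot y.
\]
This is plainly continuous and a homomorphism.

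Finally I would check $\pi\co\psi=\phi$. Using that $\phi$ is a homomorphism,
\[
\phi(x,y)=\phi(x,0)+\phi(0,y)=\sum_{i=1}^a x_i\,\phi(e_i,0)+\phi_c(y),
\]
and applying $\pi$ to $\psi(x,y)$ gives exactly $\sum_{i=1}^a x_i\,\pi(\alpha_i)+\pi(\beta\cdot y) = \sum_{i=1}^a x_i\,\phi(e_i,0)+\phi_c(y)$, matching $\phi(x,y)$. There is no substantial obstacle here; the only subtle point is the covering-space lifting argument for the continuous part, together with the verification (via uniqueness of lifts) that the lift remains a homomorphism, after which linearity is automatic.
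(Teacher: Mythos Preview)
Your proof is correct and follows essentially the same approach as the paper: split into the discrete part (where any choice of preimages of $\phi(e_i,0)$ works) and the continuous part (where the universal covering property of $\pi:\mb{R}\to\mb{T}$ supplies the lift). The paper's version is terser, reducing coordinate by coordinate to the single-variable cases $\mb{Z}\to\mb{T}$ and $\mb{R}\to\mb{T}$, whereas you lift $\mb{R}^b\to\mb{T}$ in one step and give the explicit verification that the lift remains a homomorphism; the underlying idea is the same.
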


\begin{proof}
Splitting the problem between the different coordinates it is enough to prove the result for $\phi:\mb{R}\to \mb{T}$ (a homomorphism from $\mb{Z}$ to $\mb{T}$ is trivially of the desired form). As $\mb{R}$ is the universal covering of $\mb{T}$, we have that $\phi$ factorizes though $\mb{R}$. That is, there exists a continuous homomorphism $\psi:\mb{R}\to \mb{R}$ such that $\pi\co\psi = \phi$, which completes the proof.
\end{proof}

\begin{lemma}\label{lem:poly-cont}
Let $n\ge 0$ be an integer and let $\binom{\cdot}{n}:\mb{R}\to \mb{R}$ be the map $y\mapsto \frac{y(y-1)\cdots(y-n+1)}{n!}$. Then $\binom{\cdot}{n}\in \hom(\mc{D}_1(\mb{R}),\mc{D}_n(\mb{R}))$.
\end{lemma}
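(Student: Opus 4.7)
The plan is to verify the criterion for being a morphism between higher-degree abelian nilspaces provided by \cite[Theorem 2.2.14]{Cand:Notes1}, which in our case states: a map $\phi:\mc{D}_1(\mb{R})\to\mc{D}_n(\mb{R})$ is a nilspace morphism if and only if, for any $z_1,\ldots,z_{n+1}\in\mb{R}$, the iterated discrete derivative $\partial_{z_1}\cdots\partial_{z_{n+1}}\phi$ vanishes identically (where $\partial_z f(y):=f(y+z)-f(y)$). This is exactly the criterion used in the proof of Lemma \ref{lem:binompoly}.

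The main observation is that the map $y\mapsto \binom{y}{n}=\frac{y(y-1)\cdots(y-n+1)}{n!}$ is an honest polynomial in $y$ of degree exactly $n$. For any polynomial $p\in\mb{R}[y]$ of degree $d\ge 0$ and any $z\in\mb{R}$, the function $y\mapsto p(y+z)-p(y)$ is a polynomial of degree at most $d-1$ in $y$ (the top-degree term $a_d y^d$ contributes $a_d((y+z)^d-y^d)$, whose degree in $y$ is at most $d-1$). Iterating this observation, after $n+1$ applications of discrete derivatives the resulting polynomial has degree at most $-1$, hence is identically zero. So $\partial_{z_1}\cdots\partial_{z_{n+1}}\binom{\cdot}{n}\equiv 0$ for every choice of $z_1,\ldots,z_{n+1}\in\mb{R}$, and the cited criterion yields the desired conclusion.

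No step here should present an obstacle, as the argument reduces to the elementary polynomial calculation above together with the application of \cite[Theorem 2.2.14]{Cand:Notes1}. The only minor point worth stating cleanly in the write-up is that the derivative in the sense of \cite{Cand:Notes1} for the filtered abelian group $\mc{D}_1(\mb{R})$ is the ordinary discrete derivative $\partial_z f(y)=f(y+z)-f(y)$ with $z\in\mb{R}=\mc{D}_1(\mb{R})_1$, so the proof is  essentially immediate.
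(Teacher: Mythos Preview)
Your proof is correct and is precisely the natural argument the paper has in mind: the paper simply writes ``We leave this to the reader,'' and your use of \cite[Theorem 2.2.14]{Cand:Notes1} together with the elementary fact that $\binom{y}{n}$ is a degree-$n$ polynomial (so its $(n{+}1)$-fold discrete derivative vanishes) is exactly the intended verification, mirroring the proof of Lemma \ref{lem:binompoly}.
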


\begin{proof}
We leave this to the reader.
\end{proof}

\begin{corollary}\label{cor:cont-binom-poly}
Let $F=\prod_{i=1}^k \mc{D}_i(\mb{Z}^{a_i}\times \mb{R}^{b_i})$ and let $(m,n)\in \prod_{i=1}^k \mb{Z}_{\ge 0}^{a_i}\times \mb{Z}_{\ge 0}^{b_i}$. Then the map $(x,y)\in F\mapsto (x,y)^{(m,n)}$ is a morphism $F\to \mc{D}_{|(m,n)|}(\mb{R})$.
\end{corollary}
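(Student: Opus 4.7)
The plan is to express the monomial $\binom{(x,y)}{(m,n)}$ as a product of univariate binomial factors and combine them using the composition and product lemmas already proved.

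First I would write $\binom{(x,y)}{(m,n)} = \prod_{i=1}^k \prod_{j=1}^{a_i}\binom{x_{i,j}}{m_{i,j}}\prod_{\ell=1}^{b_i}\binom{y_{i,\ell}}{n_{i,\ell}}$ and show that each univariate factor, viewed as a map on $F$ through the corresponding coordinate projection, is a morphism into an appropriate $\mc{D}_q(\mb{R})$. For a discrete factor $\binom{x_{i,j}}{m_{i,j}}$, Lemma~\ref{lem:binompoly} gives that $\binom{\cdot}{m_{i,j}} : \mc{D}_i(\mb{Z}) \to \mc{D}_{im_{i,j}}(\mb{Z})$ is a morphism; composing with the natural inclusion $\mc{D}_{im_{i,j}}(\mb{Z}) \hookrightarrow \mc{D}_{im_{i,j}}(\mb{R})$ (which is a morphism, since the cubes on $\mc{D}_s(\mb{Z})$ are exactly the $\mb{Z}$-valued cubes on $\mc{D}_s(\mb{R})$) and precomposing with the projection $F \to \mc{D}_i(\mb{Z})$ onto the $(i,j)$-coordinate yields a morphism $F \to \mc{D}_{im_{i,j}}(\mb{R})$. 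For a continuous factor $\binom{y_{i,\ell}}{n_{i,\ell}}$, Lemma~\ref{lem:poly-cont} supplies $\binom{\cdot}{n_{i,\ell}} \in \hom(\mc{D}_1(\mb{R}), \mc{D}_{n_{i,\ell}}(\mb{R}))$, and since $\lfloor in_{i,\ell}/i\rfloor = n_{i,\ell}$, Lemma~\ref{lem:higher-ab-hom-set} identifies this set with $\hom(\mc{D}_i(\mb{R}), \mc{D}_{in_{i,\ell}}(\mb{R}))$; precomposing with the projection to the $(i,\ell)$ continuous coordinate of $F$ then gives a morphism $F \to \mc{D}_{in_{i,\ell}}(\mb{R})$.

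Next I would take the product of all these factor morphisms and observe that the result is a morphism $F \to \mc{D}_q(\mb{R})$ where $q$ is the sum of the individual target degrees, namely $\sum_{i,j} im_{i,j} + \sum_{i,\ell} in_{i,\ell} = \sum_{i=1}^k i\bigl(\sum_j m_{i,j} + \sum_\ell n_{i,\ell}\bigr) = |(m,n)|$. This product step is justified by the evident $\mb{R}$-valued analog of Lemma~\ref{lem:prodpolys}: its proof only uses distributivity in the codomain ring, the Leibniz-type identity $\partial_{(g_0,g_1)}\psi(x_0,x_1) = \partial_{(0,g_1)}\psi(x_0+g_0,x_1) + \partial_{(g_0,0)}\psi(x_0,x_1)$, and the characterization of morphisms via vanishing of high-order derivatives (\cite[Theorem 2.2.14]{Cand:Notes1}); all three apply verbatim when $\mb{Z}$ is replaced by $\mb{R}$ as codomain. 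Iterating across all factors (regarding each as defined on the common domain $F$) yields the desired morphism into $\mc{D}_{|(m,n)|}(\mb{R})$.

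There is no substantive obstacle here; the argument is a routine combination of preceding lemmas. The only minor verification is that Lemma~\ref{lem:prodpolys}, stated for $\mb{Z}$-valued morphisms, extends with an identical proof to $\mb{R}$-valued morphisms, which is immediate.
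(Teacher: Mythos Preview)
Your proposal is correct and follows essentially the same approach as the paper's proof: both observe that Lemma~\ref{lem:prodpolys} extends verbatim to $\mb{R}$-valued morphisms, and combine this with Lemmas~\ref{lem:higher-ab-hom-set} and~\ref{lem:poly-cont} (and, for the discrete factors, Lemma~\ref{lem:binompoly}) to assemble the monomial as a product of univariate morphisms whose filtered degrees sum to $|(m,n)|$. Your write-up is more detailed than the paper's terse one-sentence proof, but the content is the same.
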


\begin{proof}
Note that the proof of Lemma \ref{lem:prodpolys} works the same if we consider that the morphisms involved are $\mb{R}$-valued instead of $\mb{Z}$-valued. The result then follows by combining this with Lemmas \ref{lem:higher-ab-hom-set} and \ref{lem:poly-cont}.
\end{proof}
\noindent We shall now prove Lemma \ref{lem:Taylor-cont}, and then deduce Lemma \ref{lem:poly-free-to-R}. We shall use the following notation. In the sequel, given an abelian group $A$ and a map $\phi:\mb{Z}^a\times \mb{R}^b\to A$, we may want to take derivatives with respect to variables other than the canonical ones; for example, letting $\phi:\mb{R}\to A$, $x\mapsto \phi(x)$, then we may take its derivative with respect to $x\in \mb{R}$ as usual, $\partial_a\phi(x)=\phi(x+a)-\phi(x)$, but then we may want to take the derivative with respect to  $a$, i.e., $\text{``}\partial_b\text{''}\partial_a\phi(x)=\phi(x+a+b)-\phi(x+a)$. To avoid confusion, we adopt the following notation from \cite{GT08}: given a function $\phi$ that depends on the variables $x_1,\ldots,x_n$, then $(h\cdot \nabla_{x_i})\phi(x_1,\ldots,x_n):=\phi(x_1,\ldots,x_{i-1},x_i+h,x_{i+1},\ldots,x_n)-\phi(x_1,\ldots,x_n)$. When there is no risk of confusion, we may still write $\partial_{h}$ instead.

\begin{proof}[Proof of Lemma \ref{lem:Taylor-cont}]
Let us denote $D:=\prod_{i=1}^k \mb{Z}^{a_i} \times \prod_{i=1}^k  \mb{R}^{b_i}$ and $L:=\prod_{i=1}^k \mb{Z}_{\ge 0}^{a_i}\times \prod_{i=1}^k \mb{Z}_{\ge 0}^{b_i}$.
Similarly as before, given a function $\psi:D\to \mb{T}$ we define its support as follows:
\begin{multline*}
\supp(\psi):=L\setminus\{(m,n)\in L: \text{for all }(t_{i,\ell})_{(i,\ell)\in[k]\times [b_i]}\in \mb{R} \text{ and } (x,y)\in D, \\ \partial_{e_{1,1}}^{m_{1,1}}\cdots \partial_{e_{k,a_k}}^{m_{k,a_k}}\partial_{t_{1,1}w_{1,1}}^{n_{1,1}}\cdots \partial_{t_{k,b_k}w_{k,b_k}}^{n_{k,b_k}}\phi(x)=0 \}
\end{multline*}
where $e_{i,j}\in D$ is everywhere 0 except for the coordinate $(i,j)$ in the discrete part of $D$ (where it is 1) and similarly with $w_{i,\ell}$ for the continuous part. Note that if $\phi:F\to \mb{T}$ is a morphism then $\supp(\phi)\subset \{(m,n)\in L:|(m,n)|\le k\}$.

We want to prove that if $\psi$ has finite support then $\psi(x,y)=\pi\left(\sum_{(m,n)\in \supp(\psi)}\lambda_{m,n} \binom{(x,y)}{(m,n)}\right)$ for some coefficients $\lambda_{m,n}\in \mb{R}$ (provided that $\psi$ is continuous).

We prove this by induction on the size of $\supp(\psi)$. Note that if the support of $\psi$ is 0 then this automatically means that the function equals 0, so there is nothing to prove. For the general case, let $(m,n)\in \supp(\psi)$ be an element such that if $(m',n')\in L$ satisfies $(m,n)<(m',n')$ then $(m',n')\notin \supp(\psi)$. Assume without loss of generality that $n_{i_0,\ell_0}>0$ for some $(i_0,\ell_0)\in[k]\times [b_i]$. We leave as an exercise to the reader to check the easier case when for some $(i_0,j_0)\in[k]\times[a_i]$ we have $m_{i_0,j_0}>0$. Note that if none of these exists (neither in the continuous part nor in the discrete part), then $\partial_{(u,v)}\psi(x,y)=0$ for all $(u,v)\in D$. Thus $\psi$ in this case is constant and the result follows.

Now, let us consider $(hw_{i_0,j_0}\cdot \nabla_{(x,y)})\psi(x,y)$. Note that this is a function (in the variables $(x,y)$) from $D$ to $\mb{T}$ with support strictly smaller than that of $\psi$. Indeed, note that \[\supp((hw_{i_0,j_0}\cdot \nabla_{(x,y)})\psi(x,y))\subset \{(m',n')\in L:(m',n')+w_{i_0,\ell_0}\in \supp(\psi)\}=:T.\] Note that $T$ is independent of $h$ and that $|T|<|\supp(\psi)|$. Thus by induction we can write
\begin{equation}\label{eq:form-deri}
(hw_{i_0,j_0}\cdot \nabla_{(x,y)})\psi(x,y)=\pi\left(\sum_{(m',n')\in T}\lambda_{m,n}(h) \binom{(x,y)}{(m,n)}\right)
\end{equation}
where now the coefficients $\lambda_{m,n}(h)\in \mb{R}$ may depend on $h$.

The element $(m^*,n^*):=(m,n)-w_{i_0,\ell_0}$ is in $T$ by definition. Next, we take the derivative of $(hw_{i_0,j_0}\cdot \nabla_{(x,y)})\psi(x,y)$ as many times as necessary in order to cancel all terms but $\lambda_{m^*,n^*}$. That is,
\begin{equation}\label{eq:proof-fact-free-cont-mor}
\partial_{e_{1,1}}^{m^*_{1,1}}\cdots \partial_{e_{k,a_k}}^{m^*_{k,a_k}}\partial_{w_{1,1}}^{n^*_{1,1}}\cdots \partial_{w_{k,b_k}}^{n^*_{k,b_k}} (hw_{i_0,j_0}\cdot \nabla_{(x,y)})\psi(x,y) = \pi(\lambda_{m^*,n^*}(h)),
\end{equation}
where $\partial_{e_{i,j}}$ is the operator $(e_{i,j}\cdot \nabla_{(x,y)})$ (and similarly with $w_{i,\ell}$). Note \eqref{eq:proof-fact-free-cont-mor} holds for all $(x,y)\in D$ and all $h\in \mb{R}$. Evaluating at $(x,y)=(0,0)$ yields that $\pi\co \lambda_{m^*,n^*}:\mb{R}\to \mb{T}$ is continuous.

Moreover, we can prove that this function is indeed affine. That is, it is a morphism $\mc{D}_1(\mb{R})\to \mc{D}_1(\mb{T})$. In order to do so, first apply the operator $(z_1\cdot \nabla_h)$ to both sides of \eqref{eq:proof-fact-free-cont-mor}. Note that in the left hand side we end up with an expression that consists of sums and differences of terms of the form
\[
\psi\left((x,y)+hw_{i_0,\ell_0}+\xi z_1 w_{i_0,\ell_0}+\iota_{1,1}e_{1,1}+\cdots+\iota_{k,a_k}e_{k,a_k}+\tau_{1,1}w_{1,1}+\cdots+\tau_{k,b_k}w_{k,b_k}
\right).
\]
Where $\iota_{i,j},\tau_{i,\ell},\xi\in \{0,1\}$ for all $i\in[k]$, $j\in[a_i]$ and $\ell\in[b_i]$. Hence we have the following equality: $(z_1\cdot \nabla_h)\partial_{e_{1,1}}^{m^*_{1,1}}\cdots \partial_{e_{k,a_k}}^{m^*_{k,a_k}}\partial_{w_{1,1}}^{n^*_{1,1}}\cdots \partial_{w_{k,b_k}}^{n^*_{k,b_k}} (hw_{i_0,j_0}\cdot \nabla_{(x,y)})\psi(x,y) = (z_1\cdot \nabla_h)\pi(\lambda_{m^*,n^*}(h))$, for all $(x,y)\in D$, $h,z_1\in \mb{R}$. But making the change of variables $(x,y)\mapsto (x,y)-hw_{i_0,\ell_0}$ we have that the left hand side of the previous equation does not depend on $h$. Therefore applying $(z_2\cdot \nabla_h)$ we get 0 on the  left. Hence
 $0 = (z_2\cdot \nabla_h)(z_1\cdot \nabla_h)\pi(\lambda_{n^*,m^*}(h))$. By Lemma \ref{lem:fact-hom} we have  indeed $\pi(\lambda_{m^*,n^*}(h)) = \pi(\alpha h+\beta)$ for some constants $\alpha,\beta\in \mb{R}$. We can use this information to eliminate the term indexed by $(m,n)$ in the support of $\psi$. Indeed, we want to check that
\begin{equation}\label{eq:reduc-degree}
\psi(x,y)-\pi\left(\alpha \binom{(x,y)}{(m,n)}-\beta \binom{(x,y)}{(m^*,n^*)}\right)
\end{equation}
is function on $D$ such that its support is strictly smaller than the support of $\psi$.

In order to prove this, it is enough to check that taking the correct number of derivatives according to $(m,n)$ we have 0. Note that the function $\zeta:\mb{R}\to \mb{T}$, $y\mapsto \pi\left(\binom{y}{d}\right)$ satisfies that $\partial_t\zeta(y) = \pi\left(t\binom{y}{d-1}\right)+\textbf{p}(y,t)$ where $\textbf{p}(y,t)$ is a polynomial in $y$ of degree at most $d-2$ and thus it vanishes if we take $d-1$ derivatives on $y$. Using this fact and  \eqref{eq:form-deri} it follows that \eqref{eq:reduc-degree} does not contain $(m,n)$ in its support and so we can conclude the result by induction.

To complete the proof we still need to prove that any function of the form \eqref{eq:Taylor-cont} is a morphism from $F$ to $\mc{D}_k(\mb{T})$. By Corollary \ref{cor:cont-binom-poly}, any map of the form \eqref{eq:poly-free-to-R} is a morphism from $F$ to $\mc{D}_k(\mb{R})$. As the quotient map $\pi:\mb{R}\to \mb{T}$ is a homomorphism, in particular it is in $\hom(\mc{D}_1(\mb{R}),\mc{D}_1(\mb{T}))$ and by Lemma \ref{lem:higher-ab-hom-set} in particular it is in $\hom(\mc{D}_k(\mb{R}),\mc{D}_k(\mb{T}))$. The result follows.
\end{proof}

\begin{proof}[Proof of Lemma \ref{lem:poly-free-to-R}]
We already know that any function of the form \eqref{eq:poly-free-to-R} is a morphism from $F$ to $\mc{D}_k(\mb{R})$. To prove that those are all possibilities, let $\phi:F\to \mc{D}_k(\mb{R})$ be a morphism. Then, if $\pi:\mb{R}\to \mb{T}$ is the usual quotient map, we have that $\pi\co \phi\in \hom(F,\mc{D}_k(\mb{T}))$. By Lemma \ref{lem:Taylor-cont} there exists $\phi'\in \hom(F,\mc{D}_k(\mb{R}))$ such that $\pi\co \phi = \pi\co \phi'$. Hence, $\phi-\phi'\in \hom(F,\mc{D}_k(\mb{Z}))$ and we can apply Lemma \ref{lem:Taylor-discrete} to conclude that $\phi-\phi'$ has the form \eqref{eq:Taylor-discrete}. The result follows.
\end{proof}

\section{Some results in topology}

\begin{lemma}\label{lem:quo-by-cont-gr-act}
Let $X$ be an \textsc{lch} space and let $G$ be a topological group acting continuously on $X$ and such that $\{(x,x')\in X\times X:\exists g\in G \text{ such that }x=gx'\}$ is a closed subset of $X\times X$. Then $X/G$ with the quotient topology is also an \textsc{lch} space.
\end{lemma}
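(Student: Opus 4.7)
The plan is to verify the three defining properties of an \textsc{lch} space — Hausdorffness, local compactness, and second-countability — for $X/G$ with the quotient topology, relying on the closure hypothesis on the orbit relation and standard facts for quotients by open equivalence relations, exactly as was done elsewhere in the excerpt (e.g.\ in the proof of Corollary \ref{cor:quotient-top-k-1-factor}).

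First I would show that the quotient map $\pi:X\to X/G$ is open. Since $G$ is a topological group, inversion in $G$ is continuous, so each $g\in G$ acts on $X$ as a homeomorphism (its inverse being the action of $g^{-1}$, which is continuous because the action $G\times X\to X$ is). Hence for any open $U\subset X$ the saturation $\pi^{-1}(\pi(U))=\bigcup_{g\in G} gU$ is a union of open sets, and therefore open; by definition of the quotient topology, $\pi(U)$ is open in $X/G$. Equivalently, the orbit relation $\sim_G$ is an open equivalence relation in the sense of \cite[p.\ 52, Definition 2]{BourbakiGT1}.

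Next, Hausdorffness of $X/G$ follows by a direct application of \cite[p.\ 79, Proposition 8]{BourbakiGT1}: once $\pi$ is open, the quotient is Hausdorff if and only if the graph of $\sim_G$ is closed in $X\times X$, which is exactly the hypothesis. Local compactness then follows from \cite[p.\ 107, Proposition 10]{BourbakiGT1}, which says that the quotient of a locally compact Hausdorff space by an open Hausdorff equivalence relation is locally compact. Finally, second-countability is inherited because $X$ is second-countable and $\pi:X\to X/G$ is a continuous, surjective, open map (if $\{U_n\}_{n\in\mb{N}}$ is a countable base for $X$, then $\{\pi(U_n)\}_{n\in\mb{N}}$ is a countable base for $X/G$).

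There is no real obstacle here: the only nontrivial input, namely Hausdorffness, is built into the closure assumption, and everything else is formal once $\pi$ is known to be open. The slight subtlety — making sure each individual element of $G$ acts as a homeomorphism so that saturations of open sets are open — is handled by invoking continuity of inversion in the topological group $G$.
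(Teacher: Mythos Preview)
Your proof is correct and follows essentially the same approach as the paper's: both first note that the continuous $G$-action makes $\pi$ open, and then invoke the same Bourbaki results (Proposition 8 on Hausdorff quotients and Proposition 10 on local compactness) together with the trivial observation that open continuous surjections preserve second-countability. Your version is merely a bit more explicit about why each $g\in G$ acts as a homeomorphism.
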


\begin{proof}
The fact that $G$ acts continuously on $X$ implies that the quotient map $\pi:X\to X/G$ is open. Hence it is $X/G$ is second-countable and the fact that it is \textsc{lch} follows from \cite[\S 8.3, Proposition 8 and \S 10.4 Proposition 10]{BourbakiGT1}.
\end{proof}

\begin{lemma}\label{lem:conv-quot}
Let $X$ be an \textsc{lch} space and let $G$ be a group acting continuously on $X$ and such that $\{(x,x')\in X\times X:\exists g\in G \text{ such that }x=gx'\}$ is a closed subset of $X\times X$. Then $\pi(x_n)\to \pi(x)$ as $n\to\infty$ in $X/G$ if and only if there exists a sequence $g_n\in G$ such that $g_nx_n\to x$ as $n\to\infty$ in $X$.
\end{lemma}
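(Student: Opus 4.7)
The ``if'' direction is essentially formal: given a sequence $(g_n)$ in $G$ with $g_n x_n \to x$ in $X$, the continuity of the quotient map $\pi:X\to X/G$ together with the $G$-invariance $\pi(g_n x_n) = \pi(x_n)$ yields $\pi(x_n)\to\pi(x)$ in $X/G$. No topology of $X/G$ beyond the quotient definition is needed here.

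For the ``only if'' direction the plan is to exploit two features. First, the continuous group action makes $\pi$ an open map: for any open $U\subset X$, the saturation $\pi^{-1}(\pi(U)) = \bigcup_{g\in G} gU$ is a union of open sets (each $gU$ is open since the map $y\mapsto gy$ is a self-homeomorphism of $X$), so $\pi(U)$ is open in the quotient topology. Second, Lemma \ref{lem:quo-by-cont-gr-act} gives that $X/G$ is itself \textsc{lch}, so both $X$ and $X/G$ are metrizable; in particular, $x\in X$ admits a countable decreasing neighborhood basis $U_1\supset U_2\supset \cdots$.

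Given this, I would argue as follows. Since $\pi$ is open, each $\pi(U_k)$ is an open neighborhood of $\pi(x)$ in $X/G$. Because $\pi(x_n)\to \pi(x)$, for each $k\ge 1$ there exists $N_k$ with $\pi(x_n)\in \pi(U_k)$ for every $n\ge N_k$, and one can choose the $N_k$ to be strictly increasing. For $n$ in the range $N_k\le n<N_{k+1}$, the condition $\pi(x_n)\in \pi(U_k)$ means precisely that there exists $g_n\in G$ with $g_n x_n\in U_k$; I pick such a $g_n$, and for the finitely many $n<N_1$ set $g_n$ to be the identity (or anything). Then given any open neighborhood $V$ of $x$, pick $k$ with $U_k\subset V$: for all $n\ge N_k$ we have $g_n x_n\in U_{k'}\subset U_k\subset V$ for some $k'\ge k$, so $g_n x_n\to x$ in $X$, as required.

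The main (and only) subtle point is the openness of $\pi$; everything else is a routine diagonal/basis construction enabled by metrizability of $X$. The closure hypothesis on the graph of the orbit relation is not directly invoked in this proof — it enters only through Lemma \ref{lem:quo-by-cont-gr-act}, which guarantees that $X/G$ is Hausdorff (hence that the candidate limit $\pi(x)$ is unique) and metrizable, ensuring that our reformulation of convergence in terms of a countable neighborhood basis is legitimate.
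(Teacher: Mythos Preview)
Your proof is correct and takes essentially the same route as the paper's: both reduce to the openness of $\pi$ together with metrizability, with the paper then citing \cite[Proposition 2.4]{Si} while you spell out the diagonal/basis argument directly. As a small aside, your ``only if'' argument in fact uses only first-countability of $X$ at the point $x$ (for the basis $(U_k)$), not metrizability of $X/G$; so the closure hypothesis plays no role in this lemma, and your closing remark slightly overstates where it enters.
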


\begin{proof}
By Lemma \ref{lem:quo-by-cont-gr-act} we already know that both $X$ and $X/G$ are metric spaces and the quotient map $X\to X/G$ is open. The result follows by \cite[Proposition 2.4]{Si}.
\end{proof}

A similar result holds for Polish groups.

\begin{lemma}\label{lem:conv-polish-group}
Let $H,G$ be Polish groups and let $\varphi:G\to H$ be a surjective continuous homomorphism. Then for every sequence $(\varphi(g_n))_{n\ge 0}\subset H$ such that $\varphi(g_n)\to \varphi(g)$ for some $g\in G$ there exists a sequence $r_n\in \ker(\varphi)$ such that $g_n r_n\to g$ in $G$.
\end{lemma}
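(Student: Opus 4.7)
The plan is to deduce this from the open mapping theorem for Polish groups, which guarantees that the continuous surjective homomorphism $\varphi:G\to H$ is an open map (this is a Pettis-type theorem, available in the reference \cite[Chapter 1]{BeKe} already cited for similar purposes in the paper). Once openness is in hand, the argument reduces to a diagonal construction of a small correction term, using that $G$, being Polish, is metrizable (say by a compatible metric $d$).

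More precisely, I would first rewrite the hypothesis as $\varphi(g_n^{-1}g)\to e_H$ in $H$. Since $\varphi$ is open, for every integer $m\ge 1$ the image $\varphi\bigl(B_d(e_G,1/m)\bigr)$ is an open neighborhood of $e_H$, hence there exists $N_m$ with $\varphi(g_n^{-1}g)^{-1}=\varphi\bigl((g_n^{-1}g)^{-1}\bigr)\in\varphi\bigl(B_d(e_G,1/m)\bigr)$ for all $n\ge N_m$. Arranging $(N_m)_{m\ge 1}$ to be strictly increasing and setting $m(n):=\max\{m:N_m\le n\}$ (so $m(n)\to\infty$), I can pick, for each $n$, an element $u_n\in B_d(e_G,1/m(n))$ with $\varphi(u_n)=\varphi(g_n^{-1}g)^{-1}$. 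By construction $u_n\to e_G$ in $G$.

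Then I set $r_n:=g_n^{-1}g\,u_n$. A direct check gives $\varphi(r_n)=\varphi(g_n^{-1}g)\varphi(u_n)=e_H$, so $r_n\in\ker(\varphi)$, and $g_n r_n=g\,u_n\to g$ in $G$ by continuity of the group operation. This completes the argument.

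The only conceptual point is the invocation of the open mapping theorem for Polish groups; everything else is a routine diagonal extraction. No serious obstacle arises, and the length should be short.
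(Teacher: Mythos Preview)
Your proof is correct and follows the same approach as the paper: invoke the open mapping theorem for Polish groups, then lift the convergent sequence through the resulting open map. The paper simply cites \cite[Proposition 2.4]{Si} for the lifting step, whereas you spell out the diagonal construction explicitly (which is essentially the content of that cited result in this context); the only cosmetic point is to handle the finitely many indices $n$ with $n<N_1$, e.g.\ by setting $r_n=e_G$ there.
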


\begin{proof}
By the open mapping theorem for Polish groups \cite[Theorem A.1]{HK-non-conv} we have that $\varphi$ is open. The result follows by \cite[Proposition 2.4]{Si}.
\end{proof}

\begin{lemma}\label{lem:quot-polish-gr-lie-gr}
Let $\varphi:G\to H$ be a continuous surjective homomorphism where $G$ is Polish and $H$ is Lie. Then $G/\ker(\varphi)\cong H$ and in particular, it is Lie.
\end{lemma}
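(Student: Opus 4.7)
The plan is to show that $\varphi$ descends to a topological isomorphism between $G/\ker(\varphi)$ and $H$, from which both claims follow at once. First I would observe that $H$, being a (compactly generated) Lie group, is in particular a Polish group, so both the source and target of $\varphi$ are Polish. Since $\varphi:G\to H$ is a continuous surjective homomorphism between Polish groups, the open mapping theorem for Polish groups (cited in the excerpt just above, e.g.\ as \cite[Theorem A.1]{HK-non-conv}; the version also used in the proof of Lemma \ref{lem:conv-polish-group}) applies to give that $\varphi$ is an open map.

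Next I would pass to the quotient. The kernel $N:=\ker(\varphi)$ is a closed normal subgroup of $G$, so $G/N$ inherits a Polish topological group structure via the quotient map $\pi:G\to G/N$, and by the first isomorphism theorem $\varphi$ factors as $\varphi=\overline{\varphi}\co \pi$ for a continuous injective homomorphism $\overline{\varphi}:G/N\to H$. Surjectivity of $\varphi$ makes $\overline{\varphi}$ bijective, and since $\pi$ is a quotient map and $\varphi$ is open, $\overline{\varphi}$ is open as well (alternatively, one may invoke the open mapping theorem a second time directly to $\overline{\varphi}$). Hence $\overline{\varphi}$ is a homeomorphism and a group isomorphism, i.e.\ an isomorphism of topological groups $G/N\cong H$.

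Finally, transporting the Lie group structure from $H$ through this topological isomorphism gives a Lie group structure on $G/N$, which proves the last claim. There is no substantive obstacle here; the only point requiring any care is the legitimacy of applying the open mapping theorem, which requires both groups to be Polish (so in particular we need the observation that a Lie group as used in this paper is Polish, which holds since it is second-countable, locally compact, and Hausdorff).
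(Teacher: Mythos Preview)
Your proof is correct and follows essentially the same approach as the paper: apply the open mapping theorem for Polish groups to conclude that $\varphi$ is open, deduce that the induced map $G/\ker(\varphi)\to H$ is a homeomorphism, and transport the Lie structure. You have simply spelled out more carefully the details the paper leaves implicit (in particular, the observation that $H$ is Polish so that the open mapping theorem applies).
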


\begin{proof}
By the open mapping theorem for Polish groups $\varphi$ is open. Thus, the map $G/\ker(\varphi)\to H$, $g\ker(\varphi)\mapsto \varphi(g)$ is a homeomorphism. As $H$ is Lie so is $G/\ker(\varphi)$.
\end{proof}

Recall that \textsc{lch} spaces are by definition locally compact, Hausdorff and second-countable. Hence, in particular they are compactly generated. By \cite[Lemma 46.4]{Mu} we know then that $f\in C(X,Y)$ if and only if for each compact set $K\subset X$, $f|_K$ is continuous.

\begin{proposition}\label{prop:cont-LCH-polish}
Let $X,Y$ be \textsc{lch} spaces. Then the space $C(X,Y)$ is Polish.
\end{proposition}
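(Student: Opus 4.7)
The plan is to construct an explicit compatible metric on $C(X,Y)$, show it is complete, and then exhibit $C(X,Y)$ as a closed subspace of a countable product of known Polish spaces.

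First I would note that any \textsc{lch} space is Polish: it embeds as an open subset of its one-point compactification, which is a compact metrizable (hence Polish) space by Urysohn's theorem, and an open subset of a Polish space is Polish. So $Y$ admits a complete compatible metric $d$, and by replacing $d$ with $\min(d,1)$ we may assume $d\le 1$. Next, since $X$ is \textsc{lch}, a standard exhaustion argument (using a countable basis of relatively compact open sets) produces an increasing sequence of compact sets $K_1\subset K_2\subset\cdots$ with $K_n\subset \mathrm{int}(K_{n+1})$ and $X=\bigcup_n K_n$.

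Second, by \cite[Theorem 46.8]{Mu} the compact-open topology on $C(X,Y)$ coincides with the topology of uniform convergence on compact sets. Since every compact subset of $X$ is contained in some $K_n$ (it is covered by finitely many of the sets $\mathrm{int}(K_n)$), this topology is generated by the single metric
\[
D(f,g):=\sum_{n=1}^\infty 2^{-n}\sup_{x\in K_n} d(f(x),g(x)).
\]
Completeness of $D$ is routine: a $D$-Cauchy sequence $(f_m)$ is uniformly Cauchy on each $K_n$, hence converges uniformly on $K_n$ to some continuous $g_n:K_n\to Y$; the $g_n$ agree on overlaps and so define a function $f:X\to Y$; since continuity is a local condition and $X=\bigcup_n \mathrm{int}(K_{n+1})$, $f$ is continuous, and $f_m\to f$ in $D$.

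For separability, which I expect to be the main obstacle, I would embed
\[
\iota:C(X,Y)\hookrightarrow \prod_{n\in\mathbb{N}} C(K_n,Y),\qquad f\mapsto (f|_{K_n})_{n\in\mathbb{N}},
\]
where each factor carries the sup metric $d_n(\varphi,\psi):=\sup_{x\in K_n} d(\varphi(x),\psi(x))$. Each $C(K_n,Y)$ is Polish: it is complete in $d_n$ by the same uniform-convergence argument, and it is separable because $K_n$ is a compact metric space and $Y$ is a separable metric space (one can, for instance, approximate each continuous map into a countable dense subset of $Y$ using a finite open cover of $K_n$ of small diameter and use a Lebesgue-number / partition-of-unity type construction, or invoke a standard theorem such as \cite[Theorem 4.19]{Kechris}). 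The image of $\iota$ is closed in the product, since it is cut out by the countably many closed conditions $\varphi_n|_{K_n\cap K_m}=\varphi_m|_{K_n\cap K_m}$, and $\iota$ is a homeomorphism onto its image by the definition of $D$ and of the product topology. A closed subspace of a countable product of Polish spaces is Polish \cite[(3.3) and (3.11) Propositions]{Kechris}, so $C(X,Y)$ is Polish.
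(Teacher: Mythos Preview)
Your proposal is correct and follows essentially the same route as the paper: both proofs take a compact exhaustion $(K_n)$ of $X$, embed $C(X,Y)$ into $\prod_{n} C(K_n,Y)$ via restriction, check that the image is closed and that $\iota$ is a homeomorphism onto its image, and then invoke \cite[Theorem 4.19]{Kechris} for the factors. Your additional construction of the explicit metric $D$ and direct verification of completeness are fine but redundant, since the closed-subspace-of-a-product argument already yields Polish-ness; the paper simply omits this step.
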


\begin{proof}
As $X$ is \textsc{lch}, by \cite[\S 46 Ex. 10. (b)]{Mu} there exists a sequence of nested compact subsets $K_n\subset X$ such that $X=\cup_{n=1}^\infty K_n^\circ$ where $A^\circ$ is the interior of the set $A$. Then consider the map $\iota:C(X,Y)\to \prod_{n=1}^\infty C(K_n,Y)$ that sends $f\mapsto \prod_{n=1}^{\infty} f|_{K_n}$. Then, if $\phi_{n,n+1}:C(K_{n+1},Y)\to C(K_n,Y)$ is the map $g\mapsto g|_{K_n}$ it is easy to see that this map is continuous. Furthermore, we have 
$\iota(C(X,Y))= \{(f_n)_{n\ge 1}\in \prod_{n=1}^\infty C(K_n,Y): \phi_{n,n+1}(f_{n+1})=f_n\}$. As the maps $\phi_{n,n+1}$ are continuous we get that $\iota(C(X,Y))$ is a closed subset of $\prod_{n=1}^\infty C(K_n,Y)$. Moreover, the map $\iota$ is injective so there exists a well-defined map $\iota^{-1}:\iota(C(X,Y))\to C(X,Y)$. Using that $X=\cup_{n=1}^\infty K_n^\circ$ it follows that this map is indeed continuous. Hence $C(X,Y)$ is homeomorphic to a closed subset of $\prod_{n=1}^\infty C(K_n,Y)$. By \cite[Theorem 4.19]{Kechris} each $C(K_n,Y)$ is Polish. Hence so is $\prod_{n=1}^\infty C(K_n,Y)$ and also $\iota(C(X,Y))$, being a closed subset of a Polish space.
\end{proof}

\subsection*{Funding.} All authors used funding from project PID2020-113350GB-I00 financed by  MICIU/AEI/10.13039/501100011033/FEDER, EU. The second-named author received funding from project Momentum (Lend\"ulet) 30003 of the Hungarian Government. The second and third-named authors also supported partially by the NKFIH ``\'Elvonal” KKP 133921 grant and partially by the Hungarian Ministry of Innovation and Technology NRDI Office within the framework of the Artificial Intelligence National Laboratory Program. 

\subsection*{Acknowledgements.} We thank Asgar Jamneshan, Or Shalom and Terence Tao for sharing their recent preprints \cite{JST1,JST2}  related to the present work.


\begin{thebibliography}{1}

\bibitem{Ar46} R. Arens, \emph{Topologies for Homeomorphism Groups},  Amer. J. Math. \textbf{68} (1946), 593--610.

\bibitem{BeKe} H. Becker and A. S. Kechris, \emph{The Descriptive Theory of Polish Groups Actions},
London Math. Soc. Lecture Note Ser., 232. Cambridge University Press, Cambridge, 1996.

\bibitem{BTZ} V. Bergelson, T. Tao and T. Ziegler, \emph{An inverse theorem for the uniformity seminorms associated with the action of $\mb{F}_p^{\infty}$}, Geom. Funct. Anal. \textbf{19} (2010), no. 6, 1539--1596.

\bibitem{Bergman} C. Bergman, \emph{Universal algebra. Fundamentals and selected topics}. Pure and Applied Mathematics (Boca Raton), 301. CRC Press, Boca Raton, FL, 2012.

\bibitem{BourbakiGT1} N. Bourbaki, \emph{Elements of mathematics. General topology. Part 1.} Hermann, Paris; Addison-Wesley Publishing Co., Reading, Mass.-London-Don Mills, Ont. 1966.

\bibitem{CamSzeg} O. Antolín Camarena, B. Szegedy, \emph{Nilspaces, nilmanifolds and their morphisms}, preprint (2010) \href{http://arxiv.org/abs/1009.3825}{arXiv}.

\bibitem{Cand:Notes1} P. Candela, \emph{Notes on nilspaces: algebraic aspects}, Discrete Anal., 2017, Paper No. 15, 59 pp.

\bibitem{Cand:Notes2} P. Candela, \emph{Notes on compact nilspaces}, Discrete Anal., 2017, Paper No. 16, 57pp.

\bibitem{CGSS} P. Candela, D. Gonz\'alez-S\'anchez, B. Szegedy \emph{On nilspace systems and their morphisms}, Ergodic Theory Dynam. Systems \textbf{40} (2020), no. 11, 3015--3029.

\bibitem{CGSS-p-hom} P. Candela, D. Gonz\'alez-S\'anchez, B. Szegedy, \emph{On higher-order Fourier analysis in characteristic $p$}, Ergodic Theory Dynam.\ Systems \textbf{43} (2023), no. 12, 3971–-4040.

\bibitem{CGSS-spec} P. Candela, D. Gonz\'alez-S\'anchez, B. Szegedy, \emph{Spectral algorithms in higher-order Fourier analysis}, preprint (2025) \href{https://arxiv.org/abs/2501.12287}{arXiv}.

\bibitem{CScouplings} P. Candela, B. Szegedy, \emph{Nilspace factors for general uniformity seminorms, cubic exchangeability and limits}, Mem. Amer. Math. Soc. \textbf{287} (2023), no. 1425, v+101 pp.

\bibitem{CSinverse} P. Candela, B. Szegedy, \emph{Regularity and inverse theorems for uniformity norms on compact abelian groups and nilmanifolds}, J.\ Reine Angew.\ Math.\ \textbf{789} (2022), 1--42. 

\bibitem{Di} J. J. Dijkstra, \emph{On Homeomorphism Groups and the
Compact-Open Topology}, Amer. Math. Monthly, \textbf{112} (10), 910--912.

\bibitem{Fu&Shak} H. Fujita, D. Shakhmatov, \emph{A characterization of compactly generated metric groups}, Proc. Amer. Math. Soc. \textbf{131} (2003), no. 3, 953--961.

\bibitem{Gleason} A. M. Gleason, \emph{Spaces with a compact Lie group of transformations}, Proc. Amer. Math. Soc. \textbf{1} (1950), 35--43.

\bibitem{GSz} W. T. Gowers, \emph{A new proof of Szemer\'edi's theorem},  Geom. Funct. Anal. \textbf{11} (2001), no. 3, 465--588.

\bibitem{G-gen} W. T. Gowers, \emph{Generalizations of Fourier analysis, and how to apply them}, Bull. Amer. Math. Soc. \textbf{54} (2017), no. 1, 1–44.

\bibitem{GM2} W. T. Gowers, L. Mili\'cevi\'c, \emph{An inverse theorem for Freiman multi-homomorphisms}, preprint (2020) \href{https://arxiv.org/abs/2002.11667}{arXiv}.

\bibitem{GTprimes} B. Green, T. Tao, \emph{The primes contain arbitrarily long
arithmetic progressions}, Ann. of Math. (2) \textbf{167} (2008), 481--547.

\bibitem{GT08}  B. Green, T. Tao, \emph{An inverse theorem for the Gowers $U^3$-norm}, Proc. Edinburgh Math. Soc. (1) \textbf{51} (2008), 73--153.

\bibitem{GTZ-U4} B. Green, T. Tao, T. Ziegler, \emph{An inverse theorem for the Gowers $U^4$-norm}, Glasgow Math. J. \textbf{53} (2011), 1--50. 

\bibitem{GTZ} B.\ Green, T.\ Tao, T.\ Ziegler, \emph{An inverse theorem for the Gowers $U^{s+1}[N]$-norm}, Ann. of Math. (2) \textbf{176} (2012), no. 2, 1231--1372.

\bibitem{GMV1} Y.\ Gutman, F.\ Manners, P.\ Varj\'u, \emph{The structure theory of nilspaces I},  J. Anal. Math. \textbf{140} (2020), 1, 299--369.

\bibitem{GMV2} Y.\ Gutman, F.\ Manners, P.\ Varj\'u, \emph{The structure theory of nilspaces II: Representation as nilmanifolds},  Trans. Amer. Math. Soc. \textbf{371} (2019), 7, 4951--4992. 

\bibitem{GMV3} Y.\ Gutman, F.\ Manners, P.\ Varj\'u, \emph{The structure theory of nilspaces III: Inverse limit representations and topological dynamics}, Adv. Math. \textbf{365} (2020), 107059, 53 pp.

\bibitem{H&M-ProLie} K. H. Hofmann,  S. A. Morris, \emph{The Lie theory of connected pro-Lie groups. A structure theory for pro-Lie algebras, pro-Lie groups, and connected locally compact groups}. EMS Tracts Math., 2. European Mathematical Society (EMS), Z\" urich, 2007.


\bibitem{H&M-Cpct} K. H. Hofmann,  S. A. Morris, \emph{The structure of compact groups. A primer for the student -- a handbook for the expert}. Third edition. De Gruyter Stud. Math., 25. De Gruyter, Berlin, 2013.


\bibitem{HK-non-conv} B.\ Host, B.\ Kra, \emph{Nonconventional ergodic averages and nilmanifolds}, Ann. of Math. (2) \textbf{161} (2005), 397--488.

\bibitem{HK-par} B.\ Host, B.\ Kra \emph{Parallelepipeds, nilpotent groups, and Gowers norms}, Bull. Soc. Math. France \textbf{136} (2008), 405--437.

\bibitem{HKbook} B. Host, B. Kra, \emph{Nilpotent structures in ergodic theory}, Math. Surveys Monogr., 236. American Mathematical Society, Providence, RI, 2018. 

\bibitem{JST1} A. Jamneshan, O. Shalom, T. Tao, \emph{The structure of totally disconnected Host–Kra–Ziegler factors, and the inverse theorem for the $U^k$  Gowers uniformity norms on
finite abelian groups of bounded torsion}, preprint (2023) \href{https://arxiv.org/abs/2303.04860}{arXiv}

\bibitem{JST2} A. Jamneshan, O. Shalom, T. Tao, \emph{A Host–Kra $\mb{F}_2^\omega$-system of order 5 that is not Abramov of order 5, and non-measurability of the inverse theorem for the $U^6(\mb{F}_2^n)$ norm}, preprint (2023) \href{https://arxiv.org/abs/2303.04853}{arXiv}.

\bibitem{J&T} A. Jamneshan, T. Tao, \emph{The inverse theorem for the $U^3$ Gowers uniformity norm on arbitrary finite abelian groups: Fourier-analytic and ergodic approaches}, Discrete Anal., 2023:11, 48 pp.

\bibitem{Ka} R. R. Kallman, \emph{Uniqueness results for homeomorphism groups}, Trans. Amer. Math. Soc. \textbf{295}, (1), 389--396. 

\bibitem{Kechris} A. S. Kechris, \emph{Classical descriptive set theory}, Grad. Texts in Math., 156. Springer-Verlag, New York, 1995.

\bibitem{Li} S. Lin and P. Yan \emph{Sequence-covering maps of metric spaces}, Topology Appl., \textbf{109} (3) (2001) 301--314.

\bibitem{Mosk} M. Moskowitz, \emph{Homological algebra in locally compact abelian groups}, Trans. Amer. Math. Soc. \textbf{127} (1967), 361--404.

\bibitem{Mu} J. R. Munkres, \emph{Topology, Second Edition}, Prentice Hall, Inc., Upper Saddle River, NJ, 2000.

\bibitem{P61} R. S. Palais, \emph{On the existence of slices for actions of non-compact Lie groups},  Ann. of Math. (2) \textbf{73} (1961), 295--323. 

\bibitem{Rudolph} D. J. Rudolph, \emph{Eigenfunctions of $T\times S$ and the Conze-Lesigne algebra}, Ergodic theory and its connections with harmonic analysis, 369–432. London Math. Soc. Lecture Note Ser., 205, Cambridge University Press, Cambridge, 1995.

\bibitem{Shalom} O. Shalom, \emph{Multiple ergodic averages in abelian groups and Khintchine type recurrence}, Trans. Amer. Math. Soc. 375 (2022), 2729-2761 

\bibitem{Si} F. Siwiec, \emph{Sequence-covering and countably bi-quotient mappings}, Gen.\ Topology Appl.\ \textbf{1} (2), 143--154.

\bibitem{SzegHigh} B. Szegedy, \emph{On higher-order Fourier analysis}, preprint (2012) \href{https://arxiv.org/abs/1203.2260}{arXiv}.

\bibitem{SzegFin} B. Szegedy, \emph{Structure of finite nilspaces and inverse theorems for the Gowers norms in bounded exponent groups}, preprint (2010) \href{https://arxiv.org/abs/1011.1057}{arXiv}.

\bibitem{TZ-High} T. Tao, T. Ziegler, \emph{The inverse conjecture for the Gowers norm over finite fields via the correspondence principle}, Anal. PDE \textbf{3} (2010), 1--20.

\bibitem{T&Z-Low} T. Tao, T. Ziegler, \emph{The inverse conjecture for the Gowers norm over finite fields in low characteristic}, Ann. Comb. \textbf{16} (2012), 121--188.

\bibitem{WofStackEx} Eric Wofsey, \href{https://math.stackexchange.com/questions/4563814}{https://math.stackexchange.com/questions/4563814}.
\end{thebibliography}
\end{document}